\newtheorem{theorem}{Theorem}[section]
\newtheorem*{theorem*}{Theorem}	
\newtheorem{corollary}[theorem]{Corollary}
\newtheorem{lemma}[theorem]{Lemma}
\newtheorem{proposition}[theorem]{Proposition}
\newtheorem{assumpt}[theorem]{Assumption}
\theoremstyle{definition}
\newenvironment{definition}
{\pushQED{\qed}\defx}
{\popQED\enddefx}
\newenvironment{example}
{\pushQED{\qed}\examplex}
{\popQED\endexamplex}
\newenvironment{remark}
{\pushQED{\qed}\remarkx}
{\popQED\endremarkx}
\numberwithin{equation}{section}
\DeclareMathOperator{\form}{Form}
\DeclareMathOperator{\im}{Im}
\DeclareMathOperator{\re}{Re}
\DeclareMathOperator{\End}{End}
\DeclareMathOperator{\Ind}{Ind}
\DeclareMathOperator{\Id}{Id}
\DeclareMathOperator{\Hess}{Hess}
\DeclareMathOperator{\Hom}{Hom}
\DeclareMathOperator{\Sym}{Sym}
\DeclareMathOperator{\tr}{tr}
\DeclareMathOperator{\Diff}{Diff}
\DeclareMathOperator{\Ric}{Ric}
\DeclareMathOperator{\rank}{rank}
\DeclareMathOperator{\Map}{Map}
\DeclareMathOperator{\Vol}{Vol}
\DeclareMathOperator{\Ch}{CF}
\DeclareMathOperator{\Lie}{Lie}
\DeclareMathOperator{\Mult}{Mult}
\DeclareMathOperator{\Spinc}{Spin^c}
\DeclareMathOperator{\Spincr}{Spin^c_R}
\DeclareMathOperator{\coker}{coker}
\DeclareMathOperator{\map}{Map}
\DeclareMathOperator{\grad}{grad}
\DeclareMathOperator{\Crit}{Crit}
\DeclareMathOperator{\Cob}{Cob}
\DeclareMathOperator{\SCob}{SCob}
\DeclareMathOperator{\Cylin}{Cylin}
\DeclareMathOperator{\Comp}{Comp}
\DeclareMathOperator{\spn}{span}
\DeclareMathOperator{\bd}{\mathbf{d}}
\DeclareMathOperator{\HFK}{HFK}
\DeclareMathOperator{\gr}{\mathbf{gr}}
\DeclareMathOperator{\grt}{\mathbf{gr^{(2)}}}
\DeclareMathOperator{\SW}{\underline{SW}}
\DeclareMathOperator{\Rm}{\mathit{Rm}}
  \DeclareMathOperator{\HM}{\it HM}
\DeclareMathOperator{\KHM}{\it KHM}
\DeclareMathOperator{\SLL}{\it SL}
\newcommand{\Sph}{\mathbb{S}}
\newcommand{\R}{\mathbb{R}}
\newcommand{\C}{\mathbb{C}}
\newcommand{\N}{\mathbb{N}}
\newcommand{\Z}{\mathbb{Z}}
\newcommand{\HH}{\mathbb{H}}
\newcommand{\T}{\mathbb{T}}
\newcommand{\A}{\mathbb{A}}
\newcommand{\BD}{\mathbb{D}}
\newcommand{\BF}{\mathbb{F}}
\newcommand{\x}{\mathbb{X}}
\newcommand{\y}{\mathbb{Y}}
\newcommand{\BW}{\mathbb{W}}
\newcommand{\Step}{\textit{Step }}
\newcommand{\supp}{\text{supp}}
\newcommand{\half}{\frac{1}{2}}
\newcommand{\cA}{\check{A}}
\newcommand{\cPhi}{\check{\Phi}}
\newcommand{\embed}{\hookrightarrow}
\newcommand{\pt}{\partial_t }
\newcommand{\ps}{\partial_s}
\newcommand{\pr}{\partial_r }
\newcommand{\px}{\partial_x }
\newcommand{\dt}{\frac{d}{dt}}
\newcommand{\ds}{\frac{d}{ds}}
\newcommand{\Pt}{\frac{\partial}{\partial t} }
\newcommand{\Ps}{\frac{\partial}{\partial s} }
\newcommand{\CA}{\mathcal{A}}
\newcommand{\CB}{\mathcal{B}}
\newcommand{\SC}{\mathcal{C}}
\newcommand{\D}{\mathcal{D}}
\newcommand{\E}{\mathcal{E}}
\newcommand{\CF}{\mathcal{F}}
\newcommand{\CG}{\mathcal{G}}
\newcommand{\J}{\mathcal{J}}
\newcommand{\K}{\mathcal{K}}
\newcommand{\M}{\mathcal{M}}
\newcommand{\W}{\mathcal{W}}
\newcommand{\SO}{\mathcal{O}}
\newcommand{\Pa}{\mathcal{P}}
\newcommand{\NR}{\mathcal{R}}
\newcommand{\CT}{\mathcal{T}}
\newcommand{\CQ}{\mathcal{Q}}
\newcommand{\SH}{\mathcal{H}}
\newcommand{\X}{\mathcal{X}}
\newcommand{\CL}{\mathcal{L}}
\newcommand{\CX}{\mathcal{X}}
\newcommand{\CY}{\mathcal{Y}}
\newcommand{\V}{\mathcal{V}}
\newcommand{\CZ}{\mathcal{Z}}
\newcommand{\fa}{\mathfrak{a}}
\newcommand{\fb}{\mathfrak{b}}
\newcommand{\fc}{\mathfrak{c}}
\newcommand{\fd}{\mathfrak{d}}
\newcommand{\F}{\mathfrak{F}}
\newcommand{\q}{\mathfrak{q}}
\newcommand{\p}{\mathfrak{p}}
\newcommand{\su}{\mathfrak{su}}
\newcommand{\s}{\mathfrak{s}}
\newcommand{\bs}{\widehat{\mathfrak{s}}}
\newcommand{\FC}{\mathfrak{C}}
\newcommand{\SA}{\mathscr{A}}
\newcommand{\SB}{\mathscr{B}}
\newcommand{\SL}{\mathscr{L}}
\newcommand{\SR}{\mathscr{R}}
\newcommand{\SU}{\mathscr{U}}
\newcommand{\bpartial}{\bar{\partial}}
\newcommand{\hy}{\widehat{Y}}
\newcommand{\hx}{\widehat{X}}
\newcommand{\hz}{\widehat{Z}}
\newcommand{\hr}{\widehat{R}}
\newcommand{\cB}{\check{B}}
\newcommand{\cM}{\widecheck{\mathcal{M}}}
\newcommand{\cgamma}{\check{\gamma}}
\newcommand{\hq}{\widehat{\mathfrak{q}}}
\newcommand{\bomega}{\overline{\omega}}
\newcommand{\tup}{\tilde{\Upsilon}}
\newcommand{\upd}{\Upsilon^\dagger}
\newcommand{\updd}{\Upsilon^\ddagger}
\newcommand{\wQ}{\widetilde{Q}}
\newcommand{\vb}{\delta b}
\newcommand{\va}{\delta a}
\newcommand{\vphi}{\delta \phi}
\newcommand{\vpsi}{\delta \psi}
\newcommand{\vn}{\vec{n}}
\newcommand{\spinc}{$spin^c\ $}
\newcommand{\dg}{\textbf{d}_\gamma}
\newcommand{\bn}{\bm{n}}
\newcommand{\ind}{\text{Ind}}
\newcommand{\hatx}{\widehat{X}}
\newcommand{\hatD}{\widehat{D}_{\kappa_*}}
\newcommand{\EHess}{\widehat{\Hess}}
\newcommand{\CSd}{\rlap{$-$}\mathcal{L}}
\newcommand{\Mod}{\text{-}\mathrm{Mod}}
\title{Monopoles And Landau-Ginzburg Models II:\\ Floer Homology}
\author{Donghao Wang}
\date{\today}
\address{Department of Mathematics, Massachusetts Institute of Technology, Cambridge, MA 02139, USA}
\email{donghaow@mit.edu}
\begin{document}
	
	\setcounter{section}{1}
		
	\begin{abstract} This is the second paper in this series. Following the setup of Meng-Taubes, we define the monopole Floer homology for any pair $(Y,\omega)$, where $Y$ is a compact oriented 3-manifold with toroidal boundary and $\omega$ is a suitable closed 2-form viewed as a decoration. This construction fits into a (3+1)-topological quantum field theory and generalizes the work of Kronheimer-Mrowka for closed oriented 3-manifolds. By a theorem of Meng-Taubes and Turaev, the Euler characteristic of this Floer homology recovers the Milnor-Turaev torsion invariant of the 3-manifold.  
	\end{abstract}
	
	\maketitle
	\tableofcontents

\part{Introduction}

\subsection{Motivations}
The monopole Floer homology of a closed oriented 3-manifold as introduced by Kronheimer-Mrowka \cite{Bible} has greatly influenced the study of 3-manifold topology since its inception. In this paper, we generalize their construction to the case of a compact oriented 3-manifold $Y$ with toroidal boundary, which has the potential to recover the knot Floer homology \cite{KFH,KFH1}, including both the hat-version $\widehat{\HFK}_*$ and the minus-version $\HFK^{-}_*$ as special cases. This generalization, however, depends on an additional choice of a closed 2-form $\omega$ on the 3-manifold $Y$ and a geometric datum $\fd$ on the boundary $\partial Y$, and therefore should be viewed as an invariant of the pair $(Y,\omega)$ relative to this boundary datum $\fd$. As we shall follow closely the setup of Meng-Taubes \cite{MT96}, the Euler characteristic of this monopole Floer homology group recovers the Milnor-Turaev torsion invariant of $Y$ \cite{MT96, T98}, which is independent of the 2-form $\omega$ or the boundary datum $\fd$. 

\smallskip

The first paper of this series \cite{Wang202} focused on the symplectic geometry that underpins this Floer homology: we introduced an infinite dimensional gauged Landau-Ginzburg model for any Riemann surface $(\Sigma, g_{\Sigma})$
\begin{equation}\label{E0.1}
(M(\Sigma), W_\lambda,\CG(\Sigma))
\end{equation}
whose gauged Witten equations on the complex plane $\C$ recover the Seiberg-Witten equations on the product manifold $\C\times\Sigma$. Explicitly, $M(\Sigma)$ is an infinite dimensional K\"{a}hler manifold associated to the surface $\Sigma$, and $W_\lambda: M(\Sigma)\to \C$ is a holomorphic Morse function, commonly referred to as a superpotential. The Seiberg-Witten equations on the 3-manifold $[0,+\infty)_s\times\Sigma$ forms a downward gradient flowline of the real function $\re W_\lambda$ on $M(\Sigma)$. The monopole Floer homology group of $(Y,\omega)$ is then interpreted as the Lagrangian Floer homology of an infinite dimensional Lagrangian submanifold in $M(\Sigma)$, which is associated to the 3-manifold $Y$ using the Seiberg-Witten equations by a work of Nguyen \cite{NguyenI}, with thimbles of the superpotential $W_\lambda: M(\Sigma)\to \C$. 

The monopole Floer homology of $(Y,\omega)$, which we denote temporarily by $\HM_*(Y,\omega)$,  fits into the broader program of constructing the Fukaya-Seidel category of \eqref{E0.1} and developing an infinite dimensional Picard-Lefschetz theory, as proposed first by Donaldson-Thomas \cite{DT96} and then by Haydys \cite{Haydys15} and Gaiotto-Moore-Witten \cite{GMW15,GMW17}. A more detailed plan for this program has appeared in \cite{Wang204} in an attempt to lift this monopole Floer homology into an $A_\infty$-module over the Fukaya-Seidel category of \eqref{E0.1}, providing more refined topological invariants for the 3-manifold $Y$. 

 The present paper focuses on the delicate analysis that implement this infinite dimensional construction for the underlying chain complexes of the $A_\infty$-category/module. The construction of the $A_\infty$-structure following \cite{Wang204} will be explored in a companion paper in the future.

\smallskip

Another motivation of this work is to define Floer-type invariants for knots and links inside a closed 3-manifold using the Seiberg-Witten equations. It has been long believed \cite{M16} that the knot Floer homology of a knot $K\subset Z$, as introduced by Ozsv\'{a}th-Szab\'{o} \cite{KFH} and independently Rasmussen \cite{KFH1} using Heegaard splittings, encodes some information about the Seiberg-Witten moduli space on $\R_t$ times the knot complement $Z\setminus N(K)$. By choosing the closed 2-form $\omega$ and the boundary datum $\fd$ properly, this speculation can be made concrete by applying our construction to the complement $Y=Z\setminus N(K)$ or $Z\setminus N(K\cup m)$,  where $m$ is a meridian of the knot $K\subset Z$:
\begin{align*}
\HM_*(Y,\omega)&\xrightarrow[\cong]{\hspace{1em}?\hspace{1em}}\HFK^{-}_*(S^3, K) &&\text{ if } Y=S^3\setminus N(K), \\
\HM_*(Y,\omega)&\xrightarrow[\cong]{\hspace{1em}?\hspace{1em}} \widehat{\HFK}_*(S^3, K) \text{ or } \KHM_*(S^3,K)&&\text{ if } Y=S^3\setminus N(K\cup m).
\end{align*}
While the first isomorphism is mostly conjectural, we shall confirm the second isomorphism in the third paper \cite{Wang203} for the hat version of the knot Floer homology.  

\smallskip

Some constructions of knot Floer homology already exist in the Seiberg-Witten theory. Motivated by the sutured manifold technique developed by Juh\'{a}sz \cite{J06,J08},  the monopole knot Floer homology $\KHM_*$, which is the analogue of $\widehat{\HFK}_*$, was introduced by Kronheimer-Mrowka \cite{KS} as the sutured monopole Floer homology of the knot complement with two meridional sutures. By further exploring this idea, Li  \cite{L19} constructed the analogue of $\HFK^{-}_*$ in the Seiberg-Witten theory using a direct system of sutures on the knot complement. However, the fact that these invariants fit into a suitable topological quantum field theory was not verified until the foundational work of Juh\'{a}sz \cite{Ju16} and Li \cite{L18}, due to some technical difficulty involved in these constructions.   
\smallskip

As our approach is much more geometric, the functoriality of our Floer homology groups can be easily verified for a special class of cobordisms called \textit{strict}. Following the setup of \cite{MT96}, we complete $(Y,\partial Y)$ into a 3-manifold with cylindrical ends using the boundary datum $\fd$ associated to $\partial Y$. The monopole Floer homology group of $(Y,\omega)$ is then defined as an infinite dimensional Morse homology for the perturbed Chern-Simons-Dirac functional on this completion, while the cobordism maps are constructed by counting Seiberg-Witten monopoles on a completion of the strict cobordism, which is reminiscent of the original construction of Kronheimer-Mrowka \cite{Bible} for the case of closed oriented 3-manifolds. However, this Floer homology group may depend on the choice of $\omega$ and the boundary datum $\fd$. We give a short discussion on this in Section \ref{Sec1.5} \& \ref{Sec1.6} below.

\subsection{Outline of Construction}\label{Subsec1.2} To state our results, let $Y$ be a compact oriented 3-manifold whose boundary $\partial Y\cong \Sigma\colonequals \coprod_{1\leq i\leq n} \T^2_i$ is a union of 2-tori. Throughout this paper, we assume that $Y$ is connected and its boundary $\partial Y$ is non-empty. A boundary datum $\fd=(g_\Sigma, \lambda, \mu)$ on $\Sigma$ is a triple consisting of 
\begin{itemize}
	\item a flat metric $g_\Sigma$ of $\Sigma$;
	\item an imaginary-valued harmonic 1-form $\lambda\in \Omega_h^1(\Sigma; i\R)$;
	\item an imaginary-valued harmonic 2-form $\mu\in \Omega_h^2(\Sigma; i\R)$,
\end{itemize}
which satisfies the conditions \ref{P7}\ref{P3} and \ref{P4} in Section \ref{Sec2}. This means in particular that $\lambda|_{\T_i^2}\neq 0$ for all $1\leq i\leq n$. This datum $\fd$ specifies the gauged Landau-Ginzburg model \eqref{E0.1} associated to $\Sigma$. The 1-form $\lambda$ is used to perturb the superpotential $W_{\lambda}$ to make it Morse. Choose a closed 2-form $\omega\in \Omega^2(Y; i\R)$ such that 
\[
\omega=\mu+ds\wedge\lambda
\]
in a collar neighborhood $(-1,0]_s\times\Sigma\subset Y$. This pair $(Y,\omega)$ along with other perturbation datum to be used in the construction is denoted by a thickened letter $\y$ in this paper; see Section \ref{Sec2} for the precise definition. We focus on the \spinc structure $\bs_{std}$ on $\Sigma$ such that
\[
c_1(\bs_{std})[\T^2_i]=0
\]
on each component $\T^2_i\subset \Sigma$.  For any relative \spinc structure $\bs$ on $Y$, which is a \spinc structure $\s$ along with an identification of $\s$ with $\bs_{std}$ on the boundary $\Sigma$, we follow the setup of Meng-Taubes \cite{MT96} and associate a finitely generated module over a base ring $\NR$:
\begin{equation}
\HM_*(\y,\bs),
\end{equation}
called \textit{the monopole Floer homology group} of $(\y,\bs)$. This group is constructed as an infinite dimensional Morse homology of the perturbed Chern-Simons-Dirac functional $\CL_\omega$ on the complete Riemannian manifold:
\[
\hy\colonequals Y\ \bigcup_\Sigma\ [0,+\infty)_s\times\Sigma, 
\]
where the cylindrical end is equipped with the product metric $d^2s+g_\Sigma$, and the closed 2-form $\omega$ is extended constantly on the end, i.e.,
\[
\omega=\mu+ds\wedge\lambda \text{ on } [0,+\infty)_s\times\Sigma.
\]

The Chern-Simons-Dirac functional $\CL_\omega$ is perturbed by this extension of $\omega$. To think of it another way, the boundary datum $\fd=(g_\Sigma,\lambda,\mu)$ on $\Sigma$ specifies the geometry of $(\hy,\omega)$ along this cylindrical end; each Seiberg-Witten configuration on $\hy$ is required to approximate a distinguished critical point of the superpotential $W_{\lambda}: M(\Sigma)\to \C$ as $s\to\infty$ and hence is always irreducible. The critical points of $\CL_\omega$ are solutions to the perturbed 3-dimensional Seiberg-Witten equations on $\hy$, while the Floer differential is defined by counting solutions on $\R_t\times \hy$. The set of isomorphism classes of relative \spinc structures on $Y$:
\[
\Spincr(Y)
\]
is a torsor over $H^2(Y,\partial Y; \Z)$. The desired invariant of $\y$ is obtained by forming the direct sum
\begin{equation}
\HM_*(\y)\colonequals\bigoplus_{\bs\in \Spincr(Y)} \HM_*(\y,\bs).
\end{equation}

This group carries the same formal structure of the monopole Floer homology for closed 3-manifolds: it is bigraded and admits an additional homology grading by homotopy classes of oriented relative 2-plane fields on $Y$ (i.e., oriented 2-plane fields that take a standard form near $\Sigma$). If $\bs$ and $\bs'$ give rise to the same \spinc structure, then their grading sets are the same; see Section \ref{Sec28} for more details. Moreover, a homology orientation of $Y$ determines a canonical mod 2 grading of $\HM_*(\y)$. One may take the base ring $\NR$ to be $\Z$, if $\mu=0$, and if the closed 2-form $\omega$ is monotone in the sense of Definition \ref{D27.3}, or otherwise a Novikov ring over $\Z$. 

\subsection{The Euler Characteristic and Finiteness} By \cite{MT96}, for any closed oriented 3-manifold $Z$ with $b_1(Z)\geq 2$, the Euler characteristic of the reduced monopole Floer homology $\HM_*^{red} (Z)$ as defined in \cite{Bible} (with any generic non-exact perturbation) recovers the Milnor torsion invariant of $Z$. Since we have followed the same setup of Meng-Taubes \cite{MT96}, the Euler characteristic of $\HM_*(\y)$, which is the same as the signed count of critical points of $\CL_\omega$ on $\hy$, recovers the Seiberg-Witten invariant $\SW(Y,\partial Y)$ defined in their paper. In particular, it is independent of the 2-form $\omega$ or the boundary datum $\fd=(g_\Sigma,\lambda,\mu)$ and recovers the Milnor-Turaev torsion of $(Y,\partial Y)$.

\begin{theorem}[{\cite[Theorem 1.1]{MT96}}]\label{1T1} For any compact oriented 3-manifold $(Y,\partial Y)$ with toroidal boundary, the Floer homology $\HM_*(\y)$ categorifies the Milnor torsion invariant of $(Y,\partial Y)$; in particular, $\chi(\HM_*(\y,\bs))$ is non-zero only for finitely many relative \spinc structures $\bs\in \Spincr(Y)$ if $b_1(Y)\geq 2$. 
\end{theorem}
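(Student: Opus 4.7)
The strategy is to identify $\chi(\HM_*(\y,\bs))$ with the Seiberg-Witten count $\SW(Y,\partial Y;\bs)$ of \cite{MT96}, so that the theorem reduces to their main result. By construction, the chain complex underlying $\HM_*(\y,\bs)$ is freely generated over $\NR$ by the irreducible critical points of $\CL_\omega$ on $\hy$ in the relative \spinc structure $\bs$; as highlighted in the remark above, this is precisely the set counted by Meng-Taubes. Finiteness of this set, and hence well-definedness of the Euler characteristic, will follow from the three-dimensional compactness theorem established earlier in the paper. Thus $\chi(\HM_*(\y,\bs))$ equals the signed count of critical points with respect to the canonical mod 2 grading determined by a homology orientation of $Y$.

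Next I would verify that this Floer-theoretic mod 2 grading agrees with the sign used by Meng-Taubes, namely the spectral flow sign of the Hessian of $\CL_\omega$ at each critical point. The argument mirrors the closed case treated by Kronheimer-Mrowka \cite{Bible}, adapted to relative 2-plane fields on $(Y,\partial Y)$, with the cylindrical-end contributions controlled by the standard form $\omega = \mu + ds\wedge \lambda$ near $\Sigma$. Granting this, one obtains $\chi(\HM_*(\y,\bs)) = \SW(Y,\partial Y;\bs)$, and \cite[Theorem 1.1]{MT96} identifies the right-hand side with the corresponding coefficient of the Milnor torsion, establishing the first claim. Since $\chi(\HM_*(\y,\bs))$ is an invariant of $\y$ and in particular independent of the chosen perturbation, we are free to specialize to a Meng-Taubes 2-form perturbation when carrying out this identification.

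Finally, the finiteness assertion follows because, when $b_1(Y)>1$, the Milnor torsion is a finitely supported function on $\Spincr(Y)$, a classical property of Reidemeister-Milnor torsions; combined with the previous identification this yields that $\chi(\HM_*(\y,\bs))$ vanishes for all but finitely many $\bs$. The main technical obstacle in the plan is the sign comparison in the second paragraph: one must carefully track how the topologically defined mod 2 grading aligns with the analytic orientation used in the direct Seiberg-Witten count of \cite{MT96}, especially in light of the cylindrical-end contributions coming from the boundary data $(g_\Sigma,\lambda,\mu)$.
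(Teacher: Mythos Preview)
Your proposal is correct and matches the paper's approach: the paper does not give a standalone proof of this theorem but cites it from \cite{MT96}, noting in the preceding paragraph that since the setup of the Chern--Simons--Dirac functional on $\hy$ is identical to that of Meng--Taubes, the Euler characteristic of $\HM_*(\y)$ coincides with their Seiberg--Witten invariant $\SW(Y,\partial Y)$. Your sketch correctly fills in the standard link---that $\chi$ of the chain complex is the signed count of generators, that the canonical mod 2 grading (Subsection~\ref{Subsec23.2}) is the relevant sign, and that invariance under admissible perturbations lets one compare with the unperturbed count---and then invokes \cite[Theorem~1.1]{MT96}; one small correction is that the chain complex is generated by critical points of the \emph{perturbed} functional $\CSd_\omega$, not $\CL_\omega$, but you recover this via the invariance argument in your final paragraph.
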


\begin{remark} Turaev \cite{T98} later refined this result by showing that $\chi(\HM_*(\y))$ as a map
	\[
	\Spincr(Y)\to \Z
	\]
	agrees with the Milnor--Turaev invariant of $(Y,\partial Y)$ up to an overall sign ambiguity. The version proved in \cite{MT96} is slightly weaker: relative \spinc structures with the same $c_1(\bs)\in H^2(Y,\partial Y;\Z)$ are not distinguished. Readers are referred to their original papers for the precise statements.
\end{remark}

In light of Theorem \ref{1T1}, it is natural to ask whether $\HM_*(\y)$ enjoys a similar finiteness property when $b_1(Y)\geq 2$. Due to the presence of the 2-form $\omega$, this turns out to be a subtler question than the case of closed 3-manifolds, and the answer is unknown in general. We record a result along this line:

\begin{theorem}\label{1T4} Suppose the harmonic 2-form $\mu\in \Omega^2_h(\Sigma;i \R)$ in the boundary datum $\fd=(\Sigma,\mu,\lambda)$ is nowhere vanishing on $\Sigma$, i.e., $\mu|_{\T^2_i}\neq 0$, $1\leq i\leq n$. Then $\HM_*(\y,\bs)\neq 0$ only for finitely many relative \spinc structures, and so the monopole Floer homology group $\HM_*(\y)$ is finitely generated over the Novikov Ring $\NR$. 
\end{theorem}

We remark that under the assumptions of Theorem \ref{1T4}, the boundary $\Sigma$ must be disconnected, since $\int_{\Sigma} \mu=\int_Y d\omega=0$; thus $b_1(Y)\geq |\Sigma|\geq 2$.

\subsection{The TQFT Property and Invariance} To state the $(3+1)$ TQFT property enjoyed by $\HM_*$, we introduce the strict cobordism category $\Cob_s=\Cob_s(\Sigma, \fd)$ associated to a surface $\Sigma$ with a fixed boundary datum $\fd=(g_{\Sigma},\lambda,\mu)$. Each object of $\Cob_s$ is a pair $\y=(Y,\omega)$ consisting of a compact oriented 3-manifold with  $\partial Y\cong \Sigma$ and a closed 2-form $\omega$ compatible with $\fd$. Ignoring many technical constraints, a morphism of $\Cob_s$ from $\y_1$ to $\y_2$, written as 
\[
\x=(X,W,\omega_X): \y_1\to \y_2,\ \y_i=(Y_i, \omega_i),\ i=1,2,
\]
is a 4-manifold with corners
\[
(X,W): (Y_1,\Sigma)\to (Y_2,\Sigma)
\]
such that the cobordism $W=[-1,1]_t\times \Sigma: \partial Y_1\to \partial Y_2$ between the two boundaries is a standard product (so is the name \textit{strict}). Moreover, $X$ is equipped with a closed 2-form $\omega_X$ compatible with the ones on $Y_1$ and $Y_2$. This TQFT property can be stated easily, if $\fd$ satisfies the conditions of Theorem \ref{1T4}, and if one ignores the issue of orientations and works with the $\BF_2$-coefficients.

\begin{theorem} \label{1Tideal} Suppose the harmonic 2-form $\mu\in \Omega^2_h(\Sigma;i \R)$ in the boundary datum $\fd=(\Sigma,\mu,\lambda)$ is nowhere vanishing on $\Sigma$, and we work with the Novikov ring $\NR_2$ with $\BF_2$-coefficients. Then the monopole Floer homology $\HM_*$ extends to a covariant functor:
	\[
	\HM: \Cob_s(\Sigma,\fd)\to \NR_2\text{-}\mathrm{Mod}
	\]
	from the strict cobordism category $\Cob_s(\Sigma,\fd)$ to the category of finitely generated $\NR_2$-modules. 
\end{theorem}

In general, the Floer homology group $\HM_*(\y)$ might be infinitely generated over the base ring, and a completion of $\HM_*(\y)$ (as considered in \cite[Definition 23.1.3]{Bible}) is necessary in order to state this functoriality. In this paper, we work instead with the strict \spinc cobordism category $\SCob_s=\SCob_s(\Sigma, \fd)$, where each object $(\y,\bs)$ is now equipped with a relative \spinc structure, and whose morphism sets are same as those of $\Cob_s$. In order to deal with the orientation issue, one has to consider an enlargement $\SCob_{s,b}$ of $\SCob_s$, which includes a base-point of the configuration space for each object $(\y,\bs)\in \SCob_s$; see Definition \ref{D24.8}. 

\begin{theorem}\label{1T2} Let $\NR$ be the Novikov ring defined over $\Z$. Then the monopole Floer homology $\HM_*$ extends to a covariant functor:
	\[
	\HM: \SCob_{s,b}(\Sigma,\fd)\to \NR\text{-}\mathrm{Mod}
	\]
with a modified composition law: if $\x_{13}: \y_1\to \y_3$ is the composition of $\x_{12}:\y_1\to \y_2$ and $\x_{23}:\y_2\to\y_3$, then for any relative \spinc structures $\bs_i\in \Spincr(Y_i),i=1,3$, the map
\[
\HM\big(\x_{13}:(\y_1,\bs_1)\to (\y_3,\bs_3)\big): \HM_*(\y_1,\bs_1)\to \HM_*(\y_3,\bs_3)
\]
induced  by $\x_{13}$ is equal to the sum 
\[
\bigoplus_{\bs\in \Spincr(Y_2)} \HM\big(\x_{23}:(\y_2,\bs_2)\to (\y_3,\bs_3)\big)\circ \HM\big(\x_{12}:(\y_1,\bs_1)\to (\y_2,\bs_2)\big).
\]
which may involve infinitely many non-zeros terms; nevertheless, this sum converges in the topology of $\NR$. 
\end{theorem}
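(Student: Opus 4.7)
The plan is to build the cobordism maps at the chain level by counting Seiberg--Witten solutions on cylindrical-end $4$-manifolds, verify the standard Floer-theoretic axioms, and then extract the modified composition law from a neck-stretching argument. The subtlety is the Novikov convergence of the resulting infinite sum over intermediate $\Spinc$ structures.

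Given a morphism $\x:(\y_1,\bs_1)\to(\y_2,\bs_2)$ in $\SCob_{s,b}$, I would attach cylindrical time-direction ends to form $\hx = (-\infty,0]_t \times \hy_1 \cup X \cup [0,+\infty)_t \times \hy_2$, extending the metric, closed $2$-form, and perturbation in a translation-invariant fashion on the ends. For each pair of asymptotic critical points $\fa_1, \fa_2$ of $\CL_\omega$ on $\hy_1, \hy_2$ and each relative $\Spinc$ structure $\bs_X$ on $X$ restricting to $(\bs_1, \bs_2)$, consider the moduli space $\M(\fa_1, \fa_2; \bs_X)$ of perturbed Seiberg--Witten solutions on $\hx$ with the prescribed asymptotics. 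Irreducibility of all critical points (ensured by $\lambda \neq 0$ in \cite{Wang202}) obviates the need for a blow-up; generic perturbations make the zero-dimensional components regular. Their signed counts, weighted by the Novikov monomial recording the action drop associated to $\bs_X$ together with the chosen base-point data, define the chain-level cobordism map, and $\HM(\x: (\y_1,\bs_1) \to (\y_2,\bs_2))$ is its induced homology map. The chain map property, invariance under choices of metric and perturbation, and $\HM$ of the identity cobordism all follow from the standard cobordism-of-cobordisms arguments, using the compactness theorems of \cite{Wang202} to control the behavior in the toroidal direction.

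The composition law is obtained by inserting a long neck $[-T,T]_t \times \hy_2$ in $\hx_{13}$ and sending $T \to \infty$: compactness combined with exponential decay at infinity forces the moduli space to limit to broken trajectories $(u_{12}, \fa_2, u_{23})$, where $u_{ij}$ is a solution on $\hx_{ij}$ asymptotic to a common intermediate critical point $\fa_2$ on $\hy_2$; an inverse gluing theorem recovers the full count on $\hx_{13}$ for large $T$. Organizing the resulting sum by the relative $\Spinc$ structure of $\fa_2$ yields the formula stated in the theorem.

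The main obstacle is the Novikov convergence. For fixed $\bs_1, \bs_3$, the intermediate $\bs_2 \in \Spincr(Y_2)$ admitting compatible extensions $(\bs_{12}, \bs_{23})$ that glue to a given relative $\Spinc$ structure $\bs_{13}$ on $X_{13}$ form an infinite lattice inside $H^2(Y_2, \partial Y_2; \Z)$, so the sum can have infinitely many nonzero terms. The ring $\NR$ is built so that its formal variables track the variation of $\CL_\omega$ along trajectories; for fixed $\bs_{13}$, the action drop along a composite trajectory decomposes as the sum of the individual drops on $\hx_{12}$ and $\hx_{23}$ and varies linearly with the lattice parameter labelling compatible pairs $(\bs_{12}, \bs_{23})$. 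A uniform topological energy estimate --- parallel to the one underlying \cite[Chapter 23]{Bible} for closed $3$-manifolds with $b_1 > 0$ --- then shows that for each Novikov filtration level $N$ only finitely many $\bs_2$ contribute terms below level $N$, yielding convergence in the ring topology of $\NR$. Establishing this estimate is where I expect the real work to lie, and it is precisely the phenomenon that forces the use of $\NR$ and makes Theorem \ref{1Tideal} unattainable in its literal form.
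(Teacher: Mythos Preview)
Your outline is essentially the paper's approach: define chain-level cobordism maps by counting zero-dimensional moduli spaces on the completed $4$-manifold $\CX$, verify the chain-map property and metric/perturbation independence by the standard broken-trajectory arguments, and extract the composition law from neck-stretching, with Novikov convergence supplied by a topological-energy finiteness statement (the paper's Lemmas~\ref{L27.1} and~\ref{L27.2}, which bound the number of relative $\Spinc$ cobordisms supporting nonempty moduli spaces below a given energy level). The paper in fact defers most of these steps to the corresponding sections of \cite{Bible} once its own compactness, transversality, and gluing ingredients are in place.

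The one substantive gap is the orientation theory. You write ``signed counts'' and allude to the base-point data, but the paper is explicit that working over $\Z$ here requires new machinery: since every configuration on $\hy$ is irreducible, the standard technique of orienting via reducible configurations in the blown-up space (\cite[Section~20]{Bible}) is unavailable. The paper instead develops a theory of \emph{relative orientations} (Appendix~\ref{AppD} and Theorem~\ref{T24.2}), constructing canonical bijections $e_L:\Lambda(\fc_1,\bs_X,\fc_2)\to\Lambda(\fc_1,\bs_X\otimes L,\fc_2)$ compatible with concatenation, and uses these to define the $2$-element set $\Lambda(\fc_1,\x,\fc_2)$ of homology orientations (Definition~\ref{D24.3}). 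A morphism in $\SCob_{s,b}$ is a pair $(\x,o)$ with $o\in\Lambda(\fc_{*,1},\x,\fc_{*,2})$, and it is this choice of $o$ that fixes the sign $\epsilon[o,\gamma]$ of each count. Your proposal does not indicate how you would produce such a consistent orientation scheme, and without it the chain map over $\Z$ is not defined.
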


As a consequence of Theorem \ref{1T2}, the monopole Floer homology group $\HM_*(\y,\bs)$ is invariant under the following operations (see Corollary \ref{C24.10} and \ref{C24.11} for more details):
	\begin{itemize}
\item changing of the base point, the interior metric of $Y$ and the tame perturbation of the Chern-Simons-Dirac functional $\CL_{\omega}$;
\item an isotopy to the identification map $\partial Y\cong \Sigma$;
\item replacing $\omega$ by $\omega+d_{\hy}b$ where $b\in \Omega^1(\hy; i\R)$ is a compactly supported 1-form.
	\end{itemize}

\subsection{A dichotomy I: when the harmonic 2-form $\mu$ is nowhere vanishing}\label{Sec1.5} In order to obtain a topological invariant, one has to understand how the group $\HM_*(\y,\bs)$ depends on the closed 2-form $\omega$ and the boundary datum $\fd=(g_\Sigma,\lambda,\mu)$. At this point, a striking dichotomy emerges from the non-vanishing property of the harmonic 2-form $\mu$, which can be seen already from the Finiteness Theorem \ref{1T4} and Theorem \ref{1Tideal}. In this case, one can verify that $\HM_*(\y)$ depends at most on the cohomology class of $\omega$ and $*_\Sigma\lambda$. The next theorem will be proved in the third paper \cite{Wang203} in this series. 

\begin{theorem}[{\cite[Theorem 5.2]{Wang203}}]\label{1T8} Suppose that the harmonic 2-form $\mu\in \Omega^2_h(\Sigma; i\R)$ is nowhere vanishing on $\Sigma$, and its paring with each component $\T^2_j$ is sufficiently small for all $1\leq j\leq n$. Then the isomorphism class of the monopole Floer homology  $\HM_*(\y)$ depends only on the 3-manifold $Y$, the cohomology class $[\omega]\in H^2(Y;i\R)$ and $[*_\Sigma\lambda]\in H^1(\Sigma; i\R)$.
\end{theorem}

 It is not clear to the author, however, how the group $\HM_*(\y)$ will be affected, as one changes the class $[\omega]$ and $[*_\Sigma\lambda]$. In the case of closed 3-manifolds  \cite[Section 31]{Bible}, using a closed 2-form as a non-exact perturbation \cite[Section 29]{Bible}, or using balanced perturbations yet with a non-trivial local coefficient system induced by this form, will produce the same reduced monopole Floer homology, so the dependence is understood via this local system. The similar structural result for $\HM_*(\y)$, however, is unknown and will require further exploration.

\medskip

Nevertheless, in the third paper \cite{Wang203}, we shall establish a series of topological properties of $\HM_*(\y)$, including a Thurston norm \& fiberness detection result, which are independent of the choice of $[\omega]$ and $[*_\Sigma\lambda]$. To certain extent, $\HM_*(\y)$ behaves like the sutured Floer homology of $Y$ with an empty set of sutures on the boundary $\partial Y$, despite the presence of the 2-form $\omega$. 

\medskip

It is somewhat disappointing as the cobordism map in Theorem \ref{1Tideal} is only constructed for strict cobordisms. This failure is also remedied in \cite[Corollary 4.2]{Wang203} where generalized cobordism maps are constructed. If $W:\partial Y_1\to \partial Y_2$ is the restriction of $\x:\y_1\to \y_2$ between the two boundaries and is also equipped with a closed 2-form
	\[
\mathbb{W}=(W,\omega_W): (\partial Y_1,g_{\Sigma_1}, \lambda_1,\mu_1)\to (\partial Y_2,g_{\Sigma_2}, \lambda_2,\mu_2),
	\]
 then there is a map 
	\[
	\HM_*(\y_1)\otimes \HM_*(\mathbb{W})\to 	\HM_*(\y_2)
	\]
	satisfying a modified composition law. When $\mathbb{W}$ is the product cobordism $[-1,1]_t\times (\Sigma, g_\Sigma,\lambda, \mu)$, it recovers the monopole Floer functor in Theorem \ref{1Tideal} by inserting the canonical generator of $\HM_*(\mathbb{W})\cong \NR$. It is hoped that a contact structure on $W$ determines a canonical element \cite{KM97,OS05,HKM09} of $\HM_*(\mathbb{W})$ (which is still a missing ingredient in our story) and so a map $\HM_*(\y_1)\to \HM_*(\y_2)$; this is then analogous to the sutured cobordism category considered in \cite{Ju16}. 

\subsection{A dichotomy II: when the harmonic 2-form $\mu$ is somewhere zero}\label{Sec1.6} The results stated in the previous section all rely crucially on the property that $\mu$ is nowhere vanishing on $\Sigma$; very little is known in the opposite case. This dichotomy can be explained conceptually as follows: the 2-form $\mu$ determines a symplectic quotient of the infinite dimensional K\"{a}hler manifold $M(\Sigma)$ by the gauge action and therefore may change the Fukaya-Seidel category of \eqref{E0.1}. In the previous case, this Fukaya-Seidel category is expected to be trivial, cf.\cite[Section 2.2]{Wang202}, while in the opposite case it may have non-trivial product maps. Thus interesting wall-crossing phenomena will occur as we vary the boundary datum $\fd=(g_\Sigma,\lambda,\mu)$ with $\mu$ somewhere zero.

\medskip

In fact, the $\mu=0$ case may produce a more canonical invariant for the 3-manifold $(Y,\partial Y)$. The author admits that this paper did \textbf{not} provide the most practical construction of the $\mu=0$ version of the Floer homology $\HM_*(\y)$. For instance, when $\Sigma$ is connected, the technical conditions (cf. Section \ref{Sec2}) on the boundary datum $\fd=(g_\Sigma, \lambda, \mu)$ says that $\mu=0$ and 
\begin{itemize}
	\item $[*_\Sigma \lambda] \in \im (H^1(Y;i\R)\to H^1(\Sigma;i\R))$;
	\item  $[\lambda]\neq 0\in H^1(\Sigma; i\R)$ is not proportional over $\R$ to any integral class.
\end{itemize}
The second condition forces the flat metric $g_\Sigma$ to be irrational, which is not practical for any computation. To see why this is needed for our construction, recall that the set of critical values of the superpotential $W_\lambda: M(\Sigma)\to \C$ forms a lattice in $\C$, which is the image of 
\begin{equation}\label{E0.3}
H^1(\Sigma; \Z)\to \C,\ x\mapsto 4\pi^2\bigg\langle x\cup\bigg( [\frac{*_\Sigma\lambda}{2\pi i}]-i[\frac{\lambda}{2\pi i}]\bigg), [\Sigma]\bigg\rangle.
\end{equation}

The gauge transformations coming from $Y$ corresponds to the subgroup $\im H^1(Y;\Z)\subset  H^1(\Sigma;\Z)$, which translate this lattice in the imaginary direction. A relative \spinc structure on $Y$ is then identified with a lattice point up to this vertical translation; see Figure \ref{Pic1} below. The Seiberg-Witten equations on the 3-manifold $[0,+\infty)_s\times \Sigma$ (the cylindrical end of $\hy$) is modeled on the downward gradient equation
\begin{equation}\label{E0.5}
p: [0,+\infty)_s\to M(\Sigma),\ \ps p(s)+\nabla \re (W_\lambda)(p(s))=0,
\end{equation} 
along which $\im (W_\lambda)(p(s))$ is constant. The irrationality of $g_\Sigma$ is to ensure that the critical values in \eqref{E0.3} all have distinct imaginary parts. This implies that any finite energy flowlines of $-\nabla \re W_\lambda$ on $\R_s$ must be constant, and hence the moduli space on the 3-manifold $\hy$ is compact. 

\medskip

The most geometric setup one would hope is to identify $\Sigma$ isometrically  as $(\R/2\pi\Z)_{\theta_1}\times (\R/2\pi\Z)_{\theta_2}$ by fixing a meridian and a longitude on $\Sigma=\partial Y$ with $[\frac{\lambda}{2\pi i}]=\frac{d\theta_1}{2\pi}$ dual to the meridian and $[\frac{*_\Sigma\lambda}{2\pi i}]=\frac{d\theta_2}{2\pi}$ dual to the longitude. The lattice \eqref{E0.3} then becomes 
\[
4\pi^2(\Z\oplus i\Z). 
\]

A serious non-compactness issue, however, will occur in this case for the Seiberg-Witten moduli space on the 3-manifold $\hy$. For $Y=S^1\times D^2$ and the relative \spinc structure $\bs_k$ on $Y$ with $\langle c_1(\bs_k), [D^2]\rangle =2k+1, k\geq 0$, this moduli space agrees with the vortex moduli space on $D^2$:
\begin{equation}\label{E0.4}
\Sym^k D^2\cong \C^k,
\end{equation}
thinking of $D^2$ as a surface with a cylindrical end. To get rid of the non-compactness issue, one has to modify the Seiberg-Witten equations on $[0,+\infty)_s\times \Sigma$ so that it is modeled on the perturbed equation:
\begin{equation}\label{E0.6}
p: [0,+\infty)_s\to M(\Sigma),\ \ps p(s)+\nabla \re (e^{-i\eta}W_\lambda)(p(s))=0,
\end{equation}
where $W_\lambda$ is multiplied by some $e^{-i\eta}\in S^1$ with $0<|\eta|\ll 1$. The projection of such a flowline under $W_{\lambda}$ follows the direction of $-e^{i\eta}$; see Figure \ref{Pic1} below. The subtlety arises from the fact that this $\eta$ must change for different relative \spinc structures in order to upgrade the Floer homology $\HM_*(\y)$ into an $A_\infty$-module over the Fukaya-Seidel category of \eqref{E0.1}. To verify that this $A_\infty$-module is independent of the choice of $\eta's$ up to canonical quasi-isomorphisms requires additional technical inputs, which are not addressed in the present paper. In fact, this Floer homology will depend on the sign of $\eta$: for $Y=S^1\times D^2$, $\HM_*(\y,\bs_k)$ has rank $1$ in homology grading $0$ if $\eta>0$ and $2k$ if $\eta<0$, corresponding to the homology and relative homology of the symmetric product \eqref{E0.4} respectively. 

\begin{figure}[H]
	\centering
	\begin{overpic}[scale=.8]{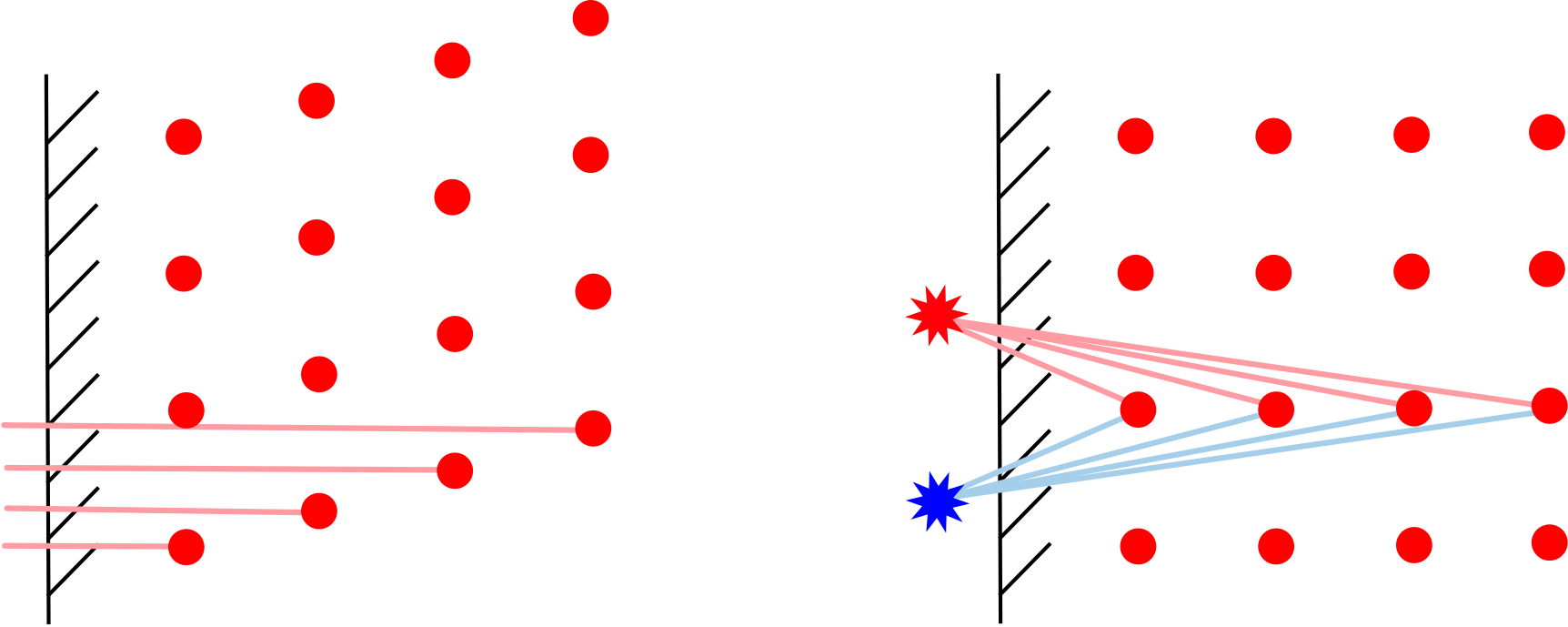}
		\put(48,19){$\eta<0$}
		\put(48,7){$\eta>0$}
	\end{overpic}	
	\caption{An irrational lattice (left) versus the standard lattice (right) $\subset \C$}
	\label{Pic1}
\end{figure}

In the finite dimensional case, this wall-crossing phenomenon (as one changes the sign of $\eta$'s) is understood via Seidel's generating theorem and has been reformulated in \cite{Wang204}; see \cite[Theorem 1.9 (3)]{Wang204}. The generalization to the Seiberg-Witten theory will appear in a companion paper in the future.

\begin{remark}By the time of preparing this paper, the framework of \cite{Wang204} was not available, so we restrict ourself here to the non-practical case (with $\eta=0$ and irrational $g_\Sigma$) for the proof.  Nevertheless, the analytic foundation laid out in this paper will apply also to the general case (with nonzero $\eta$'s) with minor changes following \cite{Wang204}. 
\end{remark}

\subsection{Relations with embedded contact homology} The irrational metric $g_\Sigma$ treated in this paper looks awkward at first sight, but a similar setup already appeared in the context of embedded contact homology. Following an earlier work of Hutchings-Sullivan \cite{HS05} on periodic Floer homology, Colin-Ghiggini-Honda \cite{CGH10} introduced a version of embedded contact homology for a contact 3-manifold $(Y,\alpha)$ with torus boundary when the boundary $\partial Y$ is invariant under the Reeb flow of the contact 1-form $\alpha$. This homology is more canonically defined in the case that $\partial Y$ is foliated by this Reeb flow with irrational slope so that $J$-holomorphic curves in the symplectization $\R_t\times Y$ will not intersect the boundary $\R_t\times \partial Y$ by a blocking lemma; cf. \cite[Section 5.2]{CGH10}. When $\partial Y$ is foliated by a circle family of closed Reeb orbits of $\alpha$, this family is then treated as a Morse-Bott singularity and gives arise to a pair of elliptic and hyperbolic Reeb orbits after perturbing $\alpha$. 

We expect that this version of embedded contact homology is isomorphic to the monopole Floer homology introduced in this paper at least in the irrational case and if $g_\Sigma$ is chosen suitably according to $\alpha$. In any case, it is a challenging task to understand the dependence of this Floer homology on the irrational slope of the Reeb flow of $\alpha$ on $\partial Y$ or on the irrational metric $g_\Sigma$.

\subsection{Relations with knot Floer homology: some speculation} The simplest examples of $(Y,\partial Y)$ arise from the knot complement for a knot $K\subset S^3$. In this case, there exists a unique \spinc structure $\s$ on $(Y,\partial Y)$, and $\Spincr(Y)$ is a torsor over 
\begin{equation}\label{E0.2}
 H^1(\partial Y; \Z)/ \im H^1( Y;\Z)\cong \Z. 
\end{equation}
A choice of $[\frac{*_\Sigma\lambda}{2\pi i}]\in \im (H^1(Y;i\R)\to H^1(\Sigma;i\R))$ specifies an isomorphism of \eqref{E0.2}. The Floer homology $\HM_*(\y)$ then carries a bi-grading of $\Z\oplus \Z$. The first grading arises from relative \spinc structures with
\[
\HM_*(\y,\bs+n)=\{0\}
\] 
when $n\gg 1$ under \eqref{E0.2}, while the second arises from the homology grading by oriented relative 2-plane fields. The group $\HM_*(\y)$ shares the same Euler characteristic and formal structures as the knot Floer homology $\HFK^-_*(S^3, K)$. But one ingredient is missing here: $\HFK^-_*(S^3, K)$ is an $\BF_2[U]$-module with $\deg U=(-1,-1)$.

\smallskip

If $\HM_*(\y)$ can be lifted into an $A_\infty$-module over the Fukaya-Seidel category $\CA$ of \eqref{E0.1}, then by passing to homology,  $\HM_*(\y)$ is a module over the algebra $H(\CA)$, which may supplement the $U$-action in our story. Using the geometric setup from Section \ref{Sec1.6}, this may provide a gauge theoretic construction of $\HFK^-_*(S^3, K)$ as an $\BF_2[U]$-module. It is hoped that the original $A_\infty$-module (the chain-level invariant) may provide refined information about the knot $K\subset S^3$. 

\smallskip
 
On the other hand, pick a meridian $m$ of $K\subset S^3$, and consider the link complement $Y_K\colonequals S^3\setminus N(K\cup m)$. By gluing the two boundary components of $Y_K$ (using a suitable orientation reversing diffeomorphism), we obtain a closed 3-manifold $\tilde{Y}_K$. We will establish an internal gluing theorem in the third paper \cite{Wang203} to identify $\HM_*(\y_K)$ with the monopole Floer homology of the closure $\tilde{Y}_K$, which is isomorphic to $\KHM_*(S^3, K)$ by \cite{KS}. This gluing result falls into Case I of the dichotomy and therefore can be easily understood. Interested readers are referred to \cite[Section 2]{Wang202} for more heuristics on this gluing formula.

\subsection{Organizations}  To define the monopole Floer homology $\HM_*(\y)$ and implement the construction sketched in Section \ref{Subsec1.2}, we address five analytic problems in this paper, as summarized below. We follow closely the plotline of  \cite{Bible}.

\smallskip

\textbf{Compactness.} To obtain the correct compactification of moduli spaces on $\R_t\times\hy$, we have to study the equations over the planar end of $\R_t\times \hy$:
\begin{equation}\label{1E4}
\HH^+\times \Sigma\colonequals \R_t\times [0,\infty)_s\times\Sigma,
\end{equation}
where $\HH^+=\R_t\times [0,+\infty)_s$ is furnished with the Euclidean metric. At this point, we make essential use of results from the first paper \cite{Wang202}. Our constraints on the boundary datum $\fd=(g_\Sigma, \lambda,\mu)$ are intended to make the following properties hold:
\begin{itemize}
\item finite energy solutions are trivial on $\C\times \Sigma$, namely, they have to be $\C$-translation invariant up to gauge \cite[Theorem 1.2 or 8.1]{Wang202}. 
\item  finite energy solutions on $\R_s\times \Sigma$ are trivial, namely, they have to be $\R_s$-translation invariant up to gauge. This result is due to Taubes; see \cite[Proposition 4.4 \& 4.7]{Taubes01} or  \cite[Proposition 10.1 \& 10.3]{Wang202} for a version that we exploit.
\end{itemize}

In Part \ref{Part2}, we first set up the strict cobordism category $\Cob_s$ and derive an energy identity for the Seiberg-Witten equations on $\hy$. Combining with the results in \cite{Wang202}, we establish the compactness theorem in Section \ref{Sec11}. Part \ref{Part2} is the counterpart of \cite[Section 4, 5, 16]{Bible}.

\smallskip

\textbf{Perturbations.} To achieve the regularity of moduli spaces, we apply a further perturbation to the Chern-Simons-Dirac functional $\CL_{\omega}$ confined in the compact region 
\[
Y=\{s\leq 0\}\subset \hy.
\]
Thus the monopole equations always take a standard form on the planar end $\HH^+\times \Sigma$. The cylinder functions that we use here are slightly different from those in \cite[Section 11]{Bible} since global gauge fixing conditions never give rise to such perturbations, in the sense of Definition \ref{D14.1}. Inspired by holonomy perturbations from instanton Floer homology, we will look at embeddings of $S^1\times D^2$ into $Y$ instead. The construction is carried out in details in Part \ref{Part4}, as the counterpart of \cite[Section 10, 11]{Bible}.

\smallskip

\textbf{Linear Analysis.} This part is more or less standard. Since $\hy$ is non-compact, the extended Hessian of $\CL_{\omega}$ on $\hy$ is a self-adjoint operator with essential spectrum, a major distinction of this theory from the case of closed 3-manifolds. Fortunately, this essential spectrum is disjoint from the origin, so it makes sense to speak of the spectrum flow when perturbed by compact operators. We will follow the setup of \cite{RS95} and summarize relevant results in Part \ref{Part5}, as the counterpart of \cite[Section 17]{Bible}.

\smallskip

\textbf{Unique Continuation.} As our perturbation space is not large enough, we need a stronger unique continuation property to achieve transversality. The non-linear version is stated as follows: if two solutions $\gamma_1,\gamma_2$ to the perturbed monopole equations on $\R_t\times \hy$ are gauge equivalent on the slice
\[
\{0\}\times Y \text{ with } Y=\{s\leq 0\}\subset\hy,
\]
then they are gauge equivalent on the whose space. The proof relies on the Carleman estimates from \cite{K95}. Part \ref{Part6} is the counterpart of \cite[Section 7, 12, 15]{Bible}. The proof of transversality is accomplished in Section \ref{Sec21}. 

\smallskip

\textbf{Orientations.} To work with a Novikov ring $\NR$ with integral coefficients, we have to orient moduli spaces coherently. For closed 3-manifolds, this is done by first looking at reducible configurations in the blown-up space. See \cite[Section 20]{Bible} for details. In our case, we have to adopt a different approach as configurations are never reducible and the action of the gauge group is free. 

Our situation here is similar to that of \cite{KM97}, where a Riemannian 4-manifold with a conic end is considered, so one may follow the argument of \cite[Appendix]{KM97} to orient moduli spaces. The key ingredients are relative determinant line bundles or \textbf{relative orientations} that compare two Fredholm operators. We adopt a more direct approach to this notion without referring to either $K$-theory or the proof of the index theorem \cite{AS68}. This combinatoric construction is based on a simple proof of the excision principle due to Mrowka and is carried out in Appendix \ref{AppD}.

Part \ref{Part7} is the counterpart of \cite[Section 20, 22, 28]{Bible}. The canonical grading of $\HM_*(\y)$ by homotopy classes of oriented relative 2-plane fields is introduced in Section \ref{Sec28}. We will first define the monopole Floer homology of $\y$ using $\BF_2$-coefficients in Section \ref{Sec27}, and then address the orientation issue in Section \ref{Sec29}.

\smallskip

Most results and proofs in the present paper are intended to generalize the ones in \cite{Bible}. Readers are assumed to have a reasonable understanding of the monopole Floer homology of closed 3-manifolds, at least in the case that $c_1(\s)$ is non-torsion. 

\begin{remark} The following results, however, are not covered in this paper:
	\begin{itemize}
\item the exponential decay of solutions in the time-direction, cf. \cite[Section 13]{Bible};
\item the gluing theorem, cf. \cite[Section 18, 19]{Bible};
	\end{itemize}
since they will follow immediately from \cite{Bible}, once we set up the rest of the theory correctly. 
\end{remark}

\textbf{Acknowledgments.} The author would like to thank his advisor Tom Mrowka for introducing him to this subject, for suggesting the present problem, and for his patient help and constant encouragement throughout this project. The author would also like to thank Micheal Hutchings, Chris Gerig, Siqi He, Lante Ma, Jianfeng Lin, Matt Stoffregen, Guangbo Xu, Yuan Yao and Boyu Zhang for helpful conversations. This work is partially supported by NSF through his thesis advisor's award DMS-1808794.

\part{Three-Manifolds with Toroidal Boundary}\label{Part2}

In this part, we introduce the strict cobordism category $\Cob_s$ of compact oriented 3-manifolds with toroidal boundary and study the Seiberg-Witten equations on the completion of these manifolds. Throughout this paper, we use $(\Sigma,g_\Sigma)$ to denote a disjoint union of 2-tori equipped with a flat metric $g_\Sigma$. Although most results in the first paper \cite{Wang202} do not require the flatness of $g_\Sigma$, it is assumed here in order to apply a theorem of Taubes; see Theorem \ref{T2.6} below.

For any compact oriented 3-manifold $(Y,\partial Y)$ with toroidal boundary $\partial Y\cong \Sigma$, we attach a cylindrical end to obtain a complete Riemannian 3-manifold 
\[
\hy\colonequals Y\ \bigcup_\Sigma\ [0,\infty)_s\times\Sigma.
\] 
Any strict cobordism between such manifolds $Y_1$ and $Y_2$
\[
(X, [-1,1]_t\times \Sigma): (Y_1, \partial Y_1)\to (Y_2, \partial Y_2),
\]
is then associated with a complete Riemannian manifold $\CX$ with a planar end:
\begin{align*}
\CX&\colonequals (-\infty,-1]_t\times \hy_1\ \bigcup\ \hx\ \bigcup\  [1,+\infty)_t \times \hy_2 \text{ with } \\
\hx&\colonequals X\ \bigcup\ [-1,1]_t\times [0,\infty)_s\times\Sigma. 
\end{align*}
The manifold $\hx$ is obtained from $X$ by completing only in the spatial direction, which gives a cobordism from $Y_1$ from $Y_2$. 

\smallskip

\autoref{Part2} is devoted to the proof of the Compactness Theorem \ref{T11.1} for the Seiberg-Witten moduli spaces on $\R_t\times \hy$ and $\CX$, which is essential to the construction of Floer homology and cobordism maps. The proof in turn relies on three important properties for the perturbed Seiberg-Witten equations:
\begin{enumerate}[label=(K\arabic*)]
	\item\label{K1} there is a uniform upper bound on the analytic energy;
	\item\label{K2} finite energy solutions are trivial on $\C_z\times \Sigma$; in other words, they are gauge equivalent to the unique $\C_z$-translation invariant solution on $\C_z\times\Sigma$; see Theorem \ref{T2.4} below. 
	\item\label{K3}  finite energy solutions on $\R_s\times \Sigma$ are trivial; in other words, they are gauge equivalent to the unique $\R_s$-translation invariant solution on $\R_s\times\Sigma$. This result is due to Taubes and requires $g_\Sigma$ to be flat; see Theorem \ref{T2.6} below.
\end{enumerate} 

For these properties to hold, a suitable closed 2-form is needed to perturb the Seiberg-Witten equations. In Section \ref{Sec5}, we summarize a list of results from the first paper \cite{Wang202} and spell out the requirement for this 2-form, which will then ensure \ref{K2} and \ref{K3}. 

\smallskip

 The rest of Part \ref{Part2} is organized as follows. In Section \ref{Sec2}, we define the strict cobordism category and set up the configuration spaces on $\hy$ and $\hx$ respectively. In Section \ref{Sec10}, we prove that the quotient configuration space in our case is still Hausdorff and remains a Hilbert manifold after Sobolev completions. 
 
 \smallskip
 
The closed 2-form $\omega$ on $Y$ is chosen such that the perturbed Seiberg-Witten equations on the cylindrical end $[0,+\infty)_s\times\Sigma$ is modeled on the equation \eqref{E0.5}, and therefore the energy equation can be derived easily. This is carried out in detail in Section \ref{Sec12} and gives the first property \ref{K1}. We remark that in order to derive the similar energy equation for the perturbed version \eqref{E0.6} in Section \ref{Sec1.6}, one must follow the computation in \cite[Section 2]{Wang204}, which will complicate the story significantly. 

\smallskip

Finally, the Compactness Theorem \ref{T11.1} is stated and proved in Section \ref{Sec11}. The analogue of Theorem \ref{T11.1} in the case of Landau-Ginzburg models is \cite[Proposition 3.5]{Wang204}, whose proof has followed the same argument and is technically easier. 


\section{Results from the First Paper}\label{Sec5}

In this section, we summarize a list of results from the first paper \cite{Wang202}, which are concerned with the geometry of the planar end of $\CX$. They are used in an essential way in the proof of the Compactness Theorem \ref{T11.1} in Section \ref{Sec11}. We focus on the case that $\Sigma$ is connected for their statements.

\subsection{Review} Let $X$ be any smooth oriented 4-manifold equipped with a metric. A \spinc structure $\s=(S_X,\rho_4)$ on $X$ is a pair consisting of a Hermitian vector bundle  $S_X=S^+\oplus S^-$ with $\rank S^\pm =2$, called the spin bundle, and a smooth bundle map $\rho_4: T^*X\to \End(S_X)$ that defines the Clifford multiplication. A Seiberg-Witten configuration $\gamma=(A,\Phi)$ on $(X,\s)$ then consists of a smooth \spinc connection $A$ and a smooth section $\Phi$ of $S^+$. The space of all such configurations, which we denote by $\SC(X,\s)$, is acted on by the gauge group $\CG(X)=\map (X,S^1)$ by the formula
\[
u\in\CG(X): \SC(X,\s)\to \SC(X,\s),\ (A,\Phi)\mapsto (A-u^{-1}du, u\Phi).
\]

The Seiberg-Witten equations on the \spinc manifold $(X,\s)$ are invariant under this gauge action and can be perturbed a closed 2-form $\omega\in \Omega^2(X;i\R)$. Let $A^t$ denote the induced connection of $A$ on $\Lambda^{0,1} S^+$ and $\omega^+$ the self-dual component of $\omega$. Then the perturbed equations take the form
\begin{equation}\label{SWEQ}
\left\{
\begin{array}{r}
\half \rho_4(F_{A^t}^+)-(\Phi\Phi^*)_0-\rho_4(\omega^+)=0,\\
D_A^+\Phi=0,
\end{array}
\right.
\end{equation}
where $D_A^+: \Gamma(S^+)\to \Gamma(S^-)$ is the Dirac operator, and $(\Phi\Phi^*)_0=\Phi\Phi^*-\half |\Phi|^2\otimes\Id_{S^+}$ denotes the traceless part of the endomorphism $\Phi\Phi^*:S^+\to S^+$, 

\smallskip

Let $\Sigma=(\T^2, g_\Sigma)$ be a 2-torus equipped with a \textbf{flat} metric. In the case that $X=\C_z\times \Sigma$ a K\"{a}hler manifold, furnished with the product metric and oriented by the complex structure, the equations (\ref{SWEQ}) can be understood more explicitly as follows. 

\smallskip

Let $dvol_\C$ (resp. $dvol_\Sigma$) denote the volume form of $\C$ (resp.  $\Sigma$). The symplectic form on $X$ is given by the sum $\omega_{sym}\colonequals dvol_\C+dvol_\Sigma$. The spin bundle $S^+$ is acted on by $\rho_4(\omega_{sym})$ and splits as $L^+\oplus L^-$ such that
\[
\rho_4(\omega_{sym})=\begin{pmatrix}
-2i & 0\\
0 & 2i
\end{pmatrix},
\]
i.e., $L^\pm$ is the $\mp 2i$ eigen-subbundle of $\rho_4(\omega_{sym})$. Write $\Phi=(\Phi_+,\Phi_-)$ with $\Phi_\pm\in \Gamma(X,L^\pm)$.

We focus on the \spinc structure on $\C_z\times \Sigma$ with 
\[
c_1(S^+)[\Sigma]=0,
\]
so both $L^+$ and $L^-$ are topologically trivial.

Let $z=t+is$ be the coordinate function on $\C_z$. Then the Clifford multiplication $\rho_4: T^*X\to \Hom(S,S)$ can be constructed by setting: 
\begin{eqnarray*}
	\rho_4(dt)=\begin{pmatrix}
		0& -\Id\\
		\Id & 0\\
	\end{pmatrix},\ 
	\rho_4(ds)=\begin{pmatrix}
		0& \sigma_1\\
		\sigma_1 & 0\\
	\end{pmatrix} :\ S^+\oplus S^-\to S^+\oplus S^-,
\end{eqnarray*}
where $\sigma_1=\begin{pmatrix}
i & 0\\
0 & -i\\
\end{pmatrix}: S^+=L^+\oplus L^-\to L^+\oplus L^-$ is the first Pauli matrix. If one identifies $L^+\cong \C$ and $L^-\cong \Lambda^{0,1}\Sigma$, then for any $x\in \Sigma$ and $w\in T_x\Sigma$, $\rho_4(w)$ is determined by the relation
\[
\rho_3(w)\colonequals\rho_4(dt)^{-1}\cdot\rho_4(w)= \begin{pmatrix}
0 & -\iota(\sqrt{2}w^{0,1})\ \cdot \\
\sqrt{2}w^{0,1}\otimes \cdot  & 0\\
\end{pmatrix}: S^+\to S^+,
\]

\medskip

\begin{remark} The Clifford multiplications in dimension $3$ and $4$ are denoted respectively by $\rho_3$ and $\rho_4$ in this paper. Identify $\C_z$ as $\R_t\times \R_s$, then they are related by 
	\[
	\rho_3(w)=\rho_4(dt)^{-1}\cdot\rho_4(w): S^+\to S^+,
	\]
	for any $w\in T^*(\R_s\times \Sigma)$. In particular, $\rho_3(ds)=\sigma_1$. 
\end{remark}
The symplectic form $\omega_{sym}$ is parallel, and so is the splitting $S^+=L^+\oplus L^-$. As a result, any \spinc connection $A$ then takes a diagonal form
\[
\nabla_A=\begin{pmatrix}
\nabla_{A_+} & 0 \\
0 & \nabla_{A_-}\\
\end{pmatrix}.
\]

We also regard $L^+=\C$ and $L^-=\Lambda^{0,1}\Sigma$ as bundles over $\Sigma$. Let $\cB_*=(d,\nabla^{LC})$ denote the reference connection on $\C\oplus  \Lambda^{0,1}\Sigma\to\Sigma$. A reference connection $A_*$ on $S^+\to X$ is then obtained by setting
\begin{equation}\label{E2.2}
\nabla_{A_*}=dt\otimes \Pt+ds\otimes \Ps+ \nabla_{\cB_*}.
\end{equation}

One can easily check that $A_*$ is a \spinc connection. Any \spinc connection on $X$ then takes the form $A=A_*+a$ for an imaginary valued 1-form $a\in \Omega^1(X;i\R)$. Their curvature tensors are related by
\[
F_A=F_{A_*}+d_Xa\otimes \Id_S,\text{ so }F_{A^t}=F_{A^t_*}+2d_Xa. 
\]

\subsection{Point-Like Solutions} Now we recall the list of results from \cite{Wang202} which we shall use in the proof of Theorem \ref{T11.1}. Let $X=\C_z\times\Sigma$. For our applications, the 2-form perturbation that appears in the Seiberg-Witten equations \eqref{SWEQ} will take the special form
\[
\omega\colonequals \mu+ds\wedge \lambda
\]
where 
\begin{itemize}
	\item $\lambda\in \Omega_h^1(\Sigma; i\R)$ is an imaginary-valued harmonic 1-form on $\Sigma$, and 
	\item $\mu\in \Omega_h^2(\Sigma; i\R)$ is an imaginary-valued harmonic 2-form.
\end{itemize}

Since the metric $g_\Sigma$ is flat, the 2-form $\omega$ is parallel on $X=\C_z\times\Sigma$. The pair $(\lambda, \mu)$ is subject to further conditions in order to deduce the properties \ref{K2} and \ref{K3} from the first paper \cite{Wang202}.

\begin{assumpt}\label{A1.2} The pair $(\lambda,\mu)\in \Omega_h^1(\Sigma; i\R)\times  \Omega_h^2(\Sigma; i\R)$ is said to be admissible if $\lambda\neq 0$, and if one of the following two conditions holds:
	\begin{enumerate}[label=$(\mathrm{W}\arabic*)$]
\item\label{VV1}$\mu\neq 0$;
\item $\lambda$ is not the real multiple of any integral class in $H^1(\Sigma; i\Z)\subset \Omega_h^1(\Sigma; i\R)$. 
	\end{enumerate}

The pair $(\lambda,\mu)$ is always assumed to be admissible in this paper.
\end{assumpt}

For the rest of this section, we state the main results of \cite{Wang202} and explain why Assumption \ref{A1.2} is crucial. First, recall the definition of the local energy functional associated to any configuration $\gamma=(A,\Phi)$ on $X$.

 \begin{definition}[{\cite[Definition 8.3]{Wang202}}]\label{D1.3} For any region $\Omega\subset \C_z$ and any configuration $\gamma=(A,\Phi)$ on $\C_z\times\Sigma$, define \textit{the local energy functional} of $\gamma$ over $\Omega$ as 
	\begin{equation*}
	\E_{an}(A,\Phi; \Omega)\colonequals\int_{\Omega}\int_{\Sigma} \frac{1}{4}|F_{A^t}|^2+|\nabla_A\Phi|^2+|(\Phi\Phi^*)_0+\rho_4(\omega^+)|^2. \qedhere
	\end{equation*}
\end{definition}

A solution $\gamma$ to the Seiberg-Witten equations \eqref{SWEQ} is called \textbf{point-like} if its global energy $\E_{an}(\gamma; \C_z)$ is finite. There is a constant solution $\gamma_*=(A_*,\Phi_*)$ on the 4-manifold $\C_z\times\Sigma$ with $\E_{an}(\gamma_*; \C_z)=0$. The \spinc connection of $\gamma_*$ is given by the formula \eqref{E2.2}, while the spinor $\Phi_*$ takes the form
\[
(r_+,\sqrt{2}\lambda^{0,1} r_-)\in \Gamma(X,\C\oplus\Lambda^{0,1}\Sigma),
\]
where $r_\pm$ are real numbers subject to the relations: 
\[
r_+r_-=1 \text{ and } \frac{i}{2}(|r_+|^2-|r_-|^2|\lambda|^2)=-*_\Sigma\mu. 
\]
See \cite[Section 7.2]{Wang202}. In particular, $\Phi_*$ is parallel with respect to $A_*$. 

\smallskip

One important consequence of Assumption \ref{A1.2} is that $\gamma_*$ will be the only point-like solution on $X$ up to gauge. For practical reasons, we give a more general statement. Let $I_n=[n-2,n+2]_t\subset \R_t$. Choose a compact domain $\Omega_0\subset I_0\times [0,\infty)_s$ with a smooth boundary such that 
\begin{equation}\label{E2.3}
I_0\times [1,3]_s\subset \Omega_0\subset I_0\times [0,4]_s.
\end{equation}
For any  $n\in\Z$ and $S\in \R_s$, define $\Omega_{n,S}\subset \C$ to be the translated domain
\begin{equation}\label{E2.4}
\Omega_{n,S}\colonequals \{(t,s):(t-n,s-S)\in \Omega_0\}\subset  I_n\times [0,\infty)_s.
\end{equation}

\begin{theorem}[{\cite[Proposition 8.3]{Wang202}}]\label{T2.4} If $\lambda\neq 0$, then there exists a constant $\epsilon_*=\epsilon_*(g_\Sigma,\lambda,\mu)>0$ with the following significance.  If a solution $\gamma=(A,\Phi)$ to \eqref{SWEQ} on $X=\C_z\times\Sigma$ satisfies the estimate
	\[
	\E_{an}(A, \Phi; \Omega_{n,S})<\epsilon_*
	\]
for all $|n|+|S|\gg 1$, then $\gamma$ is gauge equivalent to the constant configuration $\gamma_*=(A_*,\Phi_*)$. In particular, a point-like solution on $X$ is always trivial. 
\end{theorem}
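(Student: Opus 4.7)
The plan is to proceed in two stages: first, use the local-energy hypothesis to show that $\gamma$ is $C^k$-close to the reference configuration $\gamma_*$ at infinity in $\C$; and second, upgrade this asymptotic equivalence to a global gauge-equivalence by a unique-continuation argument that leverages the Morse-nondegeneracy of $\gamma_*$ guaranteed by Assumption \ref{A1.2}.

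\smallskip

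For the first stage, I would apply $\epsilon$-regularity for the perturbed Seiberg-Witten equations on each region $\Omega_{n,S}$ on which the local energy lies below $\epsilon_*$: after placing $\gamma$ in a local Coulomb gauge relative to $A_*$, the smallness of $\E_{an}(\gamma;\Omega_{n,S})$ promotes, via the standard elliptic bootstrap for \eqref{SWEQ}, to $C^k$ control on a slightly shrunk interior region (with $\epsilon_*$ chosen small at the end of the argument). For any divergent sequence $(n_k, S_k)$ with $|n_k|+|S_k|\to\infty$, I would then form the translated configurations $\gamma_k(t,s,x)\colonequals u_k\cdot\gamma(t+n_k,s+S_k,x)$ and pass to a subsequential $C^k_{loc}$-limit $\gamma_\infty$ on $\R_t\times\R_s\times\Sigma$, which has vanishing local energy. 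A slice-wise comparison forces $\gamma_\infty$ to be $t$-invariant, so it is pulled back from a finite-energy configuration on $\R_s\times\Sigma$; Theorem \ref{T2.6} then forces it to be further $\R_s$-invariant, identifying it with an $\R$-invariant critical orbit of $W_\lambda$ on $M(\Sigma)/\CG(\Sigma)$. Assumption \ref{A1.2} is now essential: either condition \ref{VV1} or its alternative rules out the parallel-spinor/flat-connection orbits with nontrivial holonomy that would otherwise compete with $\gamma_*$, leaving the $\CG(\Sigma)$-orbit of $\gamma_*$ as the unique such critical orbit (a classification already established in \cite{Wang202}). Hence, in a suitable gauge, $\gamma$ is $C^k$-close to $\gamma_*$ on every $\Omega_{n,S}$ with $|n|+|S|\gg 1$.

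\smallskip

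The second stage is where I expect the real difficulty. Fixing a global gauge in which the decay from stage one is preserved, set $a\colonequals A - A_*$ and $\phi\colonequals \Phi - \Phi_*$. Using the parallelism of $(A_*,\Phi_*)$ together with the K\"ahler Weitzenb\"ock formula, and substituting the perturbed Seiberg-Witten equations, I would derive a differential inequality of the schematic form
\[
\Delta\theta \,+\, c\,\theta \,\le\, C\,\theta^{3/2}, \qquad \theta \colonequals |a|^2 + |\phi|^2,
\]
where the linear coefficient $c > 0$ reflects the strict positivity of the Hessian of $W_\lambda$ at $\gamma_*$ transverse to the gauge action --- another consequence of Assumption \ref{A1.2}. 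Combined with the asymptotic decay $\theta\to 0$ from stage one, a maximum-principle argument on $\C\times\Sigma$, or a Carleman estimate tailored to its noncompact geometry, should then force $\theta\equiv 0$, concluding that $\gamma$ is gauge-equivalent to $\gamma_*$. The main obstacle lies in balancing the boundary/asymptotic terms in this noncompact setting: one must verify that the decay extracted in stage one is strong enough to absorb the $\theta^{3/2}$ nonlinearity before the positive linear term $c\,\theta$ becomes effective.
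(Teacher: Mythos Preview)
This paper does not prove Theorem~\ref{T2.4}; the result is imported from \cite[Proposition 8.3]{Wang202}, where the argument runs through the identification of \eqref{SWEQ} on $\C\times\Sigma$ with the gauged Witten equations for the superpotential $W_\lambda$. So there is no in-paper proof to compare against directly, but your outline can still be assessed against what that structure provides.

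Stage 1 contains an unjustified step: the translated limit $\gamma_\infty$ inherits only $\E_{an}(\gamma_\infty;\Omega_{n,S})\le\epsilon_*$ for every $(n,S)$, not \emph{vanishing} local energy, so the slice-wise reduction to $t$-invariance and Theorem~\ref{T2.6} does not follow as written. This stage is reparable by a different mechanism: global smallness plus $\epsilon$-regularity makes $\gamma_\infty$ uniformly $C^k$-close to $\gamma_*$, and then the invertibility of the linearization $\widehat{D}_{\kappa_*}$ at $\gamma_*$ (established in \cite[Proposition 7.10]{Wang202} and invoked in Section~\ref{Sec18}) forces $\gamma_\infty=\gamma_*$. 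So the intended conclusion --- that $\gamma$ is asymptotic to $\gamma_*$ at infinity in $\C$ --- is recoverable.

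Stage 2, however, has a genuine gap that the tools you name cannot close. The inequality $\Delta\theta + c\,\theta \le C\theta^{3/2}$ is inherently perturbative: at an interior maximum it yields only the dichotomy $\theta_{\max}=0$ or $\theta_{\max}\ge(c/C)^2$, and the hypothesis gives no bound on $\theta$ over the central region of $\C$ where the local energy may be large. A Carleman estimate faces the same obstruction --- there is no interior smallness to feed it. What is missing is the global energy identity coming from the gradient-flow structure (Proposition~\ref{Energy10.2}, specialized to $\hy=\R_s\times\Sigma$): once Stage 1 gives $\cgamma(t)\to\gamma_*$ as $t\to\pm\infty$, the topological energy $\E_{top}=2\CL_\omega(\cgamma(-\infty))-2\CL_\omega(\cgamma(+\infty))$ vanishes, hence $\E_{an}(\gamma)=0$, hence $\gamma$ is $t$-invariant, and Theorem~\ref{T2.6} finishes. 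Your maximum-principle route does not see this structure and cannot replace it.
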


We are also interested in solutions on the planar end $\HH^+\times\Sigma$ with $\HH^+\colonequals\R_t\times [0,+\infty)_s\subset \C_z$. The next theorem says that if a solution $\gamma$ on $\HH^+\times\Sigma$ is close to $\gamma_*$ everywhere, then $\gamma$ converges to $\gamma_*$ exponentially in the spatial direction: 

\begin{theorem}[{\cite[Theorem 9.1]{Wang202}}]\label{T2.5} If $\lambda\neq 0$, then there exists constants $\epsilon,\zeta>0$ depending only the datum $(g_\Sigma, \lambda\neq 0, \mu)$ with the following significance. Suppose that a configuration $\gamma=(A,\Phi)$ solves the Seiberg-Witten equations \eqref{SWEQ}  on $\HH^+\times\Sigma$, and $\E_{an}(\gamma; \Omega_{n,S})<\epsilon$ for all $n\in \Z$ and $S\geq 0$. Then 
	\[
	\E_{an}(\gamma;\Omega_{n,S})<e^{-\zeta S}. 
	\] 
\end{theorem}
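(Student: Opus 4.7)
The plan is to view the 4-dimensional Seiberg-Witten equations on $\HH^2_+\times\Sigma=\R_t\times[0,\infty)_s\times\Sigma$ as the formal downward gradient flow in the coordinate $s$ (after imposing temporal gauge in the $s$-direction) of a perturbed Chern-Simons-Dirac functional $\CL^{(3)}$ on the non-compact 3-manifold $N\colonequals\R_t\times\Sigma$. The constant configuration $\gamma_*=(A_*,\Phi_*)$ is a critical point of $\CL^{(3)}$, and by the uniqueness statement in \cite[Proposition 10.1 \& 10.3]{Wang202} (property \ref{K3} in the introduction to Part \ref{Part2}) it is the \emph{only} finite-energy critical point up to gauge, and is non-degenerate transverse to the gauge orbit. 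The desired exponential decay will then follow by combining this non-degeneracy with the smallness of local energy, via a standard ODE argument on a tail energy $f(S):=\E_{an}(\gamma;\{s\geq S\})$.

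The execution proceeds in three steps. First, a \emph{uniform $C^k$-closeness to $\gamma_*$}: the hypothesis $\E_{an}(\gamma;\Omega_{n,S})<\epsilon$ on every translated block precludes local concentration, and a contradiction/rescaling argument combined with Theorem \ref{T2.4} rules out bubbling; standard elliptic bootstrapping for the Seiberg-Witten system in a Coulomb gauge relative to $\gamma_*$ then upgrades $L^2$-smallness to $C^k$-smallness of $u^{-1}\cdot\gamma-\gamma_*$ on each block, for a continuous gauge transformation $u$ that is uniform across the cylinder. Second, an \emph{energy identity and Palais-Smale-type inequality}: a slicewise manipulation of the type developed in Section \ref{Sec12} rewrites
$$\E_{an}\bigl(\gamma;[S_1,S_2]_s\times\R_t\times\Sigma\bigr)=\CL^{(3)}(\gamma|_{s=S_1})-\CL^{(3)}(\gamma|_{s=S_2}),$$
and the non-degeneracy of the extended Hessian at $\gamma_*$, combined with the closeness from Step 1, yields
$$|\CL^{(3)}(\gamma|_{s=S})-\CL^{(3)}(\gamma_*)|\leq C\,\|\grad\CL^{(3)}(\gamma|_{s=S})\|_{L^2(N)}^2.$$
Third, the \emph{ODE argument}: Theorem \ref{T2.4} forces any finite-energy extension to $\C\times\Sigma$ to be trivial, which together with the uniform smallness implies $\lim_{S\to\infty}\CL^{(3)}(\gamma|_{s=S})=\CL^{(3)}(\gamma_*)$, so $f(S)=\CL^{(3)}(\gamma|_{s=S})-\CL^{(3)}(\gamma_*)$; then $-f'(S)=\|\partial_s\gamma(S)\|_{L^2(N)}^2=\|\grad\CL^{(3)}(\gamma|_{s=S})\|_{L^2(N)}^2\geq C^{-1}f(S)$, and Gr\"onwall's inequality yields $f(S)\leq f(0)e^{-S/C}$, whence $\E_{an}(\gamma;\Omega_{n,S})\leq f(S)$ gives the claim with $\zeta=1/C$ once $\epsilon$ is small enough that the prefactor $f(0)$ is absorbed.

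The main obstacle is the Palais-Smale inequality in Step 2 on the non-compact slice $N=\R_t\times\Sigma$, where the extended Hessian at $\gamma_*$ has continuous spectrum rather than a bare spectral gap. The rescue is a Fourier decomposition in the $t$-variable reducing matters to a family of operators on $\Sigma$, combined with the observation that under Assumption \ref{A1.2} the zero-mode family is itself coercive by the very same mechanism that underlies \cite[Theorem 1.2]{Wang202} and Taubes' trivialization \cite[Proposition 10.1]{Wang202}. A secondary subtlety is to choose the local gauge fixings in Step 1 so that they patch into a single continuous transformation uniformly across the end; this is handled by imposing a global Coulomb condition relative to $\gamma_*$ within the small-energy regime.
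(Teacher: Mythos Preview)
The paper does not prove this theorem; it is quoted from the companion paper \cite[Theorem 9.1]{Wang202}, so there is no proof here to compare against.

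Your proposal has a genuine gap. You define the tail energy $f(S) = \E_{an}(\gamma; \{s \geq S\})$ and a Chern--Simons--Dirac functional $\CL^{(3)}$ on the slice $N = \R_t \times \Sigma$, but neither quantity is guaranteed to be finite under the stated hypotheses. The assumption is only that $\E_{an}(\gamma; \Omega_{n,S}) < \epsilon$ on each \emph{bounded} block $\Omega_{n,S}$; summing over $n \in \Z$ can give $f(S) = +\infty$. Likewise, even if Step 1 succeeds and yields uniform $C^k$-closeness $|u^{-1}\gamma - \gamma_*| < \delta$ on every block, this is only an $L^\infty$ bound along $\R_t$ and does not force the difference into $L^2(\R_t \times \Sigma)$, so the relative functional $\CL^{(3)}(\gamma|_{s=S}) - \CL^{(3)}(\gamma_*)$ may diverge. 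Your energy identity in Step 2 then equates two possibly infinite quantities, and the ODE in Step 3 needs $f(S) < \infty$ even to start.

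You flag the Palais--Smale inequality on the non-compact $N$ as the main obstacle, but the more basic issue is that the objects feeding into your ODE are not well-defined. A repair would have to either localize to a fixed finite $t$-window (introducing boundary terms that must be controlled separately) or establish decay in $t$ first, and neither is carried out here. Note also that your appeal to Theorem \ref{T2.4} in Step 3 is misplaced: that theorem concerns solutions on all of $\C \times \Sigma$, whereas $\gamma$ is only defined on $\HH^2_+ \times \Sigma$ and is not assumed to extend.
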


This theorem will be used in Section \ref{Sec11} to control the $L^2_k$-norm of  $\gamma-\gamma_*$ on the region $\Omega_{n,S}$, $k\geq 2$ if $\gamma$ lies in the Coulomb slice of $\gamma_*$; see Theorem \ref{11.5}.

\subsection{Solutions on $\R_s\times \Sigma$} Now we study the dimensional reduction of \eqref{SWEQ}, the Seiberg-Witten equations defined on the 3-manifold $\R_s\times \Sigma$:
\begin{equation}\label{3DDSWEQ}
\left\{
\begin{array}{r}
\half \rho_3(F_{B^t})-(\Psi\Psi^*)_0-\rho_3(\omega)=0,\\
D_B\Psi=0.
\end{array}
\right.
\end{equation}
with $\omega=\mu+ds\wedge \lambda$. The pair $\cgamma=(B,\Psi)$ is a configuration on the 3-manifold. To go back to the 4-dimensional case, one takes
\[
A=dt\otimes \Pt+B,\ \Phi(t)=\Psi \text{ on } \R_t\times \R_s\times \Sigma,
\]
so $\E_{an}(A,\Phi; [0,1]_t\times \R_s)$ comes down to the energy of $(B,\Psi)$:
\[
\E_{an}(B,\Psi; \R_s)=\int_{\R_s\times \Sigma} \frac{1}{4}|F_{B^t}|^2+|\nabla_B\Psi|^2+|(\Psi\Psi^*)_0+\rho_3(\omega)|^2.
\]

The trivial solution $\cgamma_*=(B_*,\Psi_*)$ of \eqref{3DDSWEQ} can be then written as 
\begin{equation}\label{E2.6}
B_*=ds\otimes \Ps+\begin{pmatrix}
d & 0\\
0 & \nabla^{LC}
\end{pmatrix}, \Psi_*=(r_+,\sqrt{2}\lambda^{0,1} r_-),
\end{equation}
which has $\E_{an}(\cgamma_*; \R_s)=0$. In fact, this is the only solution with finite energy if Assumption \ref{A1.2} holds.
\begin{theorem}[{\cite[Proposition 4.4 \& 4.7]{Taubes01}}]\label{T2.6} If $g_\Sigma$ is flat and Assumption \ref{A1.2} holds, then any solution $\cgamma$ of $(\ref{3DDSWEQ})$ with $\E_{an}(\cgamma; \R_s)<\infty$ is gauge equivalent to the unqiue $\R_s$-translation solution $\cgamma_*$. 
\end{theorem}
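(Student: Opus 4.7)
The plan is to view the 3-dimensional Seiberg--Witten equations on $\R_s\times\Sigma$ as the downward $L^2$-gradient flow of a perturbed Chern--Simons--Dirac functional $\CL_{\Sigma,\lambda,\mu}$ on the configuration space over the flat torus $\Sigma$. Writing $\cgamma=(B,\Psi)$ in temporal gauge so that $B$ has no $ds$-component, the gradient-flow interpretation together with the hypothesis $\E_{an}(\cgamma;\R_s)<\infty$ gives $\int_{\R}\|\grad\CL\|_{L^2(\Sigma)}^2\,ds<\infty$, which forces $\cgamma(s)$ to approach critical points of $\CL_{\Sigma,\lambda,\mu}$ as $s\to\pm\infty$ along subsequences.

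The heart of the argument is the classification, under Assumption \ref{A1.2}, of critical points of $\CL_{\Sigma,\lambda,\mu}$ modulo gauge. Critical points solve a 2-dimensional vortex-like system on $\Sigma$ twisted by $(\lambda,\mu)$. Because $g_\Sigma$ is flat and $\lambda$ is parallel, a Weitzenb\"ock identity combined with the maximum principle produces a priori pointwise bounds on $|\Psi|$; one then rules out zeros of $\Psi$ using either the non-vanishing of $\mu$ in case \ref{VV1}, or, in the alternative case, the non-integrality of $[\lambda]$, which obstructs any non-trivial winding of the phase of $\Psi$ around the torus. The remaining nowhere-vanishing solutions are diagonal with respect to the splitting $S^+=L^+\oplus L^-$ and have constant moduli determined by \eqref{E2.6}, so they are all gauge equivalent to the restriction of $\cgamma_*$ to $\Sigma$.

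Once uniqueness of the critical orbit is established, one derives the energy identity
\[
\E_{an}\bigl(\cgamma;[a,b]\times\Sigma\bigr)\ =\ \CL_{\Sigma,\lambda,\mu}\bigl(\cgamma(a)\bigr)\ -\ \CL_{\Sigma,\lambda,\mu}\bigl(\cgamma(b)\bigr),
\]
with no topological correction because the \spinc assumption $c_1(S^+)[\Sigma]=0$ kills the only potentially non-trivial term, and no multivaluedness of $\CL$ because $H^1(\Sigma;\Z)$ is pulled back to $\R_s\times\Sigma$ through the trivializable first factor. Sending $a\to-\infty$ and $b\to+\infty$, and using that both limiting orbits coincide with the unique critical orbit of $\cgamma_*$, yields $\E_{an}(\cgamma;\R_s)=0$. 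Vanishing energy combined with the Seiberg--Witten equations forces $\nabla_B\Psi=0$ and pins $F_{B^t}$ to the $s$-independent value dictated by $\cgamma_*$; in temporal gauge this gives $\partial_s\Psi=\partial_s B=0$, so after a single global gauge transformation $\cgamma=\cgamma_*$.

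The main obstacle is the vortex classification step: this is precisely where Assumption \ref{A1.2} is indispensable, and the two cases (non-vanishing $\mu$ versus irrational $[\lambda]$) must be handled by separate arguments. Controlling possible zero loci of $\Psi$ on a torus in the absence of positive scalar curvature, and ruling out line bundles that are topologically trivial but carry a non-trivial holonomy compatible with the twisted equations, is the technical content of Taubes' Propositions 4.4 and 4.7 in \cite{Taubes01}, and is the step that requires by far the most care.
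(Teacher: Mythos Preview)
The paper gives no proof of its own here: Theorem~\ref{T2.6} is quoted from \cite[Propositions 4.4 \& 4.7]{Taubes01}, with a pointer to \cite[Section 10]{Wang202} for discussion. Your gradient-flow outline has the right overall shape, but there is a real gap in the step where you conclude $\E_{an}(\cgamma;\R_s)=0$.

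The two-dimensional functional you call $\CL_{\Sigma,\lambda,\mu}$ is the real part of the superpotential $W_\lambda$ (see Section~\ref{Sec30} and the remark inside the proof of Proposition~\ref{P25.1}), and the paper is explicit that $W_\lambda$ is invariant only under the identity component $\CG_e(\Sigma)$. Under a gauge transformation $u$ with $[u]\in H^1(\Sigma;\Z)$ the term $\int_\Sigma\langle b,\lambda\rangle$ shifts by a quantity proportional to the pairing $[u]\cup[*_\Sigma\lambda]$, which is nonzero since $\lambda\neq 0$ is harmonic. Your sentence about ``$H^1(\Sigma;\Z)$ being pulled back to $\R_s\times\Sigma$ through the trivializable first factor'' is beside the point: the periods come from $\pi_0(\CG(\Sigma))$, not from the topology of $\R_s$. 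So even with a unique critical orbit, the temporal-gauge limits $\cgamma(\pm\infty)$ may differ by a gauge transformation outside $\CG_e(\Sigma)$, and the energy can equal a nonzero period rather than zero. (Contrast the $S^1$ case in Section~\ref{Sec30}, where the monodromy of the temporal-gauge fixing is the exponential of a function and hence automatically lies in $\CG_e$; no such mechanism is available on $\R_s$.) You would need a separate argument to exclude these nontrivial flowlines. Note also that Taubes' Propositions 4.4 and 4.7 are the full three-dimensional statements, not the two-dimensional vortex classification you invoke them for; the route taken there and in \cite{Wang202} is a direct PDE analysis via Weitzenb\"ock identities and the maximum principle on the flat cylinder, in the spirit of \cite{Taubes96} and Appendix~\ref{AppE}, rather than an energy-drop argument.
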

\begin{remark} Theorem \ref{T2.6} is due to Taubes and is the main reason to assume the flatness of $g_\Sigma$. A short discussion on its proof can be found in \cite[Section 10]{Wang202}. In fact, Theorem \ref{T2.4} \& \ref{T2.5} also hold for any non-flat metric $g_\Sigma$ of $\Sigma$ with a slightly different expression of $\E_{an}$; see \cite{Wang202}.
\end{remark}

\section{The Strict Cobordism Category}\label{Sec2}

Let $\Sigma=\coprod_{i=1}^n\T^2_i$ be a union of 2-tori. A boundary datum $\fd=(g_\Sigma,\lambda,\mu)$ on $\Sigma$ consists of 

\begin{itemize}
	\item a flat metric $g_\Sigma$ of $\Sigma$,
	\item an imaginary-valued harmonic 1-form $\lambda\in \Omega_h^1(\Sigma; i\R)$,
	\item an imaginary-valued harmonic 2-form $\mu\in \Omega_h^2(\Sigma; i\R)$,
\end{itemize}
such that 

\begin{enumerate}[label=(P0)]
\item\label{P7} on each component $\T^2_i$, Assumption \ref{A1.2} holds for the restriction $(g_\Sigma,\lambda,\mu)\big|_{\T^2_i}$. In particular, $\lambda|_{\T^2_i}\neq 0$ for all $i$. 

\end{enumerate}
\smallskip

To any boundary datum $\fd$ on $\Sigma$ is associated with a strict cobordism category $\Cob_s(\Sigma,\fd)$. It is called strict, because objects and morphisms are subject to certain constraints. Roughly speaking, each object of $\Cob_s$ is a 3-manifold $Y$ with $\partial Y\cong \Sigma$, together with a choice of a cylindrical metric $g_Y$ and a closed 2-form $\omega$ compatible with $\fd$. A morphism of $\Cob_s$ is a 4-manifold with corners, equipped again with a suitable closed 2-form:
\[
(X,W): (Y_1,\partial Y_1)\to (Y_2,\partial Y_2), \partial Y_1\cong \partial Y_2\cong \Sigma.
\]
 The restriction of this cobordism between boundaries has to be a product, so $W=[-1,1]_t\times \Sigma$. Some of these constraints on objects and morphisms might be dropped in the future by looking at the Seiberg-Witten moduli spaces on 4-manifolds with more complicated geometry. But for now we restrict attention to this smaller category $\Cob_s$ for the sake of simplicity. Subsection \ref{Subsec3.1} and \ref{Subsec3.2} below are devoted to the precise definition of $\Cob_s$. Once this is done, the configuration spaces on the completed manifolds $\hy$ and $\hx$ will be introduced in Subsection \ref{Subsec9.3}.


\subsection{Objects} \label{Subsec3.1} For any fixed boundary datum $\fd=(g_\Sigma,\lambda,\mu)$ on $\Sigma$, an object of the strict cobordism category $\Cob_s(\Sigma,\fd)$ is a quintuple $\y=(Y, \psi, g_Y, \omega, \{\q\})$ satisfying the following properties:
\begin{enumerate}[label=(P\arabic*)]
\item $Y$ is a compact oriented 3-manifold with boundary, and $\psi : \partial Y\to \Sigma$ is an orientation preserving diffeomorphism. The identification map $\psi$ will be dropped from our notations when it is clear from the context.

\item $g_Y$ is a metric on $Y$ cylindrical near $\Sigma$, i.e., $g_Y$ is the product metric 
\[
ds^2+\psi^*g_\Sigma
\]
within a collar neighborhood $(-2,0]_s\times \partial Y$ of $\partial Y$.  We form a complete Riemannian $3$-manifold $\hy$ by attaching cylindrical ends along $\Sigma$:
\[
\hy=Y\cup_\psi [-1,\infty)_s\times \Sigma,
\]  
whose metric is denoted also by $g_Y$. 

\item\label{P2} $\omega\in \Omega^2(Y;i\R)$ is an imaginary valued \textbf{closed} 2-form on $Y$ such that within the collar neighborhood $[-1,0]_s\times \partial Y$, $\omega$ restricts to an $s$-independent form 
\[
 \mu+ds\wedge \lambda,
\]
 so $\omega$ extends naturally to a closed 2-form on  $\hy$, denoted also  by $\omega$. 
\item\label{P3}  The 1-form $*_\Sigma\lambda$ lies in the image
\[
\im (H^1(Y;i\R)\to H^1(\Sigma; i\R)). 
\]
\item\label{P4} The 2-form $\mu$ lies in the image
\[
\im (H^2(Y; i\R)\to H^2(\Sigma; i\R)). 
\]

\item\label{P5} Choose a cut-off function $\chi_1: [0,\infty)_s\to \R$ such that 
\[
\chi_1(s)\equiv 1 \text{ if } s\geq -1;\ \chi_1(s)\equiv 0 \text{ if } s\leq -3/2.
\]
Set $\omega_\lambda=\chi_1(s)ds\wedge\lambda$ and 
\begin{equation}\label{9.2}
\bomega\colonequals \omega-\omega_\lambda=\omega-\chi_1(s)ds\wedge\lambda.
\end{equation}
Then $\bomega\equiv \mu$ on $[-1,0]_s\times \partial Y$. Any two such forms $\bomega,\bomega'$ are said to be \textbf{relatively cohomologous}, if $\omega=\omega'+d_Yb$ for a compactly supported 1-form $b\in \Omega^1_c(Y;i\R)$. For each $[\mu]\in H^2(\Sigma; i\R)$, the space of relative cohomology classes is denoted by 
\[
H^2_{dR}(Y,\partial Y; [\mu])
\]
which is a torsor over $H^2_{dR}(Y,\partial Y; [0])\cong H^2(Y, \partial Y; i\R)$. The class of $\bomega$ is denoted by $[\omega]_{cpt}$. There is a natural map $j^*$:
\[
\begin{array}{ccccc}
H^2_{dR}(Y,\partial Y; [\mu])&\xrightarrow{j^*} &H^2(Y;i\R)&\xrightarrow{i^*}& H^2(\Sigma; i\R)\\
{[\omega]}_{cpt}&\mapsto &[\omega]&\mapsto& [\mu]
\end{array}
\]
sending $[\omega]_{cpt}$ to the cohomology class $[\omega]$ of $\omega$. Moreover, $i^*([\omega])=[\mu]$ for the inclusion map $i:\Sigma\to Y$. We refer to $[\omega]\in H^2(Y;i\R)$ as \textbf{the period class}, which is independent of $\lambda$ and the cut-off function $\chi_1$. The closed 2-form $\omega$ in \ref{P2} can be recovered from ($\lambda, \mu, [\omega]_{cpt}$) up to a relatively exact 2-form. 

\item\label{P8} $\{\q\}$ is a collection of admissible perturbations (in the sense of Definition \ref{D19.3}) of the Chern-Simons-Dirac functional $\CL_\omega$ for each relative \spinc structures $\bs$.
\end{enumerate}

\begin{remark} The closed 2-form $\omega$ is used to perturb the Chern-Simons-Dirac functional on $\hy$, see Definition \ref{D9.4} below. \ref{P7} will allow us to apply Theorem \ref{T2.4}$-$\ref{T2.6} in Section \ref{Sec11}, so the Seiberg-Witten moduli spaces will have the right compactness property. We will address the issue of perturbations in Part \ref{Part4}, so at present readers may ignore the last property \ref{P8}. 
\end{remark}

 The property \ref{P3} requires some further explanation: it is used to find a closed 1-form on $\hy$ that equals $*_3(\chi_1(s)ds\wedge\lambda)$ on the cylindrical end. It will play an essential role in the energy equation in Section \ref{Sec12}, cf. Theorem \ref{Energy10.2} below. 
\begin{lemma} \label{9.1} For any object $\y\in\Cob_s$, there exists a smooth 2-form $\omega_h$ on $\hy$ such that $*_3\omega_h$ is closed and $
	\omega_h=ds\wedge\lambda$ on $[-1,\infty)_s\times \Sigma$. In particular, $\omega_h-\omega_\lambda\in L^2(\hy)$.
\end{lemma}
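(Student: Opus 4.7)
The plan is to realize $\omega_h$ as the Hodge dual $*_3\alpha$ of a closed $1$-form $\alpha$ on $\hy$, chosen so that $\alpha$ equals the pullback of $*_\Sigma\lambda$ on the cylindrical end $\{s\geq -1\}$. Granting such an $\alpha$, the conclusion is immediate: $*_3\omega_h = *_3^2\alpha = \alpha$ is closed (using $*_3^2=\mathrm{Id}$ on $1$-forms in dimension three), and a short computation on the product cylinder $[-1,\infty)_s\times\Sigma$ with metric $ds^2+g_\Sigma$ gives $*_3(*_\Sigma\lambda)=ds\wedge\lambda$, matching the required form of $\omega_h$ on the end.

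The construction of $\alpha$ is where Property~\ref{P3} enters decisively. By that hypothesis, $[*_\Sigma\lambda]$ lies in the image of the restriction map $H^1(Y;i\R)\to H^1(\Sigma;i\R)$, so there exists a smooth closed $1$-form $\beta$ on $Y$ with $[\beta|_{\partial Y}]=[*_\Sigma\lambda]$. Extend $\beta$ smoothly to a closed $1$-form on $\hy$ (for instance, first modify $\beta$ near $\partial Y$ by a compactly supported exact form so that on a small collar it becomes the pullback of $*_\Sigma\lambda$, then extend by that pullback across $[0,\infty)_s\times\Sigma$). On a neighborhood of the end, $\beta$ and the pullback of $*_\Sigma\lambda$ are both closed $1$-forms representing the same class in $H^1(\Sigma;i\R)$, so their difference is exact: $\beta - *_\Sigma\lambda = dF$ for some smooth function $F$. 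Using the cutoff $\chi_1(s)$ from Property~\ref{P5} and extending $\chi_1 F$ by zero outside its support, set
\[
\alpha := \beta - d(\chi_1 F).
\]
This $\alpha$ remains closed on $\hy$, and on $\{s\geq -1\}$, where $\chi_1\equiv 1$, one has $\alpha = \beta - dF = *_\Sigma\lambda$, as required. Then $\omega_h := *_3\alpha$ is the desired $2$-form.

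The final $L^2$ assertion is automatic: $\omega_h-\omega_\lambda$ vanishes on $\{s\geq -1\}$ because both forms equal $ds\wedge\lambda$ there by~\ref{P5}, so its support is a compact subset of $\hy$ and in particular lies in $L^2(\hy)$.

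The only genuinely non-trivial step is producing the closed $1$-form $\beta$ on $Y$ with the prescribed cohomology class on $\partial Y$, which is exactly the content of~\ref{P3}. Without that hypothesis the obstruction $\delta[*_\Sigma\lambda]\in H^2(Y,\partial Y;i\R)$ arising in the long exact sequence of the pair $(Y,\partial Y)$ could be non-zero, preventing the existence of a closed extension; \ref{P3} is precisely the statement that this obstruction vanishes. The remaining ingredients — the cutoff interpolation on the end and the Hodge-duality computation $*_3(*_\Sigma\lambda)=ds\wedge\lambda$ — are local and routine.
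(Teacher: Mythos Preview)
Your argument is correct and matches the approach the paper intends: the sentence preceding Lemma~\ref{9.1} explains that property~\ref{P3} is there precisely to produce a closed $1$-form on $\hy$ equal to $*_3(ds\wedge\lambda)=*_\Sigma\lambda$ on the end, and you have carried this out in detail. The paper does not supply a formal proof beyond that remark, so your write-up is essentially a fleshed-out version of the indicated argument.
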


\begin{remark}\label{R9.3}
In Part \ref{Part7}, we will define for each object $\y$ and each relative \spinc structure $\bs$ on $(Y,\partial Y)$ (see Subsection \ref{Subsec9.3} below) a finitely generated module
\[
\HM_*(\y,\bs)
\]
over a Novikov ring $\NR$. The isomorphism class of $\HM_*(\y,\bs)$  is independent of 
\begin{itemize}
\item the cylindrical metric $g_Y$;
\item isotopy of the diffeomorphism $\psi:\partial Y\to \Sigma$;
\item the choice of admissible perturbations in \ref{P8}.
\end{itemize}

Moreover, this module is not altered if we replace $\omega$ by $\omega+d_Yb$ for a compactly supported 1-form $b\in \Omega^1_c(Y;i\R)$. We refer to $\HM_*(\y,\bs)$ as the monopole Floer homology of $(\y,\bs)$, which depends at most on 
\begin{itemize}
\item the 3-manifold $(Y,\partial Y)$,
\item the isotopy class of $\psi:\partial Y\to \Sigma$,
\item the boundary datum $\fd=(g_\Sigma,\lambda,\mu)$ and
\item the relative cohomology class $[\omega]_{cpt}\in H^2_{dR}(Y,\partial Y; [\mu])$ as defined in \ref{P5}.
\end{itemize}

However, the definition of $[\omega]_{cpt}$ relies on the cut-off function $\chi_1$. This ambiguity is removed by the following fact: the group $\HM_*(\y,\bs)$ is not affected if one replaces $\omega$ by 
\[
\omega+\sum_{i=1}^n \chi_i^\circ(s)ds\wedge\lambda_i
\]
where $\lambda_i=\lambda|_{\T^2_i}\in \Omega^1(\T^2_i;i\R)$ and $\chi_i^\circ(s)$ is any compactly supported function on $[0,\infty)_s\times\T^2_i$. Thus only a suitable quotient class of $[\omega]_{cpt}\in H^2(Y,\partial Y; [\mu])$ matters, and this class is independent of $\chi_1$. See Corollary \ref{C24.11} for more details. 
\end{remark}

\subsection{Morphisms}\label{Subsec3.2} Having described objects in the strict cobordism category $\Cob_s(\Sigma,\fd)$, we now turn to the morphism spaces in this subsection. Since each object $\y$ is coupled with a closed 2-form $\omega$, so is any morphism in this category. Given two objects $\y_i=(Y_i, \psi_i, g_i, \omega_i,\q_i), i=1,2$ in $\Cob_s$, a morphism 
\[
\x: \y_1\to \y_2
\]
 is a quadruple $\x=(X,\psi_X, W,[\omega_X]_{cpt})$ satisfying the following properties.
 \begin{enumerate}[label=(Q\arabic*)]
 \item \label{Q1}$X$ is a manifold with corners, i.e. $X$ is a space stratified by manifolds 
 \[
 X\supset X_{-1}\supset X_{-2}\supset X_{-3}=\emptyset.
 \]
 The co-dimensional 1 stratum $X_{-1}$ consists of three parts
 \[
X_{-1}= (-Y_1)\cup (Y_2)\cup W_X.
 \]
$W_X$ is an oriented 3-manifold with boundary $\partial W_X=\partial Y_1\cap\partial Y_2$, $\partial Y_i=Y_i\cap W_X$ and $X_{-2}=\partial Y_1\cup \partial Y_2$.  
 
 \item\label{Q2} $W=[-1,1]_t\times \Sigma$ is the product cobordism from $\Sigma$ to itself. 
 
 \item\label{Q3} $\psi_X: W_X\to W$ is an orientation preserving diffeomorphism compatible with $\psi_1$ and $\psi_2$. To be more precise, we require that 
 \begin{align*}
 \psi_X |_{\partial Y_1}&=\psi_1: \partial Y_1\to \{-1\}\times \Sigma, \\
\psi_X |_{\partial Y_2}&=\psi_2: \partial Y_2\to \{1\}\times \Sigma,
 \end{align*}
 and these relations hold also in a collar neighborhood of $\partial W_X$.  When it is clear from the context, we shall not distinguish $W_X$ from $W$, and $\psi_X$ will be dropped from our notations. Such a pair $(X, \psi_X)$ is called \textbf{a strict cobordism} from $(Y_1,\psi_1)$ to $(Y_2,\psi_2)$.
 
 	\item \label{Q4} The closed 2-form $\omega_i$ on $Y_i$ contains a bit more information than the period class $[\omega_i]\in H^2(Y_i;i\R)$. Note first that
 	\[
 	\omega_1|_{\Sigma}=\omega_2|_{\Sigma}=\mu\in \Omega^2_h(\Sigma; i\R),
 	\] 
so the triple $(\omega_1, \mu, \omega_2)$ determines a class $[\alpha]$ in $H^2((-Y_1)\cup W\cup Y_2;i\R)$. $[\alpha]$ is required to lie in the image 
 	\[
 	\im \bigg(m^*_0: H^2(X;i\R)\to H^2((-Y_1)\cup W\cup Y_2);i\R)\bigg),
 	\]
 	where $m_0: (-Y_1)\cup W\cup Y_2 \embed X$ is the inclusion map, and let $[\omega_X]$ be a lift of $[\alpha]$. As a result, $[\omega_X]$ generates all  cohomology classes in the right diagram below:
 \begin{equation*}
 \begin{tikzcd}
 & H^2(Y_1)\arrow[r, "k_1^*"] &  H^2(\Sigma) & & {[\omega_1]}\arrow[r,mapsto, "k_1^*"] & {[\mu_1]}\arrow[dd,equal]\\
 H^2(X)\arrow[ru,"m_1^*"] \arrow[r, "m_b^*"]\arrow[rd, "m_2^*"]& H^2(W)\arrow[ru, "j_1^*"] \arrow[rd,"j_2^*"] & &  {[\omega_X]}\arrow[ru,"m_1^*",mapsto] \arrow[r, "m_b^*",mapsto]\arrow[rd, "m_2^*",mapsto]& m_b^*[\omega_X]\arrow[ru, "j_1^*",mapsto] \arrow[rd,"j_2^*",mapsto]\\
 &  H^2(Y_2)\arrow[r,"k_2^*"] & H^2(\Sigma) & &  {[\omega_2]}\arrow[r,"k_2^*",mapsto] & {[\mu_2]}.
 \end{tikzcd}
 \end{equation*}
 \item\label{Q6} There exists a closed 2-form $\bomega_X\in \Omega^2(X;i\R)$ on $X$ with the following properties:
 \begin{itemize}
 	\item $\bomega_X$ realizes the class  $[\omega_X]\in H^2(X;i\R)$;
\item $\bomega_X=\bomega_i $ (see \ref{P5}) within a collar neighborhood of $Y_i\subset X_{-1}$ for $i=1,2$;
\item\label{Q7} $
\bomega_X=\mu$, within a collar neighborhood of $W\subset X_{-1}$.
 \end{itemize}
The existence of such a form $\bomega_X$ is guaranteed by the cohomological condition in  \ref{Q4}. Finally, set $\omega_\lambda=\chi_1(s)ds\wedge \lambda$ and 
\[
\omega_X\colonequals\bomega_X+\omega_\lambda=\bomega_X+\chi_1(s)ds\wedge\lambda \text{ on } X.
\] 

\item\label{Q8} For any two closed forms $\omega_X$  and $\omega_X'$ satisfying the condition in \ref{Q7}, they are said to be equivalent if $\omega_X'-\omega_X=da$ for a compactly supported smooth 1-form $a\in \Omega^1(X;i\R)$. Denote by $[\omega_X]_{cpt}$ the equivalence classes of $\omega_X$.

 \end{enumerate}

\begin{example}\label{E9.2} The product cobordism $\x=[-1,1]_t\times \y: \y\to\y$. In this case, $X=[-1,1]_t\times Y$ and $\psi_X=\Id_{[-1,1]_t}\times\psi$ is the product map. We obtain $\omega_X$ as the pull-back of the 2-form $\omega$ from $Y$. 
\end{example}

\begin{example}\label{Ex9.5} Take $\y_1, \y_2\in\Cob_s$ with $Y_1= Y_2=Y$ and $\psi_1 $ isotopic to $\psi_2$. Suppose in addition that $\omega_2-\omega_1=d_Yb$ for a compactly supported 1-from $b\in \Omega^1(Y;i\R)$, then one may construct a cobordism $\x: \y_1\to \y_2$ as follows. Let $X=[-1,1]_t\times Y$ and $\psi_X$ be an isotopy from $\psi_1$ to $\psi_2$. Set $\omega_X= d_X(\chi(t)b)+\omega_1$, where $\chi(t)$ is any cut-off function such that
\[
\chi(t)\equiv 0 \text{ if } t\leq -1/2;\ \chi(t)\equiv 1 \text{ if } t\geq 1/2.\qedhere
\]
\end{example}

Similar to the definition of $\hy$, for each strict cobordism $X: Y_1\to Y_2$, we obtain a cobordism between $\hy_1$ and $\hy_2$  by attaching cylindrical ends to $X$:
\[
\hx\colonequals X\cup_{\psi_X} [-1,1]_t\times [-1,\infty)_s\times \Sigma: \hy_1\to \hy_2. 
\]

A planar metric $g_X$ on $X$ is a metric compatible with the corner structure. We insist that the metric $g_W$ of $W=[-1,1]_t\times \Sigma$ is the product one
\[
v^2(t)dt^2+g_\Sigma
\] 
for some function $v:[-1,1]_t\to [0,\infty)$ such that $v(t)\equiv 1$ for $1-|t|\ll 1$. This flexibility allows us to compose strict cobordisms equipped with planar metrics, but we shall always take $v(t)\equiv 1$ in the sequel for the sake of simplicity. We require the planar metric $g_X$ be the product metric 
\[
d^2t+d^2s+g_\Sigma
\]
in a neighborhood $(-\epsilon,0]_t\times (-1,0]_s\times X_{-2}$ of the co-dimension $2$ stratum $X_{-2}=(-\Sigma)\cup \Sigma$. Also, $g_X$ needs to be cylindrical near the co-dimensional 1 stratum $X_{-1}$:
\begin{align*}
g_X|_{ [-1, -1+\epsilon)\times Y_1}&=d^2t+ g_1,&
g_X|_{(1-\epsilon, 1]\times Y_2 }&=d^2t+ g_2,\\ g_X|_{[-1,1]_t\times (-1,0]_s\times \Sigma}&=d^2s+g_{W}=d^2t+d^2s+g_\Sigma.  
\end{align*}

Such a metric extends to a cylindrical metric on $\hx$ compatible with that of $(-\hy_1)\cup \hy_2$. When it is clear from the context, we also use $g_X$ to denote this extended metric on $\hx$.

Although a planar metric $g_X$ of $X$ is \textbf{not} encoded in a morphism $\x: \y_1\to\y_2$, it is needed to define the cobordism map induced by $\x$ (and so the
functor $\HM_*$ in Theorem \ref{1T2}). This cobordism map will be independent of the choice of $g_X$. 

Similar to Lemma \ref{9.1}, one can find a co-closed 2-form $\omega_{X,h}$ on $\hx$ extending $\omega_\lambda=\chi_1(s)ds\wedge\lambda$. In this case, we also insist a Dirichlet boundary condition for $*_4\omega_{X,h}$. This property is crucial for the energy equations in Section \ref{Sec12}, cf. Theorem \ref{Energy10.1}.

\begin{lemma} \label{9.3} For any morphism $\x\in \Cob_s$, there exists a co-closed 2-form $\omega_{X,h}$ on $\hx$ such that $\omega_{X,h}=ds\wedge\lambda$ on $[-1,1]_t\times [-1,\infty)_s\times\Sigma$ and 
	\begin{equation}\label{9.4}
	*_4 \omega_{X,h}\big|_{(-\hy_1)\cup\hy_2}=0.
	\end{equation}
 In particular, $\omega_{X,h}-\omega_\lambda\in L^2(\hx)$. 
\end{lemma}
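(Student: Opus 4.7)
The plan is to construct the Hodge dual $\eta_X := *_4 \omega_{X,h}$ first and then set $\omega_{X,h} = *_4 \eta_X$, using $*_4 *_4 = \Id$ on 2-forms in four dimensions. A direct computation in an orthonormal frame of the product metric $dt^2+ds^2+g_\Sigma$ yields $*_4(ds\wedge\lambda) = dt\wedge *_\Sigma\lambda$, so the three conditions on $\omega_{X,h}$ translate into finding a closed 2-form $\eta_X$ on $\hx$ that equals $dt\wedge *_\Sigma\lambda$ on the planar end $[-1,1]_t\times[-1,\infty)_s\times\Sigma$ and whose pullback to each boundary component $\hy_1$ and $\hy_2$ vanishes. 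Two cohomological ingredients from the definition of $\Cob_s$ must be in place before the construction: property \ref{Q5} yields $[*_\Sigma\lambda_1]=[*_\Sigma\lambda_2]$ in $H^1(\Sigma;i\R)$, and since both sides are harmonic for the same flat metric, uniqueness of the harmonic representative in a cohomology class forces $\lambda_1 = \lambda_2 =: \lambda$; and property \ref{P3} applied to each $Y_i$ supplies a closed 1-form $\alpha_i\in\Omega^1(\hy_i)$ with $\alpha_i = *_\Sigma\lambda$ on the cylindrical end $[-1,\infty)_s\times\Sigma\subset\hy_i$.

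With these compatibilities established, I will then pick a smooth 1-form $\tilde\alpha$ on $\hx$ and a smooth function $T\colon\hx\to\R$ as follows. On the planar end, $\tilde\alpha$ is the pullback of $*_\Sigma\lambda$ under projection to $\Sigma$ and $T=t$; on collar neighborhoods $[-1,-1+\epsilon)_t\times\hy_1$ and $(1-\epsilon,1]_t\times\hy_2$ of the two boundary components, $\tilde\alpha$ is the $t$-independent pullback of $\alpha_i$ and again $T=t$. The two prescriptions for $\tilde\alpha$ match on the overlap of the planar end with the collar of $\hy_i$ precisely because $\alpha_i = *_\Sigma\lambda$ there, so both $\tilde\alpha$ and $T$ extend smoothly to all of $\hx$ via a partition of unity. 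Setting $\eta_X := d(T\tilde\alpha)$ gives an exact, hence closed, 2-form. On the planar end, $\tilde\alpha$ is closed (as $*_\Sigma\lambda$ is harmonic on $\Sigma$), so $\eta_X = dT\wedge\tilde\alpha = dt\wedge *_\Sigma\lambda$; on a collar of $\hy_i$ one likewise has $\eta_X = dt\wedge \pi^*\alpha_i$, and its pullback to $T\hy_i$ vanishes because $dt$ annihilates $T\hy_i$.

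Finally, $\omega_{X,h} := *_4 \eta_X$ is co-closed since $d*_4\omega_{X,h} = d\eta_X = 0$; equals $ds\wedge\lambda$ on the planar end by the opening Hodge calculation; satisfies $*_4\omega_{X,h}|_{(-\hy_1)\cup\hy_2} = \eta_X|_{(-\hy_1)\cup\hy_2} = 0$; and the difference $\omega_{X,h}-\omega_\lambda$ vanishes on the planar end (both equal $ds\wedge\lambda$ there) while being smooth with compact support on the compact remainder of $\hx$, so it lies in $L^2(\hx)$. The genuine content of the lemma is the cohomological matching provided by properties \ref{P3} and \ref{Q5}, which guarantee that $*_\Sigma\lambda$ can be simultaneously extended as a closed 1-form into both ends; the smoothing step across the interior of $X$ is a routine partition-of-unity argument that I do not anticipate to pose any analytic obstacle.
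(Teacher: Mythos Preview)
Your argument is correct and identifies the same cohomological core as the paper: the needed closed 2-form $\eta_X = *_4\omega_{X,h}$ exists because $*_\Sigma\lambda$ extends as a closed 1-form into each $\hy_i$ by \ref{P3}, and the two extensions match on the planar end by \ref{Q5}. Your explicit construction, however, differs from the paper's. You build $\eta_X$ symmetrically as an exact form $d(T\tilde\alpha)$, using the closed extensions $\alpha_1,\alpha_2$ on \emph{both} ends; exactness makes closedness automatic and the boundary vanishing follows from $dt|_{\hy_i}=0$ together with $d\alpha_i=0$. The paper instead works asymmetrically: it takes $*_4\omega_{X,h}=\chi(t)\,dt\wedge *_3\omega_{1,h}-d_X\!\big(f(t)\chi_1(s)*_2\lambda\big)$, using only the extension from the $\hy_1$ side plus an exact correction tuned so that the form vanishes at both boundary faces (via $f(\pm1)=0$). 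Your route is a bit cleaner and makes the role of both boundary conditions transparent; the paper's route shows that a single-sided lift already suffices, which is what its cohomological diagram (mapping $z\in H^1(Y_1)$ to $(z,0)$ before applying $\delta$) is encoding.
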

\begin{proof}[Proof of Lemma \ref{9.3}]  It suffices to verify that the class 
	\[
	[dt\wedge *_\Sigma\lambda]\in H^2(W,\partial W; i\R)
	\]
	lies in the image $ \im\big(H^2(X, Y_1\cup Y_2; i\R)\to H^2(W,\partial W; i\R)\big)$ with $W=[-1,1]_t\times \Sigma$. By the property \ref{P3}, take $z$ to be a lift of $[*_\Sigma\lambda]$ in $H^1(Y_1;i\R)$. In the diagram below, all cohomology groups take value in $i\R$:
	\[
	\begin{tikzcd}[column sep=2.3cm]
H^1(Y_1)\arrow[r,"{z\mapsto (z,0)}"]\arrow[d] &H^1(Y_1) \oplus H^1(Y_2) \arrow[r,"\delta"]\arrow[d]&H^2(X, Y_1\cup Y_2)\arrow[d] \\
 H^1(\Sigma) \arrow[r,"{[*_\Sigma\lambda]\mapsto ([*_\Sigma\lambda],0)}"]& H^1(\{-1\}\times \Sigma)\oplus H^1(\{1\}\times \Sigma) \arrow[r,"\delta"]& H^2(W,\partial W)
 	\end{tikzcd}
	\]
	
	Alternatively, one may construct the form $\omega_{X,h}$ by hands using the co-closed form $\omega_{1,h}\in \Omega^1(\hy_1;i\R)$ in Lemma \ref{9.1}. Take a smooth function $h: [-1,1]_t\to \R$ such that $h$ is supported on $[-1,-1+\epsilon]$, $h(t)\equiv 1$ for $t\in [-1,-1+\epsilon/2]$, and 
	\[
	\half \int_{-1}^1 h(t)dt=1.
	\]
	 Then there is another function $f:[-1,1]_t\to \R$ with $f(1)=f(-1)=0$ and $f'(t)=h(t)-1$. Finally, set  $*_4\omega_{X,h}=h(t)dt\wedge *_3\omega_{1,h}-d_X(f(t)\chi_1(s)*_\Sigma\lambda)$. 
\end{proof}

\subsection{Relative \spinc Structures and Configuration Spaces}\label{Subsec9.3} Let $\s_{std}=(S_{std},\rho_{std,3})$ be the standard \spinc structure on $\R_s\times\Sigma$ as described in Section \ref{Sec5} with
\[
S_{std}=\C\oplus \Lambda^{0,1} \Sigma.
\]

For any object $\y=(Y, \psi, g_Y, \omega, \q)\in \Cob_s$, a relative \spinc structure $\bs=(\s, \varphi)$ is a pair consisting of a \spinc structure $\s=(S,\rho_3)$ on $Y$ and an isomorphism of \spinc structures
\[
\varphi: (S,\rho_3)|_{\partial Y}\to \psi^*\s_{std}|_{\partial Y}
\]
near the boundary compatible with $\psi$. The set of isomorphism classes of relative \spinc structures on $Y$, denoted by
\[
\Spincr(Y),
\]
is a torsor over $H^2(Y,\partial Y; \Z)$. There is a natural forgetful map from $\Spincr(Y)$ to the set of isomorphism classes of \spinc structures:
\[
\Spincr(Y) \to \Spinc(Y),\ \bs=(\s,\varphi)\mapsto \s,
\]
whose fiber is acted on freely and transitively by  $H^1(\Sigma, \Z)/\im (H^1(Y,\Z))$ reflecting the change of boundary trivializations. Any $\bs\in \Spincr(Y)$ extends to a relative \spinc structure on $\hy$ (interpreted suitably), denoted also by $\bs$.

\medskip

Let $(B_*,\Psi_*)$ be the translation invariant configuration on $\R_s\times\Sigma$ such that the restriction
\begin{equation}\label{reference}
(B_*,\Psi_*)|_{\R_s\times\T^2_i}
\end{equation}
on each component $\T^2_i\subset \Sigma$ is defined by the formula \eqref{E2.6} for all $1\leq i\leq n$. Take $(B_0,\Psi_0)$ to be a smooth configuration on $\hy$ which agrees with $(B_*,\Phi_*)$ on the cylindrical end $[0,\infty)_s\times \Sigma$. Recall from \ref{P2} that the closed 2-form $\omega\in \Omega^2(Y;i\R)$ defined on $Y$ extends to a closed 2-form on the completion $\hy$ by setting 
\[
\omega|_{[-1,\infty)\times \Sigma}=\mu+ds\wedge \lambda,
\]
and $[\omega]\in H^2(Y;i\R)$ is the period class of $\omega$. Now consider the configuration space for any $k> \half$:
\begin{align*}
\SC_k(\hy,\bs)=\{(B,\Psi): (b,\psi)=(B,\Psi)-(B_0,\Psi_0)\in L^2_k (\hy; iT^*\hy\oplus S)
\}.
\end{align*}
\begin{remark} Since $\hy$ is non-compact, the condition that $(b,\psi)\in L^2_k$ says that the section $(b,\psi)$ decays mildly along the cylindrical end of $\hy$. It turns out that this decay is always exponential for solutions to the Seiberg-Witten equations, cf. Theorem \ref{11.5} below. 
\end{remark}

\begin{definition}\label{D9.4} The perturbed Chern-Simons-Dirac functional on $\SC_k(\hy, \bs)$ is defined as 
	\begin{equation}
\CL_\omega (B,\Psi)=-\frac{1}{8}\int_{\hy} (B^t-B_0^t)\wedge (F_{B^t}+F_{B_0^t})+\half \int_{\hy}\langle D_B\Psi, \Psi\rangle+\half \int_{\hy}(B^t-B_0^t)\wedge \omega. \qedhere
	\end{equation}
\end{definition}
\begin{remark} $\CL_\omega$ is the analogue of the gauged action functional $\CA_H$ in the context of gauged Witten equations, see \cite[Definition 4.1]{Wang202}. 
\end{remark}

The configuration space $\SC_k(\hy,\bs)$ is acted on freely by the gauge group
\begin{align*}
\CG_{k+1}(\hy)=\{u: \hy\to S^1\subset \C: u-1\in L^2_{k+1} (\hy, \C)\},
\end{align*}
via the formula:
\[
u(B,\Psi)=(B-u^{-1}du, u\Psi). 
\]

The Lie algebra of $\CG_{k+1}$ is $
\Lie (\CG_{k+1})= L^2_{k+1}(\hy;i\R).
$ The exponential map $f\mapsto e^f$ is surjective onto the identity component $\CG_{k+1}^e$of $\CG_{k+1}$; they fit to a short exact sequence:
\[
0\to \CG_{k+1}^e\to \CG_{k+1}\to\pi_0(\CG_{k+1})\cong H^1(Y,\Sigma;\Z)\to 0.
\]

The Chern-Simons-Dirac functional $\CL_\omega$ is not fully gauge-invariant in general: 
\begin{lemma}[{Compare \cite[Section 29]{Bible}}]\label{L9.4} For any $\gamma=(B,\Psi)\in \SC_k(\hy,\bs)$ and $u\in \CG_{k+1}(\hy)$, we have 
\[
\CL_\omega(u\cdot \gamma)-\CL_\omega(\gamma)=(2\pi^2[u]\cup c_1(S)-2\pi i[u]\cup [\omega])[Y,\partial Y],
\]
where $[u]=[\frac{u^{-1}du}{2\pi i}]\in H^1(Y,\partial Y;\Z)$ is the relative cohomology class determined by $u$ and $[\omega]$ is the period class of $\omega$. 
\end{lemma}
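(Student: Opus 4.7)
The plan is a direct substitution: write $u\cdot\gamma=(B-u^{-1}du,\,u\Psi)$ and expand $\CL_\omega(u\cdot\gamma)$ term by term, using three easy gauge-invariance facts. First, the Dirac term is unchanged because $D_{u\cdot B}(u\Psi)=uD_B\Psi$ and $|u|=1$. Second, the curvature $F_{B^t}$ is unchanged, since $B^t$ is shifted by the closed 1-form $-2u^{-1}du$. Third, the affine difference $B^t-B_0^t$ transforms to $(B^t-B_0^t)-2u^{-1}du$. Only the third fact produces nontrivial variation, and inserting it yields
\[
\CL_\omega(u\cdot\gamma)-\CL_\omega(\gamma)=\tfrac14\int_{\hy}u^{-1}du\wedge(F_{B^t}+F_{B_0^t})-\int_{\hy}u^{-1}du\wedge\omega.
\]

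Next I would eliminate the dependence on $B$ from the right-hand side. Writing $F_{B^t}+F_{B_0^t}=2F_{B_0^t}+d(B^t-B_0^t)$ and using $d(u^{-1}du)=0$, the exact piece equals a total derivative $-d\bigl(u^{-1}du\wedge(B^t-B_0^t)\bigr)$. The Sobolev conditions $u-1\in L^2_{k+1}$ and $B^t-B_0^t\in L^2_k$ on the cylindrical end provide enough decay (via Cauchy--Schwarz on an exhaustion by annuli $\{s\le R\}$) to kill the boundary-at-infinity term in Stokes' theorem. Thus the difference reduces to
\[
\tfrac12\int_{\hy}u^{-1}du\wedge F_{B_0^t}-\int_{\hy}u^{-1}du\wedge\omega.
\]

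Finally I would recognise both integrals as topological pairings. The asymptotic condition $u-1\in L^2_{k+1}$ forces $u\to 1$ on the end, so $[u^{-1}du/(2\pi i)]$ is the image of $[u]\in H^1(Y,\partial Y;\Z)$ under the de Rham isomorphism with compactly-supported cohomology. Since $B_0$ equals the translation-invariant reference $B_*$ on the end and $B_*^t$ is flat (each toroidal end carries the trivial determinant line bundle), $F_{B_0^t}$ has compact support and realises $c_1(S)\in H^2(Y;\R)$; similarly $\omega$ realises $[\omega]\in H^2(Y;i\R)$. Each integral then becomes the evaluation of a cup product via the pairing
\[
H^1(Y,\partial Y)\otimes H^2(Y)\xrightarrow{\smile}H^3(Y,\partial Y)\xrightarrow{\langle\cdot,[Y,\partial Y]\rangle}\R,
\]
and extracting the normalizing factors of $2\pi i$ reassembles the claimed formula.

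The main technical obstacle is justifying both the Stokes step and the cohomological identification on the non-compact manifold $\hy$. A clean workaround is density: I would first approximate $u\in\CG_{k+1}$ by smooth gauge transformations $u_n$ that equal $1$ outside a large compact set (this is possible because $u-1\in L^2_{k+1}$, and it preserves the relative class $[u]$). For such $u_n$ all integrals are manifestly finite, exact terms vanish strictly by Stokes, and the identification with cup products is standard. Both sides of the formula are continuous in $u$ with respect to the $L^2_{k+1}$ topology, so the general case follows by passing to the limit.
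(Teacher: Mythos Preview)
The paper states this lemma without proof, as the computation is standard and parallel to the closed case. Your argument is correct and is the expected direct computation: expand the three terms of $\CL_\omega$ under the gauge action, use Stokes to remove the $B$-dependence, and identify the remaining integrals as cup-product pairings. The density reduction to gauge transformations equal to $1$ outside a compact set is a clean way to justify both the Stokes step and the cohomological identification simultaneously, and your observation that $F_{B_0^t}$ is compactly supported (since $B_*^t$ is flat on the flat toroidal ends) is exactly what makes the $c_1(S)$ pairing land in $H^1(Y,\partial Y)\smile H^2(Y)$.
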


The tangent space at each $\gamma\in \SC_k(\hy,\bs)$ is naturally identified with $L^2_k (\hy, iT^*\hy\oplus S)$. We compute the gradient of $\CL_\omega$ with respect to the $L^2$ inner product:
\begin{equation}\label{F9.6}
\grad \CL_\omega(B,\Psi)= (\half *_3 F_{B^t}+\rho_3^{-1}(\Psi\Psi^*)_0-*_3\omega, D_B\Psi).
\end{equation}

A configuration $\gamma\in \SC_{k}(\hy,\bs)$ is a critical point of $\CL_\omega$ if and only if it solves the perturbed Seiberg-Witten equations on $\hy$.
\begin{definition} For any object $\y=(Y, \psi, g_Y, \omega, \q)\in \Cob_s$, the Seiberg-Witten map defined on $\SC_k(\hy,\bs)$ is given by (ignoring the perturbation $\q$ for a moment)
	\[
	\F_\omega(B,\Psi)=(\half \rho_3(F_{B^t}-2\omega)-(\Psi\Psi^*)_0, D_B\Psi).
	\]
	and the equation 
	\begin{equation}\label{3DSWEQ}
\F_\omega(B,\Psi)=0
	\end{equation}
is called the 3-dimensional Seiberg-Witten equations. 
\end{definition}

\begin{remark} By Theorem \ref{T2.6}, the reference configuration $(B_*,\Psi_*)$ defined in (\ref{reference}) is the unique $\R_s$-translation invariant solution of (\ref{3DSWEQ}) on $\R_s\times \Sigma$ up to gauge. 
\end{remark}

The downward gradient flowline equation of $\CL_\omega$
\[
\dt (B(t), \Psi(t))=-\grad \CL_\omega(B(t),\Psi(t))
\]
can be cast into the 4-dimensional Seiberg-Witten equations:
\begin{equation}\label{4DSWEQ}
\left\{
\begin{array}{r}
\half \rho_4(F_{A^t}^+-2\omega^+_X)-(\Phi\Phi^*)_0=0,\\
D_A^+\Phi=0,
\end{array}
\right.
\end{equation}
on $\R_t\times \hy$ with $A=\dt +B(t), \Phi=\Psi(t)$ and $\omega_X=\pi^*\omega$ where $\pi:\R_t\times \hy\to\hy$ is the projection map. This corresponds to the product cobordism $[-1,1]_t\times \hy$ in Example \ref{E9.2}.

\medskip

In general, let $(A_*,\Phi_*)$ be the $\C$-translation-invariant solution on $\C_z\times\Sigma$ with 
\begin{equation}\label{E9.7}
A_*=dt\otimes\Pt+B_*, \Phi_*(t)=\Psi_*, z=t+is.
\end{equation}

Let $\x=(X,\psi_X, W,[\omega_X]_{cpt}): \y_1\to \y_2$ be a morphism in $\Cob_s$ and suppose $\hx: \hy_1\to \hy_2$ extends to a \textit{relative} \spinc cobordism:
\begin{equation}\label{E9.8}
(\hx,\bs_X): (\hy_1, \bs_1)\to (\hy_2,\bs_2).
\end{equation}

\begin{remark}\label{R9.2} For a \textit{relative} \spinc cobordism, we insist that identification maps 
	\[
	(\hx,\bs_X)|_{\hy_i}\cong  (\hy_i, \bs_i), i=1,2
	\]
	are implicitly baked in the definition. 
\end{remark}
Let $(A_0,\Phi_0)$ be a reference configuration on $\hx$ whose restriction on $[-1,1]_t\times [0,\infty)_s\times \Sigma$ agrees with $(A_*,\Phi_*)$. For each $k\geq 1$, define 
\begin{align*}
\SC_k(\hx,\bs_X)=\{(A,\Phi): (a,\phi)=(A,\Phi)-(A_0,\Phi_0)\in L^2_k (\hx, iT^*\hx\oplus S^+)
\}.
\end{align*}

In this case, we take $\omega_X\in \Omega^2(\hx;i\R)$ to be the closed 2-form constructed in \ref{Q6} and extended constantly over the cylindrical end $[-1,1]_t\times [0,\infty)_s\times \Sigma$; so for some $\epsilon>0$, 
\begin{itemize}
\item $\omega_X=\omega_1$ on $\hy_1\times [-1,-1+\epsilon)_t$;
\item $\omega_X=\omega_2$ on $\hy_2\times (1-\epsilon, 1]_t$;
\item $\omega_X=\mu+ ds\wedge\lambda$ on $[-1,1]_t\times [0,\infty)_s\times \Sigma$.
\end{itemize}

Then the left hand side of (\ref{4DSWEQ}) defines a smooth map:
\begin{equation}\label{E9.9}
\F_X: \SC_k(\hx,\bs_X)\to L^2_{k-1}(\hx, i\su(S^+)\oplus S^-)
\end{equation}
called the Seiberg-Witten map on $\hx$. For $0\leq j\leq k$, let $\V_j$ be the trivial vector bundle with fiber $L^2_{j}(i\su(S^+)\oplus S^-)$ over $\SC_k(\hx, \bs)$:
\[
\V_j\colonequals L^2_{j}(i\su(S^+)\oplus S^-)\times \SC_k(\hx, \bs).
\]
The Seiberg-Witten map $\F_X$ defines a smooth section of $\V_{k-1}\to \SC_k(\hx,\bs_X)$.

\subsection{Strict \spinc cobordisms} Now let us introduce the strict \spinc cobordism category $\SCob_s$, which plays the central role in Theorem \ref{1T2}:
\begin{itemize}
	\item each object of $\SCob_s$ is a pair $(\y,\bs)$ where $\y$ is an object of $\Cob_s$, and $\bs\in \Spincr(Y)$ is a relative \spinc structure on $Y$;
	\item for any objects $(\y_1,\bs_1)$ and $(\y_2,\bs_2)$,
	\[
	\Hom_{\SCob_s} ((\y_1,\bs_1),(\y_2,\bs_2))=\Hom_{\Cob_s}(\y_1,\y_2). 
	\] 
\end{itemize}

\subsection{Homotopy Classes of Paths}\label{Subsec2.4} To define the monopole Floer homology $\HM_*(\y,\bs)$ for each object $(\y,\bs)\in \SCob_s$, we will look at the moduli spaces of the Seiberg-Witten equations \eqref{4DSWEQ} on $\R_t\times (\hy,\bs)$ and define a Floer chain complex:
\[
 \Ch_*(\y, \bs);
\]
The underlying idea is an infinite dimensional Morse theory in the quotient configuration space: 
\[
\CB_k(Y,\bs)\colonequals \SC_k(Y,\bs)/\CG_{k+1}(Y). 
\]
For any $\fa,\fb\in \SC_k(Y,\bs)$, the relative homotopy classes of paths $\pi_1(\CB_k(Y,\bs); [\fa],[\fb])$ is a torsor over 
\[
\pi_1(\CB_k(Y,\bs); [\fb])\cong \pi_0(\CG_{k+1})\cong H^1(Y,\partial Y;\Z). 
\]
Moreover, for any $[\gamma]\in \pi_1(\CB_k(Y,\bs); [\fa],[\fb])$, the relative loop space $\Omega_{[\gamma]}(\CB_k(Y,\bs); [\fa],[\fb])$ in the class $[\gamma]$ is simply connected, since 
\[
\pi_2(\CB_k(Y,\bs); [\fb])\cong \pi_1(\CG_{k+1})=\{0\}.
\]

There are three additional ways to think of a path $\cgamma:[-1,1]\to \CB_{k}(Y,\bs)$ with $\cgamma(-1)=\fa$ and $\cgamma(1)=\fb$, and we shall use them interchangeably:
\begin{enumerate}
\item a path $\cgamma_1: [-1,1]\to \SC_{k}(Y,\bs)$ that connects $\fa$ and $u\cdot \fb$ for some $u\in \CG_{k+1}(\hy)$; 
\item  a configuration $\gamma$ on the 4-manifold $I\times (\hy,\bs)$ with $I=[-1,1]_t$ such that $\gamma|_{\{-1\}\times \hy}=\fa$ and 
$\gamma|_{\{1\}\times \hy}=u\cdot \fb$ for some $u\in \CG_{k+1}(\hy)$;
\item a configuration $\gamma'$ for a relative \spinc cobordism
\[
(\hx=I\times\hy,\bs_X): (\hy,\bs)\to (\hy,\bs)
\]
such that $\gamma|_{\{-1\}\times \hy}=\fa$ and $\gamma|_{\{1\}\times \hy}=\fb$. Indeed, all such relative \spinc structures on $I\times \hy$ form a torsor over 
\begin{align*}
H^2(I\times Y,\partial (I\times Y);\Z)&\cong H^1(Y,\partial Y;\Z)\otimes H^1(I,\partial I; \Z)\cong H^1(Y,\partial Y;\Z). 
\end{align*}
\end{enumerate}

The last standpoint makes it easier to think about a general morphism $\x:\y_1\to \y_2$. To make $\HM_*$ into a functor from $\SCob_s$ to $\NR\text{-}\mathrm{Mod}$ as in Theorem \ref{1T2}, we attach cylindrical ends to $\hx$ and obtain a complete Riemannian manifold $\CX$:
\[
\CX\colonequals\bigg( (-\infty, -1]_t\times  \hy_1\bigg)\cup \hx\cup\bigg( 
[1,\infty)_t\times \hy_2\bigg).
\] 
 The closed 2-form $\omega_X$ extend over $\CX$ by setting
\begin{equation}\label{E9.11}
\omega_X=\omega_1 \text{ on }  (-\infty, -1]_t\times Y_1;\ \omega_X=\omega_2 \text{ on }  [1, \infty)_t\times Y_2. 
\end{equation}

The goal of this paper is to analyze the Seiberg-Witten equations \eqref{4DSWEQ} on $\CX$ and construct a chain map:
\begin{equation}\label{E9.12}
\Ch_*(\x): \Ch_*(\y_1,\bs_1)\to \Ch_*(\y_2,\bs_2)
\end{equation}
that is independent of the choice of 
\begin{itemize}
\item the planar metric $g_X$ compatible with $(g_{Y_1}, g_{Y_2}, g_\Sigma)$;
\item the closed 2-form $\omega_X\in \omega^2(X;i\R)$ in the class $[\omega_X]_{cpt}$;
\item any auxiliary perturbation of \eqref{4DSWEQ} defined in Subsection \ref{Subsec22.1};
\end{itemize}
up to chain homotopy. To do so, we have to take into account of all isomorphism classes of relative \spinc cobordisms:
\[
\Spincr(X; \bs_1,\bs_2)\colonequals \{ \text{all possible }\eqref{E9.8}: (\y_1,\bs_1)\to (\y_2,\bs_2)   \} \text{ modulo isomorphisms}
\]
which is a torsor over $H^2(X,\partial X;\Z)$. Indeed, any two relative \spinc cobordisms $\bs_{X,1},\bs_{X,2}$ that cover the 4-manifold $X$ with corners are related by a complex line bundle $L_{12}\to X$:
\[
\bs_{X,2}=\bs_{X,1}\otimes L_{12},
\]
and a trivialization $L_{12}\cong \C$ is specified along $\partial X$. Some of elements of $\Spincr(X; \bs_1,\bs_2)$ may arise from different underlying \spinc structures, but they all contribute to the chain map \eqref{E9.12} and will not be separated from each other. For any $\fa_i\in \SC_k(\hy_i,\bs_i),\ i=1,2$, an element of $\Spincr(X; \bs_1,\bs_2)$ can be viewed a homotopy class of $\x$-paths that connect $\fa_1$ and $\fa_2$.

\section{The Quotient Configuration Space and Slices}\label{Sec10}

 Configurations in $\SC_k(\hy, \bs)$ and $\SC_k(\hx, \bs_X)$ are required to converge to a fixed limit in the spatial direction, so by definition, they are never reducible, i.e. $\Psi$ or $\Phi\not\equiv 0$. This prevents us from  finding a global slice of the gauge action as in \cite[Section 9.6]{Bible} over the non-compact manifold $\hy$ or $\hx$. Nevertheless, local slices always exists. In this section, we prove that:
 
 \begin{proposition}\label{P10.1} For either $(M,\bs_M)=(\hy,\bs)$ or $(\hx, \bs_X)$, the quotient space 
 	 \[
 	\CB_k(M,\bs_M)\colonequals \SC_k(M,\bs_M)/\CG_{k+1}(M)
 	\]
 	is a Hilbert manifold when $2(k+1)>\dim M$ and $k\in \Z$. 
 \end{proposition}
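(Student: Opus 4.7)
The plan is to adapt the standard Coulomb-slice construction for Seiberg-Witten configurations (cf.~\cite[Chapter 9]{Bible}) to the non-compact, strictly irreducible setting. Two things must be established: the $\CG_{k+1}(M)$-action on $\SC_k(M,\bs_M)$ is smooth and free, and a smooth local slice exists through every configuration. The Hilbert-Lie-group structure on $\CG_{k+1}(M)$, with Lie algebra $L^2_{k+1}(M, i\R)$, follows in the usual way from the Sobolev embedding $L^2_{k+1}\hookrightarrow C^0$ ensured by $2(k+1)>\dim M$, together with the Sobolev multiplication $L^2_{k+1}\otimes L^2_k\to L^2_k$; the same embedding/multiplication give smoothness of the action. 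Freeness is immediate from irreducibility: if $u\cdot(B,\Psi)=(B,\Psi)$, then $u^{-1}du=0$ makes $u$ a constant on the connected manifold $M$, and since $(B,\Psi)-(B_0,\Psi_0)\in L^2_k\hookrightarrow C^0$ decays at infinity, $\Psi$ is asymptotic to the nowhere-vanishing reference spinor $\Psi_*$ of \eqref{E2.6}, so $u\Psi=\Psi$ forces $u=1$.

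The key step is the slice at a given $\gamma_0=(B_0',\Psi_0')\in\SC_k(M,\bs_M)$. I would work with the Coulomb slice relative to the full infinitesimal gauge action
\[
\bd_{\gamma_0}\colon L^2_{k+1}(M,i\R)\to L^2_k,\qquad f\mapsto(-df,\,f\Psi_0'),
\]
whose formal adjoint is $\bd^*_{\gamma_0}(b,\psi)=-d^*b+i\re\langle i\Psi_0',\psi\rangle$. The decisive analytic input is that the associated gauge-fixing operator
\[
L_{\gamma_0}\colonequals \bd^*_{\gamma_0}\bd_{\gamma_0}=d^*d+|\Psi_0'|^2\ \colon\ L^2_{k+1}(M,i\R)\longrightarrow L^2_{k-1}(M,i\R)
\]
is a linear isomorphism. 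By assumption~\ref{P7} and \eqref{E2.6}, $|\Psi_*|^2=r_+^2+2|\lambda^{0,1}|^2 r_-^2$ is bounded below by a positive constant on the cylindrical (or planar) end, and $\Psi_0'-\Psi_*\in L^2_k\hookrightarrow C^0$ decays, so $|\Psi_0'|^2\geq c_0>0$ outside some compact set. On the complete manifold $M$, standard elliptic theory then places the essential spectrum of $L_{\gamma_0}$ in $[c_0,\infty)$, so $L_{\gamma_0}$ is Fredholm of index zero; integrating $L_{\gamma_0}f=0$ against $\bar f$ (justified by $L^2_{k+1}$-decay) gives $\int_M|df|^2+|\Psi_0'|^2|f|^2=0$, which forces $df\equiv 0$ and $f\equiv 0$ on the end, hence $f\equiv 0$ globally. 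With $L_{\gamma_0}$ an isomorphism, the implicit function theorem applied to $(u,\gamma)\mapsto u\cdot\gamma$ on $\CG_{k+1}^e\times\Sch_{\gamma_0}$, where
\[
\Sch_{\gamma_0}=\gamma_0+\bigl\{(b,\psi)\in L^2_k:\bd^*_{\gamma_0}(b,\psi)=0\bigr\},
\]
shows that $\Sch_{\gamma_0}$ is a smooth local slice for the identity component; translation by representatives of the discrete group $\pi_0(\CG_{k+1})\cong H^1(Y,\partial Y;\Z)$ extends this to all of $\CG_{k+1}$.

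Hausdorffness of the quotient I would verify by showing that the orbit relation is closed: given $\gamma_n\to\gamma$ and $u_n\gamma_n\to\gamma'$ in $\SC_k$, the $C^0$-convergence $u_n\Psi_n\to\Psi'$ combined with the uniform lower bound $|\Psi_n|\geq c_0/2$ near infinity pins $u_n$ to a $C^0$-limit on the end; placing $u_n\gamma_n-\gamma'$ in Coulomb gauge relative to $\gamma'$ and bootstrapping via elliptic regularity for $L_{\gamma'}$ then yields $L^2_{k+1}$-convergence of a subsequence to some $u\in\CG_{k+1}$ with $u\gamma=\gamma'$. The case $M=\hx$ runs in parallel: $\hx$ is a complete $4$-manifold with planar end, the reference spinor $\Phi_0$ equals $\Phi_*$ of \eqref{E9.7} there, so $|\Phi_0|^2$ is again uniformly positive on the end, and the same essential-spectrum argument makes the four-dimensional analogue of $L_{\gamma_0}$ an isomorphism.

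The main obstacle is establishing the invertibility of $L_{\gamma_0}$ on the non-compact $M$; this is exactly where the boundary hypothesis~\ref{P7} (in particular $\lambda\neq 0$ on each component of $\Sigma$, guaranteeing $|\Psi_*|^2>0$) enters the construction, substituting for the Hodge-theoretic arguments that suffice on closed manifolds.
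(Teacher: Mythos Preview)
Your overall strategy matches the paper's: establish that the gauge group acts smoothly and freely, prove Hausdorffness of the quotient, and construct a local Coulomb slice by inverting the gauge Laplacian $L_{\gamma_0}=\Delta+|\Psi_0'|^2$. The paper packages this via the general principle \cite[Lemma~9.3.2]{Bible} (closed range of $\bd_\gamma$ suffices once Hausdorffness is known) and then proves the splitting $\CT_j=\J_j\oplus\K_j$ by inverting exactly the operator you identify.

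There is, however, a genuine gap in your treatment of $M=\hx$. You assert that $\hx$ is a complete $4$-manifold, but it is not: $\hx$ is a \emph{cobordism} from $\hy_1$ to $\hy_2$, with nonempty boundary $\partial\hx=(-\hy_1)\cup\hy_2$ (it is $\CX$, not $\hx$, that is complete). Consequently your essential-spectrum argument for the invertibility of $L_{\gamma_0}$ on $L^2_{k+1}(\hx)$ is incomplete as stated, and the integration-by-parts step $\int|df|^2+|\Psi_0'|^2|f|^2=0$ acquires a boundary term. The paper handles this by imposing the Neumann condition and working with the operator
\[
\bigl(\Delta_M+|\Phi|^2,\ \tfrac{\partial}{\partial\vn}\big|_{\partial M}\bigr)\colon L^2_{j+1}(M,i\R)\longrightarrow L^2_{j-1}(M,i\R)\times L^2_{j+1/2}(\partial M,i\R),
\]
showing it is Fredholm (parametrix patching, using the model on $[-1,1]\times\R_s\times\Sigma$) and then invertible because the Neumann realization of $\Delta+|\Phi|^2$ is positive self-adjoint. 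Correspondingly, the slice $\K_{j,\gamma}$ carries the boundary condition $\langle a,\vn\rangle=0$ on $\partial M$. Your argument for $M=\hy$ is fine, but for $\hx$ you must incorporate this boundary-value structure.

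A smaller point: your Hausdorffness sketch invokes ``placing $u_n\gamma_n-\gamma'$ in Coulomb gauge,'' which is at best circular and at worst ill-posed (that expression is a tangent vector, not a configuration). The paper's argument is more direct and avoids any appeal to the slice: one bounds $\|1-u_n\|_{L^2_{k+1}}$ by first controlling $\|1-u_n\|_{L^2}$ via $\|(1-u_n)\Phi\|_{L^2}$ (using that $\Phi$ is bounded away from zero on the end) and then bootstrapping through $\nabla u_n=u_n\cdot u_n^{-1}du_n$ with $u_n^{-1}du_n$ already controlled in $L^2_k$.
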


It is clear from the formula
\[
(uv-1)=(u-1)(v-1)+(u-1)+(v-1),\ \forall u, v\in \CG_{k+1}(M)
\]
that $\CG_{k+1}(M)$ is a Hilbert Lie group when $2(k+1)>\dim M$. Following the book \cite[Section 9]{Bible}, we base the argument on a general principle:
\begin{lemma}[\cite{P68},\cite{Bible} Lemma 9.3.2]\label{L10.2} Suppose a Hilbert Lie group $G$ acts smoothly and freely on a Hilbert manifold $C$, and the quotient space  $C/G$ is Hausdorff. Suppose that at each $c\in C$, the differential 
\[
d_c: T_eG\to T_c G
\]
has closed range, then $C/G$ is also a Hilbert manifold. 
\end{lemma}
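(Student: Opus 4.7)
The plan is the standard slice argument adapted to Hilbert manifolds. Fix $c\in C$ and let $\mu_c\colon G\to C$, $g\mapsto g\cdot c$, be the orbit map. Its differential at the identity is $d_c$, and by the closed-range hypothesis the image $R_c\colonequals \mathrm{range}(d_c)\subset T_cC$ is a closed subspace. Working in a local Hilbert chart centered at $c$, I would identify $T_cC$ with the model Hilbert space and choose a closed complement $K_c$ of $R_c$ (for instance the orthogonal complement with respect to any compatible inner product on the chart). Because $G$ acts freely and $d_c$ has closed range, $d_c\colon T_eG\to R_c$ is a continuous linear bijection between Hilbert spaces, hence a topological isomorphism by the open mapping theorem; in particular $d_c$ itself is an injective embedding onto $R_c$.

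Next I would define a local slice at $c$ by
\[
S_c\colonequals \{\,c+k\;:\; k\in K_c,\ \|k\|<\varepsilon\,\}
\]
inside the chosen chart, and consider the map $\Phi\colon U\times S_c\to C$, $(g,s)\mapsto g\cdot s$, where $U\subset G$ is a small neighborhood of $e$. Its differential at $(e,c)$ is $(\xi,k)\mapsto d_c(\xi)+k$, which is a Banach-space isomorphism $T_eG\oplus K_c\xrightarrow{\cong} T_cC$ by the decomposition above. The inverse function theorem in Hilbert manifolds then guarantees that $\Phi$ is a diffeomorphism from (a possibly smaller) $U\times S_c$ onto an open neighborhood of $c$ in $C$. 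This is the main analytic input and is straightforward; the genuine difficulty is purely point-set.

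The hard step is to arrange that, after shrinking $S_c$ if necessary, the quotient map $\pi\colon C\to C/G$ is injective on $S_c$: two distinct points of $S_c$ must lie in distinct $G$-orbits. If this failed, one could find sequences $s_n,s_n'\in S_c$ with $s_n,s_n'\to c$ and $g_n\in G$, $g_n\neq e$, with $g_n\cdot s_n=s_n'$. In particular $\pi(s_n)=\pi(s_n')$ and both converge to $\pi(c)$ in $C/G$. This is where I would use the Hausdorff hypothesis on $C/G$ together with the local diffeomorphism $\Phi$: if $g_n$ stayed in a small $U$ then $\Phi$-injectivity would force $g_n=e$; if $g_n$ escaped every compact neighborhood of $e$ then, using that $C/G$ is Hausdorff and the local structure given by $\Phi$, one derives a contradiction with the convergence $\pi(s_n)=\pi(s_n')\to\pi(c)$ exactly as in Palais \cite{P68} (and as carried out in \cite[Lemma 9.3.2]{Bible}).

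Granted slice injectivity, the quotient map $\pi|_{S_c}\colon S_c\to C/G$ is a continuous bijection onto an open set, and its inverse gives a chart for $C/G$ modeled on the Hilbert space $K_c$. Two such charts at nearby points $c,c'$ are compared through the local diffeomorphism $\Phi$: the transition map is the composition $s\mapsto g(s)\cdot s\mapsto s'$ where $g(s)\in G$ is the unique (smooth, by the implicit function theorem applied to $\Phi$) gauge transformation bringing $s\in S_c$ into $S_{c'}$. Smoothness of these transition maps equips $C/G$ with a Hilbert manifold structure, completing the proof.
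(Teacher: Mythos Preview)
The paper does not prove this lemma; it is cited as a known result from \cite{P68} and \cite[Lemma~9.3.2]{Bible} and used as a black box to deduce Proposition~\ref{P10.1}. Your sketch is the standard slice argument and is essentially correct, so there is nothing to compare against in the paper itself.

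One small remark: you assert that $d_c$ is injective because the action is free, but you do not say why. The justification is that if $d_c(\xi)=0$ then $c$ is a fixed point of the flow $t\mapsto\exp(t\xi)\cdot c$, hence $\exp(t\xi)\cdot c=c$ for all $t$ by uniqueness of integral curves, and freeness forces $\xi=0$. This is worth making explicit, since closed range alone does not give injectivity.
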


It remains to verify the condition of Lemma \ref{L10.2}.

\begin{lemma}\label{L10.3}For either $(M,\bs_M)=(\hy,\bs)$ or $(\hx, \bs_X)$, the quotient configuration space $\CB_k(M,\bs_M)$ is Hausdorff. 
\end{lemma}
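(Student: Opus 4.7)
I would prove Hausdorffness by the following closed-orbit criterion: if $\gamma_n\to \gamma$ and $u_n\cdot\gamma_n\to \gamma'$ in $\SC_k(M,\bs_M)$ for some sequence $u_n\in\CG_{k+1}(M)$, then there exists $u\in\CG_{k+1}(M)$ with $u\cdot\gamma=\gamma'$. Since $\CG_{k+1}(M)$ acts continuously, this implies the orbits are closed and the quotient is Hausdorff.

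The crucial observation is that configurations in $\SC_k(M,\bs_M)$ are \emph{uniformly irreducible on the end}. Write $\gamma=(B,\Psi)$, $\gamma_n=(B_n,\Psi_n)$, and note that $(B_n-B_0,\Psi_n-\Psi_0)\to (B-B_0,\Psi-\Psi_0)$ in $L^2_k$. Since $2(k+1)>\dim M$, Sobolev embedding gives $L^2_k\hookrightarrow C^0_b$, so on some end neighborhood $U\subset M$ of the form $[S_0,\infty)_s\times \Sigma$ (resp.\ the analogous end of $\hx$), the spinors $\Psi_n,\Psi$ are $C^0$-close to the reference $\Psi_*$. By the explicit formula \eqref{E2.6}, $|\Psi_*|\geq c_0>0$ everywhere on $\Sigma$, so after discarding finitely many terms we have $|\Psi_n|,|\Psi|\geq c_0/2$ pointwise on $U$. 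Let $\Psi'_n\colonequals u_n\Psi_n\to \Psi'$ in $L^2_k$. Since $u_n$ acts by scalar multiplication with $|u_n|=1$, on $U$ we can solve pointwise
\[
u_n=\frac{\langle \Psi'_n,\Psi_n\rangle}{|\Psi_n|^2},
\]
and since $L^2_k$ is a Banach algebra with $1/|\Psi_n|^2$ bounded in $L^2_k(U)$ (via the identity $1/|\Psi_n|^2=1/|\Psi_*|^2 + \text{lower-order remainder in }L^2_k$), the right-hand side converges in $L^2_k(U)$ to $u\colonequals \langle \Psi',\Psi\rangle/|\Psi|^2$. In particular $u_n-1\to u-1$ strongly in $L^2_k(U)$, and $|u|=1$ a.e.

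Next I would upgrade this to global convergence in $L^2_{k+1}$. From $u_n\cdot\gamma_n=\gamma'_n$ one has the identity
\[
du_n=u_n\bigl((B_n-B_0)-(B'_n-B_0)\bigr),
\]
so $du_n$ converges in $L^2_k(M)$ (using the algebra property together with $|u_n|=1$). Setting $v_n\colonequals u_n-1\in L^2_{k+1}(M)$, we obtain: $\nabla v_n$ is bounded in $L^2_k(M)$, $\|v_n\|_{L^\infty}\le 2$, and $v_n\to u-1$ in $L^2_k(U)$. A Poincar\'e-type argument on the compact region $K=M\setminus U$, with boundary data controlled by $v_n|_{\partial K}$ (bounded in every Sobolev norm by the convergence on $U$) and interior gradient bounded in $L^2_k$, yields a uniform bound $\|v_n\|_{L^2_{k+1}(M)}\le C$. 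Hence a subsequence converges weakly in $L^2_{k+1}$ and strongly in $L^2_{k}$ on compacts, and the limit must agree with $u-1$ on $U$. Therefore $u\in 1+L^2_{k+1}(M)=\CG_{k+1}(M)$.

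Finally, passing to the limit in $u_n\Psi_n=\Psi'_n$ and $u_n^{-1}du_n=(B_n-B_0)-(B'_n-B_0)$ (using strong $L^2_k$-convergence of $\Psi_n$ and $B_n-B_0$, together with $u_n\to u$ in $L^2_k$ on compacts and the $L^\infty$-bound on $u_n$) gives $u\Psi=\Psi'$ and $B'=B-u^{-1}du$, i.e.\ $u\cdot \gamma=\gamma'$. The case $M=\hx$ is identical once one notes that the end of $\hx$ still contains $[-1,1]_t\times [S_0,\infty)_s\times \Sigma$, on which the reference configuration is $\Psi_*$ and the above pointwise inversion argument applies verbatim. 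I expect the main technical obstacle to be the global $L^2_{k+1}$-bound on $v_n=u_n-1$: one must carefully combine the strong convergence on the end, the gradient bound $\|\nabla v_n\|_{L^2_k}\le C$, and a cutoff/Poincar\'e argument on the compact core so as to control the $L^2$-norm of $v_n$ globally without any a priori global $L^2$-bound on $u_n-1$.
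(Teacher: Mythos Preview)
Your overall strategy (the closed-orbit criterion, bounding $v_n=u_n-1$ in $L^2_{k+1}$, then passing to a weak limit) matches the paper's. However, there is a genuine gap at the borderline regularities.

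You claim ``since $2(k+1)>\dim M$, Sobolev embedding gives $L^2_k\hookrightarrow C^0_b$'' and then repeatedly use that $L^2_k$ is a Banach algebra. The hypothesis $2(k+1)>\dim M$ is what makes $\CG_{k+1}$ a Hilbert Lie group; it does \emph{not} give $L^2_k\hookrightarrow C^0$, which needs $2k>\dim M$. In the two cases actually covered by the lemma, namely $k=1$ on the $3$-manifold $\hy$ and $k=2$ on the $4$-manifold $\hx$, neither the $C^0$ embedding nor the algebra property of $L^2_k$ holds. Consequently your pointwise formula $u_n=\langle\Psi_n',\Psi_n\rangle/|\Psi_n|^2$ is not well-defined (you cannot conclude $|\Psi_n|\ge c_0/2$ on the end), and the product and quotient manipulations you perform to get $L^2_k(U)$-convergence of $u_n$ are not justified.

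The paper sidesteps this entirely. For the $L^2$-bound on $v_n$ it uses only $|v_n|\le 2$ (for compact regions) together with the triangle inequality
\[
\|v_n\Phi\|_2\le \|\Phi-\Phi'\|_2+\|\Phi'-u_n\Phi_n\|_2+\|u_n(\Phi_n-\Phi)\|_2,
\]
and then the pointwise lower bound on the \emph{fixed reference} spinor $\Psi_*$ (not on $\Psi_n$): since $\Phi=\Psi_*+\psi$ with $\psi\in L^2$, one gets $\|v_n\|_{L^2(\text{end})}\le c_0^{-1}(\|v_n\Phi\|_2+2\|\psi\|_2)$. No $C^0$ embedding is needed. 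For the higher derivatives the paper writes $\nabla v_n=w_n-v_n\cdot w_n$ with $w_n=u_n^{-1}du_n$ bounded in $L^2_k$, and bootstraps by induction, explicitly flagging that the first steps in the borderline cases ($k=1,\dim M=3$ and $k=2,\dim M=4$) require intermediate $L^p$ estimates rather than the algebra property. Your Poincar\'e-on-the-core idea is unnecessary once the global $L^2$-bound is obtained this way.
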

\begin{proof} Suppose we have a sequence of configurations $\gamma_n=(A_n,\Phi_n)\in \SC_k(M,s)$ and a sequence of gauge transformations $u_n\in \CG_{k+1}(M)$ such that 
	\[
	\gamma_n\to \gamma \text{ and } u_n\cdot \gamma_n\to \gamma'
	\]
	for some $\gamma=(A,\Phi)$ and $\gamma'=(A',\Phi')$. We wish to show that $u\cdot \gamma=\gamma'$ for some $u\in \CG_{k+1}(M)$. We prove that $v_n\colonequals 1-u_n$ has uniformly bounded $L^2_{k+1}$ norm, so there is a weakly converging subsequence among $\{v_n\}$. Let $v$ be the weak limit and define $u\colonequals 1-v$. 
	
	We begin with the $L^2$-norm of $v_n$. Since $\|v_n\|_\infty\leq 2$, $|v_n|_2^2$ contributes to a bounded integral over any compact region of $M$. It suffices to estimate $|v_n|_2^2$ over the cylindrical end of $M$. Note that 
	\begin{align*}
\|v_n\Phi\|_2&=\|(1-u_n)\Phi\|_2\leq \|\Phi-\Phi'\|_2+\|\Phi'-u_n\Phi_n\|_2+\|u_n(\Phi_n-\Phi)\|_2,
	\end{align*}
	which is uniformly bounded. As $s\to\infty$, $\Phi$ approximates the standard spinor and is non-vanishing everywhere. It follows that $\|v_n\|_2\leq C$ for some uniform $C>0$. 

To deal with derivatives of $v_n$, let $w_n=u_n^{-1}du_n$. Then $\|w_n\|_{L^2_k}\leq \|u_n\cdot \gamma_n-\gamma_n\|_{L^2_k}\leq \|\gamma-\gamma'\|_{L^2_k}+1$ when $n\gg 1$. The estimate for $\|\nabla^l v_n\|_{L^2}\ (1\leq l\leq k+1)$ now follows from the relation 
\[
\nabla v_n=\nabla u_n=w_n-v_n\cdot w_n
\]
and an induction argument. If we already know $2k> \dim M$, then $L^2_k$ is a Banach algebra itself; otherwise, the first a few steps in the induction requires special treatments. For instance, if $\dim M=3$ and $k=1$, then we have to bound
\[
\|\nabla v_n\|_{p} \text{ for } 2\leq p\leq 6 \text{ and }\|\nabla^2 v_n\|_2.
\]
If $\dim M=4$ and $k=2$,  then we have to bound
\[
\|\nabla v_n\|_{p} \text{ for } 2\leq p<\infty,\ \|\nabla^2 v_n\|_p \text{ for } 2\leq p\leq 4 \text{ and } \|\nabla^3 v_n\|_2.
\]
For the Sobolev embedding theorem on cylinders, see \cite[Section 13.2]{Bible}.
\end{proof}

Let $\CT_k$ be the tangent space of $\SC_k(M,\bs_M)$. For each configuration $\gamma=(A,\Phi)\in \SC_k(M,\bs_M)$, let $\dg$ be the map obtained by linearizing the action of $\CG_{k+1}(M)$, extended to lower Sobolev regularities $(0\leq j\leq k)$:
\begin{align*}
\dg: L^2_{j+1}(M;i\R)&\to L^2_j(M, iT^*M\oplus S^+)=\CT_{j,\gamma}\\
f&\mapsto (-df, f\Phi).
\end{align*}
 
Let $\J_{j,\gamma}\subset \CT_{j,\gamma}$ be the image of $\dg$ and $\K_{j,\gamma}$ be the $L^2$-orthogonal complement of $\J_{j,\gamma}$: 
\begin{align*}
\K_{j,\gamma}&\colonequals \{ v\in \CT_{j,\gamma}: \langle v, \dg(f)\rangle_{L^2(M)}=0,\forall f\in L^2_{j+1}(M;i\R)\}\\
&=\{v=(\va,\vphi)\in L^2_j(M, iT^*M\oplus S^+): \dg^*(v)=0, \langle a, \vn\rangle =0 \text{ at }\partial M\}
\end{align*}
where $\vn$ is the outward normal vector at $\partial M$ and 
\begin{align*}
\dg^*: L^2_{j}(M, iT^*M\oplus S^+)&\to L^2_{j-1}(M;i\R)\\
(\va,\vphi)&\mapsto -d^*\va+i\re\langle i\Phi, \vphi\rangle. 
\end{align*}
is the formal adjoint of $\dg$. 

\begin{lemma}[cf. {\cite[Proposition 9.3.4]{Bible}}]\label{L10.4} As $\gamma$ varies over $\SC_k(M,\bs_M)$, $\J_{j,\gamma}$ and $\K_{j,\gamma}$ form complementary closed sub-bundles of $\CT_j$, and we have a smooth decomposition 
	\[
	\CT_j|_{\SC_k(M,\bs)}=\J_j\oplus \K_j, 0\leq j\leq k.
	\]
	In particular, $T\SC_k(M,\bs)=\CT_{k}=\J_k\oplus \K_k$. 
\end{lemma}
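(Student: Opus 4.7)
The plan is to establish the splitting $\CT_j = \J_j \oplus \K_j$ pointwise by inverting the gauge-fixing Laplacian $\dg^*\dg$, and then to upgrade the pointwise construction to a smooth sub-bundle decomposition by checking that the resulting projector depends smoothly on $\gamma$. This mirrors the closed-manifold argument in \cite[Proposition 9.3.4]{Bible}; the novelty here is that $M$ is non-compact, so compactness of Sobolev embeddings must be replaced by an asymptotic analysis on the end.

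For a fixed $\gamma = (A,\Phi) \in \SC_k(M,\bs_M)$, a direct computation gives
\[
\dg^*\dg f \;=\; \Delta f + |\Phi|^2 f,
\]
where $\Delta = d^*d$; when $M = \hx$ this is paired with the Neumann-type boundary condition $\partial_\vn f = -\langle \va, \vn\rangle$ inherited from the defining condition $\langle \va,\vn\rangle = 0$ of $\K_{j,\gamma}$, while for $M = \hy$ there is no boundary. I would first prove that $\dg^*\dg$ is an isomorphism $L^2_{j+1}\to L^2_{j-1}$. Two ingredients make this go through. First, because $\Phi - \Phi_0 \in L^2_k(M)$ with $2(k+1)>\dim M$, Sobolev embedding forces $\Phi \to \Phi_0 = \Phi_*$ uniformly on the cylindrical end, and since $|\Phi_*|^2 \geq r_+^2 > 0$ by formula \eqref{E2.6}, the zeroth-order coefficient $|\Phi|^2$ is bounded below by a positive constant outside a compact subset of $M$. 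Second, the asymptotic model on the end is the translation-invariant operator $\Delta + |\Psi_*|^2$ on $\R_s\times\Sigma$ (respectively $[-1,1]_t\times\R_s\times\Sigma$ for the planar end of $\hx$), whose $L^2$-spectrum is bounded away from zero. By Lockhart--McOwen Fredholm theory on manifolds with cylindrical ends, $\dg^*\dg$ is Fredholm. Injectivity is then immediate from integration by parts: pairing $\dg^*\dg f = 0$ with $f$ and invoking the Neumann boundary condition yields
\[
\|df\|_{L^2}^2 + \bigl\||\Phi|\,f\bigr\|_{L^2}^2 \;=\; 0,
\]
forcing $f = 0$ since $M$ is connected and $\Phi$ is non-vanishing on its end. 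Self-adjointness of the Neumann problem promotes injectivity to an isomorphism.

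Given invertibility, the rest is essentially formal. For any $v = (\va,\vphi) \in \CT_{j,\gamma}$ I would solve $\dg^*\dg f = \dg^* v$ with inhomogeneous Neumann data $\partial_\vn f = -\langle \va, \vn\rangle$ and set $w = v - \dg f$; then $v = \dg f + w$ with $\dg f \in \J_{j,\gamma}$ and $w \in \K_{j,\gamma}$, while $\J_{j,\gamma} \cap \K_{j,\gamma} = 0$ by the same positivity argument. Smoothness in $\gamma$ follows because $|\Phi|^2$ depends smoothly on $\gamma$ and operator inversion is smooth on the open set of invertible elements in $\Fred(L^2_{j+1}, L^2_{j-1})$; hence the projectors $v \mapsto \dg f$ and $v \mapsto w$ depend smoothly on $\gamma$, producing the asserted smooth sub-bundle decomposition.

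The main obstacle is the Fredholm/invertibility step, since on a non-compact manifold one cannot shortcut to Fredholmness via compactness of Sobolev embeddings; everything hinges on reducing to the translation-invariant model at infinity and on the non-vanishing of the reference spinor $\Psi_*$ (formula \eqref{E2.6}), which is precisely what the assumption $\lambda \neq 0$ in \ref{P7} buys. Once this analytic input is in place, the decomposition and its smooth dependence on $\gamma$ follow routinely.
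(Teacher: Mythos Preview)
Your proposal is correct and follows essentially the same route as the paper: reduce to invertibility of the Neumann problem for $\Delta + |\Phi|^2$, establish Fredholmness by comparing with the translation-invariant model $\Delta + |\Psi_*|^2$ on the end (the paper does this via Fourier transform and parametrix patching, your invocation of Lockhart--McOwen amounts to the same thing), and conclude invertibility from positivity and self-adjointness. The smooth dependence on $\gamma$ you spell out is left implicit in the paper but is standard once the inverse exists.
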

Proposition \ref{P10.1} now follows from Lemma \ref{L10.3} and \ref{L10.4}.  
\begin{proof}[Proof of Lemma \ref{L10.4}] For any $v=(\va,\vphi)\in \CT_{j,\gamma}$, we need to find the unique element $f\in L^2_{j+1}(M;i\R)$ such that $v-\dg(f)\in \K_{j,\gamma}$. Such an element solves the Neumann boundary value problem:
	\begin{equation}\label{F10.1}
\left\{\begin{array}{rl}
\Delta_M f+|\Phi|^2f &= -\dg^*(v)\\
\langle df,\vn\rangle&=\langle \delta a, \vn\rangle \text{ at } \partial M. 
\end{array}
\right.
	\end{equation}
	
	The left hand side of $\eqref{F10.1}$ forms a Fredholm operator ($1\leq j\leq k$):
	\begin{equation}\label{E10.2}
	(\Delta_M+|\Phi|^2, \frac{\partial }{\partial \vn}\bigg|_{\partial M} ): L^2_{j+1}(M;i\R)\to L^2_{j-1}(M;i\R)\times L^2_{j+1/2}(\partial M;i\R)
	\end{equation}
	which is in fact invertible. If $M$ is compact, this follows from \cite[Proposition 7.5]{PDEI}. In general, one may start with the special case when 
	\[
	(M, \Phi)=(\R_s\times\Sigma, \Psi_*) \text{ or } ([-1,1]\times \R_s\times \Sigma, \Phi_*)
	\]
	using Fourier transformation on the real line $\R_s$ and the positivity of $|\Psi_*|^2$. To show \eqref{E10.2} is Fredholm, apply the parametrix patching argument. To compute the index of \ref{E10.2}, note that the restriction map
	\[
\frac{\partial }{\partial \vn}\bigg|_{\partial M}  : L^2_{j+1}(M.i\R)\to L^2_{j+1/2}(\partial M;i\R)
	\] 
	is surjective, and the operator
	\[
	\Delta_M+|\Phi|^2: \{f\in L^2_2(M;i\R): \langle df, \vn\rangle=0\}\to L^2(M;i\R)
	\]
	is positive and self-adjoint. This proves that the operator \eqref{E10.2} is invertible.
	
	 Alternatively, one may follow the proof of \cite[Proposition 7.5]{PDEI}. Details are left as exercises.
\end{proof}
We record the next proposition for convenience:
\begin{proposition}\label{P10.5} Over the configuration space $\SC_k(\hy,\bs)$, the gradient \eqref{F9.6} of the Chern-Simons-Dirac functional $\CL_{\omega}$ defines a smooth section of $\K_{k-1}\to \SC_k(\hy,\bs)$ when $k\geq 1$.
\end{proposition}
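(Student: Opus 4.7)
The plan is to verify three things: that $\grad\CL_\omega(\gamma)$ lies in $L^2_{k-1}$ for every $\gamma\in\SC_k(\hy,\bs)$, that it is pointwise annihilated by $\dg^*$, and that the resulting assignment is smooth between Hilbert manifolds. Note first that $\hy$ has no boundary, so the boundary condition $\langle \va,\vn\rangle=0$ in the definition of $\K_{k-1,\gamma}$ is vacuous and I only need to check the differential identity.

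For the orthogonality $\dg^*(\grad\CL_\omega(\gamma))=0$, I would invoke gauge invariance. By Lemma \ref{L9.4}, the shift $\CL_\omega(u\cdot\gamma)-\CL_\omega(\gamma)$ depends only on the topological class $[u]\in H^1(Y,\partial Y;\Z)$, and so is locally constant on $\CG_{k+1}(\hy)$. In particular it vanishes on the identity component $\CG^e_{k+1}(\hy)=\exp(L^2_{k+1}(\hy,i\R))$. Differentiating at $u=1$ in the direction $f\in L^2_{k+1}(\hy,i\R)$ yields
\[
\langle\grad\CL_\omega(\gamma),\dg(f)\rangle_{L^2}=0\quad\text{for all }f,
\]
which is precisely $\dg^*(\grad\CL_\omega(\gamma))=0$ (integration by parts produces no boundary term since $\hy$ is boundaryless).

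For the $L^2_{k-1}$ regularity, the key move is to expand around the reference configuration. Writing $(B,\Psi)=(B_0+b,\Psi_0+\psi)$ with $(b,\psi)\in L^2_k$, a direct calculation using $F_{B^t}=F_{B_0^t}+2db$, $D_B=D_{B_0}+\rho_3(b)$, and the bilinear nature of $(\Psi\Psi^*)_0$ gives
\[
\grad\CL_\omega(B,\Psi)=\grad\CL_\omega(B_0,\Psi_0)+\Pa_1(b,\psi)+\Pa_2(b,\psi),
\]
where $\Pa_1$ is linear and $\Pa_2$ is quadratic, both with coefficients built from $B_0,\Psi_0$ and their first derivatives. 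The crucial observation is that the constant term $\grad\CL_\omega(B_0,\Psi_0)$ is compactly supported: on the cylindrical end $(B_0,\Psi_0)=(B_*,\Psi_*)$ is the unique $\R_s$-translation-invariant solution of the $3$-dimensional Seiberg--Witten equations \eqref{3DSWEQ}, so the gradient vanishes there. This is what neutralizes the apparently singular term $-*_3\omega$ (which is not in $L^2$ since $\omega=\mu+ds\wedge\lambda$ on the end). Because $(B_0,\Psi_0)$ is smooth with bounded derivatives, $\Pa_1$ is a continuous map $L^2_k\to L^2_{k-1}$, while $\Pa_2$ lands in $L^2_{k-1}$ by Sobolev multiplication on the three-manifold $\hy$: for $k=1$, the embedding $L^2_1\hookrightarrow L^4$ gives $L^2_1\cdot L^2_1\hookrightarrow L^2$, and for $k\geq 2$ the space $L^2_k$ is a Banach algebra.

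Smoothness of the section then follows because continuous multilinear maps between Hilbert spaces are smooth, hence so is $\gamma\mapsto\Pa_1(b,\psi)+\Pa_2(b,\psi)$ as a map $\SC_k(\hy,\bs)\to L^2_{k-1}$. The main subtlety, and the only place where the setup of the paper really intervenes, is the treatment of the non-integrable term $-*_3\omega$: only after subtracting the value at the reference configuration, which is arranged to solve the equations on the end, does one get a genuinely $L^2_{k-1}$-valued section. Conditions \ref{P3}\ref{P4}\ref{P7} and the choice of $(B_*,\Psi_*)$ from \eqref{E2.6} are precisely what make this work.
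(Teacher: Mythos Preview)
Your proof is correct. The paper records Proposition~\ref{P10.5} ``for convenience'' without proof, so there is nothing to compare against; your argument fills in precisely the details one would expect. The three ingredients --- gauge invariance for $\dg^*(\grad\CL_\omega)=0$, the cancellation of the non-$L^2$ piece $-*_3\omega$ against the rest of $\grad\CL_\omega(B_0,\Psi_0)$ on the cylindrical end (since $(B_*,\Psi_*)$ solves \eqref{3DSWEQ} there), and Sobolev multiplication for the quadratic remainder --- are all valid. One small remark: the Sobolev embeddings and multiplication theorems you invoke do hold on the noncompact manifold $\hy$ because of its cylindrical-end geometry (cf.\ \cite[Section 13.2]{Bible}, as cited in the proof of Lemma~\ref{L10.3}); it may be worth saying this explicitly.
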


\section{Energy Equations}\label{Sec12}

This section is devoted to the energy equations of the Seiberg-Witten equations \eqref{4DSWEQ} on $\hx$, which will play an important role in the proof of the Compactness Theorem \ref{T11.1} in Section \ref{Sec11}. In particular, it gives property \ref{K1}. The main results of this section are Theorem \ref{Energy10.1} and Proposition \ref{Energy10.2}.

\subsection{The 4-Dimensional Case}Following the book \cite[Section 4]{Bible}, we prove an energy equation associated to the perturbed Seiberg-Witten equations (\ref{4DSWEQ}):
\begin{theorem}[cf. {\cite[P.593]{Bible}}]\label{Energy10.1}For any morphism $\x: \y_1\to\y_2 $ in the strict cobordism category $\Cob_s$, choose a planar metric $g_X$ on $X$ and consider a relative \spinc cobordism $(\hx,\bs_X): (\hy_1, \bs_1)\to (\hy_2,\bs_2)$. Then for any configuration $\gamma=(A,\Phi)\in \SC(\hx, \bs_X)$, the $L^2$-norm of the Seiberg-Witten map $\F_X(A,\Phi)$ can be expressed as 
	\[
\int_{\hatx}| \F_X(A,\Phi)|^2=\E_{an}(A,\Phi)-\E_{top}(A,\Phi),
	\]
where 
	\begin{align}
	\E_{an}(A,\Phi)&\colonequals \int_{\hx} \frac{1}{4}|F_{A^t}|^2+|\nabla_A\Phi|^2+|(\Phi\Phi^*)_0+\rho_4(\omega^+_X)|^2+\frac{s}{4}|\Phi|^2-\langle F_{A^t},\bomega_X\rangle\label{an} \\
	&\qquad -\int_{\hx}\langle F_{A^t},\omega_\lambda-\omega_{X,h}\rangle-\int_{\hx}F_{A^t_0}\wedge *_4\omega_{X,h}, \nonumber\\
	\E_{top}(A,\Phi)&\colonequals 2\CL_{\omega_1}(B_1,\Psi_1)-2\CL_{\omega_2}(B_2,\Psi_2)+\frac{1}{4}\int_{\hx} F_{A^t_0}\wedge F_{A^t_0}-\int_{\hx}F_{A^t_0}\wedge \omega_X,\label{top}
	\end{align}
and $(B_i,\Psi_i)=(A,\Phi)|_{\hy_i}$ are restrictions of $\gamma$ at $\hy_i$ for $i=1,2$. Here, $\omega_X=\bomega_X+\omega_\lambda$ is the closed 2-form constructed in \ref{Q6} with $\omega_\lambda=\chi_1(s)ds\wedge\lambda$. The co-closed 2-form $\omega_{X,h}$ is subject to the Neumann boundary condition and the constraint that $\omega_\lambda-\omega_{X,h}\in L^2(\hx)$. Its existence is guaranteed by Lemma \ref{9.3}. 
\end{theorem}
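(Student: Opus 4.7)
The strategy is the standard one for Seiberg--Witten energy identities, adapted from \cite{Bible}: combine a pointwise algebraic expansion of $|\mathcal{F}_X|^2$ with the Weitzenb\"ock formula for $D_A^+$, then use Stokes' theorem on $\hat X$ to convert the resulting curvature--curvature and curvature--$\omega$ topological pairings into boundary integrals at $(-\hy_1)\cup\hy_2$ that assemble, via the definition of $\mathcal{L}_{\omega_i}$, into the Chern--Simons--Dirac differences in $\mathcal E_{top}$. The new element here is the extra bookkeeping required at the planar end $[-1,1]_t\times [0,\infty)_s\times\Sigma$, where the reference data $\omega_\lambda$ and $F_{A_0^t}$ fail to lie in $L^2$.

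First I would expand $|\mathcal{F}_X|^2 = \bigl|\tfrac{1}{2}\rho_4(F_{A^t}^+) - (\Phi\Phi^*)_0 - \rho_4(\omega_X^+)\bigr|^2 + |D_A^+\Phi|^2$ using $|\rho_4(\beta)|^2 = 2|\beta|^2$ and $\langle\rho_4(\beta)\Phi,\Phi\rangle = 2\langle\rho_4(\beta),(\Phi\Phi^*)_0\rangle$ for self-dual $\beta$. Applying the Weitzenb\"ock formula $(D_A^+)^*D_A^+ = \nabla_A^*\nabla_A + \tfrac{s}{4} + \tfrac{1}{2}\rho_4(F_{A^t}^+)$ together with the integration-by-parts boundary terms from $\int|D_A^+\Phi|^2$ and $\int|\nabla_A\Phi|^2$, the curvature--spinor cross terms cancel in the interior and one obtains
\[
\int_{\hat X}\!|\mathcal{F}_X|^2 = \int_{\hat X}\!\Bigl[\tfrac{1}{2}|F_{A^t}^+|^2 + |\nabla_A\Phi|^2 + \tfrac{s}{4}|\Phi|^2 + |(\Phi\Phi^*)_0 + \rho_4(\omega_X^+)|^2 - 2\langle F_{A^t}^+, \omega_X^+\rangle\Bigr] + \mathcal{R},
\]
where $\mathcal{R}$ is a boundary contribution at $(-\hy_1)\cup\hy_2$. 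Near each $\hy_i$ the cylindrical decomposition $D_A^+ = \rho_4(dt)(\nabla_{A,\partial_t} + D_{B_i})$ together with $\rho_4(dt)^2=-1$ identifies $\mathcal{R}$ with the Dirac--spinor pieces of $2\mathcal{L}_{\omega_1}(B_1,\Psi_1) - 2\mathcal{L}_{\omega_2}(B_2,\Psi_2)$.

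Next I translate the self-dual pairings into global ones using the pointwise identities relating $|F^+|^2$ to $|F|^2$ and $F\wedge F$, and $\langle F^+,\omega^+\rangle$ to $\langle F,\omega\rangle$ and $F\wedge\omega$ (both valid since $\omega_X$ is closed). The bulk integrand then matches the first four terms of $\mathcal{E}_{an}$ except that $-\langle F_{A^t}, \omega_X\rangle$ appears in place of $-\langle F_{A^t}, \bomega_X\rangle - \langle F_{A^t}, \omega_\lambda - \omega_{X,h}\rangle$. The discrepancy is $-\int\langle F_{A^t}, \omega_{X,h}\rangle\,dvol = -\int F_{A^t}\wedge *_4\omega_{X,h}$; writing $F_{A^t} = F_{A_0^t} + 2da$ with $a=A-A_0\in L^2_k(\hat X)$ and using $d(a\wedge *_4\omega_{X,h}) = da\wedge *_4\omega_{X,h}$ (since $*_4\omega_{X,h}$ is closed), Stokes reduces the $da$ contribution to a boundary integral at $(-\hy_1)\cup\hy_2$ that vanishes by the Neumann condition of Lemma \ref{9.3}, leaving the reference term $-\int F_{A_0^t}\wedge *_4\omega_{X,h}$ appearing as the last summand of $\mathcal E_{an}$. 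For the remaining topological pieces $\tfrac{1}{4}\int F_{A^t}\wedge F_{A^t}$ and $-\int F_{A^t}\wedge \omega_X$, I again substitute $F_{A^t} = F_{A_0^t}+2da$ and apply Stokes with $dF_{A_0^t} = d\omega_X = 0$; each separates into a reference piece matching the $\tfrac{1}{4}\int F_{A_0^t}\wedge F_{A_0^t} - \int F_{A_0^t}\wedge \omega_X$ part of $\mathcal E_{top}$, plus boundary integrals in $b_i = a|_{\hy_i}$, $F_{B_0^t}$, $db_i$, and $\omega_i$. Using $B_i^t - B_0^t = 2b_i$ and $F_{B_i^t} = F_{B_0^t} + 2db_i$ in the definition of $\mathcal{L}_{\omega_i}$, these boundary integrals together with $\mathcal R$ combine into $2\mathcal{L}_{\omega_1}(B_1,\Psi_1) - 2\mathcal{L}_{\omega_2}(B_2,\Psi_2)$, giving the claimed identity.

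The main obstacle is convergence at the non-compact planar end: individually, none of $\int|F_{A^t}|^2$, $\int\langle F_{A^t},\omega_X\rangle$, $\int F_{A^t}\wedge F_{A^t}$, or $\int F_{A^t}\wedge\omega_X$ is absolutely convergent, because the reference $F_{A_0^t}$ and the model $\omega_\lambda$ fail to lie in $L^2$ on $[-1,1]_t\times[0,\infty)_s\times\Sigma$. The identity is meaningful only because $\mathcal{E}_{an}$ and $\mathcal{E}_{top}$ have been arranged so that all non-$L^2$ contributions cancel between the two sides; Lemma \ref{9.3} is the key analytic input that isolates the non-decaying piece as the well-defined reference integral $-\int F_{A_0^t}\wedge *_4\omega_{X,h}$ and justifies every Stokes step. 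Tracking sign conventions and coefficients through these cancellations is the most delicate part of the argument.
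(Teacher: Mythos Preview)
Your proposal is correct and follows essentially the same route as the paper: expand $|\F_X|^2$ pointwise, apply the Weitzenb\"ock formula to produce the spinor boundary terms, rewrite the self-dual pairings via $F\wedge F$ and $F\wedge\omega_X$, split $\omega_X=\bomega_X+\omega_\lambda$, and use the closedness of $*_4\omega_{X,h}$ together with its vanishing on $(-\hy_1)\cup\hy_2$ to replace $-\int F_{A^t}\wedge *_4\omega_{X,h}$ by the reference term $-\int F_{A_0^t}\wedge *_4\omega_{X,h}$. The only point the paper makes more explicit than you do is the rigorous handling of the convergence issue you flag at the end: rather than arguing directly that the divergent pieces cancel, the paper first reduces to the case where $(a,\phi)=(A,\Phi)-(A_0,\Phi_0)$ is compactly supported, carries out all Stokes manipulations on the truncated manifold $X_S=\{s\le S\}$ (so that the extra boundary component $\{S\}\times W$ contributes nothing once $S$ exceeds the support of $(a,\phi)$), and then obtains the general case by continuity in $\SC_k(\hx,\bs_X)$.
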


\begin{remark}\label{rmk-10.2} Let us explain why (\ref{an}) is a useful expression. Errors terms in the second line of $(\ref{an})$ are bounded below by 
	\[
	-\frac{1}{16} \|F_{A^t}\|^2_{L^2(\hx)}-C(A_0, \omega_X,g_X)
	\]
	for some constant $C(A_0, \omega_X,g_X)>0$.
	
	 The first line of (\ref{an}) is consistent with the \textit{local energy functional} $\E_{an}(A,\Phi; \Omega)$ in Definition \ref{D1.3}. Indeed, over the cylindrical end  $I\times [0,\infty)_s\times \Sigma$, (\ref{an}) becomes (with $I=[-1,1]_t)$:
	\begin{align}\label{E10.3}
\int_{I\times [0,\infty)_s}\int_{\Sigma}  \frac{1}{4}|F_{A^t}|^2+|\nabla_A\Phi|^2+|(\Phi\Phi^*)_0+\rho_4(\omega^+)|^2-\langle F_{A^t}^\Sigma,\mu\rangle
	\end{align}
	where $\omega=\mu+ds\wedge\lambda$. The last term in \eqref{E10.3} 
	\[
	-\int_{\Sigma} \langle F_{A^t}^\Sigma,\mu\rangle 
	\]
	is always zero. Indeed, if we write $a=A-A_0\in L^2(\hx, iT^*\hx)$, then $
	F_{A^t}^\Sigma=2d_\Sigma a $	
	is an exact form on the  surface $\Sigma$. Since $\mu$ is harmonic on $\Sigma$, their inner product is always zero. Hence, \eqref{E10.3} has a definite sign. The integral in \eqref{an} over the compact region $X=\{s\leq 0\}\subset \hx$ can be treated in the usual way. We summarize this remark into a lemma.
	 \end{remark}
	
	\begin{lemma}\label{L11.3} Under the assumption of Theorem \ref{Energy10.1}, there exists a constant $C_2(A_0, \omega_X,g_X)$ independent of $(A,\Phi)$ such that 
		\[
		\E_{an}(A,\Phi)+C_2>\int_{\hx} \frac{1}{8}|F_{A^t}|^2+|\nabla_A\Phi|^2+|(\Phi\Phi^*)_0+\rho_4(\omega^+_X)|^2+\frac{s}{4}|\Phi|^2.
		\]
	\end{lemma}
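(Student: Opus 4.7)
The plan is to subtract the right-hand side of the claimed inequality from the expression for $\E_{an}(A,\Phi)$ in \eqref{an}. After cancellation of the common terms $|\nabla_A\Phi|^2$, $|(\Phi\Phi^*)_0+\rho_4(\omega_X^+)|^2$ and $\frac{s}{4}|\Phi|^2$, what remains is
$$I := \frac{1}{8}\int_{\hx}|F_{A^t}|^2 - \int_{\hx}\langle F_{A^t},\bomega_X\rangle - \int_{\hx}\langle F_{A^t}, \omega_\lambda-\omega_{X,h}\rangle - \int_{\hx} F_{A_0^t}\wedge *_4\omega_{X,h},$$
and it suffices to produce a constant $C_2=C_2(A_0,\omega_X,g_X)$ with $I \geq -C_2$.

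The fourth term is already a fixed constant: on the cylindrical end of $\hx$ the reference connection $A_0$ agrees with $A_*$, which is flat by \eqref{E2.2}, so $F_{A_0^t}$ is compactly supported and the integral is finite and independent of $(A,\Phi)$. For the third term, Lemma \ref{9.3} provides $\omega_\lambda - \omega_{X,h} \in L^2(\hx)$, so Cauchy--Schwarz followed by Young's inequality yields
$$\left|\int_{\hx}\langle F_{A^t},\omega_\lambda-\omega_{X,h}\rangle\right| \leq \frac{1}{32}\|F_{A^t}\|_{L^2(\hx)}^2 + 8\|\omega_\lambda-\omega_{X,h}\|_{L^2(\hx)}^2.$$

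The main point, and the one nontrivial step, is the treatment of $-\int_{\hx}\langle F_{A^t},\bomega_X\rangle$, since $\bomega_X \to \mu$ at infinity and so is \emph{not} in $L^2$. We split $\hx = X \cup E$ with $E = [-1,1]_t\times [0,\infty)_s\times \Sigma$ the cylindrical end, on which $\bomega_X \equiv \mu$ is pulled back from $\Sigma$. Writing $a = A-A_0$ and using flatness of $A_0 = A_*$ on $E$, the $\Lambda^2 T^*\Sigma$-component of $F_{A^t}$ on each slice $\{t\}\times\{s\}\times\Sigma$ equals $2 d_\Sigma a^\Sigma$, an exact 2-form. Since $\mu$ is harmonic on the closed surface $\Sigma$, the $L^2$-orthogonality of exact and harmonic 2-forms gives $\int_\Sigma \langle 2 d_\Sigma a^\Sigma, \mu\rangle \, dvol_\Sigma = 0$ on every slice; Fubini then yields $\int_E \langle F_{A^t},\bomega_X\rangle = 0$. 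Over the compact region $X$, Cauchy--Schwarz and Young's produce
$$\left|\int_X \langle F_{A^t},\bomega_X\rangle\right| \leq \frac{1}{32}\|F_{A^t}\|_{L^2(X)}^2 + 8\|\bomega_X\|_{L^2(X)}^2.$$

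Combining the three bounds absorbs at most $\frac{1}{16}\|F_{A^t}\|^2_{L^2(\hx)}$ into the leading $\frac{1}{8}\|F_{A^t}\|^2_{L^2(\hx)}$ term, leaving $I \geq \frac{1}{16}\|F_{A^t}\|^2_{L^2(\hx)} - C_2 \geq -C_2$ with
$$C_2 := 8\|\omega_\lambda-\omega_{X,h}\|_{L^2(\hx)}^2 + 8\|\bomega_X\|_{L^2(X)}^2 + \left|\int_{\hx} F_{A_0^t}\wedge *_4\omega_{X,h}\right|,$$
which depends only on $(A_0,\omega_X,g_X)$. This is precisely the argument sketched in Remark \ref{rmk-10.2}.
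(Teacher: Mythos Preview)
Your proof is correct and follows essentially the same approach as the paper. The paper's own proof (together with Remark~\ref{rmk-10.2}) makes exactly the same three moves: the slice-wise vanishing of $\int_\Sigma\langle F_{A^t}^\Sigma,\mu\rangle$ on the end reduces the $\bomega_X$-integral to the compact piece $X$, Cauchy--Schwarz then absorbs both the $\bomega_X$ and $\omega_\lambda-\omega_{X,h}$ terms into the curvature square, and the remaining term is a fixed constant; only the bookkeeping of the fractions $\tfrac{1}{16}$ versus $\tfrac{1}{32}$ differs.
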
 
	\begin{proof} Note that 
		\[
		|\int_{\hx} \langle F_{A^t}, \bomega_X\rangle| =	|\int_X \langle F_{A^t}, \bomega_X\rangle| \leq \frac{1}{16} \|F_{A^t}\|^2_{L^2(\hx)}+C_3(A_0,\omega_X, g_X).\qedhere 
		\]
	\end{proof}

\begin{proof}[Proof of Theorem \ref{Energy10.1}] Let $\gamma_0=(A_0,\Phi_0)$ be the reference configuration in $\SC(\hx, \bs_X)$. For convenience, take its restrictions at the boundary
	\[
	(B_{i0},\Phi_{i0})=\gamma_0|_{\hy_i}\in \SC(\hy_i)
	\]
	as reference configurations in the definition of $\CL_{\omega_i}$ for $i=1,2$. It suffices to prove the theorem when the section
	\[
	(a,\phi)=(A,\Phi)-(A_0,\Phi_0)\in \SC_c^\infty(\hx, iT^*\hx\oplus S),
	\]
	is compactly support, and the rest will follow by continuity. Let $X_S=\{s\leq S\}\subset \hx$ be the truncated manifold and $Y_{i, S}=\hy_i\cap X_S$. The boundary of $X_S$ consist of three parts:
	\[
	-Y_{1,S}, Y_{2,S} \text{ and } \{S\}\times W=[-1,1]_t\times \{S\}\times \Sigma.
	\]
	
	Since $(a,\phi)$ is compactly supported, we may discard any boundary integrals over $\{S\}\times W\subset \partial X_S$ when $S\gg 1$. By the Lichnerowicz-Weizenb\"{o}ck formula \cite[(4.15)]{Bible}, we have 
	\begin{align}
	\int_{X_S} |D_A^+\Phi|^2=\int_{X_S}|\nabla_A\Phi|^2&+\half \langle \rho_4(F_{A^t}^+)\Phi, \Phi\rangle+\frac{s}{4}|\Phi|^2\label{10.1}\\
	&-\int_{Y_{1,S}} \langle D_{B_1}\Phi_1,\Phi_1\rangle+\int_{Y_{2,S}} \langle D_{B_2}\Phi_2,\Phi_2\rangle. \nonumber
	\end{align} 
	Now consider the first equation of (\ref{4DSWEQ}): 
\begin{align}
\int_{X_S} |\half \rho_4(F_{A^t}^+-2\omega^+_X)-(\Phi\Phi^*)_0|^2&=\int_{X_S} \frac{1}{4}|F_{A^t}|^2-\half \langle \rho_4(F_{A^t}^+)\Phi, \Phi\rangle+|(\Phi\Phi^*)_0+\rho_4(\omega^+_X)|^2\nonumber\\
&-\frac{1}{4}\int_{X_S} F_{A^t}\wedge F_{A^t}-2\int_{X_S}\langle F_{A^t}, \omega_X^+\rangle.\label{10.2}
\end{align}
 
Only the second line requires some further work. Note that 
\begin{align*}
-\frac{1}{4}\int_{X_S} F_{A^t}\wedge F_{A^t}&= -\frac{1}{4}\int_{X_S} F_{A^t_0}\wedge F_{A^t_0}-\half\int_{\partial X_S} a\wedge (F_{A^t}+F_{A_0^t}).
\end{align*}
 Finally, using the relation $\omega_X=\bomega_X+\omega_\lambda$, we compute
\begin{align*}
2\int_{X_S}\langle F_{A^t}, \omega_X^+\rangle&= \int_{X_S}\langle F_{A^t}, \omega_X+*_4\omega_X\rangle\\
&=  \int_{X_S}\langle F_{A^t}, \bomega_X\rangle +\langle F_{A^t}, \omega_\lambda\rangle+\langle F_{A^t_0}, *_4\omega_X\rangle+\langle 2da, *_4\omega_X\rangle\\
&=J_1+J_2+J_3+J_4. 
\end{align*}
$J_1$ and $J_3$ already show up in (\ref{an}) and \eqref{top}. As for $J_2$ and $J_4$, note that  
\begin{align*}
J_4&=-2\int_{\partial X_S} a\wedge \omega_X=\int_{Y_{1,S}} (B_1^t-B^t_{10})\wedge \omega_1+ \int_{Y_{2,S}} (B_2^t-B^t_{20})\wedge \omega_2,\\
J_2&= \int_{X_S} \langle F_{A^t}, \omega_\lambda\rangle=- \int_{X_S}  F_{A^t}\wedge *_4\omega_{X,h}+ \int_{X_S} \langle F_{A^t},\omega_\lambda-\omega_{X,h}\rangle.
\end{align*}

Since $*_4\omega_{X,h}$ is closed, the first term in $J_2$ is a pairing in cohomology: 
\[
[\frac{i}{2\pi} F_{A^t}]\cup [\frac{i}{2\pi}*_4\omega_{X,h}]\in H^4(X,\partial X)\xleftarrow{\cup} H^2(X,Z)\otimes H^2(X,Y_1\cup Y_2),
\]
so one may replace $A$ by $A_0$. Now the energy identity follows by adding (\ref{10.1}) and (\ref{10.2}) together. 
\end{proof}

\subsection{The 3-Manifold Case}  Let $I=[t_1,t_2]_t$. In the special case when $\x=I\times \y: \y\to\y$ is the product morphism, Theorem \ref{Energy10.1} takes a simpler form. 

The 4-manifold $\hx=I\times \hy$ is furnished with the product metric. Let $\omega_X=\pi^*\omega$ be the pull-back of $\omega$ where $\pi: \hx\to \hy$ is the projection map. Any \spinc connection $A$ on $\hx$ can be written as
\begin{equation}\label{10.3}
A=\dt+ B(t)+c(t)dt\otimes\Id_S. 
\end{equation}
where $B(t)$ is a path of \spinc connections on $(\hy,\bs)$ and $c(t)\in L^2_k(\hy;i\R)$. Any configuration $\gamma\in (A,\Phi)\in \SC_k(\hx,\bs_X)$ gives rise to a path $\cgamma(t)=(B(t),\Psi(t))$ in $\SC_{k-1/2}(\hy, \bs)$ by setting
\[
\Psi(t)=\Phi|_{\{t\}\times \hy}. 
\]

 Moreover, $\gamma$ solves the Seiberg-Witten equations (\ref{4DSWEQ}) on $\hx$ if and only if the path $(\cgamma(t),c(t))$ forms a  downward gradient flowline of $\CL_\omega$:
\[
\dt\gamma(t)=-\grad \CL_{\omega}(\gamma(t))-\bd_{\gamma(t)} c(t). 
\]
Let $A_0=\frac{d}{dt}+B_0$ be the reference connection on $(\hx, \bs_X)=I\times (\hy, \bs)$. The curvature form $F_{A_0^t}$ does not involve any $dt$-component, so $F_{A_0^t}\wedge F_{A^t_0}\equiv 0$. 

\begin{proposition}\label{Energy10.2} For any configuration $\gamma=(A,\Phi)$ on $(\hx, \bs_X)=I\times (\hy, \bs)$, the $L^2$-norm of the Seiberg-Witten map $\F_X(A,\Phi)$ can be expressed as 
	\[
	\int_{\hatx}| \F_X(A,\Phi)|^2=\E_{an}(A,\Phi)-\E_{top}(A,\Phi)
	\]
	where $	\E_{top}(A,\Phi)\colonequals 2\CL_{\omega}(\cgamma(t_1))-2\CL_{\omega}(\cgamma(t_2))$ and 
	\begin{align}\label{E10.8}
	\E_{an}(A,\Phi)&\colonequals\int_I\|\dt \cgamma(t)+d_{\cgamma(t)}c(t)\|^2_{L^2(\hy)}+\|\grad \CL_{\omega}(\cgamma(t))\|^2_{L^2(\hy)}\\
	&=\int_{I\times \hy}  \frac{1}{4}|F_{A^t}|^2+|\nabla_A\Phi|^2+|(\Phi\Phi^*)_0+\rho_4(\omega^+)|^2+\frac{s}{4}|\Phi|^2-\langle F_{A^t}, \omega\rangle.\nonumber
	\end{align}
	The last term can be written as 
	\begin{equation*}
	\int_{I\times\hy}\langle F_{A^t}, \omega\rangle= \int_{I\times\hy}\langle F_{A^t}, \bomega\rangle+ \int_{I\times\hy}\langle F_{A^t},  \omega_\lambda-\omega_h\rangle-|I|\int_{\hy} F_{B_0^t}\wedge *_3\omega_h,
	\end{equation*}
	where $\omega=\bomega+\omega_\lambda$ and $\omega_\lambda=\chi_1(s)ds\wedge\lambda$. The co-closed 2-form $\omega_h$ is constructed by Lemma \ref{9.1} such that $\omega_\lambda-\omega_h\in L^2(\hy)$. In particular, for any $(B,\Psi)\in \SC_1(\hy,\bs)$, 
	\begin{align*}
\|\grad \CL_{\omega}(B,\Psi)\|_{L^2(\hy)}^2&=\int_{\hy}  \frac{1}{4}|F_{B^t}|^2+|\nabla_B\Psi|^2+|(\Psi\Psi^*)_0+\rho_3(\omega)|^2+\frac{s}{4}|\Psi|^2-\langle F_{B^t}, \omega\rangle.
	\end{align*}
\end{proposition}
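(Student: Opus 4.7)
The plan is to deduce Proposition~\ref{Energy10.2} from Theorem~\ref{Energy10.1} applied to the product cobordism $\x = I\times\y$, together with a direct variational expansion that explains why $\|\F_X\|^2_{L^2(\hx)}$ splits into the sum of two squares plus a topological boundary term.

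For the first equality in \eqref{E10.8}, write $A = d/dt + B(t) + c(t)\,dt\otimes\Id$ as in \eqref{10.3}. Under this decomposition the Seiberg-Witten operator corresponds pointwise to
\[
\F_X(A,\Phi) \longleftrightarrow \dt\cgamma(t) + \bd_{\cgamma(t)} c(t) + \grad\CL_\omega(\cgamma(t)),
\]
viewed as a section of $\CT_0 = \J_0\oplus \K_0$ along the path. Squaring and integrating over $I$ produces the diagonal contribution $\int_I \|\dt\cgamma + \bd c\|_{L^2(\hy)}^2 + \|\grad\CL_\omega\|_{L^2(\hy)}^2$ plus the cross term $2\int_I \langle\dt\cgamma + \bd c,\,\grad\CL_\omega\rangle_{L^2(\hy)}$. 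By Proposition~\ref{P10.5}, $\grad\CL_\omega \in \K_0$ is $L^2$-orthogonal to the gauge direction $\bd c \in \J_0$, so what remains is the total derivative $\langle\dt\cgamma,\grad\CL_\omega(\cgamma)\rangle_{L^2(\hy)} = \dt\CL_\omega(\cgamma(t))$, whose integral over $I$ contributes exactly $2\CL_\omega(\cgamma(t_2)) - 2\CL_\omega(\cgamma(t_1)) = -\E_{top}$.

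For the second equality, specialize Theorem~\ref{Energy10.1}. Since $A_0 = d/dt + B_0$ has $F_{A_0^t} = \pi^*F_{B_0^t}$ with no $dt$-component, and $\omega_X = \pi^*\omega$ also pulls back from $\hy$, both $F_{A_0^t}\wedge F_{A_0^t}$ and $F_{A_0^t}\wedge\omega_X$ vanish pointwise because they are $4$-forms supported in $\Lambda^4 T^*\hy = 0$; this collapses $\E_{top}$ to $2\CL_\omega(\cgamma(t_1))-2\CL_\omega(\cgamma(t_2))$. For $\E_{an}$, set $a = A - A_0$ and group the three $\omega$-dependent error terms from Theorem~\ref{Energy10.1}; using $\omega_X = \bomega_X + \omega_\lambda$ they reorganize as $-\langle F_{A^t},\omega_X\rangle + 2\int_{\hx} da\wedge *_4\omega_{X,h}$. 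Since $\omega_{X,h}$ is co-closed and satisfies the Dirichlet condition~\eqref{9.4}, Stokes converts the second summand into a boundary integral over $\{t_1,t_2\}\times\hy$, which vanishes because the pullback of $*_4\omega_{X,h} = \pm dt\wedge *_3\omega_h$ to a time-slice is zero. This yields the first line of \eqref{E10.8}. The subsequent decomposition of $\int\langle F_{A^t},\omega\rangle$ follows by splitting $\omega = \bomega + (\omega_\lambda - \omega_h) + \omega_h$: the same Stokes argument replaces $F_{A^t}$ by $F_{A_0^t}$ in the $\omega_h$-piece, and Fubini combined with $*_4\pi^*\omega_h = \pm dt\wedge *_3\omega_h$ reduces that piece to $\pm|I|\int_{\hy} F_{B_0^t}\wedge *_3\omega_h$.

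Finally, the identity for $\|\grad\CL_\omega(B,\Psi)\|_{L^2(\hy)}^2$ is immediate from the first equality of \eqref{E10.8} applied to the constant path $\cgamma(t)\equiv(B,\Psi)$ in temporal gauge $c\equiv 0$: then $\dt\cgamma + \bd c \equiv 0$ and $\E_{top} = 0$, every integrand on the right of \eqref{E10.8} is $t$-independent, and division by $|I|$ yields the stated formula once one identifies $\rho_4(\omega^+)$ with $\rho_3(\omega)$ via the canonical isomorphism $S^+|_{\{t\}\times\hy}\cong S$. The main obstacle is bookkeeping rather than analysis: one must track the Hodge-star and orientation conventions on the product manifold and check that each Stokes boundary term is killed either by the Dirichlet condition~\eqref{9.4} or by the pullback-vanishing of $*_4\omega_{X,h}$ to a time-slice; no new input beyond Theorem~\ref{Energy10.1} and Proposition~\ref{P10.5} is required.
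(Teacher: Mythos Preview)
Your proposal is correct and follows essentially the same route as the paper, which presents Proposition~\ref{Energy10.2} as the specialization of Theorem~\ref{Energy10.1} to the product cobordism (the paper notes just before the statement that $F_{A_0^t}$ has no $dt$-component, so $F_{A_0^t}\wedge F_{A_0^t}\equiv 0$, and gives no further proof). Your added variational derivation of the first equality in \eqref{E10.8}---expanding $\|\dt\cgamma+\bd c+\grad\CL_\omega\|^2$ and using Proposition~\ref{P10.5} to kill the $\langle\bd c,\grad\CL_\omega\rangle$ cross term---is the standard Floer-theoretic argument and is a welcome complement, since the paper leaves this identity implicit.
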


\section{Compactness}\label{Sec11}

\subsection{Statements} With all machinery developed so far, we are ready to state and prove the compactness theorem for the (unperturbed) Seiberg-Witten equations on $\R_t\times \hy$. The result easily generalizes to a complete Riemannian manifold $\X$ induced from a morphism $\x:\y_1\to\y_2$ in $\Cob_s$. Nevertheless, we will focus on the first case for the sake of simplicity. The analogous results for perturbed equations will be addressed in Section \ref{Sec15}, after we set up tame perturbations in the next part. Now let 
\[
\gamma_0\colonequals (A_0,\Phi_0) \text{ with } A=\dt+B_0,\ \Phi(t)=\Psi_0, 
\]
be the reference configuration on $\R_t\times \hy$, then it agrees with the standard configuration $(A_*,\Phi_*)$ over the planar end $\R_t\times [0,\infty)_s\times\Sigma$. For any $k\geq 2$, define 
\[
\SC_{k,loc}(\R_t\times (\hy,\bs))=\{(A,\Phi): (A,\Phi)\big|_{I\times \hy}\in \SC_k(I\times (\hy,\bs)) ,\forall \text{ finite interval } I\subset\R_t \}
\]
and $\CG_{k+1,loc}(\R_t\times (\hy,\bs))$ in a similar manner. We will set up the Fredholm theory of moduli spaces in a different way in Section \ref{Sec19}. For now, let us stick to these loosely defined spaces. 

For any $\gamma\in \SC_{k,loc}$ and $I\subset \R_t$, define the analytic energy $\E_{an}(\gamma; I)$ over the interval $I$ to be the integral of \eqref{E10.8} over $I\times \hy$ and 
\[
\E_{an}(\gamma)\colonequals \E_{an}(\gamma, \R_t). 
\]
One standard assumption below is the finiteness of the total energy $\E_{an}$. Since $\E_{an}(\gamma; I)$ is alway non-negative, it implies that 
\[
\E_{an}(\gamma; I)<\E_{an}(\gamma; \R_t)<\infty \text{ for any } I\subset \R_t.
\]

The primary result of this section is the compactness theorem.

\begin{theorem}\label{T11.1} Suppose $\{\gamma_n=(A_n,\Phi_n)\}\subset \SC_{k,loc}$ is a sequence of solutions to the Seiberg-Witten equations \eqref{4DSWEQ} on $\R_t\times\hy$ and their analytic energy
	\[
	\E_{an}(\gamma_n)\colonequals 	\E_{an}(\gamma_n, \R_t)<C
	\]
	is uniformly bounded by a positive constant $C>0$. Then we can find a sequence of gauge transformations $u_n\in \CG_{k+1,loc}(\R_t\times \hy)$ with the following properties. For a subsequence $\{\gamma_n'\}$ of $\{u_n(\gamma_n)\}$ and any finite interval $I\subset \R_t$, the restriction of each $\gamma_n'$ on $I\times \hy$
	\[
	\gamma_n'|_{I\times \hy}
	\]
	lies in $\SC_l(I\times (\hy,\bs))$. In addition, they converge in $L^2_{l}(I\times \hy)$-topology for any $l\geq 2$. 
\end{theorem}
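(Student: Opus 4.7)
The plan is to adapt the Floer-theoretic compactness argument of \cite[Section 16]{Bible} to the present geometry with a planar end. The three key inputs are \ref{K1}, \ref{K2}, \ref{K3} identified at the start of Part~\ref{Part2}: \ref{K1} together with Lemma~\ref{L11.3} provides uniform $L^2$-control of curvature, spinor derivatives, and the quadratic term on compact pieces; \ref{K2}, i.e.\ Theorem~\ref{T2.4}, will rule out energy concentration in the planar direction; and \ref{K3}, i.e.\ Theorem~\ref{T2.6}, will handle the possibility of limits degenerating onto $\R_s\times\Sigma$. Admissibility \ref{P7} enters precisely to make \ref{K2} and \ref{K3} available.

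For the local analysis, I would first combine the Weitzenb\"ock identity and the first equation of \eqref{4DSWEQ} into
\[
\tfrac{1}{2}\Delta|\Phi_n|^2+|\nabla_{A_n}\Phi_n|^2+\tfrac{s}{4}|\Phi_n|^2+\tfrac{1}{2}|\Phi_n|^4\le |\omega|\,|\Phi_n|^2,
\]
which at an interior maximum gives a uniform pointwise bound on $|\Phi_n|$; potential bubbling of $\Phi_n$ is ruled out by the usual rescaling argument, since a rescaled limit would produce a nontrivial SW solution on flat $\R^4$. With $|\Phi_n|$ under control, the first SW equation bounds $F_{A_n^t}$ pointwise, a local Uhlenbeck--Coulomb gauge fixing on coordinate balls and elliptic bootstrapping (using Lemma~\ref{L11.3}) give uniform $L^2_l$-estimates on every compact chart, and Rellich's theorem produces a subsequence $u_n\gamma_n$ converging in $L^2_l$ on every compact subset of $\R_t\times\hy$, for arbitrary $l\ge 2$.

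The main obstacle is upgrading this to convergence on the planar tail $I\times[0,\infty)_s\times\Sigma$ for any bounded interval $I=[-T,T]_t\subset\R_t$. I would partition the planar tail into blocks of the form $\Omega_{m,S}\times\Sigma$ as in \eqref{E2.4}, with $m\in\Z\cap I$ and $S\in\{0,4,8,\dots\}$, and prove the following uniform smallness: for every $\epsilon>0$ there exist $S_0$ and $n_0$ such that
\[
\E_{an}(\gamma_n;\Omega_{m,S}\times\Sigma)<\epsilon\quad\text{for all }S\ge S_0,\ m\in\Z\cap I,\ n\ge n_0.
\]
If this fails, a subsequence of blocks $\Omega_{m_n,S_n}\times\Sigma$ with $S_n\to\infty$ carries energy $\ge\epsilon$; translating by $-S_n$ in $s$ (and by $-m_n$ in $t$ if needed) the ambient geometry converges to $\C\times\Sigma$, the local compactness of the previous paragraph applies in this limiting geometry, and an SW solution on $\C\times\Sigma$ is extracted having at least $\epsilon$ energy in a bounded region and total energy $\le C$, contradicting Theorem~\ref{T2.4}; the degenerate case in which translation produces a limit on $\R_s\times\Sigma$ is excluded by Theorem~\ref{T2.6}. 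Applying this claim with $\epsilon=\epsilon_*$ from Theorem~\ref{T2.4} and then invoking Theorem~\ref{T2.5} yields exponential decay of $\E_{an}(\gamma_n;\Omega_{m,S})$ in $S$, and this upgrades to uniform $L^2_l$-estimates of $\gamma_n-\gamma_*$ on $I\times[S_0,\infty)_s\times\Sigma$ after an asymptotic gauge fixing that aligns $\gamma_n$ with $\gamma_*$ on the tail.

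Finally, the local Coulomb gauges from the interior and the asymptotic gauge from the planar tail must be patched into a single $u_n\in\CG_{k+1,loc}(\R_t\times\hy)$ such that $u_n\gamma_n$ converges in $L^2_l$ on every slab $I\times\hy$. This is the standard Uhlenbeck patching construction: cover $I\times\hy$ by finitely many compact charts together with the tail $\{s\ge S_0\}$, arrange by small perturbations of the local gauges that the transition cocycles are close to the identity, and assemble them via partition-of-unity logarithms; a diagonal extraction over an exhaustion of $\R_t$ yields the subsequence required in the statement. The anticipated main difficulty is the planar-end step: producing the contradiction limit requires the local compactness of the second paragraph to hold not only on $\R_t\times\hy$ but also in the limiting geometry $\C\times\Sigma$, so the local estimates must be carried out in a manner uniform across 4-manifolds with bounded geometry, and the two steps are in practice intertwined.
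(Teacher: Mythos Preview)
Your overall plan is correct and matches the paper's structure: local compactness on compact pieces, uniform smallness of local energy on the planar tail, exponential upgrade via Theorem~\ref{T2.5}, conversion to $L^2_l$ estimates, and gauge patching. The paper packages the tail analysis as Theorems~\ref{11.1}, \ref{11.2} and~\ref{11.5}, then derives Theorem~\ref{T11.1} as a short corollary of Theorem~\ref{11.5}.

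However, there is a genuine gap in your paragraph-3 contradiction: the assertion that the translated limit $\beta_\infty$ on $\C\times\Sigma$ has ``total energy $\le C$'' is not justified. The passage from $\E_{an}(\gamma;I)$ in the sense of Proposition~\ref{Energy10.2} to the Definition~\ref{D1.3} local energy on the planar end goes through Lemma~\ref{L11.3}, whose constant arises from $\int_{I\times Y}\langle F_{A^t},\bomega\rangle$ and hence scales with $|I|$; one only gets $\E_{an}(\gamma_n;J\times[0,\infty)_s)\le C+C'|J|$ for bounded $J$, and there is no way to send $|J|\to\infty$. After translating in $s$ and extracting a limit on $\C\times\Sigma$, you therefore have no control of $\beta_\infty$ for large $|t|$, so the hypothesis of Theorem~\ref{T2.4} (local energy $<\epsilon_*$ when $|n|+|S|\gg1$) is unverified. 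The paper handles this with a two-layer argument. Lemma~\ref{11.3} first shows: if $\E_{an}(\gamma;J)<\eta$ on a \emph{bounded} interval, then local energy is small far out in $s$; its proof translates in $s$, and since $\eta\to0$ the limit is forced to be $t$-independent, i.e.\ a finite-energy 3D solution on $\R_s\times\Sigma$, contradicting Theorem~\ref{T2.6} --- this is the precise role of~\ref{K3} that your ``degenerate case'' remark alludes to without carrying out. Then Theorem~\ref{11.1} introduces the significant set $K_m=\{n:\E_{an}(\beta_m;J_n)>\eta\}$, of cardinality at most $6C/\eta$; translating in \emph{both} $t$ and $s$ and exploiting this bound, the limit on $\C\times\Sigma$ has small local energy on every $\Omega_{j,S}$ with $|j|$ large (via Lemma~\ref{11.3}), and only then does Theorem~\ref{T2.4} apply. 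Your sketch collapses these two layers into one and thereby loses the mechanism controlling the limit in the $t$-direction.
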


The main difficulty is to deal with the cylindrical end of $\hy$ and the proof relies on the exponential decay of $L^2_l$-norms. To state the result, recall that $\Omega_{n,S}\ (n\in \Z,\ S\in \R_s)$ defined in \eqref{E2.4} is a bounded sub-domain of $\C$ with smooth boundary, which is centered at $(n, S)\in \R_t\times \R_s$. 

\begin{theorem}\label{11.5} For any $C>0$ and $l\in \Z_{\geq 1}$, there exists constants $\zeta(\hy,\bs), M_l(C, \hy,\bs)>0$ with the following significance. For any solution $\gamma=(A,\Phi)\in \SC_{k,loc}(\R_t\times \hy)$ to the Seiberg-Witten equations $(\ref{4DSWEQ})$ on $\R_t\times (\hy, \bs)$ with analytic energy $\E_{an}(A,\Phi)<C$, we can find a gauge transformation $u\in \CG_{k+1,loc}(\R_t\times \hy)$ such that 
	\begin{equation}\label{E11.1}
	\|u(\gamma)-\gamma_0\|_{L^2_{l,A_0}(\Omega_{n,S}\times \Sigma)}\leq M_le^{-\zeta S},
	\end{equation}
	for any $l\geq 1$, $n\in \Z$ and $S\geq 0$. Here $\gamma_0$ is the reference configuration in $\SC_{k,loc}(\R_t\times\hy)$.
\end{theorem}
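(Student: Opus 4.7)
The plan is a three-step scheme: (i) upgrade the global energy bound $\E_{an}(\gamma) < C$ to uniform local-energy smallness $\E_{an}(\gamma;\Omega_{n,S}) < \epsilon$ on the far planar end by a concentration-compactness argument built on Theorem \ref{T2.4}; (ii) apply Theorem \ref{T2.5} to get exponential decay of the local energies; (iii) use Coulomb gauge fixing relative to $\gamma_0$ together with interior elliptic bootstrapping to convert this energy decay into the $L^2_l$ estimate \eqref{E11.1}.

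\textbf{Step (i).} Fix $\epsilon$ smaller than both $\epsilon_*$ of Theorem \ref{T2.4} and the threshold of Theorem \ref{T2.5}, and claim there is $S_0=S_0(C,\hy,\bs,\epsilon)$ with $\E_{an}(\gamma;\Omega_{n,S})<\epsilon$ for every $n\in\Z$ and $S\geq S_0$. I would argue by contradiction: a sequence $(n_k,S_k)$ with $S_k\to\infty$ and $\E_{an}(\gamma;\Omega_{n_k,S_k})\geq\epsilon$ can be translated by $(t,s)\mapsto(t-n_k,s-S_k)$, an isometry of the planar end. The translated configurations are Seiberg-Witten solutions on expanding subdomains of $\C\times\Sigma$ with energy $\leq C$. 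After local Uhlenbeck-type gauge fixing and the standard elliptic bootstrap (using the $|\Phi|$-bound from the Weitzenb\"ock identity together with the energy estimate of Lemma \ref{L11.3}), pass to a subsequence converging weakly to a limit $\gamma_\infty$ on all of $\C\times\Sigma$, still a finite-energy solution and still satisfying $\E_{an}(\gamma_\infty;\Omega_0)\geq\epsilon>0$. Theorem \ref{T2.4} then forces $\gamma_\infty$ to be gauge equivalent to $(A_*,\Phi_*)$, which has vanishing energy --- a contradiction.

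\textbf{Steps (ii) and (iii).} Re-parameterizing $s'=s-S_0$ matches $\R_t\times[S_0,\infty)_s\times\Sigma$ to $\HH^2_+\times\Sigma$, so Theorem \ref{T2.5} delivers $\E_{an}(\gamma;\Omega_{n,S})\leq e^{-\zeta(S-S_0)}$ for $S\geq S_0$, and this extends to all $S\geq 0$ after enlarging the constant. Because $\Phi_0=\Psi_*$ is nowhere zero on the end (from $\lambda\neq 0$ in \ref{P7}), the operator $\bd_{\gamma_0}^*\bd_{\gamma_0}=\Delta+|\Psi_*|^2$ is strictly positive and the Coulomb slice relative to $\gamma_0$ is locally well defined. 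On each slab $\Omega_{n,S}\times\Sigma$ the implicit function theorem supplies a gauge transformation close to the identity putting $\gamma$ into Coulomb gauge relative to $\gamma_0$; patching these into a single $u$ on the planar end and extending arbitrarily across the compact region gives the required global gauge. In this gauge, the Seiberg-Witten equations together with $\bd_{\gamma_0}^*(u\gamma-\gamma_0)=0$ form an elliptic first-order system, so interior regularity yields
\[
\|u\gamma-\gamma_0\|_{L^2_l(\Omega_{n,S}\times\Sigma)}\leq C_l\,\|u\gamma-\gamma_0\|_{L^2(\Omega'_{n,S}\times\Sigma)}\leq C'_l\sqrt{\E_{an}(\gamma;\Omega'_{n,S})},
\]
where $\Omega'_{n,S}\supset\Omega_{n,S}$ is a mild enlargement and the second inequality uses the positivity of $|\Psi_*|^2$ to control the $L^2$-norm in Coulomb gauge by the local energy; the right side decays like $e^{-\zeta S/2}$ by step (ii).

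\textbf{Main obstacle.} The technical heart is the global gauge construction. The slab-wise Coulomb gauges must be patched into a single $u$ without degrading the exponential rate. Rather than an inductive patching in the spirit of \cite[Chapter 13]{Bible}, I would prefer to solve the nonlinear Coulomb equation globally on the end inside a weighted Sobolev space $L^2_{l,\delta}$ with weight $e^{\delta s}$, $0<\delta<\zeta$, and invoke the inverse function theorem once. The Fredholm input needed is invertibility at $s\to\infty$ of the linearized Seiberg-Witten-plus-Coulomb operator on $\R_s\times\Sigma$; this in turn reduces to the spectral gap of that translation-invariant operator, which is ultimately guaranteed by Theorem \ref{T2.6} and the positivity of $|\Psi_*|^2$.
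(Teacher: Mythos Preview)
Your three-step strategy matches the paper's, and Steps (i)--(ii) are essentially Theorems \ref{11.1} and \ref{11.2}. Your Step (i) is in fact slightly cleaner than the paper's proof of Theorem \ref{11.1}: you argue that the translated limit on $\C\times\Sigma$ has finite total energy and invoke the ``point-like solutions are trivial'' clause of Theorem \ref{T2.4} directly, whereas the paper takes a longer route through a ``significant set'' argument and Lemma \ref{11.3} (which uses Theorem \ref{T2.6}) to verify the $\epsilon_*$-hypothesis of Theorem \ref{T2.4} instead. One minor repair: to make $S_0$ depend only on $C$ and not on the particular solution, your contradiction sequence must vary $\gamma$ as well as $(n_k,S_k)$.

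Step (iii), however, has a real gap. You write that on each slab ``the implicit function theorem supplies a gauge transformation close to the identity putting $\gamma$ into Coulomb gauge relative to $\gamma_0$,'' and then state the chain
\[
\|u\gamma-\gamma_0\|_{L^2_l}\leq C_l\|u\gamma-\gamma_0\|_{L^2}\leq C'_l\sqrt{\E_{an}}.
\]
Both inequalities, and the Coulomb gauge fixing itself, require as a \emph{hypothesis} that $\gamma$ (in the current gauge) is already close to $\gamma_0$ in $L^2_2$ or $L^2_1$: the implicit function theorem works only in a neighborhood of $\gamma_0$; the nonlinear elliptic bootstrap (the paper's Lemma \ref{L11.8}) needs $\|(a,\phi)\|_{L^2_1}<\epsilon_k$ so the quadratic terms can be absorbed; and the estimate $\|(a,\phi)\|_{L^2_1}\leq C\sqrt{\E_{an}}$ (the paper's Lemma \ref{L11.6}) is obtained by rearrangement and likewise needs $\|(a,\phi)\|_{L^2_1}$ small a priori. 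Small local energy alone does not give this closeness --- $\gamma$ could sit in the gauge orbit far from $\gamma_0$. The paper bridges this with an explicit intermediate step (its Steps 1--2): first use qualitative compactness to produce local gauges with $\|u'\gamma-\gamma_*\|_{L^2_2(\Omega'_{n,m})}<\epsilon$ (with an uncontrolled function $\eta(\epsilon)$), then patch these into a single global gauge $u_1$ with uniform $L^2_2$-smallness on every slab. Only then are the Coulomb fixing and the quantitative estimates applicable. Your weighted-Sobolev alternative faces the same obstacle: the inverse function theorem at $\gamma_0$ still needs the input to lie in a small weighted ball around $\gamma_0$, which again requires a preliminary (possibly large) gauge.
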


Theorem \ref{T11.1} is an easy corollary of Theorem \ref{11.5}.
\begin{proof}[Proof of Theorem \ref{T11.1}] It suffices to prove the case when $I=[-2,2]$. The rest will follow by a patching argument (cf. \cite[Section 13.6]{Bible}). By Theorem \ref{11.5}, for any $\gamma_n$ in that sequence, we may assume the exponential decay \eqref{E11.1} holds for $\gamma_n-\gamma_0$ . Take $S\gg 1$ and let $Y_S=\{s\leq S\}$ be the truncated 3-manifold. 
	
	With the energy equation in Proposition \ref{Energy10.2}, the classical compactness theorem \cite[Theorem 5.2.1]{Bible} implies that a subsequence of $\{\gamma_n\}$ converges smoothly (up to gauge) in the interior of the compact manifold $I\times Y_S. $ Suppose $\{u_n: I\times Y_S\to S^1\}$ is the sequence of gauge transformations, then the restriction
 \[
 u_n: I\times [S-1,S]_s\times \Sigma\to S^1
 \]
 must lie in the same homotopy class when $n\gg1$ (by \eqref{E11.1}). We may correct $\{u_n\}$ so their restrictions lie in the trivial homotopy class. By a patching argument, we extend $u_n$ over the whole space $I\times \hy$ by setting $u_n\equiv 1$ when $s\geq S+1$. By Theorem \ref{11.5}, a subsequence of $\{u_n( \gamma_n)\}$ converges in fact in $L^2_l$-topology on $[-2+\epsilon, 2-\epsilon]\times \hy$ for some small $\epsilon>0$. This completes the proof of the theorem (some details are left to the readers).
\end{proof}

The proof of Theorem \ref{11.5} will dominate the rest of the section. 
\subsection{Decay of Local Energy Functional}\label{Subsec11.1} Recall from Definition \ref{D1.3} that the local energy functional of $\gamma=(A,\Phi)$ over $\Omega_{n,S}\subset \HH^+$ is defined as
	\begin{equation*}
\E_{an}(A,\Phi; \Omega)\colonequals\int_{\Omega}\int_{\Sigma} \frac{1}{4}|F_{A^t}|^2+|\nabla_A\Phi|^2+|(\Phi\Phi^*)_0+\rho_4(\omega^+)|^2. \qedhere
\end{equation*}
with $\omega=\mu+ds\wedge\lambda$. We wish to first get an estimate on $\E_{an}(A,\Phi; \Omega_{n,S})$ for a solution $(A,\Phi)$ to (\ref{4DSWEQ}) on $\R_t\times \hy$ when $S\gg 1$. The main results are as follows.
 \begin{theorem}\label{11.1} For any $C,\epsilon>0$, there exists a constant $R_0(\epsilon, C, \hy,\bs)>0$ with the following significance. For any solution $(A,\Phi)\in \SC_k(\R_t\times\hy)$ to the Seiberg-Witten equations $(\ref{4DSWEQ})$ on $\R_t\times (\hy, \bs)$ with analytic energy $\E_{an}(A,\Phi)<C$ and any $S>R_0$, we have
 	\[
 	\E_{an}(A,\Phi; \Omega_{n,S})<\epsilon.
 	\]
 \end{theorem}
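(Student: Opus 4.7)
The plan is to argue by contradiction and to reduce the statement to Theorem \ref{T2.4}, the classification of point-like solutions on $\C\times\Sigma$, via a translation-and-compactness argument on the planar end.

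Suppose the conclusion fails. Then there exist $C,\epsilon>0$, a sequence of solutions $\gamma_j=(A_j,\Phi_j)\in \SC_{k,loc}(\R_t\times \hy)$ to \eqref{4DSWEQ} with $\E_{an}(\gamma_j)<C$, and a sequence $(n_j,S_j)\in\Z\times\R_{\geq 0}$ with $S_j\to\infty$ such that
\[
\E_{an}(\gamma_j;\Omega_{n_j,S_j})\geq\epsilon.
\]
Since $S_j\to\infty$, for $j\gg 1$ the region $\Omega_{n_j,S_j}\times\Sigma$ lies entirely in the planar end $\R_t\times[0,\infty)_s\times\Sigma$, where the metric is Euclidean, $\omega=\mu+ds\wedge\lambda$ is translation-invariant and parallel, and $\gamma_j$ satisfies the unperturbed equations \eqref{SWEQ}. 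Define translated configurations
\[
\tilde\gamma_j(t,s,\sigma)\colonequals \gamma_j(t+n_j,s+S_j,\sigma),
\]
which are solutions of \eqref{SWEQ} on the translated domain $\R_t\times[-1-S_j,\infty)_s\times\Sigma$. By construction $\E_{an}(\tilde\gamma_j;\Omega_{0,0})\geq\epsilon$ for all $j$, and Lemma \ref{L11.3} applied to $\gamma_j$ gives a uniform bound
\[
\int_{\R_t\times[0,\infty)_s\times\Sigma}\tfrac{1}{8}|F_{A_j^t}|^2+|\nabla_{A_j}\Phi_j|^2+|(\Phi_j\Phi_j^*)_0+\rho_4(\omega^+)|^2 \;\leq\; C+C_2,
\]
so each $\tilde\gamma_j$ carries total energy bounded by the same constant on its domain.

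Next I would invoke the standard interior compactness theory for the Seiberg–Witten equations on the K\"ahler manifold $\C\times\Sigma$. Fix an exhaustion of $\C\times\Sigma$ by compact sets $K_m$. For $j$ large enough $K_m$ lies in the domain of $\tilde\gamma_j$. The uniform energy bound together with the pointwise a priori bound on $|\Phi|$ furnished by the standard maximum-principle argument (applied locally using a barrier, or equivalently bootstrapping from the Weitzenb\"ock identity using the parallel form $\omega$), give uniform $L^2_1$ bounds on the spinor and on the curvature restricted to $K_m$. Choosing local Uhlenbeck–Coulomb gauges and passing to a diagonal subsequence, $\tilde\gamma_j$ converges after gauge transformations in $C^\infty_{loc}(\C\times\Sigma)$ to a smooth solution $\gamma_\infty=(A_\infty,\Phi_\infty)$ of \eqref{SWEQ} on $\C\times\Sigma$. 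By lower semicontinuity of the $L^2$ norm under this convergence, $\E_{an}(\gamma_\infty;\C)\leq C+C_2$, so $\gamma_\infty$ is a point-like solution.

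By Property \ref{P7}, the boundary data $(\lambda_i,\mu_i)$ on each torus component is admissible, so Theorem \ref{T2.4} applies: $\gamma_\infty$ is gauge-equivalent to the constant configuration $(A_*,\Phi_*)$, whose local energy vanishes identically. In particular $\E_{an}(\gamma_\infty;\Omega_{0,0})=0$. But the local energy functional $\E_{an}(\,\cdot\,;\Omega_{0,0})$ is gauge-invariant and continuous under $C^\infty_{loc}$-convergence, so
\[
\E_{an}(\gamma_\infty;\Omega_{0,0})=\lim_{j\to\infty}\E_{an}(\tilde\gamma_j;\Omega_{0,0})\geq\epsilon,
\]
a contradiction. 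This proves the existence of $R_0$.

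The main technical obstacle will be the interior compactness step: one must justify the $C^\infty_{loc}$ convergence of $\tilde\gamma_j$ on the non-compact planar end $\C\times\Sigma$ from only a global $L^2$ energy bound. This is standard in spirit—local Uhlenbeck gauge fixing plus Dirac/elliptic bootstrap—but requires a pointwise a priori bound on $|\Phi_j|$ away from the cylindrical region of $\hy$, which in our setting follows from the maximum principle applied to $|\Phi|^2$ using the parallel structure of $\omega$ on the end. Once this is in place the rest of the argument is soft.
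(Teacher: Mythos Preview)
Your argument has a genuine gap at the step where you invoke Lemma~\ref{L11.3} to obtain
\[
\int_{\R_t\times[0,\infty)_s\times\Sigma}\tfrac{1}{8}|F_{A_j^t}|^2+|\nabla_{A_j}\Phi_j|^2+|(\Phi_j\Phi_j^*)_0+\rho_4(\omega^+)|^2 \;\leq\; C+C_2.
\]
Lemma~\ref{L11.3} is stated for a bounded completion $\hx$, and its constant $C_2$ comes from absorbing the cross term $\langle F_{A^t},\bomega\rangle$ over the compact region $X=\{s\leq 0\}$. On the infinite cylinder this region is $\R_t\times Y$, and the relevant error scales with $\|\bomega\|_{L^2(\R_t\times Y)}^2=\infty$. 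What you actually get from Proposition~\ref{Energy10.2} and Lemma~\ref{L11.3} is only a bound on $\E_{an}(\gamma_j;I\times[0,\infty)_s)$ for each \emph{finite} interval $I$, with the bound growing linearly in $|I|$. Consequently your limit $\gamma_\infty$ on $\C\times\Sigma$ need not have finite total energy, so it is not point-like, and the final clause of Theorem~\ref{T2.4} does not apply.

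This is precisely why the paper's proof is more elaborate. It first isolates the $s$-direction in Lemma~\ref{11.3}: if the energy over a \emph{fixed} short time interval $J$ is small, then so is $\E_{an}(\cdot\,;\Omega_S)$ for $S$ large, by a contradiction argument whose limit is a solution to the 3-dimensional equations on $\R_s\times\Sigma$ and is killed by Theorem~\ref{T2.6}. Then, to control the $t$-direction, the paper introduces the significant set $K_m=\{n:\E_{an}(\beta_m;J_n)>\eta\}$, whose cardinality is uniformly bounded since the total energy is. After translating and passing to a limit on $\C\times\Sigma$, one does not know the total energy is finite, but the combinatorics of the $K_m$ guarantee that outside a fixed box the local energies $\E_{an}(\beta_\infty;\Omega_{j,S})$ are all below the threshold $\epsilon_*$. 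This is exactly the hypothesis of the general form of Theorem~\ref{T2.4}, which then yields the contradiction. Your shortcut skips the use of Theorem~\ref{T2.6} and the significant-set bookkeeping, and without them the limiting solution on $\C\times\Sigma$ cannot be shown to satisfy the hypotheses of Theorem~\ref{T2.4}.
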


 The uniform decay in Theorem \ref{11.1} can be improved into exponential decay using Theorem \ref{T2.5}: 
 \begin{theorem}\label{11.2} 
 	For any $C>0$, there exists constants $\zeta(\hy,\bs), M_0(C, \hy,\bs)>0$ with the following significance. For any solution $(A,\Phi)\in \SC_k(\R_t\times \hy)$ to the Seiberg-Witten equations $(\ref{4DSWEQ})$ on $\R_t\times (\hy, \bs)$ with analytic energy $\E_{an}(A,\Phi)<C$, any $n\in \Z$ and $S>0$,
	\[
	\E_{an}(A,\Phi; \Omega_{n,S})<M_0e^{-\zeta S}.
	\]
\end{theorem}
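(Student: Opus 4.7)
The plan is to bootstrap the uniform smallness from Theorem \ref{11.1} into exponential decay by applying Theorem \ref{T2.5} on a sufficiently far planar subregion. Let $\epsilon_0>0$ be the threshold appearing in the hypothesis of Theorem \ref{T2.5}, which depends only on the boundary data $(g_\Sigma,\lambda,\mu)$. Apply Theorem \ref{11.1} with this $\epsilon=\epsilon_0$ and the given bound $C$ on total energy to obtain a constant $R_0=R_0(\epsilon_0,C,\hy,\bs)$ such that
\[
\E_{an}(A,\Phi;\Omega_{n,S})<\epsilon_0 \quad\text{for every } n\in\Z \text{ and } S\geq R_0,
\]
for any solution with $\E_{an}(A,\Phi)<C$.

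Next, I would restrict the solution to the planar end $\R_t\times[R_0,\infty)_s\times\Sigma\subset\R_t\times\hy$. Under the change of coordinates $s'=s-R_0$, this region becomes isometric to $\HH^2_+\times\Sigma$ with its flat product metric, and the closed $2$-form $\omega$ is exactly $\mu+ds'\wedge\lambda$ there, so the restriction of $(A,\Phi)$ satisfies the unperturbed Seiberg-Witten equations \eqref{SWEQ} on $\HH^2_+\times\Sigma$. Moreover, the translated regions $\Omega'_{n,S'}$ correspond precisely to $\Omega_{n,S'+R_0}$ in the original coordinates, so the previous step gives
\[
\E_{an}(A,\Phi;\Omega'_{n,S'})<\epsilon_0 \quad\text{for all } n\in\Z,\ S'\geq 0.
\]
This is exactly the hypothesis of Theorem \ref{T2.5}, which therefore produces constants $\zeta>0$ (depending only on $(g_\Sigma,\lambda,\mu)$, hence only on $(\hy,\bs)$) such that
\[
\E_{an}(A,\Phi;\Omega_{n,S})=\E_{an}(A,\Phi;\Omega'_{n,S-R_0})<e^{-\zeta(S-R_0)}\quad\text{for } S\geq R_0.
\]

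Finally, I would handle the range $0\leq S<R_0$ by the trivial bound $\E_{an}(A,\Phi;\Omega_{n,S})\leq\E_{an}(A,\Phi)<C$, which is dominated by $C\,e^{\zeta R_0}\,e^{-\zeta S}$. Setting $M_0\colonequals\max(1,C)\cdot e^{\zeta R_0}$, which depends only on $C$ and $(\hy,\bs)$, yields the desired estimate $\E_{an}(A,\Phi;\Omega_{n,S})<M_0 e^{-\zeta S}$ for all $n\in\Z$ and $S\geq 0$. The main work in this theorem has already been done in the two inputs: Theorem \ref{11.1} (the uniform decay, which uses the energy identity of Section \ref{Sec12} together with Theorem \ref{T2.4}) and Theorem \ref{T2.5} (the exponential improvement near the constant solution on $\HH^2_+\times\Sigma$). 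There is no further obstacle beyond verifying that the shifted end of $\R_t\times\hy$ genuinely looks like $\HH^2_+\times\Sigma$, which follows from the cylindrical structure imposed in properties \ref{P2} and \ref{P5}.
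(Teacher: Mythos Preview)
Your proof is correct and follows exactly the approach the paper indicates: the paper states Theorem~\ref{11.2} immediately after Theorem~\ref{11.1} with the sentence ``The uniform decay in Theorem~\ref{11.1} can be improved into exponential decay using Theorem~\ref{T2.5}'' and gives no further argument, so you have simply spelled out the intended combination of the two inputs. One very minor point: the literal inequality $\E_{an}(A,\Phi;\Omega_{n,S})\leq \E_{an}(A,\Phi)$ for small $S$ is not quite immediate, since the integrand of the full analytic energy \eqref{E10.8} need not be pointwise nonnegative on the compact piece $Y$; but Lemma~\ref{L11.3} (or simply Theorem~\ref{11.1} applied with a fixed $\epsilon$) gives a uniform bound on $\E_{an}(A,\Phi;\Omega_{n,S})$ for $0\leq S<R_0$, which is all you need to absorb that range into the constant $M_0$.
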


The proof of Theorem \ref{11.1} will dominate the rest of Subsection \ref{Subsec11.1} and it relies on Theorem \ref{T2.4} and \ref{T2.6} in an essential way. Let us first state a lemma in which we set $\Omega_S\colonequals \Omega_{0,S}$.

\begin{lemma} \label{11.3}Let $J=[-3,3]\supset I=[-2,2]$. For any $\epsilon>0$, there exists constants $R_0(\hy, \epsilon), \eta(\hy, \epsilon)>0$ with the following significance.  For any solution $(A,\Phi)$ to the Seiberg-Witten equations $(\ref{4DSWEQ})$  on $J\times (\hy,\bs)$ with $\E_{an}(A,\Phi;J)<\eta$ and any $S>R_0$, we must have
	\[
	\E_{an}(A,\Phi; \Omega_{S})<\epsilon.
	\]
\end{lemma}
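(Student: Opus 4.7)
The plan is to argue by contradiction and reduce the problem to Taubes' rigidity (Theorem~\ref{T2.6}). Suppose the lemma fails: there exist $\epsilon>0$, sequences $\eta_n\searrow 0$ and $R_n,S_n\nearrow\infty$ with $S_n>R_n$, and solutions $\gamma_n=(A_n,\Phi_n)$ to \eqref{4DSWEQ} on $J\times(\hy,\bs)$ with $\E_{an}(\gamma_n;J)<\eta_n$ but $\E_{an}(\gamma_n;\Omega_{S_n})\geq\epsilon$. I will first put each $\gamma_n$ into temporal gauge along $J$ and then translate by $-S_n$ in the $s$-direction to obtain configurations $\gamma_n'$ on $J\times[1-S_n,\infty)_s\times\Sigma$. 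Since the cylindrical metric, the reference configuration $\gamma_0$, and $\omega=\mu+ds\wedge\lambda$ are all $s$-translation-invariant on the end, $\gamma_n'$ still solves \eqref{4DSWEQ}, remains in temporal gauge, and satisfies $\E_{an}(\gamma_n';\Omega_0)\geq\epsilon$ by translation invariance of the local energy.

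Next I will extract a subsequential smooth limit. Lemma~\ref{L11.3} supplies a uniform $L^2$-bound on $|F_{A_n^t}|$, $|\nabla_{A_n}\Phi_n|$, and $|(\Phi_n\Phi_n^*)_0+\rho_4(\omega^+)|$ over $J\times\hy$ by a constant independent of $n$; this bound is preserved under $s$-translation and therefore controls these same densities for $\gamma_n'$ on every compact subset of the strip $J\times\R_s\times\Sigma$. Patching local Coulomb slices relative to $\gamma_0'$ on an exhausting sequence of compact subsets (with harmonic corrections on overlaps, kept compatible with temporal gauge) and applying the Seiberg-Witten elliptic bootstrap in the spirit of \cite[Section 5]{Bible}, I extract gauge representatives converging in $C^\infty_{loc}$ to a smooth solution $\gamma_\infty$ of \eqref{4DSWEQ} on $J\times\R_s\times\Sigma$; by continuity of the energy density under smooth convergence on compacta, $\E_{an}(\gamma_\infty;\Omega_0)\geq\epsilon$.

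The final step uses the positive identity from Proposition~\ref{Energy10.2}, which in temporal gauge reads
\begin{equation*}
\E_{an}(\gamma_n;J)=\int_J\bigl(\|\partial_t\cgamma_n\|^2_{L^2(\hy)}+\|\grad\CL_\omega(\cgamma_n)\|^2_{L^2(\hy)}\bigr)<\eta_n\longrightarrow 0.
\end{equation*}
Passing to the limit yields $\partial_t\cgamma_\infty\equiv 0$ and $\grad\CL_\omega(\cgamma_\infty)\equiv 0$, so $\gamma_\infty$ is the pullback of a $t$-independent 3-dimensional solution $\cgamma_\infty$ on $\R_s\times\Sigma$. The uniform $L^2$-bound on the positive density combined with this $t$-invariance produces $\E_{an}(\cgamma_\infty;\R_s)<\infty$, so Theorem~\ref{T2.6} forces $\cgamma_\infty$ to be gauge equivalent to $\cgamma_*$; but $\cgamma_*$ has vanishing pointwise local energy density, contradicting $\E_{an}(\gamma_\infty;\Omega_0)\geq\epsilon$. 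I expect the main technical obstacle to be the gauge-fixing step: since $\hy$ is non-compact and configurations are never reducible, there is no global slice (cf.\ Section~\ref{Sec10}), so I must assemble Coulomb-type spatial gauges on a compact exhaustion while keeping the temporal-gauge condition, in order to obtain uniform local Sobolev bounds suitable both for the bootstrap and for isolating the $t$-independent limit.
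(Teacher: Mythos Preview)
Your proposal is correct and follows the same contradiction/translation/Taubes-rigidity route as the paper. The paper streamlines the step you flag as the main obstacle by invoking the classical compactness theorem \cite[Theorem~5.2.1]{Bible} directly for the $C^\infty_{loc}$ limit and by putting the \emph{limit} configuration into temporal gauge rather than trying to preserve temporal gauge through the extraction; your auxiliary sequence $R_n$ is also unnecessary, since a single sequence $S_n\to\infty$ already suffices.
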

\begin{proof} Suppose on the contrary that there exists a sequence $\{(A_n,\Phi_n)\}_{n\geq 1}$ of solutions to the Seiberg-Witten equations (\ref{4DSWEQ}) on $J\times (\hy, \bs)$, a sequence of numbers $\eta_n\to 0$ and $R_n\to\infty$ such that 
	\[
	\E_{an}(A,\Phi; J)<\eta_n \text{ and } \E_{an}(A_n, \Phi_n; \Omega_{R_n})\geq \epsilon.
	\]
By Proposition \ref{Energy10.2} and Lemma \ref{L11.3}, 
\[
\E_{an}(A_n,\Phi_n; J\times [0,\infty)_s)\leq C_2'
\]
for some uniform constant $C_2'>0$. Let $\beta_n=(A_n',\Phi_n')(t,s)=(A_n,\Phi_n)(t, s-R_n)$ be the translated configuration defined on $J\times [-R_n, R_n]\times \Sigma$. Since we have a uniform bound on 
\[
\E_{an}(\beta_n; J\times [-R_n,R_n]),
\]
the classical compactness theorem \cite[Theorem 5.2.1]{Bible} ensures that there is a subsequence of $\{\beta_n\}$ that converges in $\SC_{loc}^\infty$ topology to a solution $\beta_\infty=(A_\infty, \Phi_\infty)$ on $J\times \R_s\times \Sigma$. On the other hand, if we write $\beta_\infty$ as 
\[
(\cgamma(t),c(t))=(B(t),\Psi(t),c(t)),
\]
then Proposition \ref{Energy10.2} implies 
$$\pt \cgamma(t)+\bd_{\cgamma(t)}c(t)=-\grad \CL_{\omega}(\cgamma(t))=0,$$
since $\eta_n\to 0$ as $n\to \infty$.  By making $\beta_\infty$ into temporal gauge (i.e $c(t)\equiv 0$), we conclude that $\cgamma(t)$ is independent of $t\in I$ and solves the 3-dimensional Seiberg-Witten equations \eqref{3DSWEQ} or \eqref{3DDSWEQ}. 

This is the place where the property \ref{P7} is used. By Theorem \ref{T2.6}, up to gauge, $\gamma(t)$ has to be $\R_s$-translation invariant, so
\[
\E_{an}(\beta_\infty; I\times [-3,3])=0.
\]
This contradicts the assumption that $ \E_{an}(A_n, \Phi_n; \Omega_{R_n})\geq \epsilon$ for each $n$. 
\end{proof}

\begin{proof}[Proof of Theorem \ref{11.1}] 
	Suppose on the contrary that there exists a sequence 
	$$\{\beta_m=(A_m,\Phi_m)\}_{m\geq 1}\subset \SC_{k,loc}(\R_t\times \hy)$$ 
	of solutions to the Seiberg-Witten equations (\ref{4DSWEQ}) on $\R_t\times (\hy, \bs)$, a sequence of integers $n_m\geq 0$ and numbers $R_m\to\infty$ such that 
	\[
	\E_{an}(\beta_m)<C \text{ and } \E_{an}(A_m, \Phi_m, \Omega_{n_m,R_m})\geq \epsilon.
	\]
	
	Let $J_n=[n-3,n+3]$ for each $n\in \Z$. For each $m$, define the significant set of $\beta_m$ as 
	\[
	K_m=\{n\in \Z: \E_{an}(\beta_m, J_n)>\eta \},
	\]
	where $\eta=\eta(\epsilon, \hy,\bs)$ is the constant obtained in Lemma \ref{11.3}. Then $n_m\in K_m$. Since there is a uniform upper bound on $\E_{an}(\beta_m,\R_t)$, we know that 
	\[
	|K_m|<C_1\colonequals 6C/\eta. 
	\]
	By passing to a subsequence, we assume $|K_m|$ are the same for all $m$. Place elements of $K_m$ in the increasing order:
	\[
	a_1^{m}<a_2^m<\cdots <a_k^m,\ k=|K_m|.
	\] 
	By passing to a further subsequence, we require that  $\lim_{m\to\infty} |a^m_{i+1}-a^m_i|$ exists (either finite or infinite) for each $1\leq i\leq k$ and it is infinite precisely when $i$ is one of 
	\[
i_0\colonequals -1<i_1<i_2<\cdots <i_l<i_{l+1}\colonequals k.
	\]
	Let $N=\max_{0\leq j\leq l, m\geq 0} |a^m_{i_{j+1}}-a^m_{i_j+1}|$. Now consider the translated configuration 
	\[
	\beta_m'=(A_m',\Phi_m') \text{ with } (A_m',\Phi_m')(t,s)=(A_m,\Phi_m)(t-n_m, s-R_m)
	\]
	defined on $\R_t\times [-R_m, R_m]\times\Sigma$. What we have shown so far implies that 
	\begin{itemize}
\item $\E_{an}(\beta_m', [-N,N]_t\times[-R_m, R_m]_s)$ is bounded above by a constant $C_2$ independent of $\beta_m'$. This follows from energy equations and the assumption that $\E_{an}(\beta_m)\leq C$. 
\item For any $j\in \Z$ with $|j|\geq N$ and any $S\in\R_s$, $\E_{an}(\beta'_m, \Omega_{j,S})<\epsilon$ when $m\gg 1$. Indeed, by the choice of $N$, when $m\gg 1$, $n_m+j\not\in K_m$ and  $R_m\gg R_0-S$. Now apply lemma \ref{11.3}
	\end{itemize}

By the classical compactness theorem \cite[Theorem 5.2.1]{Bible}, up to gauge, a subsequence of $\{\beta_m'\}$ will converge in $\SC_{loc}^\infty$-topology to a solution $\beta_\infty=(A_\infty,\Phi_\infty)$ defined on $\R_t\times \R_s\times \Sigma$. Moreover, we have the following estimates on its analytic energy:
\begin{itemize}
\item For some large constant $M>0$, $\E_{an}(\beta_\infty, \Omega_{j,S})<\epsilon$ whenever $|j|>N$ or $|S|>M$;
\item $\E_{an}(\beta_\infty, [-N,N]_t\times [-M,M]_s)<\infty$;
\item $\E_{an}(\beta_\infty,\Omega_{0,0})\geq \epsilon$.
\end{itemize}

	Now we  draw a contradiction from Theorem \ref{T2.4} which rules out such solutions.  
\end{proof} 

\subsection{Decay of $L^2_k$-norm}\label{Subsec12.2} Having addressed the exponential decay of the local energy functional 
\[
	\E_{an}(A,\Phi; \Omega_{n,S})
\]
in Theorem \ref{11.2}, let us estimate the $L^2_k$-norm of $(A,\Phi)$ over the sub-domain $\Omega_{n,S}$ in terms of $\E_{an}(A,\Phi; \Omega_{n,S})$. Aside from Remark \ref{rmk-10.2}, this is the second reason why the local energy functional is useful. For the sake of simplicity, let us state the results for the compact domain
\[
\Omega_0\subset [-2, 2]_t\times \R_s \subset \C
\]
defined in \eqref{E2.3}. Let $M=\Omega_0\times\Sigma$. Recall that $\gamma_*=(A_*,\Phi_*)$ defined by \eqref{E9.7} is the standard configuration on $\C_z\times\Sigma$. For any smooth $\gamma=(A,\Phi)\in \SC(M)$, set $(a,\phi)=\gamma-\gamma_*$ and consider the gauge fixing condition 
	\begin{equation}\label{E11.5}
\left\{\begin{array}{rl}
\bd_{\gamma_*}^*(a,\phi)&\colonequals -d^*a+i\re\langle \phi, i\Phi_*\rangle=0\\
\langle a,\vn\rangle&=0 \text{ at } \partial M.
\end{array}
\right.
\end{equation}

The proof of Theorem \ref{11.5} requires three additional lemmas, summarized as follows:
\begin{itemize}
\item Lemma \ref{L11.5}: put $\gamma$ into the Coulomb-Neumann gauge slice of $\gamma_*$;
\item Lemma \ref{L11.6}: once $\gamma$ is in the slice, estimate the $L^2_{1,A_*}$-norm of $(a,\phi)=\gamma-\gamma_*$ in terms of $\E_{an}(\gamma; \Omega_0)$;
\item Lemma \ref{L11.8}:  once $\gamma$ is in the slice, estimate the $L^2_{l,A_*}$-norm of $(a,\phi)=\gamma-\gamma_*$ in terms of $\E_{an}(\gamma; \Omega_0)$ for any $l\geq 1$.
\end{itemize}

\begin{lemma}\label{L11.5} There exist constants $\epsilon_0, C_0>0$ with the following significance. For any configuration $\gamma\in \SC(\Omega_0\times\Sigma)$ with
	\begin{equation}\label{E11.6}
\|\gamma-\gamma_*\|_{L^2_{2, A_*}(M)}<\epsilon_0
	\end{equation}
then we can find a smooth function $f: M\to i\R$ such that $e^f\cdot \gamma$ satisfies the Coulomb-Neumann gauge fixing condition \eqref{E11.5}. Moreover, 
\[
\|e^f\cdot \gamma-\gamma_*\|_{L^2_{2, A_*}(M)}\leq C_0\|\gamma-\gamma_*\|_{L^2_{2, A_*}(M)}. 
\]
\end{lemma}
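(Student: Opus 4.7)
The plan is to convert the gauge-fixing requirement into a nonlinear Neumann boundary value problem for $f$ and solve it by a contraction mapping argument. Write $\gamma = \gamma_* + (a,\phi)$. Since $u = e^f$ with $f\colon M \to i\R$ acts by $u\cdot \gamma = (A - df,\, e^f\Phi)$, the Coulomb--Neumann condition \eqref{E11.5} applied to $e^f\cdot \gamma$ translates into
\begin{align*}
(\Delta + |\Phi_*|^2)\, f &= -\bd_{\gamma_*}^{*}(a,\phi) - \CN(f; a,\phi) && \text{in } M,\\
\tfrac{\partial f}{\partial \vec{n}} &= \langle a, \vec{n}\rangle && \text{on } \partial M,
\end{align*}
where $\CN(f; a,\phi) := i\re\langle (e^f - 1)\phi,\, i\Phi_*\rangle + i\bigl(\operatorname{Im}(e^f) - \operatorname{Im}(f)\bigr)|\Phi_*|^2$ collects all terms that are at least quadratic in the small parameters $(f, a, \phi)$. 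Here I used that $\Phi_* = (r_+,\sqrt{2}\lambda^{0,1} r_-)$ has $|\Phi_*|^2 = r_+^2 + 2|\lambda|^2 r_-^2$ bounded below uniformly on $M$.

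The first step is to invoke invertibility of the linearization: by Lemma \ref{L10.4} (specifically the statement about \eqref{E10.2}) applied with $\Phi = \Phi_*$, the operator
\[
P := \bigl(\Delta + |\Phi_*|^2,\ \tfrac{\partial}{\partial \vec{n}}\big|_{\partial M}\bigr)\colon L^2_{2}(M,i\R)\longrightarrow L^2(M, i\R) \times L^2_{3/2}(\partial M, i\R)
\]
is an isomorphism. Let $G$ be its inverse and recast the problem as the fixed-point equation $f = T(f)$ with
\[
T(f) := G\bigl(-\bd_{\gamma_*}^{*}(a,\phi) - \CN(f; a,\phi),\ \langle a, \vec{n}\rangle|_{\partial M}\bigr).
\]
For $\|(a,\phi)\|_{L^2_2} < \epsilon_0$, the affine term $G(-\bd_{\gamma_*}^*(a,\phi),\ \langle a,\vec n\rangle)$ has $L^2_2$-norm bounded by $C\,\epsilon_0$, which sets the natural scale for the search ball.

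Next I would show that $T$ is a contraction on the ball $B_\delta = \{f \in L^2_2 : \|f\|_{L^2_2}\leq \delta\}$ for $\delta = C\epsilon_0$ with $\epsilon_0$ small. This relies on: (i) the Sobolev embedding $L^2_2(M) \hookrightarrow L^p(M)$ for every $p < \infty$ (since $\dim M = 4$); (ii) the smoothness and boundedness of $\Phi_*$, making multiplication by $\Phi_*$ continuous on every $L^2_k$; and (iii) pointwise Taylor bounds $|e^f - 1|\leq C|f|$, $|\operatorname{Im}(e^f) - \operatorname{Im}(f)|\leq C|f|^2$, $|e^{f_1} - e^{f_2}|\leq C|f_1 - f_2|$, combined with Gagliardo--Nirenberg interpolation to upgrade these pointwise bounds to Sobolev bounds at the borderline index. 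These together yield $\CN \colon B_\delta \times \{(a,\phi): \|(a,\phi)\|_{L^2_2} < \epsilon_0\} \to L^2(M, i\R)$ of size $O(\delta^2 + \delta \epsilon_0)$ with Lipschitz constant $O(\delta + \epsilon_0)$ in the $f$-variable. Taking $\epsilon_0$ sufficiently small, $T$ becomes a strict contraction on $B_\delta$, producing a unique fixed point $f$ with $\|f\|_{L^2_2}\leq C\|(a,\phi)\|_{L^2_2}$. The smoothness of $f$ is then a posteriori by elliptic regularity for the Neumann problem. Finally, the stated estimate follows from the identity
\[
e^f\cdot \gamma - \gamma_* = \bigl(a - df,\ e^f\phi + (e^f-1)\Phi_*\bigr)
\]
together with the multiplication estimates already established.

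The main obstacle is the borderline Sobolev index: $L^2_2$ is not a Banach algebra in dimension $4$ (the borderline case $k = n/2$), so the nonlinear estimates on $\CN$ cannot use naive multiplication and have to be arranged via embeddings into $L^p$ ($p < \infty$) and Gagliardo--Nirenberg interpolation, with the smoothness of $\Phi_*$ exploited to absorb one factor at each step. Once those multiplication and composition bounds are set up, the contraction-mapping scheme is routine and the quantitative bound $C_0$ is read off from the operator norm of $G$ together with the Lipschitz constants of $\CN$.
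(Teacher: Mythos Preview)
Your approach and the paper's agree at the core---both reduce to the Neumann problem governed by the invertible operator \eqref{E10.2} from Lemma~\ref{L10.4}---but the paper packages it more efficiently. It applies the inverse function theorem to the single map
\[
U\colon L^2_3(M,i\R)\times \K_2 \longrightarrow L^2_2(M,iT^*M\oplus S^+),\qquad (f,(a',\phi'))\longmapsto \bigl(a'-df,\ (e^f-1)\Phi_*+e^f\phi'\bigr),
\]
whose linearization at the origin is an isomorphism (this is precisely \eqref{E10.2} at $j=2$). The key simplification is taking $f\in L^2_3$ rather than $L^2_2$: since $L^2_3(M)$ is a Banach algebra in dimension four, the composition $f\mapsto e^f-1$ and all the required products are automatically smooth, so the borderline Sobolev obstacle you flag never appears.

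Your contraction scheme with $f\in L^2_2$ is workable but, as written, has a small gap: the final estimate needs $\|a-df\|_{L^2_2}$ and hence a quantitative $L^2_3$ bound on $f$, whereas your fixed point delivers only $\|f\|_{L^2_2}\lesssim\|(a,\phi)\|_{L^2_2}$. One bootstrap closes it---once $f\in L^2_2$ solves the equation, the right-hand side of your Neumann problem lies in $L^2_1$ (here it is essential that $f$ is purely imaginary so $|e^f|=1$ pointwise, taming the nonlinearity), and \eqref{E10.2} with $j=2$ then yields $\|f\|_{L^2_3}\leq C\|(a,\phi)\|_{L^2_2}$. That said, working in $L^2_3$ from the outset, as the paper does, is the shorter route.
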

\begin{proof} Let $\K_2$ be the subspace of $\CT_{2,\gamma_*}\colonequals L^2_2(M, iT^*M\oplus S^+)$ subject to the gauge fixing condition \eqref{E11.5}. Consider the non-linear map:
\begin{align*}
U: L_3^2(M;i\R)\times \K_2&\to \CT_{2,\gamma_*} \\
(f, (a,\phi))&=(a-df, (e^{f}-1)\cdot \Phi_*+e^f\cdot\phi).
\end{align*}
The linearized operator $\D_0U$ of $U$ at $(0,(0,0))$ is invertible. Now our lemma  follows from the implicit function theorem.
\end{proof}

Suppose now that $\gamma$ already lies in the Coulomb-Neumann gauge slice of $\gamma_*$. The next step is to estimate $\|(a,\phi)\|_{L^2_{1,A_*}}$ in terms of the local energy functional $\E_{an}(A,\Phi; \Omega_0)$. 

\begin{lemma}\label{L11.6} There exist constants $\epsilon_1, C_1>0$ with the following significance. For any $\gamma$ subject to the gauge fixing condition \eqref{E11.5}, if  $\|(a,\phi)\|_{L^2_{1,A_*}}<\epsilon_1$, then
	\[
\|(a,\phi)\|_{L^2_{1,A_*}}^2\leq C_1\cdot  \E_{an}(\gamma,\Omega_0). 
	\]
\end{lemma}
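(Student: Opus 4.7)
The plan is to view $\E_{an}(\gamma,\Omega_0)$ as a smooth functional of $(a,\phi)=\gamma-\gamma_*$ on the Coulomb-Neumann slice through $\gamma_*$, show that $\gamma_*$ is a global minimizer with $\E_{an}(\gamma_*,\Omega_0)=0$, and then prove that the Hessian quadratic form is coercive on the slice.

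First I would verify that $\E_{an}(\gamma_*,\Omega_0)=0$. On $\C\times\Sigma$ the reference configuration $\gamma_*=(A_*,\Phi_*)$ satisfies $\nabla_{A_*}\Phi_*=0$ (since $\Sigma$ is flat and $\lambda$ is harmonic, hence parallel on $\Sigma$) and $F_{A_*^t}=0$ (the induced connection is trivial on $L^+=\C$ and Levi-Civita on the flat $L^-=\Lambda^{0,1}\Sigma$). The Seiberg-Witten identity $(\Phi_*\Phi_*^*)_0+\rho_4(\omega^+)=\tfrac{1}{2}\rho_4(F_{A_*^t}^+)=0$ then kills the remaining term of the integrand. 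Because $\E_{an}\geq 0$ pointwise on $\SC(M)$, this makes $\gamma_*$ a global minimum, so the first variation of $\E_{an}$ at $\gamma_*$ vanishes in every direction. Taylor expansion to second order then produces
\[
\E_{an}(\gamma,\Omega_0)=Q(a,\phi)+H(a,\phi),\qquad Q(a,\phi)\colonequals\int_M|da|^2+|\nabla_{A_*}\phi+a\cdot\Phi_*|^2+|\sigma(\phi)|^2,
\]
with $\sigma(\phi)\colonequals(\phi\Phi_*^*+\Phi_*\phi^*)_0$, $M=\Omega_0\times\Sigma$, and $H(a,\phi)$ a sum of cubic and quartic terms pointwise bounded by $|a|^2|\phi|+|a||\phi|^2+|\phi|^3+|a|^2|\phi|^2+|\phi|^4$. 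The Sobolev embedding $L^2_{1,A_*}(M)\hookrightarrow L^4(M)$ on the compact 4-manifold $M$ then gives $|H(a,\phi)|\leq C\|(a,\phi)\|_{L^2_{1,A_*}}^3$.

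The main step will be to establish the coercivity estimate
\[
\|(a,\phi)\|_{L^2_{1,A_*}(M)}^2\leq C_Q\cdot Q(a,\phi)\qquad\text{for all }(a,\phi)\text{ satisfying }\eqref{E11.5}.
\]
To do so, I would augment $Q$ by the gauge penalty $\|\bd_{\gamma_*}^*(a,\phi)\|_{L^2}^2$, which vanishes on the slice, to obtain an elliptic quadratic form for the deformation complex at $\gamma_*$; together with the Neumann boundary condition $\langle a,\vn\rangle|_{\partial M}=0$, G\r{a}rding's inequality yields
\[
c_3\|(a,\phi)\|_{L^2_{1,A_*}}^2\leq Q(a,\phi)+\|\bd_{\gamma_*}^*(a,\phi)\|_{L^2}^2+C_3\|(a,\phi)\|_{L^2}^2,
\]
after which Rellich-Kondrachov and a standard compactness-contradiction argument reduce the coercivity to triviality of the joint kernel on the slice. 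Suppose $(a,\phi)$ lies in this kernel; then $da=0$, $\nabla_{A_*}\phi+a\Phi_*=0$, and $\sigma(\phi)=0$. A pointwise $2\times 2$ computation (using $|\Phi_*|>0$) gives $\ker\sigma=i\R\cdot\Phi_*$, so $\phi=ig\Phi_*$ for a real function $g$; the second equation then forces $a+idg=0$; and substituting into \eqref{E11.5} yields $(\Delta+|\Phi_*|^2)g=0$ with Neumann data $\partial_{\vn}g=0$. Pairing with $g$ and integrating by parts forces $\int|dg|^2+|\Phi_*|^2g^2=0$, hence $g\equiv 0$ and $(a,\phi)=0$. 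Combining with the expansion above, for $\epsilon_1$ small enough that $C\epsilon_1\leq 1/(2C_Q)$ we obtain $\E_{an}(\gamma,\Omega_0)\geq Q(a,\phi)-C\|(a,\phi)\|^3\geq\tfrac{1}{2C_Q}\|(a,\phi)\|_{L^2_{1,A_*}}^2$.

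The principal obstacle is the kernel-triviality step: the quadratic form $Q$ has a genuine nontrivial kernel on the unconstrained $L^2_{1,A_*}$-space (the pairs $\phi=ig\Phi_*,\ a=-idg$ for $g$ arbitrary), which is eliminated only by coupling the Coulomb gauge condition to the Neumann boundary condition through the positivity of the Schr\"odinger operator $\Delta+|\Phi_*|^2$ on the compact manifold $M$.
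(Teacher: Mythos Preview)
Your argument is correct, but the paper's proof is considerably more direct. Rather than passing through G\aa rding's inequality, Rellich compactness, and a kernel computation, the paper packages the quadratic form together with the gauge-fixing term as $\|\CF_1(a,\phi)\|_{L^2}^2$, where $\CF_1(a,\phi)=(da,\ \nabla_{A_*}\phi+a\otimes\Phi_*,\ (\Phi_*\phi^*+\phi\Phi_*^*)_0,\ \bd^*_{\gamma_*}(a,\phi))$, and then expands this explicitly. The pointwise identity $|(\Phi_*\phi^*+\phi\Phi_*^*)_0|^2+|\im\langle\phi,\Phi_*\rangle|^2=|\Phi_*|^2|\phi|^2$ together with an integration by parts (using the Neumann condition $\langle a,\vn\rangle|_{\partial M}=0$ and the parallelness of $\Phi_*$) shows that all cross terms cancel, leaving $\|\CF_1\|_{L^2}^2=\|da\|_2^2+\|d^*a\|_2^2+\|\nabla_{A_*}\phi\|_2^2+\|\,|\Phi_*|\,a\|_2^2+\|\,|\Phi_*|\,\phi\|_2^2$. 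Since $|\Phi_*|$ is bounded below, this is already coercive on $L^2_{1,A_*}$ --- no compactness or kernel analysis is needed. Your approach is the standard elliptic template and would apply even if $\Phi_*$ were not parallel (at the cost of the abstract machinery); the paper's approach exploits the special structure of $\gamma_*$ to get a one-line coercivity proof.
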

\begin{proof} Consider the non-linear operator:
	\begin{align*}
	\CF(a,\phi)&=\CF_1+\CF_2 \text{ where}\\
	\CF_1(a,\phi)&=(da, \nabla_{A_*}\phi+a\otimes \Phi_*, (\Phi_*\phi^*+\phi\Phi_*^*)_0, \bd^*_{\gamma_*}(a,\phi)),\\
	\CF_2(a,\phi)&=(0, a\otimes \phi, (\phi\phi^*)_0, 0),
	\end{align*}
	so $\CF_1$ is the linear part of $\CF$ and $\|\CF(a,\phi)\|_{L^2(M)}^2=\E_{an}(\gamma,\Omega_0)$ by Definition \ref{D1.3}. Using the identity
	\[
	|(\Phi_*\phi^*+\phi\Phi_*^*)_0|^2+|\im \langle \phi,\Phi_*\rangle|^2=|\Phi_*|^2|\phi|^2,
	\]
	we calculate that 
	\begin{align*}
	\|\CF_1(a,\phi)\|^2_{L^2(M)}&=\|da\|_2^2+\|d^*a\|_2^2+\|\nabla_{A_*}\phi\|_2^2+\|a\otimes \Phi_*\|_2^2+\||\phi||\Phi_*|\|_2^2+K_3 \text{ where }\\
	K_3&=2\re \int_M \langle \nabla_{A_*}\phi, a\otimes \Phi_*\rangle-\langle\phi,  (d^*a)\Phi_*\rangle\\
	&=2\re\int_M d^*(\langle \phi,\Phi_*\rangle\cdot a )+\langle a\otimes\phi,  \nabla_{A_*}\Phi_*\rangle=0. 
	\end{align*}
	In the last step, we used the facts that $\Phi_*$ is $\nabla_{A_*}$-parallel and $\langle a, \vn\rangle=0 $ at $\partial M$. Hence, 
	\[
	\|\CF_1(a,\phi)\|_{L^2(M)}\geq c_1 \|(a,\phi)\|_{L^2_{1,A_*}},
	\]
	for some $c_1>0$. Finally, 
	\begin{align*}
	\|\CF\|_2&\geq \|\CF_1\|_2-\|\CF_2\|_2\geq  c_1 \|(a,\phi)\|_{L^2_{1,A_*}}-m_3\|(a,\phi)\|_{L^2_{1,A_*}}^2\geq \frac{c_1}{2}\|(a,\phi)\|_{L^2_{1,A_*}}
	\end{align*}
	if $\|(a,\phi)\|_{L^2_{1,A_*}}\leq c_1/2m_3$, where $m_3$ is the constant that appears in the Sobolev embedding $L^2_1\times L^2_1\to L^4$.  
\end{proof}

Now we come to estimate the $L^2_k$-norm of $(a,\phi)$. Consider a closed subset $\Omega_0'\subset \Omega_0$ with a smooth boundary such that 
\[
[-1,1]_t\times [1,3]\subset (\Omega_0')^\circ\subset  \Omega_0'\subset (\Omega_0)^\circ.
\]
\begin{lemma}\label{L11.8} There exist constants $\epsilon_k, C_k>0$ for each $k\geq 1$ with the following significance. For any smooth solution $\gamma\in \SC(M)$ to the Seiberg-Witten equations \eqref{4DSWEQ}, if $\gamma$ is subject to the gauge fixing condition \eqref{E11.5} and $\|(a,\phi)\|_{L^2_{1,A_*}(M)}<\epsilon_k$, then 
	\[
	\|(a,\phi)\|_{L^2_{k,A_*}(\Omega_0'\times \Sigma)}^2\leq C_k\cdot \E_{an}(\gamma, \Omega_0). 
	\]
\end{lemma}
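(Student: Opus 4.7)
The plan is an elliptic bootstrap. Writing $(a,\phi) = \gamma - \gamma_*$, the Seiberg-Witten equations for $\gamma$ together with the Coulomb-Neumann slice condition \eqref{E11.5} form a nonlinear elliptic system. Expanding around the standard solution $\gamma_*$ of \eqref{3DDSWEQ} on $\C \times \Sigma$, we may recast this system as
\[
L(a,\phi) = -Q(a,\phi),
\]
where $L$ is the linearization of (Seiberg-Witten $+$ Coulomb slice) at $\gamma_*$ and $Q$ collects the quadratic terms $\rho(a)\phi$ and $(\phi\phi^*)_0$. A direct symbol computation shows $L$ is a first-order elliptic operator: its principal symbol is the standard $d^+ \oplus d^*$ block on the one-form component coupled with the Dirac operator $D_{A_*}^+$ on the spinor component. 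Since $\Omega_0' \subset (\Omega_0)^\circ$, the sub-domain $M' = \Omega_0' \times \Sigma$ is compactly contained in the interior of $M = \Omega_0 \times \Sigma$, and interior elliptic regularity applies to $L$ on $M'$ with no need for boundary conditions at $\partial M$.

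The base case $k = 1$ is the content of Lemma \ref{L11.6}. For the inductive step, choose nested domains
\[
\Omega_0' \subset \Omega_0^{(k+1)} \subset (\Omega_0^{(k)})^\circ \subset \cdots \subset (\Omega_0^{(1)})^\circ = (\Omega_0)^\circ
\]
and set $M_j = \Omega_0^{(j)} \times \Sigma$. Interior elliptic regularity combined with a standard cutoff argument yields
\[
\|(a,\phi)\|_{L^2_{k+1}(M_{k+1})} \leq C\bigl(\|L(a,\phi)\|_{L^2_k(M_k)} + \|(a,\phi)\|_{L^2(M_k)}\bigr) = C\bigl(\|Q(a,\phi)\|_{L^2_k(M_k)} + \|(a,\phi)\|_{L^2(M_k)}\bigr).
\]
By Sobolev multiplication on the four-manifold $M_k$, the quadratic term satisfies $\|Q(a,\phi)\|_{L^2_k(M_k)} \leq C\|(a,\phi)\|_{L^2_k(M_k)}^2$; for $k \geq 3$ this follows immediately from the algebra property of $L^2_k$ in dimension four, while the two low-regularity cases are addressed below.

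Under the hypothesis $\|(a,\phi)\|_{L^2_1(M)} < \epsilon_k$, the energy $\E_{an}(\gamma,\Omega_0)$ is bounded by some $E_0 = E_0(\epsilon_k)$, because $\|\CF(a,\phi)\|_{L^2}^2 = \E_{an}$ (the gauge component of $\CF$ vanishes under \eqref{E11.5}) and $\|\CF(a,\phi)\|_{L^2}$ is polynomially controlled by $\|(a,\phi)\|_{L^2_1}$. The inductive hypothesis $\|(a,\phi)\|_{L^2_k(M_k)}^2 \leq C_k\E_{an}(\gamma,\Omega_0)$ then yields
\[
\|Q(a,\phi)\|_{L^2_k(M_k)} \leq CC_k\E_{an}(\gamma,\Omega_0) \leq CC_k\sqrt{E_0}\cdot\sqrt{\E_{an}(\gamma,\Omega_0)},
\]
which combined with $\|(a,\phi)\|_{L^2(M_k)} \leq \sqrt{C_1\E_{an}(\gamma,\Omega_0)}$ closes the induction, giving $\|(a,\phi)\|_{L^2_{k+1}(M_{k+1})}^2 \leq C_{k+1}\E_{an}(\gamma,\Omega_0)$.

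The main obstacle is the low-regularity bootstrap, namely the transitions $k = 1 \to 2$ and $k = 2 \to 3$ in dimension four, where $L^2_k$ is not a Banach algebra. Here one must cascade through the Sobolev embeddings $L^2_1 \hookrightarrow L^4$ and $L^2_2 \hookrightarrow L^p$ for every $p < \infty$, distribute derivatives via the product rule, and invoke $L^p$-based interior elliptic regularity for $L$ (in the same spirit as the staged $L^p$-bounds for gauge transformations carried out in the proof of Lemma \ref{L10.3}). This is routine but must be arranged carefully so that the smallness budget $\epsilon_k$ absorbs the nonlinear terms at each intermediate step; once $k \geq 3$, the Banach-algebra property makes the remaining induction uniform.
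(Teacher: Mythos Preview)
Your approach is correct and is essentially the same interior elliptic bootstrap as the paper's. The one organizational difference worth noting: rather than bounding the quadratic term by $\|(a,\phi)\|_{L^2_k}^2$ and then capping the energy by $E_0(\epsilon_k)$, the paper writes $v=(a,\phi)$ as a solution of $Dv+v\#v=0$, uses the product estimate $\|(\chi v)\#v\|_{L^2_\eta}\leq m_5\|\chi v\|_{L^2_{1+\eta}}\|v\|_{L^2_1}$ (high norm times the fixed small $L^2_1$-norm), and absorbs the nonlinear term directly by rearrangement when $\|v\|_{L^2_1}<1/(2m_5)$. This yields $\|v\|_{L^2_{1+\eta}(\Omega_0'\times\Sigma)}\leq C\|v\|_{L^2_1(M)}$ in one stroke via fractional $L^2$-spaces, avoiding the $L^p$-cascade you propose for the low-regularity steps, and gives a uniform $\epsilon_k=\min\{\epsilon_1,1/(2m_5)\}$ for all $k>1$.
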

\begin{proof} The case when $k=1$ is settled in Lemma \ref{L11.6}. For $k>1$, this follows from the standard bootstrapping argument \cite[P.107]{Bible}. To illustrate, consider the case when $1<k<2$. Take a cut-off function $\chi_4$ such that 
	\[
	\chi_4\equiv 1 \text{ on } \Omega_0';\ \supp\chi_4\subset (\Omega_0)^\circ. 
	\]
The section $v\colonequals (a,\phi)\in C^\infty(M, iT^*M\oplus S)$ is subject to a non-linear elliptic equation:
\[
Dv+v\# v=0
\]
where $\#$ stands for a certain bilinear form that involves only point-wise multiplication. By G\aa rding's inequality,  for any $0<\eta<1$, 
\begin{align*}
\|\chi_4v\|_{L^2_{1+\eta}(M)}&\leq \|D(\chi_4 v)\|_{L^2_\eta(M)}+\|v\|_2\leq  m_4\|v\|_{L^2_1}+\|(\chi_4v)\#v\|_{L^2_\eta}\\
&\leq m_4 \|v\|_{L^2_1}+m_5\|\chi_4v\|_{L^2_{1+\eta}}\|v\|_{L^2_1}
\end{align*}
If $\|v\|_{L^2_1}<1/(2m_5)$, then we use the rearrangement argument to show that 
\[
\|v\|_{L^2_{1+\eta}(\Omega_0'\times\Sigma)}\leq \|\chi_4v\|_{L^2_{1+\eta}(M)}\leq 2m_4\|v\|_{L^2_1}\leq 2m_4 \sqrt{C_1} \cdot \sqrt{ \E_{an}(\gamma, \Omega_0)},
\]
so we set $\epsilon_{1+\eta}=\min\{\epsilon_1, 1/(2m_5)\}$. In the last step, we used Lemma \ref{L11.6} to estimate $\|v\|_{L^2_{1,A_*}}$ in terms of $\E_{an}(\gamma,\Omega_0)$. When $k\geq 2$, we need more cut-off functions to separate $\Omega_0'$ from $\Omega_0$ and use inductions. In fact, we can take 
\[
\epsilon_k=\min\{\epsilon_1, 1/(2m_5)\}
\]
for any $k>1$. 
\end{proof}

\begin{proof}[Proof of Theorem \ref{11.5}] We divide the proof into three steps. Lemma \ref{L11.5} and \ref{L11.8} will be used only in the last step. In \Step 1 and \Step 2, we arrange so that the assumptions of these lemmas can be satisfied. 
	
	\medskip
	
	\Step 1. By the classical compactness theorem \cite[Theorem 5.2.1]{Bible}, for any $\epsilon>0$, we can find a constant $\eta(\epsilon)>0$ with the following property. Under the assumption of Theorem \ref{11.5}, if $\E_{an}(\gamma, \Omega_0)<\eta(\epsilon)$, then there exists a gauge transformation  $u': \Omega_0\to S^1$ such that 
	\[
\|	u'(\gamma)-\gamma_*\|_{L^2_2(\Omega_0'\times\Sigma)}<\epsilon.
	\]
	
	At this point, we have no controls of the function $\eta: \R_+\to\R_+$. 
	
	\medskip
	
	\Step 2.  We wish to find a gauge transformation $u_1\in \CG_{k+1,loc}(\R_t\times \hy)$ such that 
	\begin{equation}\label{E11.7}
\|u_1(\gamma)-\gamma_0\|_{L^2_{2,A_*} (\Omega_{n,S}\times\Sigma)}	< \min\{\epsilon_0, \frac{\epsilon_l}{C_0}\}. 
	\end{equation}
for any $n\in\Z$ and $S\gg 1$, where $\epsilon_0$ and $\epsilon_l$ are positive constants constructed in Lemma \ref{L11.5} and \ref{L11.8}. \eqref{E11.7} is provided by the uniform $L^\infty$ decay of the local energy functional. Let  $S=m\in \Z_{\geq 0}$ be an integer and apply \Step 1 to the domain
\[
\Omega_{n,m}, \forall n\in \Z, m>R_0(\eta(\epsilon), C),
\]
where $R_0$ is the constant obtained in Theorem \ref{11.1}. We find gauge transformations $u_{n,m}\in \CG^e(\Omega_{n,m}\times\Sigma)$ such that 
\[
\|u_{n,m}(\gamma)-\gamma_0\|_{L^2_2(\Omega_{n,m}'\times\Sigma)}<\epsilon. 
\]
Here $\Omega_{n,m}'$ is the translated domain of $\Omega_0'\subset \Omega_0$:
\[
\Omega_{n,m}'=\{(t,s):(t-n,s-m)\in \Omega_0'\}\subset \Omega_{n,m}.
\]

The collection of domains $\{(\Omega_{n,m}')^\circ \}$ still forms an open cover of $\R_t\times [R_0+1)_s\times \Sigma$. By a patching argument (cf. \cite[Section 13.6]{Bible}), we can find a global gauge transformation $u_1$ such that 
\[
\|u_1(\gamma)-\gamma_0\|_{L^2_1(\Omega_{n,m}\times\Sigma)}<N_1\epsilon. 
\]
for a constant $N_1>0$. Then one may achieve \eqref{E11.7} by starting with $\epsilon$ small enough. 

\medskip

\Step 3. Now apply Lemma \ref{L11.5} to $u_1(\gamma)$ on each $\Omega_{n,m}$ with $m>R_0$. We find some smooth functions $f_{n,m}: \Omega_{n,m}\times \Sigma\to i\R$ such that 
\begin{align*}
\|e^{f_{n,m}}\cdot u_1(\gamma)-\gamma_0\|_{L^2_{1,A_*}(\Omega_{n,m}\times \Sigma)}&\leq \|e^{f_{n,m}}\cdot u_1(\gamma)-\gamma_0\|_{L^2_{2,A_*}(\Omega_{n,m}\times \Sigma)}\\
&\leq C_0  \| u_1(\gamma)-\gamma_0\|_{L^2_{2,A_*}(\Omega_{n,m}\times \Sigma)}\leq \epsilon_l.
\end{align*}
and $e^{f_{n,m}}\cdot u_1(\gamma)$ lies in the Coulomb gauge slice \eqref{E11.5} of $\gamma_*$. Using Lemma \ref{L11.8} and Theorem \ref{11.2}, we estimate the $L^2_{l,A_*}$-norm of the resulting configuration:
\[
\|e^{f_{n,m}}\cdot u_1(\gamma)-\gamma_0\|^2_{L^2_{l,A_*}(\Omega_{n,m}'\times \Sigma)}\leq C_l\cdot \E_{an}(\gamma,\Omega_{n,m})\leq C_l M_0e^{-\zeta m}. 
\]

Finally, using the patching argument once again, we find a global gauge transformation $u\in \CG_{k+1,loc}(\R_t\times \hy)$ such that 
\[
\|u(\gamma)-\gamma_0\|^2_{L^2_{l,A_*}(\Omega_{n,m}\times \Sigma)}\leq N_2C_lM_0e^{-\zeta m}. 
\]
for a constant $N_2>0$. This completes the proof of Theorem \ref{11.5}.
\end{proof}

\part{Perturbations}\label{Part4}

In order to make the moduli spaces on $\R_t\times \hy$ smooth and define the Floer homology of the 3-manifold $(Y,\partial Y=\Sigma)$, a suitable perturbation $\CSd_\omega=\CL_\omega+f$ of the Chern-Simons-Dirac functional $\CL_\omega$ is needed. We follow the construction of tame perturbations in \cite[Section 10-11]{Bible}. However, there is one distinct feature of our situation, which requires some technical tricks to deal with: 
\begin{enumerate}[label=($\star$)]
\item\label{compactrequirement} We want the perturbation supported within \textbf{a compact region} of $\hy$ so that the Seiberg-Witten equations (\ref{4DSWEQ}) defined on $\R_t\times\hy$ remains unperturbed on the planar end $\HH^2_+\times \Sigma$, and Theorem \ref{T2.5} is applicable. 
\end{enumerate}

Hence, the error term $f$ must factorize through the restriction map to the truncated manifold $Y_n\colonequals\{s\leq n\}\subset \hy$ for some $n\geq 0$:
\[
\SC_{k-1/2}(\hy,\bs)\to \SC_{k-1/2}(Y_n,\bs). 
\]

As a result, the perturbation space is not large enough to separate all tangent vectors and points of $\SC_{k-1/2}(\hy,\bs)$ as in \cite[Proposition 11.2.1]{Bible}. Nevertheless, we can still achieve the transversality of moduli spaces on $\R_t\times \hy$, even with this smaller perturbation space. In fact, one may even require that $n=0$, so $Y_n=Y=\{s\leq 0\}$.

\smallskip

Part \ref{Part4} is organized as follows. In Section \ref{S14}, we introduce the so-called tame perturbations (Definition \ref{D14.2}) and state the formal mapping properties that they enjoy. 

In Section \ref{S15}, we take up the task to construct tame perturbations. The separation properties are examined carefully in Subsection \ref{Subsec15.2}. The Banach space $\Pa$ of tame perturbations is constructed in Subsection \ref{Subsec15.5}.

Section \ref{Sec15} is devoted to the compactness theorems for perturbed Seiberg-Witten equations. Since tame perturbations are made compactly supported, the proofs in Section \ref{Sec11} apply verbatim to this case. 


\section{Abstract Perturbations}\label{S14}

The perturbation that we deal with is a continuous section $(k>1)$ 
\[
\q: \SC_{k-\half}(\hy,\bs)\to \CT_0
\]
where $\CT_0$ is the $L^2$-completion of the tangent bundle $T\SC_{k-1/2}(\hy,\bs)$ introduced in Section \ref{Sec10}. The perturbation $\q$ is required to be the formal gradient of a $\CG_{k+1/2}(\hy)$-invariant continuous function $f: \SC_{k-1/2}(\hy,\bs)\to\R$, and we write $\q=\grad f$. This means that
\[
f(\cgamma(1))-f(\cgamma(0))=\int_{0}^{1} \langle \dot{\cgamma}, \q(\cgamma(t))\rangle_{L^2}dt 
\]
for any smooth path $\cgamma:[0,1]\to \SC_{k-1/2}(\hy,\bs)$. Take 
\[
\CSd_\omega=\CL_\omega+f
\]
to be the perturbed Chern-Simons-Dirac functional. Let $I=[t_1, t_2]$ and $\hz$ be the product \spinc manifold $I\times (\hy,\bs)$. The  down-ward gradient flowline equation of $\CSd_\omega$ becomes
\begin{align}\label{E15.3}
\dt\cgamma(t)&=-\grad \CSd_{\omega}(\cgamma(t))-\bd_{\cgamma(t)} c(t)\\
&=-\grad \CL_{\omega}(\cgamma(t))-\bd_{\cgamma(t)} c(t)-\q(\cgamma(t)),\nonumber
\end{align}
where $\cgamma(t)=(B(t),\Psi(t))$ is a underlying path in $\SC_{k-1/2}(\hy,\bs)$ and 
\begin{equation}\label{E14.1}
A=\dt+B(t)+c(t)dt\otimes\Id_S,\ \Phi|_{\{t\}\times Y}=\Psi(t)
\end{equation}
is the corresponding 4-dimensional configuration $\gamma=(A,\Phi)$ in $\SC(\hz)$. In this way, the continuous section $\q$ extends to a section of the trivial bundle $\V_0$ over $\SC(\hz)$:
\begin{equation}\label{E14.2}
\hq: \SC(\hz)\to \V_0=L^2(\hz, i\su(S^+)\oplus S^-)\times  \SC(\hz)
\end{equation}
by sending $\gamma=(A,\Phi)$ to $\q(\cgamma(t))$ at each time slice $t\in I$. Here we use the 3-dimensional Clifford multiplication $\rho_3$ to identify the bundle $iT^*\hy$ with $ i\su(S^+)$ over $\hz$. We wish that this section $\hq$ extends to a smooth section of $\V_k\to  \SC_k(\hz)$ for any $k\geq 2$, so \eqref{E15.3} is cast into the perturbed Seiberg-Witten equation $\F_{\hz, \q}=0$ where
\begin{align*}
\F_{\hz, \q}&\colonequals \F_{\hz}+\hq:  \SC_k(\hz)\to \V_{k-1},
\end{align*}
and $\F_{\hz}$ is defined as in \eqref{4DSWEQ}.

We do not have a canonical $L^2_j$ norm on the space $\Gamma(\hz, i\su(S^+)\oplus S^-)$. For each $\gamma=(A,\Phi)\in \SC_k(\hz)$, we define a norm at the fiber $\V_j|_\gamma$ using $A$ as the covariant derivative, i.e.
\[
\|v\|_{L^2_{j, A}}^2\colonequals \sum_{n=0}^j \|\nabla_A^n v\|^2
\]
for any $v\in \V_j|_\gamma$. This family of norms on $\V_j$ is equivariant under the gauge action of $\CG_{k+1}(\hz)$. Similarly, we define the $L^2_{j,A}$ norm on $\CT_j\to \SC_k(\hz)$. Then the $l$-th derivative of $\hq$ at $\gamma$ is a bounded multi-linear map:
\begin{align*}
\D_\gamma^l\hq&\in \Mult^l\big({\bigtimes}_l L^2_{k,A}(\hz,iT^*\hz\otimes S^+), L^2_{k,A}(i\su(S^+)\oplus S^-)\big)\\
&=\Mult^l({\bigtimes}_l \CT_k, \V_{k}). 
\end{align*}

The bundle map $\D_\gamma^l\q$ might not be a local operator: it does not necessarily send compactly supported sections on $\hy$ to another section with the same or smaller support. However, this is a property enjoyed by derivatives $\D^l_\gamma \F_{\hz}$ of the unperturbed Seiberg-Witten map $\F_{\hz}$, which motivates the next definition:

\begin{definition} \label{D14.1}For any closed subset $\Omega\subset \hy$, a perturbation $\q$ is said to be supported on $\Omega$ if $\supp\ \q(\cgamma)\subset \Omega$ for any $\cgamma\in \SC_{k-1/2}(\hy,\bs)$ and 
	\[
	\q(\cgamma_1)=\q(\cgamma_2)
	\]
	for any configurations $\cgamma_1, \cgamma_2\in \SC_{k-1/2}(\hy,\bs)$ such that $\cgamma_1=\cgamma_2$ on $\Omega$.
\end{definition}

We are primarily interested in the case when $\Omega=Y_n=\{s\leq n\}$ for some $n\geq 0$. It turns out that the choice of the integer $n$ is inconsequential for the Floer homology, so we may safely set $n=0$ and focus on the case when $\Omega=Y$. 

\begin{remark} One may even take $\Omega=[0,1]_s\times\Sigma\subset\hy$ and  the construction in Section \ref{S15} would be simplified if one uses the gauge fixing condition along each fiber $\{s\}\times\Sigma$. 
\end{remark}

For technical reasons, we also need completions of bundles and the configuration space with respect to other Sobolev norms $L^p_k$ with $p\neq 2$. Let 
\[
\SC_k^{(p)},\ \CT_k^{(p)},\ \V_k^{(p)}
\]
be the resulting space and bundles when $k\geq 1$ and $1\leq p\leq\infty$. Note that $\SC_k^{(2)}(\hz)=\SC_k(\hz)$ and so on. 

Let us state the constraints on the perturbation $\q=\grad f$.

\begin{definition}\label{D14.2} Let $Y'$ be a smooth co-dimension $0$ submanifold of $\hy$ with possibly non-empty boundary. We usually take $Y'$ to be either $Y=\{s\leq 0\}$ or $\hy$. For each integer $k\geq 2$, a perturbation $\q$ given as a section 
	\[
	\q: \SC(\hy,\bs)\to \CT_0.
	\]
	is called \textbf{$k$-tame in $Y'$} if it is the formal gradient of a continuous $\CG(\hy)$-invariant function $f$ on $\SC(\hy)$ such that
	\begin{enumerate}[label=(A\arabic*)]
\item\label{A1} the corresponding 4-dimensional perturbation $\hq$ defines an element:
\[
\hq\in C^\infty(\SC_j(\hz), \V_j)
\]
for any integer $j\in [2, k]$;

\item\label{A2} When $p>3$, $\hq$ also defines an element in
\[
 C^\infty(\SC_j^{(p)}(\hz),\V_j^{(p)})
\]
for any integer $j\in [1,k]$;

\item\label{A3} $\hq$ extends to a continuous map:
\[
\SC_1(\hz)\to \V^{(m)}_0
\]
for any $2\leq m<4$. 

\item\label{A4} for each integer $j\in [-k,k]$, the first derivative
\[
\D \hq\in C^\infty(\SC_k(\hz), \Hom(T\SC_k(\hz), \V_k))
\]
extends to a smooth map 
\[
\D \hq\in C^\infty(\SC_k(\hz), \Hom(\CT_j, \V_j));
\]
\item\label{AA5} for any $(B,\Psi)\in \SC_k(\hy)$, the $L^2_k$-section $\q(B,\Psi)$  is supported on $Y'$: 
\[
\supp\ \q(B,\Psi)\subset Y'.
\]
Moreover, there exists a constant $m_2>0$ such that 
\[
\|\q(B,\Psi)\|_{L^2(Y')}\leq m_2(\|\Psi\|_{L^2(Y')}+1),
\]
for any $(B,\Psi)\in \SC_k(\hy)$. 

\item\label{AA6} For any $0\leq\epsilon< \half$, $\hq$ extends to a continuous map 
\[
\SC_{1-\epsilon}(\hz)\to \V_0.
\]

\item\label{A7} the 3-dimensional perturbation $\q$ defines a $C^1$-section 
\[
\q:\SC_1(\hy)\to \CT_0. 
\]

	\end{enumerate} 
We simply say that $\q$ is tame in $Y'$ if $\q$ is $k$-tame in $Y'$ for any $k\geq 2$. We may not mention the support $Y'$ when $Y'=\hy$. 
\end{definition}

\begin{remark} When $Y'=\hy$, Definition \ref{D14.2} agrees with \cite[Definition 10.5.1]{Bible}, with some minor changes in properties \ref{A2}\ref{A3}\ref{AA5}\ref{AA6}. Our construction of tame perturbations in Section \ref{S15} ends up with weaker mapping properties, in exchange for having them compactly supported.
\end{remark}

\begin{remark} Let us briefly explain where these properties will be used:
	\begin{itemize}
\item \ref{A1}\ref{A2}\ref{A3}\ref{AA6} will be used in the compactness theorem for the perturbed Seiberg-Witten equations, i.e. Theorem \ref{T1.4}. They give intermediate steps in the bootstrapping arguments;
\item \ref{AA5} is used in the energy equation for the perturbed Seiberg-Witten equations, i.e. Proposition \ref{P1.1};
\item \ref{A4} is relevant with the linear theory in Part \ref{Part5};
\item \ref{A7} will be used in the proof of the exponential decay result in time direction, which we will not actually work out in this paper, cf. \cite[Section 13.4]{Bible}, in particular \cite[Lemma 13.4.3]{Bible}.\qedhere
	\end{itemize}
\end{remark}

\section{Constructing Tame Perturbations}\label{S15}

\subsection{Cylinder Functions}\label{S15.1} The construction of cylinder functions in the book \cite[Section 11]{Bible} involves a global gauge slice, which prevents perturbations being local. Instead, we adopt a variation that is reminiscent of the holonomy perturbations in instanton Floer homology to achieve our goal. 

First, we fix a smooth embedding of $S^1\times D^2$ into $\hy$, where $D^2=B(0,1)\subset \R^2$ is the unit disk:
\[
\iota: S^1\times D^2\to \hy.
\]
To find such an $\iota$, one may first embed the core $S^1\times\{0\}$ into $\hy$ and extend this map to a tubular neighborhood of the image. We pull back the metric and the spin bundle $S\to \hy$ via $\iota$. The induced Riemannian metric $g_1\colonequals \iota^*g_Y$ might not agree with the product metric 
\[
g_{std}\colonequals\iota^*g_{\hy}|_{S^1\times\{0\}}+  g_{D^2},
\]
on $S^1\times D^2$, where $g_{D^2}$ is the standard Euclidean metric of $D^2$. They are related by a smooth symmetric bundle map $K: T^*(S^1\times D^2)\to T^*(S^1\times D^2)$ (with respect to $g_{std}$) such that 
\[
\langle b_1, b_2\rangle_1=\langle K(b_1), b_2\rangle_{std}.
\]
for any co-vectors $b_1$ and $b_2$. The volume forms of $g_1$ and $g_{std}$ differ by a smooth positive function $\eta>0$:
\[
dvol_{1}=\eta\cdot dvol_{std}.
\]
It is only important to know that $K$ and $\eta$ are smooth; the Clifford multiplication $\rho_3$ is never needed for the purpose of perturbations.

Let $(B_0,\Psi_0)$ be the reference configuration in $\SC_k(Y)$. For any $(B,\Psi)\in \SC_k(\hy)$, take the difference
\[
(b,\psi)\colonequals (B,\Psi)-(B_0,\Psi_0)\in L^2_k(\hy, iT^*\hy\oplus S). 
\]

There are three classes of perturbations to be considered. The first two concern the imaginary valued 1-form $b$. The last one deals with the spin section $\Psi$. 
\begin{enumerate}[label=(B\arabic*)]
\item\label{B1} For any compactly supported 1-form $c\in \Omega^1_c(S^1\times D^2, i\R)$, define 
\begin{align*}
r_c: \SC_k(\hy)&\to \R \\
(b,\psi)&\mapsto \int_{S^1\times D^2} b\wedge d\bar{c}\\
&=\int_{S^1\times D^2} \langle b, *_1dc\rangle_{g_1} dvol_1=\int_{S^1\times D^2} \langle b, *_{std}dc\rangle_{g_{std}}dvol_{std}, 
\end{align*}
where $*_1$ and $*_{std}$ stand for the Hodge star operators of $g_1$ and $g_{std}$ respectively. The formal gradient of $r_c$ is 
\[
\grad r_c=*_1dc,
\]
while using $g_{std}$ we obtain
\[
\grad_{std} r_c\colonequals *_{std}dc=\eta K(\grad r_c).
\]

\item\label{B2} Fix a compactly supported 2-form $\nu\in \Omega_c^1(D^2, i\R)$ on the disk $D^2$ with
\[
\int_{D^2} \nu=i,
\]
and define 
\begin{align*}
r_\nu: \SC_k(\hy)&\to \R \\
(b,\psi)&\mapsto \int_{S^1\times D^2} b\wedge \pi^*\overline{\nu},
\end{align*}
where $\pi: S^1\times D^2\to D^2$ is the projection map. Unlike $r_c$, $r_\nu$ is not fully gauge-invariant. For any $u\in \CG_{k+1}(\hy)$, 
\[
r_\nu(u(b,\psi))-r_\nu(b,\psi)=-2\pi \deg (u\circ \iota: S^1\times \{0\}\to S^1)\in 2\pi \Z. 
\]

Hence, $r_\nu$ descends to a circle valued function
\[
[r_\nu]: \SC_k(\hy)\to \R/(2\pi\alpha\Z)
\]
where $\alpha\in \Z_{\geq 0}$ is the multiplicity of $\iota_*([S^1\times\{0\}])$ in $H_1(Y, \Sigma; \Z)$, i.e  $\iota_*([S^1\times\{0\}])$ is $\alpha$ times a primitive class in $H_1(Y, \Sigma; \Z)$. Using the Euclidean metric of $D^2$, one may conveniently set 
\[
\nu=i\chi_2(z)dvol_{D^2}
\]
where $\chi_2$ is a cut-off function on $D^2$ with $\chi_2(z)\equiv 1$ when $|z|\leq \half $.

\item\label{B3} Fix a gauge transformation $u_1: \hy \to S^1$ with the following properties:
\begin{itemize}
\item $u_1$ is smooth on $\hy$;
\item The composition $u_1\circ \iota: S^1\times\{0\}\to S^1$ is harmonic and has degree $\alpha$. 
\item $u_1\circ \iota: S^1\times D^2\to S^1$ is constant in $D^2$.  
\end{itemize}

Let the transformation $u_1$ act on the bundle $\R_x\times S\to \R_x\times (S^1\times D^2)$ by the formula:
\[
u^n_1(x, \Phi)\mapsto  (x-2\pi n\alpha, u^n_1\Phi).
\]

Passing to the quotient space, we obtain a bundle $\Sph$ over $(\R/2\pi\alpha\Z)\times (S^1\times D^2)$. If $\Upsilon$ is a compactly supported smooth section of $\Sph$, let $\tup$ denote its lift as a section of $\R_x\times S\to \R_x\times (S^1\times D^2)$. Then $\tup$ is an equivariant section, as 
\[
\tup(x-2\pi n\alpha, \theta,z)=u^{n}_1 \tup(x,\theta,z)
\]
for any $(\theta,z)\in S^1\times D^2$ and $x\in \R_x$. Let $b_z=b|_{S^1\times \{z\}}$ be the restriction of the 1-form $b$ over the $S^1$-fiber at $z\in D^2$. Using the product metric $g_{std}$, we write
\[
b_z=b^1_z+b^h_z
\]
in terms of the Hodge decomposition along each fiber $S^1\times\{z\}$ with 
\[
b^1_z \text{ exact and } b^h_z \text{ harmonic } (\text{the coexact part } b^2_z=0).
\]
 Let $d_{S^1}^*$ be the adjoint of the exterior differential $d_{S^1}$ over $S^1\times \{0\}$ and 
\[
G: C^\infty (S^1, i\R)\to C^\infty (S^1, i\R)
\]
be the Green operator. Then the exact part $b_z^1$ can be explicitly written as 
\[
b_z^1=d_{S^1} Gd_{S^1}^*b_z,
\]
and $b^h_z$ stands for the harmonic part of $b_z$. It is tempting to form the map:
\begin{align*}
\upd: \SC(\hy)&\to C^\infty (S^1\times D^2, S)\\
(b,\psi)&\mapsto e^{-Gd^*_{S^1}b_z} \tup(r_\nu(b), \theta,z) \text{ on } S^1\times\{z\},
\end{align*}
which is \textbf{equivariant} under the action of $u^n_1$. However, $\upd$ is \textbf{not} equivariant under the action of the full gauge group $\CG(\hy)$ (compare \cite[P.173]{Bible}). In fact, $\upd$ is invariant under $\Map(D^2,S^1)$, the space of gauge transformations that are constant along each fiber $S^1\times \{z\}$. 

To circumvent this problem, let $\Psi_z$ and $\upd_z$ be the restriction of $\Psi$ and $\upd$ along the fiber $S^1\times \{z\}$ for any $z\in D^2$. Fix an $S^1$-invariant function $h: \C_w\to \R$. For instance, set 
\[
h(w)=\chi_3(|w|^2), \forall w\in \C,
\]
for some cut-off function $\chi_3:\R\to \R_{\geq 0}$ such that
\[
\chi_3(t)\equiv 1 \text{ if } t\leq 1;\ \chi_3(t)\equiv 0 \text{ if } t\geq 2.
\]
 Then the composition $ h(\sigma(z)): \SC(\hy)\to \R$ is fully gauge invariant, where
\begin{align*}\sigma(z)\colonequals \int_{S^1\times \{z\}}\langle \Psi_z, \upd_z\rangle.
\end{align*}
Finally, define 
\[
q_{\Upsilon}(b,\psi)=\int_{D^2} h(\sigma(z))\chi_2(z) dvol_{D^2},
\]
where $\chi_2$ is the cut-off function on $D^2$ defined in \ref{B2}. 
\end{enumerate}

By choosing a finite collection of 1-forms $c_1,\cdots, c_n$ and smooth sections $\Upsilon_1, \cdots, \Upsilon_m$ of $\Sph$, we obtain a map 
\[
\Xi=(r_{c_1}, \cdots, r_{c_n}, [r_{\nu}], q_{\Upsilon_1},\cdots, q_{\Upsilon_m}): \SC(\hy)\to \R^n\times (\R/2\pi\alpha\Z)\times \R^m.
\]

\begin{definition}\label{D15.1}A function $f$ defined on $\SC(\hy)$ is called \textit{a cylinder function} if it arises as the composition $g\circ \Xi$ where
\begin{itemize}
\item the map $\Xi: \SC(\hy)\to \R^n\times (\R/2\pi\alpha\Z)\times \R^m$ is defined as above, using any compactly supported forms $c_i\ (1\leq i\leq n)$ defined on $S^1\times D^2$ and compactly supported sections $ \Upsilon_j\ (1\leq j\leq m)$ on $(\R/2\pi\alpha\Z)\times (S^1\times D^2)$, for any $n,m\geq 0$;
\item the function $$g: \R^n\times (\R/2\pi\alpha\Z)\times \R^m\to \R$$ is any smooth function with compact support. 
\end{itemize}
A cylindrical function is fully gauge invariant. 
\end{definition}

\begin{theorem}\label{T15.2} For any cylinder function $f: \SC(\hy)\to \R$, its formal gradient 
	\[
	\grad f: \SC(\hy)\to \CT_0
	\]
	is a perturbation tame in $Y'=\im \iota$, in the sense of Definition \ref{D14.2}, where $\iota: S^1\times D^2\embed \hy$ is the embedding used to define $f$.  
\end{theorem}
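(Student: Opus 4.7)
The plan is to reduce the verification of the tameness axioms \ref{A1}--\ref{A7} to the three building blocks $r_c$, $[r_\nu]$, $q_\Upsilon$ from which every cylinder function is assembled. For $f=g\circ\Xi$ with $g$ smooth and compactly supported, the chain rule expresses $\grad f$ as a finite sum of the basic gradients $\grad r_{c_i}$, $\grad r_\nu$, $\grad q_{\Upsilon_j}$, weighted by partial derivatives of $g$ evaluated at $\Xi(B,\Psi)$, which are uniformly bounded smooth scalar functions. Provided each basic gradient satisfies \ref{A1}--\ref{A7}, the same follows for $\grad f$ by multi-linear combination.

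For $r_c$ and $r_\nu$, the formal gradients are the fixed smooth, compactly supported $1$-forms $*_1 dc$ and $*_1\pi^*\nu$, independent of the configuration; the associated $4$-dimensional sections are pull-backs of these along the time projection, and every axiom in Definition \ref{D14.2} is automatic. The real substance concentrates in $q_\Upsilon$. I would first compute $\grad q_\Upsilon$ explicitly by differentiating $\sigma(z)=\int_{S^1\times\{z\}}\langle\Psi_z,\upd_z\rangle$ separately in $\Psi$, in the harmonic part of $b$ along each fiber (through $[r_\nu]$), and in the exact part $d_{S^1}Gd_{S^1}^*b_z$ inside the gauge exponential of $\upd$. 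The resulting spinor component of $\grad q_\Upsilon$ is $\chi_2(z)\,h'(\sigma(z))\,\upd$; the $1$-form component is a sum of a term proportional to $\pi^*\nu$ coming from $\partial_x\tup$ and a term of the form $\chi_2\,d_{S^1}^*G\bigl(\re\,h'(\sigma)\langle\Psi_z,\upd_z\rangle\bigr)\,d\theta$ coming from the gauge correction, pushed back into $T^*\hy$ by the bundle maps $K$ and $\eta^{-1}$ that compare $g_1$ to $g_{std}$.

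Once these formulas are in hand, the axioms fall out from three analytic inputs. The support condition \ref{AA5} is immediate from the cut-off $\chi_2$ and compactness of $\iota(S^1\times D^2)$. The smoothness axioms \ref{A1}, \ref{A2}, \ref{A4} follow because (i) $h$ has all derivatives bounded, (ii) the fibered Green's operator $G$ is smoothing of order two in the $S^1$-direction and bounded on every $L^p$, and (iii) restriction from $\hz$ to the codimension-$2$ family of fiber circles is continuous from $L^2_k$ into the requisite fiberwise Sobolev spaces for $k\geq 1$. The condition $p>3$ in \ref{A2} gives enough room for Sobolev multiplication against $\upd$ (which is itself smooth with compact support), and repeated differentiation of the composition $h(\sigma(z))$ yields polynomial expressions in $\Psi_z$ and $b_z$ whose $L^2_j$-norms are controlled by the $L^2_k$-norm of $(B,\Psi)$ for every $j\leq k$, including the negative-regularity range required in \ref{A4} by duality.

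The main obstacle is the low-regularity axioms \ref{A3} and \ref{AA6}, where $\hq$ must be continuous on $L^2_1$ or $L^2_{1-\epsilon}$ configurations on $\hz$, a setting in which the fiber trace into $L^\infty$ fails. The strategy is to exploit $L^2_1(\hz)\embed L^4$ together with the trace theorem along the two-parameter family of fiber circles: for $\Psi\in L^2_1(\hz)$ the restriction $\Psi_z$ lies in $L^2_{1/2}$ on each fiber circle $\times$ time interval, hence in $L^p$ for every finite $p$, with norms integrable in the transverse parameters by Fubini. Since $G$ and $d_{S^1}^*G$ preserve $L^p$ on $S^1$ and $h'$ is bounded with all derivatives, $\grad q_\Upsilon$ extends continuously into $\V^{(m)}_0$ for every $m<4$, giving \ref{A3} and \ref{AA6}. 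The $C^1$-property \ref{A7} for the $3$-dimensional section is obtained by specializing these estimates to $Y$, where $L^2_1\embed L^6$ comfortably closes up the Sobolev multiplications appearing in $\D(\grad q_\Upsilon)$.
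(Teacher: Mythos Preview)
Your overall strategy---reducing to the building blocks and concentrating on $q_\Upsilon$---matches the paper's, and your formula for $\grad q_\Upsilon$ is essentially \eqref{E15.2}. But you are missing the technical device that actually closes the higher-regularity axioms: \emph{anisotropic Sobolev spaces} $L^p_{m,l}$ on $I\times S^1\times D^2$, allowing $m$ derivatives in total but only $l\le m$ in the directions transverse to the fiber circle. Your observation~(ii) that $G$ smooths in the $S^1$-direction is the right idea, but exploiting it requires Lemma~\ref{L15.10} ($d_{S^1}G,\,Gd_{S^1}^*:L^p_{m,l}\to L^p_{m+1,l}$) together with the fact that $L^p_{k+1,k}$ is an algebra for $k\ge 2$ (and for $k\ge 1$ when $p>3$), so that $\updd$ lands in $L^2_{k+1,k}$ (Lemma~\ref{L15.11}) and multiplication against $\Phi\in L^2_k$ closes. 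Ordinary Sobolev multiplication at the borderline $k=2$ on a $4$-manifold does not: $L^2_2$ fails to be an algebra there, and your proposed trace-to-fiber argument does not recover the missing regularity.

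This makes your treatment of \ref{A7} a genuine gap. You assert that on the $3$-manifold $L^2_1\hookrightarrow L^6$ ``comfortably closes up'' the multiplications in $\D(\grad q_\Upsilon)$, but the paper explicitly flags this case as borderline: $L^2_{2,1}(S^1\times D^2)$ just fails to be an algebra (Subsection~\ref{Sub15.4}), the author states he was unable to prove Proposition~\ref{P15.7} at $k=1$, and \ref{A7} is rescued only because the target is merely $\CT_0$, so one can afford a small loss of regularity. Your route to \ref{A3} and \ref{AA6} via fiberwise trace theorems is also not what the paper does; it instead shows $\updd$ lands in every $L^{p'}$ (Lemma~\ref{L15.11}\ref{D5}) via a direct H\"older estimate for composition with bounded smooth maps (Lemma~\ref{L15.12}), then chains through $L^p_1\to L^{\bn(p)}$ multiplications.
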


We will prove Theorem \ref{T15.2} in Subsection \ref{Sub15.4}. 

\subsection{Cylinder Functions and Embeddings}\label{Subsec15.2} In this subsection, we examine the separating property of cylinder functions. The main results are Proposition \ref{P15.4} and \ref{P15.6}.

\medskip

 Fix an embedding $\iota:S^1\times D^2\embed \hy$, and define
\[
\Cylin(\iota)\colonequals \{f: f\text{ is a cylinder function defined via }\iota\}.
\]
 It is reasonable to ask: to what extend elements of $\Cylin(\iota)$ separate points and tangent vectors of $\SC(\hy)$. Apparently, if $(B_1,\Psi_1)$ is identical to $(B_2,\Psi_2)$ over the image of $\iota$ up to gauge, then they can not be separated by any element of $\Cylin(\iota)$, because only local information is employed when defining cylinder functions. In addition, they can not be separated if $B_1=B_2$ and 
 \[
e^{i\theta(z)}\Psi_1= \Psi_2
 \]
 for some smooth function $\theta: D^2\to \R$ as the function $h(\sigma(z))$ defined in \ref{B3} is fully gauge invariant. In fact, this is the worst case that can happen:
 
\begin{proposition}\label{P15.3} Take $\gamma_i=(B_i,\Psi_i)\in \SC(\hy)\ (i=1,2)$. Suppose for any cylinder function $f\in \Cylin(\iota)$, we always have 
	\[
	f(\gamma_1)=f(\gamma_2),
	\]
	then there exists a gauge transformation $v\in  \CG(\hy)$ and some function $\theta: B(0,1/3)\to\R$ such that 
	\[
	v(B_1)=B_2,\ e^{i\theta(z)}v\cdot\Psi_1=\Psi_2
	\] 
	over the smaller solid torus $\iota(S^1\times B(0,1/3))$. The function $\theta$ might not be continuous because of the zero locus of $\Psi_1$.
\end{proposition}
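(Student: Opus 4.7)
The plan is to exploit the three families of perturbations $r_c, r_\nu, q_\Upsilon$ in succession. Since every cylinder function has the form $g\circ\Xi$ with $g$ an arbitrary smooth compactly supported function on $\R^n\times(\R/2\pi\alpha\Z)\times\R^m$, equality of all cylinder functions on $\gamma_1$ and $\gamma_2$ forces the componentwise equalities $r_c(\gamma_1)=r_c(\gamma_2)$, $[r_\nu(\gamma_1)]=[r_\nu(\gamma_2)]\in\R/2\pi\alpha\Z$, and $q_\Upsilon(\gamma_1)=q_\Upsilon(\gamma_2)$ for every admissible choice of $c$ and $\Upsilon$.

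First I would deal with the connection part. Integration by parts gives $r_c(b)=\int b\wedge d\bar c = \int db\wedge\bar c$ for $c$ compactly supported in the open solid torus, so $r_c(\gamma_1)=r_c(\gamma_2)$ for every such $c$ implies $F_{B_1^t}=F_{B_2^t}$ on the interior of $\iota(S^1\times D^2)$, and hence $b_1-b_2$ is closed there. Because $H^1(S^1\times D^2;i\R)=i\R$ is generated by the fiber class, $b_1-b_2 = c\,d\theta + d\lambda$ for a unique $c\in i\R$ and a smooth $\lambda\colon S^1\times D^2\to i\R$. A direct computation using $\int_{D^2}\nu=i$ gives $r_\nu(\gamma_1)-r_\nu(\gamma_2) = -2\pi i c$, so the mod-$2\pi\alpha\Z$ hypothesis forces $c=i\alpha n$ for some $n\in\Z$. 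The reference transformation $u_1$ of \ref{B3} has $u_1^{-1}du_1 = i\alpha\,d\theta$ on $\iota(S^1\times D^2)$, so $b_1-b_2 = n\,u_1^{-1}du_1 + d\lambda$. Extending $\lambda$ to a smooth $\tilde\lambda\colon\hy\to i\R$ equal to $\lambda$ on $\iota(S^1\times B(0,1/3))$ and supported inside $\iota(S^1\times D^2)$, the gauge transformation $v\colonequals e^{\tilde\lambda}u_1^n\in\CG(\hy)$ satisfies $v\cdot B_1=B_2$ on the inner solid torus.

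Next, replacing $\gamma_1$ by $v\cdot\gamma_1$ (still indistinguishable from $\gamma_2$ by cylinder functions, which are gauge invariant), I may assume $B_1=B_2=:B$ on $\iota(S^1\times B(0,1/3))$. For this common $B$ the section $\upd(b,\psi)=e^{-Gd^*_{S^1}b_z}\tup(r_\nu(b),\theta,z)$ depends only on $B$ and $\Upsilon$; call the common value $\Phi$. As $\tup$ ranges over compactly supported equivariant sections of $\Sph$, and since $x_0=r_\nu(b_1)=r_\nu(b_2)\in\R/2\pi\alpha\Z$ is a single known value, choosing $\tup$ concentrated near $x_0$ and inverting the nowhere-vanishing multiplier $e^{-Gd^*_{S^1}b_z}$ allows $\Phi$ to sweep out every smooth section of the spin bundle on $S^1\times D^2$. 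Given $z_0\in B(0,1/3)$, a test section $\eta(\theta)$ on $S^1$, and a mollifier $\rho_\epsilon$ concentrated on $B(z_0,\epsilon)\subset D^2$, I test $\Phi(\theta,z)=\rho_\epsilon(z-z_0)\eta(\theta)$. Then $\sigma_i(z)=\rho_\epsilon(z-z_0)I_i(z)$ with $I_i(z)=\int_{S^1}\langle\Psi_i(\theta,z),\eta(\theta)\rangle d\theta$. Rescaling $\eta$ so that $|\sigma_i|$ stays in the linear region of an $h$ of the form $h(w)=\tilde h(|w|^2)$ with $\tilde h(t)=t$ near $0$, the identity $q_\Upsilon(\gamma_1)=q_\Upsilon(\gamma_2)$ becomes equality of the weighted integrals $\int_{D^2}\rho_\epsilon(z-z_0)^2|I_i(z)|^2\chi_2(z)\,dvol$; dividing by $\|\rho_\epsilon\|_{L^2}^2$ and applying Lebesgue differentiation as $\epsilon\downarrow 0$ yields $|I_1(z_0)|^2=|I_2(z_0)|^2$ for every $z_0\in B(0,1/3)$ and every test section $\eta$.

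Finally, the condition $|\langle\eta,\Psi_1(\cdot,z_0)\rangle_{L^2(S^1)}|=|\langle\eta,\Psi_2(\cdot,z_0)\rangle_{L^2(S^1)}|$ for every $\eta$, polarized with $\eta\mapsto\eta+w$ and $\eta\mapsto\eta+iw$, becomes $\langle\eta,\Psi_1\rangle\overline{\langle w,\Psi_1\rangle}=\langle\eta,\Psi_2\rangle\overline{\langle w,\Psi_2\rangle}$ for all $\eta,w\in L^2(S^1,S|_{S^1\times\{z_0\}})$, which forces $\Psi_1(\cdot,z_0)=e^{i\theta(z_0)}\Psi_2(\cdot,z_0)$ for a unique phase whenever $\Psi_1(\cdot,z_0)\neq 0$. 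This is exactly the claim, with the zero locus of $\Psi_1$ accounting for the caveat on continuity of $\theta$. The main obstacle will be the third step: verifying that the test construction actually hits every smooth $\Phi$ on the inner torus (requiring careful control of $r_\nu(b)$ modulo the period $2\pi\alpha$) and that the localized $q_\Upsilon$-identity, with $h$ bounded and $S^1$-invariant, still extracts pointwise information about $|I_i(z_0)|^2$.
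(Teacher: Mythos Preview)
Your treatment of the connection part is essentially the paper's: use $r_c$ to show $\delta b$ is closed, use $[r_\nu]$ to pin down the cohomology class modulo $u_1^n$, then peel off an exact piece and extend by cutoff. Fine.

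The spinor part is where you diverge, and where there is a real gap. In the paper's setup the function $h$ is \emph{fixed} once and for all (Definition~\ref{D15.1} only lets you vary $c_i$, $\Upsilon_j$, and $g$), and the explicit choice $h(w)=\chi_3(|w|^2)$ has $\chi_3\equiv 1$ on $[0,1]$. There is no ``linear region of $h$ near $0$'' to exploit: rescaling $\eta$ so that $|\sigma_i|$ is small simply makes $h(\sigma_i)\equiv 1$, and $q_\Upsilon$ sees nothing. You flagged exactly this as the main obstacle, and it is one.

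The paper sidesteps this entirely. Rather than trying to read off $|I_i(z_0)|^2$, it argues by contradiction at a single fiber: if $\Psi_{1,z_0}$ and $\Psi_{2,z_0}$ are not complex multiples of one another, pick $\Upsilon_0$ with $\Psi_{1,z_0}\perp\Upsilon_0$ but $\Psi_{2,z_0}\not\perp\Upsilon_0$, so that $\sigma_1(z_0)=0$ while $\sigma_2(z_0)\neq 0$; after rescaling $\Upsilon$, $h(\sigma_1)$ and $h(\sigma_2)$ sit on opposite plateaus of $\chi_3$ near $z_0$, and $q_\Upsilon$ separates. The equal-line, unequal-norm case is handled similarly. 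This uses only that $\chi_3$ is $1$ for small arguments and $0$ for large ones, not any differential structure of $h$.

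Your route can be repaired along similar lines: if $|I_1(z_0)|\neq|I_2(z_0)|$, then by continuity the inequality persists on $\mathrm{supp}\,\rho_\epsilon$ for small $\epsilon$; since $\chi_3$ is monotone, the integrand $\chi_3(\lambda^2\rho_\epsilon^2|I_1|^2)-\chi_3(\lambda^2\rho_\epsilon^2|I_2|^2)$ has a sign, and a suitable $\lambda$ makes it strictly nonzero at $z_0$. That gives $|I_1(z_0)|=|I_2(z_0)|$ without any linear region, after which your polarization step is correct and yields the phase relation. So the architecture is sound, but the specific mechanism you invoke (linearity of $h$ near $0$) must be replaced by monotonicity and the two plateaus.
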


\begin{proof} Take $(b_i,\psi_i)=(B_i, \Psi_i)-(B_0,\Psi_0)$ and set 
	\[
	\vb=b_2-b_1. 
	\]
By our assumptions, $\gamma_1$ and $\gamma_2$ can not be separated by any functions of classes \ref{B1}\ref{B2} and \ref{B3}. First, we claim that $\delta b$ is closed on $S^1\times D^2$, since
	\[
	0=r_c(b_2)-r_c(b_1)=r_c(\delta b)=\int_{S^1\times D^2} \delta b\wedge d\bar{c}=\int_{S^1\times D^2} d(\vb)\wedge \bar{c}
	\]
	for any compactly supported 1-form $c$. Moreover, 
	\[
	r_\nu(\delta b)=r_\nu(b_2)-r_\nu(b_1)=2\pi n \alpha\in\R
	\]
	 for some $n\in \Z$,	since $[r_\nu](b_1)=[r_\nu](b_2)$. Using the gauge transformation $u_1$ from \ref{B3}, we may place $\gamma_1$ by 
	 \[
	 u_1^{-n}(\gamma_1)
	 \]
	 to make $r_\nu(b_2)-r_\nu(b_1)$ zero. From now on, let us assume $r_\nu(\delta b)=0$. 
	 
	 This allows us to conclude that $\delta b$ is exact on $S^1\times D^2$, so $\delta b=d\xi $ for some function $\xi: S^1\times D^2\to i\R$. By cutting off $\xi$ outside $B(0,2/3)$, we extend $\xi$ to the whole manifold $\hy$ (by zero outside of $\im \iota$). Finally, replace $\gamma_1$ by $e^{-\xi}\cdot\gamma_1$. 
	 
	 It remains to show that $\Psi_1=\Psi_2$ along the core $S^1\times\{0\}$ up to an overall phase $e^{i\theta}\in S^1$ when $\delta b=0$ on $S^1\times B(0,1/2)$. Let 
	\[
	\Psi_{1,0},\Psi_{2,0}
	\]
	be their restriction along the core $S^1\times\{0\}$. If they do not generate the same complex plane in $\Gamma(S^1\times\{0\}, S)$, then we can always find a section $\Upsilon_0\in \Gamma(S^1\times\{0\}, S)$ such that 
	\[
		\Psi_{1,0}\perp \Upsilon_0 \text{ and } \Psi_{2,0}\not\perp \Upsilon_0
	\]
	or the other way around. Extending $\Upsilon_0$ to a section $\Upsilon$ of \[
	\Sph\to (\R/2\pi \alpha\Z)\times S^1\times D^2,
	\]
	supported near $\{r_\nu(b_1)\}\times S^1\times\{0\}$ will result in a function $q_{\Upsilon}$ of class \ref{B3} that separates $\gamma_1$ and $\gamma_2$. 
	
	When $\Psi_{1,0}$ and $\Psi_{2,0}$ do generate the same complex plane, but $\|\Psi_{1,0}\|_{L^2(S^1)}\neq \|\Psi_{2,0}\|_{L^2(S^1)}$, one can construct $\Upsilon$ in a similar way. 
	 
	We obtain the function $\theta: B(0,1/3)\to \R$, by applying the same argument to the fiber $S^1\times \{z\}$ for any $z\in B(0,1/3)$.
\end{proof}

Hence, it is necessary to take into account all possible embeddings of $S^1\times D^2$ into $\hy$ in order to obtain the desired separating property: 
\begin{proposition}\label{P15.4} Recall that $Y=\{s\leq 0\}\subset \hy$. Let
	\[
	\Cylin(Y)\colonequals \bigcup_{\im \iota\subset Y} \Cylin(\iota)
	\]	
	be the union of all possible cylinder functions with $\im \iota\subset Y$. If $\gamma_1$ and $\gamma_2\in \SC(\hy)$ can not be separated by any element in $\Cylin(Y)$, then there is a gauge transformation $v\in \CG(\hy)$  that identifies $\gamma_1$ with $\gamma_2$ over $Y$, i.e.
	$$v(\gamma_1)=\gamma_2 \text{ on } Y.$$ 
\end{proposition}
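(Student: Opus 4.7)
The plan is to upgrade Proposition~\ref{P15.3} from a local, per-embedding statement into a single global gauge equivalence on $Y$, proceeding in two stages: first matching the connections, then matching the spinors.

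Stage 1 (matching connections on $Y$). Set $a = B_1 - B_2$. For any embedding $\iota: S^1 \times D^2 \embed Y$ and any compactly supported $c \in \Omega_c^1(S^1 \times D^2, i\R)$, the hypothesis $r_c(\gamma_1) = r_c(\gamma_2)$ gives $\int a \wedge d\bar c = 0$; integrating by parts then forces $da = 0$ on $\iota(S^1 \times D^2)$. Since such embeddings cover $Y$, $a|_Y$ is closed. Using class~\ref{B2} cylinder functions together with $[r_\nu](\gamma_1) = [r_\nu](\gamma_2)$ in $\R/2\pi\alpha\Z$, a parallel computation shows the period of $a$ over the core $\iota(S^1 \times \{0\})$ lies in $2\pi i\Z$. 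Varying $\iota$ so that its core traverses a generating set of $H_1(Y;\Z)$, the class $[a/(2\pi i)]|_Y$ lies in $H^1(Y;\Z)$, so $a|_Y = v_1^{-1} dv_1$ for some $v_1: Y \to S^1$. Extend $v_1$ to an element of $\CG(\hy)$ (say by damping to a constant past a collar of $\partial Y$) and replace $\gamma_1$ by $v_1 \cdot \gamma_1$; henceforth we may assume $B_1 = B_2$ on $Y$.

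Stage 2 (matching spinors on $Y_\circ$). With $B_1 = B_2$ on $Y$, Proposition~\ref{P15.3} specialized to each $\iota$ produces a $v_\iota \in \CG(\hy)$ with $v_\iota^{-1} dv_\iota = 0$ on $\iota(S^1 \times B(0, 1/3))$, hence locally constant there, together with a phase $\theta_\iota$ giving $\Psi_2 = c_{\iota, z}\, \Psi_1$ on each fiber $\iota(S^1 \times \{z\})$ for some $c_{\iota, z} \in S^1$. Consequently $|\Psi_1| = |\Psi_2|$ pointwise on $Y$, and on the open set $Y_\circ \colonequals Y \setminus \{\Psi_1 = 0\}$ the map
\[
u(p) \colonequals \frac{\langle \Psi_2(p), \Psi_1(p)\rangle}{|\Psi_1(p)|^2}
\]
is a well-defined smooth $S^1$-valued function satisfying $u\,\Psi_1 = \Psi_2$ on $Y_\circ$; the fiberwise constants $c_{\iota,z}$ coming from different embeddings agree at each point of $Y_\circ$ because $\Psi_1(p) \neq 0$ determines $u(p)$ uniquely.

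Stage 3 (extension across $\{\Psi_1 = 0\}$). Along any embedded fiber of some $\iota$ that also meets $Y_\circ$, the constant phase $c_{\iota, z}$ from Stage~2 must coincide with the limiting value of $u$, giving a continuous extension of $u$ to all of $Y$ with $|u|=1$ everywhere. To upgrade this to smoothness I would argue locally near a zero $p$ of $\Psi_1$: after choosing local trivializations, $\Psi_1$ has a factorization by its order of vanishing, and since $\Psi_2 = u(p)\Psi_1 + (\Psi_2 - u(p)\Psi_1)$ where the remainder vanishes strictly faster than $\Psi_1$, a Taylor/bootstrapping argument combined with the constraint $|u| = 1$ yields smoothness of $u$ at $p$. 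Extending $u$ arbitrarily from $Y$ to $\hy$ and composing with $v_1$ from Stage~1 then produces the required $v \in \CG(\hy)$ with $v(\gamma_1) = \gamma_2$ on $Y$. I expect Stage~3 to be the main technical obstacle: the zero locus of $\Psi_1$ need not be a submanifold, the cylinder functions of class~\ref{B3} are built from $S^1$-invariant test functions $h$ and hence cannot directly separate different fiberwise phases, so the continuity-to-smoothness upgrade must be handled by a separate local argument near each point of $\{\Psi_1 = 0\}$.
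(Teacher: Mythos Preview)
Your Stages~1 and~2 parallel the paper's argument. The gap is in Stage~3, and it is more basic than the smoothness issue you flag: after Stage~1 you have $B_1=B_2$ on $Y$, so any further gauge transformation $u$ must satisfy $u^{-1}du=0$ on $Y$ to keep the connections matched. That is, $u$ must be \emph{constant} on $Y$, not merely smooth. Your Stage~3 works to extend $u$ smoothly across $\{\Psi_1=0\}$, but even a successful smooth extension would not yield $v(\gamma_1)=\gamma_2$ on $Y$ unless $du=0$, and you never argue this.

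The paper bypasses the extension problem entirely with one observation: given any two interior points $y_1,y_2\in Y$ with $\Psi_1(y_1),\Psi_1(y_2)\neq 0$, choose an embedding $\iota:S^1\times D^2\embed Y$ whose core $S^1\times\{0\}$ passes through both $y_1$ and $y_2$. Proposition~\ref{P15.3} then forces the phase to be constant along the core, so $u(y_1)=u(y_2)$. Hence $u\equiv e^{i\theta_0}$ is a single constant on $\{\Psi_1\neq 0\}$, and the desired gauge transformation is simply $e^{i\theta_0}v_1$; no local analysis near the zero locus is needed. You already have all the ingredients for this in Stage~2; what is missing is exploiting the freedom to choose the \emph{core} of $\iota$, not just its image.
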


\begin{proof} Again, take $(b_i,\psi_i)=(B_i, \Psi_i)-(B_0,\Psi_0)$ and set 
	\[
	\vb=b_2-b_1. 
	\]
	By the proof of Proposition \ref{P15.3}, we deduce that $\vb$ is closed over $Y$, and there is a gauge transformation $v\in \CG(\hy)$ such that $v(B_1)=B_2$. The remaining step is to verify
	\[
	v\cdot\Phi_1=\Phi_2
	\]
	up to a global constant $e^{i\theta}\in S^1$. By Proposition \ref{P15.3}, the equality $|\Phi_1|=|\Phi_2|$ holds point-wise on $Y$, and 
	\[
	e^{i\theta(y)}v\cdot\Phi_1=\Phi_2
	\]
	for some function $\theta: Y^\circ \to \R$ defined in the interior of $Y$. Suppose for some $y_1,y_2\in Y^\circ $, $\Phi_1(y_1),\Phi_1(y_2)\neq 0$. Choose an embedding $S^1\times\{0\}\embed Y$ that passes $y_1,\ y_2$ and extend it into an embedding of the solid torus:
	\[
	\iota: S^1\times D^2\to Y\subset \hy. 
	\]
	By Proposition \ref{P15.3}, the function $e^{i\theta}$ has to be constant along the core $S^1\times\{0\}$, so $e^{i\theta(y_1)}=e^{i\theta(y_2)}$. This allows us to modify $\theta$ to be a constant function $\theta \equiv \theta_0$, so 
	\[
	e^{i\theta_0}v\cdot\Phi_1=\Phi_2.\qedhere
	\]
\end{proof}
 
 Now we state the infinitesimal version of Proposition \ref{P15.3} and \ref{P15.4} concerning the separating property of tangent vectors. They are essential for the proof of transversality in Section \ref{Sec21}. Proposition \ref{P15.6} is a direct consequence of Proposition \ref{P15.5}, so we focus on the proof of the latter.
 
 \begin{proposition}\label{P15.5} Take $\gamma=(B,\Psi)\in \SC(\hy)$ and $V=(\delta b,\delta\psi)\in T_{\gamma}\SC(\hy)$. For a fixed embedding $\iota: S^1\times D^2\embed \hy$ and any $f\in \Cylin(\iota)$, suppose we always have 
 	\[
df(V )=0,
 	\]
 	then either 
 	\begin{itemize}
\item 
there exists some $\xi\in  \Lie(\CG(\hy))$ and some function $\theta: B(0,1/3)\to\R$ such that 
\[
(\delta b, \delta\psi)=(-d\xi, (\xi+i\theta(z))\Psi)
\] 
over the smaller solid torus $\iota(S^1\times B(0,1/3))$; or 
\item  $\Psi\equiv 0$ on $S^1\times\{z\}$ for some $z\in B(0,1/3)$.
 	\end{itemize}

 \end{proposition}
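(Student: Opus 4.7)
The plan is to run the infinitesimal analogue of the proof of Proposition~\ref{P15.3}, peeling off information from $V=(\delta b,\delta\psi)$ using the three families \ref{B1}, \ref{B2}, \ref{B3} of cylinder functions in turn. First, for any compactly supported $c\in\Omega^1_c(S^1\times D^2,i\R)$ and smooth $g$, the linearization
\[
d(g\circ r_c)(V)=g'(r_c(\gamma))\int_{S^1\times D^2}\delta b\wedge d\bar c
\]
must vanish; choosing $g$ with $g'(r_c(\gamma))\neq 0$ and varying $c$ yields, after integration by parts, $d(\delta b)=0$ in the interior of $\iota(S^1\times D^2)$. Applying the same reasoning to $g\circ[r_\nu]$ gives $\int\delta b\wedge\pi^*\bar\nu=0$; since $\delta b$ is closed, Stokes shows the period $p(z)\colonequals\int_{S^1\times\{z\}}\delta b$ is locally constant in $z\in D^2$, and the constraint $\int_{D^2}p(z)\bar\nu=0$ combined with $\int_{D^2}\nu=i$ forces $p\equiv 0$. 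As $H^1(S^1\times D^2;i\R)$ is generated by the $S^1$-period, we conclude $\delta b=-d\xi_0$ in the interior for some $\xi_0\colon S^1\times D^2\to i\R$.

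Cutting $\xi_0$ off against a bump equal to one on $\iota(S^1\times B(0,1/3))$ and vanishing outside $\iota(S^1\times B(0,2/3))$ extends it to $\xi\in\Lie(\CG(\hy))$ supported in $\im\iota$. Because every $f\in\Cylin(\iota)$ is gauge invariant, its differential annihilates gauge directions, so the hypothesis $df(V)=0$ is preserved under the replacement $V\mapsto V-\dg(\xi)=(\delta b+d\xi,\delta\psi-\xi\Psi)$. The first component vanishes on $\iota(S^1\times B(0,1/3))$ by construction, so from now on we may assume $\delta b=0$ there, at the cost of renaming the spinor component $\delta\psi'\colonequals\delta\psi-\xi\Psi$. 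The remaining task is to show that $\delta\psi'=i\theta(z)\Psi$ on the solid torus, or else to land in the second alternative.

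For this final step we use family \ref{B3}. With $\delta b\equiv 0$, the exponential factor $e^{-Gd^*_{S^1}b_z}$ in $\upd$ is constant in the direction $V$, so $\delta\sigma(z)=\int_{S^1}\langle\delta\psi'_z,\upd_z\rangle$. As $\Upsilon$ varies over compactly supported sections of $\Sph$, the restriction of $\upd$ to the slice $\{r_\nu(b)\}\times S^1\times D^2$ ranges over every smooth compactly supported section of $S\to S^1\times D^2$, by the equivariant lift construction in \ref{B3}. Choose product test sections $\upd=\phi(z)F(\theta)$ and pick $h(w)=\chi_3(|w|^2)$ with $\chi_3'(0)\neq 0$. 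Scaling $\Upsilon\mapsto\epsilon\Upsilon$ and collecting the order-$\epsilon^2$ coefficient in $dq_{\epsilon\Upsilon}(V)=0$ gives
\[
\int_{D^2}\chi_2(z)|\phi(z)|^2\,\re\bigl(\overline{A(z)}\,\delta A(z)\bigr)\,dvol_{D^2}=0,
\]
with $A(z)=\int_{S^1}\langle\Psi_z,F\rangle$ and $\delta A(z)=\int_{S^1}\langle\delta\psi'_z,F\rangle$. Varying $\phi$ produces the pointwise identity $\re\bigl(\overline{A(z)}\,\delta A(z)\bigr)=0$ for each $z\in B(0,1/2)$ and every $F$. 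Fix $z_0\in B(0,1/3)$; if $\Psi_{z_0}\equiv 0$ on $S^1\times\{z_0\}$ we land in the second alternative. Otherwise set $P=\Psi_{z_0}$, $Q=\delta\psi'_{z_0}$, $p=\|P\|_{L^2(S^1)}^2>0$, and write $(\cdot,\cdot)$ for the $L^2(S^1,S)$ inner product. The case $F=P$ gives $(Q,P)=i\alpha_0\in i\R$, and polarizing via $F=P+G$, using $\re((G,P)(Q,G))=0$ from the $F=G$ case to cancel the quadratic remainder, reduces the remaining bilinear part to $p\,\re(Q,G)+\alpha_0\im(P,G)=0$, or equivalently $\re(pQ+i\alpha_0 P,G)=0$ for every $G$. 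This forces $Q=i\theta(z_0)P$ with $\theta(z_0)=-\alpha_0/p\in\R$, and hence $\delta\psi=(\xi+i\theta(z))\Psi$ on $\iota(S^1\times B(0,1/3))$, as claimed. The main technical delicacy is this last polarization: $S^1$-invariance of $h$ restricts the test functions to $|\sigma|^2$-type, so the leading identity is quadratic in the test section $F$, and it is the combination of the $F=P$ case with the $F=P+G$ polarization that ultimately pins down $Q$ as a purely imaginary scalar multiple of $P$.
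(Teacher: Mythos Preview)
Your overall strategy matches the paper's: use classes \ref{B1} and \ref{B2} to show $\delta b$ is exact on the solid torus, subtract the gauge direction $\dg\xi$ to reduce to $\delta b=0$, then attack $\delta\psi$ with class \ref{B3}. The \ref{B3} step, however, is handled differently in the paper, and your version contains a gap.

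The function $h(w)=\chi_3(|w|^2)$ is fixed once and for all in the construction of $q_\Upsilon$, with $\chi_3\equiv 1$ on $[0,1]$; it is not among the variable data listed in Definition~\ref{D15.1}, so you are not free to ``pick $h$ with $\chi_3'(0)\neq 0$''. With the paper's $\chi_3$, every derivative of $\chi_3$ vanishes at $0$, so the Taylor expansion of $dq_{\epsilon\Upsilon}(V)$ at $\epsilon=0$ is identically zero and your ``order-$\epsilon^2$ coefficient'' yields $0=0$, giving no information.

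The paper instead writes
\[
dq_\Upsilon(0,\delta\psi)=\int_{D^2}2\chi_2(z)\,\chi_3'(|\sigma(z)|^2)\,\re\bigl(\sigma(z)\overline{\sigma_1(z)}\bigr)\,dvol_{D^2}
\]
directly, fixes $z_0\in B(0,1/3)$ with $\Psi_{z_0}\not\equiv 0$, and argues (reducing to two vectors in $\C^2$) that whenever $\delta\psi_{z_0}\notin i\R\cdot\Psi_{z_0}$ one can choose $\upd_{z_0}$ with $\re\bigl(\sigma(z_0)\overline{\sigma_1(z_0)}\bigr)\neq 0$. One then extends $\upd_{z_0}$ to a section $\Upsilon$ supported near $\{r_\nu(b)\}\times S^1\times\{z_0\}$ and scales it so that $|\sigma(z_0)|^2\in(1,2)$, the region where $\chi_3'\neq 0$; this makes $dq_\Upsilon(V)\neq 0$. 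Your polarization argument is a valid substitute for the paper's $\C^2$ reduction once the pointwise identity $\re\bigl(\overline{\langle P,F\rangle}\langle Q,F\rangle\bigr)=0$ is in hand; the repair is simply to reach that identity by this localization-and-scaling of $\Upsilon$ rather than by the $\epsilon$-expansion.
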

\begin{proposition}\label{P15.6} Suppose for some $\gamma=(B,\Psi)\in \SC(\hy)$ and some tangent vector $V\in T_\gamma \SC(\hy)$, we always have 
	\[
	df(V)=0
	\]
	for any $f\in \Cylin(Y)$. Then either 
	\begin{itemize}
\item $\Psi\equiv 0$ on $Y$, or 
\item 	for some $\xi\in \Lie(\CG(\hy))$, $V$ is generated by the infinitesimal action of $\xi$ over $Y$, i.e.
$$V=(-d\xi, \xi\Psi) \text{ on } Y.$$
	\end{itemize}
\end{proposition}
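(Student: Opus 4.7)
The plan is to deduce Proposition \ref{P15.6} by applying Proposition \ref{P15.5} to the family of all embeddings $\iota : S^1 \times D^2 \embed Y$ and then patching the local data. Write $V = (\delta b, \delta\psi)$. If $\Psi \equiv 0$ on $Y$ we are in the first alternative and there is nothing to prove, so I will assume $\Psi \not\equiv 0$ and produce the global $\xi$.

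First I would extract the global gauge potential $\xi$ from the $\delta b$--component. Because $r_c$ is linear in $b$, the hypothesis $dr_c(V) = r_c(\delta b) = \int \delta b \wedge d\bar c = 0$ for every compactly supported $c$ inside an embedding $\iota \subset Y$ forces $d(\delta b) = 0$ on $Y$ as a distribution (the images of all such $\iota$ cover $Y$). Likewise $dr_\nu(V) = r_\nu(\delta b) = 0$ for every $\iota$; since $\pi^*\nu$ is the Poincar\'{e} dual in $S^1\times D^2$ of the core circle, this computes to a multiple of $\int_\gamma \delta b$ for $\gamma$ the core. As every embedded smooth loop in $Y$ is the core of some embedded solid torus, this yields vanishing periods and hence $[\delta b]=0\in H^1(Y;i\R)$. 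I therefore obtain $\xi:Y\to i\R$ of the appropriate Sobolev regularity with $d\xi=-\delta b$ on $Y$, extended arbitrarily to an element of $\Lie(\CG(\hy))$.

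Next I would match the spinor component on $U:=\{y\in Y : \Psi(y)\ne 0\}$. Fix $y_0\in U$ and pick an embedding $\iota_0$ with $y_0\in \iota_0(S^1\times B(0,1/3))$ such that $\Psi$ is non-vanishing on every fiber $\iota_0(S^1\times\{z\})$ for $z\in B(0,1/3)$; this is possible by shrinking the tubular neighborhood. The first alternative of Proposition \ref{P15.5} gives $V=(-d\xi_0,(\xi_0+i\theta_0(z))\Psi)$ there, and since $d\xi_0=d\xi$ on this connected region we have $\xi_0=\xi+c_0$ for a constant $c_0\in i\R$. Replacing $\xi$ by $\xi+c_0+i\theta_0(0)$ (an $i\R$-constant, preserving $d\xi$) normalizes the core of $\iota_0$ so that $\delta\psi=\xi\Psi$ on $\iota_0(S^1\times\{0\})$. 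To propagate this equality across $U$, I would connect an arbitrary $y_1\in U$ to $y_0$ by a smooth arc inside $U$, cover it by a finite chain of embeddings $\iota_1,\ldots,\iota_N$ whose cores meet successively and on which $\Psi$ is non-vanishing fiberwise, and use Proposition \ref{P15.5} on each $\iota_j$. The key rigidity is that the $\theta_{\iota_j}$ depend only on the $D^2$-coordinate, hence are constant along each $S^1$-fiber; matching the two local expressions for $V$ on each overlap therefore forces the constants $c_{\iota_j}+i\theta_{\iota_j}(0)$ to all coincide with the choice already fixed at $y_0$, yielding $\delta\psi=\xi\Psi$ on a whole neighborhood of the arc and thus at $y_1$.

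Finally I would extend to $Y$: on $\overline U$ both sides are continuous, so $\delta\psi=\xi\Psi$ persists, both sides vanishing on $\partial U\setminus U$. On the interior of the zero set, where $\Psi\equiv 0$ locally, the identity $\delta\psi=\xi\Psi=0$ is the genuinely delicate point; I would handle it by applying Proposition \ref{P15.5} to embeddings straddling the boundary of this interior set, using that along any fiber where $\Psi$ is not identically zero the first alternative pins $\delta\psi$ proportional to $\Psi$, and then passing to the boundary by continuity. The main obstacle I anticipate is exactly this bookkeeping in the second and third steps---ensuring that the fiberwise ambiguity $(c_\iota,\theta_\iota(z))$ from Proposition \ref{P15.5} assembles into a \emph{single} global $\xi$. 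The enabling fact is the $S^1$-constancy of $\theta_\iota$, together with the freedom to vary embeddings through $U$, which leaves no room for a nontrivial local phase to survive.
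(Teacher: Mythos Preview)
Your first step is correct, and your overall strategy of reducing to Proposition~\ref{P15.5} is exactly what the paper has in mind; it declares Proposition~\ref{P15.6} a direct consequence of Proposition~\ref{P15.5} without spelling out details, the implicit model being the proof of Proposition~\ref{P15.4}. That model, however, uses a simpler device than your chain of overlapping embeddings: for any two points $y_1,y_2\in Y^\circ$, pick a \emph{single} embedding whose core $S^1\times\{0\}$ passes through both, and read off Proposition~\ref{P15.5} on that one core fiber. This shows in one stroke that the imaginary scalar $\tau(y)$ defined by $\delta\psi=(\xi+\tau)\Psi$ on $U=\{\Psi\neq 0\}$ is globally constant, because $\theta(0)$ is a single number on the core.

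The genuine gap is your treatment of the interior of $\{\Psi=0\}$. The phrases ``straddling the boundary'' and ``passing to the boundary by continuity'' do not reach a point $y$ deep in that interior: continuity from the boundary determines nothing there, and any fiber of a small embedding lying entirely inside the zero set yields no constraint on $\delta\psi$ from Proposition~\ref{P15.5}. The correct argument is again the single-loop trick. Choose an embedding whose core passes through $y$ \emph{and} through some point $y'$ with $\Psi(y')\neq 0$; such a loop exists since $\Psi\not\equiv 0$ on $Y$. The core fiber then has $\Psi|_{\mathrm{core}}\not\equiv 0$, so Proposition~\ref{P15.5} gives $\delta\psi|_{\mathrm{core}}=(\xi'+i\theta(0))\,\Psi|_{\mathrm{core}}$, and evaluating at $y$ yields $\delta\psi(y)=0=\xi(y)\Psi(y)$ directly. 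No limiting process is needed, and this one device handles the phase-matching on $U$ and the zero set simultaneously.
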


\begin{proof}[Proof of Proposition \ref{P15.5}] Since $V=(\vb,\vpsi)$ can not be separated by any functions in classes \ref{B1}\ref{B2}, $\vb$ has to be an exact 1-form on $S^1\times D^2$, so $\vb=-d\xi$ for some $\xi: S^1\times D^2\to i\R$. Since this problem is linear and the vector $(-d\xi, \xi\Psi)$ can not be separated, it remains to deal with the case when $\vb=0$ and show
	\[
	\vpsi=i\theta(z)\Psi
	\]
on $S^1\times B(0,1/3)$	for some function $\theta: B(0,1/3)\to \R$. For a fixed section $\Upsilon$ of $\Sph$, consider functions  $\sigma, \sigma_1: D^2\to \C$:
 \begin{align*}
\sigma(z)\colonequals \int_{S^1\times \{z\}}\langle \Psi_z, \upd_z\rangle,\ \sigma_1(z)\colonequals \int_{S^1\times \{z\}}\langle \vpsi_z, \upd_z\rangle.
 \end{align*}
 Then the differential of $q_\Upsilon$ along $V=(0,\delta\psi)$ can be computed directly as 
\[
dq_\Upsilon(0,\delta\psi)=\int_{D^2} 2\chi_2(z)\chi_3'(|\sigma_1|^2) \re(\sigma(z)\overline{\sigma_1}(z)) dvol_{D^2},
\]
where $\chi_3$ is the cut-off function used to define the $S^1$-invariant function $h$ in \ref{B3}. For any $z\in B(0,1/3)$, if $\Psi_z$ and $\vpsi_z$ do not lie in the same complex direction in $\Gamma(S^1\times\{z\}, S)$, then for some section $\upd_z\in \Gamma(\{r_\nu(b)\}\times S^1\times \{z\}, S)$, $\re(\sigma(z)\overline{\sigma_1}(z))$ is non-zero (it suffices to verify this statement for two vectors in $\C^2$). By properly extending $\upd_z$ to a section $\Upsilon$ of $\Sph$, we can make $dq_\Upsilon(0,\delta\psi)\neq 0$. 

Finally, if $\Psi_z\not\equiv 0$ and $\vpsi_z=w\Psi_z$ for some $w\in \C$, then $w$ has to be imaginary for the same reason. This proves the existence of $\theta(z)\in \R$ when $\Psi_z\not\equiv 0$. 
\end{proof}

\subsection{Estimates of Perturbations on Cylinders} In this subsection, we take up the proof of Theorem \ref{T15.2}.  Unlike the case of closed 3-manifolds (cf. \cite[Section 11.3]{Bible}),  gradients and Hessians of $f$ can not be estimated in a straightforward way; the use of anisotropic Sobolev spaces is already necessary. We will only state the estimates for the 3-manifold $\hy$, whose proof will follow from their analogue on the 4-manifold $[t_1,t_2]\times\hy$:
\begin{proposition}[cf. Proposition 11.3.3 in \cite{Bible}]\label{P15.7} For any $k\geq 2$ and any cylinder function $f$ defined using an embedding $\iota: S^1\times D^2\to \hy$, $\q=\grad f$ determines a smooth vector field on $\SC_k(\hy)$, and for each $l\geq 0$, there is a constant $C$ with 
	\[
	\|\D^l_{(B,\Psi)}\q\|\leq C (1+\|b\|_{L^2_{k-1}(Y')})^{2k(l+1)}(1+\|\Psi\|_{L^2_{k,B}(Y')})^{l+1},
	\]
	where $\D^l_{(B,\Psi)}\q$ is viewed as an element of $\Mult_l(\bigtimes_l\CT_k, \CT_k)$ and $Y'=\im \iota$. 
	
	In addition, for any $j\in[-k,k]$, the first derivative $\D\q$ extends to a smooth map 
	\[
	\D\q:\SC_k(\hy)\to \Hom(\CT_j,\CT_j)
	\]
	whose $(l-1)$-th derivative viewed as an element of $\Mult_l(\bigtimes_{l-1}\CT_k\times \CT_j, \CT_j)$ satisfies the same bound. 
\end{proposition}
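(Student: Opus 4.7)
The plan is to mirror the structure of \cite[Proposition 11.3.3]{Bible}, but with estimates tailored to the local, fiber-by-fiber construction of our cylinder functions. Since any $f\in \Cylin(\iota)$ has the form $f=g\circ\Xi$ with $g\in C^\infty_c(\R^n\times(\R/2\pi\alpha\Z)\times\R^m)$ and $\Xi=(r_{c_1},\ldots,r_{c_n},[r_\nu],q_{\Upsilon_1},\ldots,q_{\Upsilon_m})$, the multivariable Faà di Bruno formula lets me reduce to estimating, for each summand, bounded derivatives of $g$ times products of derivatives of the three families of building blocks \ref{B1},\ref{B2},\ref{B3}. The bound on derivatives of $g$ is a fixed constant (depending only on $g$), so all the content lies in estimating $r_c$, $r_\nu$ and $q_\Upsilon$.

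The cases \ref{B1}\ref{B2} are essentially trivial: $r_c$ and $r_\nu$ are affine in $(b,\psi)$ (in fact linear in $b$ alone), so $\grad r_c$ and $\grad r_\nu$ are fixed compactly supported 1-forms on $\im\iota$, smooth in any Sobolev norm, with all higher derivatives vanishing. The entire difficulty is concentrated in \ref{B3}. I would compute $\D^l q_\Upsilon$ by writing $q_\Upsilon(b,\psi)=\int_{D^2}h(\sigma(z))\chi_2(z)\,dvol_{D^2}$ with $\sigma(z)=\int_{S^1\times\{z\}}\langle\Psi_z,\Upsilon^\dagger_z\rangle$, and unfolding the dependence on $(b,\psi)$: the only $\psi$-dependence is the single factor $\Psi_z$ inside $\sigma$, while the $b$-dependence enters through (i) the phase $r_\nu(b)$ (linear in $b$) inside $\tilde\Upsilon(r_\nu(b),\theta,z)$, and (ii) the unimodular factor $e^{-Gd^*_{S^1}b_z}$ multiplying $\tilde\Upsilon$. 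Each $b$-derivative of $\Upsilon^\dagger_z$ produces either a harmless linear factor $r_\nu(\delta b)$ times a $\tilde\Upsilon$-derivative, or a factor of $Gd^*_{S^1}(\delta b)_z$, whereas each $\Psi$-derivative simply strips $\Psi_z$ away.

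The core analytic input is then Sobolev multiplication and fiber-trace estimates on $S^1\times D^2$: the Green operator $G$ on $S^1$ and $d^*_{S^1}$ commute with the other variables, and for $k\geq 2$ the operator $Gd^*_{S^1}$ sends $L^2_{k-1}(S^1\times D^2)$ boundedly into a space of continuous functions in the $S^1$-direction with $L^2_{k-1}$ dependence on $z\in D^2$; this is exactly the anisotropic regularity needed to control $e^{-Gd^*_{S^1}b_z}$ pointwise by a polynomial in $\|b\|_{L^2_{k-1}(Y')}$, using that the Taylor expansion of the exponential must be iterated up to order $\sim k$ to land in $L^\infty$ via Sobolev embedding on $S^1\times D^2$ (a 3-manifold). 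Combining this pointwise control with the smooth boundedness of $\tilde\Upsilon$, $h$ and its derivatives, and writing $\Psi_z$ integrals via the trace $L^2_{k,B}(Y')\to L^2(S^1\times D^2)$, yields the polynomial bound of the claimed shape; the exponent $2k(l+1)$ on $\|b\|_{L^2_{k-1}}$ arises precisely from iterating Sobolev multiplication $O(k)$ times for each of the $l+1$ differentiations, while the exponent $l+1$ on $\|\Psi\|_{L^2_{k,B}}$ reflects that each derivative of $h(\sigma)$ releases at most one new $\sigma$-factor.

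For the second statement, the extension of $\D\q$ to $\Hom(\CT_j,\CT_j)$ for $j\in[-k,k]$ follows from the observation that $\D\q(B,\Psi)$ is an integral operator on $Y'$ whose Schwartz kernel is the fiber-wise tensor product of smooth compactly supported functions on $S^1\times D^2$ and $(\R/2\pi\alpha\Z)$-parameters; such operators are smoothing in the directions transverse to $S^1\times D^2$, extend by duality to negative Sobolev indices, and their derivatives in $(B,\Psi)$ obey the same type of polynomial bound. The 4-dimensional version on $[t_1,t_2]\times\hy$ that is actually needed for \ref{A1}--\ref{AA6} in Definition \ref{D14.2} is obtained by applying the 3-dimensional estimate on each time slice and integrating in $t$; since $\hq(\gamma)(t)=\q(\cgamma(t))$ depends only on the time-slice, the anisotropic Sobolev inequality on $I\times\hy$ plus the 3-dimensional bound gives the desired $L^2_j$ control in 4D. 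I expect the main obstacle to be book-keeping the anisotropic estimate for $e^{-Gd^*_{S^1}b_z}$ carefully enough that $b$ is only ever measured in $L^2_{k-1}$ on $Y'$ and never in a norm that would require $b$ to be bounded pointwise; this is where the specific choice of fiber-wise Coulomb gauge (rather than a global slice) pays off, since $Gd^*_{S^1}$ gains a full derivative purely in the $S^1$-direction.
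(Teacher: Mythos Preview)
Your approach is essentially correct and uses the same analytic core as the paper: the reduction via Fa\`a di Bruno to the three building blocks \ref{B1}\ref{B2}\ref{B3}, the observation that \ref{B1}\ref{B2} are affine and hence trivial, and the control of $\Upsilon^\dagger$ through the anisotropic regularity gained by $Gd^*_{S^1}$ in the fiber direction. The paper packages this via Lemmas \ref{L15.9}--\ref{L15.11} (algebra property of $L^2_{k+1,k}$, mapping properties of $d_{S^1}G$ and $Gd^*_{S^1}$, and bounds on $\Upsilon^{\ddagger}$), which are exactly the ingredients you identify.

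The one structural difference is that the paper runs the argument in the \emph{opposite} direction: it proves the 4-dimensional Proposition \ref{P15.8} directly on $\Omega=I\times S^1\times D^2$ using anisotropic Sobolev spaces there, and then declares that Proposition \ref{P15.7} follows. Your last paragraph, deducing the 4D version from the 3D one by ``applying the 3-dimensional estimate on each time slice and integrating in $t$'', is the weak point. Since $\hq(\gamma)(t)=\q(\cgamma(t))$, controlling $\|\hq(\gamma)\|_{L^2_k(I\times\hy)}$ requires bounding $\partial_t^j\q(\cgamma(t))$ for $j\le k$, and by the chain rule each such term is a sum of $\D^m\q$ evaluated on tuples of time-derivatives $\partial_t^{a_i}\cgamma$. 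These time-derivatives live in lower-regularity slice spaces, so you cannot simply invoke the 3D bound on $\D^m\q:\bigtimes_m\CT_k\to\CT_k$; you would need the full mixed-regularity mapping properties, which is precisely what the anisotropic framework on the 4D domain supplies directly. The paper's 4D-first route avoids this bookkeeping entirely, since time derivatives are just ordinary derivatives in the $M=I\times D^2$ factor and are absorbed into the $L^2_{k+1,k}$ algebra structure from the start. Your direct 3D argument for Proposition \ref{P15.7} itself is fine; just be aware that the lift to 4D is where the anisotropic machinery really earns its keep.
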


\begin{remark}
The author was unable to prove this proposition when $k=1$. We will come back to this point in Subsection \ref{Sub15.4}.
\end{remark}

Let $I=[t_1,t_2]\subset \R_t$ and $\hz=I\times \hy$. As described in the beginning of Section \ref{S14}, each smooth perturbation $\q$ gives arise to a section
\[
\hq: \SC_k(\hz)\to \V_0
\]
of the trivial bundle 
\[
\V_0=L^2(\hz, i\su(S^+)\oplus S^-)\times  \SC_k(\hz)\to   \SC_k(\hz),
\]
where the bundle $iT^*Y\oplus S^+$  is identified with $(i\su(S^+)\oplus S^-)$ using the bundle map 
\[
(\rho_3, \rho_4(dt)),
\]
over the 4-manifold $\hz$. For any $\gamma=(A,\Phi)\in \SC_k(\hz)$, write 
\[
(a,\phi)=(A,\Phi)-(A_0,\Phi_0)\in L^2_k(\hz,iT^*\hz\oplus S^+),
\]
where $\gamma_0=(A_0,\Phi_0)$ is the reference configuration of $ \SC_k(\hz)$. 

\begin{proposition}[cf. \cite{Bible} Proposition 11.4.1]\label{P15.8} For any $k\geq 2$ and any cylinder function $f$ defined via the embedding $\iota: S^1\times D^2\to \hy$, consider its induced perturbation on the 4-manifold $\hz$:
	\[
	\hq=\grad f: \SC_k(\hz)\to \V_0. 
	\]
	
	\begin{enumerate}[label=(C\arabic*)]
\item\label{C1} The map $\hq$ extends to a smooth map 
\[
\SC_k(\hz)\to \V_k,
\]
whose $l$-th derivative regarded as a multi-linear map
\[
\D^l_{(A,\Phi)}\hq\in \Mult^l({\bigtimes}_l \CT_k(\hz), \V_k),
\]
satisfies the estimate:
\[
\|\D^l_{(A,\Phi)}\hq\|\leq C(1+\|a\|_{L^2_k(\Omega)})^{2k(l+1)}(1+\|\Phi\|_{L^2_{k,A}(\Omega)})^{l+1},
\] 
where $\Omega=I\times \im\iota\subset \hz$. 

\item\label{C2} For any $j\in [-k,k]$, the first derivative $\D\hq$ extends to a smooth map
\[
\D\hq: \SC_k(\hz)\to\Hom(\CT_j(\hz),\V_j)
\]
whose $(l-1)$-the derivative regarded as a multi-linear map
\[
\D^l_{(A,\Phi)}\hq\in \Mult^l({\bigtimes}_{l-1} \CT_k(\hz)\times \CT_j(\hz), \V_j),
\]
satisfies the same bound as in $\ref{C1}$. 
\item\label{C5} When $p>3$ and $k\geq 1$, the map $\hq$ extends to a smooth map 
\[
\SC_k^{(p)}(\hz)\to \V_k^{(p)},
\]
whose $l$-th derivative regarded as a multi-linear map
\[
\D^l_{(A,\Phi)}\hq\in \Mult^l({\bigtimes}_l \CT_k^{(p)}(\hz), \V_k^{(p)}),
\]
satisfies the estimate:
\[
\|\D^l_{(A,\Phi)}\hq\|\leq C(1+\|a\|_{L^p_k(\Omega)})^{2k(l+1)}(1+\|\Phi\|_{L^p_{k,A}(\Omega)})^{l+1}.
\]  

\item\label{C3} For any $2\leq p<4$, the map $\hq$ satisfies the estimate
\[
\|\hq\|_{L^{\bn(p)}}\leq C(1+\|(a,\phi)\|_{L^{p}_{1,A}(\Omega)}) \text{ with } \bn(p)=4p/(4-p).
\]
\item \label{C4} When $2\leq p<4$, the map $\hq$ extends to a continuous map from
\[
\SC_1^{(p)}(\hz)\to \V_0^{(m)} \text{ for any } m<\bn(p).  
\]
\item \label{C7} For any $0\leq \epsilon<\half$,  the map $\hq$ extends to a continuous map from
\[
\SC_{1-\epsilon}(\hz)\to \V_0.
\]
	\end{enumerate}
\end{proposition}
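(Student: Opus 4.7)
The plan is to reduce the 4-dimensional estimates to fiberwise 3-dimensional ones (i.e.\ Proposition \ref{P15.7}), and then to analyze the three building blocks of a cylinder function separately. Given $f=g\circ\Xi$ with $\Xi=(r_{c_1},\ldots,r_{c_n},[r_\nu],q_{\Upsilon_1},\ldots,q_{\Upsilon_m})$, the chain rule expresses $\grad f$ as a linear combination of $\grad r_{c_i}$, $\grad r_\nu$, and $\grad q_{\Upsilon_j}$, with coefficients that are partial derivatives of $g$ evaluated at $\Xi$. Since $g$ has compact support, these coefficients together with all their derivatives are uniformly bounded, regardless of the configuration. The functionals $r_c$ and $r_\nu$ are linear, so their gradients are fixed compactly supported forms in $\im\iota$; all their derivative estimates are trivial. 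Hence the entire content of the proposition lies in estimating $\grad q_\Upsilon$ and its derivatives.

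A direct computation from the definition $\sigma(z)=\int_{S^1\times\{z\}}\langle \Psi_z,\upd_z\rangle$ yields a spin-bundle component of $\grad q_\Upsilon$ proportional to $h'(|\sigma|^2)\overline{\sigma(z)}\upd_z\chi_2(z)$ supported in $\im\iota$, together with a 1-form component coming from differentiating $r_\nu(b)$ and the multiplicative factor $e^{-Gd_{S^1}^* b_z}$ inside $\upd$. In the 4-dimensional setting on $\hz=I\times\hy$, the same formula applies slicewise: for $(A,\Phi)\in\SC_k(\hz)$ the section $\hq(A,\Phi)$ at time $t$ equals $\q(B(t),\Psi(t))$, so its $L^2_k(\hz)$-norm is controlled by integrating the 3-dimensional bound from Proposition \ref{P15.7} over $t\in I$ and then estimating finitely many $t$-derivatives which, by the chain rule applied to the path $\cgamma(t)$, reduce to the same quantities. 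This yields \ref{C1} and, by the same mechanism applied to $\D\hq$, \ref{C2}. The $L^p$-version \ref{C5} for $p>3$ follows analogously because $L^p_k$ is already a Banach algebra on the four-dimensional $\Omega=I\times\im\iota$ as soon as $k\geq 1$ and $p>3$; no additional integration in $t$ is needed beyond Fubini.

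The core analytic ingredient for these polynomial bounds is that the map $b\mapsto e^{-Gd_{S^1}^* b_z}$ is smooth from $L^2_k(\im\iota)$ into $L^\infty(S^1\times D^2)$ once $k\geq 2$, together with an analogous statement for $b\mapsto\tup(r_\nu(b),\cdot)$. The first uses the fact that $Gd_{S^1}^*$ is smoothing along $S^1$ and that $L^2_k\hookrightarrow L^\infty$ on a three-dimensional domain when $k\geq 2$; the second uses that $r_\nu(b)$ is a bounded linear functional of $b$ and that $\tup$ is smooth in its first variable. The $(2k(l+1),l+1)$ exponents in the estimates arise from the $(l+1)$-fold multiplicativity of successive derivatives in both $b$ and $\Psi$, combined with the usual Banach-algebra product inequalities in $L^2_k$ and $L^p_k$.

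For \ref{C3}, \ref{C4}, \ref{C7} the argument shifts to Sobolev embeddings on $\Omega$. Because the spinor component of $\hq$ is bounded pointwise by $|\Psi|$ times a uniformly bounded factor, and the 1-form component by a bounded function of $b$ alone, one obtains \ref{C3} directly from the embedding $L^p_1(\Omega)\hookrightarrow L^{\bn(p)}(\Omega)$ for $2\leq p<4$, with $\bn(p)=4p/(4-p)$. The continuous extensions in \ref{C4} and \ref{C7} then follow by density of smooth configurations and the bounds just established, combined with the local nature of $\hq$ (which avoids any issues at the cylindrical end). The principal technical obstacle throughout is the anisotropy of $\upd$: since $Gd_{S^1}^*$ smooths only along $S^1$, one cannot treat $\hq$ as an ordinary Nemytskii operator on $L^2_k(\hz)$, and each bootstrap step must carefully track which direction supplies the needed regularity. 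This anisotropy is precisely why the proposition excludes $k=1$ in \ref{C1}, \ref{C2}, matching the remark following Proposition \ref{P15.7}.
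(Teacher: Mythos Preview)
Your outline has the right shape---reduce via chain rule to the three building blocks and focus on $q_\Upsilon$---but the central reduction step is backwards and has a regularity gap. You propose to deduce the 4-dimensional bounds \ref{C1}--\ref{C2} by ``integrating the 3-dimensional bound from Proposition~\ref{P15.7} over $t\in I$''. However, in the paper Proposition~\ref{P15.7} is \emph{derived from} Proposition~\ref{P15.8}, not the other way around (see the sentence preceding Proposition~\ref{P15.7}). The slicewise route you suggest runs into the trace-loss problem: if $(A,\Phi)\in\SC_k(\hz)$ then $(B(t),\Psi(t))$ lies only in $\SC_{k-1/2}(\hy)$, so Proposition~\ref{P15.7} (which needs integer $k\ge 2$) does not apply at the right regularity, and the chain-rule expansion of $\partial_t^a\q(\cgamma(t))$ requires control of $\D^c\q$ on inputs of mixed, non-integer regularity. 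You acknowledge the anisotropy of $\upd$ as the ``principal technical obstacle'' but do not deploy the tool that resolves it: the paper works directly on the 4-dimensional domain $\Omega=I\times S^1\times D^2$ with the anisotropic spaces $L^p_{m,l}$ (Lemma~\ref{L15.9}, Lemma~\ref{L15.10}), proving first that $\updd:\SC_k(\hz)\to L^2_{j+1,j,A}$ is smooth with the stated bounds (Lemma~\ref{L15.11}), and then assembling $\hq$ from the explicit formula for $W_j$.

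For \ref{C4} and \ref{C7}, ``density of smooth configurations and the bounds just established'' is not enough: a uniform bound $\|\hq(\gamma)\|_{L^{\bn(p)}}\le C(1+\|\gamma\|_{L^p_1})$ does not by itself yield continuity into $L^m$. The paper handles this by decomposing $W=\varpi(V_1+V_2+V_3)$ and invoking Lemma~\ref{L15.12} (continuity of compositions $\xi\mapsto\chi\circ\xi$ from $L^1$ into any $L^{p'}$) to control the nonlinear factors $\varpi$, $e^{-Gd_{S^1}^*a}$, and $\tup(r_\nu(a))$; this lemma, together with properties \ref{D3}--\ref{D5} of $\updd$, is what actually gives the continuous extensions. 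Your proposal omits this ingredient entirely. Also, the claim that ``the spinor component of $\hq$ is bounded pointwise by $|\Psi|$ times a uniformly bounded factor'' is not correct as stated: the spinor component of $W_j$ is $\updd_j$, whose pointwise modulus is bounded independently of $\Psi$, while the $\Psi$-dependence sits in the form component through the nonlocal factor $(-d_{S^1}G)\langle\Phi,\updd_j\rangle$.
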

\begin{remark} Properties \ref{C1}\ref{C5}\ref{C4}\ref{C7} are essential in the proof of compactness of perturbed Seiberg-Witten equations in Section \ref{Sec15}. Starting with $p=2$, we have $\bn(p)=4>3$. 
\end{remark}

Before we proceed to the proof, let us add a few remarks to simplify the situation. For a fixed cylinder function $f$, one can either compute its gradient using the pull-back metric $g_1$ on $S^1\times D^2$, or using the standard product metric $g_{std}$:
\[
\grad_{std} f \text{ or } \q\colonequals \grad f  .
\]
If we write $\grad f=(\grad^0 f, \grad ^1 f)$ as entries of $L^2_k(\hy, iT^*\hy\oplus S)$, then 
\[
 \grad_{std} f=(\eta K(\grad^0 f), \eta \grad^1 f),
\]
where the function $\eta$ and the bundle map $K$ were introduced in Section \ref{S15.1}.
Since they are related by a smooth bundle map of $iT^*\hy\oplus S|_{\im\iota}$, it suffices to prove estimates for $\grad_{std} f$. The change of metrics of $S^1\times D^2$ will also affect the $L^2_{j,A}$-norms on $\CT_j$ and $\V_j$, which is again inconsequential for our estimates.

From now on, we assume $g_1=g_{std}$,  and the length of the core $S^1\times \{0\}$ is $2\pi$.  

The second remark concerns the anisotropic Sobolev spaces, which involves different orders of differentiability in different directions. In what follows, let 
\begin{align*}
Y'&=S^1\times D^2=(\R/2\pi\Z) \times D^2\subset \hy,\\
\Omega&=I\times S^1\times D^2=[t_1,t_2]\times Y'\subset \hz ,\\
M&=I\times D^2.
\end{align*}

Within the product manifold $\Omega$ only the direction along $S^1$-fibers is special. Let $\theta$ be the coordinate function of the circle $\R/2\pi\Z$, and define the $L^2_{m,l}$ norm $(l\leq m)$ of functions on $\Omega$ to be 
\[
\|\xi\|_{L^2_{m,l}(\Omega)}^p=\sum_{\substack{i+j\leq m,\\ i\leq l}}\int_{\Omega} |(\frac{\partial}{\partial \theta})^j \nabla ^i_M \xi|^p
\]
and let $L^p_{m,l}(\Omega)$ be the completion of smooth functions (or sections) with respect to this norm. We are mostly interested in the case when $p=2$. There are two useful lemmas: 
\begin{lemma} Consider the Banach space $L^p_{k+1,k}$ with $k\geq 2$ if $p=2$ and $k\geq 1$  if $p>3$. Then  $L^p_{k+1,k}$ is an algebra under the point-wise multiplication and $L^p_{k+1,k}\subset C^0$; Moreover, for any $|r|\leq k+1$ and $|q|\leq k$, $L^p_{r,q}(\Omega)$ is a module of $L^p_{k+1,k}$. 
\end{lemma}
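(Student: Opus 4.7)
My plan is to deduce in order the embedding $L^p_{k+1,k}(\Omega)\embed C^0(\Omega)$, the algebra property, and the module property, each building on the previous. Throughout I write $M\colonequals I\times D^2$, which is three-dimensional, so that $\Omega=S^1\times M$; correspondingly I view functions on $\Omega$ as $L^p_k(M)$-valued $\theta$-families.

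For the embedding, I would first note that restricting the norm definition to terms with $j\leq 1$ produces a continuous inclusion
\[
L^p_{k+1,k}(\Omega)\embed W^{1,p}(S^1_\theta;\ L^p_k(M)).
\]
Under the hypothesis ($k=2,p=2$ or $k=1,p>3$) one has $kp>3=\dim M$, so the classical Sobolev embedding gives $L^p_k(M)\embed C^0(M)$. The one-dimensional Sobolev embedding $W^{1,p}(S^1;B)\embed C^0(S^1;B)$, valid for any Banach space $B$ when $p\geq 1$ by the fundamental theorem of calculus and H\"older's inequality, then yields $L^p_{k+1,k}(\Omega)\embed C^0(S^1;C^0(M))=C^0(\Omega)$.

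For the algebra property I would apply the Leibniz rule to expand
\[
\partial_\theta^{\,j}\nabla_M^i(uv)=\sum_{c\leq j,\,a\leq i}\binom{j}{c}\binom{i}{a}(\partial_\theta^{\,c}\nabla_M^a u)(\partial_\theta^{\,j-c}\nabla_M^{i-a}v)
\]
for each $(i,j)$ with $i+j\leq k+1$ and $i\leq k$, and then estimate each summand in $L^p(\Omega)$. The extremal cases $(a,c)=(0,0)$ and $(a,c)=(i,j)$ are immediate from the previous paragraph: one factor is bounded in $C^0(\Omega)$ while the other lies in $L^p(\Omega)$. For the intermediate terms both factors have strictly sub-top derivative orders, hence lie respectively in $L^{q_1}(\Omega)$ and $L^{q_2}(\Omega)$ with $q_1,q_2>p$ by non-sharp anisotropic Sobolev embeddings of the form $L^p_{s,s'}(\Omega)\embed L^q(\Omega)$. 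Since $1/q_1+1/q_2\leq 1/p$ can be arranged in each case, H\"older's inequality completes the bound, and summation gives $\|uv\|_{L^p_{k+1,k}}\leq C\|u\|_{L^p_{k+1,k}}\|v\|_{L^p_{k+1,k}}$.

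For the module property with $0\leq r\leq k+1$ and $0\leq q\leq k$, the same Leibniz expansion and term-by-term estimate works: factors coming from the $L^p_{k+1,k}$-element are controlled by $C^0$ or non-sharp Sobolev, while factors from the $L^p_{r,q}$-element never exceed its admissible derivative orders. For negative $r$ or $q$ I would define $L^p_{r,q}(\Omega)$ as the dual of $L^{p'}_{-r,-q}(\Omega)$ with $1/p+1/p'=1$; multiplication by a fixed $u\in L^p_{k+1,k}$ is a bounded operator on $L^{p'}_{-r,-q}$ by the positive-index statement, and hence also on its topological dual $L^p_{r,q}$. The main obstacle I anticipate is the combinatorial bookkeeping for the intermediate Leibniz terms in the anisotropic setting, where one must verify case-by-case that the mixed Sobolev embeddings are strict enough to leave room in the H\"older exponents; these estimates are routine but numerous.
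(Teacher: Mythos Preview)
Your argument for the $C^0$ embedding is essentially identical to the paper's: factor $\Omega=S^1\times M$ and chain $L^p_{k+1,k}(\Omega)\embed L^p_1(S^1;L^p_k(M))\embed C^0(S^1;C^0(M))$, using $kp>3=\dim M$ for the second inclusion. The paper in fact stops there---it records only this embedding and leaves the algebra and module statements as standard consequences, whereas you spell out the Leibniz/H\"older/anisotropic-Sobolev bookkeeping explicitly. That sketch is correct in outline.

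One small point in your duality step: to get the module property for negative indices you need multiplication by a fixed $u\in L^p_{k+1,k}$ to be bounded on $L^{p'}_{|r|,|q|}$, which is not literally the positive-index statement you just proved (different Lebesgue exponent). This is harmless---the same Leibniz argument works because the top-order terms are controlled by $u\in C^0$ regardless of $p'$, and the intermediate terms still have H\"older room---but it is worth saying so rather than citing the $L^p$ case verbatim.
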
\label{L15.9}
\begin{proof} Note that $L^2_{k+1,k}(\Omega)\embed L^2_1(S^1, L^2_k(M))\embed C^0(S^1, C^0(M))$ when $k\geq 2$, and 
	\[
	L^p_{k+1,k}(\Omega)\embed L^p_1(S^1, L^p_k(M))\embed C^0(S^1, C^0(M))
	\]
	when $k\geq 1$ and $p>3$.
\end{proof}

\begin{lemma}\label{L15.10} For  any $(m,l)$ and $p\in [1,\infty)$, the slicewise operator $d_{S^1}G$ and $Gd_{S^1}^*$ are bounded linear operators from $L^p_{m,l}(\Omega)\to L^p_{m+1,l}(\Omega)$, where
	\[
	G: C^\infty(S^1)\to C^\infty(S^1)
	\]
	is the Green operator associated to the Hodge Laplacian operator. 
\end{lemma}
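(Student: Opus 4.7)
The essential observation is that $G$ is a second-order inverse Laplacian on $S^1$, so both $d_{S^1}G$ and $Gd^*_{S^1}$ are pseudodifferential operators of order $-1$ acting slicewise on $\Omega = M \times S^1$. My plan is to establish the slicewise $L^p(S^1)\to L^p(S^1)$ boundedness, promote it to $\Omega$ via Fubini, and then exploit the single identity $\partial_\theta^2 G = P - \Id$ on $S^1$ (where $P$ is the projection onto $\theta$-averages) to trade a $\theta$-derivative on the output for a $\theta$-derivative on the input.

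For the slicewise estimate I would write $G$ explicitly as convolution with the Green kernel of $-\partial_\theta^2$ on the orthogonal complement of constants; on the circle this kernel is a piecewise quadratic whose first derivative is a bounded sawtooth. Hence $d_{S^1}G$ and $Gd^*_{S^1}$ are convolutions with bounded kernels on $S^1$, and Young's inequality yields $L^p(S^1)\to L^p(S^1)$ boundedness for every $p \in [1,\infty]$; by Fubini these extend to bounded slicewise operators on $L^p(\Omega)$. Since $d_{S^1}G$ acts only in the $\theta$-direction it commutes with $\nabla_M$, so for any pair $(i,j)$ with $i+j \leq m+1$ and $i \leq l$ one has $\nabla_M^i(d_{S^1}G\xi) = d_{S^1}G(\nabla_M^i \xi)$. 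The case $j=0$ is immediate from the slicewise bound together with $\|\nabla_M^i \xi\|_{L^p(\Omega)} \leq \|\xi\|_{L^p_{m,l}(\Omega)}$.

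The case $j \geq 1$ is where the regularity is gained: applying the identity $\partial_\theta d_{S^1}G = P - \Id$ yields
\[
(\partial_\theta)^j d_{S^1}G(\nabla_M^i \xi) = (\partial_\theta)^{j-1}(P - \Id)(\nabla_M^i \xi),
\]
whose $L^p(\Omega)$-norm is controlled by $\|\xi\|_{L^p_{m,l}(\Omega)}$ because $i + (j-1) \leq m$, $i \leq l$, and $P$ is contractive on $L^p$ (indeed $(\partial_\theta)^{j-1} P$ vanishes for $j \geq 2$). The operator $Gd^*_{S^1}$ is reduced to the same analysis: on the $\theta$-component $g$ of a slicewise $1$-form, $Gd^*_{S^1}(g\, d\theta) = -\partial_\theta G g = -d_{S^1}G g$. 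The only subtlety is the anisotropic bookkeeping of $(i,j)$ under $m \mapsto m+1$, which is a direct verification and presents no substantive obstacle; the whole argument rests on the single $S^1$-identity $\partial_\theta d_{S^1}G = P - \Id$.
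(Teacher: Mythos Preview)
Your proof is correct and rests on the same mechanism as the paper's: the slicewise gain of one $\theta$-derivative by $d_{S^1}G$, combined with Fubini and commutation with $\nabla_M$. The paper compresses everything into the single sentence ``$G$ extends to a bounded operator $L^p_m(S^1)\to L^p_{m+2}(S^1)$,'' whereas you unpack this via the explicit sawtooth kernel (for $j=0$) and the algebraic identity $\partial_\theta d_{S^1}G = P-\Id$ (for $j\geq 1$); these are two concrete ways of seeing the same elliptic regularity on $S^1$, so the approaches coincide in substance.
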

\begin{proof} It follows from the fact that $G$ extends to a bounded linear operator 
	\[
	G: L^p_m(S^1, \R)\to L^p_{m+2}(S^1,\R)
	\]
	for any $p\in [1,\infty)$ and $m\geq 0$. 
\end{proof}

\begin{proof}[Proof of Proposition \ref{P15.8}] Suppose the cylinder function $f$ arises as the composition $g\circ \Xi$:
	\[
	\SC_k(\hy)\xrightarrow{\Xi} \R^n\times (\R/2\pi\alpha\Z)\times \R^m\xrightarrow{g} \R
	\]
where $\Xi=(r_{c_1},\cdots, r_{c_n},[r_\nu],q_{\Upsilon_1},\cdots, q_{\Upsilon_m})$ is induced from a collection of 1-forms $c_1,c_2,\cdots c_n$ and sections $\Upsilon_1,\cdots, \Upsilon_m$. Let $x_i\ (1\leq i\leq n)$, $x$ and $y_j$ be the coordinate functions on $\R^n$, $\R/2\pi\alpha\Z$ and $\R^m$ respectively. Then set
\begin{align*}
X_i&\colonequals \grad (x_i\circ \Xi)=(*_3dc_i,0),\\
X_\nu&\colonequals \grad (x\circ \Xi)=(*_3\pi^*\nu,0) \text{ and }\\
Y_j&\colonequals \grad (y_j\circ \Xi).
\end{align*}

The expression of $Y_j$ requires some further work. First, we compute the differential:
\begin{align*}
d(y_j\circ \Xi)(\vb,\vpsi)&=2\re\int_{D^2} \chi_2(z) \frac{\partial h}{\partial w}(\sigma(z))dvol_{D^2}\cdot  d(\sigma(z)) (\vb,\vpsi)
\end{align*}
and 
\begin{align*}
d(\sigma(z)) (\vb,\vpsi)&=\int_{S^1\times\{z\}}\langle \vpsi, \upd_{j,z}\rangle +\langle \Psi_z, (\partial_x \Upsilon_j)^\dagger_z\rangle \langle \vb, X_\nu\rangle_{Y'}+\langle \Psi_z, (-Gd^*_{S^1} \vb_z)\upd_{j,z}\rangle. 
\end{align*}
where $Y'=S^1\times D^2\subset \hy$. This allows us to write $Y_j=(Y_j^0, Y_j^1)=2(\im W_j^0,W_j^1)$ with
\begin{equation}\label{E15.2}
W_j=\chi_2(z) \frac{\partial h}{\partial w}(\sigma(z))((-d_{S^1}G)\langle \Psi ,\upd_j\rangle +\langle \Psi, (\partial_x\Upsilon_j)^\dagger\rangle_{Y'} X_\nu,\upd_j).
\end{equation}

As sections of $\Sph\to (\R/2\pi\alpha\Z)\times (S^1\times D^2)$, $\partial_x\Upsilon_j$ denotes the derivative of $\Upsilon_j$ along the first factor. 
Finally, we obtain that
\begin{equation}\label{E15.1}
\q=\grad f=\sum_{i=1}^n (\frac{\partial g}{\partial x_i}\circ \Xi ) X_i+ (\frac{\partial g}{\partial x}\circ \Xi ) X_\nu+\sum_{j=1}^m (\frac{\partial g}{\partial y_j}\circ \Xi ) Y_j.
\end{equation}

To study the mapping properties of $\q$, we first examine the map:
\begin{align*}
\upd: \SC_k(\hy)&\to L^2(S^1\times D^2, S)
\end{align*}
and its  extension in dimension 4:
\begin{align*}
\updd: \SC_k(\hz)&\to L^2(\Omega, S^-)\ \text{ where } \Omega=I\times S^1\times D^2,\\
(A,\Phi)&\mapsto \upd(\cA(t), \cPhi(t)),\ \forall t\in I= [t_1,t_2]. 
\end{align*}
for any compactly supported section $\Upsilon$ of $\Sph\to (\R/2\pi\alpha\Z)\times S^1\times D^2$. 

\begin{lemma}[cf. Lemma 11.4.4 in \cite{Bible}]\label{L15.11} For any $k\geq 2$ and any $j\in [-k,k]$,  $\updd$ extends to a smooth map
	\[
	\SC_k(\hz)\to L^2_{j+1,j,A}(\hz, S^-)
	\]
	with the following properties.
	\begin{enumerate}[label=(D\arabic*)]
	\item\label{D1} For each $l\geq 0$, there is a constant $C>0$ such that the differential \[
	\D_{(A,\Phi)}^l\updd \in \Mult^l({\bigtimes}_l \CT_k(\hz), L^2_{j+1,j,A}(\hz, S^-))
	\]
	satisfies the bound 
	\[
	\|\D^l_{(A,\Phi)}\updd\|\leq C(1+\|a\|_{L^2_j})^j (1+\|a\|_{L^2_k})^k,\ \forall (A,\Phi)\in \SC_k(\hz).  
	\]
	\item\label{D2} The $l$-th derivative extends to an element of 
	\[
 \Mult^l({\bigtimes}_{l-1} \CT_k(\hz)\times \CT_j(\hz), L^2_{j+1,j,A}(\hz, S^-))
	\]
whose norm	satisfies the bound 
	\[
	\|\D^l_{(A,\Phi)}\updd\|\leq C (1+\|a\|_{L^2_k})^{2k},\ \forall (A,\Phi)\in \SC_k(\hz).  
	\]
	
	\item\label{D2.5}  For any $k\geq 1$ and $p>3$, $\updd$ extends to a smooth map
		\[
	\SC_1^{(p)}(\hz)\to L^{(p)}_{j+1,j,A}(\hz, S^-).
	\]
	whose $l$-th derivative extends to an element of 
	\[
	\Mult^l({\bigtimes}_{l-1} \CT_k^{(p)}(\hz)\times \CT_j^{(p)}(\hz), L^p_{j+1,j,A}(\hz, S^-))
	\]
	with norm bounded by 
	\[
	\|\D^l_{(A,\Phi)}\updd\|\leq C (1+\|a\|_{L^p_k})^{2k},\ \forall (A,\Phi)\in \SC_k^{(p)}(\hz).  
	\]
	\item\label{D3} For $i=0,1$ and any $p\in [2,\infty]$, we have the bound
	\[
\|\updd\|_{L^p_{A,i}}\leq C (1+\|a\|_{L^p_i})^i,\ \forall (A,\Phi)\in \SC_k(\hz).  
	\]
	\item\label{D4} For any $1\leq m<p$, $\updd$ extends to a continuous map from
	\begin{align*}
	\SC_1^{(p)}(\hz)&\to L^{m}_1(\hz, S^-).
	\end{align*}

\item\label{D5} For any $1\leq p',p<\infty$, $\updd$ extends to a continuous map from
\begin{align*}
\SC^{(p)}(\hz)&\to L^{p'}(\hz, S^-).
\end{align*}
	\end{enumerate}
\end{lemma}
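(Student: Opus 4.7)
The strategy is to decompose $\updd$ as a composition $\Theta \circ \Pi$ of a continuous linear map followed by a smooth nonlinear substitution, and then apply the chain rule together with the algebra and module properties of the anisotropic Sobolev spaces $L^p_{m,l}$ established in Lemmas \ref{L15.9} and \ref{L15.10}. Let $a = A - A_0$ and write $a^\theta \in L^2_k(\Omega, i\R)$ for the $d\theta$-component of $a$. Unwinding definitions,
\[
\updd(A,\Phi)(t,\theta,z) \;=\; \exp\!\bigl(-Gd^*_{S^1}\,a^\theta(t,\cdot,z)\bigr)\cdot \tup\!\bigl(r_\nu(a)(t),\,\theta,\,z\bigr),
\]
where $r_\nu$ is applied slicewise in $t$. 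I will therefore factor $\updd$ as
\[
\Pi:(A,\Phi)\longmapsto \bigl(r_\nu(a),\,Gd^*_{S^1}\,a^\theta\bigr)\in C^0(I,\R)\times L^2_{k+1,k}(\Omega,i\R),
\]
followed by the evaluation map
\[
\Theta:(x,f)\longmapsto \bigl((t,\theta,z)\mapsto e^{-f(t,\theta,z)}\,\tup(x(t),\theta,z)\bigr).
\]
The map $\Pi$ is continuous and linear: $r_\nu$ is a bounded linear functional on each time slice, while Lemma \ref{L15.10} sends $L^2_k(\Omega) = L^2_{k,k}(\Omega)$ into $L^2_{k+1,k}(\Omega)$. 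The map $\Theta$ is smooth because $\tup$ is smooth with compactly supported $x$-derivatives and because $L^2_{k+1,k}(\Omega)\hookrightarrow C^0(\Omega)$ is a Banach algebra by Lemma \ref{L15.9}, so postcomposition with $u\mapsto e^u$ is smooth.

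For \ref{D1} and \ref{D2}, the chain rule $\D^l\updd = \D^l(\Theta\circ\Pi)$ expands each derivative as a finite sum of products of derivatives of $\Theta$ paired with $\Pi$-derivatives; the module property of Lemma \ref{L15.9} ensures each such term lies in $L^2_{k+1,k}(\Omega,S^-)\subset L^2_{j+1,j,A_0}(\Omega,S^-)$. The $(1+\|a\|_{L^2_k})^{2k(l+1)}$ prefactor then arises from converting between $\nabla_{A_0}$- and $\nabla_A$-norms on $\V_j$: since $\nabla_A - \nabla_{A_0}$ is pointwise multiplication by $a$, the change-of-norm involves at most $k$ copies of $a$, and an $(l+1)$-fold derivative picks up polynomials of degree up to $2k(l+1)$. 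Allowing one slot to drop to regularity $j<k$ as in \ref{D2} is exactly the module property $L^2_{k+1,k}\cdot L^2_{r,q}\subset L^2_{r,q}$ for $|r|\le k+1$, $|q|\le k$. For \ref{D2.5} the argument is identical once one upgrades Lemmas \ref{L15.9} and \ref{L15.10} to $L^p$ with $p>3$, which is immediate from the one-dimensional Sobolev embedding $L^p_1(S^1)\hookrightarrow C^0(S^1)$.

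For \ref{D3}, the pointwise bound $|\updd|\le C\,\|\tup\|_{C^0}\,\exp\|Gd^*_{S^1}a^\theta\|_{C^0}$ together with the slicewise embedding $L^p_1(S^1)\hookrightarrow C^0(S^1)$ yields the stated $L^p_{A,i}$ bound, since applying $\nabla_A^i$ contributes at most $i$ linear factors of $a$. For \ref{D4} and \ref{D5}, the reduction is the same: $\Pi$ is continuous into $C^0(\Omega)\times L^p_1(\Omega)$ as soon as $a\in L^p_1(\hz)$ (respectively into $C^0(\Omega)\times L^p(\Omega)$ when $a\in L^p(\hz)$), and $\Theta$ is pointwise continuous in its arguments, so continuity in the weaker target norms follows from H\"older and the dominated convergence theorem. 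The principal technical obstacle is the careful bookkeeping of how many copies of $a$ appear when exchanging $\nabla_{A_0}$-derivatives for $\nabla_A$-derivatives and when differentiating the exponential factor in $\Theta$; this is what pins down the exact exponents in \ref{D1}--\ref{D2.5}, and it is the only step that is not an automatic consequence of the factorization.
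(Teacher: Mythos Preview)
Your treatment of \ref{D1}, \ref{D2}, \ref{D2.5} via the factorization $\updd=\Theta\circ\Pi$ and the algebra/module properties of Lemma~\ref{L15.9} is essentially what the paper does (it simply cites \cite[Lemma~11.4.4]{Bible} with Lemma~\ref{L15.9} substituted for \cite[Lemma~11.4.3]{Bible}), so that part is fine.

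There is a genuine gap in your \ref{D3}. The pointwise bound you write, $|\updd|\le C\,\|\tup\|_{C^0}\exp\|Gd^*_{S^1}a^\theta\|_{C^0}$, is both too weak and unavailable: for $p\in[2,\infty]$ and $i\in\{0,1\}$ there is no embedding $L^p_i(\Omega)\hookrightarrow C^0(\Omega)$ in dimension~$4$, and even if there were, an exponential in $\|a\|$ does not yield the required polynomial bound $(1+\|a\|_{L^p_i})^i$. The point you are missing is that $Gd^*_{S^1}a$ is purely imaginary (since $a$ is $i\R$-valued), so $|e^{-Gd^*_{S^1}a}|\equiv 1$ pointwise. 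This immediately gives $|\updd|\le\|\tup\|_\infty$, hence $\|\updd\|_{L^p}\le C$ uniformly for $i=0$; for $i=1$ the paper writes out $\nabla_A\updd$ explicitly as four terms (see \eqref{F15.4}), each linear in $a$ or its first derivatives, and the same modulus-one fact controls each.

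For \ref{D4} and \ref{D5} the paper's route is also more concrete than yours: it proves a small composition lemma (Lemma~\ref{L15.12}) stating that postcomposition with a smooth map of bounded $C^1$-norm is continuous $L^1(\Omega_*)\to L^{p'}(\Omega_*)$ for any finite $p'$. This applies to $\xi\mapsto e^\xi$ on $i\R$ (bounded $C^1$-norm precisely because $|e^{i\cdot}|=1$) and to $x\mapsto\tup(x,\cdot)$. Your dominated-convergence sketch could be pushed through, but it too needs the modulus-one observation to supply the dominating function, and your claim that $\Pi$ lands in $C^0(\Omega)\times L^p(\Omega)$ when $a\in L^p$ only is incorrect: with no derivative on $a$, the slice map $t\mapsto r_\nu(a)(t)$ is merely in $L^p(I)$, not $C^0(I)$.
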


\begin{proof} The proof of \ref{D1}\ref{D2}\ref{D2.5} carries though with little changes as in \cite[Lemma 11.4.4]{Bible}, using Lemma \ref{L15.9} in place of \cite[Lemma 11.4.3]{Bible}. In what follows, we will focus on \ref{D3}\ref{D4}\ref{D5}.
	
As this point, it is convenient to have a lemma that is slightly stronger than \cite[Lemma 11.4.5]{Bible}:
\begin{lemma}\label{L15.12} Let $\SH_1, \SH_2$ be any separable Banach spaces and $\dim\SH_1<\infty$. Suppose $\chi: \SH_1\to \SH_2$ be a smooth map with bounded $C^1$-norm. Then the composition map $\chi^*: \xi\mapsto \chi\circ \xi$ is continuous from
	\[
	L^1(\Omega_*, \SH_1)\to 	L^p(\Omega_*, \SH_2)
	\]
	for any finite measure space $\Omega_*$ and any $1\leq p<\infty$. Moreover, $\|\chi\circ\xi\|_\infty\leq \|\chi\|_\infty$. 
\end{lemma}
\begin{proof}[Proof of Lemma] It is clear that $\chi\circ \xi$ lies in $L^\infty(\Omega_*,\C)$ with $\|\chi\circ\xi\|_\infty\leq \|\chi\|_\infty$. Since $\Omega_*$ has a finite measure, $\chi\circ \xi\in L^p$.  We prove that $\chi^*$ is H\"{o}lder continuous. For any $\xi_1,\xi_2\in L^1(\Omega_*,\SH_1)$, 
	\begin{align*}
	\|\chi\circ \xi_1-\chi\circ \xi_2\|_p^p&=\int_{\Omega_*} \|\chi\circ \xi_1-\chi\circ \xi_2\|^p_{\SH_2}\leq \|2\chi\|_\infty^{p-1}\int_{\Omega_*} |\chi\circ \xi_1-\chi\circ \xi_2|_{\SH_2} \\
	&=\|2\chi\|_\infty^{p-1}\|\nabla \chi\|_\infty \int_{\Omega_*} | \xi_1- \xi_2|_{\SH_1}=\|2\chi\|_\infty^{p-1}\|\nabla \chi\|_\infty \|\xi_1-\xi_2\|_{L^1(\Omega_*,\SH_1)}. \qedhere
	\end{align*}
\end{proof}
	
	Back to the proof of Lemma \ref{L15.11}. 
	Let $(a,\phi)=(A,\Phi)-(A_0,\Phi_0)\in L^p_1(\hz, iT^*\hz\oplus S^+)$, then $\updd(A,\Phi)$ is defined as 
	\begin{equation}\label{F15.3}
	e^{-Gd^*_{S^1}a} \tup(r_\nu(a))
	\end{equation}
	as a section supported on
	\[
	\Omega=I\times S^1\times D^2
	\]
	with $r_\nu(a)=r_\nu(a|_{\{t\}\times \hy})\in L^p(I, \R)$. 
	
	\medskip
	
	\Step 1. Proof of \ref{D5}. It follows from Lemma \ref{L15.12} directly: the exponential map 
	\[
	\xi\mapsto e^{\xi}
	\]
	is continuous from $L^p(\Omega, i\R)\to L^{2p'}(\Omega, \C)$ for any $1\leq p, p'<\infty$, so the map
	 $$\varphi:a\mapsto \exp(-Gd^*_{S^1}a)$$ 
	 is continuous from $L^p\to L^{2p'}$. On the other hand, we view the map $a\mapsto  \tup(r_\nu(a))$ as the composition
	\begin{align*}
 L^p(\hz)&\to L^p(I, \R)\to L^{2p'} (I, L^{2p'}(\hy))=L^{2p'}(\hz),\\
 a&\mapsto r_\nu(a)\mapsto \tup(r_\nu(a)),
	\end{align*}
	so Lemma \ref{L15.12} applies. Finally,  $L^{2p'}\times L^{2p'}\to L^{p'}$ is continuous. 
	
	\medskip
	
\Step 2. Proof of \ref{D4}. Now we deal with the first derivative of $\updd$. Write $\nabla_A \updd=K_1+K_2+K_3+K_4$ with
	\begin{align}
K_1&= (-d_{S^1}Gd_{S^1}^*a) \updd, & K_3&=(e^{-Gd_{S^1}^*a})\nabla_{A_0}\tup(r_\nu(a)),\label{F15.4}\\
K_2&= (-Gd_{S^1}^*d_Ma) \updd, & K_4&= a\otimes\updd,\nonumber
	\end{align}
where $M=I\times D^2$. To prove \ref{D4}, we verify that each $K_i$ is continuous from $L^p_1\to L^{m}$ for any $m<p$. It is clear that each of the following terms:
\[
-d_{S^1}Gd_{S^1}^*a,\ -Gd_{S^1}^*d_Ma,\ a
\]
is continuous from $L^p_1$ to $L^p$. To analyze $K_3$, we expand $\nabla_{A_0}\tup(r_\nu(a))$ as 
\[
(\nabla_{B_0}\tup)(r_\nu(a))+(\widetilde{\partial_x\Upsilon})(r_\nu(a)) \langle \frac{d}{dt} a, X_\nu\rangle_{Y'},
\]
which is continuous from $L^p_1\to L^{p'}$ for any $1\leq p'<p$. Now we use \Step 1 to complete the proof of \ref{D4}. 

\medskip

\Step 3. Proof of \ref{D3}. It follows directly from the expression of $\updd$ and $\nabla_A\updd$, \eqref{F15.3} and \eqref{F15.4}, using the fact that $\|\varphi(a)\|_\infty=1$. 
\end{proof}

Back to the proof of Proposition \ref{P15.8}. The proof of \ref{C1}$\sim$\ref{C5} follows from \ref{D1}$\sim $\ref{D2.5} in the same line as \cite[Proposition 11.4.1]{Bible}, using Lemma \ref{L15.10}. 

\medskip

In what follows, we will explain how \ref{C3}\ref{C4}\ref{C7} follow from \ref{D3} and \ref{D5}. In fact, \ref{D5} provides better bounds than \ref{D4}. To estimate $\hq$, we investigate the section
\[
W_j=\chi_2(z) \frac{\partial h}{\partial w}(\sigma(z))((-d_{S^1}G)\langle \Phi ,\updd_j\rangle +\langle \Phi, (\partial_x\Upsilon_j)^\ddagger\rangle_{Y'} X_\nu,\updd_j),
\]
in place of $Y_j$, so
\[
\hq=\sum_{i=1}^n (\frac{\partial g}{\partial x_i}\circ \Xi ) X_i+ (\frac{\partial g}{\partial x}\circ \Xi ) X_\nu+2\sum_{j=1}^m (\frac{\partial g}{\partial y_j}\circ \Xi ) (\im W_j^0,  W_j^1).
\]

We break $W$ into four simpler pieces: $W=\varpi(V_1+V_2+V_3)$ where
	\begin{align*}
\varpi&=\chi_2(z) \frac{\partial h}{\partial w}(\sigma(z)), & V_1&=(-d_{S^1}G)\langle \Phi ,\updd\rangle ,\\
V_2&=\langle \Phi, (\partial_x\Upsilon)^\ddagger\rangle_{Y'} X_\nu, & V_3&=\updd.\nonumber
\end{align*}

\Step 1. Proof of \ref{C4}. The map $V_1: \SC^{(p)}_1(\hz)\to L^{m}(\hz, iT^* \hz)$ is continuous for any $m<\bn(p)$ when $2\leq p< 4$. Indeed, $V_1$ can be viewed as the composition 
\[
(\Phi, \updd)\in L^p_1\times L^{p'}\to  L^{\bn(p)}\times L^{p'} \xrightarrow{\times} L^{m}\xrightarrow{-d_{S^1}G} L^{m},
\]
when $p'$ is sufficiently large.

For any $p'\gg 1$, the map $V_2: \SC_1^{(p)}\to  L^{p'}(\hz, iT^*\hz)$ is also continuous since the map $(\Phi, (\partial_x\Upsilon)^\ddagger)\to \langle \Phi, (\partial_x\Upsilon)^\ddagger\rangle_{Y'} $ can be viewed as the composition:
 \begin{align*}
L^p_1\times L^{p'}\to L^p_1(I, L^p(\hy))\times L^{p'}&\to C^0(I, L^p(\hy))\times L^{p'}(I, L^{p'}(\hy))\\
&\xrightarrow{\times}L^{p'}(I, L^1(\hy))\xrightarrow{\int}L^{p'}(I).
 \end{align*}
 
By Lemma \ref{L15.11} \ref{D5}, $V_3$ is a continuous map into $ L^{\bn(p)}$. It remains to deal with $\varpi$, which is viewed as the composition of $\frac{\partial h}{\partial w}$ with the map
	\begin{align*}
	\sigma: \SC_1^{(p)}&\to L^1(M,\C),\ M=I\times D^2,\\
	(A,\Phi)&\mapsto\bigg( (t,z)\mapsto\int_{\{t\}\times S^1\times \{z\}} \langle\Phi, \updd\rangle\bigg). 
	\end{align*}
	
	The map $\sigma$ is continuous, since it is the composition:
	\[
	(\Phi, \updd)\in L^p_1\times L^{4}\xrightarrow{\times} L^1= L^1(M, L^1(S^1))\xrightarrow{\int_{S^1}} L^1(M, \C).
	\]

Since $\frac{\partial h}{\partial w}:\C_w\to \C$ is a smooth function with compact support, it follows from Lemma \ref{L15.12} that $\varpi: C^{(p)}_1\to L^{p'}$ is continuous for any $1\leq p'<\infty$. 

The same argument shows that 
\[
\frac{\partial g}{\partial x_i}\circ \Xi, \frac{\partial g}{\partial x}\circ \Xi, \frac{\partial g}{\partial y_j}\circ \Xi 
\]
are continuous functions into $L^{p'}(I,\R)$ for any $1\leq p'<\infty$. This completes the proof of $\ref{C4}$.

\medskip

\Step 2. Proof of \ref{C3}. It follows by replacing $L^{p'}$ by $L^\infty$ through out \Step 1, using \ref{D3} from Lemma \ref{L15.11}.

\medskip

\Step 3. Proof of \ref{C7}. It follows by replacing $L^p_1$ by $L^2_{1-\epsilon}$ through out \Step 1 with $0\leq \epsilon<\half$.

\medskip

The proof of Proposition \ref{P15.8} is now completed.
\end{proof}

\subsection{Proof of Theorem \ref{T15.2}}\label{Sub15.4} In this subsection, we verify that a cylinder function $f$ satisfies conditions in Definition \ref{D14.2} and prove Theorem \ref{T15.2}.

\begin{itemize}
\item \ref{A1} and \ref{A2} follows from \ref{C1} and \ref{C5}.

\item \ref{A3} is satisfied on account of \ref{C4}, as $\bn(2)=4$.

\item  \ref{A4} is a consequence of \ref{C2}, while \ref{AA6} follows from \ref{C7}.

\item As for \ref{AA5}, the statement on the support of $\q=\grad f$ is clear from the construction. The estimate on $\|\q\|_2$ is a consequence of the explicit formulae \eqref{E15.2} and \eqref{E15.1}. 
\end{itemize}

Only \ref{A7} requires some further explanation, as Proposition \ref{P15.7} does not extend to the case when $k=1$. The proof of \cite[Proposition 11.4.1]{Bible} fails here, as $L^2_{2,1}(S^1\times D^2)$ fails to be an algebra: 
\[
L^2_{2,1}(S^1\times D^2)\embed L^2_1(S^1, L^2_1(D^2))\not\embed C^0,
\]
 
 Nevertheless, it is at the borderline. As we are merely interested in $\CT_0$, losing a tiny amount of regularity is affordable. In fact, one can still prove that 
\[
\q: \SC_1(\hy)\to \CT_0
\]
is smooth. This completes the proof of Theorem \ref{T15.2}. 

\subsection{Banach Spaces of Tame Perturbations}\label{Subsec15.5}

In this subsection, we construct a Banach space of tame perturbations as described in Section \ref{S14}. Since only minor changes are needed,  we will only state the theorem and refer to \cite[Section 11.6]{Bible} for the actual proof.

First, we introduce a broader class of functions defined on $\SC_{k-1/2}(\hy,\bs)$, called generalized cylinder functions. In the definition of cylinder functions (cf. Definition \ref{D15.1}), one may allow entries of $\Xi$ to come from different embeddings of $S^1\times D^2$ into $\hy$. This motivates the next definition.

\begin{definition} \label{D15.2} A function $f'$ defined on $\SC_{k-1/2}(\hy,\bs)$ is called \textit{a generalized cylinder function} if it arises as the composition $g'\circ \Xi'$ where
	\begin{itemize}
		\item the map $\Xi'$ is defined using a collection of cylinder functions $f_1,\cdots, f_l$:
		\[
		\Xi'=(f_1,\cdots, f_l):\SC_{k-\half}(\hy,\bs)\to \R^l.
		\]
		Their underlying embeddings
		$
		\iota_j: S^1\times D^2\to \hy, 1\leq j\leq l
		$
		might be different. 
		
		\item the function $$g': \R^l\to \R$$ is any smooth function with compact support. \qedhere
	\end{itemize}
\end{definition}

\begin{theorem}\label{T15.3} Let $Y'$ is a smooth co-dimension 0 submanifold of $\hy$. Suppose a generalized cylinder function $f'$ is defined using a collection of embeddings $\{\iota_k\}_{1\leq k\leq l}$ with $\im \iota_k\subset Y'$ for all $\iota_k$, then $\grad f'$ is a perturbation tame in $Y'$ in the sense of Definition \ref{D14.2}.
\end{theorem}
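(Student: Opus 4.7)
The plan is to reduce Theorem \ref{T15.3} to Theorem \ref{T15.2} by an application of the chain rule, using that the class of tame perturbations supported in $Y'$ is stable under multiplication by smooth scalar functions of cylinder type.

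Write $f' = g' \circ \Xi'$ with $\Xi' = (f_1,\dots,f_l)$, where each $f_i$ is a cylinder function defined via an embedding $\iota_i: S^1\times D^2\to \hy$ with $\im\iota_i\subset Y'$. By the chain rule, the formal $L^2$-gradient decomposes as
\[
\q' \colonequals \grad f' \;=\; \sum_{i=1}^l \varphi_i \cdot \q_i, \qquad \varphi_i \colonequals (\partial_i g')\circ \Xi', \qquad \q_i\colonequals \grad f_i.
\]
By Theorem \ref{T15.2}, each $\q_i$ is tame in $\im\iota_i\subset Y'$. Since $g'$ has compact support in $\R^l$ and $\Xi'$ is itself smooth and gauge-invariant into a bounded region (the support of $g'$), each scalar $\varphi_i$ and all its derivatives along $\SC(\hy)$ are uniformly bounded. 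In addition, each $\varphi_i$ inherits from the cylinder functions $f_j$ the mapping property of being smooth on $\SC_k(\hy)$ for $k\geq 2$, continuous on $\SC_{1-\epsilon}(\hy)$ for $0\leq \epsilon<\half$, and $C^1$ on $\SC_1(\hy)$ (the last by \ref{A7} for cylinder functions). The induced 4-dimensional scalar $\widehat{\varphi}_i$ on $\SC(\hz)$ depends only on the restriction of a configuration to each time-slice, and by the same arguments that yield \ref{C1}$\sim$\ref{C7} in Proposition \ref{P15.8}, it extends smoothly to $\SC_j(\hz)$ and to $\SC_j^{(p)}(\hz)$ with all derivatives uniformly bounded on the whole configuration space.

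I would then verify conditions \ref{A1}--\ref{A7} of Definition \ref{D14.2} for $\q'$ property by property, by writing out $\D^n\widehat{\q}' = \sum_i \sum_{a+b=n} \binom{n}{a}\D^a\widehat{\varphi}_i \cdot \D^b\widehat{\q}_i$ via the Leibniz rule. For \ref{A1}, \ref{A2}, \ref{A4}, \ref{A7} the smoothness into the relevant Sobolev targets is inherited from each $\widehat{\q}_i$, and the polynomial estimates in Proposition \ref{P15.8} are preserved because $\widehat{\varphi}_i$ and its derivatives are uniformly bounded scalars. For \ref{A3} and \ref{AA6} the continuity into $\V_0^{(m)}$ (resp.\ $\V_0$) at low regularity again follows from multiplying a uniformly bounded continuous scalar with a continuous section. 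For the support statement in \ref{AA5}, one observes $\supp(\varphi_i\cdot \q_i)\subset \im\iota_i\subset Y'$, hence $\supp \q'\subset Y'$; and the norm bound
\[
\|\q'\|_{L^2(Y')} \;\leq\; \sum_i \|\varphi_i\|_\infty \|\q_i\|_{L^2(\im\iota_i)} \;\leq\; m_2'(\|\Psi\|_{L^2(Y')}+1)
\]
follows from the corresponding bound for each $\q_i$ and the uniform bound $\|\varphi_i\|_\infty\leq \|g'\|_{C^1}$.

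The main subtlety, and what I would regard as the only nontrivial step, lies in \ref{A4} for negative indices $j\in[-k,-1]$: one must know that multiplication by $\widehat{\varphi}_i$ is a bounded endomorphism of $\CT_j$ and $\V_j$ for such $j$, not just for $j\geq 0$. This holds because $\widehat{\varphi}_i$ is a scalar that is smooth as a section on $\hz$ with all covariant derivatives uniformly bounded by the already-established bounds on $\widehat{\varphi}_i$ from its cylinder representation, and multiplication by such a scalar defines a bounded operator on $L^2_j$ for any $|j|\leq k$ by duality with the positive case. Once this is in hand, all seven conditions of Definition \ref{D14.2} are verified, and the proof of Theorem \ref{T15.3} is complete.
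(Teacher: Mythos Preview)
Your approach is correct and is essentially the same as the paper's: the paper merely remarks that the proof ``is not essentially different from that of Theorem~\ref{T15.2},'' and indeed the proof of that theorem already expresses $\grad f$ in exactly the chain-rule form you use (cf.\ \eqref{E15.1}), so your decomposition $\grad f' = \sum_i \varphi_i\,\q_i$ with $\varphi_i = (\partial_i g')\circ\Xi'$ simply iterates that structure one level up.

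One small point on your treatment of \ref{A4}. The subtlety you flag---that multiplication by the scalar $\widehat{\varphi}_i(\gamma)$ is bounded on $\V_j$ for negative $j$---is correct and easy, since $\widehat{\varphi}_i(\gamma)$ is a function of $t$ alone with bounded $t$-derivatives, and boundedness on negative Sobolev spaces follows by duality. What you do \emph{not} explicitly address is the other Leibniz summand, $(\D_\gamma\widehat{\varphi}_i)(V)\cdot\widehat{\q}_i(\gamma)$, when $V\in\CT_j$ with $j<0$. Here one cannot restrict $V$ to time slices, so the expression $(\D_\gamma\widehat{\varphi}_i)(V)$ has to be interpreted by duality: the pairing kernel is $\sum_j(\partial_j\partial_i g'\circ\Xi')(\cgamma(\cdot))\,\widehat{\q}_j(\gamma)$, which lies in $\CT_{|j|}(\hz)$ since $\gamma\in\SC_k$ and each $\widehat{\q}_j$ satisfies \ref{C1}. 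With that observed, the first term maps $\CT_j\to\V_j$ as required. This is routine but deserves a sentence, since it is where the low-regularity argument actually enters.
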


The proof of Theorem \ref{T15.3}  is not essentially different from that of Theorem \ref{T15.2}.

\begin{theorem}\label{T15.14} Fix an open submanifold $Y'\subset \hy$. Let $\q^i\ (i\in\N)$ be any countable collection of tame perturbations arising as gradients of generalized cylinder functions on $\SC_{k-1/2}(\hy,\bs)$ with support in $Y'$. Then there exists a separable Banach space $\Pa$ and a linear map:
	\begin{align*}
	\mathfrak{O}: \Pa&\to C^0(\SC_{k-1/2}(\hy,\bs), \CT_0)\\
	\lambda&\mapsto \q^\lambda
	\end{align*}
	with the following properties:
	\begin{enumerate}[label=(F\arabic*)]
		\item For each $\lambda\in \Pa$, the element $\q^\lambda$ is a tame perturbation in $Y'$ in the sense of Definition \ref{D14.2}. 
		\item The image of $\mathfrak{O}$ contains all the perturbations $\q^i$ from the given countable collections.
		\item\label{F3} If $\hz=[t_1,t_2]\times \hy$ is a cylinder, then for all $k\geq 2$, the map 
		\begin{align*}
		\Pa\times \SC_k(\hz)&\to \V_k\\
		(\lambda,\gamma)&\mapsto \hq^\lambda(\gamma)
		\end{align*}
		is a smooth map of Banach manifolds. 
			\item\label{F4} For all $k\geq 1$ and $p=7/2$, the map 
		\begin{align*}
		\Pa\times \SC_k^{(p)}(\hz)&\to \V_k^{(p)}\\
		(\lambda,\gamma)&\mapsto \hq^\lambda(\gamma)
		\end{align*}
		is a smooth map of Banach manifolds. 
		\item\label{F5} For $\epsilon=1/4$, the map 
		\begin{align*}
		\Pa\times \SC_{1-\epsilon}(Y)&\to \CT_0(Y)\\
		(\lambda, \beta)&\mapsto \q^\lambda(\beta). 
		\end{align*}
		is continuous and satisfies the bound:
		\[
		\|\q^\lambda(B,\Psi)\|_2\leq \|\lambda\|_{\Pa}\cdot m_2(\|\Psi\|_{L^2(Y')}+1).
		\]
		

	\end{enumerate}
\end{theorem}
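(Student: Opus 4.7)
\medskip

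\textbf{Proof proposal.} The plan is to mimic the construction of \cite[Section 11.6]{Bible}: build $\Pa$ as a weighted $\ell^1$-completion of formal series $\sum_i \lambda_i \q^i$, with weights chosen large enough to dominate simultaneously all the quantitative bounds produced by Theorem~\ref{T15.3} and its underlying Proposition~\ref{P15.8}. Concretely, I would for each $i$ pick a sequence of positive constants
\[
C_i \;\geq\; 1 + \sup_{j,l,\gamma \in K_{j,l}} \bigl\| \D^l_\gamma \hq^{\,i}\bigr\|_{\mathrm{Mult}^l(\bigtimes_l \CT_j,\V_j)}
\]
where $K_{j,l}$ ranges over a fixed exhausting family of balls in the various $\SC_k(\hz)$, $\SC_k^{(p)}(\hz)$, and $\SC_{1-\epsilon}(\hy)$ spaces that appear in \ref{A1}--\ref{A7} and in the conclusions \ref{F3}--\ref{F5}; one also includes in the same supremum the evaluation norms that appear in \ref{AA5}. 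Then set
\[
\Pa \;\colonequals\; \Bigl\{\lambda=(\lambda_i)_{i\in\N}\,:\,\|\lambda\|_\Pa \colonequals \sum_i C_i\,|\lambda_i|<\infty\Bigr\},
\]
and define $\mathfrak{O}(\lambda)\colonequals \sum_i \lambda_i\,\q^{\,i}$, with the 4-dimensional extension $\hq^\lambda\colonequals \sum_i \lambda_i\,\hq^{\,i}$.

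The choice of $C_i$ guarantees that for each relevant norm the defining sum converges absolutely and uniformly on bounded subsets, so $\hq^\lambda$ inherits every mapping property \ref{A1}--\ref{A7} from the individual summands (this is where we use Theorem~\ref{T15.3}, which ensures each $\q^i$ is tame in $Y'$). Compact support in $Y'$ is preserved because each $\q^{\,i}$ is supported in $Y'$ by hypothesis. For \ref{F3} and \ref{F4}, the joint smoothness in $(\lambda,\gamma)$ reduces to the observation that the map $\lambda\mapsto \hq^\lambda(\gamma)$ is linear and bounded with operator norm $\leq \|\lambda\|_\Pa$, while $\gamma\mapsto \hq^\lambda(\gamma)$ is smooth with derivative bounds controlled by $\|\lambda\|_\Pa$ via the $C_i$; linearity in $\lambda$ automatically gives smoothness of the joint map from a Banach space into a Banach space once one has continuity of all partial derivatives, which follows term-by-term. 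Property \ref{F5} is obtained by choosing $C_i$ to dominate also the continuity modulus and the $\|\Psi\|_{L^2(Y')}$-linear bound from \ref{AA5} (with $\epsilon=1/4$, Sobolev embedding puts us safely inside \ref{AA6}). Property \ref{F2} is arranged by hand: make sure the $i$-th standard basis vector of $\Pa$ maps to $\q^{\,i}$, which requires $C_i$ finite for the norms we need, but the $\q^{\,i}$ are tame so each such norm is finite and $C_i$ can be chosen large enough.

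The step I expect to be the most delicate is the verification of \ref{F3} and \ref{F4} at the level of \emph{smoothness} rather than mere continuity, because one must show that the sum $\sum_i \lambda_i\,\D^l_\gamma \hq^{\,i}$ converges not only pointwise in $\gamma$ but uniformly on bounded sets in every $\SC_k(\hz)$ and $\SC_k^{(p)}(\hz)$ simultaneously, and that the partial derivative in the $\lambda$-direction commutes with the $\gamma$-derivatives. This is taken care of precisely by enlarging $C_i$ to dominate countably many derivative bounds at once — a standard diagonal procedure, but one has to be careful that only countably many such bounds are needed. Fortunately, the target Banach bundles $\V_k$, $\V_k^{(p)}$ and the source manifolds $\SC_k(\hz)$, $\SC_k^{(p)}(\hz)$ are separable, and the derivative bounds in Proposition~\ref{P15.8} depend only on the $L^2_k$- or $L^p_k$-norm of the configuration on the compact set $\Omega$; so a countable exhaustion by bounded sets suffices and the weighted $\ell^1$ construction goes through. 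Once these uniform bounds are in place, separability of $\Pa$ is immediate (finitely supported rational sequences are dense), and \ref{F1}--\ref{F5} follow by term-by-term application of the mapping properties established for generalized cylinder perturbations.
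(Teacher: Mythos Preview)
Your proposal is correct and follows exactly the approach the paper intends: the paper's own proof is simply ``See \cite[Theorem 11.6.1]{Bible}'', and what you have written is a faithful reconstruction of that weighted $\ell^1$ construction, adapted to the compactly supported setting via Theorem~\ref{T15.3} and Proposition~\ref{P15.8}.
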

\begin{proof} See \cite[Theorem 11.6.1]{Bible}. 
\end{proof}

We do not distinguish $\lambda\in \Pa$ with its image $\q^\lambda$ in $C^0(\SC_{k-1/2}(\hy,\bs), \CT_0)$. 

\begin{remark} In property \ref{F4}, any index $3<p<4$ will make the Compactness Theorem \ref{T1.4} work. In property \ref{F5}, one may take any $0<\epsilon<1/2$.
\end{remark}

\begin{corollary}\label{C14.18} Suppose $\{\q_n\}\subset \Pa$ and $\|\q_n\|_\Pa\to 0$ as $n\to\infty$. Then for any bounded region $\SO\subset \SC_k(\hy,\bs)$, the $C^l$-norm of $\q_n$ converges to zero, i.e.
	\[
	\|\q_n\|_{C^l(\SO, \SC_k\to \CT_k)}\to 0 \text{ as } n\to\infty. 
	\]
\end{corollary}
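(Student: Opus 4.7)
The goal is to show that the linear map $\mathfrak{O} : \Pa \to C^l(\SO, \SC_k \to \CT_k)$, $\lambda \mapsto \q^\lambda|_\SO$, is a bounded linear operator. Applied to $\lambda = \q_n$, this immediately yields the corollary since $\|\q_n\|_\Pa \to 0$. Linearity of $\mathfrak{O}$ is built into its construction in Theorem \ref{T15.14}, so the only content is to exhibit a uniform estimate of the form
\[
\sup_{\gamma \in \SO}\|D^l_\gamma \q^\lambda\|_{\Mult^l(\CT_k,\CT_k)} \leq K_{l,\SO}\,\|\lambda\|_\Pa.
\]

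For the pointwise side, Proposition \ref{P15.7}, extended to generalized cylinder functions via Theorem \ref{T15.3}, provides an explicit polynomial estimate
\[
\|D^l_{(B,\Psi)}\q'\| \leq C(\q',l)\,(1+\|b\|_{L^2_{k-1}(Y')})^{2k(l+1)}(1+\|\Psi\|_{L^2_{k,B}(Y')})^{l+1}
\]
for each individual generalized cylinder function $\q'$, which is finite on any bounded region $\SO \subset \SC_k(\hy,\bs)$. The linear control of these $C^l$-seminorms by $\|\cdot\|_\Pa$ is a direct consequence of the construction of $\Pa$ in \cite[Theorem 11.6.1]{Bible}, which is invoked in Theorem \ref{T15.14}. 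In that construction, $\Pa$ is realized as a weighted $\ell^1$-type space built on a countable dense family $\{\q^i\}$ of generalized cylinder functions with positive weights $w_i$; the weights are chosen at the outset so that $\|\q^i\|_{C^{l_j}(\SO_j)} \leq M_j w_i$ for every $(l_j,\SO_j)$ belonging to a preassigned countable exhausting family of (level, bounded-region) pairs. Any bounded region appearing in applications is absorbed into some $\SO_j$ of this family, so writing $\lambda = \sum_i c_i \q^i$ and applying the triangle inequality yields $\|\q^\lambda\|_{C^{l_j}(\SO_j)} \leq M_j \|\lambda\|_\Pa$, which is the bound we need.

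The main obstacle is that $\SO$ is only assumed bounded, not precompact, in the infinite-dimensional Banach affine space $\SC_k(\hy,\bs)$; consequently, the uniform bound cannot be extracted from the joint smoothness \ref{F3} by a naive covering argument, since the neighborhoods produced at each $\gamma_0 \in \SO$ need not admit a finite subcover. This is precisely the difficulty that the weighted construction of $\Pa$ is designed to circumvent: by baking the required $C^l$-control on a countable exhaustion of bounded regions directly into the definition of $\|\cdot\|_\Pa$, one obtains a single Banach norm on $\Pa$ that dominates every relevant $C^l(\SO)$-seminorm, and the corollary follows at once.
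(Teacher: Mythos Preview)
Your proposal is correct and matches the intended argument: the paper states this corollary without proof because it is a direct consequence of the weighted $\ell^1$-construction of $\Pa$ in \cite[Theorem 11.6.1]{Bible} (invoked in Theorem \ref{T15.14}), exactly as you describe. The key point you identify---that the weights $w_i$ are chosen in advance to dominate the $C^l$-seminorms of the generators $\q^i$ on a countable exhaustion by bounded balls, so that $\|\cdot\|_\Pa$ controls every such seminorm linearly---is precisely the mechanism, and your observation that boundedness (rather than precompactness) of $\SO$ forces one to rely on this built-in control rather than a covering argument is well taken.
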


Our primary interest is in the case when $Y'=Y=\{s\leq 0\}$, and let us specify the countable collection of tame perturbations associated to $Y'$ in Theorem \ref{T15.14}. We make the following choices in order:
\begin{itemize}
	\item a positive integer $l$;
	\item a compact subset $K'$ of $\R^l$;
	\item a smooth function $g'$ on $\R^l$ with support in $K'$
\end{itemize}
and for each $j\in \{1,\cdots,l\}$, 
\begin{itemize}
	\item an embedding $\iota: S^1\times D^2\embed (Y')^\circ$ into the interior of $Y'$;
	\item a pair of positive integers $n$ and $m$;
	\item compactly supported 1-forms $c_1,\cdots, c_n$ and compactly supported sections $\Upsilon_1,\cdots, \Upsilon_m$ of $\Sph$;
	\item a compact subset $K$ of $\R^n\times (\R/2\pi\alpha\Z)\times \R^m$;
	\item a smooth function $g$ on $\R^n\times (\R/2\pi\alpha\Z)\times \R^m$ with support in $K$. 
\end{itemize}

We require the resulting collection $\{\q^i\}_{i\in \N}$ to be dense in the space of all possible choices, in $C^\infty$-topology; see \cite[P. 192]{Bible} for a complete description. For the rest of the paper, we presume that such a collection $\{\q^i\}_{i\in \N}$ is chosen, once and for all, for $Y'=Y$.  Let $\Pa$ be the resulting Banach spaces constructed by Theorem \ref{T15.14}.

Each configuration and gauge transformation on $\hy$ can be restricted to $Y$, giving rise to maps:
\begin{align*}
R_c&: \SC_{k-\half}(\hy,\bs)\to \SC_{k-\half}(Y, \bs)\\
R_g&: \CG_{k+\half}(\hy,\bs)\to \CG_{k+\half}(Y, \bs). 
\end{align*}

Let $\SC^*(Y,\bs)$ be the irreducible part of $ \SC(Y, \bs)$ and form the quotient configuration space:
\[
\CB^*(Y,\bs)=\SC^*(Y,\bs)/ \im (R_g: \CG(\hy,\bs)\to\CG(Y,\bs)).
\]

Let us now state the separating property enjoyed by $\Pa$:  it is a direct consequence of Proposition \ref{P15.4} and \ref{P15.6} and the proof is omitted here.
\begin{theorem}\label{T15.17} Given a compact subset $K$ of a finite dimensional $C^1$-submanifold $M\subset \CB^*(\hy,\bs)$, suppose the restriction map to the truncated manifold $Y$
	\[
	[R_c]: \CB(\hy,\bs)\to \CB(Y,\bs)
	\]
	gives an embedding of $K$ into $\CB^*(Y,\bs)$. Then we can find a open neighborhood $U$ of $K$ in $M$, a collections of embeddings 
	\[
	\iota_j: S^1\times D^2\embed Y, 1\leq j\leq l
	\]
	and cylinder functions $f_k$ defined using $\iota_k$ such that the product map
	\[
	\Xi'=(f_1,\cdots, f_l): \CB^*(\hy,\bs) \to \R^l
	\]
	gives an embedding of $U$ into $\R^l$. If in addition, a tangent vector $V\in T_{\beta}\CB^*(\hy,\bs)$ at some $\beta\in K$ is given $(V$ is not necessarily tangential to $M)$ and $[r_c]_*(V)\neq 0$, then we can arrange so that 
	\[
	\Xi'_*(V)\neq 0\in T\R^l. 
	\]
\end{theorem}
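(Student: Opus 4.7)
The plan is to extract finitely many cylinder functions on $\SC(\hy,\bs)$ via Propositions \ref{P15.4} and \ref{P15.6}, and then to glue them by a compactness argument. Throughout, I use the fact that each $f \in \Cylin(Y)$ is gauge-invariant and hence descends to a function on $\CB^*(\hy,\bs)$.

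First I would secure injectivity of $\Xi'$ near $K$. For any two distinct classes $[\beta_1] \neq [\beta_2]$ in $K$, the hypothesis that $[R_c]$ embeds $K$ means $R_c(\beta_1)$ and $R_c(\beta_2)$ represent distinct classes in $\CB^*(Y,\bs)$; that is, no $v \in \CG(\hy,\bs)$ satisfies $v(\gamma_1) = \gamma_2$ on $Y$ for any choice of representatives. By the contrapositive of Proposition \ref{P15.4}, some $f \in \Cylin(Y)$ separates these two classes. Using continuity of cylinder functions in the $L^2_{k-1/2}$-topology and compactness of $K \times K$, I would select finitely many $f_1,\dots,f_{N_0} \in \Cylin(Y)$ whose joint map is injective on some open neighborhood $U_0 \supset K$ in $M$.

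Second I would upgrade $\Xi'$ to an immersion near $K$. Fix $\beta \in K$ and a nonzero $v \in T_\beta M$. The embedding hypothesis also forces $[R_c]_* \colon T_\beta M \to T_{[R_c](\beta)} \CB^*(Y,\bs)$ to be injective, so $[R_c]_* v \neq 0$. Since $R_c(\beta)$ is irreducible, its spinor does not vanish on $Y$, so the first alternative of Proposition \ref{P15.6} is excluded; the contrapositive of the proposition then yields some $f \in \Cylin(Y)$ with $df(v) \neq 0$. Using compactness of the unit sphere in the finite-dimensional $T_\beta M$, and then compactness of $K$, I would enlarge the collection to $f_1,\dots,f_l$ so that $d\Xi' = d(f_1,\dots,f_l)$ is injective on $T_\beta M$ for every $\beta$ in some open neighborhood $U \subset U_0$ of $K$.

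Combining the two steps, $\Xi' \colon U \to \R^l$ is an injective $C^1$-immersion, and since $K \subset U$ is compact, shrinking $U$ slightly produces a genuine $C^1$-embedding of a still-smaller neighborhood of $K$, giving the first conclusion. For the additional condition involving $V \in T_\beta \CB^*(\hy,\bs)$ with $[r_c]_* V \neq 0$, one further application of Proposition \ref{P15.6} (noting once more that $R_c(\beta)$ is irreducible) produces $f_{l+1} \in \Cylin(Y)$ with $df_{l+1}(V) \neq 0$, and appending it to the list does not disturb the previously secured properties. The main obstacle is the uniformity built into the compactness step: verifying that the pointwise separations provided by Propositions \ref{P15.4} and \ref{P15.6} can be organized into a single finite collection whose joint map both injects on a full neighborhood of $K$ and has everywhere injective derivative on $TM|_U$, so that an honest embedding, rather than merely a pointwise immersion and injection, emerges.
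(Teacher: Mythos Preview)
Your proposal is correct and is precisely the approach the paper intends: the paper omits the proof entirely, stating only that Theorem~\ref{T15.17} ``is a direct consequence of Proposition~\ref{P15.4} and \ref{P15.6},'' and your argument supplies exactly that deduction via the standard compactness extraction. The one phrasing to tighten is ``its spinor does not vanish on $Y$'': irreducibility only gives $\Psi\not\equiv 0$ on $Y$, which is what you actually need to exclude the first alternative in Proposition~\ref{P15.6}.
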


\section{Compactness for Perturbed Seiberg-Witten Equations}\label{Sec15}

With the Banach space $\Pa$ of tame perturbations defined as in Subsection \ref{Subsec15.5}, we start to analyze the moduli space of perturbed Seiberg-Witten equations. The primary goal of this section is to prove the compactness theorem for solutions on $\R_t\times \hy$. Before that, we have to generalize results from Section \ref{Sec12} and \ref{Sec11} for the perturbed equations.

\subsection{Energy Equations} Choose a tame perturbation $\q=\grad f\in \Pa$ with 
\begin{equation}\label{E1.1}
\|\q\|_{\Pa}<1. 
\end{equation}

For all estimates and theorems below, \eqref{E1.1} will be a standard assumption. Following the notations in Section \ref{S14}, let $I=[t_1, t_2]_t$ and $\hz=I\times (\hy, \bs)$. Consider a solution $\gamma\in \SC_k(\hz)$ to the perturbed Seiberg-Witten equations
\begin{equation}\label{E1.2}
0=\F_{\hz, \q}(\gamma)\colonequals \F_{\hz}(\gamma)+\hq(\gamma). 
\end{equation}
Write $\gamma$ as $(c(t), B(t),\Psi(t))$ where $\cgamma(t)=(B(t),\Psi(t))$ is the underlying path in $ \SC_{k-1/2}(\hy)$. Then the equation \eqref{E1.2} can be cast into the form
\begin{align}\label{E1.3}
\dt\cgamma(t)=-\grad \CL_{\omega}(\cgamma(t))-\bd_{\cgamma(t)} c(t)-\q(\cgamma(t)).
\end{align}

\begin{proposition}\label{P1.1} For any perturbation $\q=\grad f\in \Pa$ with $\|\q\|_\Pa<1$ and any configuration $\gamma=(A,\Phi)$ on $\hz=I\times (\hy, \bs)$, the $L^2$-norm of the perturbed Seiberg-Witten map $\F_{\hz,\q}(A,\Phi)$ can be expressed as 
	\[
	\int_{Z}| \F_{\hz,\q}(A,\Phi)|^2=\E_{an}^\q(A,\Phi)-\E_{top}^\q(A,\Phi)
	\]
	where 
	\begin{align*}
	\E_{top}^\q(A,\Phi)&\colonequals 2\CSd_{\omega}(\cgamma(t_1))-2\CSd_{\omega}(\cgamma(t_2)),\\
	\E_{an}^\q(A,\Phi)&\colonequals\int_I\|\dt \cgamma(t)+d_{\cgamma(t)}c(t)\|^2_{L^2(\hy)}+\|\grad \CSd_{\omega}(\cgamma(t))\|^2_{L^2(\hy)},
	\end{align*}
	and $\CSd_{\omega}=\CL_{\omega}+f$ is the perturbed Chern-Simons functional. Moreover, there exist constants $C_1,C_2>0$ such that 
	\[
	\E_{an}(A,\Phi)<C_1\cdot  \E_{an}^\q(A,\Phi)+C_2,
	\]
	where $\E_{an}$ is the analytic energy defined in Proposition \ref{Energy10.2}.
\end{proposition}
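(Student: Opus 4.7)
The plan has two parts, mirroring the structure of Proposition~\ref{Energy10.2} and then absorbing the perturbation.

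For the energy identity, I would write $\gamma=(A,\Phi)$ in temporal-plus-spatial form as in \eqref{E14.1}, so that $\F_{\hz,\q}(A,\Phi)$ becomes
\[
\F_{\hz,\q}(A,\Phi)=\partial_t\cgamma(t)+\bd_{\cgamma(t)}c(t)+\grad\CL_\omega(\cgamma(t))+\q(\cgamma(t))
\]
at each time slice, where $\cgamma(t)=(B(t),\Psi(t))$. Writing $\grad\CSd_\omega=\grad\CL_\omega+\q$ and expanding the pointwise squared norm, the only subtlety is the cross term
\[
2\int_I\langle\partial_t\cgamma+\bd_{\cgamma}c,\ \grad\CSd_\omega(\cgamma)\rangle_{L^2(\hy)}\,dt.
\]
The $\bd_{\cgamma}c$-contribution vanishes because $\grad\CSd_\omega$ is $L^2$-orthogonal to the gauge directions (the unperturbed gradient obeys the Bianchi-type identity recorded in Proposition~\ref{P10.5}, and $\q=\grad f$ is the gradient of a gauge-invariant function, so it too is perpendicular to $\J$). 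The $\partial_t\cgamma$-contribution is $\tfrac{d}{dt}\CSd_\omega(\cgamma(t))$ by definition of the formal gradient, so integration over $I=[t_1,t_2]$ produces $-\E_{top}^\q$. The identity $\int_Z|\F_{\hz,\q}|^2=\E_{an}^\q-\E_{top}^\q$ follows.

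For the comparison $\E_{an}\leq C_1\E_{an}^\q+C_2$, the first step is the elementary expansion
\[
\|\grad\CL_\omega(\cgamma(t))\|_{L^2(\hy)}^2\leq 2\|\grad\CSd_\omega(\cgamma(t))\|_{L^2(\hy)}^2+2\|\q(\cgamma(t))\|_{L^2(\hy)}^2,
\]
which, combined with the identical time-derivative term in $\E_{an}$ and $\E_{an}^\q$ (taken from Proposition~\ref{Energy10.2}), reduces the problem to bounding $\int_I\|\q(\cgamma(t))\|_2^2\,dt$ by a constant multiple of $\E_{an}^\q$ plus a constant. By the tame-perturbation estimate \ref{AA5} in Definition~\ref{D14.2}, together with the normalisation $\|\q\|_\Pa<1$ and the fact that $\q$ is supported in $Y=\{s\leq 0\}$,
\[
\|\q(\cgamma(t))\|_{L^2(\hy)}^2\leq 2m_2^2\bigl(\|\Psi(t)\|_{L^2(Y)}^2+1\bigr).
\]
Thus I only need a uniform bound on $\int_{I\times Y}|\Phi|^2$.

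The main step, and the one I expect to be the principal technical obstacle, is extracting this bound from $\E_{an}^\q$ itself. The explicit formula for $\|\grad\CL_\omega(B,\Psi)\|_{L^2(\hy)}^2$ in Proposition~\ref{Energy10.2} contains the term $\int_\hy|(\Psi\Psi^*)_0+\rho_3(\omega)|^2$, which dominates $c|\Psi|^4-C$ pointwise for some $c>0$ and a constant $C=C(\omega,g_\Sigma)$, since $\rho_3(\omega)$ is pointwise bounded on $Y$ and $|(\Psi\Psi^*)_0|^2$ is a positive multiple of $|\Psi|^4$. An analogous lower bound holds for $\|\grad\CSd_\omega\|^2$ after subtracting a further $O(\|\q\|^2)$ cross-term. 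Using Young's inequality $\|\Psi\|_{L^2(Y)}^2\leq\epsilon\|\Psi\|_{L^4(Y)}^4+C_\epsilon$ on the compact region $Y$, I can write
\[
\|\q(\cgamma(t))\|_2^2\leq 2m_2^2\epsilon\,\|\Psi(t)\|_{L^4(Y)}^4+C_\epsilon',
\]
and then choose $\epsilon$ small enough that the resulting $\|\Psi\|_{L^4(Y)}^4$ term is absorbed into the positive $|(\Psi\Psi^*)_0+\rho_3(\omega)|^2$ contribution to $\|\grad\CSd_\omega\|^2$ (after rearrangement, as in the absorption argument used for $\E_{an}$ in Lemma~\ref{L11.3}). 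After integrating in $t\in I$, this yields $\int_I\|\q\|_2^2\,dt\leq C_1'\,\E_{an}^\q+C_2'$ and hence the desired $\E_{an}\leq C_1\E_{an}^\q+C_2$; the delicacy is that both the coefficient $c$ in the $|\Psi|^4$ lower bound and the absorption threshold depend on $\omega$, $g_\Sigma$ and the universal constant $m_2$ from \ref{AA5}, but not on $\gamma$ or the choice of $\q$ in the unit ball of $\Pa$.
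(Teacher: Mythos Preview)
Your proposal is correct and follows essentially the same route as the paper. The energy identity is treated exactly as you describe (the paper simply says ``only the last clause requires work''), and for the comparison the paper uses the same three ingredients: the Cauchy--Schwarz expansion $2\E_{an}^\q\geq \E_{an}-2\int_I\|\q\|^2$, the tame bound \ref{AA5}/\ref{F5} giving $\int_I\|\q\|^2\leq 2m_2^2(1+\|\Phi\|_{L^2(I\times Y)}^2)$, and then Lemma~\ref{L11.3} together with the pointwise inequality $\tfrac14|\Phi|^4-C_3|\Phi|^2\geq |\Phi|^2-C_4$ to close the loop---the only organisational difference is that the paper routes the quartic lower bound through $\E_{an}$ itself rather than directly through $\|\grad\CSd_\omega\|^2$, which avoids having to separately discuss the cross term $2\langle\grad\CL_\omega,\q\rangle$.
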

\begin{proof} Only the last clause requires some work. By the Cauchy-Schwartz inequality, we have
	\begin{equation}\label{E1.4}
	2\E_{an}^\q(\gamma)\geq \E_{an}(\gamma)-2\int_I \|\q(\cgamma(t))\|^2_{L^2(\hy)}. 
	\end{equation}
	since $\grad \CSd_{\omega}=\grad \CL_{\omega}+\q$. By the property \ref{F5} from Theorem \ref{T15.14}, 
	\begin{align}\label{E1.6}
	\int_I \|\q(\cgamma(t))\|^2_{L^2(\hy)}&\leq 2m_2^2(1+\|\Phi\|^2_{L^2(I\times Y)}).
	\end{align}
Hence, it remains to estimate $\|\Phi\|^2_{L^2(I\times Y)}$	in terms of $\E_{an}^\q(\gamma)$. Recall from Lemma \ref{L11.3} that
	\begin{align}\label{E1.5}
	\E_{an}(A,\Phi)+C_2'&\geq \int_{I\times \hy}  \frac{1}{8}|F_{A^t}|^2+|\nabla_A\Phi|^2+|(\Phi\Phi^*)_0+\rho_4(\omega^+)|^2+\frac{s}{4}|\Phi|^2,\\
&	\geq  \int_{I\times Y} |(\Phi\Phi^*)_0+\rho_4(\omega^+)|^2+\frac{s}{4}|\Phi|^2.\nonumber
	\end{align}
	for some $C_2'>0$. Combining \eqref{E1.4}\eqref{E1.6}\eqref{E1.5} together, we obtain that 
	\begin{align}\label{E1.7}
	2\E_{an}^\q (\gamma)+C_2''&\geq \int_{I\times Y} \frac{1}{4}|\Phi|^4-C_3 |\Phi|^2\geq \int_{I\times Y} |\Phi|^2-C_4.
	\end{align}
	for some $C_2'', C_3, C_4>0$. This completes the proof.
\end{proof}

Now the proof of Lemma \ref{11.3} and Theorem \ref{11.1} can proceed with no difficulty. Let us record the results for perturbed equations: 

\begin{theorem}\label{T1.1} For any $C,\epsilon>0$, there exists a constant $R_0(\epsilon, C, \hy,\bs)>0$ with the following significance. For any tame perturbation $\q\in \Pa$ with $\|\q\|_{\Pa}<1$,  let $\gamma=(A,\Phi)$ be a solution to the perturbed Seiberg-Witten equations \eqref{E1.3} on $\R_t\times (\hy, \bs)$ with analytic energy $\E_{an}^\q(A,\Phi)<C$. Then for any $n\in \Z$ and $S>R_0$, we have
	\[
	\E_{an}(A,\Phi; \Omega_{n,S})<\epsilon.
	\]
	Here $\Omega_{n,S}\subset \C_z$ is the translated region of $\Omega_0$ defined in \eqref{E2.4}.
\end{theorem}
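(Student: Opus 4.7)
The strategy parallels the unperturbed case in Theorem~\ref{11.1} verbatim at the structural level, exploiting two crucial features: (i)~every tame perturbation $\q\in\Pa$ is supported in $Y=\{s\leq 0\}$, so the perturbed equations \eqref{E1.3} coincide with the unperturbed ones on the planar end $\R_t\times[0,\infty)_s\times\Sigma$; and (ii)~Proposition~\ref{P1.1} gives a uniform bound $\E_{an}\leq C_1\E_{an}^\q+C_2$ independent of the choice of $\q$ subject to $\|\q\|_\Pa<1$, so analytic energy on the planar end is controlled by the given perturbed analytic energy.

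The first step is to establish the perturbed analogue of Lemma~\ref{11.3}. Fix $J=[-3,3]_t\supset I=[-2,2]_t$. Given $\epsilon>0$, I claim there exist $\eta,R_0>0$ so that any solution $\gamma=(A,\Phi)$ of \eqref{E1.3} on $J\times\hy$ with $\E_{an}^\q(\gamma;J)<\eta$ satisfies $\E_{an}(\gamma;\Omega_S)<\epsilon$ for all $S>R_0$. As in the unperturbed proof I argue by contradiction: take sequences $\{\gamma_n\}$, $\eta_n\to 0$, $R_n\to\infty$ violating the conclusion. Proposition~\ref{P1.1} upgrades the bound on $\E_{an}^\q(\gamma_n;J)$ to a uniform bound on $\E_{an}(\gamma_n;J\times[0,\infty)_s)$. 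Translate the configurations by $R_n$ in the $s$-direction. Since $\q$ is supported in $\{s\leq 0\}$, the translated equation on $J\times[-R_n,R_n]_s\times\Sigma$ becomes unperturbed in the limit, and by the classical compactness theorem \cite[Theorem~5.2.1]{Bible} a subsequence converges in $\SC^\infty_{loc}$ to a limit $\beta_\infty$ solving the unperturbed 4-dimensional equations on $J\times\R_s\times\Sigma$. The vanishing of $\E_{an}^\q$ forces (after imposing temporal gauge) $\beta_\infty$ to be $t$-independent, hence a finite-energy solution of the 3-dimensional equations \eqref{3DDSWEQ} on $\R_s\times\Sigma$. By Theorem~\ref{T2.6} (property~\ref{P7}), it is gauge equivalent to $\cgamma_*$, so $\E_{an}(\beta_\infty)=0$, contradicting $\E_{an}(\gamma_n;\Omega_{R_n})\geq\epsilon$.

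Next I run the main contradiction argument as in Theorem~\ref{11.1}. Assume the theorem fails with a sequence $\{\beta_m\}\subset\SC_{k,loc}$, $\E_{an}^\q(\beta_m)<C$, integers $n_m$ and $R_m\to\infty$ with $\E_{an}(\beta_m;\Omega_{n_m,R_m})\geq\epsilon$. Define the significant set $K_m=\{n\in\Z:\E_{an}^\q(\beta_m;J_n)>\eta\}$ with $\eta$ from Step~1; the bound $\E_{an}^\q(\beta_m)<C$ forces $|K_m|<6C/\eta$. After passing to subsequences, arrange that $|K_m|$ is constant and consecutive gaps $|a^m_{i+1}-a^m_i|$ either stay bounded by some $N$ or tend to $\infty$. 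Translate $\beta_m$ by $(-n_m,-R_m)$ and extract a $\SC^\infty_{loc}$ limit $\beta_\infty$ on $\R_t\times\R_s\times\Sigma$. Again, because $\q$ is supported in $\{s\leq 0\}$ and $R_m\to\infty$, the limit satisfies the \emph{unperturbed} equations on $\C\times\Sigma$. The same book-keeping as in Theorem~\ref{11.1}, now invoking the perturbed Step~1 (Lemma~\ref{11.3}) instead of its unperturbed counterpart, yields $\E_{an}(\beta_\infty;\Omega_{j,S})<\epsilon$ whenever $|j|>N$ or $|S|>M$, while $\E_{an}(\beta_\infty;\Omega_{0,0})\geq\epsilon$.

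The final step applies Theorem~\ref{T2.4} to $\beta_\infty$: since $\beta_\infty$ is an unperturbed solution on $\C\times\Sigma$ and its local energy is $<\epsilon_*$ on $\Omega_{n,S}$ for $|n|+|S|\gg 1$ (shrink $\epsilon$ at the outset so $\epsilon<\epsilon_*$), it must be gauge equivalent to the constant configuration $(A_*,\Phi_*)$, contradicting $\E_{an}(\beta_\infty;\Omega_{0,0})\geq\epsilon$. The main obstacle, and the only point where the proof departs from the unperturbed argument, is the bookkeeping needed to ensure that the limit $\beta_\infty$ truly satisfies the unperturbed equations. This is precisely guaranteed by the compact-support requirement~\ref{compactrequirement} built into Definition~\ref{D14.2}\ref{AA5}: under the $s$-translation by $R_m\to\infty$, the support of the perturbation is pushed out to $s=-\infty$ in the limit. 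The uniformity of the constants $C_1,C_2$ in Proposition~\ref{P1.1} over $\|\q\|_\Pa<1$ provides the analytic bounds needed for the extraction of convergent subsequences.
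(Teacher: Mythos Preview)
Your proposal is correct and follows precisely the approach the paper intends. The paper itself does not write out a proof of Theorem~\ref{T1.1}; it simply states that ``the proof of Lemma~\ref{11.3} and Theorem~\ref{11.1} can proceed with no difficulty'' once Proposition~\ref{P1.1} is in hand, and you have accurately supplied those details, correctly isolating the two points where the perturbed argument differs from the unperturbed one: the compact support of $\q$ in $\{s\leq 0\}$ (so the translated limits solve the unperturbed equations and Theorems~\ref{T2.4}, \ref{T2.6} apply), and the uniform comparison $\E_{an}\leq C_1\E_{an}^{\q}+C_2$ from Proposition~\ref{P1.1}.
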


\begin{theorem}\label{T1.2} For any $C>0$, there exist constants $M_0(C, \hy,\bs), \zeta(C, \hy,\bs)>0$ with the following significance. For any perturbation $\q\in \Pa$ with $\|\q\|_{\Pa}<1$, suppose $(A,\Phi)$ is a solution to the perturbed Seiberg-Witten equations $(\ref{E1.3})$ on $\R_t\times (\hy, \bs)$ with analytic energy $\E_{an}^\q(A,\Phi)<C$, then for any $n\in \Z$ and $S>0$
	\[
	\E_{an}(A,\Phi, \Omega_{n,S})<M_0e^{-\zeta S}.
	\]
\end{theorem}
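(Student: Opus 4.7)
The plan is to reduce Theorem \ref{T1.2} to the unperturbed exponential decay result (Theorem \ref{T2.5}) by exploiting the compact support of tame perturbations. Since every $\q\in \Pa$ is supported in $Y=\{s\leq 0\}$ (Theorem \ref{T15.14}, together with our choice $Y'=Y$ in Subsection \ref{Subsec15.5}), the 4-dimensional perturbation $\hq$ vanishes on $\R_t\times (0,\infty)_s\times \Sigma$. Consequently, the restriction of any solution $(A,\Phi)$ of the perturbed equations $\F_{\hz,\q}(A,\Phi)=0$ to the planar end $\HH^2_+\times \Sigma=\R_t\times[0,\infty)_s\times\Sigma$ satisfies the \emph{unperturbed} Seiberg-Witten equations (\ref{4DSWEQ}), so Theorem \ref{T2.5} can be applied there after a coordinate shift.

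To make this precise, let $\epsilon_*>0$ be the constant from Theorem \ref{T2.5} determined by $(g_\Sigma,\lambda,\mu)$, and apply Theorem \ref{T1.1} with this $\epsilon_*$ and the given $C$ to produce $R_0=R_0(\epsilon_*,C,\hy,\bs)>0$ such that
\[
\E_{an}(A,\Phi;\Omega_{n,S})<\epsilon_*,\qquad \forall\, n\in\Z,\ S>R_0.
\]
Introduce the shifted configuration $(A',\Phi')(t,s)\colonequals (A,\Phi)(t,s+R_0+1)$ on $\HH^2_+\times\Sigma$; this solves the unperturbed equations, and the hypothesis of Theorem \ref{T2.5} holds for it uniformly in $n\in\Z$ and $S\geq 0$. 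Theorem \ref{T2.5} then yields a constant $\zeta>0$, depending only on $(g_\Sigma,\lambda,\mu)$, with
\[
\E_{an}(A,\Phi;\Omega_{n,S})=\E_{an}(A',\Phi';\Omega_{n,S-R_0-1})<e^{-\zeta(S-R_0-1)},\qquad \forall\, S\geq R_0+1.
\]
This is the desired exponential decay on the tail, with an overall multiplicative constant $e^{\zeta(R_0+1)}$.

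For the remaining range $0<S\leq R_0+1$, one absorbs the bound into $M_0$. Indeed, by Proposition \ref{P1.1} we have $\E_{an}(A,\Phi)\leq C_1 \E_{an}^\q(A,\Phi)+C_2\leq C_1 C + C_2$, and the integrand of $\E_{an}$ on $\Omega_{n,S}\times\Sigma\subset\{s>0\}$ is non-negative (see Lemma \ref{L11.3} and Remark \ref{rmk-10.2}, since the $\mu$-correction vanishes on the cylindrical end and $\tfrac{s}{4}|\Phi|^2\geq 0$). Therefore $\E_{an}(A,\Phi;\Omega_{n,S})\leq C_1 C+C_2$ on this compact range of $S$, and setting
\[
M_0\colonequals \max\bigl(e^{\zeta(R_0+1)},\ (C_1 C+C_2)e^{\zeta(R_0+1)}\bigr)
\]
produces the bound $\E_{an}(A,\Phi;\Omega_{n,S})<M_0 e^{-\zeta S}$ valid for all $n\in\Z$ and $S>0$.

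The only substantive check is the first paragraph's claim that Theorem \ref{T2.5} applies in the shifted frame: one needs the perturbed solution to actually satisfy the unperturbed equations on a half-space, which is immediate from $\supp\,\hq\subset \R_t\times Y$. Every other ingredient (compactness of perturbed solutions, smallness of local energy) is packaged into Theorem \ref{T1.1}, which was already adapted from Theorem \ref{11.1} using Proposition \ref{P1.1}. There is no genuine obstacle; the reduction is formal once the support condition \ref{AA5} of tame perturbations is invoked, and the rate $\zeta$ is inherited unchanged from Theorem \ref{T2.5} while $M_0$ depends on $C$ through $R_0$.
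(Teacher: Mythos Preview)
Your proposal is correct and follows exactly the approach the paper intends: the paper states Theorem \ref{T1.2} immediately after Theorem \ref{T1.1} as a direct consequence of the unperturbed exponential decay (Theorem \ref{T2.5}), relying on the fact that tame perturbations are supported in $Y=\{s\leq 0\}$ so that the equations are unperturbed on the planar end. Your shift-and-apply argument, together with the absorption of the finite range $0<S\leq R_0+1$ into $M_0$ via Proposition \ref{P1.1}, is precisely the reduction the paper has in mind.
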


\begin{remark}\label{R1.4} The analogous result for the exponential decay in the time direction follows from the standard argument as in \cite[Section 13]{Bible}, assuming the non-degeneracy of critical points (cf. Definition \ref{D18.2}). Indeed, once we obtain the exponential decay of $\CSd_{\omega}$, one starts to estimate the $L^2_1$-norm and $L^2_k$-norm of $(A,\Phi)$ as in Subsection \ref{Subsec12.2}. The proof is omitted here.  
\end{remark}
\subsection{Compactness} 

The next theorem is the analogue of Theorem \ref{T11.1} when $\q\neq 0$. 

\begin{theorem}\label{T1.4} For any perturbation $\q\in \Pa$ with $\|\q\|_\Pa<1$, suppose $\{\gamma_n=(A_n,\Phi_n)\}\subset \SC_{k,loc}(\R_t\times (\hy,\bs))$ is a sequence of solutions to the perturbed Seiberg-Witten equations \eqref{E1.3} on $\R_t\times\hy$ and their analytic energy
	\[
	\E_{an}^\q(\gamma_n)\colonequals 	\E_{an}^\q(\gamma_n, \R_t)<C
	\]
	is uniformly bounded. Then we can find a sequence of gauge transformations $u_n\in \CG_{k+1,loc}(\R_t\times \hy)$ with the following properties. For a subsequence $\{\gamma_n'\}$ of $\{u_n(\gamma_n)\}$ and any finite interval $I\subset \R_t$, the restriction of each $\gamma_n'$ on $I\times \hy$
	\[
	\gamma_n'|_{I\times \hy}
	\]
	lies in $\SC_l(I\times (\hy,\bs))$. Additionally, they converge in $L^2_{l}(I\times \hy)$-topology for any $l>1$. 
\end{theorem}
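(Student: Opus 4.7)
The plan is to follow the same three-stage outline as in the unperturbed case (Theorem \ref{T11.1}): first establish exponential decay of the configurations in the spatial direction, then apply a classical compactness theorem (now for the perturbed equations) on bounded regions, and finally patch together. The crucial observation exploited throughout is that by \ref{compactrequirement} and property \ref{AA5} of Definition \ref{D14.2}, every $\q \in \Pa$ is supported in $Y = \{s \leq 0\}$, so on the planar end $\R_t \times [1,\infty)_s \times \Sigma$ the equations \eqref{E1.3} coincide with the unperturbed Seiberg-Witten equations \eqref{4DSWEQ}. Consequently, the rigidity results of Theorem \ref{T2.4} and \ref{T2.5} (used through Lemmas \ref{L11.5}--\ref{L11.8}) apply on that region verbatim.

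First, I would prove the perturbed analogue of Theorem \ref{11.5}: under the hypotheses of Theorem \ref{T1.4}, and after applying suitable gauge transformations $u_n$, the inequality $\|u_n(\gamma_n) - \gamma_0\|_{L^2_{l, A_0}(\Omega_{n,S} \times \Sigma)} \leq M_l e^{-\zeta S}$ holds for $S \geq S_0$ (say $S_0 = 1$). The input is now Theorem \ref{T1.2} in place of Theorem \ref{11.2}. Since $\Omega_{n,S}$ for $S \geq 1$ lies entirely in the planar end $\{s \geq 0\}$ where the equations are unperturbed, the three-step argument of Theorem \ref{11.5} — uniform Coulomb--Neumann slice via Lemma \ref{L11.5}, $L^2_1$-control via Lemma \ref{L11.6}, and bootstrapping via Lemma \ref{L11.8} — carries over without modification.

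Next, for a finite interval $I \subset \R_t$, say $I = [-2,2]$, I would fix a large $S \gg 1$ and split $I \times \hy$ into the compact piece $I \times Y_S$ (with $Y_S = \{s \leq S\}$) and the tail $I \times (\hy \setminus Y_S)$. On the compact piece, I need a classical $L^2_l$-compactness statement for solutions of the perturbed equation $\F_{\hz,\q}(\gamma) = 0$ with bounded analytic energy $\E_{an}^\q$. This reduces, via Proposition \ref{P1.1} (which bounds $\E_{an}$ by $\E_{an}^\q$) and the mapping properties \ref{A1}--\ref{A3} and \ref{AA6} of $\hq$, to the standard bootstrap of \cite[Theorem 5.2.1]{Bible}: start from a weakly convergent $L^2_1$-subsequence in a Coulomb--Neumann slice on the compact region, use \ref{AA6} to land $\hq(\gamma_n)$ in $L^2_0$ continuously, then use \ref{A1} to improve regularity step by step. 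On the tail, the decay estimate from the previous paragraph already provides $L^2_l$ convergence to $\gamma_0$. A partition-of-unity / patching argument in the spirit of \cite[Section 13.6]{Bible} glues the two pieces together, after correcting the homotopy class of the gauge transformations on the overlap $I \times [S-1,S] \times \Sigma$ (where the exponential decay forces them to eventually lie in a single homotopy class, as in the unperturbed proof).

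The main obstacle is the bootstrap on the compact region: the cylinder-function perturbations do not quite enjoy all the mapping properties of the tame perturbations of \cite{Bible}, and the regularity threshold is tight (cf.\ the remark following Proposition \ref{P15.7} that $k=1$ fails). The property \ref{A3}, which says $\hq: \SC_1(\hz) \to \V_0^{(m)}$ is continuous for $2 \leq m < 4$, is the key ingredient that makes the very first bootstrapping step work, because the Sobolev embedding $L^2_1(I \times Y_S) \hookrightarrow L^{m}$ has $m < 4$ in dimension four. Once past this initial step, \ref{A1} and \ref{A2} supply the higher regularity. Provided this bootstrap is handled carefully, the rest of the argument is routine and parallels Theorem \ref{T11.1}.
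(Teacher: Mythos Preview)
Your proposal is correct and follows essentially the same approach as the paper: exponential spatial decay on the unperturbed planar end (via Theorem \ref{T1.2} and Lemmas \ref{L11.5}--\ref{L11.8}), classical compactness with a perturbed bootstrap on the compact region, and patching. The paper spells out the delicate bootstrap chain more explicitly than you do---weak $L^2_1 \Rightarrow$ strong $L^2_{3/4}$ (compact embedding) $\Rightarrow \hq$ in $L^2$ by \ref{AA6} $\Rightarrow$ strong $L^2_1 \Rightarrow \hq$ in $L^{7/2}$ by \ref{A3} $\Rightarrow L^{7/2}_1 \Rightarrow \hq$ in $L^{7/2}_1$ by \ref{A2} $\Rightarrow L^{7/2}_2 \hookrightarrow L^2_2 \Rightarrow L^2_3$ via \ref{A1}---so note that \ref{AA6}, not \ref{A3}, handles the very first step, and the detour through $L^{7/2}$ is forced precisely because $\hq$ does not map $\SC_1 \to \V_1$.
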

\begin{proof} It suffices to deal with the compact region $I\times Y_1$ where $Y_1=\{s\leq 1\}$ is the truncated 3-manifold. Fix a reducible configuration $\gamma_0'$ on $I\times Y_1$ as reference. The bootstrapping argument works as follows: by passing to a subsequence and applying appropriate gauge transformations, we obtain that
	\begin{align*}
	&\gamma_n-\gamma_0' \text{ bounded in } L^2_1 \Rightarrow \gamma_n\to \gamma_\infty \text{ weakly in } L^2_1 \text{ for some } \gamma_\infty\\
	\Rightarrow & \gamma_n\to \gamma_\infty \text{ in } L^2_{3/4}\Rightarrow \hq(\gamma_n)\to \hq(\gamma_\infty )\text{ in } L^2 \text{ by } \ref{AA6} \text{ with }\epsilon=1/4\\
\Rightarrow&\gamma_n\to \gamma_\infty \text{ in } L^2_1 \text{ on interior domains} \Rightarrow \hq(\gamma_n)\to \hq(\gamma_\infty) \text{ in } L^{7/2} \text{ by } \ref{A3} \\
\Rightarrow&\gamma_n\to \gamma_\infty \text{ in } L^{7/2}_1 \text{ on interior domains} \Rightarrow \hq(\gamma_\infty)\to \hq(\gamma_n) \text{ in } L^{7/2}_1 \text{ by } \ref{A2}\\
\Rightarrow&\gamma_n\to \gamma_\infty \text{ in } L^{7/2}_2 \text{ on interior domains} \Rightarrow \hq(\gamma_n)\to\hq(\gamma_\infty) \text{ in } L^{7/2}_2\embed L^2_2 \text{ by } \ref{A1}\\
\Rightarrow&\gamma_n\to \gamma_\infty \text{ in } L^2_3 \text{ on interior domains} \cdots 
	\end{align*}
Once we arrive at $L^2_3$, one may proceed as in \cite[Theorem 10.7.1]{Bible}. To conclude convergence of $\gamma_n$ on interior domains from  the convergence of $\hq(\gamma_n)$, we use the properness of the Seiberg-Witten map, cf. Theorem \cite[Theorem 5.2.1]{Bible}. 
\end{proof}
\begin{remark} It is not clear to the author whether the $L^2_1$-norm of $\hq(\gamma)$ can be estimated in terms of the $L^2_1$-norm of $\gamma-\gamma_0$, so we adopt a different approach to arrive at the $L^2$-convergence of $\hq(\gamma_n)$, cf. \cite[Theorem 10.7.1]{Bible}.
\end{remark}

\begin{proposition}\label{P1.5} Suppose $\{\q_i\}\subset \Pa$ is a convergent sequence in $\Pa$ with $\|\q_i\|_\Pa<1$ and let $\beta_i\in \SC_k(\hy,\bs)$ be solutions of the equation 
	\[
	(\grad \CL_{\omega}+\q_i)(\beta_i)=0.
	\]
	Then there is a sequence of gauge transformations $u_i\in \CG_{k+1}(\hy)$ such that the transformed solutions $u_i(\beta_i)$ have a convergent subsequence in $\SC_k(\hy,\bs)$. 
\end{proposition}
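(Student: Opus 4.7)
\medskip

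\textbf{Proof proposal for Proposition \ref{P1.5}.} The plan is to mirror the 4-dimensional compactness argument (Theorem \ref{T1.4}) in three dimensions, using the trick of viewing each $\beta_i$ as a $t$-invariant configuration on $[0,1]_t\times\hy$. Since $\beta_i$ is a critical point of $\CSd_{\omega,i}=\CL_\omega+f_i$ (where $\q_i=\grad f_i$), this 4-dimensional lift solves the perturbed Seiberg-Witten equation with vanishing analytic energy $\E_{an}^{\q_i}=0$. The first task is to establish a uniform bound on the 3-dimensional analytic energy. Writing $\grad\CL_\omega(\beta_i)=-\q_i(\beta_i)$ and invoking property \ref{F5} of Theorem \ref{T15.14} yields
\[
\|\grad\CL_\omega(\beta_i)\|_{L^2(\hy)}\le m_2(\|\Psi_i\|_{L^2(Y)}+1).
\]
Plugging this into the last formula of Proposition \ref{Energy10.2} and estimating as in \eqref{E1.5}--\eqref{E1.7} (positivity of $\tfrac{s}{4}|\Psi_i|^2$ on the end plus the dominating $|\Psi|^4$ term from $|(\Psi\Psi^*)_0+\rho_3(\omega)|^2$ on $Y$) gives a uniform bound $\E_{an}(\beta_i)<C$.

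With this in hand, Theorems \ref{T1.1} and \ref{T1.2} applied to the $t$-invariant lifts yield uniform exponential decay of the local energy functional $\E_{an}(\beta_i;\Omega_{n,S})\le M_0 e^{-\zeta S}$. The gauge-fixing machinery of Subsection \ref{Subsec12.2} (Lemmas \ref{L11.5}--\ref{L11.8} and the three-step argument in Theorem \ref{11.5}) then applies verbatim in the 3-dimensional setting: we produce gauge transformations $u_i\in\CG_{k+1}(\hy)$ that place $u_i(\beta_i)$ in Coulomb--Neumann slice on each translate $\Omega_{n,S}\times\Sigma$ of the end, with uniform estimates
\[
\|u_i(\beta_i)-\gamma_0\|_{L^2_{l,A_*}(\Omega_{n,S}\times\Sigma)}\le M_l e^{-\zeta S},
\]
valid for all $l\ge 1$, $n\in\Z$, $S\ge 0$.

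On any compact truncation $Y_S=\{s\le S\}\subset\hy$ this gives a uniform $L^2_k$-bound on $u_i(\beta_i)-\gamma_0$, so by Rellich a subsequence converges weakly in $L^2_k$ and strongly in $L^2_{k-1}$. To upgrade to $L^2_k$-strong convergence on interior compact domains, I would run the bootstrap of Theorem \ref{T1.4} on the 3-dimensional elliptic equation $(\grad\CL_\omega+\q_i)(u_i(\beta_i))=0$, exploiting both the tameness properties \ref{A1}--\ref{A7} and the hypothesis that $\q_i\to\q_\infty$ in $\Pa$: by Corollary \ref{C14.18}, $\hq_i\to\hq_\infty$ in $C^l$ on any bounded subset of $\SC_k$, which combined with \ref{A3}, \ref{A2}, \ref{A1} delivers the joint convergence $\q_i(u_i(\beta_i))\to\q_\infty(\gamma_\infty)$ at each stage of the bootstrap. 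A patching argument (as at the end of Theorem \ref{T11.1}) combines the $L^2_k$-convergence on an exhaustion of compact subsets with the uniform exponential decay estimate on the end to conclude $L^2_k$-convergence on all of $\hy$.

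The main obstacle I anticipate is the joint bootstrap with varying perturbations: one must be careful that the estimates coming from \ref{A1}--\ref{A7} depend only on $\|\q_i\|_\Pa<1$ (which is uniform) and on Sobolev norms of the configurations, so that the convergence $\q_i\to\q_\infty$ can be leveraged at each regularity step without losing control. This is essentially guaranteed by the smoothness of the evaluation map in \ref{F3}--\ref{F5}, but writing it out carefully is the technical heart of the proof.
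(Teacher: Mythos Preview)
Your proposal is correct and follows essentially the same route as the paper: both reduce to the argument of Theorem \ref{T1.4}, with the only new ingredient being the joint continuity of $(\lambda,\gamma)\mapsto\q^\lambda(\gamma)$ needed to push the bootstrap through varying perturbations. The paper's proof is much terser --- it simply cites \ref{F3}\ref{F4}\ref{F5} from Theorem \ref{T15.14} as the replacement for \ref{A1}--\ref{AA6} at each regularity step --- whereas you route through Corollary \ref{C14.18} and the single-perturbation properties \ref{A1}--\ref{A7}; the joint smoothness statements \ref{F3}--\ref{F5} are the cleaner reference since they directly handle the intermediate regularities ($L^2_{3/4}$, $L^{7/2}_k$) where Corollary \ref{C14.18} alone does not apply.
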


\begin{proof} The proof follows the same line of argument of Theorem \ref{T1.4}. To conclude the convergence of 
	\[
	\q_i(\beta_i)\to \q_\infty(\beta_\infty),
	\] 
	use \ref{F3}\ref{F4}\ref{F5} from Theorem \ref{T15.14}. 
\end{proof}
\part{Linear Analysis}\label{Part5}

Over the non-compact manifold $\hy$, the inclusion map
\[
L^2_{k+1}(\hy)\embed L^2_{k}(\hy)
\]
is no longer compact. As a result, the spectrum of the extended Hessian of the Chern-Simons-Dirac functional $\CSd_{\omega}$, as a unbounded self-adjoint operator, is not discrete.

The goal of this part to understand the essential spectrum of extended Hessians and show that it is disjoint from the origin, in which case one can still speak of the spectrum flow. Moreover, we will show the linearization of the Seiberg-Witten equations together with the linearized gauge fixing equation form a Fredholm operator on the complete Riemannian 4-manifolds $\R_t\times \hy$ and $\CX$; so we have a well-posed moduli problem. 

Part \ref{Part5} is organized as follows. In Section \ref{Sec9}, we review an abstract formalism of spectral flow following the work of Robbin-Salamon \cite{RS95}. In Section \ref{Sec17} we collect some criterion from functional analysis that computes the essential spectrum following the textbook \cite{HS96} by Hislop and Sigal. These results will be applied to the extended Hessian $\EHess$ of $\CSd_{\omega}$ in Section \ref{Sec18}.  The key observation here is that $\EHess$ can be cast into the form (up to a compact perturbation):
\[
\sigma(\ps+D_\Sigma): \Gamma(\R_s\times \Sigma, E)\to \Gamma(\R_s\times \Sigma, E)
\]
such that $\sigma^2=-\Id_E$ and $D_\Sigma: \Gamma(\Sigma, E)\to \Gamma(\Sigma, E)$ is a first order self-adjoint operator that anti-commutes with $\sigma$, i.e.
\[
\sigma D_\Sigma+D_\Sigma\sigma=0.
\]
This observation was due to Yoshida \cite{Yoshida91}. A short discussion in the context of the gauged Witten equations can be found in \cite[Subsection 4.2]{Wang202}.

Section \ref{Sec19} and \ref{Sec22} are devoted to the linearization of the Seiberg-Witten map on $\R_t\times \hy$ and $\CX$ respectively. We will study the Fredholm property and the Atiyah-Patodi-Singer boundary value problem following the book \cite[Section 17]{Bible}.

\section{Spectral Flow and Fredholm Index}\label{Sec9}

In the section, we summarize the axioms that characterize the spectral flow. Let us first introduce a few notations before we state the main result: Theorem \ref{T16.1}.

Let $H_0$ be a real separable Hilbert space and $\A_0:H_0\to H_0$ be a self-adjoint operator with domain $W_0\colonequals D(\A_0)$ dense in $H_0$. We assume that $0$ does not lie in the essential spectrum of $\A_0$: 
\begin{equation}\label{E16.1}
0\not\in \sigma_{ess}(\A_0).
\end{equation}

$W_0$ becomes a Hilbert space with respect to the graph norm
\[
\|x\|_{W_0}^2\colonequals \|\A_0 x\|_{H_0}^2+\|x\|_{H_0}^2,\ \forall x\in W_0.
\] 
The inclusion map $W_0\embed H_0$ is \textbf{not} assumed to be compact, so $\sigma_{ess}(\A_0)$ might be non-empty. A pair $(W,H)$ of Hilbert spaces is called admissible if one can find a finite dimensional space $V=\R^n$ such that 
\[
W=W_0\oplus V,\ H=H_0\oplus V. 
\]

A symmetric operator $\A: W\to H$ is called admissible if one can find a symmetric \textbf{compact} operator $K: W\to H$ such that 
\[
\A=\begin{pmatrix}
\A_0 & 0\\
0& 0
\end{pmatrix}+K. 
\] 

By the Kato-Rellich theorem, $\A$ is self-adjoint with domain $D(\A)=W$. Let $\SL_{sym}(W,H)$ be the affine space of all admissible operators between $(W,H)$. It is topologized using the operator norm on the compact perturbation $K$. Let $\SB(\R,W,H)$ be the space of continuous maps $\A:\R\to \SL_{sym}$ such that the limits 
\[
\A^\pm=\lim_{t\to\pm\infty} \A(t): W\to H
\]
exist. The $\SC^k$-distance between two paths $\A_1$ and $\A_2$ is defined as 
\[
d_k(\A_1,\A_2)\colonequals \sup_{t\in \R}\sum_{0\leq j\leq k}\|\frac{d^j}{dt^j}(\A_1(t)-\A_2(t))\|_{W\to H}.
\]
Denote by $\SB^k(\R,W,H)\subset \SB(\R,W,H)$ be the subspace consisting of paths having finite $\SC^k$-distance with a constant path, endowed with $\SC^k$-topology. Note that $\SB^0(\R,W,H)=\SB(\R,W,H)$. Finally, define an open subset 
\[
\SA=\SA(\R,W,H)\colonequals \{\A\in\SB(\R,W,H): \A^\pm\text{ invertible}\}
\]
and set $\SA^k=\SA\cap \SB^k$. Given paths $\A,\A_l,\A_r\in \SA(\R,W,H)$ such that  $\A_l(t)=\A(0)=\A_r(-t),\ t\geq 0$, $\A$ is said to be the catenation of $\A_l$ and $\A_r$ and write
\[
\A=\A_l\#\A_r
\]
 if 
\[
\A(t)=\left\{\begin{array}{ll}
\A_l(t) & \text{ if } t\leq 0\\
\A_r(t)& \text{ if } t\geq 0
\end{array}
\right.
\] 
Given any two reference operators $(\A_{01}, W_{01}, H_{01})$ and $(\A_{02}, W_{02}, H_{02})$ satisfying the condition $\eqref{E16.1}$ and any two paths $\A_i\in \SA(\R,W_i, H_i),\ i=1,2$, one can form the direct sum
\[
\A_1\oplus \A_2\in \SA(\R,W_1\oplus W_2, H_1\oplus H_2). 
\]

Let us now state the axioms that characterize the spectrum flow along a path $\A\in \SA(\R, W,H)$. 

\begin{theorem}[cf. {\cite[Theorem 4.23]{RS95}}]\label{T16.1} For any reference operator $(\A_0, W_0,H_0)$ satisfying the condition \eqref{E16.1} and any finite dimensional auxiliary space $V$, there exists a unique map
	\[
	\mu: \SA(\R,W,H)\to \Z
	\]
	satisfying the following axioms 
	\begin{itemize}
\item $($Homotopy$)$ $\mu$ is constant on the connected components of $\SA(\R,W,H)$;
\item $($Constant$)$ If $\A$ is a constant path, then $\mu(\A)=0$;
\item $($Direct Sum$)$ $\mu(\A_1\oplus \A_2)=\mu(\A_1)+\mu(\A_2)$;
\item$($Catenation$)$ If $\A=\A_r\#\A_r$, then $\mu(\A)=\mu(\A_l)+\mu(\A_r)$;
\item $($Normalization$)$ For $W=H=\R$ and $\A(t)=\arctan(t)$, $\mu(\A)=1$. 
	\end{itemize}

The integer $\mu(\A)$ is called the spectral flow of $\A\in \SA(\R, W, H)$. 
\end{theorem}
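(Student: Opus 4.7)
The plan is to follow the Robbin--Salamon strategy \cite{RS95}: construct $\mu$ explicitly via finite-rank spectral projections, verify the five axioms, and obtain uniqueness by deforming an arbitrary path into a catenation of elementary scalar $\arctan$-paths. The decisive input is that admissibility---$\A(t)-\A_0\oplus 0_V$ compact as a bounded operator $W\to H$---together with $0\notin\sigma_{ess}(\A_0)$ and Weyl's theorem on invariance of essential spectrum under compact perturbation yields a uniform spectral gap: there exists $\delta_*>0$ such that $[-\delta_*,\delta_*]\cap\sigma_{ess}(\A(t))=\emptyset$ for every $t$, so the spectral projection $E_{(-\delta,\delta)}(\A(t))$ is of finite rank whenever $\delta\in(0,\delta_*)$ and $\pm\delta\notin\sigma(\A(t))$.

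\textbf{Construction.} For $\A\in\SA(\R,W,H)$, the invertibility of $\A^\pm$ picks out $T>0$ and $a_\pm\in(0,\delta_*)$ with $[-a_\pm,a_\pm]\cap\sigma(\A(t))=\emptyset$ for $t\geq T$ or $t\leq -T$ respectively. On the compact interval $[-T,T]$, continuity of $\A$ in $\SL_{sym}(W,H)$ yields a partition $-T=s_0<s_1<\cdots<s_N=T$ and numbers $a_j\in(0,\delta_*)$ such that $\pm a_j\notin\sigma(\A(t))$ for $t\in[s_{j-1},s_j]$. Set
$$\mu(\A):=\sum_{j=1}^N\Bigl(\dim E_{[0,a_j)}(\A(s_j))-\dim E_{[0,a_j)}(\A(s_{j-1}))\Bigr).$$
Independence of the choices of partition and of the $a_j$'s follows from continuity of the finite-rank spectral projection $E_{(-a,a)}$ through a gap. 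The Constant, Catenation and Direct Sum axioms are then immediate from this formula; Homotopy invariance follows from the continuity and finite rank of the projections; and Normalization reduces to a direct calculation for the scalar path $\arctan(t)$.

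\textbf{Uniqueness.} Given any other $\mu'$ satisfying the axioms, one deforms $\A$ within its component of $\SA$ into a catenation $\A_1\#\cdots\#\A_m$ in which each $\A_j$ is the Direct Sum of a constant admissible operator on $H_0$ with a one-dimensional $\pm\arctan(t-s_j)$ factor. The deformation is carried out by splitting $H=H_{-}(t)\oplus H_{\mathrm{gap}}(t)\oplus H_{+}(t)$ along the spectral projections of $\A(t)$ onto $(-\infty,-\delta_*)$, $[-\delta_*,\delta_*]$, $(\delta_*,+\infty)$, trivializing $H_{\mathrm{gap}}(t)$ as a continuous finite-rank subbundle of $H$, and further reducing the finite-dimensional self-adjoint family $\A(t)|_{H_{\mathrm{gap}}(t)}$ to elementary one-dimensional blocks. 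By Direct Sum, Catenation and Normalization, $\mu'(\A)$ is forced to equal the integer produced by the construction, giving uniqueness.

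\textbf{The hard part.} The main technical obstacle is carrying out the reduction continuously in $t$ when the essential spectrum of $\A(t)$ is nonempty: one must verify that $E_{[-\delta_*,\delta_*]}(\A(t))$ defines a continuous finite-rank subbundle of $H$ over $\R_t$, that $\A(t)$ restricts to it as a continuous path of finite-dimensional self-adjoint operators, and that the complementary part of $\A(t)$ remains uniformly invertible. All three rely on the uniform gap $[-\delta_*,\delta_*]\cap\sigma_{ess}(\A(t))=\emptyset$, which is exactly where the hypothesis $0\notin\sigma_{ess}(\A_0)$ enters decisively; without it, the spectral projection $E_{(-\delta,\delta)}(\A(t))$ could become infinite-rank and the integer-valued invariant $\mu$ would not even be well defined.
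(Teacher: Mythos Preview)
Your proposal is correct and follows a genuinely different route from the paper. The paper defines $\mu$ via intersection theory: it stratifies $\SL_{sym}(W,H)$ by the submanifolds $\SL_k=\{\A:\dim\ker\A=k\}$ of codimension $k(k+1)/2$, perturbs an arbitrary path $\A$ to a $\SC^1$-path $\A'$ transverse to each $\SL_k$, and declares $\mu(\A)$ to be the algebraic intersection number of $\A'$ with the hypersurface $\SL_1$. Your construction instead uses the Phillips-style spectral counting formula, exploiting the uniform essential-spectrum gap directly via finite-rank spectral projections $E_{(-a,a)}(\A(t))$.

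Both approaches are standard and both ultimately rest on the same input, namely that $0\notin\sigma_{ess}(\A(t))$ for all $t$. Your approach is arguably more elementary, since it avoids the transversality machinery and the Banach-manifold structure of the strata $\SL_k$; it also makes the finite-dimensionality of the relevant spectral data completely explicit. The paper's approach is more geometric and connects directly to the crossing-form picture in \cite{RS95}. One small caution in your uniqueness sketch: the global splitting $H=H_-(t)\oplus H_{\mathrm{gap}}(t)\oplus H_+(t)$ requires $\pm\delta_*\notin\sigma(\A(t))$ for all $t$ simultaneously, which need not hold for a single $\delta_*$; the standard fix is to carry out the reduction locally on subintervals (as you already do in the construction) and invoke Catenation, rather than attempting one global trivialization.
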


\begin{proof} The proof follows the same line of argument as \cite[Theorem 4.23]{RS95}. The idea for existence works as follows. Define
	\[
	\SL_k=\{\A\in \SL_{sym}(W,H): \dim\ker \A=k\},
	\]
	then $\SL_k$ is a smooth Banach submanifold of $\SL_{sym}$ of real co-dimension $k(k+1)/2$. For any path $\A\in \SA$, find a $\SC^1$-path $\A'\in\SA^1$ that is homotopic to $\A$ and intersects each $\SL_k,\ k\geq 1$ transversely. Then $\mu(\A)$ is defined as the algebraic intersection of $\A'$ with $\SL_1$. For details, see \cite{RS95}. 
\end{proof}

There is another way to think of the spectral flow. For any path $\A\in \SA^k$, define the differential operator:
\begin{align*}
D_{\A}: \W_k\colonequals L^2_k(\R, W)\cap L^2_{k+1}(\R, H)&\to L^2_k(\R, H)\\
\xi(t)&\mapsto \dt \xi(t)+\A(t)\xi(t),
\end{align*}
where the $\W_k$-norm is defined as 
\[
\|\xi\|_{\W_k}^2=\int_\R \bigg(\sum_{0\leq j\leq k+1}\|\frac{d^j}{dt^j} \xi\|^2_H+\sum_{0\leq j\leq k}\|\frac{d^j}{dt^j} \xi\|^2_W\bigg) dt \text{ for all } \xi\in C^\infty_0(\R, W). 
\]
\begin{theorem}[cf. \cite{RS95} Theorem 3.12]\label{T16.2}
	For any $k\geq 0$ and any $\A\in \SA^k$ such that 
	\[
	\A(t)\to \A^\pm \text{ in } \SC^k_{loc}\text{-topology as } t\to \pm \infty,
	\]
	then $D_{\A}: \W_k\to L^2_k(\R,H)$ is a Fredholm operator of the index $\mu(\A)$. 
\end{theorem}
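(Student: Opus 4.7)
\medskip

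\noindent\textbf{Proof proposal for Theorem \ref{T16.2}.} The plan is to follow the standard Robbin--Salamon strategy, using the axiomatic characterization in Theorem \ref{T16.1} to compute the index, but taking extra care because the inclusion $W \hookrightarrow H$ is \emph{not} compact in the present setting. Accordingly, every compactness step must be justified using only Rellich--Kondrachov compactness in the $t$-variable.

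\medskip

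\noindent\emph{Step 1: Invertibility for constant, invertible $\A$.} If $\A(t)\equiv \A_\infty$ with $\A_\infty:W\to H$ invertible and self-adjoint, then Fourier transform in $t$ converts $D_{\A}$ into multiplication by $i\tau+\A_\infty$. Self-adjointness and invertibility give the pointwise estimate
\[
\|(i\tau+\A_\infty)x\|_H^{\,2}=\tau^2\|x\|_H^{\,2}+\|\A_\infty x\|_H^{\,2}\geq c\,(1+\tau^2)\bigl(\|x\|_H^{\,2}+\|\A_\infty x\|_H^{\,2}\bigr),
\]
which upgrades by functional calculus to a bounded inverse $L^2_k(\R,H)\to \W_k$. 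Hence $D_{\A}$ is invertible in the constant case. A parametrix patching argument, gluing the right/left inverses of the constant paths $\A^\pm$ on the two ends via cut-off functions, produces approximate inverses $Q_\pm$ of $D_{\A}$ for any $\A\in\SA^k$.

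\medskip

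\noindent\emph{Step 2: Fredholm estimate.} Choose $T$ so large that $\|\A(t)-\A^\pm\|_{W\to H}$ is smaller than the norm of the inverse from Step 1 whenever $\pm t\geq T$. Gluing gives a global pseudo-inverse $Q:L^2_k(\R,H)\to\W_k$ with $Q D_\A=I-R_1$ and $D_\A Q=I-R_2$, where $R_1$, $R_2$ are operators with support in $[-T,T]$. The key observation is that although $W\hookrightarrow H$ is not compact, the embedding
\[
L^2_{k+1}([-T,T],H)\hookrightarrow L^2_k([-T,T],H)
\]
\emph{is} compact by Rellich--Kondrachov applied in the $t$-direction alone. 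Since $\W_k\subset L^2_{k+1}(\R,H)$, the remainders $R_1,R_2$ factor through this compact embedding and are therefore compact. This proves that $D_{\A}$ is Fredholm.

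\medskip

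\noindent\emph{Step 3: Identification of $\ind(D_\A)$ with $\mu(\A)$.} The plan here is to verify that $\A\mapsto \ind(D_\A)$ satisfies the five characterizing axioms of Theorem \ref{T16.1}, so that the uniqueness statement forces the equality $\ind(D_\A)=\mu(\A)$. Homotopy invariance is standard for families of Fredholm operators varying continuously in the operator norm, and the continuity in $\A\in\SA^k$ of the induced $D_\A:\W_k\to L^2_k(\R,H)$ is straightforward since multiplication by $\A(t)-\A'(t)$ is bounded by $d_0(\A,\A')$. The \emph{Constant} axiom follows from Step 1 together with the fact that if $\A^\pm$ is not invertible a constant path does not even belong to $\SA$. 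The \emph{Direct Sum} axiom is immediate from $D_{\A_1\oplus \A_2}=D_{\A_1}\oplus D_{\A_2}$. The \emph{Catenation} axiom is the familiar gluing formula for indices of Dirac-type operators on cylinders: stretching the neck in the catenation $\A_\ell\#\A_r$ produces a degeneration for which the index behaves additively, proved by a linear gluing/excision argument using Step 1's invertibility on the neck. Finally, for the \emph{Normalization} case $W=H=\R$, $\A(t)=\arctan t$, a direct ODE calculation shows that $\ker D_\A$ is spanned by $\exp(-\int_0^t\arctan s\,ds)$ while the cokernel is trivial, giving $\ind D_\A=1$.

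\medskip

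\noindent\emph{Main obstacle.} The delicate point, and where the present setting departs from the usual textbook treatment, is Step 2: one cannot appeal to compactness of $W\hookrightarrow H$ to make the remainders $R_1,R_2$ compact, since $\A_0$ has essential spectrum. The argument must instead exploit that $\W_k$ carries one extra $t$-derivative beyond $L^2_k(\R,W)$, so that Rellich in the $t$-direction alone suffices. Everything else, including the catenation/gluing formula used in Step 3, is a routine adaptation of the standard arguments once the constant-coefficient invertibility of Step 1 is in place.
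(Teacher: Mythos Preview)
Your Step~2 contains a genuine gap. The embedding
\[
L^2_{k+1}([-T,T],H)\hookrightarrow L^2_k([-T,T],H)
\]
is \emph{not} compact when $H$ is infinite-dimensional: take an orthonormal basis $\{e_n\}$ of $H$ and consider the constant functions $f_n(t)=e_n$, which are bounded in $L^2_{k+1}$ but admit no convergent subsequence in $L^2_k$. So ``Rellich in the $t$-direction alone'' cannot produce compactness of the remainders. (There is also a secondary inaccuracy: the remainders in a parametrix patching are not compactly supported in $t$ in general, since $\A(t)-\A^\pm$ need only tend to zero, not vanish, as $t\to\pm\infty$.)

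The missing ingredient, which the paper uses and you have overlooked, is that by the very definition of $\SL_{sym}(W,H)$ each difference $K^\pm(t)\colonequals\A(t)-\A^\pm$ is a \emph{compact} operator $W\to H$ for every $t$, converging to zero in $\SC^k_{loc}$. The paper then proves a lemma (an adaptation of \cite[Lemma~3.18]{RS95}): if $K(t):W\to H$ is a $\SC^k$-family of compact operators tending to zero, then multiplication by $K$ is compact from $\W_k$ to $L^2_k(\R,H)$. The proof approximates $K$ first by step functions $\sum_j\chi_{I_j}K^{(j)}$ with each $K^{(j)}$ compact, and then approximates each $K^{(j)}$ by finite-rank operators; in the finite-rank case the operator factors through $L^2_1(I_j,U)\to L^2(I_j,U)$ with $U=\im K^{(j)}$ finite-dimensional, and only now does Rellich in $t$ apply. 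This is precisely where the compactness built into the definition of $\SA$ is consumed.

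A minor remark on Step~3: the paper avoids a direct gluing proof of the catenation axiom by invoking \cite[Proposition~4.26]{RS95}, which shows that catenation is a formal consequence of the homotopy, direct sum, and constant axioms. Your proposed neck-stretching argument would also work, but is unnecessary.
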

\begin{proof} As our situation is slightly simpler than \cite[Theorem 3.12]{RS95}, we present a direct proof using parametrix patching argument. The theorem holds when $\A(t)\equiv \A^+$ is a constant path and $\A^+$ is invertible. Indeed, 
	\begin{align*}
	\|(\dt+\A^+)\xi\|_{L^2_k(\R, H)}^2&=\sum_{0\leq j\leq k}\int_\R \|\frac{d^j}{dt^j}(\dt+\A^+)\xi\|_H^2=\sum_{0\leq j\leq k}\int_\R \|\frac{d^{j+1}}{dt^{j+1}}\xi\|_H^2+\|\A^+(\frac{d^j}{dt^{j}}\xi)\|_H^2\\
	&=\sum_{0\leq j\leq k}\int_\R \|\frac{d^{j+1}}{dt^{j+1}}\xi\|_H^2+\|\frac{d^j}{dt^{j}}\xi\|_W^2\gtrsim \|\xi\|_{\W_k}^2. 
	\end{align*}
	
	In general, let $
	\A^\pm=\lim_{t\to\pm\infty} \A(t)$ be the limiting operators of $\A$ and $Q^\pm: L^2_k(\R, H)\to \W_k$ be the inverse of $D_{\A^\pm}$. Choose cut-off functions $\beta_\pm$ on $\R_t$ such that 
	\begin{itemize}
\item $\beta_-+\beta_+=1$;
\item $\beta_+(t)\equiv 1$ when $t>1$;  $\beta_+(t)\equiv 0$ when $t<-1$.
	\end{itemize}

Take $Q_L=Q^-\beta_-+Q^+\beta_+$ and  $K^\pm=D_{\A}-D_{\A^\pm}=\A-\A^\pm$. We compute:
\begin{align*}
Q_LD_A&=Q^-D_A\beta_-+Q^-[\beta_-, D_A]+ Q^+D_A\beta_++Q^+[\beta_+, D_A]\\
&=\Id_{\W_k}+Q^-(K^-\beta_-)+Q^+(K^+\beta_+)+(Q^+-Q^-)\pt\beta_-\\
&=\Id_{\W_k}+Q^-(K^-\beta_-)+Q^+(K^+\beta_+)-Q^+((\A^+-\A^-)\pt\beta_-)Q^-. 
\end{align*}

(For the right parametrix, take $Q_R=\beta_- Q^-+\beta_+Q^+$).

To show that each error term gives arise to a compact operator, apply the next lemma to operators:
\[
K^-\beta_-, K^+\beta_+ \text{ and }(\A^+-\A^-)\pt\beta_-. 
\]
\begin{lemma}[{\cite[Lemma 3.18]{RS95}}] For any $k\geq 0$, suppose $K(t): W\to H$ is a $\SC^k$-family of compact operators that converges to zero in $\SC^k_{loc}$-topology as $t\to\pm\infty$, i.e. 
\[
\lim_{t\to\pm\infty} \|K(t+\cdot)\|_{\SC^k([-1,1])}=0. 
\]
Then the multiplication operator $K_*: \xi(t)\mapsto K(t)\xi(t) $ is compact from $\W_k$ to $L^2_k(\R, H)$. 
\end{lemma}
\begin{proof} [Proof of the Lemma] We follow the argument of \cite[Lemma 3.18]{RS95}. It suffices to show the operator $\xi(t)\mapsto \frac{d^j}{dt^j} (K(t)\xi(t))$ is compact from $\W_k$ to $L^2(\R, H)$ for any $0\leq j\leq k$. This reduces the problem to the case when $k=0$. 
	
	Let $\Comp(W,H)$ be the space of compact operators from $W$ to $H$. The function $K: \R\to \Comp(W,H)$ can be approximated in $\SC^0$-topology by linear combinations of characteristic functions. Each approximation $K_n$ is a finite sum 
	\[
	\sum_{j=0}^n\chi_{I_j} K_n^{(j)}
	\]
	where $\chi_{I_j}$ is the characteristic function  of a finite interval $I_j\subset \R$ and $K_n^{(j)}\in \Comp(W,H)$ is a compact operator. As $(K_n)_*\to K_*$ in the norm topology, it suffices to prove each $(K_n)_*$ is compact. We reduce to the case when $K=\chi_{I_1} K^{(1)}$ consists of a single term. 
	
	The final step is to approximate $ K^{(1)}$ by a sequence of finite rank operators. When $ K^{(1)}$ is a finite rank operator, $K_*$ is the composition of three operators:
	\[
	\W_0\xrightarrow{K_*} L^2_1(I_1, U)\to L^2(I_1, U)\to L^2(\R, H),
	\]
	where $U=\im  K^{(1)}$ is a finite dimensional real vector subspace of $H$, so the middle map is compact. This completes the proof of the lemma. 
\end{proof}

Back to the proof of Theorem \ref{T16.2}. To prove $\ind(D_\A)=\mu(\A)$, it remains to verify the assignment $\A\mapsto \ind(D_\A)$ satisfies all axioms of spectral flow in Theorem \ref{T16.1} when $k=0$. Only the catenation axiom is not obvious. However, by \cite[Proposition 4.26]{RS95}, the catenation axiom follows from the homotopy, direct sum and constant axioms. This completes the proof of Theorem \ref{T16.2}
\end{proof}

\section{Essential Spectrum}\label{Sec17}

To apply the general theory from the previous section, it is important to verify the condition \eqref{E16.1} for operators of interest. In this section, we discuss a class of model operators following the setup of \cite{Yoshida91}. The main result is Proposition \ref{P17.2}. This general formalism will be applied to the extended Hessians $\EHess$ of $\CSd_\omega$ in the next section. 

Recall that $\hy=Y\cup [-1,\infty)_s\times \Sigma$ is a 3-manifold with cylindrical ends. Suppose $E\to \hy$ is a real vector bundle over $\hy$ such that 
\[
E|_{ [-1,\infty)_s\times \Sigma}=\pi^* E_0
\]
and $E_0\to \Sigma$ is a vector bundle over $\Sigma$. Here $\pi: [-1,\infty)_s\times \Sigma\to \Sigma$ is the projection map. Bundles $E$ and $E_0$ are endowed with Riemannian metrics. We investigate a special class of first order differential operators 
\[
D_Y: C^\infty_0(\hy, E)\to C^\infty_0(\hy, E);
\]
satisfying the following constraints on $D_Y$: 
\begin{itemize}
\item $D_Y$ is elliptic and symmetric with respect to the $L^2$-inner product;
\item $D_Y=\sigma(\ds+D_\Sigma)$ on the cylindrical end $[-1,\infty)_s\times \Sigma$, where
\item $\sigma: E_0\to E_0$ is skew-symmetric bundle map of $E_0\to \Sigma$, i.e. $\sigma+\sigma^*=0$; moreover, $\sigma^2=-\Id_{E_0}$;
\item $D_\Sigma: C^\infty(\Sigma, E_0)\to C^\infty(\Sigma, E_0)$ is a first order self adjoint elliptic differential operator; moreover, $D_\Sigma$ anti-commutes with $\sigma$, i.e. $\sigma D_\Sigma+D_\Sigma \sigma=0$.
\end{itemize}

\begin{example} The simplest example of $D_Y$ is the Dirac operator. Let $E=S$ be the spin bundle and $D_Y=\sum_{1\leq i\leq 3}\rho_3(e_i)\nabla^B_{e_i}$ for some \spinc connection $B$. On the cylindrical end $[-1,\infty)_s\times \Sigma$, we require $B$ to take the form
	\[
	B=\ds+\cB
	\]
	for some \spinc connection $\cB$ on $\Sigma$. Set $\sigma=\rho_3(ds)$ on $[-1,\infty)_s\times \Sigma$.
\end{example}
\begin{proposition}\label{P17.2} Under above assumptions, $D_Y$ is a unbounded self-adjoint operator on $L^2(\hy, E)$ with domain $L^2_1(\hy, E)$. Moreover, the essential spectrum $\sigma_{ess}$ of $D_Y$ is 
	\[
	(-\infty, -\lambda_1]\cup [\lambda_1, \infty)
	\]
	where $\lambda_1$ is the first non-negative eigenvalue of $D_\Sigma$. In particular, if $D_\Sigma$ is invertible, then $0\not\in \sigma_{ess}(D_Y)$. 
\end{proposition}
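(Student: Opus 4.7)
\medskip

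\textbf{Proof plan.} My plan is to split the argument into three logically distinct pieces: (i) essential self-adjointness with the claimed domain, (ii) reduction of the essential spectrum to a model problem on the half-cylinder via a decomposition/Persson-type principle, and (iii) explicit diagonalization of the model on $\R_s\times\Sigma$ using the anti-commutation $\sigma D_\Sigma+D_\Sigma \sigma=0$.

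For (i), I would invoke the standard theorem that a first order symmetric elliptic operator with smooth coefficients on a complete Riemannian manifold, whose principal symbol is uniformly bounded, is essentially self-adjoint on $C^\infty_c(\hy,E)$ (Wolf/Chernoff; the propagation-speed argument goes through verbatim since on the end $D_Y=\sigma(\ps+D_\Sigma)$ has bounded symbol). Elliptic regularity then identifies the domain of the closure with the maximal domain, and the fact that $D_Y$ is a uniformly elliptic first-order system on a cylindrical-end manifold gives $\mathrm{Dom}(D_Y)=L^2_1(\hy,E)$ by a partition of unity argument (interior elliptic estimates plus the translation-invariant estimate on the end).

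For (ii), the heart of the computation, I would use Persson's principle: $\inf\sigma_{ess}(|D_Y|)$ equals $\lim_{R\to\infty}\inf\{\|D_Y u\|_2/\|u\|_2 : u\in C^\infty_c(\hy\setminus Y_R,E)\}$, where $Y_R=\{s\le R\}$. Hence $\sigma_{ess}(D_Y)$ coincides with the essential spectrum of the translation-invariant model $D_{cyl}=\sigma(\ps+D_\Sigma)$ on $\R_s\times\Sigma$; more precisely, one constructs Weyl sequences in $\hy$ from those in $\R_s\times\Sigma$ by cutting off far out on the end, and conversely. I will show $\sigma(D_{cyl})=\sigma_{ess}(D_{cyl})=(-\infty,-\lambda_1]\cup[\lambda_1,\infty)$ in step (iii).

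For (iii), decompose $L^2(\Sigma,E_0)=\bigoplus_\mu E_\mu$ into the eigenspaces of the self-adjoint elliptic $D_\Sigma$. The hypotheses $\sigma^2=-\Id$ and $\sigma D_\Sigma=-D_\Sigma\sigma$ imply that $\sigma$ is an isometric isomorphism $E_\mu\to E_{-\mu}$, so for $\mu>0$ the plane $E_\mu\oplus E_{-\mu}$ is $\sigma$- and $D_\Sigma$-invariant, while $E_0$ (if nontrivial) is $\sigma$-invariant with $\sigma|_{E_0}$ a complex structure. In a basis $\{e,\sigma e\}$ with $D_\Sigma e=\mu e$ (any $\mu\ge 0$), a direct computation gives
\[
D_{cyl}\bigl|_{\R_s\otimes\mathrm{span}\{e,\sigma e\}}
=\begin{pmatrix} 0 & -1\\ 1 & 0\end{pmatrix}\ps+\begin{pmatrix} 0 & \mu\\ \mu & 0\end{pmatrix},
\]
which Fourier-transforms in $s$ to the symbol $\bigl(\begin{smallmatrix}0 & -i\xi+\mu\\ i\xi+\mu & 0\end{smallmatrix}\bigr)$ with eigenvalues $\pm\sqrt{\xi^2+\mu^2}$. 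As $\xi$ ranges over $\R$, this contributes $(-\infty,-\mu]\cup[\mu,\infty)$ to the spectrum; taking the closure of the union over all eigenvalues $\mu\ge 0$ of $D_\Sigma$ yields $(-\infty,-\lambda_1]\cup[\lambda_1,\infty)$. Since $D_{cyl}$ has purely continuous spectrum on $\R_s\times\Sigma$ (no $L^2$-eigenfunctions survive after Fourier transform), its spectrum equals its essential spectrum, completing the identification.

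The main obstacle I anticipate is the careful bookkeeping in step (ii): one must produce genuine Weyl sequences for $D_Y$ on $\hy$ from plane-wave Weyl sequences for $D_{cyl}$ on the cylinder, and vice versa. The standard recipe is to cut off a plane wave $e^{i\xi s}e_\mu(y)$ far out on the end and check that the commutator with $D_Y$ has small $L^2$-norm (it scales like the derivative of the cutoff); this is routine but is the step where the completeness of $\hy$ and the product structure of the end both get used.
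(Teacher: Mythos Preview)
Your proposal is correct and follows essentially the same route as the paper: reduce the essential spectrum to the translation-invariant model on the cylinder, then diagonalize $\sigma(\partial_s+D_\Sigma)$ via Fourier transform and the $2\times 2$ block structure forced by $\sigma D_\Sigma+D_\Sigma\sigma=0$, obtaining eigenvalues $\pm\sqrt{\xi^2+\mu^2}$. The only packaging difference is that the paper invokes Zhislin's criterion (Hislop--Sigal, Theorem~10.6) in place of your Persson-plus-Weyl-sequences argument; Zhislin's criterion directly characterizes the full essential spectrum via sequences supported outside $Y_n$, so it handles both inclusions at once rather than giving only $\inf\sigma_{ess}(|D_Y|)$, but the underlying cut-off construction is exactly what you describe.
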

\begin{remark} Since $D_\Sigma$ anti-commutes with $\sigma$, $-\lambda_1$ is also the first non-positive eigenvalue of $D_\Sigma$. The spectrum of $D_\Sigma$ is symmetric with respect to the origin.
\end{remark}

The proof of Proposition \ref{P17.2} will dominate the rest of this section. To compute the essential spectrum of $D_Y$, we need two additional results from functional analysis: Weyl's criterion and Zhislin's criterion.

\begin{definition}\label{D17.3}Suppose $\A: H\to H$ is a self-adjoint operator with domain $W\colonequals D( \A)\subset H$. For any $\lambda \in \C$,  a sequence $\{u_n\}$ is called a Weyl sequence for $(\A,\lambda)$ if $\{u_n\}\subset W$, $\|u_n\|_H=1$, $u_n\xrightarrow{w} 0$ weakly in $H$ and $(\A-\lambda)u_n\xrightarrow{s} 0$ strongly in $H$. 
\end{definition}
\begin{theorem}[Weyl's Criterion, \cite{HS96} Theorem 7.2] Under the assumption of Definition \ref{D17.3}, $\lambda\in\sigma_{ess}(\A)$ if and only if there exists a Weyl sequence for $(\A,\lambda)$. 
\end{theorem}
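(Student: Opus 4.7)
The plan is to prove both directions of Weyl's criterion using the spectral theorem for the (unbounded) self-adjoint operator $\A$. Recall that $\sigma_{ess}(\A) = \sigma(\A) \setminus \sigma_{disc}(\A)$, where $\sigma_{disc}(\A)$ consists of isolated eigenvalues of finite multiplicity. The spectral theorem gives a projection-valued measure $E(\cdot)$ on $\mathbb{R}$ so that $\A = \int t\, dE_t$, and $\lambda \in \sigma_{ess}(\A)$ is equivalent to the statement that $E((\lambda - \epsilon, \lambda + \epsilon))$ has infinite-dimensional range for every $\epsilon > 0$. This reformulation is what makes both directions tractable.

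For the easier direction ($\Leftarrow$), suppose a Weyl sequence $\{u_n\}$ exists for $(\A, \lambda)$, and assume for contradiction that $\lambda \notin \sigma_{ess}(\A)$. Then $\lambda$ is either in the resolvent set or is an isolated eigenvalue of finite multiplicity. In the first case, $(\A - \lambda)^{-1}$ is bounded, so
\[
1 = \|u_n\|_H = \|(\A - \lambda)^{-1} (\A - \lambda) u_n\|_H \leq C \|(\A - \lambda) u_n\|_H \to 0,
\]
a contradiction. In the second case, let $P$ be the spectral projection onto $\ker(\A - \lambda)$; $P$ has finite rank, so $Pu_n \to 0$ strongly because $u_n \xrightarrow{w} 0$. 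On the complementary subspace, $\A - \lambda$ is bounded below by some $c > 0$, and since $P$ commutes with $\A$,
\[
\|(I - P)u_n\|_H \leq c^{-1} \|(I - P)(\A - \lambda)u_n\|_H \to 0.
\]
Hence $u_n = Pu_n + (I - P)u_n \to 0$, contradicting $\|u_n\|_H = 1$.

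For the forward direction ($\Rightarrow$), assume $\lambda \in \sigma_{ess}(\A)$, so that $\mathrm{rank}\, E((\lambda - 1/n, \lambda + 1/n)) = \infty$ for every $n \geq 1$. Recursively pick unit vectors $u_n \in \mathrm{range}\, E((\lambda - 1/n, \lambda + 1/n))$ with $u_n \perp \mathrm{span}(u_1, \ldots, u_{n-1})$; this is possible since the range is infinite-dimensional. Because each $u_n$ is in the range of a spectral projection of a bounded interval, we have $u_n \in D(\A)$, and by the functional calculus,
\[
\|(\A - \lambda) u_n\|_H^2 = \int_{\lambda - 1/n}^{\lambda + 1/n} (t - \lambda)^2 \, d\|E_t u_n\|^2 \leq \frac{1}{n^2} \to 0.
\]
The orthonormality of $\{u_n\}$ combined with Bessel's inequality yields $u_n \xrightarrow{w} 0$, so $\{u_n\}$ is a Weyl sequence.

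The only technical subtlety is the proper handling of spectral projections and domains for unbounded operators, which requires invoking the functional calculus rather than manipulating $\A$ directly; in particular one must verify that each $u_n$ lies in $D(\A) = W$, which follows automatically because $u_n$ is supported on a bounded spectral interval. No hard estimates are needed — the proof is essentially an abstract consequence of the spectral theorem, which is presumably why Hislop--Sigal record it as a short result and why the paper only cites it.
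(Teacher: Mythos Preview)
Your proof is correct and follows the standard spectral-theoretic argument for Weyl's criterion. Note, however, that the paper does not supply its own proof of this statement: it is quoted directly from Hislop--Sigal \cite{HS96} as background and used as a black box in the computation of essential spectra in Proposition~\ref{P17.2}. So there is no ``paper's proof'' to compare against; your argument is simply the textbook one, and it is fine.
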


When $H=L^2(\hy, E)$, Weyl's criterion can be refined into Zhislin's criterion for locally compact operators.

\begin{definition}Suppose $H=L^2(\hy, E)$ and $\chi_B$ is the characteristic function for a subset $B\subset \hy$. A self-adjoint operator $\A$ on $H$ is called \textbf{locally compact} if the operator $\chi_B(\A-i)^{-1}: H\to H$ is compact for any compact subset $B\subset \hy$. 
\end{definition} 
\begin{definition} Let $Y_n=\{s\leq n\}, n\in \Z_{\geq 0}$ be the truncated 3-manifold. For any $\lambda\in \C$,  a sequence $\{u_n\}\subset W$ is called \textbf{a Zhislin sequence} for $(\A,\lambda)$ if $\|u_n\|_H=1$, $\supp( u_n)\subset Y_n^c$ and $(\A-\lambda)u_n\xrightarrow{s} 0$ in $H$. 
\end{definition} 

As $u_n$ is supported on the complement of $Y_n$, $u_n\xrightarrow{w} 0$. As a result, a Zhislin sequence is always a Weyl sequence. 

\begin{theorem}[Zhislin's Criterion, \cite{HS96} Theorem 10.6]\label{T17.7} Suppose $H=L^2(\hy, E)$ and $\A: H\to H$ is self-adjoint and locally compact. If $\A$ satisfies the commutator estimate:
	\begin{equation}\label{E17.1}
	\|[\A,\varphi_n](A-i)^{-1}\|_{H\to H}\to 0 \text{ as }n\to\infty,
	\end{equation}
	where $\varphi_n=\varphi(s(\cdot)/n)$ and $\varphi: \R\to \R$ is some cut-off function such that $\varphi(r)\equiv 1$ when $r\leq 1$ and $\varphi(r)\equiv  0$ when $r\geq 2$, then
	$\lambda\in\sigma_{ess}(\A)$ if and only if there exists a Zhislin sequence for $(\A,\lambda)$. 
\end{theorem}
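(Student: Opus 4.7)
The plan is to prove the two directions of the equivalence separately, with the backward direction being essentially a tautology and the forward direction requiring a cut-off/diagonal argument.

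For the easy direction, suppose $\{u_n\}$ is a Zhislin sequence for $(\A,\lambda)$. I would first verify that $u_n \xrightarrow{w} 0$ weakly in $H = L^2(\hy, E)$: since $\supp(u_n) \subset Y_n^c$ and compactly supported test functions live in some fixed $Y_N$, the $L^2$-pairings $\langle u_n, \phi\rangle$ vanish for $n \geq N$. Combined with $\|u_n\|=1$ and $(\A-\lambda)u_n \xrightarrow{s} 0$, this is exactly a Weyl sequence, so Weyl's criterion delivers $\lambda \in \sigma_{ess}(\A)$.

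For the hard direction, I would start with a Weyl sequence $\{u_n\} \subset W$ for $(\A,\lambda)$ provided by Weyl's criterion, and modify it by the cut-offs $1-\varphi_n$. The central preliminary step is the following claim: for each fixed $N$, $\varphi_N u_n \xrightarrow{s} 0$ as $n \to \infty$. To prove this, write
\[
\varphi_N u_n = \varphi_N (\A-i)^{-1}(\A-i)u_n = \varphi_N(\A-i)^{-1}\bigl((\A-\lambda)u_n + (\lambda-i)u_n\bigr).
\]
The operator $\varphi_N(\A-i)^{-1}$ is compact by local compactness of $\A$ (since $\supp\varphi_N$ is compact). Moreover $(\A-i)u_n$ is bounded in $H$ (because $(\A-\lambda)u_n \to 0$ and $\|u_n\|=1$) and converges weakly to $0$. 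Compact operators send weakly convergent sequences to norm-convergent ones, so $\varphi_N u_n \xrightarrow{s} 0$.

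Next, I would extract a diagonal subsequence. Choose $n_k \to \infty$ with $\|\varphi_k u_{n_k}\| < 1/k$, and set $w_k = (1-\varphi_k)u_{n_k}$. Then $\|w_k\| \to 1$, and $\supp w_k \subset \{s \geq k\} \subset Y_k^c$. It remains to control the residual:
\[
(\A-\lambda)w_k = (1-\varphi_k)(\A-\lambda)u_{n_k} - [\A, \varphi_k]u_{n_k}.
\]
The first term has norm at most $\|(\A-\lambda)u_{n_k}\| \to 0$. For the second, I would write $u_{n_k} = (\A-i)^{-1}(\A-i)u_{n_k}$ and use the hypothesis \eqref{E17.1}:
\[
\|[\A,\varphi_k]u_{n_k}\| \leq \|[\A,\varphi_k](\A-i)^{-1}\| \cdot \|(\A-i)u_{n_k}\|,
\]
where the first factor tends to $0$ by assumption and the second is uniformly bounded. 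Setting $v_k = w_k/\|w_k\|$ then yields a Zhislin sequence for $(\A,\lambda)$, after relabeling so that $\supp v_k \subset Y_k^c$.

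The main technical obstacle is orchestrating the diagonal extraction so that $\|\varphi_k u_{n_k}\|$ decays fast enough while simultaneously ensuring the two error estimates above hold along the same subsequence; this is handled by choosing $n_k$ inductively to control $\|\varphi_k u_{n_k}\|$, $\|(\A-\lambda)u_{n_k}\|$, and the commutator remainder simultaneously. The role of the commutator hypothesis \eqref{E17.1} is precisely to decouple the support-tightening cut-off from the possibly nontrivial off-diagonal behavior of $\A$; without it, the chopped sequence could fail to remain an approximate eigenvector.
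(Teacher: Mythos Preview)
Your proposal is correct and follows essentially the same approach as the paper's sketch: the ``if'' direction reduces to Weyl's criterion since a Zhislin sequence is automatically a Weyl sequence, and for the ``only if'' direction you cut off a Weyl sequence by $1-\varphi_n$, use local compactness via $\varphi_n u_m = \varphi_n(\A-i)^{-1}(\A-i)u_m$ to show $\varphi_n u_m \xrightarrow{s} 0$, extract diagonally, and control $(\A-\lambda)v_n$ with the commutator estimate. The paper's argument is identical in outline, simply phrased more tersely and deferring details to \cite{HS96}.
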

\begin{proof}[Idea of the Proof] The ``if" part follows from Weyl's Criterion. Suppose $\lambda\in \sigma_{ess}(\A)$ and $\{u_m\}$ is a Weyl sequence for $(\A,\lambda)$. We wish to construct a Zhislin sequence for $(\A,\lambda)$ out of $\{u_m\}$. For any $n\in \Z_{\geq 0}$, choose a large number $m(n)$ and define
	\[
	v_n=(1-\varphi_n)u_{m(n)}. 
	\]
	First of all, $(\A-i)u_m=(\A-\lambda)u_m+(\lambda-i)u_m\xrightarrow{w} 0$ as $m\to\infty$. Because $\varphi_n(\A-i)^{-1}$ is compact, $\varphi_nu_m=\phi_n(A-i)^{-1}\circ (A-i)u_m\xrightarrow{s} 0$ as $m\to \infty$ for any fixed $n$. By taking $m(n)\gg n$, we ensure that $\|v_n\|_H\geq \half$.  
	
	The second step is to use the commutator estimate \eqref{E17.1} to prove $(\A-\lambda)v_n\xrightarrow{s} 0$ as $n\to\infty$. Finally, $\{v_n/\|v_n\|_H\}$ is the desired Zhislin sequence. For details, see \cite[Theorem 10.6]{HS96}
\end{proof}

\begin{remark} Zhislin's Criterion shows that the essential spectrum of $\A$ is determined completely by its behavior along the cylindrical end $[0,\infty)_s\times \Sigma$.
\end{remark}
\begin{proof}[Proof of Proposition \ref{P17.2}] $D_Y$ is a locally compact operator as $\chi_B(D_Y-i)^{-1}: L^2(\hy)\to L^2(\hy)$ factorizes through $L^2_1(B)$ when $B=Y_n$. The commutator estimate is also satisfied as 
	\[
	[D_Y, \varphi_n]=\frac{1}{n}\cdot \frac{d\varphi}{dr}(\frac{s}{n})\rho(ds)
	\]
	and its $L^\infty$-norm decays to zero. Applying Zhislin's criterion, we reduce to the case when $\hy=\R_s\times \Sigma$ is a cylinder and 
	\[
	D_Y=\sigma(\ds+D_\Sigma). 
	\]

To study the spectrum of $D_Y$ in this case, apply Fourier transformation in $\R_s$-direction. Our goal is to find eigenvalues of 
\[
\widehat{D_Y}(\xi)=\sigma(i\xi+D_\Sigma): \Gamma(\Sigma, E_0)\to \Gamma(\Sigma, E_0)
\]
for any fixed $\xi\in \R_\xi$. Let $\phi_\lambda$ be an eigenvector of $D_\Sigma$ with eigenvalue $\lambda>0$. As $D_\Sigma$ anti-commutes with $\sigma$, $-\lambda$ is also an eigenvalue; indeed,
\[
D_\Sigma(\sigma(\phi_\lambda))=-\lambda\sigma(\phi_\lambda). 
\]
As a result, $\spn_\C\{\phi_\lambda, \sigma(\phi_\lambda)\}$ is an invariant subspace of $\widehat{D_Y}(\xi)$:
\[
\widehat{D_Y}(\xi)=\begin{pmatrix}
0 & -1\\
1 & 0
\end{pmatrix}
\begin{pmatrix}
\lambda+i\xi & 0\\
0& -\lambda+i\xi
\end{pmatrix}=
\begin{pmatrix}
0 & \lambda-i\xi\\
\lambda+i\xi & 0
\end{pmatrix}
\]
whose eigenvalues are $\pm\sqrt{\xi^2+\lambda^2}$. Let $\phi_\lambda^\pm(\xi)$ be their associated eigenvectors respectively and set 
\[
\phi_n(s)\colonequals(\varphi(s-2n)-\varphi(s-n))\phi_\lambda^\pm(\xi)\exp(i\xi s). 
\] 
where $\varphi: \R\to \R$ is the cut-off function defined in Theorem \ref{T17.7}. Then $\{\phi_n/\|\phi_n\|_2\}$ is a Zhislin sequence for $(D_Y, \pm\sqrt{\xi^2+\lambda^2})$, and $\pm\sqrt{\xi^2+\lambda^2}\in \sigma_{ess}(D_Y)$ by Theorem \ref{T17.7}. 

When $\lambda'\in (-\lambda_1,\lambda_1)$, $(\widehat{D_Y}(\xi)-\lambda')$ is invertible for each $\xi\in \R_\xi$; their inverses are uniformly bounded. As a result, the operator
\[
D_Y-\lambda'
\]
is invertible, so $\lambda'\not\in \sigma_{ess}(D_Y)$. This completes the proof of Proposition \ref{P17.2}.
\end{proof}

\section{Extended Hessians}\label{Sec18}

In this section, we apply the abstract formalisms in Section \ref{Sec17} to the extended Hessians of $\CSd_{\omega}$ and compute its essential spectrum. The main result is Proposition \ref{P18.1}. The proof relies on the key observation from the first paper \cite[Proposition 7.4]{Wang202}: the Seiberg-Witten equations on $\C\times \Sigma$ is secretly the gauged Witten equations on $\C$. The structural results from \cite[Subsection 4.2]{Wang202} then becomes essential here. The formalism from Section \ref{Sec17} in fact applies to any gauged Witten equations. 

\medskip

Recall from Section \ref{Sec10} that the quotient configuration space 
\[
\CB_k(\hy,\bs)=\SC_k(\hy,\bs)/\CG_{k+1}(\hy)
\]
is a Hilbert manifold when $k>\half$. For any $\gamma\in \SC_k(\hy,\bs)$, denote by $[\gamma]$ its gauge equivalent class in $\CB_k(\hy, \bs)$. By Lemma \ref{L10.4}  the tangent space of $\SC_k(\hy,\bs)$ at $\gamma$ admits a decomposition: $$\CT_{k,\gamma}\colonequals T_\gamma \SC_k(\hy,\bs)=\J_{k,\gamma}\oplus \K_{k, \gamma}$$ 
 where 
\begin{align*}
\J_{k,\gamma}&=\im(\dg: L^2_{k+1}(\hy,i\R)\to \CT_{k,\gamma}) \text{ and }\\
 \K_{k, \gamma}&=\ker (\dg^*: \CT_{k,\gamma}\to L^2_{k-1}(\hy,i\R))
\end{align*}
form $L^2$-complementary sub-bundles of $\CT_k\to \SC_k(\hy,\bs)$. Moreover, 
$$
T_{[\gamma]}\CB_k(\hy,\bs)=\K_{k, \gamma}.
$$

Take a tame perturbation $\q=\grad f\in \Pa$. As the perturbed Chern-Simons-Dirac functional $\CSd_\omega=\CL_{\omega}+f$ is invariant under the identity component of $\CG_{k+1}(\hy)$, its gradient 
\[
\grad \CSd_{\omega}=\grad\CL_{\omega}+\q
\]
defines a smooth section of $\K_{k-1}\to \SC_k(\hy,\bs)$ and its Hessian is a symmetric bundle map:
\[
\D \grad \CSd_{\omega}: \CT_k\to \CT_{k-1}
\]
which is equivariant under the action of $\CG_{k+1}(\hy)$. As $\SC_k(\hy,\bs)$ is an affine space, the tangent bundle $\CT_{k}\to \SC_k(\hy,\bs)$ is endowed with the trivial flat connection, but the decomposition $\CT_k=\J_k\oplus \K_k$ is not parallel. Consider the composition of maps:
\[
\Hess_{\q}\colonequals \Pi_{\K_{k-1}}\circ \D\grad \CSd_{\omega}:  \K_k\to \K_{k-1},
\]
and write $ \D\grad \CSd_{\omega}$ into a block form: 
\begin{equation}\label{E18.1}
\D \grad \CSd_{\omega}=\begin{pmatrix}
y & x\\
x^* & \Hess_{\q}
\end{pmatrix}: \J_k\oplus \K_k\to \J_{k-1}\oplus \K_{k-1},
\end{equation}
where $x=\Pi_{\J_{k-1}}\circ \D\grad \CSd_{\omega}|_{\K_k}$ and $y=\Pi_{\J_{k-1}}\circ \D\grad \CSd_{\omega}|_{\J_k}$. Note that 
\[
x=0, y=0
\]
when $\gamma\in \Crit(\CSd_{\omega})$ is a critical point. Here is the another way to think of $\Hess_{\q}$. $\CSd_{\omega}$ descends to a circle valued functional $\overline{\CSd_{\omega}}$ on the quotient configuration space $\CB_k(\hy,\bs)$. The Hessian of $\overline{\CSd_{\omega}}$ at $[\gamma]\in \CB_k(\hy,\bs)$ regarded as a map
\[
\K_{k, \gamma}=T_{[\gamma]}\CB_k(\hy,\bs)\to \K_{k-1, \gamma}
\]
is precisely given by $\Hess_{\q}$. However, $\Hess_{\q}$ is not the convenient notion to work with from the gauge theoretic point of view. One looks instead at \textbf{the extended Hessian} $\widehat{\Hess}_\q$ of $\CSd_{\omega}$ whose expression at $\gamma\in \SC_k(\hy,\bs)$ is defined by 
\[
\widehat{\Hess}_{\q,\gamma}\colonequals \begin{pmatrix}
0& \dg^*\\
\dg & \D_\gamma \grad\CSd_{\omega} 
\end{pmatrix}:  L^2_{k}(\hy, i\R)\oplus \CT_{k,\gamma}\to  L^2_{k-1}(\hy,i\R)\oplus \CT_{k-1,\gamma}. 
\]
\begin{proposition}[cf. \cite{Bible} Proposition 12.3.1]\label{P18.1} The operator $\Hess_{\q,\gamma}: \K_k\to \K_{k-1}$ is symmetric. If $\gamma$  is a critical point of $\CSd_{\omega}$, then it is invertible if and only if the extended Hessian $\widehat{\Hess}_{\q,\gamma}$ at $\gamma$ is invertible. Moreover, the spectrum of $\EHess_{\q,\gamma}$ is real and 
	\[
	\sigma_{ess}(\widehat{\Hess}_{\q,\gamma})=(-\infty, -\lambda_1]\cup [\lambda_1,\infty)
	\]
	where $\lambda_1>0$ is a positive number depending only on the boundary data $(g_\Sigma,\lambda,\mu)$ of $\y\in \Cob_s$. In particular, $\widehat{\Hess}_{\q,\gamma}$ is a Fredholm operator of index $0$ for any $k\geq 1$. 
\end{proposition}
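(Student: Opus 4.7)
The plan is to follow the strategy of \cite[Proposition 12.3.1]{Bible}, with the essential new input being the computation of $\sigma_{ess}(\EHess_{\q,\gamma})$ via the framework of Section \ref{Sec17}. I would proceed in four steps.

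First, the purely algebraic content. Symmetry of $\Hess_{\q,\gamma}$ follows because $\grad\CSd_\omega$ is a section of $\K_{k-1}\to\SC_k(\hy,\bs)$ (Proposition \ref{P10.5} and the tameness axiom \ref{A4} for $\q$), and its projected derivative is the Hessian of the (locally real valued) function $\overline{\CSd_\omega}$ on $\CB_k(\hy,\bs)$. For the invertibility equivalence at a critical point $\gamma$, observe that $x=\Pi_{\J_{k-1}}\D\grad\CSd_\omega|_{\K_k}=0$ and $y=\Pi_{\J_{k-1}}\D\grad\CSd_\omega|_{\J_k}=0$ on account of $\CG_{k+1}$-equivariance of $\grad\CSd_\omega$ and the vanishing of $\grad\CSd_\omega(\gamma)$. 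In the decomposition $L^2_k(\hy,i\R)\oplus\J_k\oplus\K_k$, the extended Hessian becomes block-diagonal with off-diagonal block $\begin{pmatrix}0&\dg^*\\\dg&0\end{pmatrix}$, whose invertibility is equivalent to the invertibility of the Neumann problem $\dg^*\dg=\Delta+|\Phi|^2$ handled in Lemma \ref{L10.4}. Hence $\EHess_{\q,\gamma}$ is invertible iff $\Hess_{\q,\gamma}$ is.

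Second, I would set up $\EHess_{\q,\gamma}$ as an unbounded self-adjoint operator on $L^2(\hy,i\R)\oplus L^2(\hy,iT^*\hy\oplus S)$ with domain the corresponding $L^2_1$-space. Self-adjointness is standard once we write $\EHess_{\q,\gamma}=\EHess_{0,\gamma_0}+K$ where the reference configuration $\gamma_0$ equals $(B_*,\Psi_*)$ on $[-1,\infty)_s\times\Sigma$ and $K$ is a bundle endomorphism supported on $Y$ (using that $\q$ is supported on $Y$ by \ref{AA5} plus that $\gamma-\gamma_0\in L^2_k$). The Kato–Rellich theorem then gives self-adjointness, and in particular $\sigma(\EHess_{\q,\gamma})\subset\R$.

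Third, to compute the essential spectrum I would apply Zhislin's criterion (Theorem \ref{T17.7}): the essential spectrum depends only on the behavior of $\EHess_{\q,\gamma}$ at infinity. On $[0,\infty)_s\times\Sigma$ both $\gamma$ and $\gamma_0$ approach $(B_*,\Psi_*)$ (with $\gamma-\gamma_0\in L^2_k$, hence pointwise decaying away from zero), so modulo a locally compact and vanishing-at-infinity perturbation $\EHess_{\q,\gamma}$ agrees with the model extended Hessian $\EHess_{0}^{cyl}$ at $(B_*,\Psi_*)$ on the full cylinder $\R_s\times\Sigma$. Under the identification $iT^*(\R_s\times\Sigma)\oplus S\cong iT^*\Sigma\oplus S\oplus i\R\cdot ds$ given by splitting off the $ds$-direction, one can write (as in the discussion after \cite[Proposition 7.4]{Wang202} and \cite[Subsection 4.2]{Wang202})
\begin{equation*}
\EHess_{0}^{cyl}=\sigma(\partial_s+D_\Sigma)
\end{equation*}
on $E=(i\R\oplus iT^*\Sigma\oplus S\oplus i\R)$, where $\sigma$ is a skew-symmetric bundle endomorphism with $\sigma^2=-\Id_E$ coming from Clifford multiplication by $ds$ together with the involution swapping the Lagrange-multiplier variable with the $ds$-component of $a$, and $D_\Sigma$ is the first-order self-adjoint elliptic extended Hessian on $\Sigma$ of the 3-dimensional Chern-Simons-Dirac functional restricted to $\Sigma$. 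The anticommutation $\sigma D_\Sigma+D_\Sigma\sigma=0$ is inherited from the fact that $\EHess_0^{cyl}$ squares to an $\R_s$-invariant operator plus a first-order term.

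Fourth, the main analytic point: verify that $D_\Sigma$ is invertible. This is where properties \ref{P3}\ref{P4}\ref{P7} enter. The kernel of $D_\Sigma$ consists of infinitesimal $\R_s$-invariant deformations of $(B_*,\Psi_*)$ satisfying the linearized 3D SW equations on $\Sigma$ together with the linearized gauge-fixing condition; by Theorem \ref{T2.6} the moduli space of $\R_s$-invariant solutions near $(B_*,\Psi_*)$ is a single point up to gauge, and the assumption that $\lambda\ne 0$ on each torus ensures the spinor part $\Psi_*$ is nonvanishing, ruling out any nontrivial infinitesimal kernel by an integration by parts on $\Sigma$. With $D_\Sigma$ invertible, Proposition \ref{P17.2} gives
\begin{equation*}
\sigma_{ess}(\EHess_{0}^{cyl})=(-\infty,-\lambda_1]\cup[\lambda_1,\infty),\qquad \lambda_1=\min|\Spec(D_\Sigma)|>0,
\end{equation*}
and by Zhislin the same holds for $\EHess_{\q,\gamma}$. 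The Fredholm property with index $0$ is then immediate from self-adjointness and $0\notin\sigma_{ess}$.

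The main obstacle will be step four, namely checking rigorously that $D_\Sigma$ has no $L^2$ kernel; this requires unpacking the Clifford-algebraic identifications so that $D_\Sigma$ really appears as an operator on $\Sigma$ anticommuting with $\sigma$, and then combining Theorem \ref{T2.6} with a pointwise positivity argument using $|\Psi_*|^2>0$ to kill infinitesimal gauge-slice deformations. Everything else is a reorganization of standard material together with the Zhislin reduction to the cylindrical model.
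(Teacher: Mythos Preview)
Your outline matches the paper's proof closely, and steps one and three are fine. Two points need correction.

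In step two, the difference $K=\EHess_{\q,\gamma}-\EHess_{0,\gamma_0}$ is \emph{not} supported on $Y$. The perturbation term $\D_\gamma\q$ is, but the zeroth-order term $h(b,\psi)$ coming from $\gamma-\gamma_0=(b,\psi)\in L^2_k$ lives wherever $(b,\psi)\neq 0$, which is generically all of $\hy$. The paper handles this by invoking \cite[Theorem 13.2.2]{Bible}: for fixed $g\in L^2_k(\hy)$ with $k\geq 1$, the multiplication map $f\mapsto fg$ is compact from $L^2_k$ to $L^2_{k-1}$ (this uses that $g$ decays at infinity in an $L^2$ sense, not that it has compact support). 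You need this to conclude $h(b,\psi)$ is compact and hence that $\sigma_{ess}$ is unchanged.

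In step four, your proposed route through Theorem \ref{T2.6} does not work. That theorem is a nonlinear uniqueness statement: finite-energy solutions on $\R_s\times\Sigma$ are gauge equivalent to $\gamma_*$. It says nothing about the nondegeneracy of $\gamma_*$, i.e., about $\ker\hat D_\kappa$. A moduli space can be a single point with a nonzero Zariski tangent space. The paper does not attempt to derive invertibility from Theorem \ref{T2.6}; it simply cites \cite[Proposition 7.10]{Wang202}, which proves directly that $\hat D_\kappa$ is invertible. Your parenthetical ``integration by parts on $\Sigma$ using $|\Psi_*|^2>0$'' is closer to what that proposition actually does (a Weitzenb\"ock-type positivity argument on the flat torus with the mass term from $\Psi_*$), and you should pursue that directly rather than routing through the nonlinear result.
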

\begin{definition}\label{D18.2} A critical point $\gamma\in \SC_k(\hy, \bs)$ of the perturbed Chern-Simons-Dirac functional $\CSd_{\omega}=\CL_{\omega}+f$ is called \textbf{non-degenerate} if the extended Hessian $\widehat{\Hess}_{\q,\gamma}$ at $\gamma$ is invertible. 
\end{definition}

The proof of Proposition \ref{P18.1} will dominate the rest of this section. 

\begin{proof}[Proof of Proposition \ref{P18.1}] We focus on the essential spectrum of $\EHess_{\q,\gamma}$; the rest of statements follows from the same line of argument of \cite[Proposition 12.3.1]{Bible}. 
	
Let $\gamma_0=(B_0,\Psi_0)$ be the reference configuration of $\SC_k(\hy,\bs)$. Then $\gamma-\gamma_0=(b,\psi)\in L^2_k(\hy,iT^*\hy\oplus S)$ and 
\[
\EHess_{\q,\gamma}=\EHess_{0,\gamma_0}+h(b,\psi)+\begin{pmatrix}
0 & 0\\
0 & \D_\gamma\q
\end{pmatrix}.
\]
where $h(b,\psi)$ is an operator that involves only point-wise multiplication of $(b,\psi)$. When $g\in L^2_k(\hy)$ is fixed, the Sobolev multiplication 
\begin{align*}
L^2_k(\hy)\times L^2_k(\hy)&\to L^2_{k-1}\\
(f,g)&\mapsto fg
\end{align*}
is a compact operator in the first argument when $k\geq 1$ (see \cite[Theorem 13.2.2]{Bible}), so the error $h(b,\psi)$ is compact. As $\q$ is tame, by property \ref{A4}, $\D_\gamma\q: L^2_k\to L^2_k$ is bounded linear. In addition, since its image is supported on $Y\subset \hy$, the operator $\D_\gamma\q: L^2_k\to L^2_{k-1}$ is also compact. 

By the Kato-Rellich theorem, the essential spectrum is invariant under compact perturbations. It suffices to compute the essential spectrum of $\EHess_{0,\gamma_0}$. The general theory from Section \ref{Sec17} applies here, so we may concentrate on the special case when $\hy=\R_s\times \Sigma$ is a cylinder and $\gamma_0=(B_*,\Psi_*)$ is the $\R_s$-translation invariant solution defined by \eqref{E2.6}. 

\medskip

At this point, we have to recall some results \cite[Subsection 4.2]{Wang202}. The extended Hessian $\EHess_{\gamma_0}$ can be cast into the form $\sigma(\ps+\hat{D}_{\kappa})$ as an operator
\[
L^2_1(\hy, i\R\oplus (i\R\otimes ds)\oplus iT^*\Sigma\oplus S)\to L^2(\hy, i\R\oplus (i\R\otimes ds)\oplus iT^*\Sigma\oplus S)
\]
with 
\begin{equation}\label{E12.2}
\sigma=\begin{pmatrix}
0 & 1 & 0 &0\\
-1 & 0 & 0 & 0\\
0 & 0 & *_\Sigma & 0\\
0 & 0 & 0 & \rho_3(ds)
\end{pmatrix} 
\end{equation}
and $\hat{D}_\kappa$ defined as in \cite[P.36]{Wang202}. It is shown in \cite[Proposition 7.10]{Wang202} that $\hat{D}_\kappa$ is an invertible operator. Now we use Proposition \ref{P17.2} to conclude.
\end{proof}

\section{Linearized Operators on Cylinders}\label{Sec19}

In this section, we study the Seiberg-Witten moduli space on the cylinder $\R_t\times \hy$ and prove the Fredholm property of the linearized operator using the formalism of Section \ref{Sec9}. In Subsection \ref{Subsec19.2}, we will prove a separating property of the cokernel of the linearized operator, which will be crucial in the proof of transversality in Theorem \ref{T21.1}.

We have to justify that the proof of gluing theorem in \cite[Section 18, 19]{Bible} continue to work in our case, in the presence of essential spectra. This is done in Subsection \ref{Subsec19.3} and \ref{Subsec19.4}, where the relevant Atiyah-Patodi-Singer boundary value theory is also developed. 

\subsection{Linearized Operators}\label{Subsec19.1}
Here is the second reason why the extended Hessian is a natural object: it is more consistent with the 4-manifold theory. Suppose 
\[
\fa,\fb\in \Crit(\CSd_{\omega})\subset \SC(\hy,\bs)
\]
are non-degenerate critical points of the perturbed Chern-Simons functional $\CSd_{\omega}$ in the sense of Definition of \ref{D18.2}. To describe the moduli space of flowlines from $\fa$ to $\fb$, we fix a smooth configuration $\gamma$ on $\hz\colonequals \R_t\times \hy$ such that $\gamma$ is in the temporal gauge and 
\begin{align*}
\cgamma(t)&=\fa \text{ if } t<-1,\\
\cgamma(t)&=\fb \text{ if } t>1.  
\end{align*}

Consider the configuration space 
\[
\SC_k(\fa,\fb)=\{(A,\Phi)=\gamma_0+(a,\phi): (a,\phi)\in L^2_k(\hz, iT^*\hz\oplus S^+) \}.
\]
and the gauge group
\[
\CG_{k+1}(\hz)=\{u:\hz\to S^1: u-1\in L^2_{k+1}(\hz, \C) \}. 
\]

We are interested in solutions of the perturbed Seiberg-Witten equations on $\hz$:
\begin{equation}\label{E19.1}
0=\F_{\hz,\q}(\gamma)\colonequals\F_{\hz} (\gamma)+\hq(\gamma),
\end{equation}
where $\F_{\hz}$ is defined by \eqref{4DSWEQ} and $\hq$ is defined as in \eqref{E14.2}. We form the moduli space 
\[
\M_k(\fa,\fb)\colonequals \{\gamma\in \SC_k(\fa,\fb):\F_{\hz,\q}(\gamma)=0\}/ \CG_{k+1}(\hz). 
\]

We focus on the linearized theory of the moduli space in this section. Take a configuration $\gamma=(A,\Phi)\in \SC_k(\fa,\fb)$, then a tangent vector $V$ at $\gamma$ is a section 
\[
(\delta c(t), \delta b(t), \delta\psi(t))\in L^2_k(\R_t\times \hy, i\R\oplus iT^*\hy\oplus S). 
\]
It lies in the kernel of the linearized operator $\D_\gamma \F_{\hz,\q}$ (i.e. the tangent map) of $\F_{\hz,\q}$ if and only if it solves the equation 
\begin{equation}\label{E19.2}
\dt\begin{pmatrix}
\delta b(t)\\
\delta \psi(t)
\end{pmatrix}+\D_{\cgamma(t)}\grad \CSd_{\omega}\begin{pmatrix}
\delta b(t)\\
\delta \psi(t)
\end{pmatrix}+\bd_{\cgamma(t)} \delta c(t)=0,\ \forall t\in \R.
\end{equation}
\eqref{E19.2} is obtained by formally linearizing the equation \eqref{E15.3}. The convention of \eqref{E14.1} is also adopted here: $\cgamma(t)$ stands for the underlying path in $ \SC(\hy, \bs)$. 

On the other hand, the linearized action of $\CG(\hz)$ at $\gamma$ is given by:
\begin{align*}
\bd_\gamma: \Lie(\CG_{k+1}(\hz))=L^2_{k+1}(\hz,i\R)&\to T_\gamma \SC(\fa,\fb)\\
f(t)&\mapsto (-\dt f(t), \bd_{\cgamma(t)} f(t)),
\end{align*}
whose $L^2$-formal adjoint is 
\begin{align*}
\bd_\gamma^*: T_\gamma \SC(\fa,\fb)&\to L^2_{k-1}(\hz,i\R)\\
V(t)=(\delta c(t),\delta b(t), \delta\psi(t))&\mapsto \dt \delta c(t)+\bd_{\cgamma(t)}^*\begin{pmatrix}
\delta b(t)\\
\delta \psi(t)
\end{pmatrix}.  
\end{align*}

It follows that $\D_\gamma\F_{\hz,\q}$ together with the linearized gauge fixing operator $\bd_\gamma^*$ can be cast into the form:
\begin{equation}\label{E19.3}
V(t)\mapsto \dt V(t)+\EHess_{\q,\cgamma(t)} V(t),
\end{equation}
for $V(t)= (\delta c(t), \delta b(t), \delta\psi(t))$. By Theorem \ref{T16.2}, we have 
\begin{proposition}\label{P19.1}For any $\gamma\in \SC_k(\fa,\fb)$, the operator 
	\[
	(\bd_\gamma^*, \D_\gamma\F_{\hz,\q} ): L^2_k(\hz, i\R\oplus iT^*\hy\oplus S)\to L^2_{k-1}(\hz, i\R\oplus iT^*\hy\oplus S)
	\]
	is Fredholm. Its index is independent of $\gamma$ and equals the spectrum flow from $\EHess_{\q,\fa}$ to $\EHess_{\q,\fb}$. 
\end{proposition}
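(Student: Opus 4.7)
The plan is to reduce Proposition \ref{P19.1} directly to the abstract Fredholm/spectral-flow result, Theorem \ref{T16.2}, applied to the path $t \mapsto \EHess_{\q,\cgamma(t)}$ of extended Hessians in the sense of Section \ref{Sec18}. Under the identification $V(t)=(\delta c(t),\delta b(t),\delta\psi(t))$, the display \eqref{E19.3} rewrites $(\bd_\gamma^*,\D_\gamma\F_{\hz,\q})$ as the operator $D_{\A}\colon \xi\mapsto \pt\xi+\A(t)\xi$ with $\A(t)=\EHess_{\q,\cgamma(t)}$ acting on sections of $E=i\R\oplus iT^*\hy\oplus S$ over $\hy$.

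To set up the abstract framework, I would fix the reference Hilbert space $H=L^2(\hy,E)$ with dense domain $W=L^2_1(\hy,E)$, and take the base operator to be $\A_0=\EHess_{0,\gamma_0}$ for a fixed smooth reference configuration $\gamma_0$. Proposition \ref{P18.1} gives $0\notin\sigma_{ess}(\A_0)$, so the hypothesis \eqref{E16.1} is satisfied. The key point is that for any $\gamma'\in\SC_k(\hy,\bs)$, the difference $\EHess_{\q,\gamma'}-\A_0$ splits as a pointwise multiplication operator $h(\gamma'-\gamma_0)$ plus $\mathrm{diag}(0,\D_{\gamma'}\q)$; the former is compact from $W\to H$ by the Rellich compactness of $L^2_1\times L^2\to L^2_{-\epsilon}$-type multiplications on $\hy$, and the latter is compact by tameness property \ref{A4} together with the compactly supported condition \ref{AA5}. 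Hence each $\EHess_{\q,\cgamma(t)}$ lies in the admissible class $\SL_{sym}(W,H)$, and the path $t\mapsto \EHess_{\q,\cgamma(t)}$ belongs to $\SA(\R,W,H)$.

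To verify the asymptotic conditions required for Theorem \ref{T16.2}, I would use that $\gamma=(A,\Phi)$ approaches $\fa$ as $t\to-\infty$ and $\fb$ as $t\to+\infty$ in $L^2_{k,loc}$, with the difference lying in $L^2_k(\hz)$; by the Sobolev trace and interpolation the restrictions $\cgamma(t)$ converge to $\fa,\fb$ in $L^2_{k-1/2}(\hy)$, and from the compact dependence of $\EHess_{\q,\cgamma}$ on $\cgamma$ we obtain continuity of $\A(t)$ in $\SL_{sym}(W,H)$ together with limits $\A^\pm=\EHess_{\q,\fa},\EHess_{\q,\fb}$; by the non-degeneracy assumption these limits are invertible, so $\A\in\SA(\R,W,H)$. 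Theorem \ref{T16.2} then delivers the Fredholm property of $D_{\A}$ between the appropriate Sobolev spaces and identifies its index with the spectral flow $\mu(\A)$ from $\EHess_{\q,\fa}$ to $\EHess_{\q,\fb}$. Independence of $\gamma$ follows immediately: any two such $\gamma,\gamma'$ are connected through $\SC_k(\fa,\fb)$ by the linear path, producing a homotopy of paths in $\SA(\R,W,H)$ with fixed endpoints, and the homotopy axiom for $\mu$ gives the equality of indices.

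The main technical obstacle is the verification that $t\mapsto \A(t)$ is genuinely continuous into $\SL_{sym}(W,H)$ with the required $\SC^k_{loc}$ convergence to $\A^\pm$ at infinity; this is where one has to be careful to combine the Sobolev mapping properties of the tame perturbation $\hq$ listed in Definition \ref{D14.2} (especially \ref{A1} and \ref{A4}) with the exponential decay of $\gamma-\gamma_0$ in the time direction (alluded to in Remark \ref{R1.4}) so as to extract $\SC^k$ decay of the compact perturbation rather than merely $\SC^0$ continuity. Once that mapping calculus is in place, the rest of the argument is the formal application of the abstract Robbin--Salamon theorem.
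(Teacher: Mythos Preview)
Your approach is correct and uses the same two ingredients as the paper---Proposition~\ref{P18.1} (to see that each $\EHess_{\q,\cgamma(t)}$ is a compact perturbation of a fixed $\A_0$ with $0\notin\sigma_{ess}$) and Theorem~\ref{T16.2} (to pass from the path of Hessians to the Fredholm property and the spectral-flow formula). The paper, however, organizes the argument more economically: rather than verifying the hypotheses of Theorem~\ref{T16.2} for an arbitrary $\gamma$, it first observes that the full 4-dimensional operator $(\bd_\gamma^*,\D_\gamma\F_{\hz,\q})$ differs from $(\bd_{\gamma_0}^*,\D_{\gamma_0}\F_{\hz,\q})$ by a compact term on $L^2_k(\hz)\to L^2_{k-1}(\hz)$, and then applies Theorem~\ref{T16.2} only to the reference configuration $\gamma_0$. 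Since $\cgamma_0(t)$ is literally constant for $|t|>1$, the path $\A(t)=\EHess_{\q,\cgamma_0(t)}$ trivially lies in $\SA^{k-1}$ with constant limits, and all the convergence and regularity issues you identify as the ``main technical obstacle'' evaporate. Independence of $\gamma$ is then immediate from stability of the index under compact perturbations, rather than from the homotopy axiom.

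One correction: your appeal to Remark~\ref{R1.4} for exponential decay of $\gamma-\gamma_0$ is misplaced. That remark concerns \emph{solutions} of the perturbed equations, whereas Proposition~\ref{P19.1} is stated for an arbitrary $\gamma\in\SC_k(\fa,\fb)$. Fortunately you do not need exponential decay: the membership $\gamma-\gamma_0\in L^2_k(\hz)$ already gives (via $L^2_k(\R\times\hy)\hookrightarrow C^0_0(\R,L^2_{k-1}(\hy))$) the convergence $\cgamma(t)\to\fa,\fb$ needed for $\A(t)\to\A^\pm$, and more directly it is exactly what makes the difference-of-operators compact in the paper's argument. So the exponential decay is a red herring here; drop that reference and either carry out the $\SC^{k-1}$ verification from $L^2_k$ alone, or---simpler---adopt the paper's reduction to $\gamma_0$.
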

\begin{proof} The operator $	(\bd_\gamma^*, \D_\gamma\F_{\hz,\q} )$ differs from the $	(\bd_{\gamma_0}^*, \D_{\gamma_0}\F_{\hz,\q} )$ by a compact term. When $\gamma=\gamma_0$ is the reference configuration, apply Theorem \ref{T16.2}.
\end{proof}

\begin{definition}\label{D19.2} The moduli space $\M_k(\fa,\fb)$ is called regular if the linearized operator $(\bd_\gamma^*, \D_\gamma\F_{\hz,\q} )$ at $\gamma$ is surjective for any $[\gamma]\in \M_k(\fa,\fb)$. 
\end{definition}
\begin{definition}\label{D19.3} A tame perturbation $\q=\grad f\in \Pa$ is called admissible if 
	\begin{enumerate}[label=(E\arabic*)]
		\item\label{E1} all critical points of the perturbed Chern-Simons-Dirac functional $\CSd_{\omega}=\CL_{\omega}+f$ are non-degenerate in the sense of Definition \ref{D18.2};
		\item\label{E2} for any pair of critical points $\fa,\fb\in \Crit(\CSd_{\omega})$, the moduli space $\M_k(\fa,\fb)$ is regular in the sense of Definition \ref{D19.2}.\qedhere
	\end{enumerate}
\end{definition}

One may think of $\M_k(\fa,\fb)$ as the moduli space of down-ward gradient flowlines in the quotient space $\CB_{k-1/2}(\hy,\bs)$. The reference configuration $\gamma_0$ determines a homotopy class of paths connecting $[\fa]$ and $[\fb]$, so it is more appropriate to write
\begin{equation}\label{E19.11}
\M_{[\gamma]}([\fa],[\fb])\colonequals \M_k(\fa,\fb),\ [\gamma]\in \pi_1(\CB_{k-1/2}(\hy,\bs),[\fa],[\fb]). 
\end{equation}
By Theorem \ref{T1.4}, this space is independent of the Sobolev completion that we choose, so the subscript $k$ is dropped in our notation. 
\begin{remark} To identify a finite energy solution $\gamma$ in Theorem \ref{T1.4} with an element of $\M_k(\fa,\fb)$, we have to know the exponential decay of $\gamma$ in the time direction using the non-degeneracy of critical points, which is omitted in this paper; cf. Remark \ref{R1.4}. 
\end{remark}

Since the Seiberg-Witten equations on $\hz=\R_t\times \hy$ has an apparent $\R_t$-translation symmetry, $\M_{[\gamma]}([\fa],[\fb])$ is acted on freely by $\R_t$ if the topological energy $\E_{top}$ along the path 
\[
[\gamma]\in \pi_1(\CB_{k-1/2}(\hy,\bs),[\fa],[\fb])
\]
is positive. We form the unparameterized moduli space by taking the quotient space
\begin{equation}\label{E19.13}
\cM_{[\gamma]}([\fa],[\fb])\colonequals \M_{[\gamma]}([\fa],[\fb])/\R_t. 
\end{equation}
When $\q$ is admissible, $\cM_{[\gamma]}([\fa],[\fb])$ is a smooth manifold of dimension $\Ind 	(\bd_\gamma^*, \D_\gamma\F_{\hz,\q} )-1$.

\subsection{Sections in the Cokernel}\label{Subsec19.2} Our ultimate goal is to show that admissible perturbations, in the sense of Definition \ref{D19.3}, are generic, cf. Theorem \ref{T21.1}. To do this, we have to understand sections in the cokernel of $(\bd_\gamma^*, \D_\gamma\CF_{\hz, \q})$, when it is not surjective.

Suppose $U\in L^2(\hz, i\R\oplus iT^*\hy\oplus S)$ is $L^2$-orthogonal to the image of  $(\bd_\gamma^*, \D_\gamma\F_{\hz,\q} )$ at a solution $[\gamma]\in \M_k(\fa,\fb)$, then $U$ solves the equation 
\begin{equation}\label{E19.4}
-\dt U(t)+\EHess_{\q,\cgamma(t)} U(t)=0 \text{ by }\eqref{E19.3}.  
\end{equation}
By elliptic regularity, $U$ is smooth and $U\in L^2_1$. We write $U$ as 
\[
U(t)=(\delta c'(t),\delta b'(t), \delta \psi'(t)).
\]
The proof of Theorem \ref{T21.1} in Section \ref{Sec21} relies on a separating property of the section $U$:
\begin{lemma}\label{L19.4} Under above assumptions,  $\delta c'(t)\equiv 0$. Moreover, if $U\neq 0$ and $\cgamma(t)$ is never reducible on $\{t\}\times Y$, then there exists a time slice $t_0\in \R$ such that the tangent vector $(\delta b'(t_0), \delta\psi'(t_0))$ at $\cgamma(t_0)$ can be separated by a cylinder function $f$ tame in $Y$. Here, $Y=\{s\leq 0\}\subset \hy$ is the truncated 3-manifold. 
\end{lemma}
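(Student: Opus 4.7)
Write $U=(\delta c',V')$ with $V'=(\delta b',\delta\psi')$; the two rows of the cokernel equation $-\partial_t U+\EHess_{\q,\cgamma(t)}U=0$ read
\[
-\partial_t \delta c' + \bd_{\cgamma(t)}^* V' = 0, \qquad -\partial_t V' + \bd_{\cgamma(t)}\delta c' + \D_{\cgamma(t)}\grad \CSd_\omega(V') = 0.
\]
The plan has three steps: first I show $\delta c'\equiv 0$ purely from gauge equivariance; then I apply Proposition~\ref{P15.6} pointwise in $t$ to convert the supposed failure of separation into a pure-gauge structure $V'(t)=\bd_{\cgamma(t)}\xi(t)$ on $Y$; finally I combine this with the global constraint $\bd^*V'=0$, the evolution equation, and unique continuation to force $U\equiv 0$.

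For $\delta c'$, gauge equivariance of $\F_{\hz,\q}$ at a solution $\gamma$ gives $\D_\gamma\F_{\hz,\q}\circ\bd_\gamma=0$, so dualizing yields $\bd_\gamma^*\circ(\D_\gamma\F_{\hz,\q})^*=0$. Since the cokernel condition is $\bd_\gamma\delta c'+(\D_\gamma\F_{\hz,\q})^*V'=0$, applying $\bd_\gamma^*$ leaves $\bd_\gamma^*\bd_\gamma\delta c'=(\Delta_{\hz}+|\Phi|^2)\delta c'=0$ on $\R_t\times\hy$. Since $\delta c'\in L^2$ and $|\Phi|^2$ is close to the non-vanishing $|\Phi_*|^2$ on the planar end, integration by parts forces $\delta c'=0$ on the end and hence on all of $\hz$ by connectedness.

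Having $\delta c'=0$, the first row becomes $\bd^*V'=0$ globally on $\hy$. Argue by contradiction: if no $t_0$ works, the non-reducibility hypothesis together with Proposition~\ref{P15.6} provides $\xi(t)\in\Lie(\CG(\hy))$ with $V'(t)=\bd_{\cgamma(t)}\xi(t)$ on $Y$ for every $t$, whence $\bd^*\bd\xi(t)=(\Delta+|\Psi(t)|^2)\xi(t)=0$ on $Y^\circ$. Matching $\partial_t(\bd_{\cgamma(t)}\xi(t))$ with $\D_{\cgamma(t)}\grad\CSd_\omega(\bd\xi)=(0,-\xi\,\partial_t\Psi)$ (the latter identity comes from $\CG^e$-equivariance of $\grad\CSd_\omega$ in temporal gauge, where $(\grad\CSd_\omega)^{\text{spinor}}=-\partial_t\Psi$) yields the pointwise relations $d(\partial_t\xi)=0$ and $(\partial_t\xi)\Psi+2\xi\,\partial_t\Psi=0$ on $Y$. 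A short case analysis on whether the locally constant function $\partial_t\xi$ vanishes, combined with Aronszajn's unique continuation for the scalar equation $(\Delta+|\Psi|^2)\xi=0$, forces $\xi\equiv 0$ on $Y^\circ$ and therefore $V'\equiv 0$ on $\R_t\times Y^\circ$.

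Finally, since $U$ vanishes on the open set $\R_t\times Y^\circ$ in the connected manifold $\R_t\times\hy$ and satisfies the first-order elliptic equation $(-\partial_t+\EHess_{\q,\cgamma(t)})U=0$ (cf.\ Section~\ref{Sec18}), Aronszajn-type unique continuation propagates the vanishing globally, contradicting $U\neq 0$. The main technical hurdle is the third paragraph: one must reconcile the \emph{global} slice condition $\bd^*V'=0$ with the merely \emph{local} identification $V'=\bd\xi$ on $Y$, and the boundary $\{s=0\}\times\Sigma\subset\hy^\circ$ prevents a direct integration-by-parts proof, forcing the case-analytic route through the evolution equation outlined above.
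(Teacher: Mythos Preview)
Your overall architecture matches the paper's: establish $\delta c'\equiv 0$ from gauge equivariance and invertibility of $\bd_\gamma^*\bd_\gamma$, invoke Proposition~\ref{P15.6} to write $V'(t)=\bd_{\cgamma(t)}\xi(t)$ on $Y$, extract constraints from the evolution equation and the slice condition, and close with unique continuation for $U$. The first paragraph (for $\delta c'$) is essentially correct. The gap is in your argument that $\xi\equiv 0$ on $Y$.

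Your ``short case analysis on whether $\partial_t\xi$ vanishes, combined with Aronszajn'' does not close. First, Proposition~\ref{P15.6} produces $\xi(t)$ separately for each $t$, with no regularity in $t$, so writing $\partial_t\xi$ is not justified. Second, even granting differentiability, neither branch of the case split forces $\xi=0$ on an open set (which Aronszajn would require): if $\partial_t\xi\equiv 0$ you are left with $\xi\,\partial_t\Psi=0$, but nothing rules out $\partial_t\Psi\equiv 0$ on $Y$; if $\partial_t\xi\neq 0$ somewhere, the relation $(\partial_t\xi)\Psi+2\xi\,\partial_t\Psi=0$ only yields a pointwise proportionality between $\Psi$ and $\partial_t\Psi$. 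In fact the system you write down (together with $(\Delta+|\Psi|^2)\xi=0$ on the manifold-with-boundary $Y$) admits nonzero solutions $\xi$ in general, so some extra input is genuinely needed.

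That extra input is the $L^2$ integrability of $U$, which the paper uses and you do not. The first component of the evolution equation~\eqref{E19.6} directly gives $\partial_t\delta b'\equiv 0$ on $\R_t\times Y$ (this is a statement about $\delta b'$, with no need to differentiate $\xi$). Since $U\in L^2(\R_t\times\hy)$, a $t$-independent section on $\R_t\times Y$ must vanish: $\delta b'\equiv 0$ on $Y$. Hence $d\xi=0$, so $\xi$ is constant on the connected manifold $Y$, and the scalar constraint $(\Delta+|\Psi|^2)\xi=0$ collapses to $|\Psi|^2\xi=0$; irreducibility of $\cgamma(t)$ on $Y$ then gives $\xi=0$. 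No case analysis, no Aronszajn, and---contrary to your closing worry---no integration by parts on $Y$ is ever needed, because the $L^2$-in-$t$ argument does all the work.
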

\begin{remark} By the unique continuation property, cf. Theorem \ref{T20.10} below, if  $\cgamma(t)$ is reducible at some slice $\{t\}\times Y$, then a solution $\gamma\in \SC_k(\fa,\fb)$ has to reducible globally, which is absurd. So the condition of Lemma \ref{L19.4} is fulfilled. 
\end{remark}

\begin{proof}[Proof of Lemma \ref{L19.4}] Consider a smooth function $\xi\in L^2_{k+1}(\hz, i\R)$ and the section 
	\[
	V_\xi=(0,	\bd_\gamma \xi)\in L^2_{k}(\hz, i\R\oplus iT^*\hy\oplus S).
	\]
	Since $e^{r\xi}\cdot \gamma$ also solves the equation $\F_{\hz, \q}=0$ for any $r\in \R$, taking the derivatives yields
	\[
	\D_\gamma\F_{\hz,\q} (\bd_\gamma \xi)=0,
	\]
	so the vector 
	\[
	(\bd_\gamma^*, \D_\gamma\F_{\hz,\q} )V_\xi=(\bd_\gamma^*\bd_\gamma \xi, 0,0)\in L^2(\hz, i\R\oplus iT^*\hy\oplus S)
	\]
	is $L^2$-orthogonal to $U$. Since the composition $\bd_\gamma^*\bd_\gamma: L^2_2(\hz, i\R)\to L^2(\hz,i\R)$ is an invertible operator and $L^2_{k+1}$ is dense in $L^2_2$, $\delta c'(t)=0$. Now \eqref{E19.4} is reduced to a pair of equations:
	\begin{align}
	0&=\bd_{\cgamma(t)}^*	(\delta b'(t), \delta \psi'(t)),\label{E19.5}\\
	\dt 	(\delta b'(t), \delta \psi'(t))&=\D_{\cgamma(t)} \grad \CSd_{\omega} 	(\delta b'(t), \delta \psi'(t)).\label{E19.6} 
	\end{align}
	
For the second clause of Lemma \ref{L19.4}, suppose on the contrary that $U(t)$ can not be separated for any $t\in \R_t$. By Proposition \ref{P15.6}, we can find a function $\xi(t)\in L^2_1(\hy, i\R)$ such that 
	\[
	(\delta b'(t), \delta \psi'(t))=\bd_{\cgamma(t)} \xi(t) =(-d_{\hy} \xi(t), \xi(t)\Psi(t)) \text{ on } \{t\}\times Y
	\]
	for each $t\in \R_t$. If we write $\grad \CSd_{\omega}$ as 
	\[
	(\grad \CSd_{\omega}^0, \grad^1 \CSd_{\omega})\in L^2(\hy, iT^*\hy\oplus S),
	\]
	then 
	\[
	\grad \CSd_{\omega}(u\cdot \cgamma)=(\grad^0 \CSd_{\omega}(\cgamma), u\cdot \grad^1 \CSd_{\omega}(\cgamma)). 
	\]
	In particular, 
	\[
	\D_{\cgamma(t)} \grad  \CSd_{\omega} (\bd_{\cgamma(t)} \xi(t))=(0, \xi(t)\cdot \grad^1\CSd_{\omega}(\cgamma(t)). 
	\]
	Even though $\bd_{\cgamma(t)} \xi(t)$ and $	(\delta b'(t), \delta \psi'(t))$ only agree over $\{t\}\times Y$, we still have 
	\[
	\D_{\cgamma(t)} \grad  \CSd_{\omega}	(\delta b'(t), \delta \psi'(t))=(0, \xi(t)\cdot \grad^1\CSd_{\omega}(\cgamma(t)) \text{ on } \{t\}\times Y,
	\]
	since the perturbation $\q$ is supported on $Y$ in the sense of Definition \ref{D14.1}. The equation \eqref{E19.6} then implies 
	\[
	\dt \delta b'\equiv 0 \text{ on } \R_t\times Y. 
	\] 
	As $U\in L^2$, $-d_{\hy}\xi(t)=\delta b'(t)\equiv 0$. Now the equation \eqref{E19.5} yields
	\[
	0=\Delta_{\hy} \xi(t)+|\Psi(t)|^2\xi(t)=|\Psi(t)|^2\xi(t) \text{ on } \{t\}\times Y,
	\]
	As a result, $U\equiv 0$ on $\R_t\times Y$. An elliptic operator of the form \eqref{E19.4} satisfies the unique continuation property, so $U\equiv 0$ on $\R_t\times \hy$. 
\end{proof}

\subsection{Spectral Projections}\label{Subsec19.3}  Having discussed the linearized operator on an infinite cylinder $\R_t\times \hy$, we start to look at a finite interval $I=[t_1,t_2]\subset \R_t$ and consider the Atiyah-Patodi-Singer boundary-value problems. As noted in the beginning of Section \ref{Sec19}, we have to justify that the proof of gluing theorem in \cite[Section 18,19]{Bible} remains valid in our case, in the presence of essential spectra. This subsection is devoted to an abstract formalism, while the application in gauge theory will be explained in Subsection \ref{Subsec19.4}. However, the results in these subsections will \textbf{not} be used elsewhere in this paper. 

\medskip

As we will work in a slightly abstract setting, define
\[
E_0\colonequals i\R\oplus iT^*\hy\oplus S\to \hy
\]
Take a reference operator $\A_0$ that acts on sections of $E_0$, extending to bounded linear operators
\[
\A_0: L^2_j(\hy, E_0)\to L^2_{j-1}(\hy, E_0). 
\]
for any $j\geq 1$. Moreover,  assume that $\A_0$ is a unbounded self-adjoint operators on $L^2$ and its spectrum is disjoint from the interval $(-\lambda_1/2, \lambda_1/2)$:
\begin{align}\label{E19.12}
\sigma(\A_0)&\subset (-\infty,-\lambda_1/2]\cap [\lambda_1/2,\infty) \text{ with }\\
\sigma_{ess}(\A_0)&=(-\infty,-\lambda_1]\cap [\lambda_1,\infty),\nonumber
\end{align}
for some $\lambda_1>0$ as in Proposition \ref{P18.1}. One may think of $\A_0$ as a first-order self-adjoint elliptic differential operator plus a compact perturbation. For convenience, suppose the $L^2_j$-norm on $C^\infty_c(\hy, E_0)$ is defined using $\A_0$:
\[
\|s\|_{L^2_j(E_0)}=\| (1+|\A_0|)^j s\|_{L^2(E_0)}, \forall s\in C^\infty(\hy, E_0).
\]

Let $K: C^\infty_c(\hy, E_0)\to C^\infty(\hy,E_0)$ be an operator acting on sections of $E_0$ extending to a compact operator:
\[
K: L^2_{j}(\hy,E_0)\to L^2_{j}(\hy, E_0)
\]
for any $j\geq 0$. Assume that $K$ is self-adjoint on $L^2(\hy, E_0)$. When the sum $\A\colonequals \A_0+K$ is invertible, $L^2(\hy, E_0)$ is the direct sum of the positive and negative spectral spaces of $\A$:
\[
L^2(\hy, E_0)=H^+_{\A}\oplus H^-_{\A}, 
\]
and for any $j\geq 0$, 
\begin{equation}\label{E19.7}
L^2_j(\hy,E_0)=(H^+_{\A}\cap L^2_j)\oplus (H^-_{\A}\cap L^2_j). 
\end{equation}

Let $E\to \hz\colonequals (-\infty, 0]\times \hy$ be the pull-back bundle of $E_0$ over the half cylinder and consider the operator: 
\[
D_\A=\dt+\A: C^\infty(\hz, E)\to C^\infty(\hz, E). 
\]

The next result is a direct consequence of Functional Calculus, cf. \cite[Theorem 17.1.4]{Bible}.
\begin{proposition}\label{P19.5} Let $\hz=(-\infty,0]\times \hy$ be the half cylinder. Suppose the operator
	\[
	\A=\A_0+K: L^2_1(\hy, E_0)\to L^2(\hy, E_0)
	\]
	is invertible, then the operator
\[
D_\A\oplus \Pi^{-}_{\A}\circ r: L^2_k( \hz, E)\to  L^2_{k-1}(\hz, E)\oplus (H^-_\A\cap L^2_{k-1/2}(\hy, E_0))
\]
is also invertible for any $k\geq 1$, where $r: L^2_k(\hz,E)\to L^2_{k-1/2}(\hy, E)$ is the restriction map at the boundary $\{0\}\times\hy$ and 
\[
\Pi_\A^-: L^2_{k-1/2}\to H^-_\A\cap L^2_{k-1/2}(\hy, E_0)
\]
is the spectral projection. The subspace $ H^-_\A\cap L^2_{k-1/2}$ is precisely the image of $\ker D_\A$ under $r$.
\end{proposition}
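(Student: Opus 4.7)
The plan is to follow the functional-calculus proof of \cite[Theorem 17.1.4]{Bible} verbatim; what must be checked is that the essential spectrum of $\A_0$ does not cause any new trouble, thanks to the uniform spectral gap $\sigma(\A)\subset (-\infty,-\lambda_1/2]\cup[\lambda_1/2,\infty)$, which follows from \eqref{E19.12} and the Kato--Rellich theorem applied to the compact perturbation $K$. Once this gap is in hand, the spectral theorem for the invertible self-adjoint operator $\A$ gives orthogonal projections $\Pi^\pm_\A$ and a decomposition \eqref{E19.7} on every $L^2_j$, and the entire analysis reduces to one-dimensional semigroup estimates on each spectral half-line.

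First I would write down an explicit candidate inverse. Given $(f,g)\in L^2_{k-1}(\hz,E)\oplus(H^-_\A\cap L^2_{k-1/2})$, set
\[
u(t) \;=\; e^{-t\A}g \;+\; \int_{-\infty}^{t} e^{-(t-s)\A}\Pi^+_\A f(s)\,ds \;-\; \int_{t}^{0} e^{-(t-s)\A}\Pi^-_\A f(s)\,ds,
\]
interpreted through functional calculus so that $e^{-(t-s)\A}$ is applied to the positive spectral part only when $t-s\geq 0$ and to the negative spectral part only when $t-s\leq 0$; in each case the integrand is bounded by $e^{-|t-s|\lambda_1/2}\|f(s)\|$. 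Formal differentiation gives $D_\A u=f$, and since $\Pi^-_\A$ kills both integral terms at $t=0$ (the middle term is zero there, and the third lies in $H^-_\A$ only after being hit by $\Pi^-_\A$, wait --- more carefully: the middle term vanishes at $t=0$ because the interval of integration collapses, and the third term vanishes at $t=0$ for the same reason), the identity $\Pi^-_\A u(0)=g$ follows at once.

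The main estimate is the $L^2_k$ bound. For $v$ supported in the spectral half-line $|\lambda|\geq\lambda_1/2$, the spectral theorem gives
\[
\|(1+|\A|)^j e^{-t\A}v\|_{L^2(\hy)}^2 \;\leq\; e^{-|t|\lambda_1}\,\|(1+|\A|)^j v\|_{L^2(\hy)}^2,
\]
from which the homogeneous term lies in every $L^2_j(\hz,E)$ with $j\leq k$, time derivatives being converted to $\A$-powers via $\partial_t u_h=-\A u_h$. The two convolution terms are controlled by Young's inequality on $\R$ with the exponentially decaying kernel $e^{-|t-s|\lambda_1/2}$, uniformly after commuting $(1+|\A|)^{k-1}$ through the integrand. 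A small bookkeeping point, and the place where one must be careful with the essential spectrum, is that the intrinsic Sobolev norm $\|(1+|\A_0|)^j\cdot\|$ used to define $L^2_j(\hy,E_0)$ must be shown equivalent to the spectrally natural norm $\|(1+|\A|)^j\cdot\|$ built from $\A=\A_0+K$; this equivalence is the only substantive point where the invertibility of $\A$ and compactness of $K$ on each $L^2_j$ are used, and it follows from the resolvent identity together with the closed graph theorem.

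For injectivity, suppose $u\in L^2_k(\hz,E)$ satisfies $D_\A u=0$ and $\Pi^-_\A r(u)=0$. Splitting $u=u^++u^-$ via $\Pi^\pm_\A$, each piece satisfies $\partial_t u^\pm=-\A u^\pm$, hence $u^\pm(t)=e^{-t\A}u^\pm(0)$. The boundary condition forces $u^-(0)=0$ and so $u^-\equiv 0$; for $u^+$, the spectral theorem gives $\|u^+(t)\|_{L^2(\hy)}\geq\|u^+(0)\|_{L^2(\hy)}$ for all $t\leq 0$, which is incompatible with $u\in L^2(\hz,E)$ unless $u^+(0)=0$. This same argument identifies $\ker D_\A\subset L^2_k$ with $H^-_\A\cap L^2_{k-1/2}(\hy,E_0)$ via the restriction map, proving the last clause. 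The anticipated main obstacle is the Sobolev-norm equivalence just mentioned; apart from that, no step in \cite[\S 17.1]{Bible} is affected by the presence of essential spectrum, since every estimate is phrased purely in terms of the spectral measure of $\A$ and its behaviour away from zero.
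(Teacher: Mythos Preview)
Your approach is correct and is exactly what the paper intends: it simply states that the result ``is a direct consequence of Functional Calculus, cf.\ \cite[Theorem 17.1.4]{Bible},'' and your write-up is a faithful expansion of that argument. Two small slips to fix: first, Kato--Rellich (Weyl) only gives $\sigma_{ess}(\A)=\sigma_{ess}(\A_0)$, so you cannot conclude $\sigma(\A)\subset(-\infty,-\lambda_1/2]\cup[\lambda_1/2,\infty)$---the compact perturbation may introduce eigenvalues in $(-\lambda_1/2,\lambda_1/2)\setminus\{0\}$; all you need (and have, by invertibility) is \emph{some} gap $(-\delta,\delta)$, and every estimate goes through with $\delta$ in place of $\lambda_1/2$. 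Second, in your parenthetical about $\Pi^-_\A u(0)=g$ you have the two integral terms swapped: at $t=0$ the $\Pi^+_\A$-integral has domain $(-\infty,0]$ (not collapsing) but lands in $H^+_\A$ and is killed by $\Pi^-_\A$, while the $\Pi^-_\A$-integral has collapsing domain $[0,0]$.
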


As $\A$ differs from $\A_0$ only by a compact operator, it is expected that $\Pi_\A^-$ forms a ``compact" family as $\A$ varies. We make this precise in the next proposition. 

\begin{proposition}\label{P19.6} Given an invertible operator $\A=\A_0+K$, the difference of their spectral projections 
	\[
	\Pi_{\A}^--\Pi_{\A_0}^-: L^2_{k-1/2}(\hy, E_0)\to L^2_{k-1/2}(\hy, E_0)
	\]
	is compact for any $k\geq 1$, i.e. $\Pi_{\A_0}$ and $\Pi_{\A}$ are $k$-commensurate in the sense of \cite[Definition 17.2.1]{Bible}.
\end{proposition}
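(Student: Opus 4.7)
The plan is to represent $\Pi_\A^- - \Pi_{\A_0}^-$ as a norm-convergent integral of compact operators on $L^2_{k-1/2}$. Since both $\A_0$ and $\A$ are self-adjoint and invertible, functional calculus gives
$$\Pi_\A^- - \Pi_{\A_0}^- = \tfrac{1}{2}\bigl[\mathrm{sgn}(\A_0) - \mathrm{sgn}(\A)\bigr],$$
so it suffices to show that $\mathrm{sgn}(\A) - \mathrm{sgn}(\A_0)$ is compact on $L^2_{k-1/2}(\hy, E_0)$. Starting from the scalar identity $\mathrm{sgn}(\lambda) = \tfrac{2}{\pi}\int_0^\infty \lambda/(\lambda^2 + t^2)\,dt$ and the spectral theorem, we obtain
$$\mathrm{sgn}(\A) - \mathrm{sgn}(\A_0) = \frac{2}{\pi}\int_0^\infty D(t)\,dt, \qquad D(t) \colonequals \A(\A^2+t^2)^{-1} - \A_0(\A_0^2+t^2)^{-1},$$
interpreted as a strong limit of the truncated integrals $\int_0^R D(t)\,dt$ over each vector.

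To analyze the integrand, I will use the factorization $2\A(\A^2+t^2)^{-1} = (\A - it)^{-1} + (\A + it)^{-1}$ together with the second resolvent identity
$$(\A \pm it)^{-1} - (\A_0 \pm it)^{-1} = -(\A \pm it)^{-1}\,K\,(\A_0 \pm it)^{-1}.$$
Because the $L^2_{k-1/2}$-norm is defined through functional calculus of $\A_0$, the resolvent $(\A_0 \pm it)^{-1}$ commutes with the weight $(1+|\A_0|)^{k-1/2}$ and hence extends to a bounded operator on $L^2_{k-1/2}$ with operator norm at most $1/\sqrt{(\lambda_1/2)^2 + t^2}$. A Neumann-series argument in $(\A_0 \pm it)^{-1} K$, which is small in norm for $|t|$ large and bounded everywhere thanks to the invertibility of $\A$ (so its spectrum is bounded away from the imaginary axis), yields the analogous estimate $\|(\A \pm it)^{-1}\|_{L^2_{k-1/2}} \le C/(1+|t|)$. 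Since $K$ is compact on $L^2_{k-1/2}$ by hypothesis, $D(t)$ is compact for every $t > 0$, with
$$\|D(t)\|_{L^2_{k-1/2}} \le \frac{C\,\|K\|_{L^2_{k-1/2}}}{1+t^2}.$$

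This bound makes $\int_0^\infty D(t)\,dt$ absolutely convergent in the operator-norm topology on $L^2_{k-1/2}$. Because the compact operators form a norm-closed subspace of the bounded operators, the limit $\mathrm{sgn}(\A) - \mathrm{sgn}(\A_0)$, and hence $\Pi^-_\A - \Pi^-_{\A_0}$, is compact on $L^2_{k-1/2}$ as claimed. The conceptual obstacle that this approach sidesteps is precisely the unboundedness of the essential spectrum $\sigma_{ess}(\A_0) = \sigma_{ess}(\A) = (-\infty,-\lambda_1]\cup[\lambda_1,\infty)$: a naive Riesz contour integral $-\frac{1}{2\pi i}\oint_\Gamma (z-\A)^{-1}\,dz$ cannot enclose the negative half-line by a compact curve, and unbounded contours fail to converge for the individual operators. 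The sign-function representation keeps the divergent ``contribution at infinity'' consistently in both $\mathrm{sgn}(\A)$ and $\mathrm{sgn}(\A_0)$, so that the resolvent identity produces the $1/t^2$ decay of their difference; verifying the uniform-in-$t$ Neumann bound on $(\A\pm it)^{-1}$ is the main technical point, and it rests on the standing assumption that $K$ extends to a bounded operator on every $L^2_j$.
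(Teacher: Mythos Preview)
Your approach is correct and takes a genuinely different route from the paper. The paper argues via the half-cylinder $\hz = (-\infty,0]\times\hy$: given a bounded sequence $w_i \in H^-_\A \cap L^2_{k-1/2}$, it solves $D_\A v_i = 0$ on $\hz$ with boundary value $w_i$ (Proposition~\ref{P19.5}), then solves $D_{\A_0} u_i = -K(v_i)$ with $\Pi_{\A_0}^- \circ r(u_i) = 0$, and identifies $(\Pi_\A^- - \Pi_{\A_0}^-)w_i$ with the explicit semigroup integral $\int_{-\infty}^0 e^{t\A_0}(-K v_i(t))^+\,dt$. Compactness is then read off by approximating $K$ by finite-rank operators inside this formula. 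Your resolvent-integral argument is more self-contained and purely operator-theoretic, bypassing the auxiliary boundary-value problem; the paper's route, by contrast, stays within the APS framework already in place for the gluing theory, and the explicit formula \eqref{E19.8} feeds directly into the essential-norm estimates of Proposition~\ref{P19.10}.

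One step in your argument is underjustified: the bound $\|(\A\pm it)^{-1}\|_{L^2_{k-1/2}} \le C/(1+|t|)$ for \emph{small} $t$ does not follow from the Neumann series (which only controls large $t$), nor directly from ``invertibility of $\A$'', which is given on $L^2$ rather than on $L^2_{k-1/2}$. The fix is short: since $(\A_0\pm it)^{-1}K$ is compact on $L^2_{k-1/2}$, the operator $I + (\A_0\pm it)^{-1}K$ is Fredholm of index zero there; injectivity follows because any element of its kernel lies in the kernel of $\A\pm it$ on $L^2 \supset L^2_{k-1/2}$, which is trivial. Norm-continuity of $t\mapsto (\A_0\pm it)^{-1}K$ (strong continuity of the resolvent composed with a compact operator is norm-continuous) then gives the uniform bound on any bounded $t$-interval.
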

\begin{proof} We follow the trick from \cite[Proposition 17.2.4]{Bible}. It suffices to show for any bounded sequence $\{w_i\}\subset L^2_{k-1/2}$, it image under $\Pi_{\A}^--\Pi_{\A_0}^-$ contains a converging subsequence.  In terms of the decomposition \eqref{E19.7}, we can deal with entries of $\{w_i\}$ separately. By the symmetry of $H^\pm_\A$, we focus on the case when $\{w_i\}\subset H^-_\A\cap L^2_{k-1/2}$. By Proposition \ref{P19.5}, there exists sections $\{v_i\}\subset L^2_k(\hz, E)$ such that 
	\[
	D_{\A} v_i=0 \text{ and } r(v_i)=w_i. 
		\]
		
	Apply Proposition \ref{P19.5} again for $\A_0$ to find solutions $\{u_i\}\subset L^2_k(\hz, E)$ with 
	\[
	D_{\A_0} u_i=-K(v_i) \text{ and } \Pi_{\A_0}^-\circ r(u_i)=0. 
	\]
Since $D_{\A_0}(u_i-v_i)=0$, $r(u_i-v_i)\in H_{\A_0}^-$. So
	\begin{align*}
		( \Pi_{\A}^-- \Pi_{\A_0}^-)(w_i)=(1-\Pi_{\A_0}^-)(w_i)=\Pi_{\A_0}^+\circ r(v_i)=\Pi_{\A_0}^+\circ r(u_i). 
	\end{align*}
	
	One may write the last term explicitly in terms of $v_i$ using formulae on \cite[P.299]{Bible}:
	\begin{equation}\label{E19.8}
v_i\mapsto	\Pi_{\A_0}^+\circ r(u_i)=y_i\colonequals \int^0_{-\infty} e^{t\A_0} (-K(v_i(t)))^+dt. 
	\end{equation}
	where $(\cdot)^+$ denotes the positive part in $H^+_{\A_0}$. As this point, approximate $K$ by finite rank operators. The operator $v\mapsto y $ defined by the expression \eqref{E19.8} is also approximated by finite rank operators in the norm topology, so \eqref{E19.8} is also compact.
	
	\medskip
	
	Here is the main difference of this proof from that of \cite[Proposition 17.2.4]{Bible}: the operator
	\[
	v\mapsto \int^0_{-\infty} e^{t\A_0} (v(t))^+dt,\ L^2_k(\hz, E)\to L^2_{k-1/2}(\hy, E_0), 
	\]
	is not compact as $\A_0$ has essential spectrum, so the compactness of $\Pi_{\A}^--\Pi_{\A_0}^-$ really arises from $K$. 
\end{proof}

With Proposition \ref{P19.6} in mind, we are ready to study the boundary value problem on a finite interval. 

\begin{proposition}\label{P19.7} Let $I=[t_1,t_2]_t$ be a finite interval and $\hz=I\times \hy$. Given invertible operators $\A_i=\A_0+K_i, i=1,2$ as compact perturbations of $\A_0$, consider the operator 
	\[
	D=\dt+\A_0+K(t): L^2_k(\hz, E)\to L^2_{k-1}(\hz, E)
	\]
	on $\hz$ and spectral projections
	\begin{align*}
	\Pi^+_{\A_1}\circ r_1&:  L^2_k(\hz, E)\to H^+_{\A_1}\cap L^2_{k-1/2}(\{t_1\}\times\hy, E_0),\\
		\Pi^-_{\A_2}\circ r_2&:  L^2_k(\hz, E)\to H^-_{\A_2} \cap L^2_{k-1/2}(\{t_2\}\times\hy, E_0).
	\end{align*}
	where $K: I\to \Hom(L^2_j, L^2_j), j\geq 0$ is a smooth family of self-adjoint compact operators. Then the operator 
	\[
	P\colonequals D\oplus (\Pi^+_{\A_1}, \Pi^-_{\A_2})\circ (r_1, r_2)
	\]
	 is Fredholm, whose index is equal to the spectrum flow from $\A_1$ to $\A_2$. In particular, the restriction map on the kernel of $D$:
	\[
		(\Pi^+_{\A_1}, \Pi^-_{\A_2})\circ (r_1, r_2): \ker D\to H^+_{\A_1}\cap L^2_{k-1/2}(\hy, E_0)\oplus H^-_{\A_2}\cap L^2_{k-1/2}(\hy, E_0)
	\]
	is Fredholm of the same index. 
\end{proposition}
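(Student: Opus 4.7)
The plan is to reduce both the Fredholm property and the index calculation to the two ingredients already available: the half-cylinder invertibility (Proposition \ref{P19.5}) and the full-line spectral flow formula (Theorem \ref{T16.2}), with Proposition \ref{P19.6} controlling the error terms. First, I would extend the family $K(t)$ to a smooth family $\widetilde{K}(t)$ on all of $\R$ by setting $\widetilde{K}(t) = K_1$ for $t \leq t_1$ and $\widetilde{K}(t) = K_2$ for $t \geq t_2$, with $\widetilde{\A}(t) = \A_0 + \widetilde{K}(t)$. By the assumption \eqref{E19.12} on $\A_0$, this yields a path in $\SA$ in the sense of Section \ref{Sec9}, whose endpoints are $\A_1$ and $\A_2$. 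Theorem \ref{T16.2} then gives that
\[
D_{\widetilde{\A}} : \W_{k-1} \to L^2_{k-1}(\R \times \hy, E)
\]
is Fredholm of index equal to the spectral flow from $\A_1$ to $\A_2$.

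Next, I would construct an approximate inverse to $P$ by patching. Proposition \ref{P19.5} supplies an honest inverse $Q_1^-$ to $D_{\A_1} \oplus \Pi^-_{\A_1}\circ r$ on the left half-cylinder $\hz_1^- = (-\infty, t_1] \times \hy$; the analogous statement (proved in the same way by reversing the sign of $t$) gives an inverse $Q_2^+$ of $D_{\A_2} \oplus \Pi^+_{\A_2} \circ r$ on $\hz_2^+ = [t_2, \infty) \times \hy$. Using cutoff functions supported near $t_1$ and $t_2$, I would build from $Q_1^-$, an internal parametrix on $I \times \hy$, and $Q_2^+$ a parametrix for $D_{\widetilde{\A}}$ on the full line. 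Conversely, given any right/left parametrix for $D_{\widetilde{\A}}$, cutting off in the $t$-direction and composing with the spectral projectors $\Pi^+_{\A_1}$, $\Pi^-_{\A_2}$ at the boundary produces left and right parametrices for $P$. The error terms have two sources: commutators with the cutoff, which are compactly supported in $t$ and hence compact by Rellich's lemma on the finite cylinder, and the mismatch between the spectral projectors appearing in the patching — exactly the type of error controlled by Proposition \ref{P19.6}, which shows that $\Pi^-_{\A} - \Pi^-_{\A_0}$ is compact on the boundary $L^2_{k-1/2}$-spaces.

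Once the parametrix construction is in place, the index follows from additivity under gluing. Schematically, if $P_1^-$ and $P_2^+$ denote the boundary-value operators on the half-cylinders, the gluing gives
\[
\mathrm{ind}(D_{\widetilde{\A}}) = \mathrm{ind}(P_1^-) + \mathrm{ind}(P) + \mathrm{ind}(P_2^+),
\]
and by Proposition \ref{P19.5} (and its symmetric counterpart) the outer two summands vanish. Combined with the previous step, this gives $\mathrm{ind}(P)$ equals the spectral flow from $\A_1$ to $\A_2$. The final clause — that the restriction map on $\ker D$ into $H^+_{\A_1} \oplus H^-_{\A_2}$ has the same index — is obtained by observing that $D : L^2_k(\hz, E) \to L^2_{k-1}(\hz, E)$ is surjective and has infinite-dimensional kernel; indeed, surjectivity is a consequence of being able to solve the inhomogeneous equation via the Duhamel-type formula built from $e^{-(t-t_1)\widetilde{\A}}$ on $I$ (valid because $I$ is compact and the spectral decomposition of each $\A(t)$ is available up to compact error). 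Projecting to the boundary then identifies $\ker D \to H^+_{\A_1} \oplus H^-_{\A_2}$ with $P$ modulo the surjective $D$-factor.

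The main obstacle will be the parametrix patching in the presence of essential spectrum: unlike the closed-manifold case, one cannot use a compact embedding $L^2_1(\hy) \hookrightarrow L^2(\hy)$ to generate compactness of the error terms globally in $\hy$. One must exploit commensurability (Proposition \ref{P19.6}) to locate the compactness precisely at the boundary, and argue carefully that the cutoff commutators produce operators supported in a compact region of $t$ whose image lies in a higher Sobolev space on a finite piece of the cylinder, so that compactness is recovered via Rellich applied in the $t$-direction only. This is the analogue of \cite[Section 17.2]{Bible}, adapted to the setting where $\A_0$ has nontrivial essential spectrum away from zero.
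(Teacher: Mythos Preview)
Your gluing strategy is a legitimate alternative route, and the index portion (comparing with the full-line operator via Theorem \ref{T16.2}) is exactly what the paper does. But your stated mechanism for compactness of the parametrix error terms is wrong: ``Rellich applied in the $t$-direction only'' does not give compactness when $\hy$ is non-compact. Concretely, the inclusion $L^2_{k+1}(J\times\hy)\hookrightarrow L^2_k(J\times\hy)$ for a compact interval $J$ is \emph{not} compact (take $u_n=f(t)\,e_n$ for an orthonormal sequence $e_n$ in $L^2(\hy)$), and the commutator $[\chi,D]=\chi'(t)$ is merely a bounded multiplication operator. The actual source of compactness in this setting is the family of compact operators $K(t),K_1,K_2$ themselves --- exactly as in the proof of Theorem \ref{T16.2}, where error terms are of the form $(K(t)-K_i)\cdot$ and one invokes the Robbin--Salamon lemma. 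Your patching can be made to work, but only once the error terms are organized so that every non-invertible remainder carries a factor of $K$ or $K_i$; the cutoff commutators alone do not suffice.

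The paper sidesteps the gluing entirely and takes a shorter route: it first shows the \emph{model} operator $P_0$ (with $K_1=K_2=K(t)\equiv 0$, hence boundary projections $\Pi^\pm_{\A_0}$) is outright invertible on the finite cylinder by Functional Calculus. Then $D-D_{\A_0}$ is compact, and Proposition \ref{P19.6} makes $\Pi^\pm_{\A_i}$ and $\Pi^\pm_{\A_0}$ mutual inverses modulo compact on the boundary spaces; so $P$ differs from $P_0$ by compact perturbations after identifying targets, and Fredholmness follows immediately. The index is then obtained by the same concatenation with the full-line operator you propose. For the last clause, the paper writes $P$ in block-lower-triangular form relative to $L^2_k=C\oplus\ker D$ and cites \cite[Proposition 17.1.5]{Bible} together with unique continuation for the bijectivity of $D|_C$ --- more direct than your time-dependent Duhamel sketch, which would itself need care in the presence of essential spectrum.
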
 

In the sequel, we will abbreviate $H^+_{\A}\cap L^2_{k-1/2}(\hy, E_0)$ into $H^+_{\A}$ when the regularity of sections is clear from the context. 

\begin{proof} We start with the model case when $K_1=K_2=K(t)\equiv  0$. The operator 
	\[
	P_0=D_{\A_0}\oplus(\Pi^+_{\A_0}\circ r_1\oplus 	\Pi^-_{\A_0}\circ r_2)
	\]
	is then invertible by direct computation using Functional Calculus. For the general case, note that $D-D_{\A_0}$ is a compact operator. As for the boundary projections, Proposition \ref{P19.6} implies that 
	\begin{align*}
	\Pi_{\A_0}^+: H^+_{\A_1}&\to H^+_{\A_0},\\
	\Pi_{\A_1}^+: H^+_{\A_0}&\to H^+_{\A_1},
	\end{align*}
	are mutual inverses modulo compact operators, which also holds for the negative projections $\{\Pi_{\A_0}^-,\Pi_{\A_1}^-\}$. To compute the index, we use the concatenation trick and compare $P$ with the operator on the infinite cylinder:
	\[
	\dt+\A_0+K'(t): L^2_k(\R_t\times \hy, E)\to L^2_{k-1}(\R_t\times \hy, E). 
	\]
	where $K'$ is a smooth path of compact operators connecting $K_1$ and $K_2$: 
	\[
	K‘(t)\equiv K_1 \text{ if } t\leq t_1;\ K’(t)\equiv K_2 \text{ if } t\geq t_2.
	\]
	Now apply Proposition \ref{P19.1} or Theorem \ref{T16.2}. If we write $L^2_k(\hz, E)$ as a direct sum 
	\[
	C\oplus \ker D
	\]
	where $C$ is the $L^2_k$-orthogonal complement of $\ker D$, then $P$ is cast into a lower triangular metric
	\begin{equation}\label{E19.9}
	\begin{pmatrix}
D & 0\\
* & 	(\Pi^+_{\A_1}, \Pi^-_{\A_2})\circ (r_1, r_2)
\end{pmatrix}.
	\end{equation}
	As $D|_{C}$ is a bijection by \cite[Proposition 17.1.5]{Bible} and the unique continuation property, the other diagonal entry has to be Fredholm of the same index as that of $P$.  
\end{proof}

\begin{remark}\label{R19.10}
Here is a major difference of our case from \cite[Proposition 17.2.5]{Bible}: the projection map onto the complementary spectral subspaces:
\[
	(\Pi^-_{\A_1}, \Pi^+_{\A_2})\circ (r_1, r_2): \ker D\to H^-_{\A_1}\oplus H^+_{\A_2}
\]
is not compact. To see this, consider the model case when $\A_1=\A_2=\A_0$ and $K(t)\equiv 0$, so $\ker D$ is parametrized by the image of $(\Pi^+_{\A_0}, \Pi^-_{\A_0})\circ (r_1, r_2)$. Sticking to the positive part, the composition map
\begin{align*}
H_{\A_0}^+\cap L^2_{k-1/2}(\{t_1\}\times \hy, E_0)&\to H_{\A_0}^+\cap L^2_{k-1/2}(\{t_2\}\times \hy, E_0)\\
w&\mapsto v\colonequals P^{-1} (0,w,0)\in \ker D\\
&\mapsto \Pi^+_{\A_0}\circ r_2(v). 
\end{align*}
is simply $e^{-\A_0(t_2-t_1)}$ acting on $H^+_{\A_1}$ which has essential spectrum $[0,e^{-\lambda_1(t_2-t_1)}]$. As a result, it is never compact. 
\end{remark}

To circumvent this problem, we have to refine the estimates when the 3-manifold $\hy$ is not compact. Recall that a Fredholm operator $P$ is invertible modulo compact operators. A right (left) parametrix $Q$ is a right (left) inverse of $P$ modulo compact operators, i.e.
\[
PQ=\Id+\text{ a compact term}.
\]
Such a $Q$ is unique up to a compact term and is also a (two-sided) parametrix.

The difference up to a compact term is always insignificant. This motivates the next definition and lemma:
\begin{lemma}\label{L19.9} Let $H_i,\ i=1,2$ be Hilbert spaces. For any operator $Q: H_2\to H_1$, define its essential norm as 
	\[
	\|Q\|_{ess}\colonequals \inf_{K \text{ compact }} \|Q+K\|_{H_2\to H_1}. 
	\]
	For any Fredholm operator $P: H_1\to H_2$ with a parametrix $Q$, the perturbed operator $P+F$ is Fredholm if $\|FQ\|_{ess} <1$. 
\end{lemma}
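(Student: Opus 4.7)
The plan is to build a right parametrix for $P+F$ by perturbing $Q$, and then to conclude by using that $Q$ itself is Fredholm.

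By definition of the essential norm, the hypothesis $\|FQ\|_{ess}<1$ furnishes a compact operator $K_3$ with $\|FQ+K_3\|<1$, so by the Neumann series the bounded inverse $R:=(I+FQ+K_3)^{-1}$ exists on $H_2$. Writing $PQ=I+K_1$ with $K_1$ compact, the direct computation
\[
(P+F)(QR)=(I+K_1+FQ)R=\bigl((I+FQ+K_3)+(K_1-K_3)\bigr)R=I+(K_1-K_3)R
\]
shows that $(P+F)(QR)$ is the identity plus a compact operator, and in particular is Fredholm. Thus $QR$ is a right parametrix for $P+F$.

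Next, $Q$ itself is Fredholm: since $PQ-I$ and $QP-I$ are compact, the class $[Q]$ in the Calkin algebra $\mathcal{B}/\mathcal{K}$ is the two-sided inverse of $[P]$, which is invertible because $P$ is Fredholm. Consequently $QR$ is Fredholm as the composition of a Fredholm operator with an invertible one. From $(P+F)(QR)=I+\text{compact}$ and the Fredholmness of $QR$ one now concludes via the standard fact that if $AB$ and $B$ are Fredholm, so is $A$ (equivalently, $[P+F]=[(P+F)(QR)]\cdot[QR]^{-1}$ is a product of invertibles in $\mathcal{B}/\mathcal{K}$). Hence $P+F$ is Fredholm.

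There is no substantive obstacle; the argument is purely functional-analytic. The only point requiring attention is that the hypothesis is placed on $\|FQ\|_{ess}$ rather than $\|QF\|_{ess}$, which matches the order of composition in $(P+F)Q$; an entirely symmetric left-parametrix argument would work under the dual hypothesis $\|QF\|_{ess}<1$.
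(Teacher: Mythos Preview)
Your proof is correct and follows essentially the same route as the paper. The paper's argument is even terser: it simply observes that $(P+F)Q$ is Fredholm (since $PQ$ differs from the identity by a compact operator and $\|FQ\|_{ess}<1$) and that $Q$ is Fredholm, then concludes that $P+F$ is Fredholm---exactly the Calkin-algebra step you spell out. Your introduction of $R=(I+FQ+K_3)^{-1}$ is harmless but unnecessary, as the same conclusion follows directly from $(P+F)Q$ without passing to $QR$.
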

\begin{proof} As $(P+F)Q$ and $Q$ are Fredholm, $P+F$ is Fredholm as well. 
\end{proof}

Now let us recast Proposition \ref{P19.7} into a more convenient form for applications. Recall that the essential spectrum of $\A_0$ is away from the origin:
\[
\sigma_{ess}(\A_0)=(-\infty,\lambda_1]\cup [\lambda_1,\infty),
\]
for some $\lambda_1>0$.

\begin{proposition}\label{P19.10} Under the assumption of Proposition \ref{P19.7}, the operator $P$ is Fredholm. The essential norm of its parametrix $\widetilde{Q}$ is bounded by a constant $C_1$ that depends only on $\lambda_1$. The same conclusion applies to the projection map 
	\[
		(\Pi^+_{\A_1}, \Pi^-_{\A_2})\circ (r_1, r_2): \ker D\to H^+_{\A_1}\cap L^2_{k-1/2}(\{t_1\}\times \hy, E_0)\oplus H^-_{\A_2}\cap L^2_{k-1/2}(\{t_2\}\times\hy\, E_0).
	\]
	and its parametrix $Q$. Moreover, the essential norm of the complementary projection pre-composed with $Q$:  
		\[
	(\Pi^-_{\A_0}, \Pi^+_{\A_0})\circ (r_1, r_2)\circ Q: H^+_{\A_1}\oplus H^-_{\A_2}\xrightarrow{Q}\ker D\to H^-_{\A_0}\oplus H^+_{\A_0}
	\]
	is bounded above by $e^{-\lambda_1 |I|}$, where $|I|=|t_2-t_1|$ is the length of $I$.
\end{proposition}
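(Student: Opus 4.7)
The plan is to reduce everything to the model case $\A_1=\A_2=\A_0$ and $K(t)\equiv 0$, where the operator admits an explicit inverse via Functional Calculus, and then transfer to the general case using the compactness of $K(t)$ together with the compactness of the spectral-projection differences established in Proposition~\ref{P19.6}.

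First I would handle the model operator $P_0$. Functional Calculus shows that the map $(\Pi^+_{\A_0},\Pi^-_{\A_0})\circ(r_1,r_2):\ker D_{\A_0}\to H^+_{\A_0}\oplus H^-_{\A_0}$ is an isomorphism whose inverse $Q_0$ sends $(w^+,w^-)$ to
\[
v(t)=e^{-\A_0(t-t_1)}w^+ + e^{-\A_0(t-t_2)}w^-.
\]
Since $\A_0\ge\lambda_1/2$ on $H^+_{\A_0}$ and $-\A_0\ge\lambda_1/2$ on $H^-_{\A_0}$, each summand decays exponentially into the interior of $I$, and so its $L^2_k$ norm over $I\times\hy$ is bounded by $C\|w^\pm\|_{L^2_{k-1/2}}$ with $C=C(\lambda_1)$ uniform in $|I|$. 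Composing with the bounded inverse of $D_{\A_0}$ on the full cylinder $\R\times\hy$ (applied to the zero-extension of $f$) produces an inverse $\widetilde Q_0$ of $P_0$ whose operator norm depends only on $\lambda_1$.

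For the general case I would write $P=P_0+R$, where $R$ has three pieces: the bulk multiplication by $K(t)$, compact via a standard Rellich argument (approximate the smooth family $K(\cdot)$ by step functions and reduce to finite-rank-operator-times-characteristic-function, as in the lemma following Theorem~\ref{T16.2}); and the two boundary-projection differences $(\Pi^+_{\A_1}-\Pi^+_{\A_0})\circ r_1$ and $(\Pi^-_{\A_2}-\Pi^-_{\A_0})\circ r_2$, both compact by Proposition~\ref{P19.6}. Hence $P\widetilde Q_0=\Id+R\widetilde Q_0$ with $R\widetilde Q_0$ compact, so $\widetilde Q_0$ serves as a parametrix for $P$ whose essential norm is bounded by its operator norm, i.e.\ by a constant depending only on $\lambda_1$. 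The block factorization~\eqref{E19.9} then transfers the bound to the parametrix $Q$ of the kernel-level projection map.

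For the final $e^{-\lambda_1|I|}$ bound, the explicit formulas in the model case give
\[
\Pi^-_{\A_0}\circ r_1\circ Q_0=e^{\A_0(t_2-t_1)}\big|_{H^-_{\A_0}},\qquad \Pi^+_{\A_0}\circ r_2\circ Q_0=e^{-\A_0(t_2-t_1)}\big|_{H^+_{\A_0}}.
\]
The hard part will be sharpening beyond the naive operator-norm bound of $e^{-\lambda_1|I|/2}$, which reflects only the spectral gap at $\pm\lambda_1/2$ rather than the essential spectrum at $\pm\lambda_1$. The key observation is that for any $\delta\in(0,\lambda_1/2)$ the discrete eigenvalues of $\A_0$ in $[\lambda_1/2,\lambda_1-\delta]$ are finite in number and of finite multiplicity (the discrete spectrum can accumulate only at the essential spectrum), so the associated spectral projection is finite rank, hence compact; subtracting the corresponding finite-rank piece of the exponential leaves an operator of norm $\le e^{-(\lambda_1-\delta)|I|}$. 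Letting $\delta\to 0$ yields essential norm $\le e^{-\lambda_1|I|}$, and the passage to the general case preserves this bound since $Q-Q_0$ is compact by the previous step.
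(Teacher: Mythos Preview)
Your approach matches the paper's: reduce to the model case $\A_1=\A_2=\A_0$, $K(t)\equiv 0$ via Functional Calculus, then transfer to the general case using compactness of $K(t)$ and of the projection differences from Proposition~\ref{P19.6}. Your finite-rank argument for sharpening the bound from $e^{-\lambda_1|I|/2}$ to $e^{-\lambda_1|I|}$ is exactly what the paper invokes implicitly when Remark~\ref{R19.10} identifies the essential spectrum of $e^{-\A_0|I|}$ on $H^+_{\A_0}$ as $[0,e^{-\lambda_1|I|}]$; spelling it out is a nice touch.

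One bookkeeping point to tighten: $P$ and $P_0$ land in different spaces ($H^+_{\A_1}\oplus H^-_{\A_2}$ versus $H^+_{\A_0}\oplus H^-_{\A_0}$), and $Q$, $Q_0$ take values in different kernels $\ker D$ versus $\ker D_{\A_0}$, so ``$P=P_0+R$'' and ``$Q-Q_0$ compact by the previous step'' both need interpretation. The paper handles this by defining the parametrix as $\widetilde Q=(R,\,Q_0\circ(\Pi^+_{\A_0},\Pi^-_{\A_0}))$ --- precomposing with the norm-one projection $H^+_{\A_1}\oplus H^-_{\A_2}\to H^+_{\A_0}\oplus H^-_{\A_0}$ --- which matches codomains while preserving the norm bound, and then splitting the final estimate into two sub-steps: first allow $K(t)\neq 0$ with $\A_1=\A_2=\A_0$ and check $Q'-Q_0=P_0^{-1}\circ(\text{compact})$ directly, then absorb the general boundary projections via $Q=Q'\circ(\Pi^+_{\A_0},\Pi^-_{\A_0})$ and the estimate $\|M\|_{ess}\le\|M'\|_{ess}$.
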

\begin{proof} We divide the proof into four steps:
	
	\medskip
	
	\Step 1. Estimate $\widetilde{Q}$. When $K_1=K_2=K(t)\equiv 0$, we obtain the model operator  
\[
	P_0=D_{\A_0}\oplus(\Pi^+_{\A_0}\circ r_1\oplus 	\Pi^-_{\A_0}\circ r_2): L^2_k(\hz, E)\to L^2_{k-1}(\hz, E)\oplus (H^+_{\A_0}\oplus H^-_{\A_0}).
\]
Let $\wQ_0=(R,Q_0)$ be the inverse of $P_0$ with 
\begin{align*}
	Q_0: H^+_{\A_0}\oplus H^-_{\A_0}&\to L^2_k(\hz, E),\\
	R:  L^2_{k-1}(\hz, E)&\to L^2_k(\hz, E).
\end{align*}
 The norm $\|\wQ_0\|$ is bounded by a constant $C_1$ independent of the length $|I|$. In the general case, set $\wQ\colonequals (R, Q_0\circ ( \Pi^+_{\A_0}, \Pi^-_{\A_0}))$ with 
\[
 ( \Pi^+_{\A_0}, \Pi^-_{\A_0}): H^+_{\A_1}\oplus H^-_{\A_2}\to H^+_{\A_0}\oplus H^-_{\A_0}.
\]
Then $	\|\wQ\|\leq \|\wQ_0\|
	$, since we have used $\A_0$ to define the $L^2_j$-norm on $C_c^\infty(\hy, E_0)$. By Proposition \ref{P19.6}, projection maps:
	\[
	\Pi_{\A_i}^\pm: H^\pm_{\A_0}\to H^\pm_{\A_i},\  \Pi_{\A_0}^\pm: H^\pm_{\A_i}\to H^\pm_{\A_0}, i=1,2
	\]
	are mutual inverses modulo compact operators; so $\wQ$ is a parametrix of $P$.
	
	\Step 2. Estimate $Q$.  Using the block form \eqref{E19.9}, we write $\wQ$ as a 2 by 2 matrix:
	\[
	\begin{pmatrix}
Q_{11} & Q_{12}\\
Q_{21} & Q_{22}
	\end{pmatrix}.
	\]
	Take $Q\colonequals Q_{22}$ to be the bottom right entry, then
	\[
	Q: H^-_{\A_1}\oplus H^+_{\A_2}\to \ker D
	\]
	is a left parametrix of $(\Pi^+_{\A_1}, \Pi^-_{\A_2})\circ (r_1, r_2)$ and 
	\[
	\|Q\|\leq \|\wQ\|,
	\]
	since $C$ is $L^2_k$-orthogonal to $\ker D$ in \eqref{E19.9}.
	
	\medskip

	\Step 3. Estimate the complementary projection.
	It suffices to estimate the norm of 
	\[
	M\colonequals (\Pi^-_{\A_0}, \Pi^+_{\A_0})\circ (r_1, r_2)\circ Q.
	\]
	First of all, the estimate holds for the model case when $\A_1=\A_2=\A_0$ and $K(t)\equiv 0$, by Remark \ref{R19.10}. Define
		\[
	M_0\colonequals (\Pi^-_{\A_0}, \Pi^+_{\A_0})\circ (r_1, r_2)\circ Q_0.
	\]
	 Now we allow $K(t)\neq 0$, but $\A_1=\A_2=\A_0$. Write $Q'$ for the parametrix constructed in \Step 2. We have to compare 
	 \[
	 	M'\colonequals (\Pi^-_{\A_0}, \Pi^+_{\A_0})\circ (r_1, r_2)\circ Q'.
	 \]
	 with the model operator $M_0$, and show the difference $M-M_0$ is compact.
	 
	   For any $(w_1,w_2)\in H^+_{\A_0}\oplus H^-_{\A_0}$, sections $u\colonequals Q'(w)$ and $v\colonequals Q_0(w)$ obey the following equations respectively:
	\[
	\left\{\begin{array}{rl}
	D_{\A_0}(u)&=-K(t)u\\
	\Pi_{\A_0}^+\circ r_2 (u)&=w_1-k_1(w),\\
	\Pi_{\A_0}^-\circ r_2 (u)&=w_2-k_2(w),
	\end{array}
	\right.
	\qquad
	\left\{\begin{array}{rl}
	D_{\A_0}(v)&=0,\\
	\Pi_{\A_0}^+\circ r_2 (v)&=w_1,\\
	\Pi_{\A_0}^-\circ r_2 (v)&=w_2,
	\end{array}
	\right.
	\]
	where $(k_1,k_2)$ is a compact operator acting on $H^+_{\A_0}\oplus H^-_{\A_0}$. It follows that 
	\[
	w\mapsto 
	 (Q'-Q_0)(w)= u-v=P_0^{-1}(-K(t)Q'(w), -k_1(w),-k_2(w))
	\] 
	is a compact operator. 
	
	\medskip
	
	\Step 4. In the most general case, we allow $K(t)\neq 0$ and $\A_1, \A_2\neq \A_0$. Recall that $Q=Q'\circ (\Pi_{\A_0}^+, \Pi_{\A_0}^-)$, so $M=M'\circ (\Pi_{\A_0}^+, \Pi_{\A_0}^-)$ and 
	\begin{equation*}
\|M\|_{ess}\leq \|M'\|_{ess}= \|M_0\|_{ess}\leq e^{-\lambda_1 |I|}.\qedhere
	\end{equation*}
\end{proof}

Spectral projections are not the most relevant boundary conditions for the main applications in gauge theory, although they serve important intermediate steps.

\begin{proposition}\label{P19.11} Under the assumption of Proposition \ref{P19.5} with $\hz=(-\infty,0]\times \hy$, suppose $\Pi_1$ is any linear projection on $L^2_{k-1/2}(\hy, E_0)$ whose kernel is a complement of $H^-_{\A}$: 
	\begin{equation}\label{E19.10}
	\ker(\Pi_1)\oplus (H^-_{\A}\cap L^2_{k-1/2}(\hy, E_0))=L^2_{k-1/2}(\hy, E_0). 
	\end{equation}
	and let $H_1^-$ be the image of $\Pi_1$. Then the operator 
	\[
	D_\A\oplus \Pi_1\circ r: L^2_k(\hz, E)\to L^2_{k-1}(\hz, E)\oplus H_1^-
	\]
	is an isomorphism. 
\end{proposition}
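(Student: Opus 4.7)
The plan is to reduce directly to Proposition~\ref{P19.5} via the elementary observation that the hypothesis on $\Pi_1$ is equivalent to the restricted map
\[
T \colonequals \Pi_1\big|_{H^-_\A \cap L^2_{k-1/2}} \colon H^-_\A \cap L^2_{k-1/2} \to H_1^-
\]
being a Banach space isomorphism. Injectivity of $T$ is precisely the condition $\ker(\Pi_1) \cap H^-_\A = \{0\}$ contained in the direct sum hypothesis. For surjectivity, given $w \in H_1^-$ one decomposes $w = k + v$ with $k \in \ker(\Pi_1)$ and $v \in H^-_\A$; applying $\Pi_1$ and using $\Pi_1(w) = w$ (since $w \in H_1^- = \im \Pi_1$ and $\Pi_1$ is idempotent) yields $\Pi_1(v) = w$. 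Continuity of $T^{-1}$ then follows from the open mapping theorem, as both $H^-_\A \cap L^2_{k-1/2}$ and $H_1^-$ are closed subspaces of $L^2_{k-1/2}$, hence Banach.

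Once $T$ is in hand, both halves of the bijection statement fall out of Proposition~\ref{P19.5}. For injectivity of $D_\A \oplus \Pi_1 \circ r$, if $D_\A u = 0$ then Proposition~\ref{P19.5} forces $r(u) \in H^-_\A \cap L^2_{k-1/2}$; combined with $\Pi_1 \circ r(u) = 0$, the isomorphism $T$ gives $r(u) = 0$, and a second appeal to Proposition~\ref{P19.5} yields $u = 0$. For surjectivity, given $(f, w) \in L^2_{k-1}(\hz,E) \oplus H_1^-$, one first produces $u_0 \in L^2_k$ with $D_\A u_0 = f$ and $\Pi^-_\A \circ r(u_0) = 0$ using Proposition~\ref{P19.5}, then sets $v \colonequals T^{-1}(w - \Pi_1 \circ r(u_0)) \in H^-_\A$ and invokes the second clause of Proposition~\ref{P19.5} to obtain $u_1 \in \ker D_\A$ with $r(u_1) = v$. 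The sum $u \colonequals u_0 + u_1$ satisfies $D_\A u = f$ and $\Pi_1 \circ r(u) = \Pi_1 \circ r(u_0) + \Pi_1(v) = w$; bounded invertibility of the combined operator follows once more from the open mapping theorem.

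The only substantive ingredient is the isomorphism $T$, whose verification is the one-line argument sketched above; everything else is bookkeeping around Proposition~\ref{P19.5}. In this sense Proposition~\ref{P19.11} is essentially a linear-algebraic repackaging of Proposition~\ref{P19.5} that accommodates boundary projections dictated by geometric or gauge-theoretic constraints rather than by the spectral decomposition of $\A$, which is the flexibility one needs in the Atiyah-Patodi-Singer framework of gluing.
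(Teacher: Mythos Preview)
Your proof is correct. The paper does not spell out an argument for this proposition; it simply cites \cite[Proposition 17.2.6]{Bible} or \cite[P.340--341]{Bible}, so there is no in-paper proof to compare against. Your reduction to Proposition~\ref{P19.5} via the isomorphism $T=\Pi_1|_{H^-_\A}$ is exactly the standard argument one finds in the cited reference, and you have carried it out cleanly: the key observation that the complementary-kernel hypothesis is equivalent to $T$ being a Banach isomorphism is correct, and the two-step surjectivity (first solve with the spectral boundary condition, then correct within $\ker D_\A$) is the expected bookkeeping. Nothing is missing.
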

\begin{proof} See \cite[Proposition 17.2.6]{Bible} or \cite[P.340-341]{Bible}.
\end{proof}
\begin{proposition}[cf. \cite{Bible} Proposition 17.2.6]\label{P19.12} Under the assumption of Proposition \ref{P19.10} with $\hz=I\times \hy$ and $I=[t_1,t_2]$, suppose $\Pi_1^+$ and $\Pi_2^-$ are any linear projections on $L^2_{k-1}(\hy, E_0)$ whose kernels are complements of $H^+_{\A_1}$ and $H^-_{\A_2}$ respectively, i.e. \eqref{E19.10} holds for $(\Pi_1^+, H_{\A_1}^+)$ and $(\Pi_2^-, H_{\A_2}^-)$. Let $H_1^-$ and $H_2^+$ be images of $\Pi_1^+$ and $\Pi_2^-$ respectively. Then there exists a constant $T_0(\Pi_1^+, \Pi_2^-)>0$ such that the operator
	\[
	D\oplus (\Pi_1^+, \Pi_2^-)\circ (r_1, r_2): L^2_k(\hz, E)\to L^2_{k-1}(\hz, E)\oplus H_1^+\oplus H_2^-, 
	\] 
	is Fredholm when $|I|>T_0$.
\end{proposition}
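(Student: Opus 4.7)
The plan is to realize $\widetilde{P} := D \oplus (\Pi_1^+, \Pi_2^-) \circ (r_1, r_2)$ as a perturbation of the spectral-projection operator $P := D \oplus (\Pi_{\A_1}^+, \Pi_{\A_2}^-) \circ (r_1, r_2)$ already treated in Proposition \ref{P19.10}, and to absorb the perturbation using the exponential estimate $e^{-\lambda_1 |I|}$ for the cross-spectral components of the parametrix, via Lemma \ref{L19.9}.

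First I would exploit condition \eqref{E19.10}: both $\ker(\Pi_1^+)$ and $H^-_{\A_1} = \ker(\Pi_{\A_1}^+)$ are complements of $H^+_{\A_1}$, which forces the restriction $S_1^+ := \Pi_1^+|_{H^+_{\A_1}} : H^+_{\A_1} \to H_1^+$ to be a Banach space isomorphism; likewise for $S_2^- := \Pi_2^-|_{H^-_{\A_2}} : H^-_{\A_2} \to H_2^-$. Let $T := \Id_{L^2_{k-1}} \oplus S_1^+ \oplus S_2^-$, a Banach isomorphism from the target of $P$ onto the target of $\widetilde{P}$. Decomposing $r_i(u) = \Pi_{\A_i}^+ r_i(u) + \Pi_{\A_i}^- r_i(u)$ and using $\Pi_1^+ \circ \Pi_{\A_1}^+ = S_1^+ \circ \Pi_{\A_1}^+$ (together with its analogue on the other boundary) gives the identity
\[
\widetilde{P} = T \circ P + F, \qquad F := 0 \oplus (\Pi_1^+ \Pi_{\A_1}^- r_1,\ \Pi_2^- \Pi_{\A_2}^+ r_2),
\]
so the error $F$ collects precisely the \emph{cross-sign} contributions to the boundary data.

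Now let $\widetilde{Q}$ be the parametrix for $P$ furnished by Proposition \ref{P19.10}; then $\widetilde{Q} \circ T^{-1}$ is a parametrix for $T \circ P$ of bounded essential norm. The composition $F \circ \widetilde{Q} \circ T^{-1}$ factors through the operator $(\Pi_{\A_1}^- r_1,\ \Pi_{\A_2}^+ r_2) \circ \widetilde{Q}$. By Proposition \ref{P19.6}, each $\Pi_{\A_i}^\pm$ differs from $\Pi_{\A_0}^\pm$ by a compact operator, so up to compact terms this composition coincides with $(\Pi_{\A_0}^-, \Pi_{\A_0}^+) \circ (r_1, r_2) \circ \widetilde{Q}$, whose essential norm is bounded by $e^{-\lambda_1 |I|}$ by the final clause of Proposition \ref{P19.10}. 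Multiplying by the bounded factors $\|\Pi_1^+\|$, $\|\Pi_2^-\|$, and $\|T^{-1}\|$ yields $\|F \circ \widetilde{Q} \circ T^{-1}\|_{\mathrm{ess}} \leq C(\Pi_1^+, \Pi_2^-) \cdot e^{-\lambda_1 |I|}$. Setting $T_0 := \lambda_1^{-1} \log(2C)$ makes this essential norm strictly less than $1$ whenever $|I| > T_0$, and Lemma \ref{L19.9} then yields the Fredholm property of $\widetilde{P}$.

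The main obstacle is verifying that the operator $\widetilde{Q}$ — explicitly built in Proposition \ref{P19.10} from $\A_0$-spectral projections — interacts correctly with the $\A_i$-spectral projections appearing in $F$, so that the full weight of the exponential estimate transfers. This is really the content of the commensurability Proposition \ref{P19.6}: the essential-norm bound is insensitive to swapping $\Pi_{\A_0}^\pm$ for $\Pi_{\A_i}^\pm$, with all error terms being genuinely compact rather than merely small. A secondary technical point is that $T_0$ depends on $\|\Pi_i^\pm\|$ and on $\|(S_i^\pm)^{-1}\|$, so the constant is not universal — but this is harmless and consistent with the statement $T_0 = T_0(\Pi_1^+, \Pi_2^-)$.
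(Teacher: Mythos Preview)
Your decomposition $\widetilde{P} = T\circ P + F$ and the plan to apply Lemma~\ref{L19.9} are sound, but the appeal to the final clause of Proposition~\ref{P19.10} is a misreading: that clause bounds the essential norm of $(\Pi_{\A_0}^-,\Pi_{\A_0}^+)\circ(r_1,r_2)\circ Q$, where $Q$ is the parametrix of the boundary map restricted to $\ker D$, not of the full parametrix $\widetilde{Q}$. The two differ precisely on the $L^2_{k-1}(\hz,E)$-input. Concretely, for $v\in L^2_{k-1}(\hz,E)$ the element $u=\widetilde{Q}(v,0,0)$ solves (up to compact error) $Du=v$ with $\Pi_{\A_1}^+ r_1(u)=0$ and $\Pi_{\A_2}^-r_2(u)=0$; hence $\Pi_{\A_1}^- r_1(u)=r_1(u)$ is the \emph{entire} boundary restriction, and already in the model case $K(t)\equiv 0$ functional calculus shows the map $v\mapsto r_1(R(v))$ has operator norm of order $\lambda_1^{-1/2}$, independent of $|I|$. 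So $\|F\widetilde{Q}T^{-1}\|_{\mathrm{ess}}$ does not decay as $|I|\to\infty$, and Lemma~\ref{L19.9} cannot be applied on the full space.

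The paper sidesteps this by first reducing to $\ker D$ via the lower-triangular block form~\eqref{E19.9}: since $D|_{C}$ is a bijection, the full operator is Fredholm if and only if $(\Pi_1^+,\Pi_2^-)\circ(r_1,r_2):\ker D\to H_1^+\oplus H_2^-$ is. On $\ker D$ your decomposition and essential-norm argument go through verbatim with $Q$ in place of $\widetilde{Q}$, and this is exactly the paper's second approach. So the only missing ingredient is that preliminary reduction.
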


\begin{proof} There are two ways to proceed. In the first approach, one may use Proposition \ref{P19.11} to construct a parametrix of $	D\oplus (\Pi_1^+, \Pi_2^-)$; see Proposition \ref{P22.1} below.  In the second approach, we use the estimate on essential operator norms from Proposition \ref{P19.10}. It suffices to show the restriction map 
\[
(\Pi_1^+, \Pi_2^-)\circ (r_1, r_2):\ker D\to  H_1^+\oplus H_2^-
\]
is Fredholm. We focus on $H_2^+$ and pretend the other boundary does not exist. Write
\[
\Pi^-_2=\Pi^-_2\circ \Pi^-_{\A_2}+\Pi^-_2\circ (\Pi^+_{\A_2}-\Pi_{\A_0}^+)+\Pi_2^-\circ \Pi_{\A_0}^+.
\]
The middle term is compact. Since $\Pi_2^-: H_{\A_2}^-\to H_2^-$ is an isomorphism of Hilbert spaces, by Proposition \ref{P19.10}, 
\[
\Pi_2^-\circ \Pi_{\A_2}^-\circ  r_2:\ker D\to H_{\A_2}^-\xrightarrow{\Pi_2^-} H_2^-
\]
is Fredholm with parametrix $Q\circ (\Pi_2^-)^{-1}$. To apply Lemma \ref{L19.9}, we have to estimate the essential norm of 
\[
(\Pi_2^-\circ\Pi_{\A_0}^+)\circ(Q\circ (\Pi_2^-)^{-1})=\Pi_2^-\circ (\Pi_{\A_0}^+\circ Q)\circ (\Pi_2^-)^{-1},
\]
which is bounded above by $C(\Pi_2^-)\cdot e^{-\lambda_1 |I|}<1$ if $|I|\gg 1$. The constant $C(\Pi_2^-)$ depends only on the operator norms of 
\[
\Pi_2^-: H^-_{\A_2}\to H_2^- \text{ and } (\Pi_2^-)^{-1}: H_2^-\to H^-_{\A_2}.\qedhere
\]
\end{proof}

\smallskip

\subsection{Applications in Gauge Theory}\label{Subsec19.4} Having developed the abstract theory in Subsection \ref{Subsec19.3}, let us explain now how various operators are defined in gauge theory. For each tame perturbation $\q\in \Pa$ and a configuration  $\fa\in \SC_{k-1/2}(\hy, \bs)$, consider the extended Hessian
\[
\widetilde{\A}\colonequals \EHess_{\q,\fa},
\]
The reference operator $\A_0$ is taken to be a compact perturbation of $\tilde{\A}$ such that the conditon \eqref{E19.12} holds.

Recall that the space $L^2_{k-1/2}(\hy, E_0)$ admits a decomposition for each $\fa\in \SC_{k-1/2}(\hy,\bs)$:
\begin{align*}
 L^2_{k-1/2}(\hy, E_0)&=L^2_{k-1/2}(\hy, i\R)\oplus \CT_{k-1/2, \fa},\\
 &=L^2_{k-1/2}(\hy, i\R)\oplus \J_{k-1/2,\fa}\oplus \K_{k-1/2,\fa},
\end{align*}
on  which $\EHess_{\q,\fa}$ takes a block form:
\[
\begin{pmatrix}
0 & \bd_\fa^* & 0\\
\bd_\fa & 0 & 0\\
0 & 0 &\Hess_{\q,\fa}
\end{pmatrix}+ 
\begin{pmatrix}
0 & 0 & 0\\
0 & y & x\\
0 & x^* &0
\end{pmatrix}
\]
The operators $x,y$ are defined as in \eqref{E18.1} and they are compact. Denote the first matrix by $\A$ and consider its spectral decomposition:
\[
\Pi_{\A}^\pm: L^2_{k-1/2}(\hy, E_0)\to H^\pm_{\A}.
\]

As $\Hess_{\q,\fa}$ acts on $\K_{k-1/2, \fa}$, we also have the spectral decomposition of $\Hess_{\q,\fa}$:
\[
\K_{k-1/2, \fa}=\K_\fa^+ \oplus \K_\fa^-.
\]
Define subspaces:
\[
H^\pm_{\fa}\colonequals L^2_{k-1/2}(\hy, i\R)\oplus \{0\}\oplus \K_\fa^\pm\subset L^2_{k-1/2}(\hy, E_0),
\]
and the projection maps
\[
\Pi_\fa^\pm: L^2_{k-1/2}(\hy, E_0)\to H^\pm_\fa,
\]
whose kernels are
\[
\{0\}\oplus \J_{k-1/2,\fa}\oplus \K_\fa^\mp.
\]
The pairs $(\Pi_\fa^\pm, \Pi_{\A}^\pm)$ that satisfy the condition \eqref{E19.10}, cf. \cite[P.316]{Bible}. By Proposition \ref{P19.12}, the first statement of \cite[Theorem 17.3.2]{Bible} continues to hold in our case, and the proof the gluing theorem from \cite[Section 17-19]{Bible} remains valid. Proposition \ref{P19.10} is the replacement of \cite[Proposition 17.2.5]{Bible} in the presence of essential spectra.

\begin{remark} In practice, we will take $\q$ to be an admissible perturbation and $\fa$ to be a non-degenerate critical point of $\CSd_{\omega}$, in which case $\widetilde{\A}=\A$. Moreover, $\CSd_{\omega}$ has only finitely many critical points by the compactness theorem. Since only finitely many configurations are involved in the gluing theorem, we have a uniform upper bound on the constant $T_0$ in Proposition \ref{P19.12}, so it does not cause a problem.
\end{remark} 

Finally, let us compute the spectrum flow from $\Hess_{\q,\fa}$ to $\Hess_{\q,u\cdot\fa}$ as an application of Proposition \ref{P19.7}. 
\begin{lemma}[cf. \cite{Bible} Lemma 14.4.6]\label{L19.14} Consider the cylinder $\hz=\R_t\times(\hy,\bs)$ and the operator $(\bd_\gamma^*, \D_\gamma\F_{\hz,\q} )$ defined in Proposition \ref{P19.1} with $\fb=u\cdot \fa$ and $u\in \CG_{k+1}(\hy)$, then 
	\[
	\Ind (\bd_\gamma^*, \D_\gamma\F_{\hz,\q} )=([u]\cup c_1(\bs))[Y,\partial Y]\in 2\Z,\ \forall \gamma\in \SC_k(\fa, u\cdot \fa). 
	\] 
\end{lemma}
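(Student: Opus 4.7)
The proof proposal is to mirror the closed-manifold argument of Kronheimer--Mrowka \cite[Lemma 14.4.6]{Bible}, using the homotopy-invariance of the Fredholm index together with the Atiyah--Singer/APS index theorem, but carefully accommodating the non-compactness of $\hy$ and the essential spectrum of $\EHess$ established in Proposition \ref{P18.1}.

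First I would invoke Proposition \ref{P19.1} to rewrite $\Ind(\bd_\gamma^*, \D_\gamma\F_{\hz,\q})$ as the spectral flow from $\EHess_{\q,\fa}$ to $\EHess_{\q,u\cdot\fa}$. Since the space $\SA$ of operators from Section \ref{Sec9} is path-connected up to homotopy-invariants, and since the essential spectra of all the Hessians along the family $(\q_s,\cgamma_s(t))$ avoid the origin uniformly (by Proposition \ref{P18.1} with the same $\lambda_1$), I may deform continuously to $\q=0$ and to a convenient path. The natural choice is the \emph{gauge path} $\cgamma(t)\colonequals u_t\cdot \fa$, where $u_t\in\CG_{k+1}(\hy)$ is a smooth family with $u_t\equiv 1$ for $t\leq -1$ and $u_t\equiv u$ for $t\geq 1$; the corresponding 4-dimensional configuration is $\gamma=U\cdot\tilde{\fa}$ where $\tilde{\fa}$ is the constant extension of $\fa$ and $U(t,y)=u_t(y)$.

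Next I would identify this spectral flow with the index of a twisted Dirac-type operator on a compact model. Since $u-1\in L^2_{k+1}(\hy)$ implies $u$ is asymptotically trivial on the cylindrical end, $u$ represents a class $[u]\in H^1(Y,\partial Y;\Z)$, and $U$ extends to a line bundle $L_U$ on the mapping cylinder obtained by identifying $\{-\infty\}\times\hy$ with $\{+\infty\}\times\hy$ via $u$. Conjugating the linearized operator by $U$ (using equivariance of $\bd^*$ and $\F_{\hz,0}$) produces an operator whose asymptotic Hessians agree at $\pm\infty$ but which is now twisted by $L_U$; its index over $\R_t\times \hy$ equals the index of the untwisted operator (which is $0$, being a constant path) plus a topological contribution from the twisting. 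By the excision principle (Appendix \ref{AppD}, applied to cut off the planar end where the operator is translation-invariant and the Hessian has a spectral gap), the topological contribution localizes to the compact region $[-1,1]_t\times Y$, reducing us to a genuine index computation on a compact 4-manifold with boundary.

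Finally I would apply the Atiyah--Singer index theorem (or its APS boundary-value refinement) to compute this localized index. The linearized Seiberg--Witten operator decomposes as the Dirac operator $D_A^+$ (contributing via $\ch(S^+)$), the self-dual part $d^++\bd^*$ of the de Rham complex (topologically trivial for our purposes), and the gauge-fixing operator. Combining the resulting curvature integrals and using the relative fundamental class $[Y,\partial Y]$ in place of the absolute one from the closed case, one arrives at $([u]\cup c_1(\bs))[Y,\partial Y]$; the evenness is the standard consequence of $c_1(\bs)\equiv w_2(TY)\pmod 2$ and the $\Z/2$ self-intersection pairing on $H^1(Y,\partial Y)$. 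The main obstacle is the second step---localizing the spectral-flow computation away from the non-compact end---since naive APS on $\hy$ would leave an essential-spectrum contribution; the resolution uses that $u-1\in L^2$ makes the twisting compactly supported up to arbitrarily small error, and the spectral gap $\lambda_1>0$ from Proposition \ref{P18.1} ensures the tail contribution is zero.
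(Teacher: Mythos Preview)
Your proposal is close in spirit to the paper's proof but diverges at the crucial localization step. The paper also passes to the mapping torus $S^1\times\hy$ (via Proposition \ref{P19.7} and \cite[Proposition 14.2.2]{Bible}), with spin bundle $[0,1]\times S/(0,v)\sim(1,u\cdot v)$, but then applies the Atiyah--Patodi--Singer index theorem \cite[Theorem 3.10]{APS} \emph{directly} to this non-compact manifold with its single cylindrical end $S^1\times[0,\infty)_s\times\Sigma$. The key observation is that on the end the operator takes the form $\pt+\sigma(\ps+D_\Sigma)=\sigma(\ps-\sigma\cdot\pt+D_\Sigma)$, and the boundary operator $-\sigma\cdot\pt+D_\Sigma$ on $S^1\times\Sigma$ has discrete spectrum symmetric about zero (by the same anti-commutation argument as in Proposition \ref{P17.2}), so its $\eta$-invariant vanishes; moreover it is invertible, so its kernel is trivial. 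Thus the APS correction term is zero and the computation from \cite[Lemma 14.4.6]{Bible} carries over unchanged.

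Your proposed route---excising to the compact piece $[-1,1]_t\times Y$ via Appendix \ref{AppD}---is problematic on two counts. First, Appendix \ref{AppD} is about relative \emph{orientations} of determinant lines, not index excision; the excision principle quoted there is for closed hypersurfaces in manifolds without boundary. Second, $[-1,1]_t\times Y$ is a manifold with corners, so neither the Atiyah--Singer nor the standard APS theorem applies without substantial additional work on corner contributions. The paper avoids this completely: by closing up only the $t$-direction it obtains a manifold $S^1\times\hy$ with a smooth cylindrical end, where APS applies cleanly and the boundary correction vanishes by the symmetry of the spectrum.
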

\begin{proof} We may use Proposition \ref{P19.7} and \cite[Proposition 14.2.2]{Bible} to identify this index to the index of an operator on $S^1\times \hy$. The spin bundle $S^+\to S^1\times \hy$ is constructed as 
	\[
	[0,1]\times S/ (0, v)\sim (1, u\cdot v). 
	\]
	Using the Atiyah-Patodi-Singer index theorem \cite[Theorem 3.10]{APS} instead, the proof of \cite[Lemma 14.4.6]{Bible} can now proceed with no difficulty. Indeed, over the cylindrical end of $S^1\times \hy$, the operator is cast into the form (up to a compact term)
	\[
	\pt+\sigma(\ps+D_\Sigma)=\sigma(\ps-\sigma\cdot \pt+D_\Sigma) \text{ on }S^1\times [0,+\infty)_s\times\Sigma.  
	\]
	Following the proof of Proposition \ref{P17.2}, the spectrum of $(-\sigma\cdot \pt+D_\Sigma)$ on $S^1\times\Sigma$ is discrete and symmetric with respect to the origin, so its $\eta$-invariant is zero. Moreover, $(-\sigma\cdot \pt+D_\Sigma)$ is invertible, so its kernel is trivial. 
\end{proof}

\section{Linearized Operators on Cobordisms}\label{Sec22}

Having addressed the linearized operators on the product manifold $\R_t\times \hy$, in this section, we explore the case for a morphism $\x: (\y_1,\bs_1)\to (\y_2,\bs_2)$ in the strict cobordism category $\SCob_s$. In this case, we have a relative \spinc cobordism
\[
(\hx, \bs_X): (\hy_1, \bs_1)\to (\hy_1,\bs_2).
\]
By attaching cylindrical ends, we obtain a complete Riemannian manifold 
\[
\CX\colonequals\bigg( (-\infty, -1]_t\times  \hy_1\bigg)\cup \hx\cup\bigg( 
[1,\infty)_t\times \hy_2\bigg)
\]
together with a closed 2-form $\omega_X$ on $\CX$ defined as in \eqref{E9.11}. There are two main tasks for this section:
\begin{itemize}
\item define the perturbation space of the Seiberg-Witten equaions on $\CX$. This is crucial for the transversality result in Section \ref{Sec21}, cf. Theorem \ref{T21.5};
\item prove that the linearized operator on $\CX$ is Fredholm.
\end{itemize}
They are addressed in Subsection \ref{Subsec22.1} and \ref{Subsec22.2} respectively.

\subsection{Perturbations}\label{Subsec22.1} Given a morphism $\x:(\y_1,\bs_1)\to (\y_2,\bs_2)$ in the strict \spinc cobordism category $\SCob_s$, the perturbation $\q_i\in \Pa(Y_i)$ encoded in the definition of $(\y_i,\bs_i)$ is admissible by \ref{P8}. Take a critial point
\[
\fa_i\in \Crit(\CSd_{\omega_i,\hy_i})\subset \SC_k(\hy_i,\bs_i), 
\]
for each $i=1,2$.  Pick a smooth configuration $\gamma$ on $\CX$ such that 
\begin{equation}\label{E22.3}
\left\{\begin{array}{rl}
\cgamma(t)&\equiv \fa_1 \text{ if } t<-1/2;\\ \cgamma(t)&\equiv \fa_2 \text{ if } t>1/2\\
\gamma(t) &\text{is in the temporal gauge when } |t|>1/2, \\
\gamma\big|_{\hx}&\in \SC_{k}(\hx,\bs). 
\end{array}
\right.
\end{equation}
 Now consider the configuration space on $\CX$:
\[
\SC_k(\fa_1,\CX, \fa_2)\colonequals \{(A,\Phi)=\gamma_0+(a,\phi): (a,\phi)\in L^2_k(\CX, iT^*\CX\oplus S^+) \}.
\]
and the gauge group
\[
\CG_{k+1}(\CX)=\{u:\CX\to S^1: u-1\in L^2_{k+1}(\CX, \C) \}. 
\]

The linearized action of $\CG_{k+1}(\CX)$ at $\gamma=(A,\Phi)\in \SC_{k}(\fa_1,\CX, \fa_2)$ is given by:
\begin{align*}
\bd_\gamma: L^2_{k+1}(\CX,i\R)&\to T_\gamma \SC(\fa_1,\CX, \fa_2)\\
f(t)&\mapsto (-df, f\Phi)
\end{align*}
whose $L^2$-formal adjoint is 
\begin{align*}
\bd_\gamma^*: T_\gamma \SC(\fa_1,\CX, \fa_2)&\to L^2_{k-1}(\CX,i\R)\\
(\delta a,\delta\phi)& \mapsto -d^*a+i\re\langle \delta\phi, i\Phi\rangle. 
\end{align*}

Let us now specify the class of perturbations involved in the Seiberg-Witten equations. Choose a cut-off function $\beta: \R_t\to \R$ with $\beta(t)\equiv 1$ if $|t|>3$ and $\beta(t)\equiv 0$ if $|t|<2$. Pick another cut-off function $\beta_0: \R_t\to \R$ supported on $[1,2]_t\subset \R_t$, equal to $1$ when $t\in [5/4, 7/4]$. Now consider the perturbed Seiberg-Witten equation: 
\begin{align}\label{E22.1}
\F_{\CX,\p}(\gamma)&=0,\ \gamma\in \SC_{k}(\fa_1,\CX,\fa_2),\\
\F_{\CX,\p}(\gamma)&\colonequals \F_{\CX}(\gamma)+\beta(t)[\hq_1(\gamma)+\hq_2(\gamma)]+\beta_0(t)(\hq_3(\gamma))+(\rho_4(\omega_3^+),0),\nonumber
\end{align}
where $\F_{\CX}$ is the unperturbed Seiberg-Witten map defined by the formula \eqref{4DSWEQ}. Here $\p$ denotes the quadruple 
\[
\p\colonequals (\q_1, \q_2,\q_3, \omega_3)\in \Pa(Y_1)\times \Pa(Y_2)\times \Pa(Y_2)\times \Omega^2_c([1,2]\times Y_2, i\R).
\]
where $\q_3\in \Pa(Y_2)$ is a tame perturbation supported on $Y_2$ and $\omega_3$ is an imaginary-valued exact 2-form compactly supported on $[1,2]\times Y_2$. The effect of $\omega_3$ is to deform $\omega_X$ into $\omega_X-\omega_3$, so the first equation of \eqref{4DSWEQ} is changed into 
\[
\half\rho_4(F_{A^t}^+-2\omega^+_X)-(\Phi\Phi^*)_0=-\rho_4(\omega_3^+),
\]
modulo perturbations from $\q_i$'s. In practice, it suffices to consider $\omega_3$ in the special form: 
\begin{equation}\label{E22.2}
\omega_3=d_{\CX} (\beta_0(t)f_3dt )=-\beta_0(t) dt\wedge d_{Y_2} f_3.
\end{equation}
for a compactly supported smooth function $f_3: [1,2]_t\times Y_2\to i\R$.

Within the space of all compactly supported smooth functions on $[1,2]_t\times Y_2$, we choose a countable subset that is dense in $C^\infty$-topology and form a Banach space as in Theorem \ref{T15.14}:
\[
\Pa_{\form}.
\]
The space $\Pa_{\form}$ is dense in $C^\infty_c([1,2]_t\times Y_2, i\R)$, and we define $\omega_3$ by the formula \eqref{E22.2} with $f_3\in \Pa_{\form}$. In all, the quadruple $\p$ takes value in a Banach space
\[
\p=(\q_1, \q_2,\q_3, \omega_3)\in \Pa(Y_1)\times \Pa(Y_2)\times \Pa(Y_2)\times \Pa_{\form}.
\]

Here $\q_1$ and $\q_2$ are encoded in the cylindrical ends of $\CX$; only the last two terms
\[
(\q_3,\omega_3)
\]
give rise to the actual perturbation in \eqref{E22.1}, allowing us to achieve transversality in Section \ref{Sec21}. Note that 
\[
\beta_0(t)\hq_3(\gamma) \text{ and } (\rho_4(\omega_3^+), 0)
\]
are both supported in the compact region $[1,2]_t\times Y_2$. Finally, we form the moduli space $\M_k(\fa_1, \CX, \fa_2)$ by taking the quotient space:
\begin{equation}\label{E22.4}
\M_k(\fa_1, \CX, \fa_2)\colonequals \{\F_{\CX,\p}(\gamma)=0: \gamma\in \SC_k(\fa_1, \CX, \fa_2) \}/ \CG_{k+1}(\CX),
\end{equation}
which is in fact independent of the subscript $k$, due to the exponential decay of the local energy functional, cf. Theorem \ref{T1.2}.

\subsection{Linearized Operators}\label{Subsec22.2} Similar to the case for $\hz=\R_t\times \hy$, the linearization of $\F_{\CX,\p}$ together with $\bd_\gamma^*$ forms a Fredholm operator. In particular, the cokernel is finite dimensional. 

\begin{proposition}\label{P22.1} For any $i=1,2$, let $\fa_i$ be a smooth non-generate critical point of $\CSd_{\omega_i}$ in $\SC_k(\hy_i, \bs_i)$. Then for any $\gamma\in \SC_k(\fa_1,\CX, \fa_2)$, the operator 
	\[
	(\bd_\gamma^*, \D_\gamma\F_{\CX,\p}): L^2_k(\CX,  iT^*\CX\oplus S^+)\to L^2_{k-1}(\CX, i\R\oplus i\Lambda^+ \CX\oplus S^-)
	\]
	is Fredholm. 
\end{proposition}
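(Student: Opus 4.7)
The plan is to show that $P \colonequals (\bd_\gamma^*, \D_\gamma \F_{\CX,\p})$ admits a parametrix by patching together local inverses built from the three asymptotic models that arise at infinity in $\CX$. Decompose $\CX$ using three overlapping regions: the incoming cylindrical end $U_- = (-\infty, -1/2]_t \times \hy_1$, the outgoing end $U_+ = [1/2, \infty)_t \times \hy_2$, and a middle region $U_0$ containing $\hx$ together with small collars on either side. Choose a smooth partition of unity $\{\phi_-, \phi_0, \phi_+\}$ subordinate to this cover and slightly larger cutoff functions $\{\chi_-, \chi_0, \chi_+\}$ with $\chi_i \equiv 1$ on $\supp \phi_i$. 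The global parametrix will take the form $Q = \sum_i \chi_i Q_i \phi_i$, where each $Q_i$ is a parametrix for $P$ restricted to $U_i$ with appropriate boundary condition at the joining slice.

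On each cylindrical end $U_\pm$, the linearization takes the cylindrical form $\pt + \EHess_{\q_i, \cgamma(t)}$ after Clifford multiplication by $dt$, exactly as in Subsection \ref{Subsec19.1}. Since $\cgamma(t) \to \fa_i$ as $|t| \to \infty$ with exponential rate (cf.\ Remark \ref{R1.4}), and since the non-degeneracy assumption forces $\EHess_{\q_i, \fa_i}$ to be invertible (Definition \ref{D18.2}), the difference between the actual operator and the constant-coefficient model $\pt + \EHess_{\q_i, \fa_i}$ is a $t$-dependent compact perturbation tending to zero in operator norm at infinity. Proposition \ref{P19.5} then supplies an exact inverse $Q_\pm$ on the half-cylinder provided one imposes the negative (resp. positive) spectral APS boundary projection $\Pi^{\mp}_{\A_i}$ at $t = \mp 1/2$. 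A Neumann series in the compact perturbation converts this into a genuine parametrix $Q_\pm$ for $P|_{U_\pm}$.

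On the middle region $U_0$, the remaining source of non-compactness is the planar end $[-1,1]_t \times [N,\infty)_s \times \Sigma$ for $N$ large. By the exponential decay theorem (Theorem \ref{11.5}), after an appropriate gauge transformation the configuration $\gamma$ is close to the reference $\gamma_*$ on this end, and the operator limits to the translation-invariant model $\pt + \sigma(\ps + D_\Sigma)$ up to a compact perturbation. By Propositions \ref{P17.2} and \ref{P18.1}, the essential spectrum of $\EHess_{\q, \gamma_*}$ is contained in $(-\infty, -\lambda_1] \cup [\lambda_1, \infty)$, in particular bounded away from zero; hence the model operator on the planar end is invertible (its Fourier transform in $(t,s)$ is uniformly invertible). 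Combining interior elliptic estimates on the compact body of $\hx$ with this invertibility on the planar end, and imposing APS spectral boundary conditions $\Pi^{\pm}_{\A_i}$ at the slices $t = \pm 1$, Proposition \ref{P19.12} produces a parametrix $Q_0$ for $P|_{U_0}$.

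Finally one checks that $PQ - \Id$ and $QP - \Id$ are compact on the appropriate Sobolev spaces. The error terms reduce to commutators $[P, \chi_i] Q_i \phi_i$, which are supported in compact overlap regions between $U_0$ and $U_\pm$; compactness on these overlaps follows from the standard Rellich embedding on the compact pieces together with the commensurability of the APS projections across the joining slices (Proposition \ref{P19.6}). Together with matching the APS projections on the two sides of each joint using the compatibility of the spectral decompositions, this shows $P$ is Fredholm. The main obstacle is handling the planar end: unlike the cylindrical ends, the relevant Hessian $\EHess_{\q,\gamma_*}$ has nontrivial essential spectrum, so invertibility of the model operator must be extracted from the quantitative spectral gap of Proposition \ref{P18.1}, and the patching of parametrices across the APS joints at $t = \pm 1$ must use the essential-norm estimates of Proposition \ref{P19.10} rather than a naive compact perturbation argument.
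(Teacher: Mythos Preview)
Your overall strategy (parametrix patching across three regions) is correct and you rightly identify the planar end as the main obstacle, but the paper takes a simpler and rather different route, and a couple of your steps are imprecise.

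The paper avoids APS boundary conditions entirely. After reducing to $\gamma=\gamma_0$ and $(\q_3,\omega_3)=0$, it uses the inverses $Q_i$ of $D_i=\pt+\EHess_{\q_i,\fa_i}$ on the \emph{full} infinite cylinders $\R_t\times\hy_i$ together with a local parametrix $Q_X$ on the compact region $X$, and patches via $Q=\tilde\beta_1 Q_1\beta_1+\tilde\beta_2 Q_2\beta_2+\tilde\beta_X Q_X\beta_X$. The key idea is the shape of the partition of unity: on $\{s\leq 2\}$ it is subordinate to a compact cover (so $\beta_X$ has compact support), but on the planar end $\{s\geq 2\}$ one sets $\beta_X\equiv 0$ and takes $\beta_i=\beta_i^T(t)$ to depend only on $t$ with $|d\beta_i^T|\leq 4/T$. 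Since on $\{s\geq 2\}$ all three operators $P$, $D_1$, $D_2$ coincide with the translation-invariant planar model, the parametrices $Q_1,Q_2$ are exact there, and the commutator errors split into a compactly supported piece (from $\{s\leq 2\}$, compact by Rellich) plus a piece of norm $O(1/T)$ (from $\{s\geq 2\}$). Taking $T\gg 0$ finishes the proof.

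By contrast, you place the planar end inside the middle piece $U_0$ and invoke the APS machinery to glue across time slices. Two issues arise. First, Proposition~\ref{P19.12} is stated only for product cylinders $I\times\hy$, not for a cobordism $\hx$ carrying its own planar end; producing $Q_0$ would itself require a parametrix patching on $\hx$ (interior estimates on $X$, model inverse on the planar end, APS at the two temporal faces), essentially a second copy of the whole argument. Second, your assertion that the overlap regions between $U_0$ and $U_\pm$ are compact is false---they are slabs $\{t\in[a,b]\}\times\hy_i$ with $\hy_i$ non-compact---so the commutator errors are not compact by Rellich, and Proposition~\ref{P19.6} alone (about differences of spectral projections) does not directly control them. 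The paper's cutoff trick sidesteps this by making the non-compactly-supported portion of the error \emph{small in norm} rather than compact.
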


\begin{definition}\label{D22.2} The moduli space $\M_k(\fa_1, \CX, \fa_2)$ is called regular, if the operator $(\bd_\gamma^*, \D_\gamma\F_{\CX,\p})$ is surjective at any solution $[\gamma]\in \M_k(\fa_1, \CX, \fa_2)$, 
\end{definition}

\begin{proof}[Proof of Proposition \ref{P22.1}] It suffices to deal with the case for the reference configuration $\gamma=\gamma_0$ and when $(\q_3, \omega_3)=0$. As $\fa_i$ is non-degenerate, the operator on the infinite cylinder 
	\[
	D_i\colonequals \dt+\EHess_{\q_i,\fa_i}: L^2_k(\R_t\times\hy_i, i\R\oplus iT^*\hy_i\oplus S)\to L^2_{k-1}(\R_t\times \hy_i, i\R\oplus iT^*\hy_i\oplus S)
	\]
	is invertible for $i=1,2$. Denote the inverse by $Q_i$. Unlike Theorem \ref{T16.2}, the cut-off functions involved in the parametrix patching argument are more sophisticated, as we explain now. There are three of them:
	\[
	\beta_1,\ \beta_2 \text{ and } \beta_X \text{ with }\beta_1+\beta_2+\beta_X\equiv 1 \text{ and } \beta_X \text{ compactly supported}
	\]
 Over the region $\{s\leq 2\}\subset \X$, choose a partition of unity $\{\beta_1',\beta_2',\beta_X\}$ subordinate to the open cover $U_1\cup U_2\cup U_X$: 
	\begin{figure}[H]
		\centering
		\begin{overpic}[scale=.17]{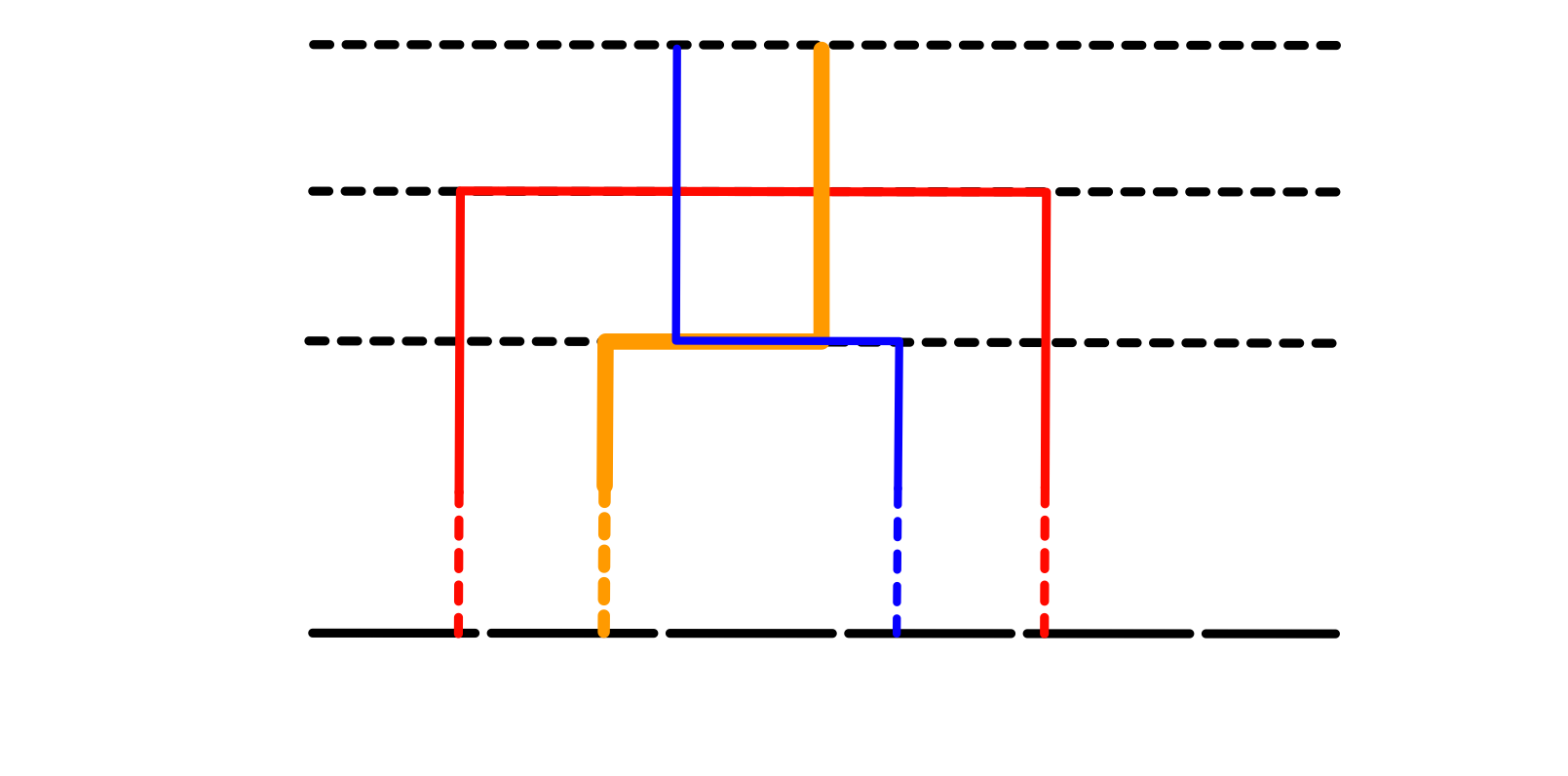}
			\put(46,15){$X$}
			\put(-2,15){$(-\infty, -1]\times \hy_1$}
			\put(72,15){$[1,\infty)\times \hy_2$}
			\put(16,5){$t=$}
			\put(26,5){\small$-2$}
			\put(36,5){\small$-1$}
			\put(47,5){\small$0$}
			\put(57,5){\small$1$}
			\put(66,5){\small$2$}
			\put(8,46){\small$s=2$}
			\put(15,36){\small$0$}
			\put(12,26){\small$-2$}
			\put(60,23){\color{red}$U_X$}
			\put(21,39){\color{orange}$U_1$}
			\put(71,39){\color{blue}$U_2$}
		\end{overpic}	
		\caption{An open cover of $\{s\leq 2\}$}
	\end{figure}
	
	Over the region $\{s\geq 2\}$, $\beta_X\equiv 0$ and $\beta_i(s,t)=\beta_i^T(t), i=1,2$ where $\{\beta_1^T,\beta_2^T\}$ is a partition of unity on the real line $\R_t$ subordinate to the cover 
	\[
	\R_t=(-\infty, T]\cup [-T,\infty),
	\]
	such that $|d\beta_i^T|\leq 4/T$. The value of $\beta_i$ in the transition area $\{1\leq s\leq 2\}$ is filled in by interpolation. To be more precise, pick a partition of unity $\{\alpha^L,\alpha^U\}$ on $\R_s$ such that $\alpha^U(s)\equiv 1$ when $s\geq 2$ and $\alpha^U(s)\equiv 0$ when $s\leq 1$. Set 
	\[
	\beta_i=\alpha^L(s)\beta_i'+\alpha^U(s)\beta_i^T(t),\ i=1,2.
	\]
	Finally, we take
	\[
	Q=\tilde{\beta}_1Q_1\beta_1+\tilde{\beta}_2Q_2\beta_2+\tilde{\beta}_XQ_X\beta_X,
	\]
	with $\tilde{\beta}_i$ constructed in a similar manner. Here we require that $\tilde{\beta}_i\equiv 1$ on $\supp\ \beta_i$ so that $\tilde{\beta}_i\beta_i=\beta_i$. The same holds for $(\tilde{\beta}_X,\beta_X)$; and also $\supp\ \tilde{\beta}_X$ is compact.
	
	The parametrix $Q_X$ is given by a local patching argument as usual. By taking $T\gg 0$, one verifies that $Q$ is indeed a parametrix for the operator $	(\bd_\gamma^*, \D_\gamma\F_{\CX,\p})$. 
\end{proof}

\part{Transversality}\label{Part6}

The primary goal of this part is to prove the key transversality result: Theorem \ref{T21.1}, which states that admissible perturbations on $(\hy,\bs)$, in the sense of Definition \ref{D19.3}, exist and are in fact generic. Because the perturbation space $\Pa(Y)$ that we consider are supported on the truncated 3-manifold $Y=\{s\leq 0\}$, only a weak separating property is satisfied, cf. Theorem \ref{T15.17}. As a result, a stronger unique continuation property is required in order to achieve transversality. 

 \medskip

Section \ref{Sec20} is devoted to the proof of unique continuation properties, which uses the Carleman estimates from \cite{K95}. In Section \ref{Sec21}, we prove Theorem \ref{T21.1} as well as its analogue for a general morphism $\x:(\y_1,\bs_1)\to (\y_2,\bs_2)$ in the $\SCob_s$, cf. Theorem \ref{T21.5}. 

\section{Unique Continuation}\label{Sec20}

\subsection{Statements} In this section, we prove the unique continuation properties of the perturbed Seiberg-Witten equations \eqref{E19.1}, which are crucial for the proof of Theorem \ref{T21.1}. The main results are listed as follows:
\begin{itemize}
\item the non-linear version: Theorem \ref{T20.9};
\item the linearized version: Theorem \ref{T20.20}; and 
\item the irreducibility of spinors: Theorem \ref{T20.10}. 
\end{itemize}

These theorems are summarized in the first subsection, while the rest of section is devoted to their proofs. Let us start with the non-linear version of unique continuation:
\begin{theorem}\label{T20.9} Let $I=(t_1,t_2)_t$ be an open finite interval. Consider a tame perturbation $\q\in \Pa$ supported on the truncated 3-manifold $Y=\{s\leq 0\}\subset \hy$ and the perturbed Seiberg-Witten equations on $\hz\colonequals I\times \hy$:
	\begin{equation}\label{E20.15}
	0=\F_{\hz,\q}(\gamma)\colonequals\F_{\hz}(\gamma)+\hq(\gamma).
	\end{equation}
	If two solutions $\gamma_1, \gamma_2$ are gauge equivalent on the slice $\{t_0\}\times Y$ at some $t_0\in I$, i.e there exists a gauge transformation $u\in \CG(\hy)$ such that 
	\[
	u(\gamma_1|_{\{t_0\}\times \hy})=\gamma_2|_{\{t_0\}\times \hy} \text{ on } Y,
	\]
	then $\gamma_1$ and $\gamma_2$ are gauge equivalent over the whole manifold $\hz$. 
\end{theorem}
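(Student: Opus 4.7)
By applying $u$, viewed as a $t$-independent gauge transformation on $\hz$, to $\gamma_1$, one reduces to the case $\gamma_1 = \gamma_2$ on $\{t_0\} \times Y$. The plan is to combine two Carleman-style unique continuation arguments---one propagating equality forward and backward in $t$ across the slice $\{t_0\} \times Y$, and a second propagating equality from the wall $\{s=0\} \times \Sigma$ into the cylindrical end, where the perturbation $\hq$ is identically zero---and then close up with a connectedness argument in $t$. Both Carleman estimates are instances of those in \cite{K95}, and together they follow the template of \cite[Section 7]{Bible}.

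\emph{Step 1 (time direction, inside $Y$).} Place $\gamma_1$ and $\gamma_2$ in a simultaneous temporal gauge on $(t_0 - \epsilon_0, t_0 + \epsilon_0) \times \hy$, together with a Coulomb slice condition at $t = t_0$. In this gauge the difference $V = \gamma_2 - \gamma_1$ vanishes at $t = t_0$ on $Y$ and obeys an equation
\[
(\partial_t + \EHess_{\q,\fa})\, V \;=\; R(V;\gamma_1,\gamma_2) + E(\gamma_1,\gamma_2),
\]
where $\fa = \gamma_1|_{\{t_0\} \times \hy}$, the term $R$ consists of pointwise bilinear and lower-order expressions in $V$, and $E := \hq(\gamma_2) - \hq(\gamma_1)$ is the only non-local contribution. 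By tameness (properties \ref{A4}, \ref{AA6}) one has the Lipschitz bound
\[
\|E(t)\|_{L^2(Y)} \;\leq\; C\,\|V(t)\|_{L^2(\{t\}\times \hy)},
\]
and since $E$ is supported in $Y$, the Carleman estimate of \cite{K95} with a weight $e^{\tau\phi(t)}$ peaked at $t_0$ absorbs both $R$ and $E$ as zeroth-order perturbations once $\tau$ is chosen large. This yields $V \equiv 0$ on $(t_0-\epsilon,\, t_0+\epsilon) \times Y^\circ$ for some $0 < \epsilon \leq \epsilon_0$.

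\emph{Step 2 (radial direction, into the cylindrical end).} On $(t_0-\epsilon, t_0+\epsilon) \times [0,\infty)_s \times \Sigma$ the perturbation $\hq$ vanishes identically, so both $\gamma_1$ and $\gamma_2$ solve the genuine Seiberg-Witten equations there. By Step 1, $V \equiv 0$ at $s=0$; after choosing a common spatial gauge equal to the identity on the wall $\{s=0\} \times \Sigma$, the difference satisfies a first-order elliptic Dirac-type system with zero Cauchy data along the hypersurface $\{s=0\}$. A second Carleman estimate applied in the $s$-direction (or equivalently Aronszajn's unique continuation theorem) then propagates $V \equiv 0$ throughout $(t_0-\epsilon,\, t_0+\epsilon) \times \hy$.

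\emph{Step 3 (globalization).} Let $J \subset I$ denote the set of times $t$ for which $\gamma_1$ and $\gamma_2$ are gauge-equivalent on a neighborhood of $\{t\} \times \hy$. Steps 1--2 show that $J$ is open; it is also closed, since along any sequence $t_n \to t_\infty$ in $J$ the associated gauge transformations admit a convergent subsequence (using Sobolev control near the reference configuration on the planar end, together with the non-vanishing of the limit spinor there, which prevents stabilizers from collapsing). Since $I$ is connected and $t_0 \in J$, we get $J = I$, and the locally defined gauge transformations patch into a global $v \in \CG(\hz)$. The main technical obstacle is the non-local remainder $E$ in Step 1; its $L^2$-Lipschitz bound from tameness makes it harmless as a zeroth-order term in the Carleman inequality, so the adaptation of \cite[Section 7]{Bible} requires no essential change beyond the bookkeeping needed to exhaust $Y^\circ$ by interior subdomains when centering the weight.
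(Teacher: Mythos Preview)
Your decomposition into a $t$-direction argument on $Y$ followed by an $s$-direction argument on the cylindrical end has a genuine gap in Step~1, and the gap is not repairable by adding details.

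The difficulty is this: a Carleman (or Agmon--Nirenberg) estimate in the $t$-direction needs the spatial operator to be formally self-adjoint on a fixed Hilbert space $H$, and the initial condition $V(t_0)=0$ must hold in that same $H$. If you take $H=L^2(\hy)$, you do not have $V(t_0)=0$ there---only on $Y$. If instead you take $H=L^2(Y)$, the extended Hessian produces uncontrolled boundary terms at $\partial Y=\{s=0\}\times\Sigma$ (the $*d$ and Dirac pieces involve $\partial_s$), so the integration-by-parts identity underlying the Carleman inequality fails. Your Lipschitz bound $\|E(t)\|_{L^2(Y)}\leq C\|V(t)\|_{L^2(\hy)}$ is correct but does not help: the right-hand side is exactly the quantity you cannot control. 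Since Step~2 feeds on the output of Step~1, the whole scheme stalls.

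The paper avoids this by refusing to decouple the two directions. It runs a single Carleman estimate in a \emph{radial} variable $r$ in the $(t,s)$-plane, with contours $\CY_r=\{r\}\times Y\ \cup\ (\text{half-circle of radius }r)\times\Sigma\ \cup\ (-\{-r\}\times Y)$. These are closed 3-manifolds, so the boundary terms at $\partial Y$ from the two copies of $Y$ are matched exactly against the endpoint contributions from the half-circle piece; this is the content of the symmetry check \eqref{E20.9} in the paper's Steps~1--2. The toy model in Proposition~\ref{P20.1} (the perturbed $\bar\partial$ equation with non-local kernel) is designed to exhibit precisely this mechanism, and Remark~\ref{R20.2} shows why the initial vanishing region must contain the full support of the non-local term---which your Step~1 respects, but the boundary issue above still blocks the $t$-only argument.
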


The analogous result for closed 3-manifolds is \cite[Proposition 7.2.1]{Bible}. The main difference here is that $\gamma_1$ and $\gamma_2$ are \textbf{not} assumed to be gauge equivalent on the whole time slice $\{t_0\}\times \hy$; thus, the proof of \cite[Proposition 7.2.1]{Bible} does not apply directly here. 

\medskip

 Theorem \ref{T20.9} will follow from the strong unique continuation of the Seiberg-Witten equations if $\q=0$. The problem arises from the tame perturbation $\q$, which gives rise to non-local operators. We will provide a toy model in the next subsection to clarify this point, cf. Remark \ref{R20.2}. It is essential here that the region $\{t_0\}\times Y$ over which $\gamma_1$ and $\gamma_2$ agree contains the support of $\q$. 
 
 Before we proceed any further, let us state the linearized version of Theorem \ref{T20.9} and the version that concerns the irreducibility of spinors.
 
 \begin{theorem}[The Linearized Version]\label{T20.20} Let $I=(t_1,t_2)_t\subset \R_t$ be an open interval. Consider a tame perturbation $\q\in \Pa$ supported on the truncated 3-manifold $Y=\{s\leq 0\}\subset \hy$ and a smooth solution $\gamma$ to the perturbed Seiberg-Witten equation \eqref{E20.15} on the 4-manifold $\hz=I\times (\hy,\bs)$. Suppose a smooth tangent vector at $\gamma$
 	\[
 	V(t)=(\delta c(t),\delta b(t), \delta\psi(t))\in L^2_k(\hz, iT^*\hz\oplus S)
 	\]
 	lies in the kernel of  the linearized Seiberg-Witten map: 
 	\begin{equation}\label{E20.11}
 	0=\D_\gamma\F_{\hz,\q}(V),
 	\end{equation}
 	or equivalently, it solves the equation \eqref{E19.2}. If $V$ is generated by the linearized gauge action on $\{t_0\}\times Y$ at some $t_0\in I$, i.e. there exists a smooth function $\xi\in L^2_{k+1/2}(\hy, i\R)$ such that 
 	\[
 	(\delta b(0), \delta\psi(0))=\bd_{\cgamma(t)} \xi \text{ on } \{t_0\}\times Y.
 	\]
 	then $V$ is generated by the linearized gauge action on the whole manifold $\hz$, i.e. there exists a smooth function $\xi'\in L^2_{k+1}(\hz,i\R)$ such that 
 	\[
 	V=\bd_\gamma \xi' \text{ on } \hz.
 	\]
 \end{theorem}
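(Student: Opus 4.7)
The plan is to recast the problem as a unique continuation statement for an elliptic system with a compactly supported non-local perturbation, for which the Carleman estimates of \cite{K95} apply. First, choose any smooth extension $\tilde\xi \in L^2_{k+1}(\hz, i\R)$ of $\xi$ satisfying, on $\{t_0\}\times Y$, both $\tilde\xi = \xi$ and $\pt\tilde\xi = -\delta c(t_0)$; both matching conditions can be arranged by a direct construction. Setting $W := V - \bd_\gamma\tilde\xi$ and using gauge invariance of $\F_{\hz,\q}$ at the solution $\gamma$ (which gives $\D_\gamma\F_{\hz,\q}\circ\bd_\gamma = 0$), one obtains $\D_\gamma\F_{\hz,\q}(W) = 0$ together with $W|_{\{t_0\}\times Y} = 0$. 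Producing the desired $\xi'$ is equivalent to exhibiting $\eta$ with $W = \bd_\gamma\eta$: then $\xi' = \tilde\xi + \eta$ works.

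Next, I would impose a Coulomb gauge by solving the second-order elliptic equation $\bd_\gamma^*\bd_\gamma\eta = \bd_\gamma^* W$ on $\hz$, with the Cauchy-type matching conditions $\eta|_{\{t_0\}\times Y} = 0$ and $\pt\eta|_{\{t_0\}\times Y} = 0$; these are consistent with the equation because $W$ vanishes on $\{t_0\}\times Y$. The residual $W' := W - \bd_\gamma\eta$ then lies in the kernel of the elliptic linearized operator $(\bd_\gamma^*, \D_\gamma\F_{\hz,\q})$ of Proposition \ref{P19.1}, and still satisfies $W'|_{\{t_0\}\times Y} = 0$. It suffices to prove $W' \equiv 0$ on $\hz$; granting this, $W = \bd_\gamma\eta$ globally.

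The required unique continuation would proceed in two stages. On the time slice $\{t_0\}\times\hy$, the linearization specializes to a $3$-dimensional problem governed by the extended Hessian $\EHess_{\q,\cgamma(t_0)}$ of Proposition \ref{P18.1}, which on the cylindrical end takes the form $\sigma(\ps + D_\Sigma)$ and is elliptic self-adjoint; applying a $3$D Carleman estimate propagates $W'(t_0)|_Y = 0$ to $W'(t_0) \equiv 0$ on all of $\hy$. The $4$D Carleman inequality from \cite{K95}, applied on $\hz$, then upgrades the codimension-one vanishing $W'|_{\{t_0\}\times\hy} = 0$ to global vanishing $W' \equiv 0$.

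The hard part will be the non-local perturbation $\D_\gamma\hq$: it is not a local differential operator, so it does not fit classical Aronszajn/Carleman hypotheses directly. The saving feature is that $\hq(\gamma)(t) = \q(\cgamma(t))$ is supported in and depends only on the restriction of the configuration to $Y$, so $\D_\gamma\hq(W')$ vanishes wherever $W'|_{I\times Y} = 0$. With a Carleman weight in the time variable chosen so that the resulting non-local commutator is absorbable into the good part of the estimate, the argument goes through; this is precisely where the compact support of $\q$ inside $Y$ is essential, and it is also the technical crux where the methods of \cite{K95} must be adapted rather than cited off the shelf.
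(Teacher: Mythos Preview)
Your opening move—subtracting $\bd_\gamma\tilde\xi$ to make the residual vanish on $\{t_0\}\times Y$—matches the paper's first step. But the rest has two genuine gaps. First, the Coulomb step is ill-posed: $\bd_\gamma^*\bd_\gamma = \Delta_{\hz}+|\Phi|^2$ is \emph{elliptic} on the $4$-manifold $\hz$, so prescribing Cauchy data on a hypersurface is the classical Hadamard obstruction; solving the equation globally instead gives no control of $\eta$ on $\{t_0\}\times Y$, and you lose the vanishing of $W'$ there. Second, and more seriously, your Stage~1 cannot work: the slice value $W'(t_0)$ satisfies \emph{no} autonomous $3$-dimensional equation. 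In the form \eqref{E19.3} the $4$D system reads $\pt W'+\EHess_{\q,\cgamma(t)}W'=0$, so $\EHess\,W'(t_0)=-\pt W'(t_0)$, not zero—there is nothing to which a $3$D Carleman estimate on $\hy$ applies. The spatial and temporal directions are genuinely coupled and cannot be separated into two sequential propagations.

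The paper proceeds instead by linearizing the argument of Theorem~\ref{T20.9} directly. After the temporal-gauge subtraction $\xi^{(1)}(t)=\xi-\int_0^t\delta c(t')\,dt'$ (no Coulomb step), it reuses the one-parameter family of closed $3$-manifolds $\CY_r=\CY_r^{(1)}\cup\CY_r^{(2)}\cup\CY_r^{(3)}$—two copies of $Y$ joined by a half-circle in $\HH^2_+$ times $\Sigma$—and feeds the restricted sections into the abstract Carleman framework of Proposition~\ref{P20.3}. The essential geometric point is that this family sweeps out a full $4$-dimensional neighbourhood of $\{t_0\}\times Y$ \emph{including the corner at $\{t_0\}\times\partial Y$}, so the perturbed region $I\times Y$ and the unperturbed region $X'\subset\HH^2_+\times\Sigma$ are handled simultaneously rather than in sequence. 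Your two-stage separation founders precisely at that corner.
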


\begin{theorem}[Irreducibiliy of Spinors]\label{T20.10} Let $I=(t_1,t_2)_t\subset \R_t$ be an open interval. For any tame perturbation $\q\in \Pa$ and a solution $\gamma=(A,\Phi)$ to the perturbed Seiberg-Witten equations \eqref{E20.15} on the 4-manifold $\hz=I\times\hy$, if the spinor 
	\[
	\Phi\equiv 0 \text{ on } \{t_0\}\times Y,
	\]
	for some $t_0\in I$, then $\Phi\equiv 0$ on $\hz$. 
\end{theorem}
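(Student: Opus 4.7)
The plan is to combine a vanishing-order observation about the spinor component of the perturbation with a Carleman unique continuation argument for the $4$-dimensional Dirac operator, in the spirit of \cite{K95}. The key structural input is as follows: the fixed weight function $h(w) = \chi_3(|w|^2)$ from Section \ref{S15.1} satisfies $\partial_w h(0) = \bar w \chi_3'(|w|^2)|_{w=0} = 0$, so the spinor component $W_j^1 = \chi_2(z)\,\partial_w h(\sigma(z))\,\updd_j$ appearing in \eqref{E15.2} vanishes whenever $\sigma(z) = \int_{S^1\times\{z\}}\langle\Psi_z,\updd_z\rangle$ vanishes identically in $z$, which happens in particular when $\Psi|_Y\equiv 0$. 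Combining this through \eqref{E15.1} and extending via the chain rule to generalized cylinder functions and by density in $\Pa$, one obtains the pointwise estimate
\[
|\hq_1(\gamma)(t,y)| \;\leq\; C(\gamma)\,\|\Phi(t,\cdot)\|_{L^2(Y)},
\]
uniform on bounded sets of configurations. In particular $\hq_1(\gamma)(t,\cdot)\equiv 0$ on $\hy$ whenever $\Phi(t,\cdot)|_Y\equiv 0$.

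For the Carleman absorption, let $Y' = \{s\leq 1\}\subset\hy$ be a compact enlargement of $Y$, fix $\epsilon>0$ small, and set $U = (t_0-\epsilon, t_0+\epsilon)\times Y'$. A Carleman estimate of the type proved in \cite{K95} for the Dirac operator, using a weight $e^{2\tau\varphi(t)}$ depending only on $t$ with a strict maximum at $t_0$, reads
\[
\tau\int_U e^{2\tau\varphi(t)}|u|^2\,dV \;\leq\; C\int_U e^{2\tau\varphi(t)}|D_A^+u|^2\,dV
\]
for $u$ compactly supported in $U$. I would apply it to $u = \chi\Phi$, where $\chi$ is a cutoff equal to $1$ on a smaller slab $V = (t_0-\epsilon/2, t_0+\epsilon/2)\times Y$ and supported in $U$. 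Substituting $D_A^+\Phi = -\hq_1(\gamma)$, using the structural bound above, and swapping the order of integration via Fubini (which is legal since the weight is $y$-independent), the main term on the right is controlled by $C\Vol(Y')\int_U e^{2\tau\varphi(t)}|\chi\Phi|^2\,dV$, and is absorbed into the left as soon as $\tau > 2C\Vol(Y')$. The commutator $[D_A^+,\chi]\Phi$ is supported on $U\setminus V$, where $\varphi$ assumes values strictly less than its maximum on a neighborhood of $\{t_0\}\times Y$; letting $\tau\to\infty$, the commutator contribution becomes negligible compared with the left-hand side, forcing $\Phi\equiv 0$ on an open neighborhood of $\{t_0\}\times Y$ in $\hz$. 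By compactness of $Y$ this neighborhood contains a product slab $(t_0-\delta,t_0+\delta)\times Y$ for some $\delta>0$.

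To globalize, observe that on the slab $(t_0-\delta,t_0+\delta)\times Y$ we now have $\Phi\equiv 0$, so by the structural bound of the first paragraph $\hq_1(\gamma)\equiv 0$ on the entire $4$-dimensional slab $(t_0-\delta,t_0+\delta)\times\hy$. The equation thus reduces there to the unperturbed Dirac equation $D_A^+\Phi = 0$, and since $\Phi$ vanishes on the nonempty open subset $(t_0-\delta,t_0+\delta)\times Y^\circ$ of this connected slab, classical Aronszajn unique continuation forces $\Phi\equiv 0$ on the full slab. Iterating the Carleman step with base time in the interior of this slab propagates the vanishing to all of $\hz = I\times\hy$.

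The main obstacle is the Carleman step itself. One must choose $\varphi(t)$, $\chi$, and $\tau$ so that the absorption succeeds despite the nonlocal dependence of $\hq_1$ on $\Phi$, and so that the level sets of $\varphi$ separate $\supp d\chi$ from a neighborhood of $\{t_0\}\times Y$, allowing the commutator term to be killed in the limit $\tau\to\infty$. The essential structural compatibility -- that the bound on $|\hq_1|$ is pointwise in $t$ but only integrated in the spatial variable, matching the $t$-only weight under Fubini -- is what makes the absorption possible despite $\hq_1$ not being a pointwise multiplier of $\Phi$. The use of the compact enlargement $Y'$ rather than all of $\hy$ ensures that $\Vol(Y')$ is finite and the absorption constant is well controlled.
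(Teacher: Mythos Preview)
There is a genuine gap in your Carleman step. The weight $\varphi$ depends only on $t$, yet your cutoff $\chi$ must vanish near the spatial boundary $\{s=1\}\times\Sigma$ of $Y'$ (since $u=\chi\Phi$ is to be compactly supported in $U$). Consequently the commutator $[D_A^+,\chi]\Phi$ is supported on $U\setminus V$, which contains the strip $(t_0-\epsilon/2,t_0+\epsilon/2)\times (Y'\setminus Y)$. On this strip the weight $\varphi(t)$ takes values arbitrarily close to its maximum $\varphi(t_0)$, so the commutator contribution is \emph{not} negligible compared with the left-hand side as $\tau\to\infty$. Your assertion that ``$\varphi$ assumes values strictly less than its maximum on $U\setminus V$'' is simply false for the spatial portion of the cutoff. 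This is not a technicality: a $t$-only weight cannot distinguish the region $\{s>0\}$ (where you have no vanishing information on $\Phi$) from the interior of $Y$, and there is no evident repair via a different $\chi$ or $Y'$. Working on all of $\hy$ with $\chi=\chi(t)$ does not help either, since the hypothesis gives vanishing only on $\{t_0\}\times Y$, not on the full slice $\{t_0\}\times\hy$.

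The paper's proof avoids this by \emph{not} using a $t$-only weight. It imports the radial Carleman setup from the proof of Theorem~\ref{T20.9}: one introduces the closed $3$-manifolds $\CY_r$ (gluing $\{r\}\times Y$, the reversed $\{-r\}\times Y$, and a semicircular cap $\{re^{i\theta}:0\le\theta\le\pi\}\times\Sigma$) and runs the Carleman estimate in the radial variable $r$. Because each $\CY_r$ is closed, no spatial cutoff is needed and no boundary terms arise; the infinite-order vanishing of $\Phi$ on $\{t_0\}\times Y$ (derived from the evolution equation together with $\q^1(B,0)=0$, which is exactly your structural observation) makes the weighted integrals vanish to all orders as $r\to 0$. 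Your globalization idea via Aronszajn on the unperturbed slab is sound in isolation, but the paper does not need a separate step: the $\CY_r$ argument already propagates into both the $t$- and $s$-directions at once.
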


The proofs of Theorem \ref{T20.9}-\ref{T20.10} will not be used elsewhere in this paper. They will dominate the rest of the section.

\subsection{A Motivating Problem} To better explain the ideas and point out the difference from the standard theory \cite[Section 7]{Bible}, let us first discuss a motivating problem that concerns the $\bpartial$-operator on the complex plane. Let 
\[
f:\C_z\to \C
\]
be a holomorphic function and $z=t+is$ be the complex coordinate of the domain. It is well-known that if $f$ vanishes along the interval $\{0\}\times [0,1]_s$, then $f\equiv 0$ over $\C_z$. 

We investigate a class of perturbations of the $\bpartial-$operator. The equation $\bpartial f=0$ can be formally cast into an evolution equation:
\[
\pt f=-D(f)
\]
where $D(f)=i\partial_s f$ is a self-adjoint operator on $L^2(\R_s, \C)$ (although we do not assume $f(t)\in L^2(\R_s,\C)$ for any time slice $\{t\}\times \R_s$). Consider a smooth function $K_1:\R_s\times \R_s\to \C$ with
\[
\supp\ K_1\subset [0,1]_s\times [0,1]_s 
\]
and form the convolution operator 
\begin{align*}
K:C^\infty(\R_s, \C)&\to C^\infty(\R_s,\C)\\
f&\mapsto K(f)(s)=\int_\R K_1(s,s') f(s')ds'. 
\end{align*}

Then $D_K\colonequals D+K$ is a compact perturbation of $D$, not necessarily self-adjoint anymore. More generally, let $V:\C_z\to \C$ be any smooth function and consider the equation 
\begin{equation}\label{E20.1}
	\pt f=-D_K(f)-V\cdot f \text{ on } \C=\R_t\times \R_s. 
\end{equation}

The potential $V$ can be viewed as a time-dependent perturbation of $D_K$. 

\begin{proposition}\label{P20.1} Suppose $f\in C^\infty(\C_z,\C)$ is a solution to the perturbed $\bpartial$-equation \eqref{E20.1} and $f(z)= 0$ for any $z\in \{0\}\times [0,1]_s$, then $f\equiv 0$ on $\C_z$. 
\end{proposition}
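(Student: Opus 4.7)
The plan is to exploit the fact that $K_1$ is supported in $[0,1]_s\times[0,1]_s$, so the non-local term $K(f)(t,s)$ both vanishes identically outside the strip $R\colonequals \R_t\times[0,1]_s$ and depends only on $g\colonequals f|_R$. Hence $g$ satisfies a self-contained equation
\begin{equation*}
2\bpartial g + Kg + Vg = 0 \text{ on } R, \qquad g(0,\cdot)\equiv 0 \text{ on } [0,1]_s,
\end{equation*}
and it suffices to show $g\equiv 0$: then $Kf\equiv 0$ identically on $\C$, so $f$ solves the purely local equation $2\bpartial f+Vf=0$ and vanishes on the open set $R$, whence classical strong unique continuation for $\bpartial$ with bounded potential yields $f\equiv 0$.

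To prove $g\equiv 0$ I would apply a Carleman estimate for $\bpartial$ with a weight $\phi$ depending \emph{only on $t$} and suitably pseudoconvex at $\{t=0\}$, so that
\begin{equation*}
\tau\int_\C e^{2\tau\phi}|u|^2 \;\le\; C\int_\C e^{2\tau\phi}|\bpartial u|^2
\end{equation*}
holds for $u\in C_c^\infty$ and $\tau\gg 1$ (as in \cite{K95}). Inserting $u=\chi(t)\chi_1(s)g$ with $\chi$ localizing near $t=0$ and $\chi_1$ supported in $(0,1)_s$, the $Vg$ contribution is absorbed by taking $\tau$ large. Because $\phi$ is $s$-independent and $|Kg(t,s)|$ is pointwise bounded by $C\|g(t,\cdot)\|_{L^2([0,1])}$, one gets
\begin{equation*}
\int e^{2\tau\phi(t)}|\chi\chi_1 Kg|^2\,ds\,dt \;\le\; C\int e^{2\tau\phi(t)}|g|^2\,ds\,dt,
\end{equation*}
which is likewise absorbed into the left-hand side for $\tau\gg 1$. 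The commutator $[\bpartial,\chi\chi_1]g$ contributes error terms supported on $\supp\nabla\chi$ and $\supp\nabla\chi_1$; one handles the former using $g(0,\cdot)=0$ together with a $\chi$ symmetric about $t=0$, and the latter by iterating the argument while shrinking $\delta$ so that $\chi_1$ approaches $\mathbf{1}_{[0,1]}$. Sending $\tau\to\infty$ then forces $g\equiv 0$ in a neighborhood of $\{0\}\times[0,1]$ inside $R$; translating the time origin and iterating extends vanishing to all of $R$.

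The main obstacle is precisely this Carleman step: the non-local term forces the weight to be $s$-independent, since otherwise the ratio $e^{\tau\phi(t,s)}/e^{\tau\phi(t,s')}$ for $s,s'\in[0,1]$ would blow up and prevent absorption, and one must then verify that such an $s$-independent weight is still admissible for the $\bpartial$ Carleman estimate. This structural tension — non-local perturbations confronting strictly local Carleman estimates — is exactly the technical point around which the proofs of Theorems \ref{T20.9}, \ref{T20.20}, and \ref{T20.10} will have to be organized, with the support condition \ref{AA5} on $\q$ playing the role of the support condition on $K_1$ here.
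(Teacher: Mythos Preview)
Your overall strategy is sound up through the reduction step: once $g\colonequals f|_R\equiv 0$ on $R=\R_t\times[0,1]_s$, the rest follows exactly as you say. The gap is in the Carleman argument for $g$, specifically the handling of the commutator $[\bpartial,\chi_1]g$ supported near $s\in\{0,1\}$.

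The problem is structural, not technical. Your weight $\phi$ is $s$-independent (as you correctly note it must be, to absorb $Kg$), so the Carleman estimate gives you control of $\tau\int e^{2\tau\phi(t)}|\chi\chi_1 g|^2$, an integral over $\supp\chi_1\subset(\delta,1-\delta)$. The commutator error $\int e^{2\tau\phi(t)}|\chi_1'|^2|g|^2$ lives over $(0,\delta)\cup(1-\delta,1)$, carries the \emph{same} weight $e^{2\tau\phi(t)}$, and involves values of $g$ at points where nothing is known. These two integrals live over disjoint $s$-regions, so the left-hand side cannot absorb the error for any $\tau$, and letting $\delta\to 0$ only makes $|\chi_1'|$ blow up. In effect, cutting off in $s$ uses only the hypothesis $f|_{\{0\}\times[\delta,1-\delta]}=0$; Remark~\ref{R20.2} shows this weaker hypothesis is genuinely insufficient, so no refinement of the cutoff will work.

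The paper avoids this by not working on the strip at all. It parametrizes by a family of \emph{closed} contours $\Gamma_r$ encircling the segment $\{0\}\times[0,1]$ (two vertical segments $\{\pm r\}\times[0,1]$ joined by half-circles at the endpoints), sets $w(r)=f|_{\Gamma_r}$ in a suitable Hilbert space $H$, and applies the abstract Carleman machinery of Proposition~\ref{P20.3} in the single variable $r$. Because each $\Gamma_r$ is closed there are no ``angular'' boundary terms; the non-local operator $K$ becomes a bounded map $H\to H$ since on the vertical pieces $\{\pm r\}\times[0,1]$ it acts within the same slice; and the hypothesis $f|_{\{0\}\times[0,1]}=0$ translates into infinite-order vanishing of $w$ at $r=0$. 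This contour trick is precisely what replaces your $s$-cutoff, and it is the template for the gauge-theoretic proofs that follow.
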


\begin{remark}\label{R20.2} If we only assume $f\equiv 0$  on $ \{0\}\times [\epsilon,1]_s$ for some small $\epsilon>0$, then for some kernel $K_1$ and potential $V$, the conclusion fails. Indeed, set $f(t,s)\equiv g(s)$ and $V\equiv 0$. Let $g$ be a cut-off function such that
	\[
	g(s)\equiv 0, \forall s\geq \epsilon \text{ and } g(s)\equiv 1, \forall s<\epsilon/2. 
	\]
	Then one can find $K_1$ with $K_1*g=-D(g)=-i\ps g$, so $g\in \ker D_K$. 
 \end{remark}

The problem here is that the convolution operator $K$ is not local: even if a function $g: [0,1]_s\to \C$ is supported on a small interval $[0,\epsilon]\subset [0,1]_s$, $K(g)=K_1*g$ might be non-vanishing on a much larger region. This is the analogue of the tame perturbation $\q$ in the Seiberg-Witten equations.

The proofs of Theorem \ref{T20.9}-\ref{T20.10} are modeled on that of Proposition \ref{P20.1}, which involves Carleman estimates, as we discuss in the next subsection. 

\subsection{Carleman Estimates} There are two classical ways to prove a strong unique continuation property like Proposition \ref{P20.1}. The first follows Agmon and Nirenberg \cite{AN67} and relies on a differential inequality. This is the approach adopted in the book \cite[Section 7]{Bible}. In this paper, we follow the second strategy and base our works on Carleman estimates \cite{C39}. The primary result that we consult is \cite[Theorem 1]{K95}. 

Let us first state a result in an abstract Hilbert space.
\begin{proposition}\label{P20.3}
Let $H$ be a Hilbert space and $L_i: H\to H,\ i=1,2$ be (unbounded) self-adjoint operators on $H$ satisfying the relation 
\begin{equation}\label{E20.2}
(L_1+rL_2+\alpha)^2-rL_2 \geq 0
\end{equation}
for any $r>0$ and $\alpha>\alpha_0(H, L_1, L_2)$; or equivalently, 
\begin{equation}\label{E20.5}
\|(L_1+rL_2+\alpha)v\|_H^2-\re\langle v,  (rL_2) v\rangle\geq 0\ \forall v\in D(L_1)\cap D(L_2).
\end{equation}
Here, $\alpha_0>0$ is a fixed large number depending only on $H$, $L_1$ and $L_2$.

\medskip

 Suppose $w: [0,r_0]_r\to D(L_1)\cap D(L_2)$ is a smooth function such that
 \begin{itemize}
\item for a constant $C_0>0$, the following estimate holds for any $r\in (0,r_0]$:
\begin{equation}\label{E20.3}
\| (\pr+\frac{1}{r}\cdot L_1+ L_2)w(r)\|_H\leq C_0 \|w(r)\|_H;
\end{equation}
\item $w(r)$ vanishes at the origin to the infinite order, i.e 
$
(\pr^n w)(0)=0
$
for any $n\geq 0$. In practice, we will only need the property that 
\begin{equation}\label{E20.18}
\|w(r)\|_H, \|\partial_r w(r)\|_H=\SO(r^n) \text{ as }r\to 0,
\end{equation}
for any $n\geq 1$. 
 \end{itemize}

 Then $w\equiv 0$. 
\end{proposition}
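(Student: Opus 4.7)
The natural strategy is a Carleman estimate in logarithmic coordinates. Substitute $s = \log r$, $s \in (-\infty, \log r_0]$, and set $\tilde{w}(s) = w(e^s)$. Multiplying (E20.3) through by $r = e^s$, the hypothesis becomes
\begin{equation*}
\|(\partial_s + L_1 + e^s L_2)\tilde{w}(s)\|_H \leq C_0\, e^s\, \|\tilde{w}(s)\|_H, \qquad s \leq \log r_0,
\end{equation*}
and the infinite-order vanishing of $w$ at $r=0$ becomes the infinite-order decay $\|\tilde{w}(s)\|_H,\ \|\partial_s \tilde w(s)\|_H = O(e^{Ns})$ as $s\to -\infty$, for every $N\geq 1$. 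The goal is to show $\tilde w \equiv 0$ on the whole half-line.

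Next I would establish the Carleman estimate for $P := \partial_s + L_1 + e^s L_2$ with a convex weight $\phi(s)$ satisfying $\phi'(s)\leq -1$ and $\phi''(s)\geq c_0 > 0$ on the region of interest (for instance $\phi(s) = -s + e^{-s}$, whose convexity and decreasingness are automatic). For a smooth $v\in C_c^\infty\bigl(\R;\, D(L_1)\cap D(L_2)\bigr)$, the conjugated operator $P_{\phi,\alpha} := e^{\alpha\phi}\,P\,e^{-\alpha\phi} = \partial_s + B(s)$ with $B(s) = L_1 - \alpha\phi'(s) + e^s L_2$ is self-adjoint in its spatial part, so integration by parts (with boundary terms vanishing by compact support) yields
\begin{equation*}
\int \|P_{\phi,\alpha} v\|_H^2\, ds \;=\; \int \|\partial_s v\|_H^2\, ds \;+\; \int \|B(s)v\|_H^2\, ds \;+\; \int \alpha\phi''(s)\|v\|_H^2\, ds \;-\; \int e^s\langle v, L_2 v\rangle\, ds,
\end{equation*}
because $\dot B(s) = -\alpha\phi''(s) + e^s L_2$. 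Now invoke (E20.5) at each $s$ with parameter $-\alpha\phi'(s) \geq \alpha > \alpha_0$ and with $r=e^s$: this gives $\|B(s)v\|_H^2 \geq e^s\langle v, L_2 v\rangle$ pointwise, so the second and fourth terms cancel in the right direction and we obtain the key estimate
\begin{equation*}
\int \|P_{\phi,\alpha} v\|_H^2 \,ds \;\geq\; \alpha c_0 \int \|v\|_H^2\, ds, \qquad \alpha > \alpha_0.
\end{equation*}
Undoing the conjugation via $v = e^{\alpha\phi} g$ gives the weighted form $\int e^{2\alpha\phi}\|Pg\|_H^2\,ds \geq \alpha c_0 \int e^{2\alpha\phi}\|g\|_H^2\,ds$ for any $g$ making $e^{\alpha\phi}g$ compactly supported.

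The third step uses the estimate to localize $\tilde w$. Fix $s_* < \log r_0$ and let $\chi(s)$ be a cutoff equal to $1$ on $[s_1, s_*]$ and supported in $[s_1-1, s_*+\epsilon]$, with $s_1 \ll 0$. Apply the weighted estimate to $g = \chi\tilde{w}$: writing $P(\chi \tilde w) = \chi'\tilde w + \chi P\tilde w$, the hypothesis bounds $\|P\tilde w\|_H \leq C_0 e^s\|\tilde w\|_H$ so the second piece is absorbed into the LHS once $\alpha c_0 > 4 C_0^2 \sup_{\mathrm{supp}\,\chi}e^{2s}$, leaving
\begin{equation*}
\tfrac{\alpha c_0}{2}\int e^{2\alpha\phi}\chi^2\|\tilde w\|_H^2\,ds \;\leq\; 2\int e^{2\alpha\phi}(\chi')^2\|\tilde w\|_H^2\,ds.
\end{equation*}
The support of $\chi'$ splits into a piece near $s_1$, where the infinite-order decay of $\tilde w$ kills the contribution once $s_1\to-\infty$, and a piece near $s_*+\epsilon$. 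Since $\phi$ is strictly decreasing, $\phi(s_*+\epsilon) < \phi(s_*)$; dividing by $e^{2\alpha\phi(s_*)}$ and letting $\alpha\to\infty$ forces $\tilde w\equiv 0$ on $[s_*-\epsilon/2, s_*]$. A standard continuity-in-$s_*$ argument propagates the vanishing forward, giving $\tilde w\equiv 0$ on $(-\infty, \log r_0]$ as required.

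\textbf{Main obstacle.} The crucial juncture is the Carleman identity, specifically that the cross-term $-\int e^s\langle v, L_2 v\rangle\,ds$ generated by the time-dependence of $B$ is exactly the bad term that hypothesis (E20.5) is calibrated to absorb into $\|B v\|_H^2$. If $L_2$ were unbounded below in a manner not controlled by $\|(L_1+rL_2+\alpha)v\|^2$, the estimate would fail, so verifying that the abstract hypothesis (E20.5) is indeed the right quantitative statement is the conceptual heart of the argument; everything else is bookkeeping.
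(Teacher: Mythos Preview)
Your overall strategy matches the paper's: the logarithmic substitution, the Carleman estimate via conjugation, and the recognition that hypothesis~(E20.5) is exactly what absorbs the cross term $-\int e^s\langle v, L_2 v\rangle\,ds$ into $\|B(s)v\|^2$ are all correct and constitute the heart of the argument.

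There is, however, a genuine gap in your choice of weight. The function $\phi(s) = -s + e^{-s}$ satisfies $\phi(s)\sim e^{-s}$ as $s\to-\infty$, so $e^{2\alpha\phi(s)}$ grows \emph{doubly} exponentially in $|s|$. The infinite-order vanishing of $w$ at $r=0$ translates only to $\|\tilde w(s)\|_H = O(e^{Ns})$ for every $N$, i.e.\ arbitrarily fast \emph{single} exponential decay. This cannot compete: the left-boundary contribution
\[
\int_{s_1-1}^{s_1} e^{2\alpha\phi(s)}(\chi')^2\|\tilde w\|_H^2\,ds
\]
does not tend to zero as $s_1\to-\infty$ for any fixed $\alpha$---it blows up. If instead you drop the left cutoff, then $e^{\alpha\phi}\tilde w$ is not even in $L^2$ and the integration by parts in the Carleman identity cannot be justified.

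The resolution (which the paper carries out) is to use a weight with at most linear growth at $-\infty$, so that $e^{\tau\varphi}$ grows only exponentially and is beaten by the hypothesis. Linear growth forces $\varphi''\to 0$ at $-\infty$, so your uniform lower bound $\phi''\geq c_0$ must be abandoned. The paper takes $\varphi$ defined implicitly by $-\varphi+e^{-\epsilon\varphi}=x$, giving $\varphi\sim -x$ and $\varphi''(x)\gtrsim \epsilon^2 e^{2\epsilon x}$; the resulting Carleman estimate is the slightly weaker
\[
\tau\int \|e^{\tau\varphi+\epsilon x}u\|_H^2\,dx \;\leq\; C(\epsilon)\int \|e^{\tau\varphi}Pu\|_H^2\,dx,
\]
with the harmless extra factor $e^{\epsilon x}$ on the left. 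Now $e^{\tau\varphi}\sim e^{-\tau x}$ is controlled by $\|w\|=O(e^{Nx})$ once $N>\tau$, and a single cutoff near $x=0$ (no limit $s_1\to-\infty$ needed) completes the argument exactly as you outlined.
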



With loss of generality, we assume $r_0=1$ and let $x\colonequals \ln r\in (-\infty,0]$. Then the inequality \eqref{E20.3} becomes 
\begin{align}\label{E20.16}
g(x)&\colonequals (\px+L_1+e^x L_2) w(x),\\ \|g(x)\|_H&\leq C_0e^x\|w(x)\|_H,\ \forall x\in (-\infty,0].\nonumber
\end{align}

The key ingredient is the Carleman estimate. We follow the idea from \cite{AB80}. For any $\epsilon\in(0,1)$, consider the weight function $\varphi: (-\infty,0]\to \R_+$ implicitly determined by the relation $-\varphi(x)+\exp(-\epsilon\varphi(x))=x$, so $\varphi(x)\sim -x$ and 
\begin{align}
\px \varphi(x)&=-\frac{1}{1+\epsilon e^{-\epsilon\varphi(x)}}\in (-1,-\half),\label{E20.17}\\
\px^2 \varphi(x)&=\frac{\epsilon^2 e^{-\epsilon\varphi(x)}}{(1+\epsilon e^{-\epsilon\varphi(x)})^3}\geq C_1\epsilon^2\cdot e^{2\epsilon x}, \label{E20.4}
\end{align}
for a constant $C_1>0$. In what follows, we will always treat $\epsilon\in (0,1)$ as a fixed constant. 
\begin{proposition}[Carleman Estimates, \cite{K95} Theorem 1]\label{P20.4} Under the assumptions of Proposition \ref{P20.3}, for any $\epsilon\in (0,1)$, there is a constant $C(\epsilon)>0$ such that for any $\tau>2\alpha_0$ and $u\in C_c^\infty((-\infty, 0), D(L_1)\cap D(L_2))$, we have 
	\[
\tau\int_{(-\infty, 0)} \| e^{\tau\varphi(x)+\epsilon x} u(x)\|_H^2dx\leq C(\epsilon) \int_{(-\infty, 0)} \|e^{\tau \varphi(x)} (\px+L_1+e^x L_2) u(x)\|_H^2dx.
	\]
This estimate is uniform in $\tau$. 
\end{proposition}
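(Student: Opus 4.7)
The plan is to follow the classical conjugation-plus-commutator recipe for Carleman estimates. The first step is to set $v(x) \colonequals e^{\tau\varphi(x)} u(x)$, reducing the desired inequality to
\[
\tau \int_{-\infty}^{0} e^{2\epsilon x} \|v(x)\|_H^2 \, dx \;\le\; C(\epsilon) \int_{-\infty}^{0} \|P_\tau v(x)\|_H^2 \, dx,
\]
where $P_\tau = e^{\tau\varphi} (\partial_x + L_1 + e^x L_2) e^{-\tau\varphi} = \partial_x - \tau \varphi'(x) + L_1 + e^x L_2$. On $L^2((-\infty,0), H)$ I would then decompose $P_\tau = A_\tau + S_\tau$ into its antisymmetric and symmetric parts, namely
\[
A_\tau \colonequals \partial_x, \qquad S_\tau(x) \colonequals -\tau \varphi'(x) + L_1 + e^x L_2,
\]
and open up the square in the usual way to get $\|P_\tau v\|^2 = \|S_\tau v\|^2 + \|A_\tau v\|^2 + \langle [S_\tau, A_\tau] v, v\rangle$ for test vectors in $C_c^\infty((-\infty,0), D(L_1)\cap D(L_2))$.

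The crucial calculation is the commutator. Since $L_1$ and $L_2$ are $x$-independent and $[f(x), \partial_x] = -f'(x)$ for scalar multiplications, I would obtain
\[
[S_\tau, A_\tau] = \tau \varphi''(x) - e^x L_2.
\]
The bad $-e^x L_2$ term is exactly what the structural hypothesis \eqref{E20.5} is designed to kill. Indeed, from \eqref{E20.17} one has $-\tau \varphi'(x) \in (\tau/2, \tau)$, so for $\tau > 2\alpha_0$ the pointwise inequality \eqref{E20.5} applies at each $x$ with $r = e^x$ and $\alpha = -\tau\varphi'(x) > \alpha_0$, yielding $\|S_\tau(x) w\|_H^2 \ge e^x \langle w, L_2 w\rangle_H$ for every $w \in D(L_1) \cap D(L_2)$. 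Integrating in $x$ gives $\|S_\tau v\|^2 \ge \int e^x \langle v, L_2 v\rangle \, dx$, and plugging this into the expansion makes the two $e^x L_2$ contributions cancel exactly:
\[
\|P_\tau v\|^2 \;\ge\; \|A_\tau v\|^2 + \tau \int_{-\infty}^{0} \varphi''(x) \|v(x)\|_H^2 \, dx.
\]

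To finish, I would discard the non-negative term $\|A_\tau v\|^2$ and invoke the pointwise lower bound $\varphi''(x) \ge C_1 \epsilon^2 e^{2\epsilon x}$ from \eqref{E20.4}, producing
\[
\|P_\tau v\|^2 \;\ge\; C_1 \epsilon^2 \tau \int_{-\infty}^{0} e^{2\epsilon x} \|v(x)\|_H^2 \, dx,
\]
which, after undoing $v = e^{\tau\varphi} u$, is the desired Carleman estimate with $C(\epsilon) = (C_1 \epsilon^2)^{-1}$, uniform in $\tau$. The main technical obstacle is not analytic but bookkeeping: $S_\tau$ is an unbounded operator on $L^2((-\infty,0), H)$ with fibered domain $D(L_1)\cap D(L_2)$ at each $x$, and the commutator expansion, the pointwise application of \eqref{E20.5}, and the integration by parts hidden in $A_\tau^* = -A_\tau$ must all be justified on the test class $C_c^\infty((-\infty,0), D(L_1)\cap D(L_2))$. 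I would handle this by a standard density/approximation argument (no boundary terms appear since the test vectors are compactly supported in $(-\infty,0)$); once that is granted, the entire estimate is a direct consequence of the two pointwise inequalities \eqref{E20.5} and \eqref{E20.4}.
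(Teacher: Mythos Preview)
Your proposal is correct and is essentially the same proof as the paper's, just phrased in the standard antisymmetric/symmetric/commutator language: your $S_\tau(x)$ is the paper's $L(x)$, your commutator identity $[S_\tau,A_\tau]=\tau\varphi''-e^xL_2$ is exactly the paper's integration-by-parts step \eqref{E20.7}, and both cancel the $e^xL_2$ term against $\|S_\tau v\|^2$ via the structural hypothesis \eqref{E20.5} before invoking \eqref{E20.4}.
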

	\begin{proof}[Proof of Proposition \ref{P20.3}] Fix some $x_0<0$. To apply Carleman estimates, choose a cut-off function $\chi: (-\infty, 0]_x\to [0,1]$ such that $\chi(x)\equiv 1$ when $x<x_0$ and $\chi(0)=0$.  Set $u(x)=\chi(x)w(x)$. The function $u(x)$ is not compactly supported on $(-\infty, 0)$, but its decay is faster than any exponential function as $x\to -\infty$, by \eqref{E20.18}. In this case, Proposition \ref{P20.4} still applies, cf. Remark \ref{R20.8}; so
\begin{align*}
\frac{\tau}{2C(\epsilon)}\int_{(-\infty, x_0]} &\| e^{\tau\varphi(x)+\epsilon x} w(x)\|_H^2dx\leq  \frac{\tau}{2C(\epsilon)}\int_{(-\infty, 0]} \| e^{\tau\varphi(x)+\epsilon x} u(x)\|_H^2dx,\\
&\leq \half \int_{(-\infty, 0]} \| e^{\tau\varphi(x)} (\px+L_1+e^xL_2) u(x)\|_H^2dx,\\
&\leq \int_{(-\infty, 0]} \|e^{\tau\varphi(x)} g(x)\|_H^2dx+ \int_{[x_0, 0]} \|e^{\tau\varphi(x)}[\px, \chi(x)] w(x)\|_H^2dx,\\
(\text{by }\eqref{E20.16})&\leq C_0 \int_{(-\infty, 0]} \|e^{\tau\varphi(x)+ x} w(x)\|_H^2dx+C_2e^{\tau \varphi(x_0)}\int_{[x_0,0]}\|w\|_H^2dx,
\end{align*}
where $C_2=\|\px\chi\|_\infty^2$. The upshot is that this inequality holds for any $\tau\gg 1$, so when $\tau>4C_0C(\epsilon)$, we use the rearrangement argument to derive that
\[
\frac{\tau}{2}\int_{(-\infty, x_0]} \| e^{\tau\varphi(x)+\epsilon x} w(x)\|_H^2dx\leq 2C(\epsilon)C_2 e^{\tau\varphi( x_0)}\int_{[x_0,0]}\|w\|_H^2dx.
\]

Let $\tau\to\infty$. We conclude that $w(x)\equiv 0$ when $x<x_0$. Since $x_0<0$ is arbitrary, $w\equiv 0$ on $(-\infty, 0]$. 
	\end{proof}

To complete the proof of Proposition \ref{P20.3}, it remains to prove Carleman estimates.

\begin{proof}[Proof of Proposition \ref{P20.4}] It is essentially the same argument as \cite[Theorem 1]{K95}. We record the proof here because a slight modification will be made in our actual applications. Set $v(x)\colonequals e^{\tau\varphi(x)}u(x)$, then 
	\[
	e^{\tau\varphi(x)} (\px+L_1+e^xL_2) e^{-\tau\varphi(x)} v(x)= (\px+L_1+e^xL_2+\tau(-\px\varphi(x)))v(x). 
	\]
Define $L(x)\colonequals L_1+e^xL_2+\tau(-\px\varphi(x))$ and compute 
	\begin{align*}
\int_{(-\infty, 0]} \|(\px+L(x)) v(x)\|_H^2dx&=\int_{(-\infty, 0]} \|\px v(x)\|_H^2+\|L(x)v(x)\|_H^2\\
&\qquad +\int_{(-\infty, 0]}2\re\langle \px v(x), L(x)v(x)\rangle_H dx.
	\end{align*}
	
	Using the fact that $L(x):H\to H$ is a self-adjoint operator, we integrate by parts:
\begin{align}\label{E20.7}
&\int_{(-\infty, 0]}2\re\langle \px v(x), L(x)v(x)\rangle_H=-\int_{(-\infty, 0]} \re\langle v(x), (\px L(x))v(x)\rangle\\
=& \int_{(-\infty, 0]}\re\langle v(x), e^x(-L_2) v(x)\rangle+\tau\int_{(-\infty, 0]}\langle v(x), (\px^2 \varphi(x) )v(x)\rangle\nonumber\\
(\text{by } \eqref{E20.4})\geq&  \int_{(-\infty, 0]}\re\langle v(x), e^x(-L_2) v(x)\rangle+C_1\epsilon^2\tau\int_{(-\infty, 0]}\|e^{\epsilon x}v(x)\|_H^2. \nonumber
\end{align}	

Set $\alpha=\tau(-\px\varphi)$. If $\tau>2\alpha_0$, then by \eqref{E20.17}, $
\alpha=\tau(-\px\varphi)>\tau/2>\alpha_0.$ Now we use the relation \eqref{E20.2} to conclude that 
\[
\int_{(-\infty, 0]} \|(\px+L(x)) v(x)\|_H^2dx\geq C_1 \epsilon^2\tau\int_{(-\infty, 0]}\|e^{\epsilon x}v(x)\|_H^2
\]
for any $\tau>2\alpha_0$ and $\epsilon\in (0,1)$. 
\end{proof}
\begin{remark}\label{R20.8} When $u(x): (-\infty,0)\to D(L_1)\cap D(L_2)$ is not compactly supported and yet $u(0)=0$, we have to verify that the boundary term in \eqref{E20.7} vanishes:
	\[
	\lim_{x\to -\infty}\re \langle v(x), L(x) v(x)\rangle. 
	\]
	Then one may assume that $\|u(x)\|_H,\ \|\px u(x)\|_H$ and $\|(L_1+e^x L_2)u(x)\|_H$ decay faster than any exponential functions as $x\to -\infty$. In Proposition \ref{P20.3}, this is guaranteed by \eqref{E20.16} and \eqref{E20.18}.
\end{remark}

\subsection{Applications} In this subsection, we give a few examples of $(H, L_1, L_2)$ for which the assumption \eqref{E20.5} is fulfilled and derive Proposition \ref{P20.1} from the abstract Proposition \ref{P20.3}. We will work out the Seiberg-Witten equations in the next subsection.

\begin{lemma}\label{L20.5} If self-adjoint operators $L_1, L_2: H\to H$ anti-commute, i.e.
\[
\{L_1, L_2\}\colonequals L_1L_2+L_2L_1=0,
\]
then the condition \eqref{E20.5} holds. 
\end{lemma}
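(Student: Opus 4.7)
The plan is to recast the inequality \eqref{E20.5} as positivity of the self-adjoint operator
\[
T \;\colonequals\; (L_1+rL_2+\alpha)^2 - rL_2 .
\]
Because $\{L_1,L_2\}=0$, the cross term in $(L_1+rL_2)^2$ vanishes, so $(L_1+rL_2)^2 = L_1^2+r^2L_2^2$ and hence
\[
T \;=\; A + M, \qquad A\;\colonequals\;L_1^2+r^2L_2^2+\alpha^2,\qquad M\;\colonequals\;2\alpha L_1+(2\alpha-1)\,rL_2.
\]

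I would then harvest two further consequences of anti-commutation. First, from $L_1L_2^2 = -L_2L_1L_2 = L_2^2L_1$ and its mirror image, each of $L_1,L_2$ commutes with $A$, so $[A,M]=0$. Second, a further application of $\{L_1,L_2\}=0$ collapses the cross term in $M^2$, giving
\[
M^2 \;=\; 4\alpha^2L_1^2 + (2\alpha-1)^2 r^2 L_2^2 .
\]
Because $A$ and $M$ are commuting self-adjoint operators with $A\ge\alpha^2>0$, the spectral theorem for commuting pairs reduces the inequality $T\ge 0$ to a pointwise inequality on the joint spectral subspaces. On the subspace where $L_1^2=u\ge 0$ and $r^2L_2^2=w\ge 0$, the operator $A$ acts by $a = u+w+\alpha^2$ and $M^2$ acts by the scalar $4\alpha^2 u+(2\alpha-1)^2 w$; since $M$ is self-adjoint with this scalar square, both $\pm\sqrt{4\alpha^2 u+(2\alpha-1)^2 w}$ occur as eigenvalues of $M$ on this subspace, and $T\ge 0$ is therefore equivalent to
\[
(u+w+\alpha^2)^2 \;\ge\; 4\alpha^2 u + (2\alpha-1)^2 w .
\]

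The last step is the elementary identity
\[
(u+w+\alpha^2)^2 - 4\alpha^2 u - (2\alpha-1)^2 w \;=\; (u-\alpha^2+w)^2 + (4\alpha-1)\,w ,
\]
whose right-hand side is manifestly non-negative for all $u,w\ge 0$ whenever $\alpha\ge 1/4$; so one may take $\alpha_0 = 1/4$. The only technicality I expect to have to tidy up is the rigorous use of the joint spectral calculus when $L_1,L_2$ are unbounded; but since each $L_j$ commutes with $L_{3-j}^2$, and hence with its spectral projections, one can simultaneously truncate to $\{L_1^2\le N\}\cap\{L_2^2\le N\}$, prove the inequality on these bounded, invariant subspaces, and pass to the limit in the quadratic form $\langle v,Tv\rangle$ on $v\in D(L_1)\cap D(L_2)$. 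I do not foresee any genuine analytical obstacle beyond this bookkeeping.
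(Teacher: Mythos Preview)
Your argument is correct and reaches the same threshold $\alpha_0=1/4$, but the paper's route is considerably shorter and more elementary. Rather than expanding $(L_1+rL_2+\alpha)^2$ at the operator level and invoking joint spectral calculus, the paper works directly with the quadratic form and completes the square: for $c=1-\tfrac{1}{2\alpha}$ one has the identity
\[
\|(L_1+rL_2+\alpha)v\|^2-\langle v,rL_2v\rangle=\|(L_1+crL_2+\alpha)v\|^2+(1-c^2)\|rL_2v\|^2+\tfrac{1}{\alpha}\re\langle L_1v,rL_2v\rangle,
\]
and the last term vanishes because $2\re\langle L_1v,L_2v\rangle=\langle v,\{L_1,L_2\}v\rangle=0$. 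No spectral theory, no domain truncation, no commutator bookkeeping.

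The practical payoff of the paper's argument is flexibility: it uses only the single consequence $\re\langle L_1v,L_2v\rangle=0$, not the full anti-commutation relation. This matters later in the paper (e.g.\ in the proof of Proposition~\ref{P20.1} and Theorem~\ref{T20.9}), where $L_1,L_2$ are not literally self-adjoint and do not anti-commute in the operator sense, yet the cross term still vanishes after integration by parts. Your spectral approach, by contrast, genuinely needs $[A,M]=0$ and the simplification of $M^2$, both of which rely on the full anti-commutator identity; it would not transfer to those generalized settings without further work.
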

\begin{proof} We rewrite the left hand side of \eqref{E20.5} as 
\[
\|(L_1+(1-\frac{1}{2\alpha})rL_2+\alpha)v\|_H^2+(1-(1-\frac{1}{2\alpha})^2)\| (rL_2) v\|_H^2 +\frac{\re\langle L_1v, (rL_2)v\rangle}{\alpha}\geq 0 \text{ if }\alpha>\frac{1}{4}.
\]
The last term vanishes because $\{L_1, L_2\}=0.$
\end{proof}
\begin{example}\label{EX20.7} The first example is the Dirac operator on $\C_z\times \Sigma$ where $\Sigma=\partial Y$ is a union of 2-tori endowed with a flat metric. We choose a \spinc connection $A$ on $\C_z$ such that 
	\[
	A=\dt+\ds+\cB
	\]
	for a fixed \spinc connection $\cB$ on the surface $\Sigma$. 
	
	Using the polar coordinate $(r,\theta)$ on the complex plane, the Dirac operator $D_A^+$ can be written as 
	\[
	D_A^+=\rho_4(dr)(\pr+\rho_3(rd\theta)\cdot( \frac{1}{r} \partial_\theta+ D_{\cB}^{\Sigma}))
	\]
	where $D_{\cB}^{\Sigma}$ is the Dirac operator associated to $\cB$ on the surface. Unlike $\rho_4(rd\theta)$,
	\[
	\rho_3(rd\theta)=\rho_4(dr)^{-1}\cdot \rho_4(rd\theta)=-\rho_4(dr\wedge rd\theta)
	\]
	is a constant bundle map. Proposition \ref{P20.3} applies to the operator $\rho_4^{-1}(dr)\cdot D_A^+$ with
	\[
	L_1^\BD=\rho_3(rd\theta)\cdot \partial_\theta,\ L_2^\BD= \rho_3(rd\theta)D_{\cB}^{\Sigma}. 
	\]
	and $H=L^2(S^1\times\Sigma, S^+)$. Indeed, by Lemma \ref{L20.5}, $\{L_1^\BD, L_2^\BD\}=0$.
\end{example}

\begin{example}\label{EX20.8} The second example concerns the self-dual operator 
	\begin{align*}
\Omega^1(X,i\R)&\to \Omega^+(X,i \R),\\
b&\mapsto d^+b,
	\end{align*}
 on the 4-manifold $X=\C_z\times \Sigma$. Using the polar coordinate at the origin $0\in \C_z$, we regard $b$ as an 1-form on 
	\[
	X'=[0,r_0)_r\times S^1\times \Sigma. 
	\]
Suppose $b$ does not contain the $dr$-component and write 
\[
b(r)=b_1(r) (rd\theta)+b_2(r)
\] 
with $b_1(r)\in H_1\colonequals L^2(S^1\times \Sigma, \R)$ and  $b_2(r)\in H_2\colonequals L^2(S^1\times \Sigma, T^*\Sigma)$. As the metric on $X'$ is given by 
\[
dr^2+(rd\theta)^2+g_\Sigma,
\]
the equation $d^+b=0$ is equivalent to that
\[
\pr \begin{pmatrix}
b_1(r)\\
b_2(r)
\end{pmatrix}+\bigg[
\frac{1}{r}\begin{pmatrix}
1 & 0\\
0 & *_\Sigma \partial_\theta
\end{pmatrix}+
\begin{pmatrix}
0 & *_\Sigma d_\Sigma\\
-*_\Sigma d_\Sigma &0
\end{pmatrix}
\bigg]
\begin{pmatrix}
b_1(r)\\
b_2(r)
\end{pmatrix}=0.
\]
To apply Proposition \ref{P20.3}, set $H=H_1\oplus H_2$ and \[
L_1^\Sph=\begin{pmatrix}
 1 & 0\\
 0& L_3
\end{pmatrix},\ L_2^\Sph=\begin{pmatrix}
0 & L_4\\
L_4^* & 0
\end{pmatrix},
\]
with  $L_3\colonequals *_\Sigma\partial_\theta: H_2\to H_2$ and $L_4\colonequals *_\Sigma d_\Sigma: H_2\to H_1$. To verify the condition \eqref{E20.5}, we calculate for each $v=(b_1, b_2)\in H$ that
\begin{align*}
\|(L_1^\Sph+rL_2^\Sph+\alpha)v\|_H^2-\langle v, rL_2^\Sph v\rangle&=\|\alpha b_1+rL_4b_2\|_{H_1}^2+\|rL_4^*b_1+(L_3+\alpha)b_2\|_{H_2}^2\\
&\qquad+ (2\alpha+1)\|b_1\|_{H_1}^2\geq 0 \text{ if }\alpha>-\half. 
\end{align*}
In this case, Lemma \ref{L20.5} is not applicable because the anti-commutator $\{L_1^\Sph, L_2^\Sph\}\neq 0$. 
\end{example}

In the proof of Proposition \ref{P20.1} below, we will work with operator $L_1, L_2$ that are not self-adjoint on $H$. Nevertheless, the abstract Proposition \ref{P20.3} still applies, since we can verify the first step of \eqref{E20.7} directly: this is the only place the self-adjointness was used. 

\begin{proof}[Proof of Proposition \ref{P20.1}]
Let $I=[0,1]_s$. For any $r\geq 0$, consider the contour $\Gamma_r=\Gamma_r^{(1)}+\Gamma_r^{(2)}+\Gamma_r^{(3)}+\Gamma_r^{(4)}$ with 
\begin{align*}
\Gamma_r^{(1)}&=\{r\}\times I, & \Gamma_r^{(2)}=& \{i+re^{i\theta}: 0\leq \theta\leq \pi\},\\
\Gamma_r^{(3)}&=\{-r\}\times I, & \Gamma_r^{(4)}=& \{re^{i\theta}: \pi\leq \theta\leq 2\pi\},
\end{align*}
and define
\begin{align*}
v_1(r)&=f|_{\Gamma_r^{(1)}\coprod \Gamma_r^{(3)}}\in H_1\colonequals L^2(I\coprod (-I), \C),\\	v_2(r)&=f|_{\Gamma_r^{(2)}\coprod \Gamma_r^{(4)}}\in H_2\colonequals L^2([0,\pi]_\theta\coprod [\pi, 2\pi]_\theta,\C).
\end{align*}
where $(-I)$ stands for the orientation reversal of $I$. Finally, set
\[
w(r)=(w_1(r),w_2(r))\colonequals (v_1(r),\sqrt{r} v_2(r))\in H\colonequals H_1\oplus H_2.
\]

Our assumptions imply that the function $w: [0, 1)\to H$ vanishes to the infinite order at the origin. To apply Proposition \ref{P20.3}, we look for the differential equation that governs $w(r)$. As the function $f$ solves the perturbed $\bpartial$-equation, we have  
	\begin{equation}\label{E20.6}
\pr w(r)+\big(\frac{1}{r}L_1+ L_2\big)w(r)=h(r)
\end{equation}
with 
\[
L_1=(0, i\partial_\theta-\half)\text{ and }L_2=(i\partial_s,0) \text{ on } H=H_1\oplus H_2. 
\]
	The error term $h(r)$ in \eqref{E20.6} is determined by the convolution operator $K$ and the smooth potential $V$, so the assumption \eqref{E20.3} is satisfied in our case. 
	
	Neither $L_1$ nor $L_2$ is a self-adjoint operator on $H$, but we still have 
	\begin{equation}\label{E20.9}
	\re\langle (\frac{1}{r} L_1+L_2) w(r), \pr w(r)\rangle =	\re\langle  w(r), (\frac{1}{r} L_1+L_2)  \pr w(r)\rangle
	\end{equation}
	 which justifies the equality \eqref{E20.7} in the proof of Proposition \ref{P20.4}.  Indeed, 
	 \begin{align*}
&\langle (\frac{1}{r} L_1+L_2) w(r), \pr w(r)\rangle-\langle  w(r), (\frac{1}{r} L_1+L_2)  \pr w(r)\rangle=\frac{i}{2r} \int_{[0,\pi]\coprod[\pi, 2\pi]}  \partial_\theta|v_2(r,\theta)|^2 d\theta
	\end{align*}
	is purely imaginary. As the relation 
		\[
	2\re\langle L_1 v, L_2 v\rangle=0, \forall v\in D(L_1)\cap D(L_2)
	\]
	still holds in our case, the proof of Lemma \ref{L20.5} remains valid. Now we use Lemma \ref{L20.5} and Proposition \ref{P20.3} to complete the proof. 
\end{proof}

\subsection{The Seiberg-Witten Equations}

Having discussed some toy problems, we are now ready to prove the strong unique continuation property for the perturbed Seiberg-Witten equations, by combining Example \ref{EX20.7} and \ref{EX20.8}.

\begin{proof}[Proof of Theorem \ref{T20.9}] With loss of generality, assume $I=[-1,1]$ and $t_0=0$. It suffices to show that $\gamma_1$ and $\gamma_2$ are gauge equivalent in an open neighborhood of $\{0\}\times Y$, then one may use induction to extend this neighborhood to the whole space $\hz=I\times \hy$. 
	
	To imitate the proof of Proposition \ref{P20.1}, consider the closed 3-manifold $\CY_r=\CY_r^{(1)}\cup \CY_r^{(2)}\cup \CY_r^{(3)}$ where
	\begin{align*}
	\CY_r^{(1)}&\colonequals \{r\}\times Y, & \CY_r^{(2)}&=\{re^{i\theta}: 0\leq\theta\leq\pi\}\times\Sigma,\\
	 \CY_r^{(3)}&\colonequals (-\{-r\}\times Y), & \forall r&\in [0, 1].
	\end{align*}

	Here $\CY_r^{(3)}$ is the orientation reversal of $\{-r\}\times Y$. Let $B_0$ be the reference \spinc connection on $\hy$, so $B_0$ agrees with the $\R_s$-invariant connection 
	\[
	\ds+\cB
	\]
on the cylindrical end $[-1,\infty)_s\times \Sigma$. Set $\gamma_0=(B_0, 0)$.  

\medskip
	
	Extend the gauge transformation $u$ constantly in the time direction and replace $\gamma_1$ by $u(\gamma_1)$. Construct gauge transformations $u_i,\ i=1,2$ such that $u_i\equiv \Id$ on $\{0\}\times \hy$ and $\gamma_i'\colonequals u_i(\gamma_i)$ is in the temporal gauge (the $dt$-component vanishes). Consider the difference 
	\[
	\delta_i(t)\colonequals  \gamma_i'|_{\{t\}\times \hy}-\gamma_0\in C^\infty(\hy, iT^*\hy\oplus S)
	\]
	Formally, $\delta_i$ is subject to an evolution equation:
	\[
	\pt \delta_i(t)+L_2^Y \delta_i(t)+\delta_i(t)\#\delta_i(t)+\q(\delta_i(t)+\gamma_0)=c. 
	\]
where $\#$ is a symmetric bilinear form that involves only point-wise multiplications. Here $c$ is a constant error term determined by $\gamma_0$ and 
\[
L_2^Y=\begin{pmatrix}
*_3d_{\hy} & 0\\
0 & D_{B_0}
\end{pmatrix}.
\]

Now take the difference $\delta(t)\colonequals \delta_2(t)-\delta_1(t)$. Over the space $[-1,1]_t\times Y$, we have
\begin{equation}\label{E20.8}
\pt \delta(t)+L^Y_2(\delta(t))=h_1(t)\in  C^\infty(Y, iT^*Y\oplus S)
\end{equation}
and $\|h_1(t)\|_{L^2(Y)}\leq C\|\delta(t)\|_{L^2(Y)}$ for a uniform constant $C>0$. Moreover, 
\[
 \pt^n \delta(0)\equiv 0 \text{ on } Y \text{ for any }n\geq 0.
\]
When $n=0$, this follows from the assumption that $\gamma_1=\gamma_2$ on $\{0\}\times Y$. When $n\geq 1$, this is a consequence of the equation \eqref{E20.8} and its higher time derivatives. As a result, all derivatives of $\delta$ vanish on $\{0\}\times Y$.

Set $H_1=L^2(Y, iT^*Y\oplus S)$ and define 
\[
v_1(r)=(\delta(r)|_{Y}, \delta(-r)|_{Y})\in H_1 \oplus H_1. 
\]

Then $\pr^n v_1(0)=0$ for any $n\geq 0$.

 To deal with the middle part $\CY_r^{(2)}$, consider the polar coordinate at $0\in \C_z$ and restrict $\delta$ to a section of 
\[
iT^*X'\oplus S\to X'\colonequals [0, 1]_r\times [0,\pi]_\theta\times \Sigma\subset \R_t\times \{s\geq 0\}\times \Sigma. 
\]
The section $\delta$ is not necessarily in the radial temporal gauge: the $dr$-component of $\delta$ only vanishes when $\theta=0,\pi$. One has to construct gauge transformations $u'_i: X'\to S^1$ on $X'$ such that $u_i'|_{\{0\}\times [0,\pi]_\theta\times \Sigma}\equiv \Id$ and $u_i'(\gamma_i')$ is the radial temporal gauge. Then we define 
\[
v_2(r)=u_2'(\gamma_2')(r)-u_1'(\gamma_1')(r)\in H_2\colonequals L^2([0,\pi]_\theta\times\Sigma, i\R\oplus iT^*\Sigma\oplus S).
\]

Then the path $v_2(r)$ is subject to the equation 
\[
\pr v_2(r)+\bigg(\frac{1}{r}\begin{pmatrix}
L_1^\Sph & 0\\
0 & L_1^\BD
\end{pmatrix}+
\begin{pmatrix}
L_2^\Sph & 0\\
0 & L_2^\BD
\end{pmatrix}\bigg) v_2(r)=h_2(r)\in H_2. 
\]
and $\|h_2(r)\|_{H_2}\leq C\|v_2(r)\|_{H_2}$ for a constant $C>0$. The Seiberg-Witten equations are not perturbed on $X'$, so the error term $h_2(r)$ involves only point-wise multiplications with $v_2(r)$. Operators $L_i^\Sph$ and $L_i^\BD, i=1,2$ are defined as in Example \ref{EX20.7} and \ref{EX20.8}.

As all derivatives of $\delta$ vanish on $(0,0)\times \Sigma$, $\partial_r^n v_2(0)=0$ for any $n\geq 0$. 
	
Finally, let $H=(H_1\oplus H_1)\oplus H_2$ and define
\[
w(r)=(w_1(r), w_2(r))\colonequals (v_1(r), \sqrt{r} v_2(r))\in H. 
\]

Now the path $w:[0, 1)_r\to H$ is subject to the equation
\begin{equation}\label{E20.10}
\pr w(r)+(\frac{1}{r} L_1+L_2)w(r)=(h_1(r),-h_1(-r),\sqrt{r} h_2(r)). 
\end{equation}
 with 
\[
L_1=(0,0, \begin{pmatrix}
L_1^\Sph & 0\\
0 & L_1^\BD
\end{pmatrix}-\half), L_2=(L_2^Y, -L_2^Y, \begin{pmatrix}
L_2^\Sph & 0\\
0 & L_2^\BD
\end{pmatrix}). 
\]

 To apply Proposition \ref{P20.3}, we have to verify:
 \begin{itemize}
\item the positivity condition  \eqref{E20.5};
\item the symmetry condition \eqref{E20.9}; note that neither $L_1$ nor $L_2$ is self-adjoint.
 \end{itemize}

At this point, we have reduced the problem to some formal properties of $L_1$, $L_2$ and $w(r)$. We will treat the form component and the spinor component of \eqref{E20.10} separately. The verification of \eqref{E20.5} and \eqref{E20.9} will dominate the rest of the proof. 

\medskip

\Step 1. The Form Component and the Self-Dual operators. In this case, the positivity condition \eqref{E20.5} follows from the same argument as in Example \ref{EX20.8} and Proposition \ref{P20.1}. It can be checked separately on each of $\CY_r^{(i)},\ 1\leq i\leq 3$. As for \eqref{E20.9}, we focus on the common boundary of $\CY_r^{(1)}$ and $\CY_r^{(2)}$. Suppose the form components of $v_1(r)$ and $v_2(r)$ are given respectively by 
\begin{align*}
v_1|_{\{r\}\times (-1,0]_s\times \Sigma} &\rightsquigarrow a_1ds+a_2, & & a_1(r)\in C^\infty((-1,0]_s\times \Sigma, i\R),\\
&&&a_2(r)\in C^\infty((-1,0]_s\times \Sigma, iT^*\Sigma), \\
 v_2 &\rightsquigarrow b_1(rd\theta)+b_2, & & b_1(r)\in C^\infty([0,\pi]_\theta\times\Sigma, i\R),\\ &&&b_2(r)\in C^\infty([0,\pi]_\theta\times\Sigma, iT^*\Sigma).
\end{align*}

Near the boundary of $\CY_r^{(1)}$, we have 
\[
(*_3 d_Y)\begin{pmatrix}
a_1(r)\\
a_2(r)
\end{pmatrix}=\begin{pmatrix}
0 & *_\Sigma d_\Sigma\\
-*_\Sigma d_\Sigma & *_\Sigma\ps
\end{pmatrix}
\begin{pmatrix}
a_1(r)\\
a_2(r)
\end{pmatrix}.
\]

Then we calculate (the operator $L_2^\Sph$ is ignored here as it is  always self-adjoint):
\begin{align*}
\langle (*_3 d_Y)v_1,(\pr v_1)\rangle_{\{r\}\times Y}-\langle  v_1, (*_3 d_Y)(\pr v_1)\rangle_{\{r\}\times Y}&=\langle *_\Sigma a_2(r,0), (\pr a_2)(r,0)\rangle_{(r,0)\times \Sigma}. \\
\langle\frac{1}{r} L_1^\Sph w_2,(\pr w_2)\rangle_{H_2}-\langle  w_2,\frac{1}{r} L_1^\Sph(\pr w_2)\rangle_{H_2}&=-\langle *_\Sigma b_2(r,0), (\pr b_2)(r,0)\rangle_{(r,0)\times \Sigma}\\
&\qquad+\frac{1}{r}\underbrace{\int_{[0,\pi]_\theta} \partial_\theta \langle *_\Sigma b_2, b_2\rangle_\Sigma}_{=0}+\cdots. 
\end{align*}

It remains to verify that $a_2(r,0)=b_2(r,0)$ on $\CY_r^{(1)}\cap \CY_r^{(2)}$. Suppose the restriction of the form component of $\delta$ on $X'= [0, 1]_r\times [0,\pi]_\theta\times \Sigma$ is $fdr+c_1(rd\theta)+c_2$ with
\begin{align*}
f(r), c_1(r)\in C^\infty([0,\pi]_\theta\times\Sigma, i\R),\ c_2(r)\in C^\infty([0,\pi]_\theta\times\Sigma, iT^*\Sigma).
\end{align*}

It is clear that $a_2$ and $c_2$ agree along the common boundary of $\CY_r^{(1)}$ and $\CY_r^{(2)}$. Moreover, 
\[
f(r, \theta)\equiv 0 \text{ if } \theta=0 \text{ or }\pi. 
\]
To put $\delta$ into radial temporal gauge, we applied further gauge transformations, so $(b_1,b_2)$ is related to $\delta$ by the formulae:
\begin{align*}
b_1(r)&=c_1(r)-\frac{1}{r}\int_0^r (\partial_\theta f)(r')dr',& b_2(r)&=c_2(r)-\int_0^r (d_\Sigma f)(r')dr'.
\end{align*}

As a result, $a_2(r,s)|_{s=0}=b_2(r,\theta)|_{\theta=0}$. This equality does not a priori hold for $a_1$ and $b_1$, but it is not needed in the proof. 

\medskip

\Step 2. The Spinor Component and the Dirac operators. The proof of \eqref{E20.9} proceeds in the same way as in \Step 1. We focus on the positivity condition \eqref{E20.5}. Suppose  the spinor components of $v_i(r),\ 1\leq i\leq 3$ are given respectively by 
\begin{align*}
v_1|_{\{r\}\times Y} &\rightsquigarrow \Phi_1(r)\in C^\infty(Y, S),& v_2 &\rightsquigarrow \Phi_2(r)\in C^\infty([0,\pi]_\theta\times\Sigma, S),\\
v_3|_{\{r\}\times Y} &\rightsquigarrow \Phi_3(r)\in C^\infty(Y, S).&&
\end{align*}

We focus on sections 
\[
(\Phi_1(r), \Phi_3(r), \sqrt{r}\Phi_2(r))\in L^2(Y,S)\oplus L^2(Y,S)\oplus L^2([0,\pi]_\theta\times \Sigma, S)
\]
and operators:
\[
\frac{1}{r} \begin{pmatrix}
0 & 0 & 0\\
0& 0 &0\\
0 & 0 & L_1^\BD-\half
\end{pmatrix}+\begin{pmatrix}
D_{B_0} & 0 &0\\
0& -D_{B_0} & 0\\
0 & 0 & L_2^\BD
\end{pmatrix}.
\]

Unlike Example \ref{EX20.7}, $L^\BD_1$ is not self-adjoint in this case. In general, 
\[
2\re\langle L^\BD_1 v, L^\BD_2 v\rangle_{L^2([0,\pi]_\theta\times\Sigma)}=\int_{\{\theta\}\times\Sigma}\langle v, D_{\cB}^\Sigma v\rangle \ \bigg|_{\theta=0}^{\theta=\pi} \neq 0,\ v\in L^2([0,\pi]_\theta, S).
\]

Let $v=\sqrt{r}\Phi_2(r)$ and follow the proof of Lemma \ref{L20.5}:
\begin{align*}
&\|(L_1^\BD+rL_2^\BD+(\alpha-\half))\sqrt{r}\Phi_2\|^2_{L^2([0,\pi]_\theta\times\Sigma)}-\re\langle \sqrt{r}\Phi_2, (rL_2^\BD) \sqrt{r}\Phi_2\rangle\\
\geq&\frac{2r^2}{2\alpha-1}\re\langle L_1^\BD\Phi_2, L_2^\BD \Phi_2\rangle_{L^2([0,\pi]_\theta\times\Sigma)}\\
=&\frac{r^2}{2\alpha-1}\bigg(\int_{\{\pi\}\times\Sigma}\langle \Phi_2, D_{\cB}^\Sigma \Phi_2\rangle-\int_{\{0\}\times\Sigma}\langle \Phi_2, D_{\cB}^\Sigma \Phi_2\rangle\bigg).
\end{align*}

Just as in \Step 1, sections $\Phi_1$ and $\Phi_2$ have the same boundary value along $\CY_r^{(1)}\cap \CY_r^{(2)}$:
\[
\Phi_1(r,s)\big|_{s=0}=\Phi_2(r,\theta)\big|_{\theta=0}.
\]
Therefore, it remains to verify the inequality:
\[
\|(rD_{B_0}+\alpha)\Phi_1\|^2_{L^2(Y)}-\re\langle \Phi_1, (rD_{B_0})\Phi_1\rangle\geq \frac{r^2}{2\alpha-1}\int_{\{0\}\times\Sigma}\langle \Phi_1, D_{\cB}^\Sigma \Phi_1\rangle.
\]
The left hand side can be rewritten as 
\[
(1-\frac{1}{2\alpha-1})\|(rD_{B_0}+\frac{(2\alpha-1)^2}{4\alpha-4})\Phi_1\|^2_{L^2(Y)}+\frac{r^2}{2\alpha-1} \|D_{B_0}\Phi_1\|^2_{L^2(Y)}+\frac{4\alpha^2-6\alpha+1}{8\alpha-8}\|\Phi_1\|^2_{L^2(Y)}.
\]
Using the Weitzenb\"{o}ck formula \cite[(4,15)]{Bible}, the last two terms are bounded below by 
\begin{align*}
 &\frac{r^2}{2\alpha-1}\bigg(\|\nabla_{B_0}\Phi_1\|^2_{L^2(Y)}+\int_{\{0\}\times\Sigma}\langle \Phi_1, D_{\cB}^\Sigma \Phi_1\rangle+\int_Y\frac{s}{4}|\Phi_1|^2+\langle \Phi_1, \half \rho_3(F_{B_0^t})\Phi_1\rangle\bigg)\\
 &\qquad+\frac{\alpha-1}{2}\|\Phi_1\|^2_{L^2(Y)}\geq \frac{r^2}{2\alpha-1} \int_{\{0\}\times\Sigma}\langle \Phi_1, D_{\cB}^\Sigma \Phi_1\rangle+\frac{\alpha-\alpha_0}{2}\|\Phi_1\|^2_{L^2(Y)}.
\end{align*}
Then we take $\alpha>\alpha_0\colonequals e^{100}\max\{ \|s\|_\infty, \|F_{B_0^t}\|_\infty,1\}$. 

The common boundary $\CY_r^{(2)}\cap \CY_r^{(3)}$ is dealt with similarly. Hence, the positivity condition \eqref{E20.5} holds when $\alpha>\alpha_0$. Now we use Proposition \ref{P20.3} and \ref{P20.4} to complete the proof. 
\end{proof}

\subsection{Irreducibility of Spinors} We accomplish the proof of Theorem \ref{T20.10} in this subsection, following the idea above. The spinor part of the equation \eqref{E20.15} is cast into the form
	\[
	\dt \Psi(t)+D_{B(t)}\Psi(t)+\q^1(B(t),\Psi(t))=0. 
	\]
	where $\q^1$ is the spinor part of the perturbation $\q=(\q^0,\q^1)$. As $\q^1(B(t),0)\equiv 0$, we have 
	\begin{align*}
\|\q^1(B(t),\Psi(t))\|_2&=\|\q^1(B(t),\Psi(t))-\q^1(B(t),0)\|_2\\
&=\int_0^1 \|\D_{(B(t),r\Psi(t))} \q^1(\Psi(t))\|_2dr\leq C\|\Psi(t)\|_{L^2(Y)},
	\end{align*}
for a constant $C>0$ and any $t\in [t_0-\epsilon, t_0+\epsilon]$. Now the proof of Theorem \ref{T20.9} can proceed with no difficulty.

\subsection{The Linearized Version} In this subsection, we accomplish the proof of Theorem \ref{T20.20}. To some extent, it suffices to ``linearize'' each step of the proof of Theorem \ref{T20.9}. Again, assume $I=[-1,1]_t$ and $t_0=0$. 
	\[
	\xi^{(1)}(t)=\xi-\int_{0}^t \delta c(t')dt'\in C^\infty(\hz, i\R), 
	\]
	and set $V_1=V-\bd_\gamma \xi^{(1)}$. This new section $V_1$ is smooth, and  
	\begin{align*}
	V_1(t)&=(0, \delta b_1(t), \delta \psi_1(t) )\in L^2_k(Z,iT^*Z\oplus S),\\
	 V_1(0)&=0 \text{ on } \{0\}\times Y. 
	\end{align*}
	As $\gamma$ solves the non-linear equation \eqref{E20.15}, $\dg f^{(1)}$ is a solution to the linear equation \eqref{E20.11}, and so is $V_1$. The equation \eqref{E19.2} is formally an evolutionary equation on $I\times Y$:
\begin{equation}\label{E20.13}
\dt\begin{pmatrix}
\delta b_1(t)\\
\delta \psi_1(t)
\end{pmatrix}+\begin{pmatrix}
*_3d_{Y} & 0\\
0 & D_{B_0}
\end{pmatrix}\begin{pmatrix}
\delta b(t)\\
\delta \psi(t)
\end{pmatrix}=\eta(t)\begin{pmatrix}
\delta b(t)\\
\delta \psi(t)
\end{pmatrix},t\in \R.  
\end{equation}
where $\eta(t): L^2(Y)\to L^2(Y)$ is a family of bounded linear operators determined by $\cgamma(t)$. 

 To borrow the proof of Theorem \ref{T20.20}, we focus on $\CY_r^{(2)}$. Using polar coordinates, we write 
\[
V_1(r)=(\delta c_1'(r), \delta b_1'(r), \delta\psi_1'(r))\in C^\infty(X', i\R\oplus iT^*([0,\pi]_\theta\times\Sigma)\oplus S),
\]
on $X'=[0,1]_r\times [0,\pi]_\theta\times \Sigma\subset \HH^2_+\times \Sigma$. To put $V_1(r)$ into radial temporal gauge, consider the function
\[
f^{(2)}(r)=-\int_0^r \delta c_1'(r')dr' \text{ on } X'. 
\]
Then $f^{(2)}(r,\theta)\equiv 0$ when $\theta=0, \pi$, and the section $V_1-\dg f^{(2)} $ solves the linear equation \eqref{E20.11} on $X$. The proof of Theorem \ref{T20.9} is now applicable. We conclude that 
\begin{align}\label{E20.14}
V_1(t)&\equiv 0 \text{ on } I\times Y\\
V_1-\dg f^{(2)}&\equiv 0 \text{ on } X'. \nonumber
\end{align}

We extend $f^{(2)}$ by zero over the product $I\times Y$. One might worry that $f^{(2)}$ does not form a smooth function on the union
\[
(I\times Y)\bigcup X',
\]
as we pointed out in \Step 1 in the proof of Theorem \ref{T20.20}. However, once the unique continuation property is established, the smoothness of $f^{(2)}$ follows from \eqref{E20.14} and the smoothness of $V_1$. As a result, 
\[
V_1=\dg f^{(2)} \text{ on } (I\times Y)\bigcup X'. 
\]

By induction, we can extend the region where this equality holds. This completes the proof of Theorem \ref{T20.20}. 

\section{Transversality}\label{Sec21}

With all machinery developed so far, we are ready to prove the transversality result on the cylinder $\R_t\times \hy$ in this section. Here is the main result:
\begin{theorem}\label{T21.1} For any relative \spinc manifold $(\hy,\bs)$ satisfying constraints in the strict cobordism category $\Cob_s$, one can find an admissible perturbation $\q\in \Pa(\hy,\bs)$, in the sense of Definition \ref{D19.3}. Here $\Pa(\hy,\bs)$ is the Banach space of tame perturbations constructed Subsection \ref{Subsec15.5}.
\end{theorem}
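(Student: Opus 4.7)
The plan is parametrized Sard--Smale: I want to show the admissible perturbations form a comeager subset $\Pa_{reg} \subset \Pa(Y)$, handling (E1) first and then (E2) for $\q$ in the resulting residual set $\Pa^{(1)}_{reg}$.

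For (E1), consider the parametrized critical locus
\begin{equation*}
\mathcal{Z} = \{(\q,[\gamma]) \in \Pa\times\CB^*(\hy,\bs): \grad\CSd_{\omega}([\gamma])+[\q(\gamma)] = 0\},
\end{equation*}
restricted to $\|\q\|_\Pa\le 1$ and a topological-energy sublevel, so that fibers are compact by Proposition \ref{P1.5}. The vertical derivative is the Hessian $\Hess_{\q,\gamma}$, Fredholm by Proposition \ref{P18.1}, and making the total derivative $(V,\delta\q)\mapsto \Hess_{\q,\gamma}(V)+\Pi_\K(\delta\q(\gamma))$ surjective reduces to pairing any nonzero $W\in\ker\Hess_{\q,\gamma}^*$ with some $\delta\q\in\Pa$. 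Theorem \ref{T20.10} forces $\Psi\not\equiv 0$ on $Y$ at each critical point, so $[\gamma]$ is irreducible and the separating Theorem \ref{T15.17} yields such a $\delta\q$. Thus $\mathcal{Z}$ is a Banach submanifold, the projection to $\Pa$ is Fredholm of index $0$, and Sard--Smale produces the residual $\Pa^{(1)}_{reg}$.

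For (E2), fix $\q\in\Pa^{(1)}_{reg}$; the critical set is now finite, and each critical point persists under small perturbation. For each triple $([\fa],[\fb],[\gamma_0])$ of critical points and homotopy class of paths, form the universal moduli space $\widetilde{\M}_{[\gamma_0]}([\fa],[\fb];U)$ over a neighborhood $U\subset\Pa^{(1)}_{reg}$ of $\q$. Surjectivity of the total linearization $(\bd^*,\D_\gamma\F_{\hz,\q'},\partial_{\q'})$ reduces, via Proposition \ref{P19.1}, to ruling out any cokernel element $U=(0,\delta b',\delta\psi')$ orthogonal to every variation $\delta\q\in\Pa$. Assume $U\neq 0$. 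Theorem \ref{T20.10} makes $\cgamma(t)$ irreducible on every time slice, and Lemma \ref{L19.4} provides a time $t_0$ at which a cylinder function $f$ tame in $Y$ satisfies $df|_{\cgamma(t_0)}(\delta b'(t_0),\delta\psi'(t_0))\ne 0$. Since cylinder functions have compact support in configuration space, taking $f$ supported in a small neighborhood of $\cgamma(t_0)$ makes $t\mapsto \langle(\grad f)(\cgamma(t)),U(t)\rangle_{L^2(\hy)}$ continuous and concentrated on a short interval around $t_0$ where it keeps definite sign; the $t$-integral is therefore nonzero, contradicting the hypothesis on $U$. A second Sard--Smale step yields the residual subset on which each $\M_{[\gamma_0]}([\fa],[\fb])$ is regular, and intersecting over the countable collection of triples gives $\Pa_{reg}$.

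The main obstacle is this last transversality step: perturbations in $\Pa$ are supported on the compact piece $Y \subset \hy$ and are time-independent on $\R_t\times\hy$, so a priori they lack the freedom to separate cokernel elements on the entire cylinder. The argument succeeds only because the full Part \ref{Part6} toolkit is in place --- strong unique continuation (Theorems \ref{T20.9}, \ref{T20.20}) and irreducibility on every time slice (Theorem \ref{T20.10}) combine with Lemma \ref{L19.4} to guarantee that any nonzero cokernel element is detectable on some $\{t_0\}\times Y$ by a single cylinder function. A secondary subtlety is shrinking the configuration-space support of $f$ so that the time integral does not cancel, which is legitimate because $\cgamma\colon\R\to\mathcal{B}(\hy,\bs)$ is a nonconstant smooth trajectory whenever the topological energy is positive, while the zero-energy case is handled by the $\R_t$-translation action.
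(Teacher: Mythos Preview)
Your overall architecture is right and matches the paper's: parametrized Sard--Smale, with Lemma~\ref{L19.4} and the unique continuation package doing the heavy lifting. But the step from $df|_{\cgamma(t_0)}(U(t_0))\ne 0$ to a nonzero time integral contains a real gap. You claim that shrinking the configuration-space support of $f$ makes the integrand $t\mapsto\langle\grad f(\cgamma(t)),U(t)\rangle$ keep definite sign on its short $t$-support. This is not a continuity argument, because shrinking the support means \emph{changing} $f$; you are not restricting a fixed continuous function to a smaller domain. And the conclusion can genuinely fail: under a finite-dimensional embedding $\Xi'$ as in Theorem~\ref{T15.17}, if $\Xi'_*(U(t))=\alpha\,\Xi'_*(\partial_t\cgamma(t))$ for a constant $\alpha$ on a neighborhood of $t_0$, then for \emph{every} generalized cylinder function $g'\circ\Xi'$ supported there the integrand is the total derivative $\alpha\,\tfrac{d}{dt}(g'\circ\Xi'\circ\cgamma)(t)$, whose integral vanishes. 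Your justification --- that the trajectory is nonconstant --- only gives injectivity of $\cgamma$; it says nothing about this tangential obstruction.

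The paper addresses this explicitly: it observes that such proportionality cannot persist for all $t$ (the equations \eqref{E19.2} and \eqref{E19.6} governing $\partial_t\cgamma$ and $U$ would then force $U\equiv 0$), so one may relocate $t_0$ and enlarge $\Xi'$ until the obstruction disappears, after which a suitable $g'$ exists. That is exactly the technical crux you have skipped. A second, structural difference: the paper first fixes $\q_1$ achieving (E1) and then varies only over the closed subspace $\Pa'\subset\Pa$ of perturbations vanishing to first order at the critical points, invoking Lemma~\ref{L21.3} to freeze the critical set. Your setup lets the critical points drift with $\q'$; this can be made to work, but then the total $\q'$-variation of the section is $\delta\hq(\gamma)+\D_\gamma\F_{\hz,\q}(\partial_{\q'}\gamma_0)$, with neither summand separately in $L^2(\R_t\times\hy)$ --- only their sum is --- and you would need to argue carefully that pairing with $U$ still reduces to $\int\langle\delta\q(\cgamma(t)),U(t)\rangle\,dt$.
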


Pick an admissible perturbation $\q(\bs)$ for each relative \spinc structure $\bs$ on $Y$. By putting them altogether, we obtain an object $\y=(Y,\psi, g_Y, \omega,\q)$ in the category $\Cob_s$: the property \ref{P8} is fulfilled. In this case, the moduli spaces $\M_{[\gamma]}(\fa,\fb)$ defined in Section \ref{Sec19} will become a smooth manifold, and the Floer homology of $(\y,\bs)$ will be defined in Part \ref{Part7}. 

Theorem \ref{T21.1} is a formal consequence of the unique continuation properties, Theorem \ref{T20.9}-\ref{T20.10} and the separating properties of cylinder functions, Theorem \ref{T15.17}. The transversality result for a general morphism $\x: (\y,\bs_1)\to (\y_2,\bs_2)$ in the category $\SCob_s$ is proved in Subsection \ref{Subsec21.3}, cf. Theorem \ref{T21.5}.

\subsection{Transversality for the 3-Dimensional Equations} Consider the Banach space of perturbations $\Pa$ and a tame perturbation $\q=\grad f\in \Pa$. We start with the first condition \ref{E1} in Definition \ref{D19.3} which concerns the 3-dimensional equation 
\[
\grad \CSd_{\omega}(\fa)=0,
\] 
Recall from Definition \ref{D18.2} that a critical point $\fa\in \SC_k(\hy,\bs)$ of $\CSd_{\omega}=\CL_{\omega}+f$ is called non-degenerate if the extended Hessian at $\fa$
\[
\EHess_{\q,\fa}
\]
is invertible. In fact, this is a generic condition for a perturbation $\q\in \Pa$.
\begin{theorem}[cf. \cite{Bible} Theorem 12.1.12]\label{T21.2}
	 There is a residue (and in particular non-empty) subset of $\Pa$ such that for every $\q$ in this subset, any critical point $\fa\in \Crit(\CSd_{\omega})$ is non-degenerate. For such a perturbation, $\Crit(\CSd_{\omega})$ comprises a finite collection of gauge orbits. 
\end{theorem}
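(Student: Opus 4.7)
The plan is to run a standard Sard--Smale argument on the parametrized moduli space
\[
\M^{univ}\colonequals \{(\q,[\fa])\in \Pa\times \CB_k^*(\hy,\bs): \grad \CSd_{\omega}(\fa)=0\},
\]
where $\CB_k^*(\hy,\bs)$ is the quotient configuration space, which is a genuine Hilbert manifold by Proposition \ref{P10.1}. First I would verify that $\M^{univ}$ is a Banach submanifold of $\Pa\times \CB_k^*(\hy,\bs)$. The defining map is the section of $\K_{k-1}\to \Pa\times \SC_k(\hy,\bs)$ given by $(\q,\fa)\mapsto \Pi_{\K_{k-1}}(\grad \CL_{\omega}(\fa)+\q(\fa))$, and its vertical derivative at a zero $(\q,\fa)$ decomposes as the sum
\[
\Hess_{\q,\fa}\oplus \big(\lambda\mapsto \Pi_{\K_{k-1}}\q^\lambda(\fa)\big): \K_{k,\fa}\oplus\Pa\to \K_{k-1,\fa}.
\]
Since $\Hess_{\q,\fa}$ is a Fredholm self-adjoint operator (Proposition \ref{P18.1}), it suffices to show that for any nonzero $V\in \ker\Hess_{\q,\fa}\subset \K_{k,\fa}$, there exists $\lambda\in \Pa$ with $\langle V,\q^\lambda(\fa)\rangle_{L^2}\neq 0$.

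The nontrivial point is that $\Pa$ is supported on the truncated manifold $Y=\{s\leq 0\}$. The separating property Theorem \ref{T15.17} (infinitesimal version) produces such a $\lambda$ provided the restriction of $V$ to $Y$ is not of the form $\bd_\fa\xi|_Y$ for any $\xi\in \Lie(\CG)$, together with the non-vanishing of $\Psi|_Y$. For the latter, I would argue that a critical point $\fa=(B,\Psi)$ cannot have $\Psi\equiv 0$ on $Y$: the three-dimensional analogue of the irreducibility Theorem \ref{T20.10}, obtained by viewing $\fa$ as a $t$-independent four-dimensional solution on $\R_t\times\hy$, forces $\Psi\equiv 0$ globally, contradicting the requirement that $\Psi$ converges to the nonvanishing $\Psi_*$ on the cylindrical end. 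For the former, suppose $V\in\ker\Hess_{\q,\fa}\cap \K_{k,\fa}$ satisfies $V|_Y=\bd_\fa\xi|_Y$; the three-dimensional linearized unique continuation (the analogue of Theorem \ref{T20.20}, which I would prove by an identical Carleman argument applied to the elliptic equation $\EHess_{\q,\fa}V=0$ on $\hy$) extends $\xi$ over $\hy$ so that $V=\bd_\fa\xi$ globally. But $V\in\K_{k,\fa}$ means $V$ is $L^2$-orthogonal to the image of $\bd_\fa$, so $V=0$.

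With $\M^{univ}$ established as a Banach manifold, I would apply the Sard--Smale theorem to the smooth projection $\pi: \M^{univ}\to \Pa$. The fiber $\pi^{-1}(\q)$ is (the image in $\CB^*_k$ of) $\Crit(\CSd_{\omega})$, and a critical point is nondegenerate precisely when $\pi$ is a submersion at the corresponding point. Thus the set of regular values of $\pi$, which is residual in $\Pa$ by Sard--Smale, is exactly the set of perturbations for which every critical point is nondegenerate. The Fredholm index of $\Hess_{\q,\fa}$ is zero by Proposition \ref{P18.1}, so at a nondegenerate critical point the map $\pi$ is a local diffeomorphism, and $\Crit(\CSd_{\omega})$ is therefore a discrete set of gauge orbits in $\CB^*_k(\hy,\bs)$.

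Finiteness is the last step: given a sequence of critical points $\fa_n$ for a fixed admissible $\q$, the compactness Proposition \ref{P1.5} yields, after gauge transformations, a convergent subsequence in $\SC_k(\hy,\bs)$, and nondegeneracy prevents the limit from being approached by infinitely many distinct orbits. Hence $\Crit(\CSd_{\omega})$ consists of finitely many gauge orbits. The main obstacle in the whole argument is establishing that the compactly supported perturbation space $\Pa$ is \emph{large enough} to detect every cokernel element of the extended Hessian; this is exactly where unique continuation (Theorem \ref{T20.9}--\ref{T20.20}) enters decisively, since without it one could not upgrade separation over the compact piece $Y$ to separation on the whole cylindrical-end manifold $\hy$.
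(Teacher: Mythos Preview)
Your proposal is correct and follows essentially the same approach as the paper: both reduce to showing that a nonzero $V\in\ker\Hess_{\q,\fa}$ cannot restrict to $\bd_\fa\xi$ on $Y$ (via unique continuation), then invoke the separating property (Proposition~\ref{P15.6}) to feed into the Sard--Smale machinery of \cite[Section~12.5]{Bible}. The only minor difference is that you propose reproving a three-dimensional linearized unique continuation statement, whereas the paper applies Theorem~\ref{T20.20} directly by regarding $\fa$ and $V$ as $t$-independent data on $I\times\hy$ (and mentions the alternative of passing to $S^1\times\hy$), which avoids rewriting the Carleman argument.
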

\begin{proof} The proof follows the same argument as in \cite[Section 12.5]{Bible} with one slight modification, as we explain now. Suppose for some $\q\in \Pa$ and $\fa\in \Crit{\CSd_{\omega}}$, the tangent vector $v=(0, \delta b, \delta \psi)\neq 0$ lies in the kernel of $\EHess_{\q,\fa}$:
	\begin{equation}\label{E21.2}
	(0, \delta b, \delta \psi)\in \ker \EHess_{\q,\fa}. 
	\end{equation}
	We have to show that $v$ is separated by a cylinder function. To apply Proposition \ref{P15.6}. we verify that $v$ is not generated by the infinitesimal gauge action on $Y$. Suppose on the contrary that
	\begin{equation}\label{E21.1}
	(\delta b, \delta \psi)=\bd_{\fa} \xi\text{ on }  Y
	\end{equation}
	for some $\xi\in L^2_{k+1}(\hy, i\R)$, then by the unique continuation property of tangent vectors, Theorem \ref{T20.20}, for a possibly different function $\xi'\in L^2_{k+1}(\hy,\R)$, the equation \eqref{E21.1} holds on $\hy$:
	\[
		(\delta b, \delta \psi)=\bd_{\fa} \xi'. 
	\]
	By \eqref{E21.2}, $\bd_\fa^*(\delta b, \delta \psi)=0$ , so $(\delta b, \delta \psi)$ is $L^2$-orthogonal to the subspace $\J_{k,\fa}\subset \CT_{k,\fa}$. This implies that $v=0$, which a contradiction. Alternatively, we may apply the linearized version of \cite[Theorem 7.2.1]{Bible} on the 4-manifold
	\[
	S^1\times \hy,
	\]
	which possesses a cylindrical end $S^1\times [0,\infty)_s\times \Sigma$. Now we use Proposition \ref{P15.6} to find a cylinder function $f\in \Cylin(Y)$ supported on $Y\subset \hy$ such that 
	\[
	df(v)\neq 0. 
	\]
	The rest of the proof then follows \cite[Section 12.5]{Bible}.
\end{proof}

\subsection{Transversality on Cylinders} Suppose a tame perturbation $\q_1=\grad f_1$ in the residue subset of Theorem \ref{T21.2} has been chosen.  Then the critical set of $\CSd_{\omega}^1\colonequals \CSd_{\omega}+f_1$ consists of a finite collection of gauge orbits; let their representatives be 
\[
\fa_i,\ 1\leq i\leq r. 
\]

We wish to find a closed Banach subspace $\Pa'$  of $\Pa$ such that for any generic $\q_2\in \Pa'$ with $\|\Pa\|\ll 1$, the sum 
\[
\q=\q_1+\q_2
\]
is an admissible perturbation. The Banach subspace $\Pa'$ that we consider is 
\begin{equation}\label{E21.7}
\Pa'\colonequals \{\q_2\in \Pa:  \q_2(\fa_i)=0, \D_{\fa_i}^1\q_2=0, \forall i=1,\cdots, r \},
\end{equation}
so the perturbation $\q_2$ vanishes to the first order at each representative $\fa_i$. The subspace $\Pa'$ is clearly closed inside $\Pa$. Let us first verify the property \ref{E1} for $\q=\q_1+\q_2$. 
\begin{lemma}[\cite{Bible} Lemma 15.1.2]\label{L21.3} There exists some $\eta>0$ such that for any $\q_2=\grad_2 f_2\in \Pa'$ with $\|\q_2\|_{\Pa}<\eta$, the critical set of $\CSd_{\omega}\colonequals \CL_{\omega}+(f_1+f_2)$ agrees with that of $\CSd_\omega^1=\CL_{\omega}+f_1$. As a result, the first condition \ref{E1} of Definition \ref{D19.3} continues to hold for the sum $\q=\q_1+\q_2$.  
\end{lemma}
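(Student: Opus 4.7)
The plan is: first, show each $\fa_i$ persists as a non-degenerate critical point of $\CSd_\omega = \CL_\omega + f_1 + f_2$; second, rule out any new critical orbits for $\|\q_2\|_\Pa$ sufficiently small by a compactness-plus-implicit-function-theorem argument.

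For the first step, the definition of $\Pa'$ in \eqref{E21.7} immediately gives $\grad\CSd_\omega(\fa_i) = \grad\CSd_\omega^1(\fa_i) + \q_2(\fa_i) = 0$ and $\EHess_{\q_1+\q_2, \fa_i} = \EHess_{\q_1, \fa_i} + \D_{\fa_i}\q_2 = \EHess_{\q_1, \fa_i}$, which is invertible by Theorem \ref{T21.2}. Hence each $[\fa_i]$ remains a non-degenerate, isolated critical orbit of $\CSd_\omega$ in $\CB_k(\hy, \bs)$, so the critical set of $\CSd_\omega^1$ is contained in that of $\CSd_\omega$.

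For the reverse inclusion, I would argue by contradiction. Assume there are $\q_2^n \in \Pa'$ with $\|\q_2^n\|_\Pa \to 0$ and critical points $\fb_n$ of $\CL_\omega + f_1 + f_2^n$ whose gauge classes differ from every $[\fa_i]$. By Proposition \ref{P1.5}, after suitable gauge transformations and passage to a subsequence, $\fb_n$ converges in $\SC_k(\hy, \bs)$ to some $\fb_\infty$ with $\grad\CSd_\omega^1(\fb_\infty)=0$; Theorem \ref{T21.2} and one more gauge transformation bring this to $\fb_n \to \fa_i$ for a definite $i$. Fix a Coulomb-type slice $\Sch_{\fa_i} \subset \SC_k(\hy, \bs)$ through $\fa_i$ transverse to the $\CG_{k+1}(\hy)$-orbit (Lemma \ref{L10.4}) and assume, after a further small gauge adjustment, $\fb_n \in \Sch_{\fa_i}$. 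On the slice the equation $\grad(\CL_\omega + f_1 + f_2^n)(\beta) = 0$ reads $\Hess_{\q_1,\fa_i}(\beta - \fa_i) + R(\beta - \fa_i) + \q_2^n(\beta) = 0$, where $R$ denotes the (at least) quadratic remainder of $\grad\CSd_\omega^1|_{\Sch_{\fa_i}}$. The $\Pa'$-conditions $\q_2^n(\fa_i) = 0$ and $\D_{\fa_i}\q_2^n = 0$, combined with Corollary \ref{C14.18}, yield an estimate of the form $\|\q_2^n(\beta)\|_{L^2} \leq \epsilon_n \|\beta - \fa_i\|^2_{L^2_k}$ on a fixed bounded neighborhood of $\fa_i$, with $\epsilon_n \to 0$. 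Since $\Hess_{\q_1,\fa_i}$ is invertible on $\Sch_{\fa_i}$, the contraction mapping principle in a small enough ball forces $\fa_i$ to be the unique zero of the slice equation for $n$ large, contradicting $[\fb_n] \neq [\fa_i]$.

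The main obstacle will be producing the uniform quadratic estimate on $\q_2^n$ near $\fa_i$. This amounts to controlling the $C^2$-norm of the induced section $\beta \mapsto \q_2^n(\beta)$ of $\CT_{k-1} \to \SC_k(\hy, \bs)$ on a fixed bounded neighborhood of $\fa_i$ by $\|\q_2^n\|_\Pa$, which is exactly the content of Corollary \ref{C14.18} together with the tame-perturbation properties \ref{A1} and \ref{A4}. Taking $\eta$ to be the minimum of the thresholds produced in the two steps then yields the lemma, and the first condition \ref{E1} of Definition \ref{D19.3} is immediate for $\q = \q_1 + \q_2$.
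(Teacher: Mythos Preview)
Your proposal is correct and follows essentially the same approach as the paper: both argue by contradiction, use Proposition \ref{P1.5} to extract a subsequence converging to some $\fa_i$, pass to the Coulomb slice, and exploit the second-order vanishing of $\q_2^n$ at $\fa_i$ together with Corollary \ref{C14.18} to contradict the non-degeneracy of $\fa_i$. The only cosmetic difference is that the paper compares the two sides of $\grad\CSd_\omega^1(\beta_j)-\grad\CSd_\omega^1(\fa_i)=-(\q_2^{(j)}(\beta_j)-\q_2^{(j)}(\fa_i))$ directly (lower bound $c\|\beta_j-\fa_i\|$ versus upper bound $\epsilon\|\beta_j-\fa_i\|$), whereas you phrase the same estimate through the contraction mapping principle; your explicit first step verifying that each $\fa_i$ remains a non-degenerate critical point is stated in the paper immediately after the lemma rather than inside the proof.
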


In particular, for any $\q_2\in \Pa'$, the critical points of $\CSd_{\omega}$ in the quotient configuration space $\CB_{k-1/2}(\hy,\bs)$ are still given by $[\fa_i], 1\leq i\leq r$ and
\[
\D_{\fa_i} \grad(\CL_{\omega}+f_1)=\D_{\fa_i} \grad(\CL_{\omega}+f_1+f_2),\ 1\leq i\leq r.
\]
So each $\fa_i$ is still non-degenerate in the sense of Definition \ref{D18.2}. Here $[\fa_i]$ is the image of $\fa_i$ in $\CB_{k-1/2}(\hy,\bs)$. 

\begin{proof}[Proof of Lemma] Suppose on the contrary that there is a sequence of tame perturbations $\q_2^{(j)}\in \Pa'$ and a sequence of configurations $\beta_i\in \SC_{k-1/2}(\hy,\bs)$ such that 
	\[
	\|\q_j\|_\Pa\to 0,\ (\grad\CSd_{\omega}^1+\q_2^{(j)})(\beta_j)=0
	\]
	and each $\beta_j$ is not gauge equivalent to any of $\fa_i, 1\leq i\leq r$. By Proposition \ref{P1.5}, a subsequence of $\{\beta_j\}$ converges to some $\fa_i$ up to gauge. Fix $0<\epsilon\ll 1$ and let $\SO_i(\epsilon)$ be the $\epsilon$-neighborhood of $\fa_i$ in $\SC_{k+1/2}(\hy,\bs)$. When $j\gg 1$, each $\beta_j\in \SO_i(\epsilon)$, and one may use gauge transformations to put $\beta_j$ into the Coulomb gauge slice at $\fa_i$, i.e.
	\[
	\bd_{\fa_i}^*(\beta-\fa_i)=0.
	\]
 Then
	\begin{equation}\label{E21.8}
	\grad\CSd_\omega^1(\beta_j)-\grad\CSd_\omega^1(\fa_i)=-(\q_2^{(j)}(\beta_j)-\q_2^{(j)}(\fa_j)).
	\end{equation}
	As $\fa_i$ is non-degenerate as a critical point of $\CSd_{\omega}^1$, the $L^2_{k-1/2}$-norm of the left hand side is bounded below by 
	\[
	c\|\beta_j-\fa_i\|_{L^2_{k+1/2,\fa_i}}
	\]
	for some $c>0$. On the other hand, as $\q_2^{(j)}\to 0$ in $\Pa$, the $C^2$-norm of $\q$ over the bounded neighborhood $\SO_i(\epsilon)$ converges to zero, by Corollary \ref{C14.18}:
	\[
	\sup_{\gamma\in \SO_i(\epsilon)}\|\D^2_\gamma\q_2^{(j)}\|\to 0 \text{ as } j\to\infty.
	\]
	As a result, the $L^2_{k-1/2}$-norm of the right hand side of \eqref{E21.8} is bounded above by 
	\[
	\|\beta_j-\fa_i\|_{L^2_{k-1/2,\fa_i}}^2\leq 	\|\beta_j-\fa_i\|_{L^2_{k+1/2,\fa_i}}^2\leq \epsilon \|\beta_j-\fa_i\|_{L^2_{k+1/2,\fa_i}},
	\]
	when $j\gg1$, which yields a contradiction if $\epsilon<c$. 
\end{proof}

Theorem \ref{T21.1} now follows from the strong unique continuation property Theorem \ref{T20.9}-\ref{T20.10} together with Lemma \ref{L19.4}. The proof is modeled on \cite[Section 15]{Bible}. In what follows, we will only point out the necessary changes to be made. 

\begin{proof}[Proof of Theorem \ref{T21.1}] Let $\fa,\fb\in \Crit(\CSd_{\omega})$ be critical points of $\CSd_{\omega}$ and $\hz=\R_t\times \hy$ be the infinite cylinder. Following the scheme of \cite[Section 15]{Bible} and notations from Subsection \ref{Subsec19.1}, it suffice to show for any $\q_2\in \Pa'$ and any solution $\gamma\in \SC_k(\fa,\fb)$ to the perturbed equation
	\[
	0=\F_{\hz, \q}\colonequals \F_{\hz}+\hq,
	\]
the operator 
\begin{align}\label{E21.3}
\Pa'\times L^2_k(\hz, i\R\oplus iT^*\hy\oplus S)\to L^2_{k-1}(\hz, i\R\oplus iT^*\hy\oplus S)\\
(\delta \q, V)\mapsto \delta\hq (\gamma)+(\bd_\gamma^*, \D_\gamma\F_{\hz,\q} )(V)\nonumber
\end{align}
is surjective. The section $\delta\hq(\gamma)$ lies in $L^2_{k-1}$ as the underlying path $\cgamma: \R\to \SC_{k-1/2}(\hy,\bs)$ decay exponentially to either $\fa$ or $\fb$ as $t\to\pm\infty$ and $\delta \q$ vanishes at $\fa$ and $\fb$ to the first order. 

Suppose first that $\delta \hq=0$ in \eqref{E21.3}, then \eqref{E21.3} becomes a Fredholm operator by Proposition \ref{P19.1}, and its cokernel is finite dimensional. It remains to show that for any section
\[
U=(\delta c'(t), \delta b'(t), \delta\psi'(t))\in L^2(\hz, i\R\oplus iT^*\hy\oplus S)
\]
that is $L^2$-orthogonal to the image of $(\bd_\gamma^*, \D_\gamma\F_{\hz,\q} )$, there exist some $\delta \hq\in \Pa'$ such that 
\begin{equation}\label{E21.4}
\langle \delta \hq(\gamma(t)), U\rangle_{L^2(\R\times \hy)}\neq 0. 
\end{equation}

We first explain how to achieve \eqref{E21.4} for a generalized cylinder function $f: \SC_{k-1/2} (\hy,\bs)\to \R$:
\begin{equation}\label{E21.6}
\int_{t\in\R_t} \langle \grad f(\cgamma), U(t)\rangle_{L^2(\hy)}\neq 0. 
\end{equation}

By the unique continuation properties, Theorem \ref{T20.9}, \ref{T20.20} and \ref{T20.10},  the underlying path $\cgamma: \R\to \SC_{k-1/2}(\hy,\bs)$ satisfies the following properties
\begin{itemize}
\item for any $t_1\neq t_2\in \R_t$, $\cgamma(t_1)$ and $\cgamma(t_2)$ are not gauge equivalent over $Y$;
\item for any $t\in \R_t$, $\cgamma(t)$ is not gauge equivalent to $\fa_i$ on $Y$ for any $1\leq i\leq r$; moreover, $\cgamma(t)$ is irreducible on $Y$;
\item for any $t\in \R_t$, its derivative $\pt \cgamma(t)$ is not generated by the infinitesimal gauge action over $Y$.
\end{itemize}

As for the section $U$ in the cokernel, by  Lemma \ref{L19.4}, we have 
\begin{itemize}
\item $\delta c'(t)\equiv 0$;
\item for some $t_0\in \R_t$, $U(t_0)=(0,\delta b'(t_0),\delta\psi'(t_0))$ are not generated by the infinitesimal gauge action over $Y$. 
\end{itemize}

 Take a large constant $T>0$ such that $t_0\in [-T,T]$. To apply Theorem \ref{T15.17}, let the compact subset $K$ be the image of
\[
\{\fa_i:1\leq i\leq r \}\bigcup \{\cgamma(t): t\in [-T,T]\}
\]
in the quotient configuration space $\CB^*_{k-1/2}(\hy,\bs)$. Then we can find a finite collection of cylinder functions $\{f_j,\ 1\leq j\leq l\}$ defined using embeddings $\iota_j: S^1\times D^2\embed Y$ such that the map
\[
\Xi'_*=(f_1,\cdots, f_l): \CB_{k-1/2}(\hy,\bs)\to\R^l
\]
gives an embedding of $K$ and $\Xi'(U(t_0))\neq 0$. Choose a smooth function 
\[
g': \R^l\to \R
\]
supported in a small neighborhood $\overline{\Omega}$ of $\Xi'([\cgamma(t_0)])$ with the following additional properties
\begin{itemize}
\item $ \Xi'([\fa_i])\not\in \Omega,\ \forall 1\leq i\leq r; $
\item $(\Xi'\circ \cgamma)^{-1}(\overline{\Omega})$ is a small connected interval $[t_0-\epsilon_1,t_0+\epsilon_2]$ around $t_0$; to achieve this, we take $T\gg 1$;
\item lastly, the integral
\begin{equation}\label{E21.10}
\int_{\R_t} dg'(\Xi'_*(U(t)))dt\neq 0.
\end{equation}
\end{itemize}

The last property would be impossible if for some constant $\alpha\in \R$, $\Xi_*'(U(t))=\alpha \Xi_*'(\pt\cgamma_t)$ for any $t\in [t_0-\epsilon,t_0+\epsilon]$. However, this cannot hold for the whole real line; otherwise one may draw a contradiction from equations \eqref{E19.2} and \eqref{E19.6}. Then we can achieve \eqref{E21.10} by taking a different time slice $t_0\in \R_t$ and possibly a different $\Xi'$.
\medskip

As a result, the inequality \eqref{E21.6} is achieved for the composition:
\[
f\colonequals g'\circ \Xi': \CB_{k-1/2}(\hy,\bs)\to \R,
\]
Note that $f\equiv 0$ in some $L^2_{k-1/2}$-neighborhood of $\{[\fa_i]:1\leq i\leq r\}$, so $\grad f$ satisfies the constraints in \eqref{E21.7}. By the density of the Banach space $\Pa$, we can approximate $\grad f$ by an element $\delta\hq$ in $\Pa'$ and the inequality \eqref{E21.4} holds for this approximation. 

The rest of the proof follows the same line of argument as in \cite[Proposition 15.1.3]{Bible}.
\end{proof}
\subsection{Transversality on 4-Manifolds in General}\label{Subsec21.3}

Recall the set up from Section \ref{Sec22}. For a morphism $\x: (\y_1,\bs_1)\to (\y_2,\bs_2)$, the Seiberg-Witten equations $\F_{\CX,\p}=0$ on the complete Riemannian 4-manifold $\CX$ is perturbed by a quadruple 
\[
\p=(\q_1,\q_2,\q_3,\omega_3).
\]
While $(\q_1,\q_2)$ are encoded in the objects $\y_1$ and $ \y_2$, the pair 
\[
(\q_3,\omega_3)\in \Pa(Y_3)\times \Pa_{\form}
\]
is the actual perturbation that allows us to achieve transversality.

\begin{definition}\label{D21.4} The quadruple $\p$ is said to be admissible if 
	\begin{itemize}
	\item each $\q_i\in \Pa(Y_i), i=1,2$ is admissible in the sense of Definition \ref{D19.3};
	\item  for any \spinc cobordism $(\hx, \bs_X): (\hy_1,\bs_1)\to (\hy_2,\bs_2)$ (with a prescribed planar metric $g_X$), the moduli space $\M_k(\fa_1,\CX,\fa_2)$ is regular in the sense of Definition \ref{D22.2}. Here $\fa_i\in \Crit(\CSd_{\omega_i,\hy_i})$ is a critical point of the perturbed Chern-Simons-Dirac functional $\CSd_{\omega_i,\hy_i}$ on $\hy_i$, $i=1,2$. \qedhere
	\end{itemize}
\end{definition}

\begin{theorem}\label{T21.5} Under above assumptions, for any fixed admissible perturbations $(\q_1,\q_2)$ on $\hy_1$ and $\hy_2$ respectively, there is a residue subset of $\Pa(Y_2)\times \Pa_{\form}$ such that for every pair $(\q_3, \omega_3)$ in this subset, the quadruple $\p$ is admissible. 
\end{theorem}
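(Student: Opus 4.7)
The plan is to apply the Sard–Smale theorem to the projection from a universal moduli space to the parameter space $\Pa(Y_2)\times \Pa_{\form}$, following the scheme used in the proof of Theorem \ref{T21.1} and in \cite[Section 15]{Bible}. For each triple $(\fa_1,\fa_2,\bs_X)$ consisting of non-degenerate critical points of $\CSd_{\omega_i,\hy_i}$ ($i=1,2$) and a relative \spinc cobordism structure, introduce the universal moduli space
\[
\M^{univ}(\fa_1,\CX,\fa_2)\colonequals \{(\gamma,\q_3,\omega_3): \F_{\CX,\p}(\gamma)=0\}/\CG_{k+1}(\CX).
\]
By Proposition \ref{P22.1} together with the smooth dependence on parameters (Theorem \ref{T15.14}\ref{F3}\ref{F4} and the analogous statement for $\Pa_{\form}$), this is the zero set of a smooth section of a Banach bundle, and the linearization at a solution $(\gamma,\q_3,\omega_3)$ fits into a Fredholm family. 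Sard–Smale then produces a residual subset of $\Pa(Y_2)\times \Pa_{\form}$ for which the fiber $\M_k(\fa_1,\CX,\fa_2)$ is regular in the sense of Definition \ref{D22.2}; taking the countable intersection over all triples $(\fa_1,\fa_2,\bs_X)$ yields the desired residual set.

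The core analytic step is to prove that at every solution $(\gamma,\q_3,\omega_3)\in \M^{univ}$, the extended linearization
\[
L:L^2_k(\CX,iT^*\CX\oplus S^+)\times \Pa(Y_2)\times \Pa_{\form}\to L^2_{k-1}(\CX,i\R\oplus i\Lambda^+\CX\oplus S^-),
\]
\[
(V,\delta\q_3,\delta\omega_3)\mapsto (\bd_\gamma^*,\D_\gamma \F_{\CX,\p})(V)+\beta_0(t)\,\delta\hq_3(\gamma)+(\rho_4(\delta\omega_3^+),0),
\]
is surjective. Since $(\bd_\gamma^*,\D_\gamma \F_{\CX,\p})$ is Fredholm, its cokernel is spanned by finitely many smooth sections $U=(\delta c',W,\delta\psi')$ lying in $L^2\cap C^\infty$. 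The argument used in the proof of Lemma \ref{L19.4} applies verbatim to show $\delta c'\equiv 0$, and that $U$ satisfies an elliptic equation on $\CX$ of the form $-\pt U+\EHess_{\q_i,\cgamma(t)}U=0$ on the ends. Hence the $4$-dimensional unique continuation theorem (the $\CX$-analog of Theorem \ref{T20.9}, proved by the same Carleman argument) implies that if $U\equiv 0$ on the slab $[5/4,7/4]_t\times Y_2$ where $\beta_0$ is identically $1$, then $U\equiv 0$ globally.

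The main obstacle is therefore to show that if $U\neq 0$ on this slab then one can find $(\delta\q_3,\delta\omega_3)$ giving a non-zero pairing with $U$. The slab splits the problem naturally into a spinor part and a form part. For the spinor part, pick a time $t_0\in(5/4,7/4)$ at which the spinor component $\delta\psi'(t_0)$ is non-zero on $Y_2$; by Theorem \ref{T20.10} applied along $\R_t\times \hy_2$, the underlying spinor $\Psi(t_0)$ is not identically zero on $Y_2$, and by the linearized unique continuation (Theorem \ref{T20.20}, again transplanted to $\CX$) together with Lemma \ref{L19.4}, $U(t_0)|_{Y_2}$ is not of the form $\bd_{\cgamma(t_0)}\xi$ on $Y_2$; hence Theorem \ref{T15.17} produces a cylinder function whose gradient, multiplied by a suitable bump in $t$ and approximated within $\Pa(Y_2)$, gives a variation $\delta\q_3$ with non-zero pairing. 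For the form part, note that $\omega_3=-\beta_0(t)dt\wedge d_{Y_2}f_3$, so the pairing with the form component $W$ of $U$ becomes, after integration by parts,
\[
\int_{[1,2]\times Y_2}\bigl\langle d^*(\rho_4^{-1}W)^+,\,\beta_0(t)\,f_3\,dt\bigr\rangle,
\]
so that vanishing for all $f_3\in\Pa_{\form}$ forces the $dt$-component of $d^*W$ to vanish on the slab; combined with the equation $L(U)=0$ and unique continuation this yields $W\equiv 0$ on the slab, and then the earlier spinor argument produces the contradiction.

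Finally, once surjectivity of $L$ is established, standard Banach manifold arguments (cf.\ \cite[Proposition 15.1.3]{Bible}) show $\M^{univ}$ is a smooth Banach manifold and the projection to $\Pa(Y_2)\times \Pa_{\form}$ is a smooth Fredholm map. Applying Sard–Smale on each exhausting compact piece of $\M^{univ}$, and intersecting residual sets over all $(\fa_1,\fa_2,\bs_X)$ and all relevant compact subsets, yields a residual subset of $\Pa(Y_2)\times \Pa_{\form}$ of admissible $(\q_3,\omega_3)$, completing the proof.
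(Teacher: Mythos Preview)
Your overall framework (universal moduli space, Sard--Smale, reducing to surjectivity of the extended linearization) is correct and matches the paper. The genuine gap is in your cokernel analysis on the slab $[5/4,7/4]_t\times Y_2$.

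You claim that if $\delta\psi'(t_0)\neq 0$ on $Y_2$ then, by Lemma~\ref{L19.4} and Theorem~\ref{T20.20}, the restriction $U(t_0)|_{Y_2}$ cannot be of the form $\bd_{\cgamma(t_0)}\xi$. Neither result gives this. Theorem~\ref{T20.20} concerns tangent vectors in the \emph{kernel} of the linearized map, not the cokernel; $U$ satisfies the adjoint equation \eqref{E19.4}, so it does not apply. And the proof of Lemma~\ref{L19.4} uses in an essential way that $U\in L^2(\R_t\times\hy)$ over the \emph{entire} real line: from $\partial_t\delta b'\equiv 0$ on $\R_t\times Y$ one concludes $\delta b'\equiv 0$ only because a nonzero constant is not in $L^2(\R_t)$. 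On the finite slab (or even on the half-line, since your perturbation $\delta\q_3$ can only be inserted where $\beta_0\neq 0$), that step fails. It is entirely possible that $U(t)=\bd_{\cgamma(t)}\xi(t)$ on $\{t\}\times Y_2$ for every $t\in[5/4,7/4]$ with $\xi\neq 0$, and then no cylinder-function perturbation $\delta\q_3$ separates $U$. Your ``form part'' paragraph is too vague to close this gap: the perturbation $\omega_3=-\beta_0\,dt\wedge d_{Y_2}f_3$ only probes a very specific piece of the form component, not $d^*W$ in general.

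The paper's proof handles exactly this obstruction. The correct dichotomy is not spinor-versus-form but: either $U(t_0)$ \emph{can} be separated by a cylinder function for some $t_0\in\supp\beta_0$ (then use $\delta\q_3$ as in Theorem~\ref{T21.1}), or it cannot for any such $t_0$. In the second case, Proposition~\ref{P15.6} forces $(\delta b(t),\delta\psi(t))=\bd_{\cgamma(t)}\xi(t)$ on $Y_2$ for all $t\in[5/4,7/4]$, and the cokernel equations give $\Delta_{Y_2}\xi(t)+|\Psi(t)|^2\xi(t)=0$. Now the pairing with $\rho_4(\delta\omega_3^+)=\rho_3(\beta_0\,d_{Y_2}f_3)$ becomes $-\int\beta_0\langle\Delta_{Y_2}\xi,f_3\rangle$, so orthogonality to all $\delta\omega_3$ forces $\Delta_{Y_2}\xi(t)=0$; combined with the previous equation this gives $|\Psi(t)|^2\xi(t)=0$, hence $\xi(t)\equiv 0$ (harmonic and vanishing on an open set), so $U\equiv 0$ on the slab, and unique continuation finishes. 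This is the mechanism your argument is missing: the $\omega_3$-perturbation is there precisely to compensate for the failure of Lemma~\ref{L19.4} on a finite slab.
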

\begin{proof} Following the proof of Theorem \ref{T21.1}, it suffices to verify that the operator 
\begin{align}\label{E21.9}
\Pa(Y_2)\times \Pa_{\form}\times L^2_k(\CX, iT^*\CX\oplus S^+)&\to  L^2_{k-1}(\CX, i\R\oplus i\su(S^+)\oplus S-)\\
(\delta\q_3, \delta\omega_3, V)&\mapsto (\bd_\gamma^*, \D_\gamma\F_{\CX,\p})V+\beta_0(t)\delta\hq_3(\gamma)+\rho_4(\delta\omega_3^+),\nonumber
\end{align}
is surjective, for any solution $\gamma\in \SC_k(\fa_1,\CX,\fa_2)$ to the perturbed equation $\F_{\CX,\p}=0$. We begin with $(\delta \q_3, \delta \omega_3)=0$, then \eqref{E21.9} becomes a Fredholm operator by Proposition \ref{P22.1}. Suppose $U\in L^2(\CX, i\R\oplus i\Lambda^+ \CX\oplus S^-)$ is $L^2$-orthogonal to the image of $(\bd_\gamma^*, \D_\gamma\F_{\CX,\p})$. it remains to find $(\delta \hq_3,\delta\omega_3)$ such that 
\begin{equation}\label{E22.10}
\langle U, \beta_0(t)\delta\hq_3(\gamma)+\rho_4(\delta\omega_3^+)\rangle_{L^2}\neq 0. 
\end{equation}

Let $I=[1,2]_t$ and write 
\[
U=(\delta\xi, \delta\omega,\delta\phi) \text{ with }\delta\xi\in L^2(\CX, i\R). 
\]
The same argument as in the proof of Lemma \ref{L19.4} implies that $\delta\xi\equiv 0$. The inner product \eqref{E22.10} is supported on the compact submanifold
\[
\hz\colonequals I\times\hy_2,
\]
over which the formal adjoint of $(\bd_\gamma^*, \D_\gamma\F_{\CX,\p})$ is cast into the form \eqref{E19.4}. If instead we write
\[
U(t)=(0,\delta b(t),\delta\psi(t))\in L^2_1(\hz, i\R\oplus iT^*\hy_2\oplus S) \text{ on } I\times \hy_2,
\]
then we are back to the cylindrical case. Here we have used the bundle map
\[
(\rho_3, \rho_4(dt))
\]
to identify $iT^*\hy_2\oplus S$ with $i\su(S^+)\oplus S^-$ over $\hz$. 

However, Lemma \ref{L19.4} does not apply directly here, so we argue as follows. If there exists some $t_0\in \supp \beta_0\subset [1,2]$ such that $U(t_0)$ is separated by some cylinder function $f$, then we set $\delta\omega_3=0$ and proceed as in the proof of Theorem \ref{T20.9}. 

If not, then by the proof of Lemma \ref{L19.4}, for any $t\in [5/4, 7/4]$, there exists some function  $\xi(t)\in L^2_1(\hy, i\R)$ such that
\[
(\delta b(t),\delta\psi(t))= \bd_{\cgamma(t)}\xi(t) \text{ on } \{t\}\times Y_2.
\] 
Moreover, 
\begin{equation}\label{E22.11}
\dt d_{Y_2} \xi(t)\equiv 0 \text{ and } \Delta_{Y_2}\xi(t)+\xi(t)|\Psi(t)|^2=0 \text{ on } [5/4, 7/4]\times Y_2.
\end{equation}
Recall that $\delta\omega_3=-\beta_0(t)dt\wedge d_{Y_2}f_3$ for a compactly supported function $f_3: I\times Y_2\to i\R$, so
\[
\rho_4(\delta\omega_3^+)=\rho_3(d_{Y_2}(\beta_0(t) f_3)). 
\]

If $U$ is orthogonal to $\rho_4(\delta\omega_3^+)$ for any $\delta\omega_3\in \Pa_{\form}$, then $\Delta_{Y_2}\xi(t)\equiv 0$. By \eqref{E22.11}, $U(t)\equiv 0$ on $[5/4, 7/4]\times Y_2$. By unique continuation, $U\equiv 0$ on the whole manifold $\CX$. 
\end{proof}

\part{Floer Homology}\label{Part7}

Let $(\y,\bs)\in \SCob_s$ be an object in the strict \spinc cobordism category, as defined in Section \ref{Sec2}.  The underlying 3-manifold $Y$ of $\y$ is compact, connected and oriented, whose boundary is identified with a disjoint union of 2-tori $\Sigma$ by the diffeomorphism $\psi:\partial Y\to \Sigma$. The quintuple $\y=(Y,\psi,g_Y,\omega,\{\q\})$ also dictates a cylindrical metric $g_Y$ and a closed 2-form $\omega\in \Omega^2(Y,i\R)$.  $\bs\in \Spincr(Y)$ is a relative \spinc structure of the 3-manifold $Y$. 

\smallskip

 The primary goal of this part is to define the functor
\[
\HM_*: \SCob_s\to \NR\Mod
\]
which assigns the monopole Floer homology $\HM_*(\y,\bs)$ for each object $(\y,\bs)\in \SCob_s$, generalizing the construction of Kronheimer-Mrowka for closed 3-manifolds. 

\smallskip

So far we have addressed two fundamental problems in order to define the functor $\HM_*$:
\begin{itemize}
\item the compactness issue; see Theorem \ref{T11.1}  for the unperturbed equations and Theorem \ref{T1.4} for the perturbed ones;
\item the transversality issue; see Theorem \ref{T21.1} for the case of cylinders and Theorem \ref{T21.5} for morphisms in $\SCob_s$. 
\end{itemize}

Although the proof of the gluing theorem is omitted in this paper, it follows from the standard procedure in \cite[Section 17-19]{Bible}, as noted in Subsection \ref{Subsec19.4}.  

\smallskip

Now the construction of monopole Floer homology becomes straightforward by following the standard argument. Part \ref{Part7} is organized as follows. In Section \ref{Sec27}, we explain the basic construction using $\BF_2$-coefficient. Section \ref{Sec28} is devoted to the canonical grading as well as the canonical mod $2$ grading of $\HM_*(\y,\bs)$.

 In Section \ref{Sec29}, we address the orientation issue, which allows us to define the monopole Floer homology $\HM_*(\y,\bs)$ using $\Z$-coefficient. The key ingredient is the notion of relative orientations, which compare the orientations of two Fredholm operators using the excision principle, cf. Theorem \ref{T24.2} and Definition \ref{DD.2}. The proof is postponed to Appendix \ref{AppD}. 

\section{The Basic Construction: $\BF_2$-coefficient}\label{Sec27}

In this section, we define the monopole Floer homology $\HM_*(\y,\bs)$ for each object $(\y,\bs)\in \SCob_s$ using $\BF_2$-coefficient. For the most general case, we have to use a Novikov ring $\NR_2$. To work with the field $\BF_2$ of two elements, we will pass to a subcategory of $\SCob_s$ in which case a monotonicity condition is required.
\subsection{Novikov Rings} Let us first explain the construction of $\HM_*(\y,\bs)$ using a Novikov ring 
\[
\NR_2=\{ \sum_{n_i} a_iq^{n_i}:\ a_i\in \BF_2,\ n_i\in \R,\ \lim_{i}n_i=-\infty\}, 
\]
which is a complete topological group. Each element of $\NR_2$ is a Laurent series in a formal variable $q$ with possibly infinitely many terms in negative degrees. For any object $(\y,\bs)\in \SCob_s$, the perturbation $\q=\grad f$ encoded in the quintuple $\y$ is admissible in the sense of Definition \ref{D19.3}. Let $\FC(\y,\bs)$ be the set of critical points of $\CSd_{\omega}=\CL_\omega+f$ in the quotient configuration space $\CB_k(\hy,\bs)$, then $\FC(\y,\bs)$ is a finite set by Theorem \ref{T21.2}. Then the chain group $C_*(\y,\bs)$ is freely generated by $\FC(\y,\bs)$ over $\NR_2$:
\[
C_*(\y,\bs)=\bigoplus_{[\fa]\in \FC(\hy,\bs)} \NR_2\cdot [\fa]. 
\]
with differential $\partial$ defined as
\begin{equation}\label{E27.1}
\partial [\fa]=\sum_{\substack{z\in \pi_1(\CB_k(\hy,\bs); [\fa],[\fb])\\\dim \cM_{z}([\fa],[\fb])=0}} [\fb]\cdot \#\cM_{z}([\fa],[\fb])\cdot q^{-\E_{top}^{\q}([\fa],[\fb];z)}.
\end{equation}
The unparameterized moduli space $\cM_z([\fa],[\fb])\colonequals \M_z([\fa],[\fb])/\R_t$ is defined as in \eqref{E19.13}. The topological energy $\E_{top}^\q([\fa],[\fb];z)$ for a homotopy class of paths $z\in \pi_1(\CB_k(\hy,\bs);[\fa],[\fb])$ equals twice the drop of $\CSd_{\omega}$ along $\gamma$
\[
2(\CSd_{\omega}(\fa)-\CSd_{\omega}(\fb))
\]
if $\gamma: [0,1]\to \SC_k(\hy,\bs)$ is a lift of $z$ with $\gamma(0)=\fa$ and $\gamma(1)=\fb$. This expression is suggested by Proposition \ref{P1.1}. To ensure the sum in \eqref{E27.1} is convergent in $\NR_2$, we need a finiteness result:
\begin{lemma}\label{L27.1} For any $C>0$, there are only finitely many homotopy classes of paths $z\in \pi_1(\CB_k(\hy,\bs); [\fa],[\fb])$ such that $\E_{top}^{\q}([\fa],[\fb],z)<C$ and $\cM_z([\fa],[\fb])$ is non-empty. Moreover, each $\cM_z([\fa],[\fb])$ is compact if its dimension equals zero. 
\end{lemma}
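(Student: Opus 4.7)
The plan is to combine the energy equation with the index formula and a curvature pairing to cut the set of homotopy classes down to finitely many, and to rule out breaking in zero-dimensional moduli spaces via admissibility. First I would apply Proposition~\ref{P1.1} to any $[\gamma]\in\cM_z([\fa],[\fb])$: one has $\E_{an}^\q(\gamma)=\E_{top}^\q([\fa],[\fb];z)<C$, and Lemma~\ref{L11.3} upgrades this to a uniform $L^2$-bound $\|F_{A^t}\|_{L^2(\R_t\times\hy)}^2\leq K$ depending only on $C$, $\omega$, and the reference configuration. Fix a base class $z_0$ and write $z=z_0\cdot u$ for $u\in H^1(Y,\partial Y;\Z)$. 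By Lemma~\ref{L9.4} the energy is linear in $u$: $\E_{top}^\q(z)=\E_{top}^\q(z_0)-2\Delta(u)$ with $\Delta(u)=(2\pi^2[u]\cup c_1(\bs)-2\pi i[u]\cup[\omega])[Y,\partial Y]$; by Proposition~\ref{P19.1} together with Lemma~\ref{L19.14}, so is the index: $\ind_z=\ind_{z_0}+\phi(u)$ with $\phi(u)=\langle[u]\cup c_1(\bs),[Y,\partial Y]\rangle$. Non-emptiness of $\cM_z$ forces $\phi(u)\geq 1-\ind_{z_0}$.

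The inequalities $0\leq\E_{top}^\q(z)<C$ and $\phi(u)\geq 1-\ind_{z_0}$ already confine $u$ to a bounded region in every direction of $H^1(Y,\partial Y;\R)$ on which $\phi$ or $\Delta$ is non-trivial, so if the pair $(c_1(\bs),[\omega])$ has trivial joint annihilator the finiteness claim is immediate. In the residual directions $\ker\phi\cap\ker\Delta$ I would exploit the $L^2$-bound $\|F_{A^t}\|_{L^2}\leq\sqrt{K}$: pairing $F_{A^t}$ against carefully chosen test forms on $\R_t\times\hy$ and applying Cauchy--Schwarz converts this into a coordinate bound on $u$ in the remaining lattice directions, leaving only finitely many $u\in H^1(Y,\partial Y;\Z)$ satisfying all constraints simultaneously.

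For the compactness assertion, suppose $\dim\cM_z=0$ and take a sequence $[\gamma_n]\in\cM_z$. Theorem~\ref{T1.4}, combined with the exponential decay in the time direction from Remark~\ref{R1.4}, extracts a subsequence converging in $L^2_{l,loc}$ modulo gauge to a limit that is a priori broken: a chain $[\gamma_1^\infty],\ldots,[\gamma_N^\infty]$ through intermediate critical points with composite class $z$. Additivity of indices and the $\R_t$-quotient give $\sum_i\dim\cM_{z_i}=\dim\cM_z-(N-1)$, so if $N\geq 2$ at least one factor has negative expected dimension and is therefore empty by admissibility of $\q$ (Theorem~\ref{T21.1}). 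Hence $N=1$ and $\cM_z$ is sequentially compact. The main obstacle is the curvature-pairing step in the finiteness argument: the natural map $H^2_c(\R_t\times\hy;\R)\to H^2(\R_t\times\hy;\R)$ vanishes on the suspension classes $[\beta'(t)dt\wedge\alpha]$ (each admits the primitive $\int\!\beta'(s)\,ds\cdot\alpha$), so pairing $F_{A^t}$ with \emph{closed} test forms cannot detect the component of $u$ in $\ker\phi\cap\ker\Delta$; one must employ non-closed test forms or an asymptotic holonomy/winding argument to extract the needed lattice bounds from $\|F_{A^t}\|_{L^2}$, which is the technical heart of the proof.
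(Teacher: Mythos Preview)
Your argument for the second clause (compactness of zero-dimensional $\cM_z$) is essentially correct and is the standard breaking argument; this part matches the paper's intended approach.

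For the first clause (finiteness of homotopy classes), however, your direct approach via linear functionals has a genuine gap beyond the obstacle you already flag. Even before reaching the residual directions $\ker\phi\cap\ker\Delta$, your bounds are insufficient: the energy inequality $0\leq\E_{top}^\q(z)<C$ gives a two-sided bound on $\Delta(u)$, but non-emptiness only yields the one-sided bound $\phi(u)\geq 1-\ind_{z_0}$. A single half-space constraint on $\phi$ together with a slab constraint on $\Delta$ does not confine $u$ to a bounded region even in the directions where $\phi$ or $\Delta$ is non-trivial, let alone in the joint kernel. The curvature-pairing idea you propose to handle the kernel is, as you yourself note, obstructed by the vanishing of the relevant cohomological pairings, and no concrete substitute is supplied.

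The paper avoids all of this by deducing finiteness directly from the compactness theorem itself (this is the content of the reference to \cite[Corollary~31.2.5]{Bible}). The argument is: suppose for contradiction there are infinitely many distinct classes $z_n$ each supporting a solution $\gamma_n$ with $\E_{top}^\q<C$. By Theorem~\ref{T1.4} and the exponential decay in the time direction (Remark~\ref{R1.4}), a subsequence converges (modulo gauge) to a possibly broken trajectory in some fixed composite class $z_\infty$. But convergence in this sense forces $z_n=z_\infty$ for all large $n$, contradicting distinctness. This is precisely the same mechanism you already invoke for the zero-dimensional compactness; you simply need to apply it once more with solutions drawn from different classes rather than from a single $\cM_z$. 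No index computation or curvature pairing is needed.
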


To show $\partial^2=0$, we follow the standard argument and look at the compactification of moduli spaces $\cM_z([\fa],[\fb])$ when $\dim=1$. Readers are referred to \cite[Section 22]{Bible} for the details. The monopole Floer homology of $(\y,\bs)$ is then defined as the homology of the chain complex $(C_*(\hy,\bs),\partial)$:
\[
\HM_*(\y,\bs)\colonequals H_*((C_*(\hy,\bs),\partial)). 
\]

To make $\HM_*$ into a functor:
\[
\HM_*: \SCob_s\to \NR_2\text{-}\mathrm{Mod},
\]
we assign for each morphism $\x: \y_1\to \y_2$ a chain map:
\[
m(\x; g_X,\p): (C_*(\y_1,\bs_1),\partial_1)\to (C_*(\y_2,\bs_2),\partial_2)
\]
which relies on a planar metric $g_X$ of the strict cobordism $X:Y_1\to Y_2$ and a quadruple
\[
\p=(\q_1,\q_2,\q_3,\omega_3)\in \Pa(Y_1)\times \Pa(Y_2)\times \Pa(Y_2)\times\Pa_{\form}.
\]
Here $\p$ is required to be admissible in the sense of Definition \ref{D21.4}. While $(\q_1,\q_2)$ are encoded in the objects $(\y_1,\y_2)$, $(\q_3,\omega_3)$ are the actual perturbations to the Seiberg-Witten equations on the complete Riemannian 4-manifold $\CX$. Now define 
\begin{equation}\label{E27.2}
m(\x; g_X,\q)[\fa_1]=\sum_{\substack{\bs_X\in \Spinc(X;\bs_1,\bs_2)\\\dim \M(\fa_1,\bs_X,\fa_2)=0}} [\fa_2]\cdot \# \M(\fa_1, \bs_X,\fa_2)\cdot q^{-\E_{top}^{\p}(\fa_1, \bs_X, \fa_2)},
\end{equation}
where $\fa_i$ is a lift of $[\fa_i]\in \FC(\y_i)$ in $\SC_k(\hy,\bs)$ for $i=1,2$. The moduli space $\M(\fa_1, \bs_X,\fa_2)$ is defined as in \eqref{E22.4} with the admissible quadruple $\q$ as perturbations. The topological energy is given by the formula
\begin{equation}\label{E27.4}
\E_{top}^{\p}(\fa_1, \bs_X, \fa_2)\colonequals 2\CSd_{\omega_1}(\fa_1)-2\CSd_{\omega_2}(\fa_2)+C(A_0,\omega_X)
\end{equation}
where $A_0$ is a background \spinc connection on $\hx$ such that the restriction $A_0|_{\hy_i}$ is the reference connection on $\hy_i$ that defines the Chern-Simons-Dirac functional $\CSd_{\omega_i}$ for $i=1,2$. The constant $C(A_0,\omega_X)$ is given concretely by 
\begin{equation}\label{E27.5}
C(A_0,\omega_X)=\frac{1}{4}\int_{\hx} F_{A^t_0}\wedge F_{A^t_0}-\int_{\hx}F_{A^t_0}\wedge \omega_X,
\end{equation}
as suggested by \eqref{top}. To make sense of the expression \eqref{E27.2}, we need another finiteness result:
\begin{lemma}\label{L27.2} For any $C>0$, any pair of critical points $([\fa_1],[\fa_2])\in \FC(\y_1)\times\FC(\y_2)$ and any admissible quadruple $\p$, there are only finitely many relative \spinc cobordisms  $\bs_X\in \Spincr(X;\bs_1,\bs_2)$ such that $\E_{top}^{\p}(\fa_1, \bs_X, \fa_2)<C$ and $\M(\fa_1, \bs_X,\fa_2)$ is non-empty. Moreover, each moduli space $\M(\fa_1, \bs_X,\fa_2)$ is compact if its dimension equals zero.
\end{lemma}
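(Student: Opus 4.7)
\emph{Strategy.} Both parts rest on the same energy bound. For any $\gamma=(A,\Phi)\in \M(\fa_1,\bs_X,\fa_2)$, the perturbed energy identity (Theorem \ref{Energy10.1} together with Proposition \ref{P1.1}) gives $\E_{an}^{\p}(\gamma)=\E_{top}^{\p}(\fa_1,\bs_X,\fa_2)<C$, and the perturbed analogue of Lemma \ref{L11.3} then furnishes a uniform $L^{2}$-bound on $F_{A^{t}}$ over $\hatx$ depending only on $C$, the fixed endpoints $\fa_i$, and the data $\p$. For the finiteness in $\bs_X$, fix a base element $\bs_X^{0}\in \Spincr(X;\bs_1,\bs_2)$ with reference connection $A_0^{0}$, and write any other $\bs_X$ as $\bs_X^{0}\otimes L$ for a complex line bundle $L\to X$ trivialized on $\partial X$; this identifies $\Spincr(X;\bs_1,\bs_2)$ with a torsor over the finitely generated group $H^{2}(X,\partial X;\Z)$ parametrized by $c_1(L)$. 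Choosing $A_0=A_0^{0}$ outside a compact set, the closed form $F_{A^{t}}-F_{A_0^{0,t}}$ lies in $L^{2}(\hatx)$ and represents $4\pi i\, c_1(L)\in H^{2}(X,\partial X;i\R)$; the curvature bound then bounds this class by standard Hodge theory on cylindrical-end manifolds, in which the $L^{2}$-harmonic 2-forms compute the image of $H^{2}(X,\partial X;\R)\to H^{2}(X;\R)$, while the residual ambiguity lies in a quotient of $H^{1}(\partial X;\Z)$ that is pinned down by the fixed boundary identifications $\bs_1,\bs_2$. Lattice finiteness in a finitely generated abelian group then yields only finitely many admissible $c_1(L)$, hence only finitely many $\bs_X$.

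\emph{Compactness of zero-dimensional $\M(\fa_1,\bs_X,\fa_2)$.} With $\bs_X$ now fixed, any sequence $\{\gamma_n\}\subset \M(\fa_1,\bs_X,\fa_2)$ has uniformly bounded analytic energy, so the compactness argument of Theorem \ref{T1.4}, applied on $\CX$ in place of $\R_t\times\hy$, extracts a subsequence converging modulo gauge in $L^{2}_{l,\mathrm{loc}}$ to a broken trajectory consisting of a solution on $\CX$ together with finitely many non-trivial unparameterized flowlines along the cylindrical ends $(-\infty,-1]\times \hy_1$ and $[1,\infty)\times \hy_2$. Regularity (Theorem \ref{T21.5}) ensures every component of the broken limit has non-negative dimension, and each proper breaking increases the codimension of the corresponding stratum by one. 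Since $\dim \M(\fa_1,\bs_X,\fa_2)=0$, any proper breaking would produce a stratum of negative dimension, which is impossible, so in fact no breaking occurs and the subsequence converges to a point of $\M(\fa_1,\bs_X,\fa_2)$.

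\emph{Main obstacle.} The step with no direct analogue in the closed-manifold treatment of \cite{Bible} is the Hodge-theoretic estimate used in the finiteness argument: on the non-compact 4-manifold $\hatx$, one must convert an $L^{2}$-bound on a closed 2-form into a bound on its compactly-supported cohomology class, and separately control the kernel of $H^{2}(X,\partial X;\R)\to H^{2}(X;\R)$ coming from $H^{1}(\partial X)$. The prescribed asymptotic behavior of $A$ at the ends, together with the exponential decay estimates of Section \ref{Sec11}, reduce this to finite-dimensional Hodge theory on the compactification $X$, which is exactly the cylindrical-end/Atiyah--Patodi--Singer setting; the rest of the argument then mirrors the closed-manifold proof.
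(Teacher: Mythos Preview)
The paper gives essentially no proof: it simply asserts that Lemmas~\ref{L27.1} and~\ref{L27.2} follow from the Compactness Theorem~\ref{T1.4} (and its cobordism analogue) and refers to \cite[Corollary 31.2.5]{Bible}. So you are supplying details the paper omits, and your second paragraph (compactness of zero-dimensional moduli via broken trajectories and dimension counting) is correct and is exactly the standard argument.

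For the finiteness clause, your route differs from the paper's. The intended argument is pure compactness: if infinitely many distinct $\bs_X^{(n)}$ carried solutions $\gamma_n$ with $\E_{top}^{\p}<C$, then $\E_{an}^{\p}(\gamma_n)<C$, and the cobordism analogue of Theorem~\ref{T1.4} produces subsequential convergence (after gauge, to a broken trajectory). Since the relative $\spinc$ structure is a discrete invariant of a configuration in $\SC_{k,loc}$ (equivalently, the homotopy class of an $\x$-path in the sense of Subsection~\ref{Subsec2.4}), convergence forces $\bs_X^{(n)}$ to stabilize, a contradiction.

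Your Hodge-theoretic route has a gap. You correctly observe that the $L^2$ bound on $F_{A^t}$ controls only the image of $c_1(L)$ in $H^2(X;\R)$, with residual ambiguity coming from $\im\bigl(H^1(\partial X)\to H^2(X,\partial X)\bigr)$. But your assertion that this residual is ``pinned down by the fixed boundary identifications $\bs_1,\bs_2$'' is not correct as stated: $\bs_1,\bs_2$ are held fixed throughout and impose no further constraint---indeed $\Spincr(X;\bs_1,\bs_2)$ is a full torsor over $H^2(X,\partial X;\Z)$, so relative classes with the same image in $H^2(X)$ are genuinely distinct. What actually controls the residual is the asymptotic behavior of the \emph{connection} $A$ itself (not merely its curvature): $A$ converges to the fixed connections of $\fa_1,\fa_2,A_*$ at the three ends of $\CX$, and this boundary data lets one integrate over relative $2$-cycles (surfaces in $X$ with boundary in $\partial X$, extended cylindrically into $\CX$) to recover the full relative Chern class with a uniform bound. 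You gesture at this in your ``Main obstacle'' paragraph, but the earlier sentence is misleading and the mechanism is not explained. The compactness argument sidesteps this issue entirely, which is presumably why the paper prefers it.
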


Lemma \ref{L27.1} and Lemma \ref{L27.2} follow from the Compactness Theorem \ref{T1.4} and its analogue for a general cobordism. Readers are referred to \cite[Corollary 31.2.5]{Bible} for more details; their proofs are omitted here. By analyzing the moduli space $\M(\fa_1, \bs_X,\fa_2)$ with $\dim=1$, we conclude that $m(\x;g_X,\q)$ is a chain map by the standard argument. The chain maps induced from different auxiliary data $(g_X,\q)$ are all chain homotopic to each other, so the resulting maps on the homology are independent of $(g_X,\q)$
\[
\HM(\x)\colonequals [m(\x;g_X,\p)]: \HM_*(\y_1,\bs_1)\to\HM_*(\y_2,\bs_2),
\]

To show that $\HM$ defined this way is a functor and satisfies the composition law in Theorem \ref{1T2}, we follow \cite[Section 26]{Bible}.  

\subsection{Monotonicity} To define the monopole Floer homology using $\BF_2$-coefficient, it is necessary to pass to a subcategory of $\SCob_s$, as we explain in this subsection. 
\begin{definition}\label{D27.3} An object $(\y,\bs)=(Y,\psi, g_Y, \omega,\q,\bs)\in \SCob_s$ is called monotone if the period class $[\omega]\in H^2(Y; i\R)$ is proportional to the image of $c_1(\bs)$ in $\im(H^2(Y,\partial Y;\Z)\to H^2(Y;\R)$: 
	\[
	[\frac{\omega}{\pi i}]=\alpha\cdot c_1(\s) \in H^2(Y; \R) \text{ for some }\alpha\in \R.  
	\] 
	In addition, $(\hy,\bs)$ is called
	\begin{itemize}
\item positively monotone if $\alpha<1$;
\item balanced if $\alpha=1$;
\item negatively monotone if $\alpha>1$.\qedhere
	\end{itemize}
\end{definition}

In light of Lemma \ref{D9.4}, under the monotonicity assumption, we have 
\[
\CSd_{\omega}(u\cdot \gamma)-\CSd_{\omega}(\gamma)=2(1-\alpha)\pi^2[u]\cup c_1(\s),
\]
for any $\gamma\in \SC_k(\hy,\bs)$ and $u\in \CG_{k+1}(\hy)$. In particular, $\CSd_{\omega}$ becomes a real valued functional if $(\y,\bs)$ is balanced. One necessary condition of monotonicity is that  $\mu=0$. The construction described below will work in general for any monotone objects, but let us focus on the special case when the period class $[\omega]=0\in H^2(Y; i\R)$ and the form $\bomega$ defined in \ref{P5} vanishes, for the sake of simplicity; so 
\[
\omega=\omega_\lambda=\chi_1(s)ds\wedge\lambda.
\]
In this case, $(\hy,\bs)$ is always positively monotone, since $\alpha=0$.

Under this assumption, the chain group $C_*(\y,\bs; \BF_2)$ is a finite dimensional $\BF_2$-vector space:
\[
C_*(\y,\bs; \BF_2)\colonequals \bigoplus_{[\fa]\in \FC(\hy,\bs)} \BF_2\cdot [\fa]. 
\]
with differential defined by 
\begin{equation}\label{E27.3}
\partial [\fa]=\sum_{\substack{z\in \pi_1(\CB_k(\hy,\bs); [\fa],[\fb])\\\dim \cM_{z}([\fa],[\fb])=0}} [\fb]\cdot \#\cM_{z}([\fa],[\fb])
\end{equation}

In light of Lemma \ref{L27.1}, to make sense of  this expression, we need an upper bound on the topological energy $\E_{top}^\q([\fa],[\fb];z)$: 
\begin{lemma}\label{L27.4} For any $[\fa],[\fb]\in \FC(\y,\bs)$, there exists a constant $C>0$ such that $$\E_{top}^\q([\fa],[\fb];z)<C,$$
for any homotopy classes of paths $z\in \pi_1(\CB_k(\hy,\bs), [\fa],[\fb])$ with $\dim \cM_{z}([\fa],[\fb])=0$. 
\end{lemma}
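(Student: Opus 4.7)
The plan is to show that under our simplifying monotonicity hypothesis ($[\omega]=0$, $\bomega\equiv 0$), the combination $\E_{top}^\q(z)+4\pi^2\dim \M_z$ depends only on the endpoints $[\fa],[\fb]$ and not on $z\in \pi_1(\CB_k(\hy,\bs);[\fa],[\fb])$. Once this identity is in hand, restricting to $\dim \cM_z=0$ (equivalently $\dim \M_z=1$) pins $\E_{top}^\q(z)$ down to a single value for each pair of critical points, and the finiteness of $\FC(\y,\bs)$ established in Theorem \ref{T21.2} immediately yields a uniform bound $C$.

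To carry this out, first fix lifts $\fa,\fb\in \SC_k(\hy,\bs)$ of each critical point and a reference class $z_0\in \pi_1(\CB_k;[\fa],[\fb])$. Any other class $z$ arises from $z_0$ by concatenation with a loop at $[\fb]$ represented by some $u\in \CG_{k+1}(\hy)$ with $[u]\in H^1(Y,\partial Y;\Z)\cong \pi_0(\CG_{k+1}(\hy))$; equivalently, a lift of $z$ starts at $\fa$ but ends at $u\cdot \fb$. The cylinder functions constructed in Subsection \ref{S15.1} are fully gauge invariant, so the tame perturbation $f$ is invariant under the full gauge group, and Lemma \ref{L9.4} applies directly to $\CSd_\omega=\CL_\omega+f$. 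With $[\omega]=0$ it specializes to
\[
\CSd_\omega(u\cdot \fb)-\CSd_\omega(\fb)=2\pi^2\,([u]\cup c_1(\s))[Y,\partial Y].
\]
Substituting into $\E_{top}^\q(z)=2\CSd_\omega(\fa)-2\CSd_\omega(u\cdot\fb)$ gives
\[
\E_{top}^\q(z)-\E_{top}^\q(z_0)=-4\pi^2\,([u]\cup c_1(\s))[Y,\partial Y].
\]

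On the dimension side, Proposition \ref{P19.1} identifies $\dim \M_z$ with the Fredholm index of $(\bd_\gamma^*,\D_\gamma\F_{\hz,\q})$, and the catenation axiom for the spectral flow (Theorem \ref{T16.1}) combined with Lemma \ref{L19.14} applied to the loop segment at $\fb$ produces
\[
\dim \M_z-\dim \M_{z_0}=([u]\cup c_1(\s))[Y,\partial Y].
\]
Adding the two displays gives the promised cancellation $\E_{top}^\q(z)+4\pi^2\dim \M_z=\E_{top}^\q(z_0)+4\pi^2\dim \M_{z_0}$, so for any $z$ with $\dim \cM_z=0$ we obtain $\E_{top}^\q(z)=\E_{top}^\q(z_0)+4\pi^2(\dim \M_{z_0}-1)$, a constant depending only on $[\fa],[\fb]$. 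Taking
\[
C:=1+\max_{[\fa],[\fb]\in \FC(\y,\bs)}\bigl[\E_{top}^\q(z_0)+4\pi^2(\dim \M_{z_0}-1)\bigr]
\]
and using the finiteness of $\FC(\y,\bs)$ finishes the proof. The only real obstacle is the bookkeeping consistency check just invoked: Lemma \ref{L19.14} is stated for paths from $\fa$ to $u\cdot \fa$, but catenation reduces the general case to applying it to the loop segment at $\fb$, so no new input is required.
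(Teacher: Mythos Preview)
Your proof is correct. Both your argument and the paper's deduce the lemma from the identity $\E_{top}^\q(z)+4\pi^2\dim\M_z=\text{const}$, but they reach this identity by different means. The paper proves the more general cobordism statement Proposition~\ref{P27.6} first---computing the change in $\E_{top}$ via the curvature integral $C(A_0,\omega_X)$ in \eqref{E27.5} and the change in dimension via relative Euler numbers and the forward-referenced Index Axiom~\ref{Axiom1}---then specializes to the product $X=[-1,1]\times Y$ to obtain both Lemma~\ref{L27.4} and Lemma~\ref{L27.5} at once. You instead stay on the cylinder and invoke the already-established Lemma~\ref{L9.4} (gauge variance of $\CL_\omega$, extended to $\CSd_\omega$ by the full gauge invariance of the tame perturbation) together with Lemma~\ref{L19.14} (spectral flow around a loop). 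Your route is more elementary and logically self-contained for this particular lemma, avoiding the forward reference to Section~\ref{Sec28}; the paper's route has the advantage of simultaneously handling the cobordism version Lemma~\ref{L27.5}, which your argument does not directly address.
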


As for a morphism $\x: (\y_1,\bs_1)\to (\y_2,\bs_2)$ with $\omega_1=\omega_2=\omega_\lambda$, $\bomega_X$ is a compactly supported 2-form  (see \ref{Q7}) on $X$. We require that the class defined in \ref{Q8} vanishes: $[\omega_X]_{cpt}=0\in H^2(X,\partial X;\Z)$. This time the chain map $m(\x;g_X, \q)$ is defined as 
\begin{align*}
m(\x;g_X,\q): C_*(\y_1,\bs_1;\BF_2)&\to C_*(\y_2,\bs_2;\BF_2)\\
[\fa_1]&\mapsto \sum_{\substack{\bs_X\in \Spinc(X;\bs_1,\bs_2)\\\dim \M(\fa_1,\bs_X,\fa_2)=0}} [\fa_2]\cdot \# \M(\fa_1, \bs_X,\fa_2).
\end{align*}

Again, we need a upper bound on $\E_{top}(\fa_1,\bs_X,\fa_2)$ to ensure the sum in the expression above is finite: 
\begin{lemma}\label{L27.5} Under above assumptions, for any pair of critical points $([\fa_1],[\fa_2])\in \FC(\y_1,\bs_1)\times\FC(\y_2,\bs_2)$, any planar metric $g_X$ and any admissible quadruple $\p$, there is a constant $C>0$ such that $$\E_{top}^\p(\fa_1, \bs_X, \fa_2)<C$$
which holds for any $\bs_X\in\Spincr(X;\bs_1,\bs_2)$ with $\dim\M(\fa_1, \bs_X,\fa_2)=0$. 
\end{lemma}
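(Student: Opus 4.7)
The plan is to show that, under the monotonicity hypotheses of Subsection 27.2 (where $\omega_i=\omega_\lambda$ and $[\omega_X]_{cpt}=0$), the topological energy $\E_{top}^{\p}(\fa_1,\bs_X,\fa_2)$ and the dimension $\dim\M(\fa_1,\bs_X,\fa_2)$ depend on $\bs_X$ only through a common quadratic expression in the class distinguishing $\bs_X$ from a reference. Once that is established, fixing $\dim=0$ pins $\E_{top}^{\p}$ to a single value, which gives the desired bound.

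Fix a reference $\bs_X^{(0)}\in\Spincr(X;\bs_1,\bs_2)$. Any other $\bs_X$ has the form $\bs_X^{(0)}\otimes L_e$ for a complex line bundle $L_e\to X$ trivialized on $\partial X$ with $c_1(L_e)=e\in H^2(X,\partial X;\Z)$. I would pick $a\in\Omega^1(\hx,i\R)$ with $da$ compactly supported in $X$ representing $e$, i.e., $[\tfrac{i}{2\pi}da]=e$ in $H^2_c(X)\cong H^2(X,\partial X;\R)$. Since $\det(S^+\otimes L_e)=\det S^+\otimes L_e^{2}$, the induced reference connection on $\det S^+$ for $\bs_X$ may be taken so that $A_0^t=A_0^{(0),t}+2a$. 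Substituting into \eqref{E27.5} and using bilinearity of curvature gives
\begin{equation*}
\Delta(e)\colonequals C(A_0,\omega_X)-C(A_0^{(0)},\omega_X)=\int_{\hx}F_{A_0^{(0),t)}}\wedge da+\int_{\hx}da\wedge da-2\int_{\hx}da\wedge\omega_X.
\end{equation*}
The last integral vanishes because the hypothesis $[\omega_X]_{cpt}=0$ (with $\mu=0$) forces the period class $[\omega_X]\in H^2(\hx;i\R)$ to be trivial, so the pairing of the compactly supported class $e$ with $[\omega_X]$ is zero. The first two integrals are topological, evaluating via the relative cup product $H^2(X,\partial X)\otimes H^2(X,\partial X)\to\Z$ to $-4\pi^2\bigl(c_1(\bs_X^{(0)})\cdot e+e\cdot e\bigr)$.

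On the index side, $c_1(\bs_X)=c_1(\bs_X^{(0)})+2e$, so the Seiberg-Witten dimension formula predicts
\begin{equation*}
\dim\M(\fa_1,\bs_X,\fa_2)-\dim\M(\fa_1,\bs_X^{(0)},\fa_2)=\tfrac{1}{4}\bigl(c_1(\bs_X)^2-c_1(\bs_X^{(0)})^2\bigr)=c_1(\bs_X^{(0)})\cdot e+e\cdot e,
\end{equation*}
interpreted through the same relative cup product. Comparing with the preceding computation yields $\Delta(e)=-4\pi^2\bigl(\dim\M(\fa_1,\bs_X,\fa_2)-\dim\M(\fa_1,\bs_X^{(0)},\fa_2)\bigr)$; hence when $\dim\M(\fa_1,\bs_X,\fa_2)=0$ the quantity $C(A_0,\omega_X)$ is locked to $C(A_0^{(0)},\omega_X)+4\pi^2\dim\M(\fa_1,\bs_X^{(0)},\fa_2)$, and the desired bound on $\E_{top}^{\p}$ follows from \eqref{E27.4}.

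The main obstacle is justifying this relative dimension formula on the non-compact manifold $\hx$, since the classical derivation uses the Atiyah-Singer theorem on closed 4-manifolds. I would address this by an excision/relative index argument: the Fredholm operators of Proposition \ref{P22.1} associated to $\bs_X$ and $\bs_X^{(0)}$ differ by a compactly supported perturbation (localized where $da\neq 0$), and the essential spectrum condition of Proposition \ref{P18.1} together with the spectral projection framework of Proposition \ref{P19.6} are preserved under such perturbations. The relative index therefore reduces to a local topological computation on a compact region, which matches the formula above; this is the sort of excision handled in Appendix \ref{AppD}. The same strategy, with the gauge winding class $[u]\in H^1(Y,\partial Y;\Z)$ replacing $e$ and using Lemmas \ref{L9.4} and \ref{L19.14}, also yields the cylindrical analogue Lemma \ref{L27.4}.
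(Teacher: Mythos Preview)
Your proposal is correct and follows essentially the same route as the paper: the paper derives Lemma~\ref{L27.5} from Proposition~\ref{P27.6}, which establishes exactly the identity $\E_{top}(\fa_1,\bs_X',\fa_2)-\E_{top}(\fa_1,\bs_X,\fa_2)=-4\pi^2\bigl(\dim\M(\fa_1,\bs_X',\fa_2)-\dim\M(\fa_1,\bs_X,\fa_2)\bigr)$ that you compute. Your energy computation matches the paper's verbatim, and your observation that the $\omega_X$ term drops out because $[\omega_X]_{cpt}=0$ is precisely the role of the monotonicity hypothesis.

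The only point of divergence is how the relative index formula is justified. The paper's primary argument invokes the Index Axiom~\ref{Axiom1} of the canonical grading (Proposition~\ref{P23.5}), which identifies $\dim\M(\fa_1,\bs_X,\fa_2)$ with a relative Euler number $e(S^+;\Phi_0)[X,\partial X]$; the difference of Euler numbers is then the topological quantity $[L]\cup(c_1(\bs_X)+[L])$. Your proposed route---excision to a compact piece where the closed-manifold formula $\tfrac14(c_1^2-c_1'^2)$ applies---is explicitly mentioned by the paper as ``another approach'' in the proof of Proposition~\ref{P27.6}, so you have not strayed from the intended logic. Either way the work has already been done elsewhere in the paper (Proposition~\ref{P23.5} is itself proved via an excision/gluing argument in Lemma~\ref{L23.6}), so both routes ultimately rest on the same foundation.
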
 

Lemma \ref{L27.4} and \ref{L27.5} follow directly from a general statement relating the dimension with the topological energy $\E_{top}$. In Proposition \ref{P27.6} below, we will think of a homotopy class of paths as a relative \spinc cobordism, following the ideas in Subsection \ref{Subsec2.4}. 
\begin{proposition}\label{P27.6} Under above assumptions, for any relative \spinc cobordism $\bs_{X},\bs_{X}'\in\Spincr(X;\bs_1,\bs_2)$, we have 
\begin{align*}
\E_{top}(\fa_1, \bs_X', \fa_2)-\E_{top}(\fa_1, \bs_X, \fa_2)= -4\pi^2\big(\dim \M(\fa_1, \bs_X',\fa_2)-\dim\M(\fa_1, \bs_X,\fa_2)\big)
\end{align*}
\end{proposition}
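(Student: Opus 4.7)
My plan is to express both sides in terms of a line bundle $L\to X$ recording the difference between $\bs_X'$ and $\bs_X$, and then check the two pieces match. Since both cobordisms restrict to the same boundary spin$^c$ structures $(\bs_1,\bs_2)$, we can write $\bs_X'=\bs_X\otimes L$ with $L$ trivialized over $\partial X$; extending $L$ by the trivial bundle over the cylindrical ends of $\hx$ produces a line bundle (still called $L$) with $c_1(L)\in H^2(X,\partial X;\Z)$. I would choose a connection $B_L$ on $L$ that is trivial on the ends (so $F_{B_L}$ is compactly supported), and reference spin$^c$ connections $A_0,A_0'$ for $\bs_X,\bs_X'$ that both restrict to the given reference on the ends, so that $F_{(A_0')^t}=F_{A_0^t}+2F_{B_L}$ under the trivialization.

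For the dimension side, the linearized operators of Proposition \ref{P22.1} for $\bs_X$ and $\bs_X'$ have identical asymptotic data $\EHess_{\q_i,\fa_i}$ at the two ends, so any APS-type eta-invariant and spectral flow contributions cancel in the difference. The excision principle of Appendix \ref{AppD} then reduces the index change to the usual closed-manifold Seiberg--Witten index formula, giving
\[
\dim\M(\fa_1,\bs_X',\fa_2)-\dim\M(\fa_1,\bs_X,\fa_2)=\tfrac{1}{4}\big(c_1(\bs_X')^2-c_1(\bs_X)^2\big)=c_1(\bs_X)\cdot c_1(L)+c_1(L)^2,
\]
where the cup products are defined through the pairing $H^2(X,\partial X)\otimes H^2(X)\to H^4(X,\partial X)\cong\Z$.

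For the energy side, since $\CSd_{\omega_i}(\fa_i)$ is unchanged for $i=1,2$, formulas \eqref{E27.4}--\eqref{E27.5} reduce the energy difference to $C(A_0',\omega_X)-C(A_0,\omega_X)$, and substituting $F_{(A_0')^t}=F_{A_0^t}+2F_{B_L}$ expands this into
\[
\int_{\hx}F_{A_0^t}\wedge F_{B_L}+\int_{\hx}F_{B_L}\wedge F_{B_L}-2\int_{\hx}F_{B_L}\wedge\omega_X.
\]
Passing to Chern classes via $F=-2\pi i\, c_1$, the first two terms contribute exactly $-4\pi^2(c_1(\bs_X)\cdot c_1(L)+c_1(L)^2)$, which already matches $-4\pi^2$ times the dimension difference computed above.

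The main obstacle is to show the remaining cross term $\int_{\hx}F_{B_L}\wedge\omega_X$ vanishes; this is where the hypotheses $\omega_i=\omega_\lambda$ and $[\omega_X]_{cpt}=0$ enter crucially. By (Q8) we may write $\bomega_X=da$ for a compactly supported $a\in\Omega^1(X,i\R)$; since $\bomega_X$ vanishes in a collar of $\partial X$, we can arrange $a$ to vanish there too, so $a$ has compact support in the interior of $\hx$. Then $\int_{\hx}F_{B_L}\wedge da=\int_{\hx}d(F_{B_L}\wedge a)=0$ by Stokes, using closedness of $F_{B_L}$. For the remaining piece $\int_{\hx}F_{B_L}\wedge\omega_\lambda$, I would exploit the gauge freedom $B_L\mapsto B_L+\zeta$ (any smooth compactly supported 1-form $\zeta$) to relocate the compactly supported representative $F_{B_L}$ into the deep interior of $X$, disjoint from the collars where $\omega_\lambda=\chi_1(s)ds\wedge\lambda$ is supported; such a bump representative of $c_1(L)\in H^2(X,\partial X;\Z)$ exists because by Poincar\'e--Lefschetz duality the dual 2-cycle can be homotoped away from the collars. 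With this choice the integrand vanishes pointwise, and combining the pieces yields the identity $\Delta C=-4\pi^2(\dim\M'-\dim\M)$.
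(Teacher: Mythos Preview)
Your argument is correct and follows essentially the same line as the paper: write $\bs_X'=\bs_X\otimes L$, compute the energy difference as $C(A_0',\omega_X)-C(A_0,\omega_X)$ via \eqref{E27.4}--\eqref{E27.5}, and match it against the index difference expressed through $c_1(\bs_X)\cdot c_1(L)+c_1(L)^2$.

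Two small points of comparison. For the dimension side, the paper's primary route invokes the Index Axiom \ref{Axiom1} of the canonical grading (Proposition~\ref{P23.5}), expressing $\dim\M(\fa_1,\bs_X,\fa_2)$ as a relative Euler number $e(\bs_X;\Phi_0(\fa_1,\fa_2))[X,\partial X]$ and then computing the difference of Euler numbers directly; it then mentions your excision argument as an alternative. Your route avoids the forward reference to Section~\ref{Sec28} at the cost of appealing to the closed-manifold index formula via excision. For the energy side, you are actually more explicit than the paper: the paper simply asserts the answer $-2\pi^2[L]\cup(c_1(\bs_X)+c_1(\bs_X'))[X,\partial X]$ without isolating the cross term $\int F_{B_L}\wedge\omega_X$, whereas you correctly identify that this is where the hypotheses $[\omega_X]_{cpt}=0$ and $\omega_i=\omega_\lambda$ are used. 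Your handling of the $\omega_\lambda$ piece by relocating the support of $F_{B_L}$ is fine; a slightly cleaner alternative is to note that $\omega_\lambda=d(\Xi(s)\lambda)$ for a primitive $\Xi$ of $\chi_1$, so that $\int F_{B_L}\wedge\omega_\lambda$ vanishes by Stokes once $F_{B_L}$ is compactly supported in the interior.
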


In particular, the topological energy $\E_{top}^\q(\fa_1,\bs_X,\fa_2)$ is independent of the choice of $\bs_X\in \Spincr(X;\bs_1,\bs_2)$ if $\dim \M(\fa_1, \bs_X,\fa_2)=0$.
\begin{proof} Suppose $\bs_X'=\bs_X\otimes L$ for a relative complex line bundle in the class $[L]\in H^2(X,\partial X; \Z)$. In terms of \eqref{E27.4} and \eqref{E27.5}, we compute the difference of the topological energy
	\begin{align*}
	\E_{top}(\fa_1, \bs_X', \fa_2)-\E_{top}(\fa_1, \bs_X, \fa_2)&=C(A_0(\bs_X'),\omega_X)-C(A_0(\bs_X),\omega_X)\\
	&=-2\pi^2[L]\cup (c_1(\bs_X)+c_1(\bs_X'))[X,\partial X]\\
	&=-4\pi^2 [L]\cup (c_1(\bs_X)+[L])[X,\partial X]. 
	\end{align*}
	where $c_1(\bs_X)$ and $\ c_1(\bs_X')$ are understood as elements in $H^2(X, [-1,1]\times \Sigma;\Z)$. On the other hand, pick an arbitrary non-vanishing section $\Phi_0$ of 
	\[
	S^+|_{\partial X}\to \partial X. 
	\]
	Any relative \spinc structure $\bs_X\in \Spincr(X;\bs_1,\bs_2)$ dictates an identification of $\bs_X|_{\partial X}$ with a standard \spinc structure on the boundary $\partial X$, so it makes sense to define the relative Euler number $
	e(\bs_X; \Phi_0)[X,\partial X]$ for any non-vanishing section $\Phi_0$ of the spin bundle $S^+\to \partial X$. In particular, 
	\[
	\big(e(\bs_X'; \Phi_0)-e(\bs_X; \Phi_0)\big)[X,\partial X]=[L]\cup (c_1(\bs_X)+[L])[X,\partial X]. 
	\]
	In Proposition \ref{P23.5} below, we will associate a homotopy class of non-vanishing sections $[\Phi_0(\fa_1,\fa_2)]$ to any pair $(\fa_1,\fa_2)$ such that 
	\begin{equation}\label{E27.6}
	e(\bs_X; \Phi_0(\fa_1,\fa_2))[X,\partial X]=\dim \M(\fa_1,\bs_X,\fa_2)
	\end{equation}
	for any $\bs_X\in \Spincr(X;\bs_1,\bs_2)$. In fact, \eqref{E27.6} follows from the Index Axiom \ref{Axiom1} of the canonical grading of $\HM_*(\y,\bs)$. Another approach is to show
	\[
		\big(e(\bs_X'; \Phi_0)-e(\bs_X; \Phi_0)\big)[X,\partial X]=\dim \M(\fa_1, \bs_X',\fa_2)-\dim\M(\fa_1, \bs_X,\fa_2)
	\]
	for any non-vanishing section $\Phi_0$ directly using the excision principle. This completes the proof of Proposition \ref{P27.6}
\end{proof}

Finally, one has to verify that $m(\x;g_X,\q)$ is a chain map and a generic homotopy of auxiliary data $(g_X,\q)$ gives rise to a chain homotopy of $m(\x;g_X,\q)$. The argument is not different from that of \cite[Section 25]{Bible}. 
\section{Canonical Gradings}\label{Sec28}

In this section, we introduce the canonical grading of the monopole Floer homology $\HM_*(\y,\bs)$. It is more natural to think of the grading set of $\HM_*(\y,\bs)$
\[
\Xi^{\pi}(\y,\bs)
\]
as the space of unit-length relative spinors on $\hy$ modulo gauge transformations, identified also as a subset of homotopy classes of oriented relative 2-plane fields on $Y$. In particular, 
\[
\Xi^{\pi}(\y,\bs_1)=\Xi^{\pi}(\y,\bs_2)
\]
if $\bs_1$ and $\bs_2$ come down to the same \spinc structure on $Y$. 

The main result of this section is Proposition \ref{P23.5}, which characterizes the canonical grading in terms of the Index Axiom \ref{Axiom1} and the Normalization Axiom \ref{Axiom2}. They are inspired by the following index computation for a closed Riemannian 4-manifold $X$: 
\[
\dim \M(X,\s_X)=e(\s_X)[X]
\]
where $\M(X,\s_X)$ is the Seiberg-Witten moduli space and $e(\s_X)$ is the Euler class of the spin bundle $S_X^+\to X$. The canonical mod 2 grading will be discussed in Subsection \ref{Subsec23.2}.

\subsection{Homotopy Classes of Oriented Relative 2-Plane Fields} For a closed $3$-manifold $Y$, recall that the three flavors of monopoles Floer homology:
\[
\widecheck{\HM}_\bullet(Y),\ \widehat{\HM}_\bullet(Y),\ \overline{\HM}_\bullet(Y)
\]
defined in the book \cite{Bible} are graded by the homotopy classes of oriented 2-plane fields over $Y$. The analogous statement continues to hold in our case, using \textbf{relative} oriented 2-plane fields instead, as we explain now. The following lemma from \cite{Bible} explains the relationship between 2-plane fields and \spinc structures:
\begin{lemma}[\cite{Bible} Lemma 28.1.1]\label{L18.1} On an oriented Riemannian 3-manifold $Y$, there is a bijection between
	\begin{enumerate}[label=(\roman*)]
\item oriented $2$-plane fields $\xi$;
\item 1-forms $\theta$ of length $1$; and 
\item isomorphism classes of pairs $(\s, \Psi)$ comprising a \spinc structure and a unit-length spinor $\Psi$. 
	\end{enumerate}
\end{lemma}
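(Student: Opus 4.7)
The plan is to factor the bijection through (ii) by establishing (i)$\leftrightarrow$(ii) and (ii)$\leftrightarrow$(iii) separately. The first is immediate from Riemannian duality; the substance is in the second, which rests on the fact that in dimension three the Clifford multiplication $\rho\colon T^{*}Y\to\su(S)$ is a fiberwise isometric isomorphism onto the rank-$3$ bundle of trace-free skew-Hermitian endomorphisms of the rank-$2$ complex spin bundle $S$.

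For (i)$\leftrightarrow$(ii), use the musical isomorphism to identify unit vectors with unit covectors. Given an oriented $2$-plane field $\xi\subset TY$, pick the unit normal $v\in\xi^{\perp}$ for which $(v,e_{1},e_{2})$ realizes the orientation of $Y$ whenever $(e_{1},e_{2})$ is an oriented frame of $\xi$, and set $\theta=v^{\flat}$. Conversely, given a unit $1$-form $\theta$, set $\xi:=\ker\theta$ oriented by the same rule; the two constructions are manifestly inverse.

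For (iii)$\to$(ii), given $(\s,\Psi)=((S,\rho),\Psi)$ with $|\Psi|=1$, I would define $\theta$ as the unique $1$-form satisfying $\rho(\theta)\Psi=i\Psi$. Existence and uniqueness follow from the identity $\rho(\theta)=2i(\Psi\Psi^{*})_{0}$: on the orthogonal splitting $S=\C\Psi\oplus\Psi^{\perp}$ the right-hand side is a trace-free skew-Hermitian endomorphism with eigenvalues $+i$ on $\C\Psi$ and $-i$ on $\Psi^{\perp}$, so the isometric isomorphism $\rho^{-1}$ yields a unique $\theta$, and $\rho(\theta)^{2}=-|\theta|^{2}\,\mathrm{Id}_{S}=-\mathrm{Id}_{S}$ then forces $|\theta|=1$. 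For the inverse direction, given a unit $1$-form $\theta$ with associated $2$-plane field $\xi$, equip $\xi$ with the almost complex structure $Jv=\theta^{\sharp}\times v$ coming from the cross product on $TY$, and set $\s_{\theta}=(S_{\theta},\rho_{\theta})$ with $S_{\theta}=\underline{\C}\oplus(\xi,J)$ and Clifford multiplication normalized so that $\rho_{\theta}(\theta)=\mathrm{diag}(i,-i)$ while $\rho_{\theta}(v)$ swaps the two summands for $v\in\xi$; take $\Psi_{\theta}=(1,0)$. A direct calculation gives $\rho_{\theta}(\theta)\Psi_{\theta}=i\Psi_{\theta}$.

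The step I expect to require the most care is checking that the two composites are the identity on isomorphism classes, which reduces to showing that every $(\s,\Psi)$ producing a given $\theta$ is isomorphic as a pair of \spinc structure with unit spinor to $(\s_{\theta},\Psi_{\theta})$. Fiberwise, the Hermitian decomposition $S=\C\Psi\oplus\Psi^{\perp}$ matches $\C\oplus\xi$ via a unitary isomorphism intertwining the Clifford actions; the remaining fiberwise ambiguity is the $U(1)$ acting on $\Psi^{\perp}$, which coincides precisely with the gauge freedom in an isomorphism class of \spinc structures. These pointwise unitary intertwiners depend smoothly on the base point, so they assemble into a global isomorphism of \spinc structures sending $\Psi$ to $\Psi_{\theta}$, completing the argument.
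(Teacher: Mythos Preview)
Your proof is correct and is essentially the standard argument; the paper does not supply its own proof but simply cites \cite[Lemma 28.1.1]{Bible}. In fact, the paper's subsequent use of the lemma (immediately after its statement, in the identification $\theta_*\leftrightarrow \Psi_*/|\Psi_*|$ via the condition that $\C\Psi_*$ and $(\C\Psi_*)^\perp$ are the $\pm i$ eigenspaces of $\rho_3(\theta_*)$) is exactly your characterization $\rho(\theta)\Psi=i\Psi$, so your argument is aligned with how the result is applied here.

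One small point of phrasing: in your last paragraph, the sentence about ``the $U(1)$ acting on $\Psi^{\perp}$'' being the gauge freedom is slightly misleading. The cleaner way to say it is that the fiberwise Clifford intertwiners $S_y\to (S_\theta)_y$ form a $U(1)$-torsor (automorphisms of an irreducible Clifford module are scalars), and among these exactly one sends $\Psi_y$ to $(1,0)$; this unique choice varies smoothly and gives the desired global isomorphism of pairs. Your conclusion is right, but the ambiguity lives in the scalar automorphisms of the whole spinor bundle, not in a $U(1)$ acting only on $\Psi^\perp$.
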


Over the infinite cylinder $\R_s\times \Sigma$, we defined in \eqref{E2.6} a preferred $\R_s$-translation invariant solution 
\[
\gamma_*=(B_*,\Psi_*)
\]
to the perturbed Seiberg-Witten equations $\eqref{3DSWEQ}$. The perturbation is provided by a covariantly constant 2-form 
\[
\omega_*\colonequals \mu+ds\wedge\lambda
\]
The correspondence in Lemma \ref{L18.1} then identifies 
\begin{equation}\label{E23.1}
\text{the unit length 1-form }\theta_*\colonequals  i*_3 \frac{\omega_*}{|\omega_*|} \leftrightarrow \text{the unit length spinor }\frac{\Psi_*}{|\Psi_*|},
\end{equation}
Indeed, as $\gamma_*$ solves the equations \eqref{3DSWEQ}, $(\Psi_*\Psi_*^*)_0=\rho_3(*_3\omega_*)$, so 
\[
\C\Psi_* \text{ and } \C(\Psi_*)^\perp
\]
are $i$ and $-i$ eigenspaces of $\rho_3(\theta_*)$ respectively. In particular, \eqref{E23.1} determines a preferred oriented 2-plane fields $\xi_*$ on $\R_s\times\Sigma$ by Lemma \ref{L18.1}. Now we return to a 3-manifold $\hy$ with cylindrical ends and state a relative version of Lemma \ref{L18.1}.
\begin{definition}\label{D18.3}
 An oriented 2-plane field $\xi$ on $\hy$ is called \textbf{relative} if $\xi$ agrees with $\xi_*$ over the cylindrical end $[0,\infty)_s\times\Sigma$. Similarly, we define 
\begin{itemize}
	\item \textbf{relative} 1-forms and
	\item  \textbf{relative} spinors
\end{itemize}
using $\theta_*$ and $\Psi_*/|\Psi_*|$ as the models along the end $[0,\infty)_s\times\Sigma$.
\end{definition}

\begin{lemma}\label{L18.2} For any object $\y\in \Cob_s$, let $\hy$ be the extended 3-manifold with cylindrical ends. Then there is a bijection between:
	\begin{enumerate}[label=(\roman*)]
		\item oriented relative $2$-plane fields $\xi$;
		\item 1-forms relative $\theta$ of length $1$; and 
		\item isomorphism classes of pairs $(\s, \Psi)$ consisting of a \spinc structure $\s$ with $c_1(\s)|_\Sigma=0\in H^2(\Sigma, \Z)$ and a unit-length spinor $\Psi$ that is gauge equivalent to a relative spinor. 
	\end{enumerate}
\end{lemma}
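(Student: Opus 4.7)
The plan is to bootstrap from the pointwise (i.e.\ non-relative) version in Lemma \ref{L18.1} and then verify that the "relative" condition transfers correctly across the three descriptions. The bijection between (i) and (ii) is the usual one: given an oriented relative $2$-plane field $\xi$, take $\theta$ to be the unique unit $1$-form with $\ker\theta=\xi$ and $\theta\wedge(\text{oriented area form on }\xi)=dvol_{\hy}$; conversely, a unit $1$-form $\theta$ determines the oriented plane field $\ker\theta$. By construction of the model pair $(\xi_*,\theta_*)$ in \eqref{E23.1}, this correspondence sends $\xi_*\mapsto\theta_*$, so the relative condition of Definition \ref{D18.3} passes across. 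Thus I reduce to proving the bijection between (ii) and (iii).

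For (ii)$\Rightarrow$(iii), start with a unit relative $1$-form $\theta$. On the non-compact manifold $\hy$, apply Lemma \ref{L18.1} fiberwise to produce a \spinc structure $\s$ with an isomorphism $\varphi:(S,\rho_3)|_{[0,\infty)_s\times\Sigma}\to\pi^*\s_{std}$ under which the distinguished unit spinor $\Psi$ on $\s$ maps to $\Psi_*/|\Psi_*|$ along the end: concretely, one first builds $\s$ abstractly from $\xi=\ker\theta$ using the standard recipe (the canonical $\spinc$ structure of an oriented Riemannian $3$-manifold, twisted by the complex line $\xi$ regarded via its almost complex structure), and $\Psi$ is the distinguished section of the $+i$-eigenbundle of $\rho_3(\theta)$ of unit length. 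Because $\theta=\theta_*$ on the end and $\Psi_*/|\Psi_*|$ spans the $+i$-eigenbundle of $\rho_3(\theta_*)$ on $\s_{std}$, the isomorphism $\varphi$ carries $\Psi$ to $\Psi_*/|\Psi_*|$. In particular $c_1(\s)|_{\T_i^2}=c_1(S_{std}^+)|_{\T_i^2}=0$, and the gauge class $[\Psi]$ has a representative that is relative in the sense of Definition \ref{D18.3}.

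For (iii)$\Rightarrow$(ii), start with a pair $(\s,\Psi)$ as in the lemma. Choose a representative in its gauge orbit that is relative on the end, i.e.\ such that under some identification of $\s|_{\text{end}}$ with $\pi^*\s_{std}$ (which exists because $c_1(\s)|_{\T_i^2}=0$) we have $\Psi=\Psi_*/|\Psi_*|$ on $[0,\infty)_s\times\Sigma$. Define $\theta$ by the fiberwise condition $\rho_3(\theta)\Psi=i\Psi$ and $|\theta|=1$; this is gauge-invariant since both $\rho_3$ and the eigenvalue condition are preserved by the diagonal action of $\Map(\hy,S^1)$ on $(\s,\Psi)$. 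On the end this forces $\theta=\theta_*$, so $\theta$ is a relative unit $1$-form. Well-definedness on isomorphism classes of $(\s,\Psi)$ is immediate because any $\spinc$-isomorphism intertwines the two Clifford actions, hence sends the $+i$-eigenline of $\rho_3(\theta)$ on one side to the $+i$-eigenline on the other.

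The main obstacle I anticipate is bookkeeping around the non-uniqueness of the boundary trivialization of $\s$: different identifications of $\s|_{\text{end}}$ with $\pi^*\s_{std}$ differ by a gauge transformation of $\hy$, and this is precisely the reason clause (iii) asks only that $\Psi$ be \emph{gauge equivalent to} a relative spinor rather than literally equal to $\Psi_*/|\Psi_*|$ there. Once this is set up correctly, the verification that the two constructions in the previous two paragraphs are mutually inverse, and that the bijection (ii)$\leftrightarrow$(iii) is compatible under composition with (i)$\leftrightarrow$(ii), is a direct consequence of the pointwise Lemma \ref{L18.1} applied fiberwise and of the observation that the model triple $(\xi_*,\theta_*,\Psi_*/|\Psi_*|)$ is itself a compatible pointwise correspondence along the cylindrical end.
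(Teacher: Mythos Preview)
The paper does not supply a proof of Lemma \ref{L18.2}; it is stated immediately after the closed-manifold version (Lemma \ref{L18.1}, quoted from \cite{Bible}) and is evidently meant to follow by the obvious relative adaptation. Your proposal does exactly that: you invoke the pointwise bijection of Lemma \ref{L18.1} and then check that the model triple $(\xi_*,\theta_*,\Psi_*/|\Psi_*|)$ is compatible under the correspondence, so that the ``relative'' condition of Definition \ref{D18.3} passes across all three descriptions. This is the intended argument, and your treatment of the bookkeeping around boundary trivializations (the reason clause (iii) is phrased in terms of gauge equivalence rather than literal equality on the end) is the right observation to make explicit.

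One small point of care: when you write ``under some identification of $\s|_{\text{end}}$ with $\pi^*\s_{std}$ (which exists because $c_1(\s)|_{\T_i^2}=0$)'', you are implicitly absorbing the choice of identification into the notion of isomorphism class in (iii). That is fine and consistent with the Remark following Lemma \ref{L18.2}, but it would be slightly cleaner to say that the ambiguity in this identification is exactly a gauge transformation on the end, which can be extended over $\hy$ and hence does not affect the isomorphism class of the pair $(\s,\Psi)$. With that clarification your argument is complete.
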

\begin{remark} In the last description, the identification of $\bs|_\Sigma$ is not specified and a gauge transformation does not necessarily lie in the identity component when restricted to $\Sigma$.  
\end{remark}

For each relative \spinc structure $\bs\in \Spincr(Y)$, let $\Xi(\hy,\bs)$ be the space of unit-length relative spinors on $\hy$. The index set for the monopole Floer homology $\HM_*(Y,\bs)$ will be 
\begin{equation}\label{E18.2}
\Xi^\pi(Y,\bs)\colonequals \pi_0(\Xi(\hy,\bs))/H^1(Y, \partial Y; \Z)
\end{equation}
where $H^1(Y, \partial Y; \Z)=\pi_0(\CG(\hy,\bs))$ acts on $\pi_0(\Xi(\hy,\bs))$ by gauge transformations. The last description in Lemma \ref{L18.2} suggests that 
\[
\Xi^\pi(Y,\bs_1)\cong \Xi^\pi(Y,\bs_2)
\]
if $\bs_1$ and $\bs_2$ come down to the same \spinc structure on $Y$. In this way, $\Xi^\pi(Y,\bs)$ is identified with a subset of homotopy classes of oriented relative 2-plane fields. 

\medskip

Now let us introduce the axioms that characterize the canonical grading of $\HM_*(\y,\bs)$.
\begin{definition}\label{D23.3} For any configuration $\fa\in \SC_k(\hy, \bs)$ and any tame perturbation $\q\in \Pa(Y)$, the pair $\fc=(\fa,\q)$ is called \textbf{non-degenerate} if the extended Hessian $\EHess_{\fa,\q}$ is invertible.
\end{definition}

For any non-degenerate pair $\fc=(\fa,\q)$, we will assign an element
\[
\gr(\fc)\in \pi_0(\Xi(\hy,\bs)).
\]
which descends to a map
\begin{equation}\label{E23.3}
\gr^\pi: (\SC_k(\hy,\bs)\times \Pa)/\CG_{k+1}(\hy)\dashrightarrow \Xi^{\pi}(\hy,\bs), [\fc]\mapsto [\gr(\fc)],
\end{equation}
on the ``non-degenerate locus" of the quotient space. To state the axioms that characterize the grading function $\gr$, consider a relative \spinc cobordism 
\[
(\hx, \bs_X): (\hy_1, \bs_1)\to (\hy_1,\bs_2).
\]
We defined the moduli space $\M_k(\fa_1,\CX,\fa_2)$ in Section \ref{Sec22}, when $\fa_i$ is a critical point of $\CSd_{\omega_i, Y_i}$ for $i=1,2$. However, if we are interested only in the linear theory, one may take $\fa_1$ and $\fa_2$ to be any configurations. Pick a reference configuration $\gamma$ on $\CX$ satisfying conditions \eqref{E22.3}. Then the linearized operator:
\begin{align}\label{E23.8}
\CQ(\fc_1, \bs_X, \fc_2)&\colonequals	(\bd_\gamma^*, \D_\gamma\F_{\CX,\p}): L^2_1(\CX,  iT^*\CX\oplus S^+)\to L^2(\CX, i\R\oplus i\Lambda^+ \CX\oplus S^-) \\
\text{ with } \p&=(\q_1,\q_2,0,0)\nonumber
\end{align}
is Fredholm, by Proposition \ref{P22.1}, provided that $\fc_i=(\fa_i,\q_i)$ is non-degenerate for $i=1,2$. Any such choices of $\gamma$ will provide the same operator $\CQ(\fc_1, \bs_X, \fc_2)$ up to compact terms, so the underlying path $\gamma$ is omitted from our notations. 

Now we are ready to state the axioms that characterize the grading function $\gr$. 

\begin{enumerate}[label=(A-I)]
\item\label{Axiom1} (Index Axiom) The Fredholm index of $\CQ(\fc_1, \bs_X, \fc_2)$ equals the relative Euler number:
\[
e(S^+; \Psi_1,\Psi_*/|\Psi_*|, \Psi_2)[X,\partial X]\in \Z. 
\]
where $\Psi_i$ is a unit-length relative spinor  on $\hy_i$ representing $\gr(\fc_i)$. Since $\Psi_1, \Psi_*/|\Psi_*|$ and $\Psi_2$ form a unit-length spinor of $S^+$ on the boundary 
\[
\partial X=(-Y_1)\cup [-1,1]_t\times \Sigma\cup Y_2, 
\]
the relative Euler class 
$e(S^+; \Psi_1,\Psi_*/|\Psi_*|, \Psi_2)\in H^4(X,\partial X; \Z)$ of this spinor is well-defined.
\end{enumerate}

\begin{enumerate}[label=(A-II)]
	\item\label{Axiom2} (Normalization Axiom) Suppose $\fa=(B,\Psi)\in \SC_k(\hy,\bs)$ is a configuration such that 
	\begin{enumerate}[label=(V\arabic*)]
\item\label{V1} $\Psi$ is nowhere vanishing;
\item\label{V2} $\Psi\equiv \Psi_*$ on $[0,+\infty)_s\times\Sigma$, where $\Psi_*$ is the standard spinor on $\R_s\times\Sigma$;
\item\label{V3}  for any $\tau\geq 1$, define the rescaled configuration $\fa(\tau)\colonequals (B,\tau\Psi)$; then the extended Hessian $\EHess_{\fa(\tau)}$ at $\fa(\tau)$ is always invertible for any $\tau\geq 1$.
	\end{enumerate}

We define that
	\[
	\gr(\fc)=[\Psi /|\Psi|]\in \pi_0(\Xi(\hy,\bs)) \text{ if } \fc=(\fa,0).
	\]
	Note that $\fa(\tau)$ lies in a different configuration space obtained by rescaling the boundary date $(\lambda,\mu)$. 
\end{enumerate}

\begin{enumerate}[label=(A-III)]
\item\label{Axiom3} (Equivariance Axiom) The grading function 
\[
\gr: \SC_k(\hy,\bs)\times \Pa\dashrightarrow \pi_0(\Xi(\hy,\bs))
\]
is equivariant under the action of $\CG_{k+1}(\hy)$ meaning that 
\[
\gr(u\cdot \fa, \q)=[u]\cdot \gr(\fa, \q)
\]
for any non-generate pair $(\fa,\q)$ and $u\in \CG_{k+1}(\hy)$. 
\end{enumerate}

The Index Axiom \ref{Axiom1} can not determine the grading function $\gr$ completely. On the other hand, the Equivariance Axiom \ref{Axiom3} is redundant, since it follows from \ref{Axiom1}\ref{Axiom2}. It is added to justify the quotient map $\gr^{\pi}$ in \eqref{E23.3}. Here is the main result of this section:
\begin{proposition}\label{P23.5} There exists a unique grading function \[
	\gr:\SC_k(\hy,\bs)\times\Pa\dashrightarrow \Xi(\hy,\bs)
	\]
	satisfying axioms \ref{Axiom1}\ref{Axiom2}\ref{Axiom3}.
\end{proposition}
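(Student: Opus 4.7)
\medskip

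\textbf{Proof proposal.} The plan is to first use the Index Axiom \ref{Axiom1} to pin down uniqueness, then construct $\gr$ by deforming any non-degenerate pair to a ``large-spinor'' model configuration where the Normalization Axiom \ref{Axiom2} directly applies.

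For uniqueness, suppose two grading functions $\gr$ and $\gr'$ both satisfy the three axioms. Given any non-degenerate pair $\fc=(\fa,\q)$ and any fixed non-degenerate reference pair $\fc_0=(\fa_0,\q_0)$, apply \ref{Axiom1} to the product relative \spinc cobordism $(\hx,\bs_X)=(I\times\hy,\pi^*\bs)$. The identity
\[
\ind\CQ(\fc,\bs_X,\fc_0)=e(S^+;\Psi,\Psi_*/|\Psi_*|,\Psi_0)[X,\partial X]
\]
must hold with $\Psi$ representing $\gr(\fc)$ or $\gr'(\fc)$ and $\Psi_0$ representing $\gr(\fc_0)$ or $\gr'(\fc_0)$. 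Since the relative Euler number of a triple of unit-length spinors determines the relative homotopy class of any one of them given the other two (up to the $H^1(Y,\partial Y;\Z)$-action coming from varying the class $\bs_X\in\Spincr(X;\bs,\bs)$, which is exactly the gauge action in \ref{Axiom3}), the difference $\gr(\fc)-\gr(\fc_0)$ in $\pi_0(\Xi(\hy,\bs))/H^1(Y,\partial Y;\Z)$ is forced. Hence any two grading functions satisfying all three axioms that agree on a single non-degenerate pair must be equal; \ref{Axiom2} provides the basepoint.

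For existence, I would first verify that configurations of Normalization type are plentiful. Fix a smooth $\fa=(B,\Psi)$ with $\Psi$ nowhere vanishing and equal to $\Psi_*$ along $[0,\infty)_s\times\Sigma$; I claim that for all sufficiently large $\tau$, the rescaled pair $\fc(\tau)=(\fa(\tau),0)$ with $\fa(\tau)=(B,\tau\Psi)$ has invertible extended Hessian. Indeed, writing $\EHess_{\fa(\tau)}$ in the decomposition $L^2\oplus iT^*\hy\oplus S$, the cross terms are independent of $\tau$ while the diagonal spinor block acquires a potential of size $\tau^2|\Psi|^2$, which is uniformly positive on $Y$ by \ref{V1} and agrees with the model $\EHess$ from Proposition \ref{P18.1} on the cylindrical end by \ref{V2}; standard perturbation arguments (akin to the large-$\tau$ limit in \cite[Prop.~28.1.3]{Bible}) yield invertibility for $\tau\gg 1$. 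By a small further perturbation of $B$ one arranges \ref{V3} for all $\tau\geq 1$. The grading at such $\fc(\tau)$ is then defined to be $[\Psi/|\Psi|]$ by \ref{Axiom2}. For a general non-degenerate $\fc=(\fa,\q)$, choose a path $\fc(t)=(\fa(t),\q(t))$, $t\in[0,1]$, with $\fc(0)=\fc$ and $\fc(1)$ of Normalization type; generically this path meets the degenerate locus in finitely many transverse spectral crossings. Define $\gr(\fc)$ by
\[
\gr(\fc)\ :=\ [\Psi_{\fc(1)}/|\Psi_{\fc(1)}|]\cdot\lambda,
\]
where $\lambda\in\pi_0(\Xi(\hy,\bs))$ is the correction class uniquely determined by the Index Axiom applied to the cylinder $\hx=I\times\hy$ with reference configurations $\fc$ and $\fc(1)$; equivalently, by Theorem \ref{T16.2}, $\lambda$ is recorded by the spectral flow of $\EHess_{\fc(t)}$.

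Well-definedness under change of path is the content of Theorem \ref{T16.1}: spectral flow is homotopy-invariant with fixed endpoints in $\SA(\R,W,H)$, and the catenation axiom delivers additivity of grading corrections under concatenation of deformations. Independence of the choice of Normalization endpoint follows because any two configurations satisfying \ref{V1}-\ref{V3} can be connected by a path through large-spinor configurations, along which the grading computed by the Index Axiom exactly tracks $[\Psi/|\Psi|]$ (the homotopy class of unit-length spinors being locally constant along such a path). Axioms \ref{Axiom1} and \ref{Axiom3} are then inherited from the additivity of the Fredholm index under catenation (Theorem \ref{T16.1} in the linear setting, combined with Lemma \ref{L19.14} for the gauge-action case).

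The main obstacle is verifying condition \ref{V3} uniformly in $\tau\geq 1$ together with constructing generic deformation paths with only transverse spectral crossings: this requires a careful parametric transversality argument for the family of operators $\EHess_{\fc(t,\tau)}$ on the non-compact manifold $\hy$, exploiting the fact that, by Proposition \ref{P18.1}, essential spectrum stays away from $0$ uniformly in $\tau$, so crossings occur only in the compact (discrete) part of the spectrum and can be arranged to be simple and transverse by a small perturbation of the path in $\Pa$, much as in Theorem \ref{T21.2}.
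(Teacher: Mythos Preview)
Your approach differs from the paper's in a substantive way, and there is a gap at the step where you claim that condition \ref{V3} can be arranged ``by a small further perturbation of $B$.''

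The paper does \emph{not} construct $\gr$ by deforming within a fixed $(\hy,\bs,g_Y)$ to a Normalization-type endpoint. Instead it fixes \emph{one} reference manifold $(\hy_0,\bs_0)$ equipped with a special metric on which a pair satisfying \ref{V1}--\ref{V3} is guaranteed to exist (this is Lemma~\ref{L23.4}), sets $\gr(\fc_0)$ there by \ref{Axiom2}, and then \emph{defines} $\gr(\fc_1)$ on any other $(\hy_1,\bs_1)$ via the Index Axiom applied to an arbitrary relative $\spinc$ cobordism $(\hx,\bs_X):(\hy_0,\bs_0)\to(\hy_1,\bs_1)$. Well-definedness (independence of the cobordism) and Axiom~\ref{Axiom2} on the nose both reduce to Lemma~\ref{L23.6}, which is proved by the large-$\tau$ invertibility argument of \cite{KM97}; Axiom~\ref{Axiom3} follows as in your outline.

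Your route would require a Normalization-type configuration on \emph{each} $(\hy,\bs,g_Y)$, not just on one reference. But \ref{V3} is a one-parameter non-degeneracy condition along the entire ray $\tau\in[1,\infty)$, and generically a one-parameter family $\tau\mapsto\EHess_{\fa(\tau)}$ has isolated spectral crossings that cannot be removed by perturbing $B$: if the spectral flow from $\tau=1$ to $\tau\to\infty$ is nonzero, crossings are forced; even when it vanishes, the path $\tau\mapsto(B,\tau\Psi)$ is rigidly prescribed by rescaling and you only get to move its starting point. The paper's Lemma~\ref{L23.4} confirms this difficulty: to obtain \ref{V3} it has to \emph{change the metric} (rescale the compact piece $Y$ and insert a long neck), not merely perturb $B$. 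So as written, your construction of the endpoint $\fc(1)$ is not justified, and the spectral-flow definition of $\gr(\fc)$ collapses. Your consistency check between different Normalization endpoints is also essentially the content of Lemma~\ref{L23.6} and would need that argument spelled out.
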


The proof of Proposition \ref{P23.5} will dominate the rest of this subsection. It relies on two additional lemmas. On the one hand, we have to show the desired configurations in the Normalization Axiom \ref{Axiom2} exist at least for some special metrics on $Y$. 
\begin{lemma}\label{L23.4} For any $3$-manifold $Y$ with $\partial Y\cong \Sigma$, there exists some cylindrical metric $g_Y$ and a configuration $\fa\in \SC_k(\hy,\bs)$ that satisfies all constraints in Axiom \ref{Axiom2}.
\end{lemma}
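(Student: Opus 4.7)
The plan is to construct a triple $(g_Y,B,\Psi)$ on $\hy$ satisfying (V1)–(V3). Conditions (V1)–(V2) are topological and straightforward; the uniform invertibility (V3) is the real analytic content of the lemma.

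\emph{Step 1 (topological construction).} Fix any cylindrical metric $g_Y$ on $\hy$ and any cylindrical \spinc connection $B$ equal to $B_*$ on the end. The unit sphere bundle of the spin bundle $S\to \hy$ has fiber $S^3$. The successive obstructions to extending the nowhere-vanishing boundary section $\Psi_*|_{\{0\}\times\Sigma}$ to a nowhere-vanishing section on $Y$ relative to $\partial Y$ lie in $H^{i+1}(Y,\partial Y;\pi_i(S^3))$, which vanish for $i\le 2$ (since $\pi_i(S^3)=0$) and for $i=3$ by dimension. Hence a nowhere-vanishing extension $\Psi$ of $\Psi_*$ exists; set $\fa=(B,\Psi)$.

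\emph{Step 2 (coercivity for large $\tau$).} Writing the extended Hessian in the block form of Proposition \ref{P18.1} and substituting $\fa(\tau)=(B,\tau\Psi)$, the linearized gauge map is $\bd_{\fa(\tau)}(f)=(-df,f\tau\Psi)$, giving $\bd_{\fa(\tau)}^*\bd_{\fa(\tau)}=\Delta+\tau^2|\Psi|^2$. Since $\Psi$ is nowhere vanishing on the compact core and $|\Psi_*|^2$ is positive-constant on the end, $|\Psi|^2\ge c_0>0$ uniformly, so the scalar block is bounded below by $\tau^2 c_0$. Combining with the Weitzenb\"ock identity for $D_B$ (cf.\ the energy identity in Proposition \ref{Energy10.2}) and the coupling $\tau\Psi$ in the form/spinor block, one expects an estimate
\[
\|\EHess_{\fa(\tau)}V\|_{L^2}^2 \;\ge\; (c\tau^2-C)\,\|V\|_{L^2}^2
\]
with $c,C$ independent of $\tau$. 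Hence $\EHess_{\fa(\tau)}$ is invertible for every $\tau\ge \tau_0$ once $\tau_0$ is chosen large.

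\emph{Step 3 (genericity on $[1,\tau_0]$).} On the compact interval the map $\tau\mapsto \EHess_{\fa(\tau)}$ is a real-analytic family of self-adjoint Fredholm operators of index zero, so
\[
\Lambda(B,\Psi)\;=\;\bigl\{\tau\in [1,\tau_0] : \ker\EHess_{\fa(\tau)}\neq 0\bigr\}
\]
is finite. Using the separation of tangent vectors by cylinder functions (Theorem \ref{T15.17}) exactly as in the proof of Theorem \ref{T21.2}, a generic small compactly-supported perturbation of $B$ by an imaginary $1$-form $b$ shifts each of the finitely many zero eigenvalues off the origin; taking $b$ small enough preserves (V1), (V2) and the coercivity of Step 2 on $[\tau_0,\infty)$, so that $\Lambda(B+b,\Psi)=\emptyset$.

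\emph{Main obstacle.} The principal subtlety is that $\fa(\tau)$ does not lie in $\SC_k(\hy,\bs)$ itself but in the rescaled configuration space with boundary data obtained by replacing $(r_+,r_-)$ with $(\tau r_+,\tau r_-)$ in the formulae of Section \ref{Sec5}; the boundary pair $(\lambda,\mu)$ must be rescaled to preserve the relations $r_+r_-=1$ and $\tfrac{i}{2}(|r_+|^2-|r_-|^2|\lambda|^2)=-*_\Sigma\mu$. Consequently, the essential-spectrum gap furnished by Proposition \ref{P18.1} depends on $\tau$ through the rescaled boundary data, and one must verify that it remains bounded away from $0$ uniformly in $\tau\in[1,\infty)$, so that the Fredholm framework and the coercivity estimate of Step 2 are genuinely $\tau$-uniform. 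This should follow from a homogeneity analysis of the boundary Dirac-type operator on $\Sigma$, but it is the step that requires the most care.
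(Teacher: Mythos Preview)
Your Steps 1 and 2 are sound; Step 2 (invertibility of $\EHess_{\fa(\tau)}$ for $\tau\gg 1$ when $\Psi$ is nowhere vanishing) is exactly what the paper also uses, citing \cite[Lemma~3.11]{KM97}. The gap is Step 3. A small perturbation of $B$ cannot in general make the one-parameter family $\tau\mapsto\EHess_{(B+b,\tau\Psi)}$ invertible on all of $[1,\tau_0]$: the obstruction is the spectral flow. If the original self-adjoint family has nonzero spectral flow on $[1,\tau_0]$, then every nearby path (in particular every path obtained by perturbing $b$) still has an eigenvalue crossing zero --- the crossing merely moves to a nearby value of $\tau$. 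Your appeal to Theorem~\ref{T21.2} does not help here: that result achieves non-degeneracy at isolated configurations by perturbing the functional, not simultaneous invertibility along an entire one-parameter family. You give no argument that the spectral flow vanishes, and there is no a priori reason it should for an arbitrary metric.

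The paper sidesteps this completely by exploiting the freedom you discarded in Step 1: the metric $g_Y$ is part of what the lemma allows you to choose. The key observation is that on a closed $3$-manifold, rescaling the metric to $\tau^2 g_Y$ is equivalent, from the point of view of the Seiberg--Witten equations, to rescaling the spinor by $\tau$. Hence if one takes $\tau_0$ large enough that Step~2 applies and replaces $Y$ by $(Y,\tau_0^2 g_Y)$, then on this new metric the configuration $\fa$ already sits in the coercive regime, and scaling by any $\tau\ge 1$ corresponds to $\tau\tau_0\ge\tau_0$ on the old metric, where invertibility is guaranteed. In the non-compact case one rescales only the compact piece and inserts a long interpolating neck $[0,R(\tau_0)]\times\Sigma$ to match the fixed cylindrical end; \cite[Lemma~3.11]{KM97} is again what makes this work. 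So the choice of metric is precisely the device that makes $(V3)$ hold from $\tau=1$ onward, with no genericity argument needed. The essential-spectrum issue you flag as the ``main obstacle'' is a legitimate technical point, but it is secondary to the structural gap in Step~3.
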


On the other hand, we have to show that Axioms \ref{Axiom1} and \ref{Axiom2} are consistent.

\begin{lemma}\label{L23.6} For any relative \spinc cobordism $(\hx, \bs_X): (\hy_1, \bs_1)\to (\hy_1,\bs_2),$ suppose non-generate pairs $\fc_i=(\fa_i,0), i=1,2$ are given as in \ref{Axiom2}, then 
	\[
	\Ind \CQ(\fc_1, \bs_X, \fc_2)=e(S^+; \frac{\Psi_1}{|\Psi_1|}, \frac{\Psi_*}{|\Psi_*|}, \frac{\Psi_2}{|\Psi_2|})[X,\partial X],
	\]
	where $\Psi_i\in \Gamma(\hy_i,S)$ is the spinor component of $\fa_i\in \SC_k(\hy_i,\bs_i)$. 
\end{lemma}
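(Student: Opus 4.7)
\medskip

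\noindent\textbf{Proof proposal.}
The plan is to exploit the rescaling provision \ref{V3} of Axiom \ref{Axiom2} to deform $\fa_i$ into the large-spinor regime, in which the linearized operator decouples into a ``longitudinal'' block that becomes invertible and a ``transverse'' block whose index is identified with the relative Euler number via excision. For each $\tau\geq 1$ set $\fa_i(\tau)=(B_i,\tau\Psi_i)$ and $\fc_i(\tau)=(\fa_i(\tau),0)$. Property \ref{V3} guarantees that each $\EHess_{\fa_i(\tau)}$ is invertible, so the family of operators $\{\CQ(\fc_1(\tau),\bs_X,\fc_2(\tau))\}_{\tau\geq 1}$ is a norm-continuous family of Fredholm operators (the cylindrical-end analysis of Section \ref{Sec22} adapts verbatim once one replaces the critical points there by any non-degenerate configurations, using Proposition \ref{P19.12} in place of admissibility). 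Hence $\Ind\CQ$ is constant in $\tau$; on the other hand $\tau\Psi_i/|\tau\Psi_i|=\Psi_i/|\Psi_i|$, so the right-hand side of the claimed identity is $\tau$-invariant as well.

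Next I would analyze the asymptotic structure as $\tau\to\infty$. Using the pointwise orthogonal decomposition of $S\to \hy_i$ into the real line $\R\langle\Psi_i/|\Psi_i|\rangle$ and its orthogonal complement (a real rank-3 bundle containing the complex subbundle $(\C\Psi_i)^\perp$), together with the splitting $iT^*\hy_i\oplus i\R$ on the form side, the extended Hessian $\EHess_{\fa_i(\tau)}$ takes the schematic block form
\begin{equation*}
\EHess_{\fa_i(\tau)}=
\begin{pmatrix}
\tau^2 M_i+R_i^{(0)} & \tau R_i^{(1)}\\[0.3ex]
\tau R_i^{(1)*} & D_i^{\perp}+R_i^{(2)}
\end{pmatrix},
\end{equation*}
where $M_i$ is pointwise positive (essentially $|\Psi_i/|\Psi_i||^2=1$ in the longitudinal block), $D_i^{\perp}$ is a self-adjoint Dirac-type operator on the complex line bundle $(\C\Psi_i)^\perp\subset S$, and the $R_i^{(\bullet)}$ are bounded $\tau$-independent error terms. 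After the standard rescaling of the longitudinal variables by $\tau^{-1}$, the family converges in gap topology to a block-diagonal invertible model, whose second block is a first-order elliptic operator on $(\C\Psi_i)^\perp$. The same decomposition is carried out on $\CX$ for the four-dimensional operator $\CQ(\fc_1(\tau),\bs_X,\fc_2(\tau))$: the longitudinal block has trivial kernel and cokernel for $\tau\gg 1$ (its principal symbol is pointwise invertible and its boundary spectral projections limit to those of an invertible operator), so in the limit the Fredholm index localizes to a twisted Dirac-type operator $\widetilde D_X$ acting on a complex line bundle $L_{\bs_X}\subset S^+$ inside $\CX$, with boundary values controlled by $(\C\Psi_i/|\Psi_i|)^\perp$ and $(\C\Psi_*/|\Psi_*|)^\perp$.

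The final step is to identify $\Ind\widetilde D_X$ with the relative Euler number via the excision principle developed in Appendix \ref{AppD}. Two unit spinors in $S^+$ that are nowhere parallel determine a trivialization of the complementary line bundle up to homotopy; applied to the boundary sections $\Psi_1/|\Psi_1|$, $\Psi_*/|\Psi_*|$ and $\Psi_2/|\Psi_2|$ this gives a compactly supported class whose pairing with $X$ is $e(S^+;\Psi_1/|\Psi_1|,\Psi_*/|\Psi_*|,\Psi_2/|\Psi_2|)[X,\partial X]$. Gluing $\CX$ to a reference cobordism on which the analogous limit operator has a priori computable index (for instance the product cobordism in Example \ref{E9.2}, where both sides manifestly vanish), the excision isomorphism converts the index difference into the relative Euler number of the obvious line bundle.

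The main technical obstacle is making the $\tau\to\infty$ decoupling argument rigorous on the non-compact cylindrical end, where the extended Hessian has nontrivial essential spectrum and where the spectral projections of Section \ref{Sec19} enter the boundary value problem. Here I would invoke the $k$-commensurability of spectral projections (Proposition \ref{P19.6}) applied uniformly in $\tau$ to the family $\EHess_{\fa_i(\tau)}$, together with the uniform gap estimate furnished by Proposition \ref{P18.1} applied on the reference end $[0,\infty)_s\times\Sigma$ (where $\fa_i=\fa_\ast$ exactly and the whole analysis is explicit). Once this uniform control is in place, the longitudinal/transverse splitting extends all the way to infinity, the limit operator is Fredholm on $\CX$, and the excision identification goes through.
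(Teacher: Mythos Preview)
Your proposal contains the right two ingredients---the rescaling $\Phi\mapsto\tau\Phi$ and an excision argument---but they are applied in the wrong order, and this creates a genuine gap.

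The block decomposition $S^+=\R\langle\Phi/|\Phi|\rangle\oplus(\R\langle\Phi/|\Phi|\rangle)^\perp$ on $\CX$ that you use to define the ``longitudinal'' and ``transverse'' pieces of $\CQ(\tau)$ only makes sense where $\Phi$ is non-vanishing. On the ends $\hy_i$ and on the planar end this is fine by \ref{V1}\ref{V2}, but over the compact piece $X$ a generic extension of the boundary spinor $(\Psi_1/|\Psi_1|,\Psi_*/|\Psi_*|,\Psi_2/|\Psi_2|)$ has isolated zeros, and their signed count is \emph{exactly} the relative Euler number $e(S^+;\cdots)[X,\partial X]$. So your splitting fails precisely on the locus that carries all of the index, and the assertion that ``the Fredholm index localizes to a twisted Dirac-type operator $\widetilde D_X$ acting on a complex line bundle $L_{\bs_X}\subset S^+$'' is not well-posed: there is no such global sub-line-bundle unless $e=0$. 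Your final excision paragraph does not repair this, because you are still comparing two operators for which the intermediate ``transverse index'' has not been defined.

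The paper's argument reverses the order. First, for a \emph{closed} $4$-manifold the identity $\Ind=e(S^+)$ is just Atiyah--Singer together with \cite[Lemma 28.2.3]{Bible}. Excision with a closed piece then shows that both sides of the claimed identity change by the same amount when $\bs_X$ is altered, so it suffices to treat the single case $e(S^+;\Psi_1/|\Psi_1|,\Psi_*/|\Psi_*|,\Psi_2/|\Psi_2|)[X,\partial X]=0$. In that case one \emph{can} choose a reference configuration $(A,\Phi)$ on $\CX$ with $\Phi$ nowhere vanishing and equal to $(A_*,\Phi_*)$ on the planar end, and now the rescaling argument of \cite[Lemma 3.11, Corollary 3.12]{KM97} applies verbatim to show that $\CQ(\tau)$ is \emph{invertible} (not merely block-triangular) for $\tau\gg1$, hence $\Ind\CQ(1)=0$. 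No transverse Dirac operator is ever needed.
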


\begin{proof}[Proof of Lemma \ref{L23.6}] This lemma is in the spirit of \cite[Theorem 3.3]{KM97} and we follow the argument therein. When $X_1$ is a closed Riemannian 4-manifold, the index formula:
	\[
	\dim \M(X_1,\bs_{X_1})=e(S^+)[X_1]
	\]
	is a consequence of the Atiyah-Singer Index Theorem and \cite[Lemma 28.2.3]{Bible}. Using the excision principle, this allows us to reduce Lemma \ref{L23.6} to the special case when 
	\[
	e(S^+; \Psi_1,\Psi_*,\Psi_2)[X,\partial X]=0.
	\]
	At this point, choose a reference configuration $\gamma=(A,\Phi)$ on $\CX$ such that the spinor $\Phi$ is non-vanishing everywhere, and 
	\[
	\gamma|_{\HH^2_+\times\Sigma}=(A_*,\Phi_*) 
	\]
	is the standard configuration on the planar end. By rescaling the spinor $\Phi$, we define
	\[
	\gamma(\tau)\colonequals (A,\tau\Phi). 
	\]
	which lies a different configuration space on $\CX$. As the pair $\fc_i(\tau)\colonequals (\fa_i(\tau),\q_i=0), i=1,2$ are non-degenerate for any $\tau\geq 1 $ by assumption \ref{V3}, the linearized operator at $\gamma(\tau)$ gives rise to a continuous family of Fredholm operators:
	\[
	\CQ(\tau)\colonequals \CQ(\fc_1(\tau), \bs_X, \fc_2(\tau)).
	\]
	The proof of \cite[Lemma 3.11 \& Corollary 3.12]{KM97} is valid here, as $\q_i=0, i=1,2$. As a result, $\CQ(\tau)$ is invertible when $\tau\gg 1$; so
	\[
	\Ind \CQ(1)=\lim_{\tau\to\infty}\Ind\CQ(\tau) =0. \qedhere
	\] 
\end{proof}

\begin{proof}[Proof of Lemma \ref{L23.4}] Following the proof of Lemma \ref{L23.6}, one can easily show the extended Hessian $\EHess_{\fa(\tau)}$ is invertible when $\tau\gg1$ for any fixed configuration $\fa=(B,\Psi)$ satisfying properties \ref{V1} and \ref{V2}, but we have to pick a good metric on $\hy$ so that this range is $[1,+\infty)$.
	
If $Y_1$ is a closed 3-manifold, one may instead rescale the metric:
\[
Y_1(\tau)=(Y_1,\tau^2 g_{Y_1}). 
\]
and regard $\fa$ as a configuration on the pull-back \spinc structure on $Y(\tau)$. The Seiberg-Witten theory does not tell the difference between:
\[
(Y(\tau), \fa) \text{ and } (Y, \fa(\tau)),
\]
so for $\tau_0\gg 1$, $(Y(\tau_0), \fa)$ satisfies constraints \ref{V1}\ref{V3} in Axiom \ref{Axiom2}. 

In our case, instead of rescaling the whole manifold 
\[
\hy=Y\cup [0,\infty)_s\times \Sigma,
\]
we rescale the compact region $Y$ and insert a long cylinder:
\[
\hy(\tau)\colonequals Y(\tau)\cup [0, R(\tau)]_s\times\Sigma\cup  [0,\infty)_s\times \Sigma. 
\]
The metric of $[0,R(\tau)]_s\times \Sigma$ interpolates the metrics $\tau^2g_\Sigma$ and $g_\Sigma$ at boundary. We make this interpolation mild enough by taking $R(\tau)\gg 1$. The extension of $\fa$ over the cylinder $[0,R(\tau)]_s\times \Sigma$:
\[
(B',\Psi')
\]
must interpolate $(B_*,\Psi_*)$ at boundary in a mild way. One may use the oriented relative 2-plane field $\xi_*$ and construct the spinor $\Psi'$ using Lemma \ref{L18.1}. Now \cite[Lemma 3.11]{KM97} applies, and all constraints in \ref{Axiom2} are satisfied by 
\[
(\hy(\tau_0),\tilde{\fa})
\]
when $\tau_0\gg1$, where $\tilde{\fa}$ is the extension of $\fa$ on $\hy(\tau_0)$.
\end{proof}

\begin{proof}[Proof of Proposition \ref{P23.5}] The proof is modeled on that of \cite[Subsection 28.2]{Bible} which can now proceed with no difficulties. We first deal with the existence of $\gr$ and divide the proof in six steps. 
	
\medskip
	
\Step 1. Construction. Fix a reference relative \spinc 3-manifold $(\hy_0, \bs_0)$. Let $\fc_0=(\fa_0,0)$ be a non-generate pair constructed by Lemma \ref{L23.4}, then the value $\gr(\fc)$ is determined by \ref{Axiom2}.  Take $\Psi_0$ as a unit-length relative spinor on $\hy_0$ that represents $\gr(\fc)$.

 By \cite[Proposition 28.1.2]{Bible}, any two relative \spinc manifolds $(\hy_0,\bs_0)$ and $(\hy_1, \bs_1)$ admit a relative \spinc cobordism $(\hx, \bs_X)$
 \begin{equation}\label{E18.3}
 (\hx, \bs_X):(\hy_0, \bs_0)\to (\hy_1, \bs_1)
 \end{equation}
 
  The Index Axiom \ref{Axiom1} then determines a unique homotopy class $[\Psi_1]$ of unit-length relative spinors on $\hy_1$ such that 
\[
\Ind \CQ(\fc_0, \bs_X,\fc_1)=e(S^+; \Psi_0,\Psi_*/|\Psi_*|, \Psi_1)[X,\partial X]. 
\]
As noted in Remark \ref{R9.2}, an isomorphism 
\[
\varphi_1: (\hx,\bs_X)|_{\hy_i}\cong (\hy_1,\bs_1)
\]
is always encoded in a relative \spinc cobordism. Define  $\gr(\fc_1)\colonequals (\varphi_1)_*[\Psi_1]\in \pi_0(\Xi(\hy_1,\bs_1))$.

\medskip

\Step 2. $\gr$ is well-defined. Suppose there is another relative \spinc cobordism
\begin{equation}\label{E23.2}
(\hx_1, \bs_{X_1}): (\hy_0,\bs_0)\to (\hy_1,\bs_1),
\end{equation} 
then we reverse the orientation of $(\hx_1, \bs_{X_1})$ and form the composition:
\[
(\hx, \bs_X)\#_{(\hy_1,\bs_1)}((-\hx_1), \bs_{-X_1}): (\hy_0,\bs_0)\to(\hy_0,\bs_0). 
\]
By Lemma \ref{L23.6} and the additivity of Fredholm indices and relative Euler classes, the values of $\gr(\fc_1)$ defined using either \eqref{E18.3} or \eqref{E23.2} are equal. 

\medskip

\Step 3. Axiom \ref{Axiom1} holds for $\gr$. The proof is similar to \Step 2. Instead of \eqref{E23.2}, given any \spinc cobordism $(\hx_2, \bs_{X_2}): (\hy_1, \bs_1)\to (\hy_1,\bs_2)$, we take the pre-composition with \eqref{E18.3}:
\[
(\hx, \bs_X)\#_{(\hy_1,\bs_1)}(\hx_2, \bs_{X_2}): (\hy_0,\bs_0)\to(\hy_2,\bs_2). 
\]
The rest of the argument is unchanged.

\Step 4. Axiom \ref{Axiom2} holds for $\gr$. This is by Lemma \ref{L23.6}. 

\Step 5. Uniqueness. This is clear from \Step 1. 

\Step 6. Axiom \ref{Axiom3}. There are two ways to proceed. In  \Step 1, one may change the isomorphism $\phi_1$ by an automorphism of $(\hy_1,\bs_1)$, i.e a gauge transformation $u\in \CG_{k+1}(\hy)$. As a result, the grading function $\gr$ is gauge equivariant. 

In the second approach, we verify the following fact: for the product manifold $X=[-1,1]_t\times Y$ and $\CX=\R_t\times \hy$, 
\begin{equation}\label{E23.9}
\Ind \CQ(\fc, \bs,u\cdot \fc)=e(S^+; \Psi,\Psi_*, u\cdot \Psi)[X,\partial X]. 
\end{equation}
for any non-generate pair $\fc$ and any gauge transformation $u\in \CG_{k+1}(\hy)$ such that $u\equiv 1$ on $[0,\infty)_s\times \Sigma$. Here $\Psi$ is a relative spinor on $\hy$ representing $\gr(\fc)$. The identity \eqref{E23.9} now follows  from Lemma \ref{L19.14}. 
\end{proof}

\subsection{Canonical Mod 2 Gradings}\label{Subsec23.2} Now we focus a single relative \spinc 3-manifold $(\hy,\bs)$. In order to define the Euler characteristic of the monopole Floer homology
\[
\chi(\HM_*(\hy,\bs))
\]
we need a mod 2 reduction of the canonical grading $\gr^{\pi}$. For each non-generate pair $\fc=(\fa,\q)$, in the sense of Definition \ref{D23.3}, we will assign a number 
\begin{equation}\label{E23.4}
\grt(\fc)\in \Z/2\Z,
\end{equation}
characterized by the following axioms:
\begin{enumerate}[label=(B-\Roman*)]
\item\label{B-I} (Reduction Axiom) Let $(\hx,\bs_X)=[-1,1]_t\times (\hy,\bs)$ be  the product \spinc manifold. For any $\fc_1,\fc_2$ non-generate, we have
\[
\grt(\fc_1)-\grt(\fc_2)=\ind \CQ(\fc_1, \bs_X, \fc_2)\mod 2,
\]
\item\label{B-II} (Invariance Axiom) The mod 2 grading function 
\[
\grt: \SC_k(\hy,\bs)\times \Pa\dashrightarrow \Z/2\Z
\]
is invariant under the action of $\CG_{k+1}(\hy)$.
\end{enumerate}

Again, the Invariance Axiom \ref{B-II} is redundant, as it follows from \ref{B-I}. One may fix the value $\grt(\fc_1)$ for one particular pair $\fc_1$ and decide the other value $\grt(\fc_2)$ using the Reduction Axiom \ref{B-I}, so such a mod 2 grading function $\grt$ clearly exists. It is not unique, as the value of $\grt(\fc_1)$ is arbitrary. 

\medskip

This ambiguity is fixed \textbf{simultaneously} for all relative \spinc structures $\bs\in \Spincr(Y)$, once \textbf{a homological orientation} of $(Y,\partial Y)$ is chosen, as explained in \cite{MT96}, which is also reminiscent of the case of 4-manifolds as treated in \cite[Subsection 24.8]{Bible}. Since this story has been standard nowadays, we only give a brief sketch here. 

One may alternatively think of $\grt(\fc)$ as an orientation of the extended Hessian 
\[
\EHess_{\fc}.
\]
As $\fc$ is non-generate, an orientation of this invertible operator $\EHess_{\fc}$ is equivalent to a choice of signs in $\{\pm 1\}$. However, this standpoint allows us to extend the domain of $\grt$ to the whose space $\SC_k(\hy,\bs)\times \Pa$. Indeed, $\{\EHess_\fc\}$ forms a continuous family of Fredholm operators, and as such gives rise to a determinant line bundle over the base:
\[
\begin{tikzcd}
\R\cong \det \EHess_\fc\arrow[r] & L\arrow[d]\\
& \SC_k(\hy,\bs)\times \Pa. 
\end{tikzcd}
\]
The real line bundle $L$ is trivial as $\SC_k(\hy,\bs)\times \Pa$ is contractible. To orient $L$, it suffices to orient one particular fiber $L_\fc$; we choose the one at $\fc=(\fa,0)$ such that $\fa$ agrees with the standard configuration:
\[
(B_*,\Psi_*)
\]
on the cylindrical end $[0,\infty)_s\times \Sigma$. As explained in the proof of Proposition \ref{P18.1}, the extended Hessian $\EHess_\fa$ in this case is cast into the form
\[
\sigma(\ps+\widehat{D}_{\kappa_*})
\]
on the cylindrical end $[0,\infty)_s\times\Sigma$, where 
\begin{equation}\label{E23.7}
\widehat{D}_{\kappa_*}: L^2_1(\Sigma, i\R\oplus i\R\oplus T^*\Sigma\oplus S)\to L^2(\Sigma, i\R\oplus i\R\oplus T^*\Sigma\oplus S)
\end{equation}
is an invertible self-adjoint elliptic operator. For the precise expression, see \cite[Subsection 7.4]{Wang202}. Let $H^\pm$ be the $(\pm)$-spectral subspaces of $\hatD$. Instead of $\EHess_\fa$, we consider the operator with a spectral boundary projection:
\begin{equation}\label{E23.5}
\EHess_\fa\oplus \Pi^{-}\circ r: L^2_k(Y, i\R\oplus iT^*Y\oplus S)\to L^2_{k-1}(Y, i\R\oplus iT^*Y\oplus S)\oplus (H^-\cap L^2_{k-1/2}). 
\end{equation}
on the truncated 3-manifold $Y=\{s\leq 0\}$. At this point, we can further deform $\fa$ so that $\Psi\equiv 0$, in which case
\[
\EHess_\fa=\begin{pmatrix}
0 & -d & 0\\
-d^* & *d & 0\\
0 & 0 & D_{B_0}
\end{pmatrix} \text{ on } Y
\]
for a reference \spinc connection $B_0$, and 
\[
\EHess_\fa=\sigma(\ps+\hat{D}_0) \text{ with }D_0=\begin{pmatrix}
D_{\form} & 0\\
0 & D_{\cB_*}^\Sigma
\end{pmatrix}
\]
in the collar $(-1,0]_s\times \Sigma$. Here 
\[
D_{\form}=\begin{pmatrix}
0 & 0 & -*_\Sigma d_\Sigma\\
0 & 0& -d_\Sigma^*\\
*_\Sigma d_\Sigma & -d_\Sigma & 0
\end{pmatrix}
: L^2_1(\Sigma, i\R\oplus i\R\oplus iT^*\Sigma)\to  L^2(\Sigma, i\R\oplus i\R\oplus iT^*\Sigma)
\]
is a self-adjoint operator with kernel $H^0(\Sigma, i\R)\oplus H^0(\Sigma,i\R)\oplus H^1(\Sigma, i\R)$ and 
\[
D_{\cB_*}^\Sigma: L^2_1(\Sigma, S)\to  L^2(\Sigma, S)
\]
is the Dirac operator on the surface, which is complex linear. Consider the projection map
\[
\Pi_{\form}=\Pi_1\oplus \Pi_{\form}^-: L^2(\Sigma)\to H^1(\Sigma, i\R)\oplus H^-_{\form}.
\]
where $\Pi_{\form}$ is the projection map onto the negative spectral subspace of $D_{\form}$ and $\Pi_1$ is the projection onto $H^1(\Sigma, i\R)\subset \ker D_{\form}$. 
\begin{lemma} The kernel and the cokernel of the operator:
	\begin{equation}\label{E23.6}
	\begin{pmatrix}
0 & -d \\
-d^* & *d 
\end{pmatrix}\oplus (\Pi_{\form}\circ r): L^2_1(\R\oplus iT^*Y)\to L^2(\R\oplus iT^*Y)\oplus H^1(\Sigma, i\R)\oplus H^-_{\form}.
	\end{equation}
	are isomorphic to $H^0(Y;i\R)\oplus H^1(Y,\partial Y; \R)$ and $H^0(Y,\partial Y; \R)\oplus H^1(Y ;i\R)$ respectively. In particular, an orientation of \eqref{E23.6} is equivalent to a homological orientation of $(Y,\partial Y)$. 
\end{lemma}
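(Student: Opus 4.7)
\smallskip

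The plan is to analyze this as an Atiyah--Patodi--Singer type boundary value problem on the compact manifold with boundary $(Y, \Sigma)$, using Hodge theory for manifolds with boundary to identify the kernel and cokernel with the stated cohomology groups. I would first verify the Fredholm property: near the boundary, $A := \begin{pmatrix} 0 & -d \\ -d^* & *d\end{pmatrix}$ takes the cylindrical form $\sigma(\partial_s + D_{\form})$, and the projection $\Pi_{\form} = \Pi_1 \oplus \Pi_{\form}^-$ has kernel equal to $(H^0(\Sigma) \oplus H^0(\Sigma)) \oplus H^+_{\form}$, which is a complement to $H^1(\Sigma, i\R) \oplus H^-_{\form}$. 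Thus Proposition \ref{P19.11} (or an analogue on a compact piece, using the doubling/cylinder model) applies and \eqref{E23.6} is Fredholm. A direct computation of the principal symbol and its square shows $A^2 = \diag(\Delta, \Delta)$ up to lower-order boundary terms, so any element $(f,a) \in \ker$ has both $f$ and $a$ harmonic, and in addition $d^*a = 0$ and $df = *da$.

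Next I would compute the kernel. Decompose $a|_\Sigma = a_n\, ds + a_\Sigma$ where $a_n$ is the normal component and $a_\Sigma$ is the tangential one. The condition $\Pi_1 r(f,a) = 0$ forces the harmonic (with respect to $\Sigma$) part of $a_\Sigma$ to vanish, while $\Pi_{\form}^- r(f,a) = 0$ combined with the harmonicity of $(f,a)$ and the intertwining $A = \sigma(\partial_s + D_{\form})$ propagates the boundary condition inward and forces the strictly negative spectral part of $r(f,a)$ to vanish; the positive spectral part is killed automatically by the $L^2_1$ condition near $\Sigma$ combined with harmonicity (via the usual APS argument of expanding in eigensections and observing that the outward-growing modes must have zero coefficient). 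What remains after these boundary conditions lies in the $(H^0 \oplus H^0)(\Sigma)$ part of $\ker D_{\form}$. Comparing with the Hodge--Morrey--Friedrichs decomposition on $(Y, \Sigma)$, this is exactly the data of a harmonic 1-form with vanishing tangential part (Dirichlet condition) together with a constant function, giving $\ker \cong H^0(Y; i\R) \oplus H^1(Y, \partial Y; \R)$. Concretely: $(c, 0)$ with $c$ constant contributes $H^0(Y; i\R)$, and $(0, \alpha)$ with $\alpha$ harmonic and $\alpha_\Sigma|_\Sigma = 0$ represents $H^1(Y, \partial Y; \R)$.

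For the cokernel I would pass to the formal adjoint. Since $A$ is formally self-adjoint, the adjoint boundary problem has the complementary projection $\Pi_{\form}^c = \Id - \Pi_{\form}$ (i.e. $\Pi_0 \oplus \Pi_0 \oplus \Pi_{\form}^+$, projecting onto the $H^0 \oplus H^0$ part of the kernel together with the positive spectrum). Running the same argument in this dual setting yields harmonic sections with Neumann-type boundary conditions — $f$ with $f|_\Sigma = 0$ (giving $H^0(Y, \partial Y; \R)$) and $a$ harmonic with $d^*a = 0$ and vanishing normal component (giving $H^1(Y; i\R)$). This produces the claimed identification $\coker \cong H^0(Y, \partial Y; \R) \oplus H^1(Y; i\R)$.

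Finally, the orientation statement is essentially a matter of linear algebra once the (co)kernel identification is in hand. An orientation of the Fredholm operator \eqref{E23.6} is an orientation of the determinant line
\[
\det(\ker) \otimes \det(\coker)^* \;\cong\; \det H^0(Y;\R) \otimes \det H^1(Y, \partial Y; \R) \otimes \det H^0(Y, \partial Y; \R)^* \otimes \det H^1(Y; \R)^*,
\]
which by Poincaré--Lefschetz duality is canonically isomorphic to the determinant line of the total real cohomology of $(Y, \partial Y)$; an orientation of this line is precisely a homological orientation of $(Y, \partial Y)$ in the sense of Meng--Taubes. The main obstacle will be step two: pinning down exactly how the composite projection $\Pi_{\form} = \Pi_1 \oplus \Pi_{\form}^-$ — which treats $\ker D_{\form}$ asymmetrically by keeping $H^1(\Sigma)$ on the ``negative'' side and the two copies of $H^0(\Sigma)$ on the ``positive'' side — translates into Dirichlet (rather than Neumann or mixed) boundary conditions for the tangential part of $a$ and free conditions for $f$. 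This requires a careful tangential/normal decomposition at $\Sigma$ and matching with the Hodge--Morrey--Friedrichs theorem; everything else is formal once this identification is in place.
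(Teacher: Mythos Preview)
The paper does not actually prove this lemma---it is stated without proof, as is common for such Hodge-theoretic identifications---so your proposal is being evaluated on its own merits. Your overall strategy (cast the problem as an APS boundary-value problem and appeal to Hodge--Morrey--Friedrichs) is the right one, and you correctly flag the crux as translating $\Pi_{\form} = \Pi_1 \oplus \Pi_{\form}^-$ into concrete boundary conditions. But two steps in the middle do not go through as written.

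First, the sentence ``the positive spectral part is killed automatically by the $L^2_1$ condition near $\Sigma$'' is misplaced: the domain is $L^2_1$ on the \emph{compact} manifold $Y$, so there is no decay condition at $\Sigma$ and no ``outward-growing modes'' argument. The boundary condition $\Pi_{\form}^- r = 0$ itself kills the negative spectral part; the positive part is simply unconstrained.

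Second, and more substantively, the concrete kernel elements you propose do not all satisfy the boundary condition. For $(0,\alpha)$ with $\alpha$ a Dirichlet harmonic $1$-form ($d\alpha = d^*\alpha = 0$, $i^*\alpha = 0$), the boundary value is $(0,\alpha_n|_\Sigma,0)$. Writing $\alpha_n|_\Sigma = c + g$ with $g$ of mean zero, one checks from the explicit form of $D_{\form}$ that $(0,g,0)$ splits evenly between $H^+_{\form}$ and $H^-_{\form}$; hence $\Pi_{\form}^-(0,\alpha_n,0) \neq 0$ unless $\alpha_n|_\Sigma$ is locally constant. But $d\alpha = 0$ and $d^*\alpha = 0$ together with $\alpha_\Sigma|_\Sigma = 0$ only force $\partial_s\alpha_n|_\Sigma = 0$, not $d_\Sigma\alpha_n|_\Sigma = 0$. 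So Dirichlet harmonic representatives on $Y$ are not literally the kernel elements.

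A clean fix is to pass to $\hy$: the kernel of \eqref{E23.6} is identified (via the usual APS extension) with bounded solutions of $A(f,a) = 0$ on $\hy$ whose limiting value at infinity lies in $H^0(\Sigma)\oplus H^0(\Sigma) \subset \ker D_{\form}$ (the $H^1(\Sigma)$-component of the limit must vanish). One then identifies this space with $H^0(Y;i\R)\oplus H^1(Y,\partial Y;\R)$ by the long exact sequence of the pair, or equivalently by conjugating $A$ via $a \mapsto *a$ into $(d+d^*):\Omega^{\mathrm{even}}(Y)\to\Omega^{\mathrm{odd}}(Y)$ and quoting the standard APS computation for the de Rham operator. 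The cokernel follows dually. Your final orientation paragraph is fine once these identifications are in place.
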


Finally, to relate the operator \eqref{E23.5} with \eqref{E23.6}, we have to deform the boundary projection $\Pi^-$ in \eqref{E23.5}. Notice that the operator $\hatD$ in \eqref{E23.7} relies on the standard spinor $\Psi_*$. The deformation is then made by taking 
\[
\Psi_*\mapsto \tau \Psi_*, \tau\to 0. 
\]
In the limit, $\hatD$ will recover $\hat{D}_0$, which is no longer invertible. At this point, one has to examine the deformation of spectral projections very carefully, which is independent of relative \spinc structures. In this way, an orientation of \eqref{E23.6} gives rise to an orientation of $L$.

\section{Floer Homology with $\Z$-coefficient}\label{Sec29} Let $\NR$ be the Novikov ring of Laurent series with integral coefficients
\[
\NR=\{ \sum_{n_i} a_iq^{n_i}:\ a_i\in \Z,\ n_i\in \R,\ \lim_{i}n_i=-\infty\}.
\]
To define the monopole Floer homology over $\NR$, we have to orient moduli spaces in a consistent way. Since the space $\SC_k(\hy,\bs)$ does not contain any reducible configurations, the strategy used in \cite[Section 20]{Bible} does not work directly here. Moreover, our cobordism maps are induced from oriented 4-manifold with corners. It is not crystal clear what is meant to be a homology orientation in this case.

We will address this problem using an analytic approach. The main result of this section is Theorem \ref{T24.2}, which leads to the replacement of homology orientations in Definition \ref{D24.3}. The proof of Theorem \ref{T24.2} relies on the notion of relative orientations that compares the determinant line bundles of two Fredholm operators in the excision principle. We will develop the relevant theory in Appendix \ref{AppD} and accomplish the proof of Theorem \ref{T24.2} in Subsection \ref{SubsecD.10}.  The construction of the functor $$\HM_*: \SCob_{s,b}\to \NR\Mod$$ is explained in Subsection \ref{Subsec24.4}.

\subsection{Determinant Line Bundles and Direct Sums} To start, let us recall the basic theory of determinant line bundles of Fredholm operators from \cite[Section 20.2]{Bible}. Given two real Hilbert spaces $E$ and $F$,  consider a continuous family of Fredholm operators 
\[
\A_z: E\to F,\ z\in \CZ,
\]
parametrized by a topological space $\CZ$. \textbf{The determinant line bundle} of this family is a real line bundle over $\CZ$
\[
\det \A\to \CZ
\]
such that the fiber $\det \A_z$ at each $z\in \CZ$ is identified with 
\[
\Lambda^{\max}\ker \A_z\otimes( \Lambda^{\max}\coker \A_z)^*.
\]
When the determinant line bundle $\det\A\to \CZ$ is orientable, denote the 2-element set of orientations by 
\[
\Lambda(\A) \text{ or }\Lambda(\det \A). 
\]
\begin{example}\label{EX24.1} Let $\A_*: E\to F$ be a reference Fredholm operator and $\CZ$ be the space of all compact operators:
	\[
	\CZ=\{z: E\to F: z \text{ compact}\}.
	\]
	Then the family $\{\A_z=\A_*+z: z\in \CZ\}$ is parametrized by a contractible space $\CZ$. An orientation of $\A_*$ is meant to be an orientation of this contractible family. Denote the 2-element set of orientations by
	\[
	\Lambda(\A_*) \text{ or } \Lambda(\det \A_*). \qedhere
	\]
\end{example}

Given two families of operators $\A'\to \CZ$ and $\A''\to \CZ$ parametrized by the same space, we form a new family by taking the point-wise direct sum of Fredholm operators
\[
\A_z=\A_z'\oplus\A''_z: E'\oplus E''\to F'\oplus F''. 
\]
 Then there is a natural isomorphism of real line bundles constructed in \cite[P.379]{Bible}:
\begin{equation}\label{E24.3}
q: \det \A'\otimes \det\A''\to \det \A. 
\end{equation}
Suppose $\alpha_z'$ and $\alpha_z''$ are elements in $\Lambda^{\max} \ker \A'_z$ and $\Lambda^{\max} \ker \A''_z$ respectively, while $\beta_z'$ and $\beta_z''$ are corresponding elements in $\Lambda^{\max} \coker \A'_z$ and $\Lambda^{\max} \coker \A''_z$. Then the bundle map $q$ is locally defined (up to a positive scalar) by the formula:
\begin{align*}
\big(\alpha'_z\otimes (\beta'_z)^*\big)\otimes \big(\alpha''_z\otimes (\beta''_z)^*\big)&\mapsto (-1)^r (\alpha'_z\wedge \alpha''_z)\otimes (\beta'_z\wedge\beta''_z)^* \text{ where }\\
r&=\dim\coker \A'_z \times \ind(\A''_z).
\end{align*}

The sign $(-1)^r$ is added here to ensure that the bundle map $q$ is continuous as the base point $z$ varies in $\CZ$. Moreover, the bundle map $q$ becomes associative when we consider the direct sum of three families of operators parametrized by the same space $\CZ$.

\medspace

For any 2-element set $\Lambda$, let $\Z/2\Z$ act on $\Lambda$ by involutions. For any $\Lambda_1$ and $\Lambda_2$ with $\Z/2\Z$ action, we form their product set 
\[
\Lambda_1\Lambda_2\colonequals \Lambda_1\times_{\Z/2\Z}\Lambda_2. 
\]

As a result, by passing to the 2-element sets of orientations, the bundle map $q$ descends to an associative multiplication, denoted also by $q$: 
\[
q: \Lambda(\A')\times \Lambda(\A'')\to \Lambda(\A'\oplus \A''),
\]
or an isomorphism preserving the $\Z/2\Z$-action:
\[
q: \Lambda(\A')\Lambda(\A'')\xrightarrow{\cong}\Lambda(\A'\oplus \A'').
\] 

\subsection{Homology Orientations} Having discussed the abstract properties of determinant line bundles, let us explain now the primary application in gauge theory. Given a morphism $\x: (\y_1,\bs_1)\to (\y_2,\bs_2)$ in the strict cobordism category $\SCob_s$, consider non-degenerate pairs (in the sense of Definition \ref{D23.3})
\[
\fc_i=(\fa_i,\q_i)\in \SC_k(\hy_i,\bs_i)\times\Pa(Y_i), i=1,2. 
\]
By looking at the linearized Seiberg-Witten map and the linearized gauge fixing equation on the complete Riemannian 4-manifold $\CX$, we obtained in $\eqref{E23.8}$ a Fredholm operator $\CQ(\fc_1, \bs_X,\fc_2)$ for any relative \spinc cobordism $(\hx,\bs_X): (\hy_1,\bs_1)\to (\hy_2,\bs_2)$. Define 
\[
\Lambda(\fc_1, \bs_X, \fc_2)\colonequals \Lambda(\CQ(\fc_1, \bs_X,\fc_2))
\]
for any non-degenerate pairs $\fc_1,\fc_2$ and any $\bs_X\in \Spincr(X;\bs_1,\bs_2)$. The 2-element set $\Lambda(\fc_1, \bs_X, \fc_2)$ is understood in the sense of Example \ref{EX24.1}. Since the different choices of the reference configuration $\gamma$ will give rise to the same operator $\CQ(\fc_1, \bs_X,\fc_2)$ up to compact terms, $\Lambda(\fc_1, \bs_X, \fc_2)$ is independent of the choice of $\gamma$.

Our goal is to identify these 2-element sets $\Lambda(\fc_1, \bs_X, \fc_2)$ in a canonical way for all relative \spinc cobordisms $\bs_X\in \Spincr(X;\bs_1,\bs_2)$. As a result, if the orientation is fixed for one particular $\bs_X$, then it automatically fixes the choice for any other relative \spinc cobordisms.

Recall that $\Spincr(X;\bs_1,\bs_2)$ is a torsor over $H^2(X, \partial X;\Z)$. 
\begin{theorem}\label{T24.2} For any isomorphism classe of relative line bundles $[L]\in H^2(X,\partial X; \Z)$, there exists a natural bijection
	\[
	e_L: \Lambda(\fc_1, \bs_X,\fc_2)\to \Lambda(\fc_1, \bs_X\otimes L,\fc_2),
	\]
	for any $\bs_X\in \Spincr(X;\bs_1,\bs_2)$	satisfying the following two properties:
	\begin{enumerate}[label=(U\arabic*)]
		\item\label{U1} $e_{L_1}\circ e_{L_2}=e_{L_1\otimes L_2}$; 
		\item\label{U2} the collection $\{e_L\}$ is compatible with the concatenation map $q$ meaning that the diagram 
		\begin{equation}\label{E24.5}
		\begin{tikzcd}
		\Lambda(\fc_1, \bs_{12},\fc_2) \Lambda(\fc_2, \bs_{23},\fc_3)\arrow[r,"q"]\arrow[d,"e_{L_{12}}\otimes e_{L_{23}}"] & \Lambda(\fc_1, \bs_{13},\fc_3)\arrow[d,"e_{L_{13}}"]\\
		\Lambda(\fc_1, \bs_{12}\otimes L_{12},\fc_2) \Lambda(\fc_2, \bs_{23}\otimes L_{23},\fc_3)\arrow[r,"q"] & \Lambda(\fc_1, \bs_{13}\otimes L_{13},\fc_3)
		\end{tikzcd}
		\end{equation}
		is commutative for any relative \spinc cobordisms:
		\begin{align*}
		(\hx_{12}, \bs_{12}): (Y_1,\bs_1)&\to (Y_2,\bs_2), \\
		(\hx_{23}, \bs_{23}): (Y_2,\bs_3)&\to (Y_3,\bs_3).
		\end{align*}
		Here $(\hx_{13},\bs_{13})=(\hx_{12}\#\hx_{23}, \bs_{12}\#\bs_{23})$ is the concatenation of relative cobordisms and $L_{13}=L_{12}\#L_{23}$ is the concatenation of relative line bundles. 
	\end{enumerate}
\end{theorem}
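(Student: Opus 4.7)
The plan is to construct $e_L$ via an excision argument that localizes the difference between $\bs_X$ and $\bs_X\otimes L$ into a small ball, then exploits the complex linearity of the resulting model operator. First, represent $[L]\in H^2(X,\partial X;\Z)$ by a smooth Hermitian line bundle $L\to X$ with unitary connection $\nabla_L$ trivialized outside a small compact ball $B\subset X^\circ$ (possible since $L$ has a section on $\partial X$). Tensoring $\bs_X$ with $L$ alters the Dirac part of $\CQ(\fc_1,\bs_X,\fc_2)$ only inside $B$ and leaves the gauge-fixing and self-dual parts unchanged; outside $B$ the two operators are identical. Doubling $B$ gives a closed model manifold $\Sigma_B\cong S^4$, over which $L$ extends trivially to a bundle $L_B$, and we form the corresponding linearized Dirac-type operators $D_{\Sigma_B,0}^+$ and $D_{\Sigma_B,L_B}^+$ on $\Sigma_B$. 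Both of these are complex linear, so their kernels and cokernels carry canonical complex structures, and hence their determinant lines carry canonical complex orientations $\omega_0$ and $\omega_L$.

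Second, invoke the excision principle from Appendix \ref{AppD} (i.e.\ the notion of relative orientation of Definition \ref{DD.2}) to obtain a natural isomorphism
\[
\det\CQ(\fc_1,\bs_X\otimes L,\fc_2)\otimes \det D_{\Sigma_B,0}^+\;\cong\;\det\CQ(\fc_1,\bs_X,\fc_2)\otimes \det D_{\Sigma_B,L_B}^+.
\]
Then define $e_L:\Lambda(\fc_1,\bs_X,\fc_2)\to\Lambda(\fc_1,\bs_X\otimes L,\fc_2)$ as the map of two-element sets determined by the requirement that this excision isomorphism sends $e_L(o)\otimes\omega_0$ to $o\otimes\omega_L$. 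The map $e_L$ is independent of the choices of $B$, of $\nabla_L$, and of the representative $L$ in its isomorphism class: changing any of these produces a continuous path in the relevant space of Fredholm operators (compactly supported for the connection, and an isotopy of embedded balls for $B$), under which the determinant line bundles are oriented consistently.

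Third, verify property \ref{U1}. Given $L_1,L_2$, pick disjoint balls $B_1,B_2\subset X^\circ$ supporting representatives of $L_1$ and $L_2$. Iterated excision along $B_1$ and then $B_2$ yields
\[
\det\CQ(\bs_X\otimes L_1\otimes L_2)\otimes\det D^+_{\Sigma_{B_1},0}\otimes\det D^+_{\Sigma_{B_2},0}\;\cong\;\det\CQ(\bs_X)\otimes\det D^+_{\Sigma_{B_1},L_1}\otimes\det D^+_{\Sigma_{B_2},L_2},
\]
while a single excision along $B_1\sqcup B_2$, applied to the bundle $L_1\otimes L_2$ (supported in $B_1\cup B_2$), gives the same identification after matching the canonical complex orientations factor by factor; this forces $e_{L_1}\circ e_{L_2}=e_{L_1\otimes L_2}$. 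For property \ref{U2}, represent $L_{12}$ and $L_{23}$ by bundles supported in balls $B_{12}\subset\hx_{12}^\circ$ and $B_{23}\subset\hx_{23}^\circ$ disjoint from a collar neighborhood of the gluing hypersurface $\hy_2$. Then the concatenation map $q$, which is built from the Mayer--Vietoris direct-sum isomorphism \eqref{E24.3} relative to $\hy_2$, commutes with the two excisions because the excisions take place in regions disjoint from the interface. This yields the commutativity of diagram \eqref{E24.5}.

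The main obstacle I expect is Step 3/Step 4: reconciling the sign conventions. The direct-sum map $q$ carries the universal sign $(-1)^{\dim\coker\cdot\ind}$, and the relative orientation produced by excision introduces its own signs. One must verify that the iterated excision in \ref{U1} and the simultaneous excision-plus-concatenation in \ref{U2} produce matching signs. The decisive point is that the model operators $D^+_{\Sigma_B,L_B}$ are complex linear, so their indices are even; this makes the parity-dependent sign factors in $q$ disappear when combined with the canonical complex orientations $\omega_L$, and yields the desired associativity and concatenation compatibility.
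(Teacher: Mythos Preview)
Your proposal has a genuine gap at the very first step: you cannot localize a nontrivial relative line bundle in a $4$-ball. For $B\cong D^4$ one has $H^2(D^4,S^3;\Z)=0$, so any line bundle trivialized outside $B$ is globally trivial. The justification ``possible since $L$ has a section on $\partial X$'' does not help: a trivialization on $\partial X$ extends over $X\setminus B$ only when the image of $c_1(L)$ in $H^2(X\setminus B,\partial X;\Z)\cong H^2(X,\partial X;\Z)$ vanishes, which fails for $[L]\neq 0$. In dimension~$4$ the zero locus of a generic section of $L$ is a surface, not a point, so the support of the twist cannot be shrunk to a ball. Consequently the excision to the doubled model $\Sigma_B\cong S^4$ never gets off the ground, and the arguments for \ref{U1} and \ref{U2} (which rely on placing the support in disjoint balls away from the gluing hypersurface) collapse with it.

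The paper's proof (Subsection~\ref{SubsecD.10}) takes a different cut: it excises along the full boundary $Y=\partial X$ rather than along a small ball. The compact piece $X$ carries the family $\SL=\{L_{\bs_X}\}$, and the noncompact complement is traded, via the relative-orientation machinery of Proposition~\ref{PD.9}, for a compact filling $X_3$ so that $X\# X_3$ is closed. On a closed manifold the linearized operator at a \emph{reducible} configuration decouples as $(d^*\oplus d^+)\oplus D_A^+$; the first summand is independent of $\bs_X$ and the second is complex linear, which yields the bijection $e_L$. Property~\ref{U1} then comes from Axiom~\ref{C-IV}, and \ref{U2} from Proposition~\ref{PD.5} together with the evenness of $\SL$ (the final Lemma of that subsection). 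Your instinct that complex linearity and even index are the decisive points is correct; what you are missing is the right hypersurface along which to cut.
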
 

\begin{remark}The proof of Theorem \ref{T24.2} is constructive: we will construct each $e_L$ explicitly and verify properties \ref{U1}\ref{U2} by hands. The key ingredient is the notion of relative orientations, which allows us to reduce the problem from a non-compact manifold $\CX$ to a closed 4-manifold. In the latter case, we know how to construct $e_L$, since the Dirac operator and the self-dual operator are now decoupled. The relevant theory is developed in Appendix \ref{AppD}. The proof of Theorem \ref{T24.2} will be accomplished in Subsection \ref{SubsecD.10}. 
\end{remark}

The horizontal maps $q$ in the diagram \eqref{E24.5} require some further explanations. Take non-degenerate pairs $\fc_i$ on $\hy_i$ for $1\leq i\leq 3$. Instead of $\CQ$, we look at operators on $\hx_{ij}$ with spectral projections:
\begin{equation}\label{E24.7}
\CQ'(\fc_i,\bs_{ij}, \fc_j)\colonequals D_{ij}\oplus (\Pi^+_{\A_i}, \Pi^-_{\A_j})\circ (r_i, r_j), 1\leq i<j\leq 3, 
\end{equation}
understood in the sense of Proposition \ref{P19.7} and Subsection \ref{Subsec19.4} adapted to the case of general cobordisms. In particular, $\Pi^\pm_{\A_i}$ are spectral projections of the extended Hessians at $\fc_i$: 
\[
\EHess_{\fc_i}: L^2_k(\hy_i, i\R\oplus iT^*Y_i\oplus S)\to L^2_k(\hy_i, i\R\oplus iT^*Y_i\oplus S),\ 1\leq i\leq 3.
\]
The 2-element set $\Lambda(\fc_1,\bs_{12},\fc_2)$ can be defined using $\CQ'(\fc_i,\bs_{ij}, \fc_j)$ instead.  As explained in \cite[P. 384]{Bible}, there is a canonical bundle isomorphism defined using the map $\eqref{E24.3}$,
\begin{equation}\label{E24.4}
q: \det \CQ'(\fc_1,\bs_{12},\fc_2)\otimes \det \CQ'(\fc_2,\bs_{23},\fc_3)\to \det \CQ'(\fc_1,\bs_{13},\fc_3).
\end{equation}
which descends to an associative multiplication:
\[
q: \Lambda(\fc_1, \bs_{12},\fc_2) \Lambda(\fc_2, \bs_{23},\fc_3)\to  \Lambda(\fc_1, \bs_{13},\fc_3).
\]

Our construction of homology orientations is based upon Theorem \ref{T24.2}.
\begin{definition}\label{D24.3} Following the notations in Theorem \ref{T24.2}, for any triple $(\fc_1,\x,\fc_2)$, define the 2-element set of \textbf{homology orientations} as the quotient space
	\[
	\Lambda(\fc_1, \x, \fc_2)\colonequals \coprod_{\bs_X\in\Spincr(X;\bs_1,\bs_2)} \Lambda(\fc_1, \bs_X,\fc_2)\big/\{ e_L \}_{[L]\in H^2(X,\partial X; \Z)},
	\]
	where $\x: \y_1\to \y_2$ is any morphism in $\Cob_s$ and for $i=1,2$, $\fc_i\in \SC_k(\hy_i,\bs_i)\times \Pa(Y_i)$ is a non-degenerate pair. By the property \ref{U2} in Theorem \ref{T24.2}, the concatenation map $q$ descends to an associative multiplication: 
	\[
	q:\Lambda(\fc_1, \x_{12},\fc_2) \Lambda(\fc_2, \x_{23},\fc_3)\to  \Lambda(\fc_1, \x_{13},\fc_3). \qedhere
	\]
\end{definition}

\begin{remark} If we replace $\CX$ by a closed Riemannian 4-manifold $X_1$, the construction above will recover the original definition of homology orientations of $X_1$, i.e. orientations of the real line
	\[
	\Lambda^{\max}H^2_+(X_1,\R)\otimes( \Lambda^{\max} H^1(X_1, \R))^*.
	\]
	Here $H^2_+(X_1,\R)$ is any maximal positive subspace of $H^2(X_1, \R)$ with respect to the intersection form. 
\end{remark}

Now let us specialize to the case when $X=[-1,1]\times Y$ is a product cobordism and $\bs_1=\bs_2=\bs$. This is relevant for orienting moduli spaces on the cylinder $\R_t\times \hy$. The non-degenerate pairs $\fc_1,\fc_2$ now lie in the same space:
\[
 \SC_{k}(\hy,\bs)\times \Pa(Y).
\]

\begin{definition} Let $I=[-1,1]$. Define the 2-element set $\Lambda([\fc_1],[\fc_2])$ to be the homology orientations of $(\fc_1,I\times \y ,\fc_2)$ in the sense of Definition \ref{D24.3}, where $[\fc_i]$ denotes the class in the quotient configuration space $\CB_k(\hy,\bs)\times\Pa(Y)$. More concretely, $\Lambda([\fc_1],[\fc_2])$ is realized as the quotient space
	\[
	\coprod_{[L]\in H^2(I\times Y, \partial (I\times Y); \Z)} \Lambda(\fc_1, \bs\otimes L,\fc_2)\big/\{ e_L \}.\qedhere
	\]
\end{definition}

	When $\fc_1=\fc_2\in \SC_{k}(\hy,\bs)\times \Pa$, there is a canonical element $v(\fc_1)$ in $\Lambda([\fc_1],[\fc_1])$ induced from 
\[
1\in \Lambda(\CQ(\fc_1, \R_t\times(\hy,\bs), \fc_1)).
\] 
In this case, we choose an $\R_t$-invariant configuration $\gamma$ on $\R_t\times \hy$ to define the operator $\CQ(\fc_1,I\times (\hy,\bs),\fc_1)$. Because $\fc_1$ is non-degenerate, $\CQ$ is invertible. The canonical element $1$ denotes the positive orientation of this invertible operator.  
\begin{remark} Here we have identified the homotopy classes of paths $\pi_1(\CB_{k}(\hy,\bs); [\fc_1],[\fc_2])$ with the space of relative \spinc cobordisms $\Spincr(I\times Y; \bs,\bs)$, following the ideas in Subsection \ref{Subsec2.4}. When $\fc=(\fa,\q)$ is a critical point of the perturbed Chern-Simons-Dirac functional $\CSd_{\omega}$, the canonical element $v(\fc)$ orients automatically the moduli space $\cM_{z}(\fc,\fc)$ in \eqref{E19.13} for any $z\in \pi_1(\CB_{k}(\hy,\bs); [\fa])$. Moreover, this orientation is compatible with concatenation of paths by the associativity of the concatenation map $q$.
\end{remark}

\subsection{Floer Homology with $\Z$-coefficient} \label{Subsec24.4} Having defined homology orientations on cylinders and general cobordisms, let us now explain the construction of $\HM_*(\y,\bs)$ using the integral coefficient. In the most general case, we have to use a Novikov ring defined over $\Z$:
\[
\NR=\{ \sum_{n_i} a_iq^{n_i}:\ a_i\in \Z,\ n_i\in \R,\ \lim_{i}n_i=-\infty\}.
\]
To work with $\Z$ directly, we have to assume the monotonicity condition in Definition \ref{D27.3} for the object $(\y,\bs)$ and pass to a sub-category of $\SCob_s$.

To better illustrate our construction below, we focus on the first case. Only formal adaptations are actually needed for the second case. At this point, we have to enlarge the strict cobordism category $\SCob_s$ slightly to incorporate a base point for each object. 
\begin{definition}\label{D24.8} An object of \textit{the based strict cobordism category} $\SCob_{s,b}$ is a triple $(\y,\bs, \fc_*)$ where  $(\y,\bs)$ is an object of $\SCob_s$ and $\fc_*=(\fa_*,\q)\in \SC(\hy,\bs)\times \Pa(Y)$ is a non-degenerate pair. We require that the tame perturbation $\q=\grad f$ is the one encoded in the object $\y\in\Cob_s$ for the relative \spinc structure $\bs$. A morphism of $\SCob_{s,b}$ is a pair
	\begin{equation}\label{E24.6}
(\x,o): (\y_1,\bs_1,\fc_{*,1})\to  (\y_1,\bs_1,\fc_{*,2})
	\end{equation}
	where $\x: \y_1\to \y_2$ is a morphism in $\Cob_s$ and $o\in \Lambda(\fc_{*,1},\x,\fc_{*,2})$ is a choice of homology orientations in the sense of Definition \ref{D24.3}.  
\end{definition} 

The based strict cobordism category $\SCob_{s,b}$ is only a formal enlargement of $\SCob_s$. The base point $\fc_*$ is included here to remove the ambiguity of orientations on the cylinder $\R_t\times\hy$. More precisely, for any object $(\y,\bs,\fc_*)\in \SCob_{s,b}$ and for any critical point $\fa\in \Crit(\CSd_{\omega})$ of  $\CSd_{\omega}=\CL_{\omega}+f$, define 
\[
\Lambda([\fa])\colonequals \Lambda([\fc_*],[(\fa,\q)]),
\]
 and form the chain group
\[
C_*(\y,\bs, \fc_*)=\bigoplus_{[\fa]\in \FC(\y,\bs)} \Z\Lambda([\fa])\otimes_\Z \NR
\]
where $\Z/2\Z$ acts  non-trivially on $\Z$ and we set $\Z\Lambda([\fa])\colonequals \Z\times_{\Z/2\Z} \Lambda([\fa])$.
\begin{remark} For closed 3-manifolds, the role of $\fc_*$ is played by a reducible configuration $\fc_{*}'$ in the blown-up configuration space; see \cite[Section 20.3]{Bible}. In that case, the choice of $\fc_{*}'$ does not matter, since there is a canonical element in 
	\[
	\Lambda([\fc_*'],[\fc_*''])
	\]
	when $\fc_{*}'$ and $\fc_{*}''$ are both reducible. However, this property does not hold in our case.
\end{remark}
In the formula of the differential $\partial$ below,  we take the sum over all possible triples 
\[
([\fa],[\fb],z)\in \FC(\y,\bs)\times\FC(\y,\bs)\times\pi_1(\CB_k(\y,\bs); [\fa],[\fb])
\]
 such that $\dim \cM_{z}([\fa],[\fb])=0$:
\begin{align}\label{E24.2}
\partial&=\sum_{[\fa]}\sum_{[\fb]}\sum_{z}\sum_{[\gamma]\in \cM_{z}([\fa],[\fb])}\Gamma[\gamma]:C_*(\y,\bs, \fc_*)\to C_*(\y,\bs, \fc_*).
\end{align}
 Since each unparameterized solution $[\gamma]\in \cM_{z}([\fa],[\fb])$ is a point, the positive orientation of $\gamma$ defines an element $v([\gamma])$ in $\Lambda([(\fa,\q)],[(\fb,\q)])$. Combining with the concatenation map $q$, this provides a homomorphism of abelian groups:
 \[
 \epsilon[\gamma]=\Id_\Z\ \otimes\ q(\cdot, v[\gamma]):\Z\Lambda([\fa])\to \Z\Lambda([\fb]).
 \]
The $\NR$-module homomorphism $\Gamma[\gamma]$ in \eqref{E24.2} is then defined by taking into account the topological energy $\E_{top}$:
 \begin{align*}
 \Gamma[\gamma]\colonequals  \epsilon[\gamma]\otimes q^{-\E_{top}^\q([\fa],[\fb];z)}&: \Z\Lambda([\fa])\otimes \NR\to \Z\Lambda([\fb])\otimes \NR.
 \end{align*}
 
 The differential $\partial$ on $C_*(\y,\bs, \fc_*)$ is formed by taking the sum of all $ \Gamma[\gamma]$. 

\medskip

 Now we come to define $\HM_*$ for the morphism sets of $\SCob_{s,b}$. For any morphism $(\x,o):(\y_1,\bs_1,\fc_{*,1})\to (\y_2,\bs_2,\fc_{*,2})$ of the based cobordism category $\SCob_{s,b}$, pick a planar metric $g_X$ and an admissible quadruple $\p$ as the perturbation. The chain map is now defined as 
\begin{align}\label{E24.1}
m(\x,o;g_X,\q)=\sum_{[\fa_1]}\sum_{[\fa_2]}\sum_{ \bs_X}\sum_{[\gamma]\in  \M(\fa_1,\bs_X,\fa_2)} \Gamma[o,\gamma]: C_*(\y_1,\bs_1,\fc_{*,1})\to
 C_*(\y_2,\bs_2,\fc_{*,2}),
\end{align}
 where the sum is over all possible triples \[
 ([\fa_1],[\fa_2], \bs_X)\in \FC(\y,\bs_1)\times \FC(\y,\bs_2)\times \Spincr(X,\bs_1,\bs_2),
 \]
  such that $\dim  \M(\fa_1,\bs_X,\fa_2)=0$. Each solution $[\gamma]$ in $\M(\fa_1,\bs_X,\fa_2)$ is a $0$-dimensional manifold, whose positive orientation determines a class $v([\gamma])$ in 
  \[
  \Lambda((\fa_1,\q_1),\x,(\fa_2,\q_2)).
  \]
  
  We obtain a morphism 
    \[
  \epsilon[o,\gamma]:   \Z\Lambda([\fa_1])\to   \Z\Lambda([\fa_2])
  \]
  by chasing around the diagram:
  \[
\begin{tikzcd}[column sep=3cm]
\Lambda(\fc_{*,1},(\fa_1,\q_1))\arrow[d,dashed,"{\epsilon[o,\gamma]}"]\arrow[r,"q(\cdot{,\ v([\gamma])})"]&\Lambda(\fc_{*,1},\x, (\fa_2,\q_2))\arrow[d,equal]\\
\Lambda(\fc_{*,2},(\fa_2,\q_2))\arrow[r,"q(o{,} \cdot)"] & \Lambda(\fc_{*,1},\x, (\fa_2,\q_2))
\end{tikzcd}
\]
Here $o\in \Lambda(\fc_{*,1}, \x,\fc_{*,2})$ is the reference homology orientation that we picked up in the morphism $(\x,o)$. 
  The $\NR$-module homomorphism $\Gamma[o,\gamma]$ in \eqref{E24.1} is defined by the formula
 \begin{align*}
\Gamma[o,\gamma]\colonequals  \epsilon[o,\gamma]\otimes q^{-\E_{top}^\p([\fa_1],[\fa_2];\ \bs_X)}&: \Z\Lambda([\fa_1])\otimes \NR\to \Z\Lambda([\fa_2])\otimes \NR.
\end{align*}

One can verify that each $(C_*(\y,\bs,\fc_*),\partial)$ is indeed a chain complex and $m(\x,o;g_X,\q)$ gives rise to a chain map by following the standard argument in \cite[Section 22]{Bible}. Then the covariant functor 
\[
\HM: \SCob_{s,b}\to \NR\Mod
\]
is obtained by taking the homology groups, and it satisfies the modified composition law in Theorem \ref{1T2}. 
  
\subsection{Invariance} Having constructed the monopole Floer homology $\HM_*(\y,\bs,\fc_*)$, our next step is to discuss the extend to which it is a topological invariant of $(Y,\partial Y)$. The definition of $\y$ involves an orientation preserving diffeomorphism $\varphi: \partial Y\to \Sigma$, a cylindrical metric $g_Y$, a closed 2-form $\omega$ and a collection of admissible perturbations $\{\q\}$, one for each relative \spinc structure $\bs\in \Spincr(Y)$. It turns out that only the boundary data $(g_\Sigma,\lambda,\mu)$, the isotopy class of $\varphi$ and the relative cohomology class $[\omega]_{cpt}\in H^2(Y,\partial Y;[\mu])$ (as defined in \ref{P5}) may potentially affect this group. We have two immediate corollaries of Theorem \ref{1T2}.

\begin{corollary}\label{C24.10} For any object $(\y,\bs,\fc_*)\in \SCob_{s,b}$, the monopole Floer homology group $\HM_*(\y,\bs)$ is independent of the choices of the base point $\fc_{*}$, the cylindrical metric $g_Y$ and the admissible perturbation $\q$ associated to $\bs$, up to canonical isomorphisms. In particular, the isomorphism class of $\HM_*(\y,\bs)$ is not affected if one replaces $\omega$ by $\omega+d c$ for a compactly supported 1-form $c\in \Omega^1_c(\hy,i\R)$. 
\end{corollary}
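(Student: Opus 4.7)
The strategy is to realize each of the four auxiliary choices as the two endpoints of a morphism in $\SCob_{s,b}$ whose underlying cobordism is, up to the auxiliary data being varied, the product $[-1,1]_t\times Y$. Functoriality from Theorem \ref{1T2} then produces maps in both directions, and concatenation with the reverse cobordism gives the identity endomorphism on each object, which by the composition law is chain homotopic to the identity induced by the product morphism $\Id_\y=[-1,1]_t\times\y$. Concretely, for the base point $\fc_*$ one takes $\x=\Id_\y$ with the canonical homology orientation built from $v(\fc_{*,1})$ and $v(\fc_{*,2})$ together with the element $o$ of Definition \ref{D24.8}; for the cylindrical metric one interpolates between $g_Y$ and $g_Y'$ through planar metrics on $X=[-1,1]_t\times Y$ compatible with $g_\Sigma$ on the corners; for the admissible perturbation $\q$ one replaces $\q$ by a time-dependent family $\q_t$ on the product cobordism, noting that the resulting quadruple $\p$ is admissible for generic interpolations by Theorem \ref{T21.5}; and for the replacement $\omega\mapsto \omega+dc$ with $c\in\Omega^1_c(\hy,i\R)$, one uses exactly the construction of Example \ref{Ex9.5} with $b=c|_Y$ (after cutting off outside a compact region, using that $\HM_*(\y,\bs)$ depends only on $\omega|_Y$ modulo compactly supported exact forms on $\hy$).

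In each case the opposite cobordism provides an inverse up to chain homotopy: the two compositions are morphisms from $(\y,\bs,\fc_*)$ to itself whose underlying 4-manifold is the trivial cylinder $[-1,1]_t\times Y$ equipped with either (a) a time-dependent family of metrics/perturbations/forms that interpolates and then retracts, or (b) the constant data. These two morphisms are connected by a one-parameter family of admissible data, and Theorem \ref{1T2} together with the standard chain homotopy constructed from parametrized moduli spaces (obtained by allowing the planar metric $g_X$ and quadruple $\p$ to vary in a one-parameter family, as in the chain-homotopy statement at the end of Section \ref{Sec27}) shows that the composition agrees with the identity map $\HM(\Id_{(\y,\bs,\fc_*)})$. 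It is essential here that the identity morphism acts as the identity on $\HM_*$; this in turn follows from the observation that on $\R_t\times \hy$ with translation-invariant data, the only unparametrized zero-dimensional moduli space of solutions from $\fa$ to itself is the constant one, carrying the canonical orientation $v(\fc_*)$, while all other contributions are killed by $\R_t$-translation.

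The main technical obstacle, and the one requiring some care, is the modified composition law of Theorem \ref{1T2}: because $\HM_*(\y,\bs)$ need not be finitely generated, the composition of the forward and backward cobordism maps is a priori a sum over infinitely many relative \spinc cobordisms $\bs_X\in\Spincr(X;\bs,\bs)$ of the trivial cylinder. Convergence of this sum in $\NR$ is provided by Theorem \ref{1T2}, but to identify the total with the identity one must verify that the sum over $\bs_X\ne\bs_{X,\mathrm{triv}}$ gives a null-homotopic map. This is handled by showing that the parametrized moduli spaces producing the chain homotopy can be constructed simultaneously over all $\bs_X$ via the same compactness input (Theorem \ref{T1.4}) and the energy bound of Proposition \ref{P1.1}, so that the convergent sum of contributions assembles into a well-defined $\NR$-linear chain homotopy between the composition and the identity. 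Once this is established, the resulting isomorphism of $\HM_*(\y,\bs)$ is canonical because any two choices of interpolation are themselves connected by a two-parameter family, yielding a chain homotopy between the two chain homotopies; the usual categorical argument then shows the induced isomorphism on homology is independent of all auxiliary choices.
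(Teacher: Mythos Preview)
Your approach for the first clause matches the paper's: both use the product cobordism $[-1,1]_t\times Y$ between the two objects and invoke functoriality (Theorem~\ref{1T2}) to produce mutually inverse maps. The paper's proof is essentially a one-line appeal to this standard mechanism, while you spell out the homotopy-of-data argument in more detail.

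For the second clause, however, you take a different route than the paper. You invoke Example~\ref{Ex9.5} to build a cobordism with $\omega_X$ interpolating between $\omega$ and $\omega+dc$, and then argue as in the first clause. The paper instead makes the sharper observation that
\[
(B,\Psi)\longmapsto \tfrac{1}{2}\int_{\hy}(B^t-B_0^t)\wedge dc
\]
is itself a tame perturbation (its formal gradient is the constant, compactly supported section $(*_3dc,0)$, so conditions \ref{A1}--\ref{A7} are trivially satisfied). Adding this to $\CL_\omega$ yields $\CL_{\omega+dc}$, so changing $\omega$ by an exact compactly supported form is literally a change of tame perturbation, and the second clause reduces immediately to the first. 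Your cobordism approach is valid but less direct; the paper's observation shows the two functionals have the \emph{same} perturbed theory, not merely isomorphic Floer homologies.

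One remark on your final paragraph: the concern about the modified composition law is somewhat misplaced. To show $m(\x_{21})\circ m(\x_{12})$ is chain homotopic to the identity, you do not need Theorem~\ref{1T2}'s decomposition over intermediate $\bs_2$ at all; you construct the chain homotopy directly from a one-parameter family of auxiliary data on the doubled cylinder, exactly as in the closed case. The sum over $\bs_X\in\Spincr(X;\bs,\bs)$ that appears is the one already built into the definition of $m(\x)$ via \eqref{E27.2}, and its convergence in $\NR$ is handled by Lemma~\ref{L27.2}; there is no separate infinite sum over intermediate spin$^c$ structures to worry about here.
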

\begin{proof} The product cobordism $[-1,1]_t\times Y$ between $(Y, \psi,g_Y, \omega,\{\q\} )$ and $(Y, \psi,g_Y', \omega,\{\q'\})$ provides the canonical isomorphism between their Floer homology groups. For the second clause, one observes that the function
	\[
	(B,\Psi)\mapsto \half\int_{\hy} (B^t-B_0^t)\wedge dc. 
	\]
	defines a tame perturbation on the configuration space; so one may use the first clause to conclude. 
\end{proof}

Recall that $\lambda\in \Omega_h^1(\Sigma, i\R)$ is a harmonic 1-form on $\Sigma=\coprod \T^2_j$ such that $\lambda_j\colonequals \lambda|_{\T^2_j}\neq 0$. Let $[\lambda_j]\in H^1(\T^2_j,i\R)\embed H^1(\Sigma; i\R)$ be the cohomology class of $\lambda_j$.

\begin{corollary}\label{C24.11} For any object $(\y,\bs,\fc_*)\in \SCob_{s,b}$, the isomorphism class of $\HM_*(\y,\bs,\fc_*)$ is not affected if we apply an isotopy to the diffeomorphism $\varphi:\partial Y\to \Sigma$ or change the class $[\omega]_{cpt}\in H^2(Y,\partial Y;[\mu])$ by an element of the form 
	\[
	\delta_*(\sum_{j=1}^n a_j[\lambda_j]),
	\]
	where $a_j\in \R, 1\leq j\leq n$ and $\delta_*: H^1(\Sigma; i\R)\to H^2(Y,\partial Y; i\R)$ is the co-boundary map.
\end{corollary}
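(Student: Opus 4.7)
The plan is to realize each of the two permissible modifications by an explicit morphism in $\Cob_s$, and then invoke the functoriality of $\HM$ from Theorem \ref{1T2} together with an inverse-cobordism argument to conclude that the induced map is an isomorphism. In both cases the concatenation of the constructed cobordism $\x$ with its reverse $\bar\x$ will be isotopic in $\Cob_s$ to a product cobordism on $\y$, so by functoriality the compositions induce the identity, forcing $\HM(\x)$ to be an isomorphism.

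For an isotopy $\{\psi_t\}_{t\in[-1,1]}$ of the identification $\varphi\colon\partial Y\to\Sigma$ from $\psi_1$ to $\psi_2$, I would apply Example \ref{Ex9.5} with $b=0$ to produce a strict cobordism $\x_\psi\colon\y_1\to\y_2$ whose underlying 4-manifold is $[-1,1]_t\times Y$, with $\psi_X$ realizing the isotopy and $\omega_X$ the pull-back of the common $\omega=\omega_1=\omega_2$. Reversing the isotopy yields $\bar\x_\psi\colon\y_2\to\y_1$. The concatenations $\bar\x_\psi\circ\x_\psi$ and $\x_\psi\circ\bar\x_\psi$ are isotopic, as morphisms in $\Cob_s$, to the respective product cobordisms, since any two isotopies of $\Sigma=\coprod_j\T_j^2$ between fixed endpoints are homotopic. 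Theorem \ref{1T2} then gives that $\HM(\x_\psi)$ is an isomorphism with inverse $\HM(\bar\x_\psi)$.

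For the change of $[\omega]_{cpt}$ by $\delta_*(\sum_j a_j[\lambda_j])$, choose a cut-off $\chi_0\colon\R\to[0,1]$ with $\chi_0\equiv 1$ on $[-1,\infty)$ and $\chi_0\equiv 0$ on $(-\infty,-3/2]$, and set $\alpha=d\bigl(\chi_0(s)\textstyle\sum_j a_j\lambda_j\bigr)=\chi_0'(s)\,ds\wedge\textstyle\sum_j a_j\lambda_j$. This is a closed 2-form on $Y$ supported in the interior region $(-3/2,-1)\times\Sigma$, vanishing on the collar $[-1,0]_s\times\Sigma$, and representing the class $\delta_*(\sum_j a_j[\lambda_j])\in H^2(Y,\partial Y;i\R)$ (it is the standard representative of the co-boundary map). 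Set $\omega_2\colonequals\omega_1+\alpha$: this gives a new object $\y_2\in\Cob_s$ satisfying \ref{P2}, with $[\omega_2]_{cpt}-[\omega_1]_{cpt}=\delta_*(\sum_j a_j[\lambda_j])$ and $[\omega_2]=[\omega_1]\in H^2(Y;i\R)$ by exactness of the pair long exact sequence. I would then construct a strict cobordism $\x\colon\y_1\to\y_2$ with $X=[-1,1]_t\times Y$, $\psi_X=\Id$, and 2-form
\[
\bar\omega_X=\bar\omega_1+\eta(t)\,\alpha+\eta'(t)\,dt\wedge\chi_0(s)\textstyle\sum_j a_j\lambda_j+d\gamma,
\]
where $\eta\colon[-1,1]\to[0,1]$ is a smooth step with $\eta\equiv 0$ near $-1$ and $\eta\equiv 1$ near $1$, and $\gamma$ is an auxiliary 1-form, compactly supported near the corner $W=[-1,1]_t\times\Sigma$, chosen so that the extra $dt$-component is killed on a collar of $W$ while the $t=\pm 1$ boundary values are preserved modulo the freedom in \ref{Q8}. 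Reversing $X$ produces an inverse cobordism $\bar\x\colon\y_2\to\y_1$; the concatenations are isotopic to product cobordisms in $\Cob_s$ because the net change in $[\omega_X]_{cpt}$ vanishes, so Theorem \ref{1T2} again forces $\HM(\x)$ to be an isomorphism.

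The main obstacle is constructing $\bar\omega_X$ fulfilling simultaneously closedness, matching $\bar\omega_i$ near each $Y_i$, and the delicate corner condition \ref{Q7} that $\bar\omega_X=\mu$ on a collar of $W$. The naive interpolation leaves a residual $\eta'(t)\,dt\wedge\sum_j a_j\lambda_j$ along $W$ that cannot be removed by any globally exact form without altering the boundary values at $t=\pm 1$, because the class $[\alpha]=\delta_*(\sum_j a_j[\lambda_j])$ is genuinely nonzero in $H^2(Y,\partial Y)$. The corrective 1-form $\gamma$ must be built from cut-offs in both the $t$- and $s$-variables, concentrated near the corner, with the induced contribution on $\{t=\pm 1\}\times Y$ exhibited as a compactly supported exact form (permissible under \ref{Q8}); working out this cut-off combinatorics is the heart of the argument.
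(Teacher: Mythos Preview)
Your approach to the first clause (isotopies) via Example \ref{Ex9.5} matches the paper's. However, your justification that the concatenations $\bar\x_\psi\circ\x_\psi$ and $\x_\psi\circ\bar\x_\psi$ are product cobordisms because ``any two isotopies of $\Sigma=\coprod_j\T_j^2$ between fixed endpoints are homotopic'' is false: by \cite[Theorem 1(b)]{EE67}, $\Diff_+(\T^2)$ has the homotopy type of $S^1\times S^1\times\SLL(2,\Z)$, so $\pi_1(\Diff_+(\T^2))\cong\Z\oplus\Z$ and distinct homotopy classes of isotopies exist. The paper explicitly flags this: the isomorphism on $\HM_*$ exists, but is \emph{not} canonical. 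Your inverse argument still shows $\HM(\x_\psi)$ is an isomorphism (the concatenation is a self-cobordism of $\y_1$ built from a loop of identifications, and reversing that loop inverts it), but you should drop the incorrect claim.

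For the second clause your route diverges sharply from the paper's, and you have correctly identified the obstruction in your own approach: the naive interpolation leaves the residual $\eta'(t)\,dt\wedge\sum_j a_j\lambda_j$ along the collar of $W$, and correcting it while preserving the exact boundary values at $Y_1,Y_2$ is delicate (it can be done up to a compactly supported exact form on $Y_2$, which is harmless by Corollary \ref{C24.10}, but the bookkeeping is unpleasant). The paper sidesteps all of this with a much simpler observation: the class $[\omega]_{cpt}$ is an artifact of the truncation $Y=\{s\leq 0\}\subset\hy$ and the cut-off $\chi_1$. One can rewrite the \emph{same} complete manifold as $\hy=Y'\cup[0,\infty)_{s'}\times\Sigma$ with $Y'=\{s\leq 1\}$ and $s'=s-1$; the \emph{same} 2-form $\omega$ then acquires a new relative class $[\omega]'_{cpt}=[\omega]_{cpt}+\delta_*(a[\lambda])$ for some $a\neq 0$. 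Since $Y$ and $Y'$ are the same underlying 3-manifold with different cylindrical metrics, Corollary \ref{C24.10} identifies their Floer homologies. Allowing independent translation amounts on each component $[0,\infty)_s\times\T^2_j$ realizes any combination $\sum_j a_j[\lambda_j]$. This argument requires no cobordism construction at all and no corner combinatorics; it simply exploits the built-in ambiguity of $[\omega]_{cpt}$ already noted in Remark \ref{R9.3}.
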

\begin{proof} The first clause follows from Example \ref{Ex9.5}. However, given two isotopic diffeomorphisms $\varphi_1,\varphi_2:\partial Y\to \Sigma$, there are different ways to connect them using isotopies; so the isomorphism constructed using Theorem \ref{1T2} is not canonical. This is due to the fact that the diffeomorphism group $\Diff_+(\T^2)$ of the 2-torus $\T^2$ is not simply connected. Indeed, by \cite[Theorem 1(b)]{EE67}, $\Diff_+(\T^2)$ has the same homotopy type of its linear subgroup $S^1\times S^1\times \SLL(2,\Z)$, so $\pi_1(\Diff_+(\T^2))\cong \Z\oplus \Z$. 
	
	The second clause follows from the fact that the class $[\omega]_{cpt}\in H^2(Y,\partial Y;[\mu])$ is not well-defined, unless a cut-off function $\chi_1$ in \ref{P5} is specified, as noted already in Remark \ref{R9.3}. We have studied the Seiberg-Witten equations on the 3-manifold $\hy$ with cylindrical ends, but they are different ways to write 
	\[
	\hy=Y\cup [0,\infty)_s\times\Sigma.
	\]
	Indeed, one may take $Y'=Y_1=\{s\leq1 \}$ and set $s'=s-1$; so 
		\[
	\hy=Y'\cup  [0,\infty)_{s'}\times\Sigma.
	\]
	However, the closed 2-form $\omega\in \Omega^2(\hy,i\R)$ is associated to different relative cohomology classes $[\omega]_{cpt}$ and $[\omega]_{cpt}'$ on $Y$ and $Y'$ respectively, according to \ref{P5}, which are related by 
	\[
	[\omega]'_{cpt}=[\omega]_{cpt}+\delta_*(a[\lambda])
	\]
	for some $a\neq 0\in \R$. Since $Y'$ and $Y$ are the same 3-manifold, while equipped with different cylindrical metrics, one may apply Corollary \ref{C24.10} to identify the Floer homology of $(Y',[\omega]_{cpt}')$ with that of $(Y, [\omega]_{cpt}')$. To deal with the general case, it suffices to choose different translation amounts for the coordinate function $s$ on different connected component of $[0,\infty)_s\times \Sigma$.  
\end{proof}

\part{Some Properties}\label{Part8}

Having defined the monopole Floer homology and the functor $\HM$ in Theorem \ref{1T2}, our next goal is to establish a finiteness result and provide a few calculations. The results obtained in this part concentrated on the 3-dimensional Seiberg-Witten equations \eqref{3DSWEQ}. Section \ref{Sec25} and \ref{Sec30} below are independent of each other and can be read separately.

\smallskip 

Section \ref{Sec25} is devoted to the proof of the finiteness result: Theorem \ref{1T4}. Given any object $\y\in\Cob_s$ satisfying the assumption of Theorem \ref{1T4}, we will show that only finitely many relative \spinc structures can support a solution to \eqref{3DSWEQ}. The key ingredient is the energy estimate in Proposition \ref{P25.1}, which leverages some identities observed first by Taubes \cite{Taubes96} and their general forms for any Riemannian 3-manifolds. Although they have been well-known for any experts working in this field, we record the statement of their general forms and a short proof in Appendix \ref{AppE} for the sake of completeness. 

\smallskip

Section \ref{Sec30} is devoted to the computation of the monopole Floer homology for the product manifold $\Sigma_{g,n}\times S^1$, where $\Sigma_{g,n}$ is a genus-$g$ surface with $n\geq 2$ cylindrical ends. To do this, we examine the dimensional reduction of \eqref{3DSWEQ} on the surface $\Sigma_{g,n}$ and make use of the results from \cite[Appendix C]{Wang202}. 

\section{Finiteness of Critical Points}\label{Sec25}

In this section, we present the proof of Theorem \ref{1T4}, which states that the monopole Floer homology $\HM_*(\y)$ is finitely generated if the harmonic 2-form $\mu$ is non-vanishing on $\Sigma=\partial Y$. 

Recall from Section \ref{Sec5} that we made the Assumption \ref{A1.2} for Theorem \ref{T2.6} to hold. It turns out that if the first alternative \ref{VV1} holds for any component of $\Sigma$, the properties of $\HM_*(\y)$ are much easier to understand. This is the situation in Theorem \ref{1T4}. Further results will be supplied in the third paper of this series \cite{Wang203}. Theorem \ref{1T4} follows immediately from an energy estimate:

\begin{proposition}\label{P25.1} For any object $\y\in \Cob_s$ such that the harmonic 2-form $\mu$ is non-vanishing on $\Sigma$, there exists a constant $C(g_Y,\omega)>0$ with the following property. For any relative \spinc structure $\bs\in \Spincr(Y)$, suppose the configuration $(B,\Psi)$ solves the 3-dimensional Seiberg-Witten equations \eqref{3DSWEQ}, i.e., it is a critical point of the Chern-Simons-Dirac functional $\CL_{\omega}$, then 
	\begin{equation}\label{E25.1}
\int_{\hy}  \frac{1}{8}|F_{B^t}|^2+|\nabla_B\Psi|^2+|(\Psi\Psi^*)_0+\rho_3(\omega)|^2+\frac{s}{4}|\Psi|^2<C.
	\end{equation}
\end{proposition}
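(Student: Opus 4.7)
The plan is to apply Proposition~\ref{Energy10.2} to the critical-point equation $\grad\CL_\omega(B,\Psi)=0$ to obtain the energy identity
\[
\int_\hy \tfrac{1}{4}|F_{B^t}|^2 + |\nabla_B\Psi|^2 + |(\Psi\Psi^*)_0+\rho_3(\omega)|^2 + \tfrac{s}{4}|\Psi|^2 \;=\; \int_\hy\langle F_{B^t},\omega\rangle,
\]
and then to bound the right-hand side by $C_1\|F_{B^t}\|_{L^2(\hy)}+C_2$ with constants depending only on $(g_Y,\omega)$. The elementary absorption $C_1 X \leq \tfrac{1}{8}X^2+2C_1^2$ extracts \eqref{E25.1}; the factor $\tfrac{1}{8}$ appearing in the conclusion (versus $\tfrac{1}{4}$ above) is precisely the slack needed for this absorption. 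The scalar-curvature term is pre-controlled by a uniform $L^\infty$ bound on $\Psi$: contracting $D_B^2\Psi=0$ with $\Psi$ and substituting the first Seiberg--Witten equation yields the pointwise identity $\tfrac{1}{2}\Delta|\Psi|^2 + |\nabla_B\Psi|^2 + \tfrac{s}{4}|\Psi|^2 + \tfrac{1}{2}|\Psi|^4 + \langle\rho_3(\omega)\Psi,\Psi\rangle = 0$, from which the maximum principle gives $|\Psi|^2 \leq 2\|\omega\|_\infty+\tfrac{1}{2}\|s\|_{L^\infty(Y)}$ at an interior maximum, while at the cylindrical end $|\Psi|\to|\Psi_*|$ is bounded by the data $(\lambda,\mu)$. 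Since $s\equiv 0$ on the flat end, $\big|\!\int_\hy\tfrac{s}{4}|\Psi|^2\big|$ is controlled by a constant depending only on $(g_Y,\omega)$.

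Using the last formula of Proposition~\ref{Energy10.2} together with $\omega=\bomega+\omega_\lambda$ and the co-closed form $\omega_h$ of Lemma~\ref{9.1}, the pairing $\int\langle F_{B^t},\omega\rangle$ splits as
\[
\int_\hy\langle F_{B^t},\bomega\rangle + \int_\hy\langle F_{B^t},\omega_\lambda-\omega_h\rangle - \int_\hy F_{B_0^t}\wedge*_3\omega_h.
\]
The third term is a fixed constant determined by the reference data. The middle term is an honest $L^2$-pairing, bounded via Cauchy--Schwarz by $\|F_{B^t}\|_{L^2(\hy)}\cdot\|\omega_\lambda-\omega_h\|_{L^2(\hy)}$, with the latter factor finite by Lemma~\ref{9.1}.

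The main obstacle is the first term $\int\langle F_{B^t},\bomega\rangle$: under the hypothesis $\mu\neq 0$, one has $\bomega\equiv\mu$ on the entire cylindrical end, so $\bomega\notin L^2(\hy)$ and Cauchy--Schwarz fails directly. The saving observation is that $\bomega$ is nevertheless closed, and $*_3\bomega$ restricted to the end equals $\mu_0\,ds$, which is closed because $\mu$ is harmonic on the flat $\Sigma$. Splitting $\hy=Y\cup[0,\infty)_s\times\Sigma$, the $Y$-contribution is bounded by Cauchy--Schwarz by $C\|F_{B^t}\|_{L^2(Y)}$. For the cylindrical piece, decompose $F_{B^t}=ds\wedge\alpha(s)+\beta(s)$ with $\beta(s)\in\Omega^2(\Sigma)$, so that
\[
\int_{[0,R]\times\Sigma}\!F_{B^t}\wedge*_3\mu \;=\; \mu_0\int_0^R\!\Big(\int_\Sigma\beta(s)\Big)\,ds \;=\; 0,
\]
because the relative \spinc constraint $c_1(\bs)|_\Sigma=0\in H^2(\Sigma;\Z)$ forces $F_{B^t}|_{\{s\}\times\Sigma}$ to be exact on each slice, so $\int_\Sigma\beta(s)=0$ identically in $s$. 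Passing to $R\to\infty$ is justified by the exponential spatial decay of $F_{B^t}$ on the end, which follows from Theorem~\ref{11.2} applied to the time-invariant four-dimensional configuration $\R_t\times(B,\Psi)$.

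Assembling the three pieces gives $\int_\hy\langle F_{B^t},\omega\rangle\leq C_1\|F_{B^t}\|_{L^2(\hy)}+C_2$, and the quadratic absorption argument sketched above then delivers the bound \eqref{E25.1}. The crux of the argument is precisely the slice-wise topological vanishing of Step~3, which converts a non-$L^2$ pairing with $\bomega$ into a tractable compactly supported one; this is the Taubes-type identity of the general form derived in the appendix, and it relies decisively on the compatibility between the relative \spinc constraint $c_1(\bs)|_\Sigma=0$ and the non-triviality of $\mu$. With \eqref{E25.1} in hand, $\|F_{B^t}\|_{L^2(\hy)}$ is bounded uniformly in $\bs$, which pins $c_1(\bs)\in H^2(Y,\partial Y;\Z)$ into a bounded subset and yields Theorem~\ref{1T4}.
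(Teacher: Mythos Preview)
Your argument has a genuine gap, and it is precisely the one flagged in the remark immediately following Proposition~\ref{P25.1}. The problematic step is the claim that ``the third term $-\int_{\hy}F_{B_0^t}\wedge*_3\omega_h$ is a fixed constant determined by the reference data.'' The reference connection $B_0$ is part of the data of the relative \spinc structure $\bs$: as $\bs$ ranges over $\Spincr(Y)$, the class $[F_{B_0^t}]$ ranges over an $H^2(Y,\partial Y;\Z)$-coset, and since $*_3\omega_h$ is a closed $1$-form whose restriction to $\Sigma$ represents $[*_\Sigma\lambda]\neq 0$, the pairing $\int_Y F_{B_0^t}\wedge *_3\omega_h$ varies unboundedly with $\bs$. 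Your other two terms are handled correctly (the slice-wise vanishing $\int_\Sigma F_{B^t}|_{\{s\}\times\Sigma}\wedge *_\Sigma\mu=0$ is exactly Remark~\ref{rmk-10.2}, and $\omega_\lambda-\omega_h\in L^2$), but this third term wrecks uniformity in $\bs$. A symptom: your argument never actually uses $\mu\neq 0$ in an essential way --- the slice-wise vanishing you highlight as ``the crux'' holds for any $\mu$, so if your proof were valid it would prove the proposition without that hypothesis.

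The paper's proof avoids the $\omega_h$-splitting entirely on the cylindrical end. It first obtains uniform \emph{pointwise} bounds $|\Psi|,|F_{B^t}|,|\nabla_B\Psi|\leq C_1(g_Y,\omega)$ via the maximum principle (your $|\Psi|$ bound, plus a second-order Taubes-type inequality for $|\nabla_B\Psi|^2$), which handles the compact piece $Y$. On the end, the energy equals a boundary expression $I_1+I_2$ at $\{0\}\times\Sigma$, where $I_2=-2\int_\Sigma\langle b,\lambda\rangle$ is the dangerous term. Writing $\Psi=\sqrt{2m}(\alpha,\beta)$ in the eigenbasis of $\rho_3(\omega)$, one shows $|\beta|$ decays exponentially, hence $\tfrac12 F_{B^t}\approx (1-|\alpha|^2)\omega$ up to $O(|\beta|)$. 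The hypothesis $\mu\neq 0$ (i.e.\ $\delta\neq 0$) enters decisively here: integrating the $\Sigma$-component over a slice and dividing by $|\delta|$ gives $|\int_\Sigma(1-|\alpha|^2)|\leq Ce^{-cs}$, which feeds back into the $ds$-component to bound $|\tfrac{d}{ds}I_2(s)|$ exponentially, hence $|I_2(0)|\leq C$. This is where the work is, and it cannot be shortcut by the energy identity alone.
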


\begin{remark} The estimate \eqref{E25.1} is automatic on a closed 3-manifold. One may apply Lemma \ref{L11.3} to obtain that
	\[
	\int_{\hy}  \frac{1}{8}|F_{B^t}|^2+|\nabla_B\Psi|^2+|(\Psi\Psi^*)_0+\rho_3(\omega)|^2+\frac{s}{4}|\Psi|^2<C'-\int_{\hy} F_{B_0^t}\wedge *\omega_h,
	\]
	where $\omega_h$ is the co-closed 2-form obtained in Lemma \ref{9.1}. However, the terms on the right hand side depend on the relative \spinc structure $\bs\in \Spincr(Y)$, which is not what we look for. 
\end{remark}

\begin{proof}[Proof of Theorem \ref{1T4}] It suffices to show that the group $\HM_*(\y,\bs)\neq \{0\}$ for only finitely many $\bs\in \Spincr(Y)$. For any such $\bs$, there is at least one critical point for the perturbed functional $\CSd_{\omega}$. Since $\HM_*(\y,\bs)$ is independent of the tame perturbation $\q$, we can work instead with a sequence of admissible tame perturbations $\q_n$ with $\|\q_n\|_{\Pa}\to 0$ and obtain a sequence of configurations $\gamma_n$ such that 
	\[
	\grad \CL_{\omega}(\gamma_n)=-\q_n(\gamma_n). 
	\]
	By Proposition \ref{P1.5}, a subsequence of $\{\gamma_n\}$ converges to a solution $(B,\Psi)$ of \eqref{3DSWEQ}. By \eqref{E25.1} and the Compactness Theorem \ref{11.5} adapted to the 3-manifold case, we have a point-wise estimate
	\begin{equation}\label{E25.2}
	|F_{B^t}|<C'e^{-\zeta s} \text{ for some }\zeta,C'>0,
	\end{equation}
	where $s$ is the coordinate function on the cylindrical end $[-2,\infty)_s\times\Sigma$ extended constantly over the interior of $Y$. Take a basis $\{\nu_j\}$ of $H_2(Y, \partial Y; \R)$ and suppose each $\nu_j$ is realized as a weighted sum of oriented surfaces with cylindrical ends in $\hy$. Then \eqref{E25.2} provides a uniform upper bound for the pairing $|\langle F_{B^t},\nu_j\rangle |$. As a result, $c_1(\bs)\in H^2(Y,\partial Y;\Z)$ can take only finitely many possible values. This completes the proof of Theorem \ref{1T4}.
\end{proof}

From now on, we focus on the Seiberg-Witten equations \eqref{3DSWEQ}. The proof of Proposition \ref{P25.1} relies on the maximum principle and some formulae from Taubes' paper \cite{Taubes96}, as we explain now.

\begin{proof}[Proof of Proposition \ref{P25.1}] Let $(B_0,\Psi_0)$ be the reference configuration and set $(b,\psi)=(B,\Psi)-(B_0,\Psi_0)$.  We divide the integral in \eqref{E25.1} into two parts:
	\[
	\int_{\hy}=\int_Y\ +\ \int_{[0,\infty)_s\times\Sigma},
	\]
	and estimate them separately. For the compact region $Y$, we make use of an a priori estimate:
	
	\begin{lemma}\label{L25.3} There exists a constant $C_1(g_Y,\omega)>0$ depending only on the Riemannian metric $g_Y$ and the 2-form $\omega$ such that the estimates
		\[
|\Psi|,	|F_{B^t}|, |\nabla_B\Psi|<C_1 
		\]
		hold for any solution $(B,\Psi)$ to \eqref{3DSWEQ}. 
	\end{lemma}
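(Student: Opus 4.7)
The plan is to carry out the classical Seiberg--Witten a priori estimate via the Lichnerowicz--Weitzenb\"ock formula and the maximum principle, adapted to the cylindrical-end geometry of $\hy$. First, combining $D_B\Psi=0$ with the Weitzenb\"ock identity $D_B^2=\nabla_B^*\nabla_B+\tfrac{\mathrm{Scal}}{4}+\tfrac{1}{2}\rho_3(F_{B^t})$, the curvature equation $\rho_3(F_{B^t})=2(\Psi\Psi^*)_0+2\rho_3(\omega)$, and the algebraic identity $\langle(\Psi\Psi^*)_0\Psi,\Psi\rangle=\tfrac{1}{2}|\Psi|^4$ yields the pointwise differential equation
\[
\tfrac{1}{2}\Delta|\Psi|^2+|\nabla_B\Psi|^2+\tfrac{\mathrm{Scal}}{4}|\Psi|^2+\tfrac{1}{2}|\Psi|^4+\langle\rho_3(\omega)\Psi,\Psi\rangle=0
\]
on all of $\hy$, where $\mathrm{Scal}$ is the scalar curvature of $g_Y$ and $\Delta=d^*d$ is the positive Laplacian on functions.

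Next I would control the asymptotic behavior of $|\Psi|$ along the cylindrical end. Since $(B,\Psi)$ is a critical point of $\CL_\omega$, regarded as a $t$-invariant solution on $\R_t\times\hy$ it has analytic energy $\E_{an}\equiv 0$ by Proposition \ref{Energy10.2}; the 3-dimensional counterpart of Theorem \ref{11.5} (whose proof in Subsection \ref{Subsec12.2} carries over verbatim since it uses only the local energy functional) then gives $\|(B,\Psi)-(B_*,\Psi_*)\|_{L^2_{l}([S,S+1]_s\times\Sigma)}\le M_l\,e^{-\zeta S}$. By Sobolev embedding this forces $|\Psi(s,\cdot)|\to|\Psi_*|$ uniformly as $s\to\infty$, and in particular $|\Psi|\in L^\infty(\hy)$. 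Set $M=\sup_{\hy}|\Psi|^2$. If $M\le|\Psi_*|^2$ the spinor bound is immediate. Otherwise $M$ is attained at some interior point $x_0\in\hy$; there $\Delta|\Psi|^2(x_0)\ge 0$, so the identity above yields
\[
\tfrac{1}{2}M^2\leq\bigl(\tfrac{|\mathrm{Scal}(x_0)|}{4}+|\omega(x_0)|\bigr)M,
\]
and hence $M\le\tfrac{1}{2}\|\mathrm{Scal}^-\|_{L^\infty(\hy)}+2\|\omega\|_{L^\infty(\hy)}$; both quantities are finite because $g_Y$ is cylindrical with a flat model on the end (so $\mathrm{Scal}\equiv 0$ there) and $\omega=\mu+ds\wedge\lambda$ is parallel at infinity.

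With $|\Psi|\leq C_1(g_Y,\omega)$ in hand, the first Seiberg--Witten equation instantly gives $|F_{B^t}|\leq 2|\Psi|^2+2|\omega|$, which is uniformly bounded. For the derivative bound, I would place $B$ in the Coulomb--Neumann slice of $B_0$ on a unit ball around an arbitrary $x\in\hy$; the bounded geometry together with the $L^\infty$ bound on $F_{B^t}$ yields a uniform $C^{0,\alpha}$ bound on the connection 1-form $B-B_0$ in this gauge (cf.\ the argument of Lemma \ref{L11.8}), and then Schauder estimates applied to $D_B\Psi=0$ upgrade the $L^\infty$ bound on $\Psi$ to a uniform $C^{1,\alpha}$ bound, yielding $|\nabla_B\Psi|\leq C_1$.

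The principal obstacle is the second step: unlike on a closed 3-manifold, the maximum principle does not apply directly on the non-compact $\hy$ because a priori the supremum of $|\Psi|^2$ could be approached along a sequence of points escaping to infinity without ever being attained. Ruling this out is exactly what the exponential decay of $(B,\Psi)$ to $(B_*,\Psi_*)$ on the cylindrical end provides, and this decay is supplied by the 3-dimensional counterpart of Theorem \ref{11.5}; the hypothesis of finite analytic energy there is trivially satisfied at any critical point of $\CL_\omega$, since the 4-dimensional energy of the constant flowline vanishes. Once this asymptotic control is secured, the bounds on $|F_{B^t}|$ and $|\nabla_B\Psi|$ are routine consequences of the Seiberg--Witten equations together with standard elliptic regularity on a manifold of bounded geometry.
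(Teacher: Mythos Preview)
Your argument is correct, and the first two steps essentially match the paper: the Weitzenb\"ock identity combined with the maximum principle bounds $|\Psi|$, and the curvature equation then bounds $|F_{B^t}|$. One minor remark: to justify that $|\Psi|^2$ is bounded and approaches $|\Psi_*|^2$ at infinity, you invoke the full exponential-decay machinery of Theorem~\ref{11.5}, whereas the paper simply uses that $(B,\Psi)\in\SC_k(\hy,\bs)$ already forces $\Psi-\Psi_0\in L^2_k$, hence $\Psi\to\Psi_*$ in $C^0$ by Sobolev embedding on the cylindrical end; your route works but is heavier than needed.

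The genuine divergence is in the bound on $|\nabla_B\Psi|$. You go the elliptic-regularity route: local Coulomb gauge on unit balls, Uhlenbeck-type control of the connection form from the $L^\infty$ curvature bound, then Schauder for $D_B\Psi=0$. This is perfectly valid on a manifold of bounded geometry and has the virtue of being a generic argument that uses nothing special about Seiberg--Witten. The paper instead applies a second maximum-principle step: it invokes Taubes' differential inequality (recorded as Proposition~\ref{PE.6} in Appendix~\ref{AppE}),
\[
\tfrac{1}{2}\Delta|\nabla_B\Psi|^2+\tfrac{1}{2}|\Psi|^2|\nabla_B\Psi|^2\le C_3\bigl(|\Psi||\nabla_B\Psi|+|\nabla_B\Psi|^2+|F_{B^t}||\nabla_B\Psi|^2\bigr),
\]
and, after absorbing the right-hand side using the already-established bounds on $|\Psi|$ and $|F_{B^t}|$, applies the maximum principle to the auxiliary function $w=|\nabla_B\Psi|^2+C_5|\Psi|^2$. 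The paper's approach avoids gauge fixing entirely and stays within the pointwise maximum-principle framework, which is more in the spirit of Taubes' original estimates and dovetails with the appendix material developed for that purpose; your approach trades this structural elegance for a more off-the-shelf argument.
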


\begin{proof} To estimate the spinor $\Psi$, we use the Weitzenb\"{o}ck formula to derive that (cf. \cite[Section 4.6]{Bible})
\begin{equation}\label{E25.4}
\half \Delta |\Psi|^2+|\nabla_B\Psi|^2+\half |\Psi|^4=-\frac{s}{4}|\Psi|^2-\langle \Psi, \rho_3(\omega)\Psi\rangle\leq C_2|\Psi|^2,
\end{equation}
where $C_2=\|s\|_\infty+\|\omega\|_\infty+\|\Psi_*\|_\infty^2$ and $\Psi_*$ is the standard spinor on $\R_s\times\Sigma$. Since $\Psi\to \Psi_*$ as $s\to \infty$, 
\[
 |\Psi|^2-2C_2<0
\]
when $s\gg 0$. By the maximum principle and \eqref{E25.4}, $|\Psi|^2-2C_2<0$ on $\hy$. The estimate for $|F_{B^t}|$ now follows from the curvature equation of \eqref{3DSWEQ}. 

\medskip

To estimate $|\nabla_B\Psi|$, we borrow a formula from Taubes' paper \cite[Section 2(e)]{Taubes96}. 

\begin{lemma}[Proposition \ref{PE.6}]\label{L20.4} There exists a constant $C_3(g_Y,\omega)>0$ such that 
\begin{equation}\label{E25.5}
\half \Delta |\nabla_B\Psi|^2+\half |\Psi|^2|\nabla_B\Psi|^2\leq C_3(|\Psi||\nabla_B\Psi|+|\nabla_B\Psi|^2+|F_{B^t}||\nabla_B\Psi|^2).
\end{equation}
\end{lemma}

Although Lemma \ref{L20.4} is not stated explicitly in \cite{Taubes96}, it follows from the derivation of \cite[(2.38)(2.40)]{Taubes96}. For a family of generalized Seiberg-Witten equations, a similar estimate is obtained in \cite[Proposition 2.12]{WZ19}. The proof of Lemma \ref{L20.4} is deferred to Appendix \ref{AppE}.

\medskip

Given the bound on $|F_{B^t}|$, the right hand side of \eqref{E25.5} can be further controlled by 
\[
C_4|\Psi|^2+C_5|\nabla_B\Psi|^2. 
\]
Now consider the function $w\colonequals |\nabla_B\Psi|^2+C_5|\Psi|^2$. We combine \eqref{E25.4} and \eqref{E25.5} to derive:
\[
\half \Delta w+\half |\Psi|^2w\leq (C_2C_5+C_4)|\Psi|^2.
\]
The maximum principle then implies that $w\leq 2(C_2C_5+C_4)$. This completes the proof of Lemma \ref{L25.3}.
\end{proof}
	
	 Back to the proof of Proposition \ref{P25.1}. It remains to estimate the integral \eqref{E25.1} over the cylindrical end $[0,\infty)_s\times\Sigma$, where the metric is flat. We first exploit the energy equation to write 
	 \begin{align*}
\int_{[0,\infty)_s\times\Sigma}  \frac{1}{4}|F_{B^t}|^2+|\nabla_B\Psi|^2+|(\Psi\Psi^*)_0+\rho_3(\omega)|^2&=\int_{\{0\}\times\Sigma} \langle D_B^{\Sigma}\Psi,\Psi\rangle-2\langle b,\lambda\rangle\colonequals I_1+I_2
	 \end{align*}
	
	\begin{remark} Here $D_B^\Sigma$ denotes the Dirac operator on the surface $\Sigma$ associated to the connection $B|_{\{0\}\times \Sigma}$. The sum $I_1+I_2$ can be recognized as $2\re W_\lambda ( (B,\Psi)|_{\{0\}\times\Sigma})$ where $W_\lambda$ is the superpotential defined in \cite[Subsection 7.2]{Wang202}. This energy equation is derived from \cite[Section 4.5]{Bible}.
	\end{remark}

While $I_1$ can be estimated directly using Lemma \ref{L25.3}, the second term $I_2$ requires further work. We first extend $I_2$ to be a function on $[0,\infty)_s$:
\[
I_2(s)\colonequals -\int_{\{s\}\times\Sigma} 2\langle b,\lambda\rangle.
\]
The idea is to estimate the derivative $\big|\frac{d}{ds}I_2\big|$.

\medskip

Since our analysis below is purely local, we focus on a connected component of the half cylinder $[0,\infty)_s\times\Sigma$. To ease our notation, we pretend that $\Sigma$ is connected from now on.

Since $
\omega=\mu+ds\wedge\lambda
$ is parallel on $[0,\infty)_s\times\Sigma$, we write $\mu=\delta\cdot dvol_\Sigma$. By our assumption, $\delta\neq 0$. Following the notation from \cite[Section 10]{Wang202}, the spin bundle $S^+$ splits as
\begin{equation}\label{E25.6}
L^+_\omega\oplus L^-_\omega
\end{equation}  with $\rho_3(\omega)$ acting on by a diagonal matrix
\[
m \begin{pmatrix}
-1 & 0\\
0 & 1
\end{pmatrix}
\]
where $m=\sqrt{|\delta|^2+|\lambda|^2}$ is a constant. The splitting \eqref{E25.6} is also parallel. Write $\Psi(t)=\sqrt{2m}(\alpha(t),\beta(t))$ with respect to \eqref{E25.6}. As observed by Taubes \cite{Taubes96, Taubes94},  \eqref{E25.4} can be separated for $\Delta |\alpha|^2$ and $\Delta |\beta|^2$. In our case, we use  \cite[(2.4)]{Taubes96} to obtain that
\[
\half \Delta|\beta|^2+|\nabla_B\beta|^2+m(|\alpha|^2+|\beta|^2+1)|\beta|^2=0.
\]
By Lemma \ref{L25.3} and the maximum principle, this implies that 
\begin{equation}\label{E25.7}
|\beta(s)|\leq \|\beta(0)\|_{L^\infty(\Sigma)}e^{-\sqrt{2m}s}<C_1e^{-\sqrt{2m}s}.
\end{equation}

The curvature equation in \eqref{3DSWEQ} says that 
\[
\half \rho_3(F_{B^t})=m\begin{pmatrix}
|\alpha|^2-|\beta|^2-1 & 2\alpha\beta^*\\
2\alpha^*\beta & -|\alpha|^2+|\beta|^2+1
\end{pmatrix}=(1-|\alpha|^2)\rho_3(\omega)+\SO(|\beta|).
\]
Hence, by \eqref{E25.7} and Lemma \ref{L25.3},
\begin{equation}\label{E25.8}
|\half F_{B^t}-\omega (1-|\alpha|^2)|\leq C_6 e^{-\sqrt{2m}s}
\end{equation}
for some $C_6>0$. On the one hand, we integrate \eqref{E25.8} over each slice $\{s\}\times \Sigma$ to obtain that
\begin{align}\label{E25.9}
\bigg|\int_{\{s\}\times\Sigma} (1-|\alpha|^2)\bigg|= \frac{1}{|\delta|} \bigg|\int_{\{s\}\times\Sigma}\half F_{B^t}-\omega (1-|\alpha|^2)\bigg|\leq \frac{\Vol(\Sigma)\cdot C_6}{|\delta|}e^{-\sqrt{2m}s}.
\end{align}

Here we used the assumption that $|\delta|\neq 0$. On  the other hand, the component of \eqref{E25.8} involving $ds$ is precisely:
\[
ds\wedge (\frac{\partial b}{\partial s}-\lambda (1-|\alpha|^2)).
\]
We combine \eqref{E25.8} and \eqref{E25.9} to conclude that 
\begin{align*}
\bigg |\frac{d}{ds} I_2(s)\bigg|\leq \bigg|\int_{\{s\}\times\Sigma} \langle \frac{\partial b}{\partial s}-\lambda (1-|\alpha|^2),\lambda\rangle\bigg|+|\lambda|^2\bigg|\int_{\{s\}\times\Sigma} (1-|\alpha|^2)\bigg|\leq C_7e^{-\sqrt{2m}s},
\end{align*}
for some $C_7>0$. Since $I_2(\infty)=0$, it follows that $|I_2(0)|<C_7/\sqrt{2m}$. This completes the proof of Proposition \ref{P25.1}.
\end{proof}

\section{The Product Manifold $\Sigma_{g,n}\times S^1$} \label{Sec30}

In this section, we compute the monopole Floer homology of the product manifold $\Sigma_{g,n}\times S^1$, where $\Sigma_{g,n}$ is a genus-$g$ surface with $n$ cylindrical ends. Let us first recall the case for closed surfaces.

\medskip

Let $\Sigma_g$ be a closed oriented surface of genus $g\geq1$. Equip the 3-manifold $\Sigma_g\times S^1$ with a product metric. We are interested in the case when 
\[
c_1(\s)=2(d-g+1)\cdot k\in H^2(\Sigma_g\times S^1;\Z) \text{ for some } 0\leq d\leq 2(g-1),
\]
where $k$ is the Poincar\'{e} dual of $\{pt\}\times S^1$. Consider the 2-form
\[
\omega=\delta\cdot dvol_{\Sigma_g}+d\theta\wedge\lambda'. 
\]
for some $\delta\in i\R$ and harmonic 1-form $\lambda'\neq 0\in \Omega^1_h(\Sigma_g, i\R)$. If the holomorphic 1-form $(\lambda')^{1,0}$ on $\Sigma_g$ has only simple zeros, then the 3-dimensional Seiberg-Witten equations \eqref{3DSWEQ} associated to $(\Sigma_g\times S^1,\bs)$ can be solved explicitly (see \cite[Proposition C.2]{Wang202}): they have precisely 
\[
\binom{2g-2}{d}
\] 
solutions up to gauge, all of which are non-degenerate as critical points of $\CL_{\omega}$, concentrating on a single homology grading,  the one corresponding to $S^1$-invariant 2-plane fields with first Chern class $c_1(\s)$. As a result, the reduced monopole Floer homology can be computed as
\[
\HM_*^{red}(\Sigma_g\times S^1, [\omega];\bs)\cong \NR^{\binom{2g-2}{d}}.
\]
where $\NR$ is a Novikov ring. Since we have used a non-exact non-balanced perturbation, $\widehat{\HM}_*(\Sigma_g\times S^1, [\omega];\bs)$ and $\widecheck{\HM}_*(\Sigma_g\times S^1, [\omega];\bs)$ are both isomorphic to the reduced version. 

If one works instead with an exact perturbation and $0\leq d\leq g-2$, then $\HM_*^{red}$ computes the singular homology of the symmetric product $\Sym^{d} \Sigma_g$, whose rank is larger. In fact,
\[
\chi(\Sym^d\Sigma_g)=\binom{2g-2}{d} \text{ for any } 0\leq d\leq 2(g-1). 
\]

The goal of this section is to generalize this computation for surfaces with cylindrical ends, as we explain now.

\subsection{The Setup} Let $\Sigma_{g,n}=\Sigma_g\setminus\{p_1,\cdots,p_n\}$ be the punctured surfaces obtained from $\Sigma_g$ by removing $n$ distinct points. We require that 
\[
\chi(\Sigma_{g,n})=2-2g-n\leq 0;
\]
so the genus $g$ can be zero if $n\geq 2$. We identify a neighborhood $U_i$ of $p_i$ with a cylindrical end using the map:
\begin{align*}
\epsilon_j: [0,\infty)_s\times (\R/2\pi\alpha_j \Z)&\to B(0,1)\cong U_j\subset \Sigma,\\
(s,\theta_j)&\mapsto e^{-s-i\theta_j/\alpha_j}. 
\end{align*}
Pick a  metric of $\Sigma_{g,n}$ such that it restricts to the product metric on each end $[0,\infty)_s\times (\R/2\pi\alpha_j \Z)$; so the $j$-th boundary component $S^1_j\colonequals \{0\}\times \R/2\pi \alpha_j\Z$ has length $2\pi \alpha_j$ for some $\alpha_j>0$.

We will work with the product metric on $\Sigma_{g,n}\times S^1$. Let $\theta$ be the coordinate function on $S^1$ such that $d\theta=*_3dvol_{\Sigma_{g,n}}$. Define the closed 2-form $\omega$ to be
\[
\omega=\delta dvol_{\Sigma_{g,n}}+ d\theta\wedge \lambda'.
\]
such that $\delta\neq 0\in i\R$ and $\lambda'\in \Omega^1(\Sigma_{g,n},i\R)$ is closed. When restricted to each cylindrical end, we require that 
\begin{itemize}
\item $\lambda'$ is the constant 1-form $\delta_j d\theta_j+c_j ds$ for some $\delta_j,c_j\in i\R$;
\item $\delta_j\neq 0$ for any $1\leq j\leq n$, and $\sum \alpha_jc_j=0$.
\end{itemize}
As a result, 
\[
\omega=(-\delta_j) d\theta_j\wedge d\theta+ds\wedge (\delta d\theta_j-c_jd\theta) \text{ on }[0,\infty)_s\times S^1_j\times S^1, 
\]
and
\[
\begin{array}{ll}
\mu=((-\delta_j) d\theta_j\wedge d\theta)_{1\leq j\leq n}&\in \Omega^2_h(\partial \Sigma_{g,n}\times S^1, i\R),\\
\lambda=(\delta d\theta_j-c_jd\theta)_{1\leq j\leq n}&\in \Omega^1_h(\partial \Sigma_{g,n}\times S^1, i\R). 
\end{array}
\]
In particular, $*_2\lambda=\delta d\theta+c_jd\theta_j$ extends to a closed 1-form on $\Sigma_{g,n}\times S^1$ and the first alternative \ref{VV1} in Assumption \ref{A1.2} holds. Thus the monopole Floer homology of $(\Sigma_{g,n}\times S^1, \omega)$ is well defined.

\begin{proposition}\label{P27.1} Suppose the metric of $\Sigma_{g,n}$ and the 2-form $\omega$ are given as above. Consider the relative \spinc structure $\bs$ with 
	\[
	c_1(\bs)=(2d+\chi(\Sigma_{g,n}))\cdot k\in H^2(\Sigma_{g,n}\times S^1, \partial \Sigma_{g,n}\times S^1;\Z),
	\]
	where $k$ is the Poincar\'{e} dual of $\{pt\}\times S^1$ and
	\[
	0\leq d\leq -\chi(\Sigma_{g,n}).
	\]
	If in addition $\lambda'$ is harmonic and the holomorphic 1-form $(\lambda')^{1,0}$ has $(2g-2+n)$ simple zeros on $\Sigma_{g,n}$, then the 3-dimensional Seiberg-Witten equations \eqref{3DSWEQ} associated to $(\Sigma_{g,n}\times S^1,\omega; \bs)$ has precisely 
	\[
	\binom{2g-2+n}{d}
	\]
	solutions up to gauge. Moreover, they are non-degenerate as the critical points of $\CL_{\omega}$ and concentrate on a single homology grading in the sense of Section \ref{Sec28}. In particular,
	\[
	\HM_*(\Sigma_{g,n}\times S^1,\omega;\bs)\cong \NR^{	\binom{2g-2+n}{d}}. 
	\]
\end{proposition}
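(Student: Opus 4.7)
My plan is to reduce the three-dimensional Seiberg--Witten problem on $\Sigma_{g,n}\times S^1$ to a two-dimensional vortex problem on $\Sigma_{g,n}$, imitating the strategy of \cite[Appendix C]{Wang202} but with care at the cylindrical ends. First, I would observe that because $(g_{\Sigma_{g,n}\times S^1},\omega)$ is $S^1$-invariant (in the $\theta$-direction), any finite-energy critical point $(B,\Psi)$ of $\CL_\omega$ is gauge equivalent to an $S^1$-invariant configuration. One way to see this: by the energy estimate of Proposition \ref{P25.1} and the exponential decay Theorem \ref{11.5}, $(B,\Psi)$ approaches the standard translation-invariant configuration on each end; combined with Theorem \ref{T2.6} in the $\theta$-direction and a standard averaging/uniqueness argument (or the non-linear unique continuation Theorem \ref{T20.9} applied in the $\theta$-direction to compare $(B,\Psi)$ with its $\theta$-translates), one shows the $S^1$-symmetry can be restored.

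Once $S^1$-invariance is established, the equations reduce to the vortex equations on $\Sigma_{g,n}$: with the splitting $S^+ = L^+_\omega \oplus L^-_\omega$ induced by $\rho_3(\omega)$, write $\Psi = \sqrt{2m}(\alpha,\beta)$ as in the proof of Proposition \ref{P25.1}. The maximum principle applied to $|\beta|^2$ (using $\delta\neq 0$ in the interior and $\delta_j\neq 0$ on the ends, so that $m$ is bounded below) forces $\beta\equiv 0$. The remaining equation on $\alpha$, together with the curvature equation, becomes the abelian vortex equation on the Kähler surface $\Sigma_{g,n}$ for a section of the line bundle determined by $\bs$, coupled to $\lambda'$. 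Standard Hitchin--Bradlow theory in the punctured setting identifies solutions modulo gauge with effective divisors $D$ of degree $d = \tfrac{1}{2}(c_1(\bs)\cdot[\Sigma_{g,n}\times S^1]_{\mathrm{rel}}-\chi(\Sigma_{g,n}))$, and the degree count requires the asymptotic normalization $\sum \alpha_j c_j = 0$ that we imposed on $\lambda'$.

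The next step is to show that the zero locus of $\alpha$ must lie in the zero set of the holomorphic 1-form $(\lambda')^{1,0}$. This follows by pairing the curvature equation with $(\lambda')^{1,0}$ and integrating: because $\lambda'$ is closed and because the defining equations force a pointwise identity relating $\bar\partial_B \alpha$ to $(\lambda')^{1,0}\cdot\beta = 0$ (together with the vortex equation), any zero of $\alpha$ must be a zero of $(\lambda')^{1,0}$. By the assumption that $(\lambda')^{1,0}$ has $2g-2+n$ simple zeros, $D$ is a subset of this set of size $d$, giving exactly $\binom{2g-2+n}{d}$ solutions. The argument is essentially that of \cite[Proposition C.2]{Wang202}, with the count $2g-2$ replaced by the relative degree $2g-2+n = -\chi(\Sigma_{g,n})$ of $(\lambda')^{1,0}$ viewed as a section of the log-cotangent bundle.

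Non-degeneracy of each critical point is checked by computing $\EHess$ at the $S^1$-invariant solution: the extended Hessian splits according to Fourier modes in $\theta$, and each non-zero mode is invertible by Proposition \ref{P18.1} type reasoning, while the zero mode reduces to the linearized vortex operator on $\Sigma_{g,n}$, which is invertible at solutions associated to simple zeros of $(\lambda')^{1,0}$. Finally, concentration in a single homology grading is obtained from the Index Axiom \ref{Axiom1}: for any two such $S^1$-invariant solutions $\fa_1,\fa_2$, the linearized operator $\CQ(\fc_1,\bs_X,\fc_2)$ on the cylinder $\R_t\times\Sigma_{g,n}\times S^1$ has index zero, because the associated relative spinors differ only by multiplication by a function with winding number zero around a circle of zeros/nonzeros. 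Since all critical points are non-degenerate and the differential vanishes for index reasons, $\HM_*(\Sigma_{g,n}\times S^1,\omega;\bs) \cong \NR^{\binom{2g-2+n}{d}}$. The hardest step will be the first one: justifying that every finite-energy critical point is gauge equivalent to an $S^1$-invariant one, since we must marry the cylindrical-end analysis (Theorem \ref{11.5}) with a symmetry argument in the $S^1$-direction rather than the radial direction.
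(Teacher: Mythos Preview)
Your overall outline follows the paper's strategy, but two of the four steps contain genuine gaps.

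\textbf{The $S^1$-invariance step.} Your proposed argument is circuitous and the theorems you cite do not apply as stated: Theorem~\ref{T2.6} concerns $\R_s\times\Sigma$ with $\Sigma$ a compact torus, not $S^1\times\Sigma_{g,n}$ with $\Sigma_{g,n}$ non-compact; and Theorem~\ref{T20.9} is formulated for $I\times\hy$ with $\hy$ a $3$-manifold, not for the $\theta$-direction over a surface. The paper's argument is much cleaner: put $(B,\Psi)$ in temporal gauge along $S^1$, so that the equations become $\partial_\theta\check\gamma(\theta)=-\grad\re W_{\lambda'}(\check\gamma(\theta))$ for the superpotential $W_{\lambda'}$ of the associated gauged Landau--Ginzburg model. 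Since $\check\gamma(0)$ and $\check\gamma(l)$ differ by a gauge transformation in the identity component of $\CG(\Sigma_{g,n})$, the flowline has zero energy and is therefore constant.

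\textbf{The dimensional reduction step.} This is where your proposal goes wrong. You work with the splitting $S=L^+_\omega\oplus L^-_\omega$ by eigenspaces of $\rho_3(\omega)$ and claim $\beta\equiv 0$ via the maximum principle as in the proof of Proposition~\ref{P25.1}. But that identity was derived on the cylindrical end where $\omega$ is \emph{parallel}; on the interior of $\Sigma_{g,n}\times S^1$ the form $\omega=\delta\,dvol_{\Sigma_{g,n}}+d\theta\wedge\lambda'$ is not parallel (indeed $\lambda'$ has zeros), so the splitting is not parallel and the Taubes identity acquires error terms. More fundamentally, the correct splitting for this problem is the one by eigenspaces of $\rho_3(d\theta)$, namely $S=L^+\oplus L^-$ with $L^-=L^+\otimes\Lambda^{0,1}\Sigma_{g,n}$, and in this splitting \emph{both} components $\Psi_+,\Psi_-$ are nonzero. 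The critical points of $\re W_{\lambda'}$ are characterized (see \cite[Proposition~C.6]{Wang202}) by the holomorphic factorization
\[
\Psi_+\otimes\Psi_-^*=-\sqrt{2}\,(\lambda')^{1,0},
\]
so that the zero divisors $Z(\Psi_+)$ and $Z(\Psi_-)$ give a \emph{partition} of the $(2g-2+n)$ simple zeros of $(\lambda')^{1,0}$, with $|Z(\Psi_+)|=d$. This yields the count $\binom{2g-2+n}{d}$ directly. Your description of ``$\alpha$ a vortex section whose zeros lie in $Z((\lambda')^{1,0})$'' misses this factorization structure.

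For non-degeneracy, the paper casts $\EHess_{\fa}=\sigma(\partial_\theta+\hat D_\kappa)$ with $\hat D_\kappa$ self-adjoint and anti-commuting with $\sigma$, then invokes the invertibility of $\hat D_\kappa$ from \cite[Propositions~C.6 and 7.10]{Wang202}; your Fourier-mode argument is morally the same. For the single grading, the paper uses the Normalization Axiom~\ref{Axiom2} (the spinor $\Psi/|\Psi|$ is $S^1$-invariant and all non-vanishing relative sections of $S\to\Sigma_{g,n}$ are homotopic), which is more direct than appealing to the Index Axiom.
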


\begin{remark} By \cite[Lemma C.9\&C.10]{Wang202}, for any fixed $(\delta_i d\theta_j+c_jds)_{1\leq j\leq n}$, one can find a harmonic 1-form $\lambda'$ extending these forms, if the metric of $\Sigma_{g,n}$ is allowed to change. Thus the assumptions on $\lambda'$ can be always fulfilled for any boundary data $(\mu,\lambda)$.

On the other hand, one may ask if the cohomology class of $d\theta\wedge\lambda'$ can be fixed in Proposition \ref{P27.1}. This problem will be addressed in the third paper \cite{Wang203} using some formal arguments. 
\end{remark}

The proof of Proposition \ref{P27.1} relies on the computation from \cite[Proposition C.6]{Wang202}. The dimension reduction of \eqref{3DSWEQ} gives rise to a kind of vortex equations on $\Sigma_{g,n}$, which can be solved explicitly. Although the first paper \cite{Wang202} focused on the 2-torus, its main result, Proposition 1.5, generalizes to higher genus surfaces as well as surfaces with cylindrical ends. In particular, one can associate an infinite dimensional gauged Landau-Ginzburg model
\begin{equation*}
(M(\Sigma_{g,n}, \delta), W_{\lambda'}, \CG(\Sigma_{g,n}))
\end{equation*}
to $(\Sigma_{g,n},\lambda',\delta)$, whose gauged Witten equations on $\C$ recover the Seiberg-Witten equations on $\C\times \Sigma_{g,n}$. The downward gradient flowline equation of $\re W_{\lambda'}$ on $\R_s$ recovers the 3-dimensional equations \eqref{3DSWEQ} on $\R_s\times \Sigma_{g,n}$. 

We will work with $S^1$ instead of $\R_s$. But the situation is not very different. The structure of the extended Hessian can be analyzed as in \cite[Subsection 4.2]{Wang202}. In what follows, we will explain how this reduction works and refer the reader to the corresponding sections of the first paper \cite{Wang202} for the actual proofs.

\subsection{Proof of Proposition \ref{P27.1}} Our plan is to solve the Seiberg-Witten equations \eqref{3DSWEQ} explicitly. The first step is to show that under the assumption of Proposition \ref{P27.1}, any such solution $(B,\Psi)$ is $S^1$-invariant. 

To see this, identify $S^1$ with $\R/l\Z$, where $l>0$ is the length of $S^1$. We shall regard the relative \spinc structure $\bs$ as pulled back from $\Sigma_{g,n}$ with 
\[
S=L^+\oplus L^-,
\]
where $L^+\to \Sigma_{g,n}$ is a relative line bundle of degree $d$ and $L^-=L^+\otimes\Lambda^{0,1}\Sigma_{g,n}$. Moreover, $L^\pm$ is the $(\pm i)$-eigenspace of $\rho_3(d\theta)$. Write $\Psi=(\Psi_+,\Psi_-)$ under this decomposition and use $(\cdot )_\Pi$ to denote the off-diagonal part of an endomorphism of $S$. The 2-dimensional Clifford multiplication is now given by 
\[
\rho_2(e)\colonequals \rho_3(d\theta)^{-1}\rho_3(e): S\to S
\]
for any $e\in T^*\Sigma_{g,n}$, which allows us to define the Dirac operator associated to $\cB(\theta)\colonequals B|_{\{\theta\}\times\Sigma_{g,n}}$:
\[
D_{\cB}^{\Sigma_{g,n}}=\rho_2(e_i)\nabla^{\cB}_{e_i}: \Gamma(\Sigma_{g,n}, S)\to  \Gamma(\Sigma_{g,n}, S). 
\]
If $(B,\Psi)$ is put into the temporal gauge, then the 3-dimensional Seiberg-Witten equations \eqref{3DSWEQ} can be cast into the form:
\begin{align*}
\partial_\theta \cB(\theta)&=[-\rho_2^{-1}(\Psi\Psi^*)_\Pi+\lambda']\otimes\Id_S,\\
\partial_\theta \Psi(\theta)&=-D_{\cB(\theta)}^{\Sigma_{g,n}} \Psi(\theta),\\
0&=\half *_2 F_{\cB^t}+\frac{i}{2}(|\Psi_+|^2-|\Psi_-|^2)+\delta.
\end{align*}
When $\lambda'$ is a harmonic form on $\Sigma_{g,n}$, the first two equations give rise to a downward gradient flow for the functional $\re W_{\lambda'}$. Here $W_{\lambda'}$ is the superpotential associated to the gauged Landau-Ginzburg models, as defined in \cite[Appendix C]{Wang202}, which is invariant under the action of $\CG_e(\Sigma_{g,n})$, the identity component of the full gauge group $\CG(\Sigma_{g,n})$. Since $(\cB(0), \Psi(0))$ and $(\cB(l), \Psi(l))$ are related by a gauge transformation in $\CG_e(\Sigma_{g,n})$, the energy of this flowline is zero; so $(B,\Psi)$ must be $\theta$-invariant.

\medskip

The critical points of $\re W_{\lambda'}$ are computed in \cite[Proposition C.6]{Wang202}. In this case, the sections $\Psi_+$ and $\Psi_-^*$ are holomorphic with respect to some unitary connections on $L^+$ and $(L^-)^*$; moreover, 
\[
\Psi_+\otimes \Psi_-^*=-\sqrt{2}(\lambda')^{1,0}. 
\]
Thus the zero loci $Z(\Psi_+)$ and $Z(\Psi_-)$ give rise to a partition of $Z((\lambda')^{1,0})$. Conversely, any such partition produces a critical point; so there are $\binom{2g-2+n}{d}$ in total. 

\medskip

Let $\fa$ be an $S^1$-invariant solution to \eqref{3DSWEQ}. To see that $\fa$ is a non-degenerate critical point of $\CL_{\omega}$, we exploit the results from \cite[Subsection 4.2]{Wang202}: the extended Hessian at such a critical point $\fa$ can be cast into the form
\[
\EHess_{\fa}=\sigma(\partial_\theta+\hat{D}_{\kappa}). 
\]
where the bundle map $\sigma$ is defined as in  $\eqref{E12.2}$ with $\rho_3(ds)$ replaced by $\rho_3(d\theta)$. Moreover, 
\[
\hat{D}_{\kappa}: L^2_1(\hy, i\R\oplus (i\R\otimes d\theta)\oplus iT^*\Sigma_{g,n}\oplus S)\to  L^2(\hy, i\R\oplus (i\R\otimes d\theta)\oplus iT^*\Sigma_{g,n}\oplus S)
\]
is a self-adjoint differential operator anti-commuting with $\sigma$. It is shown in \cite[Proposition C.6\&7.10]{Wang202} that $\hat{D}_{\kappa}$ is invertible, and so is the extended Hessian $\EHess_{\fa}$. 

\medskip

Finally, we describe the canonical grading that $\fa$ belongs to using the Normalization Axiom \ref{Axiom2} from Section \ref{Sec28}. Let $\Psi\in \Gamma(\Sigma_{g,n}\times S^1, S)$ be the spinor of $\fa$. Although the assumption \ref{V2} does not hold strictly for $\Psi$, the canonical grading of $\fa$ is still given by 
the relative homotopy class of 
\[
\frac{\Psi}{|\Psi|}, 
\]
which is $S^1$-invariant. Since any non-vanishing relative sections of $S\to \Sigma_{g,n}$ are relatively homotopic to each other, the canonical grading of $\fa$ is determined by the $S^1$-invariance of $\Psi/|\Psi|$. This completes the proof Proposition \ref{P27.1}.

\appendix

\section{Relative Orientations}\label{AppD}

The primary goal of this appendix is to present the proof of Theorem \ref{T24.2}, which leads to the notion of homology orientations in Definition \ref{D24.3}. It allows us to orient the moduli spaces in consistently when the complete Riemannian 4-manifold $\CX$ possesses a planar end $\HH^2_+\times\Sigma$.  

To do this, we have to develop the theory of \textbf{relative orientations} in a systematic way. One possible  approach is to use the argument in \cite[Appendix]{KM97} in which case a Riemannian 4-manifold with a conic end is considered. The construction that we present here is slightly different. It is self-contained and combinatorial in nature, having the advantage of being very explicit and concrete. It relies on a simple proof of the excision principle of elliptic differential operators, which was due to Mrowka. 

The main results are Proposition \ref{PD.3} and \ref{PD.9}. As an application of this abstract theory, we will prove Theorem \ref{T24.2} in Subsection \ref{SubsecD.10}.

\subsection{Statements} The situation that we have here is similar to the excision principle of elliptic differential operators; we follow its setup. Given a oriented \textbf{compact} manifold $Y$, consider vectors bundles $E,F\to [-1,1]\times Y$ and a reference first-order elliptic differential operator:
\[
D: \Gamma([-1,1]\times Y,E)\to \Gamma([-1,1]\times Y, F). 
\]

We are interested in two classes of elliptic differential operators 
\[
\SL \text{ and } \SR. 
\]
Each element of $\SL$ consists of a pair $(X_i, L_i)$ satisfying the following properties:
\begin{enumerate}[label=(J\arabic*)]
\item\label{J1} $X_i$ is an oriented smooth manifold with boundary $Y$; moreover, there exists a collar neighborhood $W_i\subset X_i$ of $Y$ and a diffeomorphism 
\[
\phi_i: (W_i, Y)\to ([-1,1]\times Y, \{1\}\times Y)
\]
identifying $W_i$ with the standard cylinder; $X_i$ is not necessarily compact;
\item\label{J2} $L_i:\Gamma(X_i, E_i)\to \Gamma(X_i, F_i)$ is a first-order elliptic differential operator where $E_i, F_i\to X_i$ are vector bundles over $X_i$. The operator $L_i$ is cast into a standard form on the collar neighborhood $W_i$ in the following sense. There exist bundle isomorphisms
\[
\phi_i^E: E_i|_{W_i}\to E,\ \phi_i^F: F_i|_{W_i}\to F, 
\]
covering the diffeomorphism $\phi_i: W_i\to [-1,1]\times Y$ in \ref{J1} such that 
\[
L_i=(\phi_i^F)^{-1}\circ D\circ \phi_i^E \text{ on }W_i. 
\]
\end{enumerate}

Similar properties are required for an element $(X_j, R_j)$ of  $\SR$ with one distinction: the oriented boundary of $X_j$ is $(-Y)$, so under the diffeomorphism $\phi_j$, it is mapped to $\{-1\}\times (-Y)$:
\[
\phi_j: (W_j,  (-Y))\to ([-1,1]\times Y, \{-1\}\times (-Y))
\]

For any operators $(X_i, L_i)\in \SL$ and $(X_j, R_j)\in \SR$, we first glue up their underlying manifolds and obtain a manifold without boundary:
\[
X_i\#X_j: X_i\coprod X_j/\sim_{ij} \text{ where } \phi_i(x_i)\sim_{ij} \phi_j(x_j) \text{ if } x_i\in W_i, x_j\in W_j.
\]

Similarly we glue vector bundles and obtain $E_i\#E_j,\ F_i\#F_j\to X_i\#X_j$ using $(\phi^E_i, \phi^E_j)$ and $(\phi^F_i, \phi^F_j)$ instead. Finally, we glue operators and obtain 
\[
L_i\# R_j: \Gamma(E_i\#E_j)\to \Gamma(F_i\#F_j). 
\]
\begin{assumpt}\label{AD.1} The first-order elliptic differential operator
	\[
	L_i\# R_j: L^2_1(E_i\#E_j)\to L^2(F_i\#F_j)
	\]
	is assumed to be Fredholm for any elements $(X_i, L_i)\in \SL$ and $(X_j, R_j)\in \SR$.
\end{assumpt}

In terms of Example \ref{EX24.1}, define
\[
\Lambda(L_i\# R_j)
\]
to be the 2-element set of orientations of this Fredholm operator $L_i\# R_j$. 

From now on, we will omit the underlying manifolds when it is clear from the context. For any operators $L_1, L_2\in \SR$, we wish to define a 2-element set $\Lambda(L_1, L_2)$ such that any element $x\in \Lambda(L_1, L_2)$ defines a preferred $\Z/2\Z$-equivariant map
\[
\Lambda(L_1\# R_3)\to \Lambda( L_2\# R_3)
\]
for any $R_3\in \SR$. We will proceed in the opposite order and first define
\[
\Lambda(L_1, L_2; R_3)\colonequals \Hom_{\Z/2\Z}(\Lambda(L_1\#R_3), \Lambda(L_2\#R_3)).
\]

Then the goal is to construct natural bijections:
\begin{equation}\label{ED.8}
p(R_3, R_4): \Lambda(L_1, L_2; R_3)\to \Lambda(L_1, L_2; R_4)
\end{equation}
for any operators $L_1, L_2\in \SL$ and $L_3, L_4\in \SR$ such that  the following axioms are satisfied:
\begin{enumerate}[label=(C-\Roman*)]
\item\label{C-I} $p$ is associative meaning that for any three operators $R_j\in \SR, 3\leq j\leq 5$, we have 
\[
p(R_4, R_5)\circ p(R_3, R_4)=p(R_3, R_5): \Lambda(L_1, L_2; R_3)\to \Lambda(L_1, L_2; R_5);
\]
\item\label{C-II} $p$ is reflexive meaning that $p(R_3, R_3)=\Id$;
\item \label{C-III} When $L_1=L_2$, $p$ preserves the identity element: 
\[
p: 1\in \Lambda(L_1, L_1; R_3)\mapsto 1\in \Lambda(L_1, L_1; R_4);
\]
\item\label{C-IV} $p$ commutes with compositions of $\Hom$-sets, i.e. for any three operators $L_i\in\SL, 0\leq i\leq 2$, the following diagram is commutative:
\[
\begin{tikzcd}
 \Lambda(L_0, L_1; R_3)\times \Lambda(L_1, L_2; R_3)\arrow[r,"m"]\arrow[d,"{(p,p)}"]& \Lambda(L_0, L_2; R_3)\arrow[d,"p"]\\
  \Lambda(L_0, L_1; R_4) \times\Lambda(L_1, L_2; R_4)\arrow[r,"m"]& \Lambda(L_0, L_2; R_4),
\end{tikzcd}
\]
where horizontal arrows $m$ are given by compositions of maps.
\end{enumerate}

\begin{definition}\label{DD.2} For any classes $\SL$ and $\SR$, a collection of bijections $\{p\}$ satisfying axioms \ref{C-I}-\ref{C-IV} defines an equivalence relation on the disjoint union:
	\[
	\coprod_{R_j\in \SR} \Lambda(L_1, L_2; R_j).
	\]
	Let $\Lambda(L_1, L_2)$ be the quotient space, then the composition map $m$ descends to an associative multiplication:
	\[
	\bar{m}: \Lambda(L_0, L_1)\times\Lambda(L_1, L_2)\to \Lambda(L_0, L_2), 
	\]
	which admits a unit in each $\Lambda(L_i, L_i)$. An element of $\Lambda(L_1, L_2)$ is called \textbf{a relative orientation} of $L_1$ and $L_2$. 
\end{definition}


Here is the main result of this appendix. 

\begin{proposition}\label{PD.3} There exists a collection of bijections $\{p(R_3, R_4)\}$ satisfying $\ref{C-I}-\ref{C-IV}$ for any classes of operators $\SL$ and $\SR$ such that Assumption \ref{AD.1} holds. 
\end{proposition}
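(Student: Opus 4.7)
The heart of the matter is to construct, for any four operators $L_1,L_2 \in \SL$ and $R_3,R_4\in \SR$, a natural excision isomorphism of $\Z/2\Z$-torsors
\[
\epsilon(L_1,L_2;R_3,R_4): \Lambda(L_1\#R_3)\,\Lambda(L_2\#R_4)\xrightarrow{\;\cong\;}\Lambda(L_1\#R_4)\,\Lambda(L_2\#R_3).
\]
To produce it, I would consider the direct sum $(L_1\#R_3)\oplus(L_2\#R_4)$ acting over the disjoint union $(X_1\#X_3)\sqcup(X_2\#X_4)$ and note that the underlying manifolds are obtained by gluing $X_1\sqcup X_2$ to $X_3\sqcup X_4$ across a ``double neck'' $[-1,1]\times(Y\sqcup Y)$. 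There are two natural identifications of this neck: the identity identification yields $(X_1\#X_3)\sqcup(X_2\#X_4)$, while the swap identification yields $(X_1\#X_4)\sqcup(X_2\#X_3)$. Because the swap is isotopic to the identity within the self-diffeomorphisms of $[-1,1]\times(Y\sqcup Y)$ (cut each cylinder, rotate one of the two pairs through a family of Dehn-twist-like interpolations, then reglue), the two glued operators can be joined by a continuous one-parameter family of Fredholm operators. The induced isomorphism of determinant lines along such a path is well-defined up to positive scalars and therefore descends to a canonical bijection $\epsilon$ of the associated 2-element sets. Independence of the chosen isotopy follows because the space of cylindrical identifications, while not simply connected, differs by full $2\pi$-twists, each of which contributes an \emph{even} index change (the swap on $Y\sqcup Y$ squares to the identity) and hence preserves orientation.

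\textbf{Construction of $p(R_3,R_4)$.} Using the standard identification $\Hom_{\Z/2\Z}(A,B)\cong A\,B$ for $\Z/2\Z$-torsors, rewrite
\[
\Lambda(L_1,L_2;R_j)=\Lambda(L_1\#R_j)\,\Lambda(L_2\#R_j),\qquad j=3,4.
\]
Starting from $\omega\in\Lambda(L_1\#R_3)\,\Lambda(L_2\#R_3)$, I multiply by the canonical units $1\in\Lambda(L_1\#R_4)\,\Lambda(L_1\#R_4)$ and $1\in\Lambda(L_2\#R_4)\,\Lambda(L_2\#R_4)$ to land in the eightfold product, apply the excision isomorphism $\epsilon$ twice (once to interchange $L_1\#R_4$ with $L_1\#R_3$ across $L_2\#R_4/L_2\#R_3$, and once to symmetrically handle the $L_2$-block), and cancel the factors $\Lambda(L_i\#R_3)\,\Lambda(L_i\#R_3)$ against their canonical units. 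The result is an element of $\Lambda(L_1\#R_4)\,\Lambda(L_2\#R_4)=\Lambda(L_1,L_2;R_4)$. This defines $p(R_3,R_4)$.

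\textbf{Verification of the axioms.} Reflexivity \ref{C-II} is immediate because $\epsilon(L_1,L_2;R_3,R_3)$ arises from a constant path and is thus the identity. Axiom \ref{C-III} reduces to observing that when $L_1=L_2$, the canonical identity in $\Lambda(L_1\#R_3)\,\Lambda(L_1\#R_3)$ is sent by $\epsilon$ applied diagonally to the canonical identity in $\Lambda(L_1\#R_4)\,\Lambda(L_1\#R_4)$; this follows because the excision homotopy respects the diagonal inclusion. Associativity \ref{C-I} follows from a ``triangle'' of excision isomorphisms: iterating the rotational homotopies across three necks $R_3,R_4,R_5$ yields commutativity of $\epsilon(\cdot,R_3,R_5)=\epsilon(\cdot,R_4,R_5)\circ\epsilon(\cdot,R_3,R_4)$, since both sides correspond to homotopic paths in the space of Fredholm operators on the appropriate disjoint union. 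Axiom \ref{C-IV} reduces to checking that excision commutes with multiplication of $\Hom$-elements, which is a direct combinatorial consequence of the associativity of the bundle isomorphism $q$ in \eqref{E24.3} applied to the four-fold direct sum used to define $\epsilon$.

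\textbf{Main obstacle.} The technical heart is the well-definedness and coherence of $\epsilon$: one must show that along the one-parameter family of cylindrical reidentifications the induced map on determinant lines depends only on the endpoints (not on the choice of path), and that $\epsilon$ behaves associatively with respect to three-fold excisions. Both reduce to verifying that certain loops in the space of gluing parameters act trivially on orientations; the key input here is Assumption \ref{AD.1}, which guarantees that every operator encountered in the family is Fredholm so that its determinant line is defined, together with the combinatorial sign analysis of how the bundle isomorphism $q$ transforms under reordering of summands. Once this coherence is settled, axioms \ref{C-I}--\ref{C-IV} fall out almost formally, completing the proof.
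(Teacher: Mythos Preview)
Your construction of the excision isomorphism $\epsilon$ rests on a false premise. You assert that the swap identification of the double neck $[-1,1]\times(Y\sqcup Y)$ is isotopic to the identity identification within its self-diffeomorphisms, but this is simply not true: the swap permutes the two connected components of $Y\sqcup Y$, so it lies in a different component of the diffeomorphism group. There is no one-parameter family of neck identifications interpolating between the gluing that produces $(X_1\#X_3)\sqcup(X_2\#X_4)$ and the one that produces $(X_1\#X_4)\sqcup(X_2\#X_3)$. The remark about ``full $2\pi$-twists'' contributing even index changes does not repair this; it addresses monodromy along loops, not the non-existence of the path you need.

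The paper's mechanism is different and genuinely operator-theoretic rather than geometric. One writes down an explicit rotation matrix
\[
U=\begin{pmatrix}\cos\theta & -\sin\theta\\ \sin\theta & \cos\theta\end{pmatrix}
\]
(with $\theta$ a cutoff function on the cylinder) acting on the \emph{Hilbert space} direct sum $L^2_k(E_{13})\oplus L^2_k(E_{24})$, and observes that $U$ intertwines $D_{13}\oplus D_{24}$ with $D_{14}\oplus D_{23}$ up to a compact error, because the commutator $[D,\theta]$ is zeroth-order and supported on the compact neck. This yields the identification $U_*$ of determinant lines directly, with no isotopy of manifolds. The map $p(R_3,R_4)$ is then defined by declaring a single square \eqref{ED.7} to commute after inserting an explicit sign $(-1)^r$ with $r=\Ind D_{23}\cdot\Ind D_{24}+\Ind D_{24}$; no eightfold product is needed. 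Finally, the part you describe as falling out ``almost formally'' is in fact the bulk of the work: verifying \ref{C-I} and \ref{C-IV} requires building explicit homotopies between $4\times4$ matrices of cutoff functions (the analogue of your ``triangle'' is a six-row diagram with nontrivial middle squares) and checking a mod-$2$ identity among Fredholm indices. The sign $(-1)^r$ is not optional and its precise form is what makes these verifications close.
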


One can prove that the collection $\{p(R_3, R_4)\}$ is unique in a suitable sense:

\begin{proposition}\label{PD.4} Under the assumptions of Proposition \ref{PD.3}, suppose that there are two collections of bijections $\{p\}$ and $\{p'\}$ satisfying axioms $\ref{C-I}-\ref{C-IV}$, then one can find a function:
	\[
	\iota: \SL\times \SR\to \Z/2\Z
	\]
	such that 
	\[
	p(L_1, L_2; R_3, R_4)=(-1)^{\eta} p'(L_1, L_2; R_3, R_4):\Lambda(L_1, L_2; R_3)\to \Lambda(L_1, L_2; R_4)
	\]
	with $\eta=\iota(L_1, R_3)+\iota(L_1, R_4)+\iota(L_2, R_3)+\iota(L_2, R_4)$. In other words, $p'$ is obtained from $p$ by applying the automorphism 
	\[
	\iota(L_i, R_j): \Lambda(L_i\#R_j)\to \Lambda(L_i\# R_j)
	\]
	for each pair $(L_i, R_j)\in \SL\times \SR$. 
\end{proposition}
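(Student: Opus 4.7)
The plan is to measure the discrepancy between the two collections $\{p\}$ and $\{p'\}$ by a single $\Z/2\Z$-valued function, verify it satisfies cocycle-type relations coming from axioms \ref{C-I}--\ref{C-IV}, and then exhibit it as a coboundary of the required form after fixing basepoints in $\SL$ and $\SR$. Concretely, since $\Lambda(L_1,L_2;R_3)$ is a two-element set and both $p(R_3,R_4)$ and $p'(R_3,R_4)$ are $\Z/2\Z$-equivariant bijections onto $\Lambda(L_1,L_2;R_4)$, they differ by a unique sign; define $\epsilon(L_1,L_2;R_3,R_4)\in\Z/2\Z$ by $p'(R_3,R_4)=(-1)^{\epsilon}\,p(R_3,R_4)$. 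The target is to show that $\epsilon$ has the form $\iota(L_1,R_3)+\iota(L_1,R_4)+\iota(L_2,R_3)+\iota(L_2,R_4)$ for some $\iota:\SL\times\SR\to\Z/2\Z$.

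Next I would extract four structural identities for $\epsilon$ from the axioms. Axiom \ref{C-II} (reflexivity) gives $\epsilon(L_1,L_2;R_3,R_3)=0$. Axiom \ref{C-III} forces $\epsilon(L_1,L_1;R_3,R_4)=0$: when $L_1=L_2$, both $p$ and $p'$ send the unit element of $\Lambda(L_1,L_1;R_3)$ to the unit in $\Lambda(L_1,L_1;R_4)$, and a $\Z/2\Z$-equivariant map between two-element sets is completely determined by its value on one element, so $p=p'$ there. Axiom \ref{C-I} (associativity) translates into the $R$-cocycle relation
\[
\epsilon(L_1,L_2;R_3,R_4)+\epsilon(L_1,L_2;R_4,R_5)=\epsilon(L_1,L_2;R_3,R_5),
\]
while \ref{C-IV} (compatibility with composition of $\Hom$-sets) gives the parallel $L$-cocycle relation
\[
\epsilon(L_0,L_1;R_3,R_4)+\epsilon(L_1,L_2;R_3,R_4)=\epsilon(L_0,L_2;R_3,R_4).
\]

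Finally I would trivialize $\epsilon$ as a coboundary. Assuming $\SL$ and $\SR$ are nonempty (the statement is vacuous otherwise), fix basepoints $L_\ast\in\SL$ and $R_\ast\in\SR$, and set $\xi(L_1,L_2;R):=\epsilon(L_1,L_2;R_\ast,R)$. The $R$-cocycle relation together with $\epsilon(L_1,L_2;R_3,R_3)=0$ yields $\epsilon(L_1,L_2;R_3,R_4)=\xi(L_1,L_2;R_3)+\xi(L_1,L_2;R_4)$. The $L$-cocycle relation passes to $\xi$, and combined with $\xi(L,L;R)=0$ it forces $\xi(L_1,L_2;R)=\xi(L_\ast,L_1;R)+\xi(L_\ast,L_2;R)$. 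Defining $\iota(L,R):=\epsilon(L_\ast,L;R_\ast,R)$ then delivers the identity claimed in the proposition. The one step requiring care is the deduction $\epsilon(L_1,L_1;R_3,R_4)=0$ from \ref{C-III}, which hinges on the $\Z/2\Z$-equivariance built into the definition of $\Lambda(L_1,L_2;R_3)$; the remainder is routine cohomological bookkeeping in the group $\Z/2\Z$.
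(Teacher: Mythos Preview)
The paper states Proposition~\ref{PD.4} without proof; it appears only as a companion uniqueness statement to Proposition~\ref{PD.3}, and the appendix then proceeds directly to the construction and verification of the preferred collection $\{p\}$. So there is no paper proof to compare against.

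Your argument is complete and correct. The key observations---that $\epsilon$ satisfies additive cocycle conditions in both the $L$-variables (from \ref{C-IV}) and the $R$-variables (from \ref{C-I}), and vanishes on the diagonals (from \ref{C-II} and \ref{C-III})---are exactly what is needed, and the trivialization by choosing basepoints $L_\ast,R_\ast$ and setting $\iota(L,R)=\epsilon(L_\ast,L;R_\ast,R)$ is the standard and correct way to exhibit such a bicocycle as a coboundary over $\Z/2\Z$. The step you flag as requiring care, namely that \ref{C-III} forces $\epsilon(L_1,L_1;R_3,R_4)=0$, is indeed immediate once one recalls that a $\Z/2\Z$-equivariant bijection of two-element sets is determined by the image of a single element.
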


\begin{remark} The proof of Proposition \ref{PD.3} is constructive; see Proposition \ref{PD.9} below for a refined statement. In particular, we will pick up a preferred collection $\{p\}$ for our primary applications in gauge theory. Axioms \ref{C-II} and \ref{C-III} are redundant, since they follow from the other two axioms.
\end{remark}

\subsection{Compatibility with Direct Sums} Proposition \ref{PD.3} will guarantee the first property \ref{U1} in Theorem \ref{T24.2}, but \ref{U2} will require an additional property of the collection $\{p(R_3,R_4)\}$, as we explain now.

 The class $\SR$ can be extended slightly to incorporate more operators. Denote this new class by $\widehat{\SR}$. Each element of $\widehat{\SR}$ is a triple $\widehat{R}_j\colonequals (P_j, R_j, Q_j)$ where 
\begin{itemize}
\item $R_j\in \SR$;
\item $P_j: H^a_j\to H^b_j$ and $Q_j: H^c_j\to H^d_j$ are arbitrary Fredholm operators; here $H^a_j\sim H^d_j$ are arbitrary Hilbert spaces. 
\end{itemize}
Now instead of $L_i\#R_j$, we look at
\[
L_i\#\widehat{R}_j\colonequals P_j\oplus (L_i\# R_j) \oplus Q_j: H_j^a\oplus L^2_1(E_i\#E_j)\oplus H_j^c\to H_j^b\oplus L^2(F_i\#F_j)\oplus H_j^d. 
\]

To extend Proposition \ref{PD.3} for this new class of operators $\widehat{\SR}$, we impose a convenient condition on the first class $\SL$. 

\begin{definition}\label{DD.5} The class of operators $\SL$ is called even if for any $L_1, L_2\in \SL$, 
	\begin{equation}\label{ED.18}
	\Ind L_1\#R_3-\Ind L_2\# R_3\equiv 0\mod 2,\ \forall R_3\in \SR. \qedhere
	\end{equation}
\end{definition}

Also, we look for a normalization property on the map
\[
p(R_3, \hr_3):  \Lambda(L_1, L_2; R_3)\to \Lambda(L_1, L_2; \hr_3). 
\]
\begin{proposition}\label{PD.5} Suppose $\SL$ is an even family of operators and Assumption \ref{AD.1} holds for $(\SL, \SR)$, then there exists a collection of bijections $\{p(\hr_3,\hr_4)\}$ satisfying axioms \ref{C-I}-\ref{C-IV} for the class $\SL$ and $\widehat{\SR}$. This collection satisfies the following additional property: for any $\hr_3=(P_3, R_3,Q_3)\in \widehat{\SR}$, the following diagram fis commutative:
	 \begin{equation}\label{ED.19}
	\begin{tikzcd}
\Lambda(P_3) \Lambda(L_1\# R_3)\Lambda(Q_3)\arrow[r,"{\Id\otimes x\otimes \Id}"]\arrow[d,"q"] &  \Lambda(P_3) \Lambda(L_2\# R_3)\Lambda(Q_3)\arrow[d,"q"]\\
\Lambda(L_1\# \hr_3)\arrow[r,"{p(R_3,\hr_3)(x)}"] &  \Lambda(L_2\# \hr_3)
\end{tikzcd}
	 \end{equation}
 for any $x\in \Lambda(L_1, L_2; L_3)$. The vertical maps are induced from \eqref{E24.3}.
\end{proposition}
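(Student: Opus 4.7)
The approach is to reduce the construction on the enlarged class $\widehat{\SR}$ to the already-established collection $\{p(R_3, R_4)\}$ on $\SR$ from Proposition \ref{PD.3}, leveraging the direct-sum structure $\hr_j = (P_j, R_j, Q_j)$. For each $\hr_j \in \widehat{\SR}$ and each $L_i \in \SL$, the operator $L_i \# \hr_j = P_j \oplus (L_i\#R_j) \oplus Q_j$ is Fredholm, and iterating the direct-sum isomorphism \eqref{E24.3} produces a canonical $\Z/2\Z$-equivariant identification
\[
q^{(i,j)}: \Lambda(P_j)\Lambda(L_i\#R_j)\Lambda(Q_j) \xrightarrow{\ \cong\ } \Lambda(L_i\#\hr_j),
\]
which I would take as the central tool.

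Using these isomorphisms, I would first define the ``transport'' map $p(R_j, \hr_j): \Lambda(L_1, L_2; R_j) \to \Lambda(L_1, L_2; \hr_j)$ by the formula
\[
p(R_j, \hr_j)(x) \ := \ q^{(2,j)} \circ \bigl(\Id_{\Lambda(P_j)} \otimes x \otimes \Id_{\Lambda(Q_j)}\bigr) \circ (q^{(1,j)})^{-1}.
\]
This is a $\Z/2\Z$-equivariant bijection and makes \eqref{ED.19} commute tautologically. For arbitrary $\hr_3, \hr_4 \in \widehat{\SR}$ I would then set
\[
p(\hr_3, \hr_4) \ := \ p(R_4, \hr_4) \circ p(R_3, R_4) \circ p(R_3, \hr_3)^{-1},
\]
where $p(R_3, R_4)$ denotes the collection supplied by Proposition \ref{PD.3}. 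When the flanking operators $P_j, Q_j$ are trivial (i.e.\ $\hr_j$ already lies in $\SR$), the map $q^{(i,j)}$ reduces to the identity and the new definition recovers the original $p(R_3, R_4)$, so nothing is lost.

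The axioms then follow by formal manipulations. \ref{C-I} is a telescoping argument combined with associativity of $\{p(R_3, R_4)\}$; \ref{C-II} is immediate from $p(R_3, R_3) = \Id$; \ref{C-III} uses that $p(R_j, \hr_j)$ sends $\Id_{\Lambda(L_1\#R_j)}$ to $\Id_{\Lambda(L_1\#\hr_j)}$ because $\Id \otimes \Id \otimes \Id$ conjugates trivially through $q^{(i,j)}$; and \ref{C-IV} reflects the elementary fact that transport by conjugation intertwines the composition operation $m$ on both sides.

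The main technical obstacle lies in the sign $(-1)^{\dim\coker A' \cdot \ind A''}$ built into \eqref{E24.3}. The two isomorphisms $q^{(1,j)}$ and $q^{(2,j)}$ carry signs involving $\ind(L_1\#R_j)$ and $\ind(L_2\#R_j)$ respectively, so a priori the conjugation defining $p(R_j, \hr_j)$ could acquire a sign flip depending on the parity of $\ind(L_1\#R_j) - \ind(L_2\#R_j)$, and this discrepancy could in turn obstruct compatibility with the composition law \ref{C-IV} across different choices of $\hr_j$. The evenness assumption of Definition \ref{DD.5} is tailored to kill exactly this ambiguity: it forces the two parities to agree simultaneously for every flanking pair $(P_j, Q_j)$, so the conjugation is coherent across all of $\widehat{\SR}$ and the four axioms hold without any sign corrections, completing the proof.
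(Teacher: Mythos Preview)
Your approach is correct and takes a genuinely different route from the paper. The paper extends the explicit excision construction of Proposition~\ref{PD.9} (the matrix $U$ and the sign $(-1)^r$ of \eqref{ED.9}) verbatim to the enlarged class $\widehat{\SR}$, and then invests effort in verifying diagram~\eqref{ED.17} to show this $U_*$-based definition agrees with the conjugation picture; that reconciliation step is where the parity computation $\Ind P_4\cdot(\Ind D_{13}+\Ind D_{23})\equiv 0$ appears and the evenness hypothesis enters. You instead \emph{define} $p(\hr_3,\hr_4)$ by conjugation through the $q^{(i,j)}$ isomorphisms, which makes \eqref{ED.19} hold tautologically and reduces Axioms~\ref{C-I}--\ref{C-IV} to the already-verified versions on $\SR$ by purely formal telescoping. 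This is cleaner.

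One clarification: your final paragraph overstates the role of evenness for your own argument. In your construction the map $p(R_j,\hr_j)(x)=q^{(2,j)}\circ(\Id\otimes x\otimes\Id)\circ(q^{(1,j)})^{-1}$ is a specific bijection of $2$-element sets, and the verification that it intertwines $m$ (Axiom~\ref{C-IV}) is the identity $q^{(2,j)}\circ(\Id\otimes y\otimes\Id)\circ(\Id\otimes x\otimes\Id)\circ(q^{(0,j)})^{-1}=q^{(2,j)}\circ(\Id\otimes(y\circ x)\otimes\Id)\circ(q^{(0,j)})^{-1}$, with the middle $q^{(1,j)}$'s cancelling on the nose---no sign bookkeeping is required. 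So your route actually proves the proposition without invoking Definition~\ref{DD.5}; the evenness hypothesis is essential only for the paper's route, where the $U_*$-construction must be matched against conjugation after the fact. What your approach buys is simplicity; what the paper's buys is that the extended collection is literally given by the same excision formula \eqref{ED.9}, which is conceptually satisfying and keeps the presentation uniform with Proposition~\ref{PD.9}.
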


Proposition \ref{PD.5} will be proved in Subsection \ref{SubsecD.9}.

\subsection{Construction of Bijections} Our construction of bijections $\{p\}$ is motivated by a simple proof of the excision principle which states that 
\begin{equation}\label{ED.3}
\Ind(L_1\# R_3)+\Ind(L_2\# R_4)=\Ind(L_1\# R_4)+\Ind(L_2\# R_3)
\end{equation}
for any $L_1, L_2\in \SL$ and $R_3, R_4\in \SR$. The author learned this elegant proof of excision principle in a graduate course at MIT, taught by Prof. Mrowka, who has kindly agreed to present his proof here. 

Consider a cut-off function $\theta: [-1,1]\to \R$ such that 
\[
\theta(t)\equiv 0 \text{ if } t\in [-1,-\half];\  \theta(t)\equiv \frac{\pi}{2} \text{ if } t\in [\half, 1].
\] 
Over each manifold $X_i\# X_j$, $\theta$ extends to a global function by setting $\theta \equiv 0$ on $X_i\setminus W_i$ and $\theta \equiv 1$ on $X_j\setminus W_j$. Consider functions $\phi_L\colonequals\cos\theta \text{ and }\phi_R\colonequals \sin\theta$, then the matrix 
\[
U=\begin{pmatrix}
\phi_L & -\phi_R\\
\phi_R & \phi_L
\end{pmatrix} \text{ with inverse }
U^{-1}=\begin{pmatrix}
\phi_L & \phi_R\\
-\phi_R & \phi_L
\end{pmatrix}
\]
defines an invertible operator between Hilbert spaces:
\[
L^2_k(E_1\# E_3)\oplus L^2_k(E_2\# E_4)\to L^2_k(E_1\# E_4)\oplus L^2_k(E_2\# E_3)
\]
for any $k\in\{0,1\}$. The same statement holds if we use bundles $F_i$ instead. In what follows, we write $E_{ij}$ for $E_i\#E_j$,  $F_{ij}$ for $F_i\#F_j$ and $D_{ij}$ for $L_i\#R_j$.

\begin{lemma}\label{EL.1} The following diagram is commutative up to a compact operator:
	\begin{equation}\label{ED.2}
\begin{tikzcd}
L^2_1(E_{13})\oplus L^2_1(E_{24})\arrow[r,"U"] \arrow[d,"D_{13}\oplus D_{24}"]&L^2_1(E_{14})\oplus L^2_1(E_{13})\arrow[d,"D_{14}\oplus D_{23}"]\\
L^2(F_{13})\oplus L^2(F_{24})\arrow[r,"U"]& L^2(F_{14})\oplus L^2(F_{13})
\end{tikzcd}
	\end{equation}
\end{lemma}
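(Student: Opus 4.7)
The plan is to compute the difference $(D_{14}\oplus D_{23})\circ U - U\circ (D_{13}\oplus D_{24})$ entry by entry as a $2\times 2$ matrix of operators, and show that each entry is compact. A direct matrix multiplication gives
\[
\begin{pmatrix} D_{14}\phi_L-\phi_L D_{13} & -D_{14}\phi_R+\phi_R D_{24} \\ D_{23}\phi_R-\phi_R D_{13} & D_{23}\phi_L-\phi_L D_{24}\end{pmatrix}.
\]
So each entry is a commutator-like expression $D_{ij}\phi-\phi D_{kl}$ where the cut-off $\phi\in\{\phi_L,\phi_R\}$ is one of the two smooth functions built from $\theta$. The structural point to exploit is that $d\phi_L$ and $d\phi_R$ are both supported inside the collar region $[-\tfrac12,\tfrac12]\times Y$, where all four operators $D_{ij}=L_i\#R_j$ coincide with the same reference operator $D$ under the bundle identifications $\phi^E_i,\phi^F_j$ from \ref{J2}.

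The first thing I would check is that the individual terms of the matrix are even well-defined between the stated Hilbert spaces, given that the bundles $E_{13},E_{24}$ live over $X_1\#X_3$, $X_2\#X_4$ but the targets involve $E_{14},E_{23}$ on different glued manifolds. This is immediate from the support conditions: $\phi_L$ is supported where $\theta<\pi/2$, so $\phi_L s$ for $s\in L^2_1(E_{13})$ can be transferred to a section of $E_{14}$ by extension by zero outside $X_1\cup W_1$, because both bundles agree with $E$ on $W_1$; symmetric statements hold for $\phi_R$, which is supported on the $X_j$-side.

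Next, for each of the four entries I would show that the commutator reduces to $[D,\phi]$ acting in the collar. Off the support of $d\phi$ the cut-off is locally constant, so $D_{ij}\phi-\phi D_{kl}$ vanishes there. On the support of $d\phi$, which lies inside the common collar, both operators $D_{ij}$ and $D_{kl}$ agree with $D$ under the identifications. Hence the entry equals $[D,\phi]=\sigma_D(d\phi)$ — a bundle homomorphism $E\to F$ of order zero, supported on the compact slab $[-\tfrac12,\tfrac12]\times Y$.

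The final step is the compactness conclusion: a zeroth-order operator with compact support factors as a bounded multiplication $L^2_1\to L^2_1$ followed by the Rellich-compact inclusion $L^2_1\hookrightarrow L^2$ over the compact slab, and the bundle map $\sigma_D(d\phi)\colon E\to F$ is bounded pointwise. Therefore each of the four entries is compact as a map $L^2_1\to L^2$, and so is the full difference, which is exactly the claim. I do not expect a serious obstacle — the only delicate point is the bookkeeping of the bundle identifications across the different glued manifolds, which is why I would carry out that check first before invoking the symbol calculus and Rellich.
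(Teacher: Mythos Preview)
Your proof is correct and follows exactly the approach the paper intends: the paper's own proof is the single sentence ``Note that the inclusion $L^2_1([-1,1]\times Y)\to L^2([-1,1]\times Y)$ is compact, since $Y$ is compact,'' and your argument simply unpacks this by computing the commutator matrix, observing that each entry reduces to the zeroth-order symbol $\sigma_D(d\phi)$ supported in the compact collar, and invoking Rellich.
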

\begin{proof} Note that the inclusion $L^2_1([-1,1]\times Y)\to L^2([-1,1]\times Y)$ is compact, since $Y$ is compact.
\end{proof}

Apparently, the excision principle \eqref{ED.3} is an immediate corollary of Lemma \ref{EL.1}. On the other hand, the digram \eqref{ED.2} also gives rise to an identification of orientations:
\begin{equation}\label{ED.6}
U_*: \Lambda(D_{13}\oplus D_{24})\to \Lambda(D_{14}\oplus D_{23})
\end{equation}
understood in the sense of Example \ref{EX24.1}. Let us make a more precise statement:
\begin{lemma}\label{LB.6} Suppose $\{\A_z: H_1\to H_2 \}_{z\in \CZ}$ is a family of Fredholme operators parametrized by a topological space $\CZ$. In addition, let  $\{U_z: H_0\to H_1\}_{z\in \CZ}$ and $\{V_z: H_2\to H_3\}_{z\in\CZ}$ be families of invertible operators parametrized by the same space. Form the new family $\{U_z\circ \A_z\circ V_z: H_0\to H_3\}_{z\in \CZ}$, then there is continuous bundle map:
	\begin{equation}\label{ED.4}
	(U,V)_*: \det \A\to \det (U\circ \A\circ V),
	\end{equation}
whose restriction at each fiber is given by 
	\[
	\alpha_z\otimes \beta_z^*\mapsto U_z^{-1}(\alpha_z)\otimes (V_z(\beta_z))^*
	\]
	if $\alpha_z$ and $\beta_z$ are elements in $\Lambda^{max}\ker \A_z$ and $\Lambda^{max}\coker \A_z$ respectively.
\end{lemma}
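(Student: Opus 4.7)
The plan is to construct the bundle map $(U,V)_*$ first fiberwise using the invertibility of $U_z$ and $V_z$, and then promote the pointwise construction to a continuous bundle map via the standard stabilization description of the determinant line bundle of a Fredholm family.

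First, at each point $z \in \CZ$, since $U_z: H_0 \to H_1$ and $V_z: H_2 \to H_3$ are invertible, I have natural linear isomorphisms
\[
 U_z^{-1}: \ker \A_z \xrightarrow{\sim} \ker(V_z \A_z U_z), \qquad V_z: \coker \A_z \xrightarrow{\sim} \coker(V_z \A_z U_z).
\]
Taking top exterior powers and dualizing the second, I obtain a canonical isomorphism of one-dimensional real vector spaces $\det \A_z \to \det(V_z \A_z U_z)$ whose explicit formula matches $\alpha_z \otimes \beta_z^* \mapsto U_z^{-1}(\alpha_z) \otimes (V_z(\beta_z))^*$.

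Second, for continuity, I recall that the determinant line bundle of a Fredholm family is not defined fiberwise by $\Lambda^{\max}\ker \otimes (\Lambda^{\max}\coker)^*$ directly, since those dimensions jump, but via stabilization: locally one picks a finite-dimensional subspace $K \subset H_2$ transverse to $\im \A_z$ for all $z$ in a neighborhood, so that $\A_z \oplus \iota_K: H_1 \oplus K \to H_2$ is surjective and its kernel forms a continuous finite-dimensional subbundle. I would apply the same construction to the family $V_z \A_z U_z$ using the transverse subspace $V_z(K) \subset H_3$, and observe that
\[
 (V_z \A_z U_z) \oplus \iota_{V_z(K)} = V_z \circ (\A_z \oplus \iota_K) \circ (U_z \oplus \Id_K),
\]
where both $V_z$ and $U_z \oplus \Id_K$ are continuous families of invertible operators. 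This gives a continuous isomorphism of the stabilized kernel subbundles, which descends to a continuous bundle map of determinant lines extending the fiberwise formula.

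Third, the main obstacle I expect is verifying the independence of this construction from the choice of stabilizing subspace $K$ and ensuring that the sign conventions implicit in the definition of $\det \A$ are preserved. Since transition functions between different stabilizations involve determinants of inclusions $K \hookrightarrow K'$ and of the induced maps on the surjective replacements, I would check that applying $V_z$ to all such subspaces commutes with these transition functions. This is essentially a naturality check; given the explicit fiberwise formula, sign consistency follows once one verifies it for a single choice of $K$ and then tracks the effect of enlarging $K$ to $K'$, the change in each factor contributing canceling determinant factors on both sides of the map.
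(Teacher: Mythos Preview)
Your proposal is correct and follows essentially the same approach as the paper, which simply says to ``go back to the definition of determinant line bundles in \cite[Section 20.2]{Bible} to verify that $(U,V)_*$ is continuous, using the fact that $U$ and $V$ are families of invertible operators.'' You have carried out exactly this verification via the stabilization description; one small bookkeeping point is that in your displayed identity the domains of the two sides differ by the identification $K\cong V_z(K)$ via $V_z$, but once that is made explicit the argument goes through as you describe.
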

\begin{proof} One has to go back to the definition of determinant line bundles in \cite[Section 20.2]{Bible} to verify that $(U, V)_*$ is continuous, using the fact that $U$ and $V$ are families of invertible operators. 
\end{proof}
\begin{remark}\label{RD.7} It is clear that this construction is functorial with respect to compositions of families of invertible operators. 
\end{remark}

\begin{lemma}\label{LD.8} The bundle map $\eqref{ED.4}$ is functorial with respect to direct sums of operators in the following sense. Suppose $\{\A'_z: H'_1\to H'_2 \}_{z\in \CZ}$ and $\{\A''_z: H''_1\to H''_2 \}_{z\in \CZ}$ are two families of Fredholm operators, and similarly we have families of invertible operators:
	\[
	\{U_z'\}, \{U_z''\}, \{V_z'\}, \{V_z''\}. 
	\]
	as in Lemma \ref{LB.6} parametrized by the same topological space $\CZ$.Then we have a commutative diagram:
	\begin{equation}\label{ED.5}
	\begin{tikzcd}[column sep=3cm]
\det\A'\otimes \det\A''\arrow[r,"{(U',V')_*\otimes (U'',V'')_*}"]\arrow[d,"q"] & \det (U'\circ \A'\circ V')\otimes \det (U''\circ \A''\circ V'')\arrow[d,"q"]\\
\det(\A'\oplus\A'') \arrow[r,"{(U'\oplus U'',V'\oplus V'')_*}"] & \det (U'\oplus U'')\circ (\A'\oplus\A'')\circ  (V'\oplus V''),
\end{tikzcd}
	\end{equation}
	where vertical maps are induced from \eqref{E24.3}. 
\end{lemma}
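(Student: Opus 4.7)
The plan is to verify the commutativity of \eqref{ED.5} pointwise on fibers; once this is established at every $z\in \CZ$, continuity of both maps is automatic from Lemma \ref{LB.6} together with the continuity of the concatenation map $q$ established in \cite[P.379]{Bible}. Because $U'_z, U''_z, V'_z, V''_z$ are invertible, the kernel and cokernel dimensions of $\A'_z,\A''_z,\A'_z\oplus \A''_z$ coincide with those of their conjugates $U'_z\A'_zV'_z$, $U''_z\A''_zV''_z$, $(U'_z\oplus U''_z)(\A'_z\oplus \A''_z)(V'_z\oplus V''_z)$, and likewise the Fredholm indices. Consequently the sign $(-1)^r$ with $r=\dim\coker \A'_z\cdot \ind \A''_z$ appearing in the definition of $q$ is the same for both vertical arrows of \eqref{ED.5}, so no sign discrepancy enters.

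First, I would pick bases $\alpha'_z\in \Lambda^{\max}\ker \A'_z$, $\beta'_z\in \Lambda^{\max}\coker \A'_z$, and likewise $\alpha''_z,\beta''_z$. Chasing the top-right route through the diagram sends
\[
(\alpha'_z\otimes(\beta'_z)^*)\otimes (\alpha''_z\otimes(\beta''_z)^*)\longmapsto (-1)^{r}\bigl((U'_z)^{-1}\alpha'_z\wedge (U''_z)^{-1}\alpha''_z\bigr)\otimes \bigl(V'_z\beta'_z\wedge V''_z\beta''_z\bigr)^*.
\]
Going down-then-right, the bottom vertical $q$ yields the element $(-1)^{r}(\alpha'_z\wedge\alpha''_z)\otimes(\beta'_z\wedge\beta''_z)^*$ in $\det(\A'_z\oplus \A''_z)$, and then applying $(U'_z\oplus U''_z,V'_z\oplus V''_z)_*$ gives
\[
(-1)^{r}\bigl((U'_z\oplus U''_z)^{-1}(\alpha'_z\wedge \alpha''_z)\bigr)\otimes\bigl((V'_z\oplus V''_z)(\beta'_z\wedge\beta''_z)\bigr)^*.
\]
The key observation is that under the canonical identifications $\ker(\A'_z\oplus \A''_z)=\ker\A'_z\oplus\ker\A''_z$ and $\coker(\A'_z\oplus \A''_z)=\coker\A'_z\oplus\coker\A''_z$, a block-diagonal invertible operator $U'_z\oplus U''_z$ acts on the top exterior power by $(U'_z)^{-1}\alpha'_z\wedge (U''_z)^{-1}\alpha''_z$, and similarly for $V'_z\oplus V''_z$; this is a purely linear-algebraic fact about the functoriality of exterior products under direct sums of linear maps. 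Hence the two outputs agree on fibers.

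The only step that requires minor care is matching the order of wedge factors exactly, since $\Lambda^{\max}$ of a direct sum is computed by placing $\ker \A'_z$-factors before $\ker \A''_z$-factors (and similarly for cokernels). With this ordering convention consistently applied throughout, no additional sign is introduced when commuting the block-diagonal invertible maps past the wedge, and the diagram \eqref{ED.5} closes. I do not expect any substantive obstacle: the statement is essentially a bookkeeping check verifying that the pointwise prescription for $(U,V)_*$ is compatible with the bilinear concatenation pairing that defines $q$, and the only nontrivial input is that the sign convention in the definition of $q$ depends only on $\dim\coker$ and $\ind$, both of which are preserved by invertible conjugation.
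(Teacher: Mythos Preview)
Your proposal is correct. The paper states Lemma \ref{LD.8} without proof, treating it as a routine verification; your direct pointwise check using the explicit fiberwise formulae for $(U,V)_*$ and $q$ is exactly the intended argument, and your observation that the sign exponent $r=\dim\coker\A'_z\cdot\ind\A''_z$ is unchanged under invertible conjugation is the only point that needs to be made explicit.
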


In our primary applications, $\CZ$ is always a contractible space; see Example \ref{EX24.1}. In light of Lemma \ref{LB.6}, the identification in \eqref{ED.6} is in fact $(U^{-1}, U)_*$, but we will keep using the notation $U_*$ for convenience. Now consider the following diagram:
\begin{equation}\label{ED.7}
\begin{tikzcd}[column sep=huge]
\Lambda(D_{13})\Lambda(D_{24})\arrow[r,"{\bar{p}(R_3,R_4)}"]\arrow[d,"{q_{13;24}}"] &  \Lambda(D_{14})\Lambda(D_{23})\arrow[d,"q_{14;23}"]\\
\Lambda(D_{13}\oplus D_{24})\arrow[r,"(-1)^rU_*"]& \Lambda(D_{14}\oplus D_{23})\
\end{tikzcd}
\end{equation}
where vertical maps are induced from \eqref{E24.3}. The top horizontal arrow $\bar{p}(R_3,R_4)$ is equivalent to a map:
\[
p(R_3,R_4): \Hom_{\Z/2\Z}(\Lambda(D_{13}), \Lambda(D_{23}))\to\Hom_{\Z/2\Z}(\Lambda(D_{14}), \Lambda(D_{24}))
\]
for which we are aiming in \eqref{ED.8}. One may define $\bar{p}(D_3,D_4)$ by making the diagram \eqref{ED.7} commutative, but there is a choice of freedom for the sign $(-1)^r$. In fact, there is no reason to believe that the identification map $U_*$ in \eqref{ED.6} is just the natural one, as there are different ways to set up the excision picture.
\begin{proposition}\label{PD.9} Suppose Assumption \ref{AD.1} holds for the families of operators $(\SL,\SR)$. We construct the bijection in \eqref{ED.8} by declaring the diagram \eqref{ED.7} to be commutative with 
	\begin{equation}\label{ED.9}
	r(L_1, L_2; R_3, R_4)=\Ind D_{23}\cdot \Ind D_{24}+\Ind D_{24}. 
	\end{equation}
	Then the collection of bijections $\{p(R_3, R_4)\}$ satisfies Axioms \ref{C-I}-\ref{C-IV}. 
\end{proposition}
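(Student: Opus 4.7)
The bijection $p(R_3, R_4)$ is well defined because every arrow appearing in \eqref{ED.7}---$q_{13;24}$, $q_{14;23}$, $U_*$, and multiplication by $(-1)^r$---is a $\Z/2\Z$-equivariant bijection of $2$-element sets; hence $\bar p(R_3, R_4)$ is a bijection and the induced map $p(R_3, R_4)$ on $\Hom_{\Z/2\Z}$-sets is likewise. My plan is to verify \ref{C-I} and \ref{C-IV} directly from this definition; \ref{C-II} and \ref{C-III} then follow formally, but can also be checked by hand. For \ref{C-II}, when $R_3=R_4$ the cut-off $\theta$ defining the matrix $U$ can be deformed to the zero cut-off through a path of invertible operators (the bundles and operators on both sides now coincide), which shows $U_*=\Id$; combined with $r(L_1,L_2;R_3,R_3)=\Ind D_{23}(\Ind D_{23}+1)\equiv 0\pmod 2$, this yields $p(R_3,R_3)=\Id$.

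For axiom \ref{C-IV}, I would unwind the definition of $\bar p$ and compare sign contributions across the three pairs $(L_0,L_1)$, $(L_1,L_2)$, $(L_0,L_2)$, originating from (i) the explicit factors $(-1)^r$, (ii) the sign $(-1)^{\dim\coker D'\cdot \Ind D''}$ built into each $q$-map, and (iii) the identifications $U_*$. Since $r(L_i,L_j;R_3,R_4)=\Ind D_{j3}\cdot\Ind D_{j4}+\Ind D_{j4}$ depends only on the second argument $L_j$, the residual discrepancy
\[
r(L_0,L_2)-r(L_0,L_1)-r(L_1,L_2)\equiv \Ind D_{14}(\Ind D_{13}+1)\pmod 2
\]
is to be absorbed by the $q$-sign and $U_*$-sign contributions produced when rearranging the direct sums of Fredholm operators carrying the Hom composition; this is a direct calculation using Lemma \ref{LD.8} and the bilinearity of the determinant line identification \eqref{E24.3}.

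The main obstacle is axiom \ref{C-I}. The strategy is to reformulate the identity $p(R_4,R_5)\circ p(R_3,R_4)=p(R_3,R_5)$ as an equality of elements in the $2$-element set
\[
\Lambda(D_{13})\Lambda(D_{23})\Lambda(D_{25})\Lambda(D_{15}),
\]
where the shared factors $\Lambda(D_{14})^2=\Lambda(D_{24})^2=1$ cancel in the fibered product, and then realize both sides via a single triple excision on the direct sum $D_{13}\oplus D_{24}\oplus D_{25}$. Composing the two block rotations $U_*(R_3,R_4)$ and $U_*(R_4,R_5)$ yields a $3\times 3$ block invertible operator which, by interpolating cut-off functions, is homotopic through invertibles (Remark \ref{RD.7}) to the $U_*$-map realizing the $(R_3,R_5)$ excision followed by a permutation of the three summands. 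The sign of this permutation, combined with the three $q$-signs produced via Lemma \ref{LD.8} and the explicit sum
\[
r(L_1,L_2;R_3,R_4)+r(L_1,L_2;R_4,R_5)+r(L_1,L_2;R_3,R_5)\equiv \Ind D_{24}(\Ind D_{23}+\Ind D_{25}+1)+\Ind D_{23}\Ind D_{25}\pmod 2,
\]
produces a total mod-$2$ count that is exactly zero; the formula \eqref{ED.9} is reverse-engineered to enforce this cancellation. The heart of the proof is therefore a combinatorial sign computation showing that the particular choice of $r$ in \eqref{ED.9} is forced by compatibility with the triple excision picture.
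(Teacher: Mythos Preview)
Your overall strategy---define $\bar p$ via diagram~\eqref{ED.7}, then verify the axioms by comparing compositions of $U$-matrices up to homotopy and tracking signs---is exactly the paper's, and your treatment of \ref{C-II} agrees with Subsection~\ref{SubsecB.3} onward. However, the substance of the proof lies entirely in the two steps you treat as routine, and neither is carried out.

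For \ref{C-I} the paper does \emph{not} work with a three-block sum $D_{13}\oplus D_{24}\oplus D_{25}$; it keeps the ``spectator'' factors $D_{14}$ and $D_{24}$ and builds a four-block diagram~\eqref{ED.13} whose middle square compares $V_1=U\oplus U$ with another $V_2=U\oplus U$ through explicit $4\times 4$ permutation matrices $V_3,V_4$. The crucial technical step is Lemma~\ref{LB.16}: an \emph{explicit} two-stage homotopy $V_2(\tau)$ (with computed determinant) showing that $V_2$ is homotopic to $V_4^{-1}V_1V_3$ through invertibles. Your phrase ``by interpolating cut-off functions, is homotopic through invertibles'' hides exactly this lemma; the naive interpolation of the angle $\theta$ does \emph{not} obviously remain invertible once the blocks are mixed by the permutation, and the paper has to write down $V_2(\tau)$ by hand. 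Likewise, your assertion that ``the total mod-$2$ count is exactly zero'' is precisely equation~\eqref{ED.14}: the paper computes the four contributions $a_1,a_5,a_l,a_r$ (the last two coming from the signs of the permutations $V_3,V_4$ via Lemmas~\ref{LD10}--\ref{LD11}), sums fourteen terms, and reduces to $b+b^2\equiv 0$ with $b=\Ind D_{1j}-\Ind D_{2j}$. Your three-block scheme may well be made to work, but it requires its own version of Lemma~\ref{LB.16} and its own sign count, neither of which you have supplied. The same comments apply to \ref{C-IV}, which the paper handles by the mirror diagram~\eqref{ED.15}; your residual discrepancy $\Ind D_{14}(\Ind D_{13}+1)$ must be matched against concrete permutation signs, not absorbed by fiat.
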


The proof of Proposition \ref{PD.9} will dominate Subsections \ref{SubsecB.3}-\ref{SubsecB.8}.

\subsection{A Toy Model}\label{SubsecB.3} To convince ourselves that the formula $\eqref{ED.9}$ indeed provides the correct convention, let us verify a degenerate case when $Y=\emptyset$.  In this case, we assume that every $L_i $ and $R_j$ are Fredholm operators themselves, so 
\[
D_{ij}=L_i\oplus R_j,
\]
and \eqref{ED.7} fits into a larger diagram:
\begin{equation}\label{ED.10}
\begin{tikzcd}[column sep=huge]
\Lambda(L_1)\Lambda(R_3)\Lambda(L_2)\Lambda(R_4)\arrow[r,"{\tilde{p}(R_3,R_4)=\Id}"]\arrow[d,"q_{1;3}\otimes q_{2;4}"] &\Lambda(L_1)\Lambda(R_4)\Lambda(L_2)\Lambda(R_2)\arrow[d,"q_{1;4}\otimes q_{2;3}"]\\
\Lambda(D_{13})\Lambda(D_{24})\arrow[r,"{\bar{p}(R_3,R_4)}"]\arrow[d,"{q_{13;24}}"] &  \Lambda(D_{14})\Lambda(D_{23})\arrow[d,"q_{14;23}"]\\
\Lambda(D_{13}\oplus D_{24})\arrow[r,"(-1)^rU_*"]& \Lambda(D_{14}\oplus D_{23})\
\end{tikzcd}.
\end{equation}

If we declare the top horizontal map $\tilde{p}(R_3, R_4)$ to be the identity map, then the resulting collection $\{p(R_3,R_4)\}$ will satisfy all axioms we want. Hence, we can determine the sign $(-1)^r$ on the bottom if the digram $\eqref{ED.10}$ is commutative. In this case, the matrix $U$ is a $4\times 4$ matrix:
\[
U=\begin{pmatrix}
1 & 0  & 0 &0\\
0 & 0 & 0 & -1\\
0 & 0 & 1 & 0\\
0 & 1 & 0 & 0
\end{pmatrix}: L^2_k(E_1\oplus E_3 \oplus E_2\oplus E_4)\to  L^2_k(E_1\oplus E_4 \oplus E_2\oplus E_3)
\]
for $k\in \{0,1\}$ (which is also true for $F_i$). To compute the sign induced from $U$, let us record two lemmas:
\begin{lemma}\label{LD10} Given $\{\A'_z: H'_1\to H'_2 \}_{z\in \CZ}$ and $\{\A''_z: H''_1\to H''_2 \}_{z\in \CZ}$ two families of Fredholm operators parametrized by the same topological space $\CZ$, consider the permutation operator:
	\[
	\tau=\begin{pmatrix}
0 & 1\\
1 & 0
	\end{pmatrix}: H'_1\oplus H'_2\to H'_2\oplus H'_1 \text{ and } H''_1\oplus H''_2\to H''_2\oplus H''_1.
	\]
	Then the following digram is commutative with $r_1=\Ind \A'\cdot \Ind\A''$: 
	\[
	\begin{tikzcd}[column sep=3cm]
\Lambda(\A')\Lambda(\A'')\arrow[r,"\Id"] \arrow[d,"{q(\A',\A'')}"]& \Lambda(\A'')\Lambda(\A')\arrow[d,"{q(\A'',\A')}"]\\
\Lambda(\A'\oplus \A'')\arrow[r,"{(-1)^{r_1}(\tau^{-1},\tau)_*}"]&\Lambda(\A''\oplus \A'). 
	\end{tikzcd}
	\]
	where vertical maps are induced from \eqref{E24.3}. 
\end{lemma}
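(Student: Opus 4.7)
Since the diagram of line bundles over $\CZ$ is local in $z$, and the constituent bundle maps are continuous by Lemma \ref{LB.6} and the construction of $q$, it suffices to verify commutativity fiberwise, i.e.\ to compute the sign obtained by chasing an element through each path at a single point $z\in\CZ$. Let $k=\dim\ker\A'_z$, $l=\dim\ker\A''_z$, $m=\dim\coker\A'_z$, $n=\dim\coker\A''_z$, and fix generators $\alpha'\in\Lambda^{\max}\ker\A'_z$, $\alpha''\in\Lambda^{\max}\ker\A''_z$, $\beta'\in\Lambda^{\max}\coker\A'_z$, $\beta''\in\Lambda^{\max}\coker\A''_z$. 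Start with $(\alpha'\otimes(\beta')^*)\otimes(\alpha''\otimes(\beta'')^*)\in \det\A'_z\otimes\det\A''_z$.

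The top path first applies the identification $\Lambda(\A')\Lambda(\A'')\to\Lambda(\A'')\Lambda(\A')$ (just reordering the two factors; this introduces no sign because $\Z/2\Z$-acts diagonally and these are unordered two-element sets identified in the obvious way), and then applies $q(\A'',\A')$ using the defining formula \eqref{E24.3}. This yields $(-1)^{r''}(\alpha''\wedge\alpha')\otimes(\beta''\wedge\beta')^*$ in $\det(\A''\oplus\A')$, where $r''=n(k-m)=n\,\Ind\A'$. The bottom path first applies $q(\A',\A'')$ to obtain $(-1)^{r'}(\alpha'\wedge\alpha'')\otimes(\beta'\wedge\beta'')^*$ with $r'=m(l-n)=m\,\Ind\A''$, and then applies $(\tau^{-1},\tau)_*$, which, by Lemma \ref{LB.6}, implements the canonical identifications $\ker(\A'\oplus\A'')=\ker\A'\oplus\ker\A''\xrightarrow{\tau}\ker\A''\oplus\ker\A'=\ker(\A''\oplus\A')$ and analogously on cokernels.

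The key fact is how $\tau$ acts on top wedges. Under the tautological isomorphism $\Lambda^{k+l}(V'\oplus V'')\cong\Lambda^{k+l}(V''\oplus V')$ induced by the swap of summands, a decomposable element $\alpha'\wedge\alpha''$ with $\alpha'\in\Lambda^k V'$ and $\alpha''\in\Lambda^l V''$ is sent to $(-1)^{kl}\alpha''\wedge\alpha'$, because rearranging $k$ factors past $l$ factors in the exterior algebra of the total space costs $kl$ transpositions. Applying this to the kernel ($k,l$) and to the cokernel ($m,n$), and remembering to dualize, gives $(\alpha'\wedge\alpha'')\otimes(\beta'\wedge\beta'')^*\mapsto(-1)^{kl+mn}(\alpha''\wedge\alpha')\otimes(\beta''\wedge\beta')^*$. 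Inserting the asserted sign $(-1)^{r_1}$ at the bottom, the bottom path produces $(-1)^{r_1+r'+kl+mn}(\alpha''\wedge\alpha')\otimes(\beta''\wedge\beta')^*$.

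Equating the two outputs mod $2$ gives the congruence $r_1\equiv r''-r'-kl-mn\equiv (nk-ml)+kl+mn\equiv(k+m)(l+n)\pmod 2$. Since $\Ind\A'=k-m\equiv k+m$ and $\Ind\A''=l-n\equiv l+n$ modulo $2$, this reduces precisely to $r_1\equiv\Ind\A'\cdot\Ind\A''\pmod 2$, which is the claim. The main obstacle in writing this out is bookkeeping of signs: one must carefully distinguish the two different $q$-signs appearing on either side of the square and the Koszul sign coming from the swap of direct summands on wedge powers, and must not forget that the cokernel entries are dualized (so the swap there contributes a sign to $(\beta'\wedge\beta'')^*$ rather than to $\beta'\wedge\beta''$). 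Once these signs are isolated, the identity $r_1=\Ind\A'\cdot\Ind\A''\pmod 2$ is algebraic, and continuity in $z$ follows from the corresponding continuity statements in \eqref{E24.3} and Lemma \ref{LB.6}.
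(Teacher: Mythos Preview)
Your proof is correct. The paper states Lemma \ref{LD10} without proof, treating it as a standard sign computation for determinant line bundles; you have supplied exactly that computation, tracking the Koszul sign $(-1)^{kl+mn}$ from the swap on kernels and cokernels together with the two distinct $q$-signs $r'=m\,\Ind\A''$ and $r''=n\,\Ind\A'$, and reducing correctly to $(k+m)(l+n)\equiv\Ind\A'\cdot\Ind\A''\pmod 2$. One minor remark: when you dualize, $(c\beta)^* = c^{-1}\beta^*$ rather than $c\beta^*$, but since $c=(-1)^{mn}$ this makes no difference to the outcome.
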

\begin{lemma}\label{LD11} Given a family of Fredholm operators $\{\A_z: H_1\to H_2 \}_{z\in \CZ}$, consider the operator
	\[
\sigma=-\Id: H_1\to H_1 \text{ and } H_2\to H_2. 
	\]
	Then the map $(\sigma^{-1},\sigma)_*$ defined by Lemma \ref{LB.6} equals 
	\[
	(-1)^{\Ind\A}: \Lambda(\A)\to \Lambda(\A). 
	\]
\end{lemma}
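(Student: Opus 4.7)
\smallskip

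The plan is to reduce the statement to a direct fiber-by-fiber calculation using the explicit formula for $(U,V)_*$ supplied by Lemma~\ref{LB.6}. At a point $z \in \CZ$, set $k = \dim \ker \A_z$ and $m = \dim \coker \A_z$, so that the fiber of $\det \A$ at $z$ is canonically $\Lambda^{\max} \ker \A_z \otimes (\Lambda^{\max}\coker \A_z)^{*}$. Substituting $U = \sigma^{-1} = -\Id_{H_1}$ and $V = \sigma = -\Id_{H_2}$ into the fiber formula gives
\[
(\sigma^{-1},\sigma)_{*}\bigl(\alpha_z \otimes \beta_z^{*}\bigr)
= (-\Id)(\alpha_z) \otimes \bigl((-\Id)(\beta_z)\bigr)^{*}
= (-1)^{k}\alpha_z \otimes (-1)^{m}\beta_z^{*},
\]
since multiplication by $-1$ on an $n$-dimensional real vector space acts as $(-1)^{n}$ on the top exterior power.

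Thus the endomorphism of $\det \A_z$ is multiplication by the scalar $(-1)^{k+m}$. Since $\Ind \A_z = k - m$ and parities agree, $(-1)^{k+m} = (-1)^{k-m} = (-1)^{\Ind \A}$. As $\Ind \A$ is locally constant on $\CZ$, this scalar is independent of $z$, and so descends to the sign $(-1)^{\Ind \A}$ on the two-element set $\Lambda(\A)$ of orientations of the determinant line bundle, as claimed.

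I do not expect any genuine obstacle here: the only point requiring mild care is to make sure the continuity clause of Lemma~\ref{LB.6} is applied correctly, so that the fiber-wise identity assembles into a bundle automorphism. This follows because $\sigma$ is a constant (hence continuous) family of invertible operators, so $(\sigma^{-1},\sigma)_{*}$ is a genuine bundle map as asserted in Lemma~\ref{LB.6}. Everything else is bookkeeping with signs on top exterior powers.
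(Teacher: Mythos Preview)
Your proof is correct and is exactly the natural fiber-wise calculation one would expect; the paper states Lemma~\ref{LD11} without proof, so there is nothing further to compare against.
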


By Remark \ref{RD.7}, we decompose $U$ into a composition of permutations and $\sigma$, so 
\begin{align*}
r&=\Ind L_2(\Ind L_3+\Ind L_4)+\Ind L_3\Ind L_4+\Ind L_4\\
&=\Ind D_{23}\cdot \Ind D_{24}+\Ind D_{24},
\end{align*}
by Lemma \ref{LD10} and \ref{LD11}. 

\subsection{Verification of Axiom \ref{C-III}} The toy model above can partially justify the choice of $r$ in \eqref{ED.9}. Let us give another reason by verifying Axiom \ref{C-III}, in which case $L_1=L_2$. Consider the family of operators parametrized by $\tau\in [0,1]$:
\begin{equation}\label{ED.12}
U_\tau=\begin{pmatrix}
\cos \theta_\tau & -\sin\theta_\tau\\
\sin \theta_\tau & \cos\theta_\tau
\end{pmatrix}: L^2_k(E_{23})\oplus L^2_k(E_{24})\to L^2_k(E_{24})\oplus L^2_k(E_{23}),\ k\in\{0,1\}
\end{equation}
with $\theta_\tau=\theta+\tau(\pi/2-\theta): X_{ij}\to \R$, so $U_0=U$ and 
\[
U_1=\begin{pmatrix}
0 & -1\\
1 & 0
\end{pmatrix}. 
\]
We have to verify the top horizontal map $\bar{p}(R_3, R_4)$ in \eqref{ED.7} is the identity map. The diagram \eqref{ED.7} remains commutative if we carry out the homotopy $\{U_\tau\}_{\tau\in [0,1]}$:
\begin{equation}
\begin{tikzcd}[column sep=huge]
\Lambda(D_{23})\Lambda(D_{24})\arrow[r,"{\bar{p}(R_3,R_4)}"]\arrow[d,"{q_{23;24}}"] &  \Lambda(D_{24})\Lambda(D_{23})\arrow[d,"q_{24;23}"]\\
\Lambda(D_{23}\oplus D_{24})\arrow[r,"(-1)^r(U_\tau)_*"]& \Lambda(D_{24}\oplus D_{23})\
\end{tikzcd}.
\end{equation}
When $\tau=1$, by Lemma \ref{LD10} and \ref{LD11}, $\bar{p}(R_3, R_4)=\Id$ if we define $r$  by \eqref{ED.9}.

\begin{remark} By Proposition \ref{PD.4}, there exist other choices of signs $(-1)^r$ in Proposition \ref{PD.9} that also fulfill Axioms \ref{C-I}-\ref{C-IV}, but \eqref{ED.9} seems to be the preferred one by what we have discussed so far. In fact, the toy model in Subsection \ref{SubsecB.3} may provide a normalization axiom that removes the ambiguity in Proposition \ref{PD.4}. 
\end{remark}

\subsection{Verification of Axiom \ref{C-II}} In this case, $R_3=R_4$. Analogous to \ref{C-III}, we consider the family of operators parametrized by $\tau\in [0,1]$:
\[
U_\tau=\begin{pmatrix}
\cos \theta_\tau' & -\sin\theta_\tau'\\
\sin \theta_\tau' & \cos\theta_\tau'
\end{pmatrix},
\] 
with $\theta_\tau'=(1-\tau) \theta$. Then $U_0=U$ and $U_1=\Id$. In this case, $r\equiv 0 \mod 2$. 

\subsection{Verification of Axiom \ref{C-I}} For operators $R_j\in \SR, j\in\{3,4,5\}$, we have to verify that 
\[
\bar{p}(R_3, R_4)\otimes \bar{p}(R_4,R_5) =\Id\otimes \bar{p}(R_3,R_5)
\]
as maps:
\[
\Lambda(D_{13})\Lambda(D_{24})\Lambda(D_{14})\Lambda(D_{25})\to \Lambda(D_{14})\Lambda(D_{23})\Lambda(D_{15})\Lambda(D_{24}).
\]

To do this, we introduce a huge diagram and explain the construction of each piece in 5 steps:
\begin{equation}\label{ED.13}
\begin{tikzcd}[column sep=3cm,row sep=0.9cm]
\Lambda(D_{13})\Lambda(D_{24})\Lambda(D_{14})\Lambda(D_{25})\arrow[r, "{\bar{p}(R_3, R_4)\otimes \bar{p}(R_4,R_5)}"]\arrow[d,"q_{13,24}\otimes q_{14;25}"] \arrow[rd, phantom, "\BW_1"]& 
\Lambda(D_{14})\Lambda(D_{23})\Lambda(D_{15})\Lambda(D_{24})\arrow[d,"q_{14,23}\otimes q_{15;24}"]\\
\Lambda(D_{13}\oplus D_{24})\Lambda(D_{14}\oplus D_{25})\arrow[r, "U_*\otimes U_*"] \arrow[d,"q_{1342;1425}"]\arrow[rd, phantom, "\BW_2"]& 
\Lambda(D_{14}\oplus D_{23})\Lambda(D_{15}\oplus D_{24})\arrow[d,"q_{1423;1524}"]\\
\Lambda(D_{13}\oplus D_{24}\oplus D_{14}\oplus D_{25})\arrow[r, "(V_1)_*"] \arrow[rd, phantom, "\BW_3"]& \Lambda(D_{14}\oplus D_{23}\oplus D_{13}\oplus D_{24})\\
\Lambda(D_{14}\oplus D_{24}\oplus D_{13}\oplus D_{25})\arrow[r, "(V_2)_*"] \arrow[u,"(V_3)_*"]\arrow[rd, phantom, "\BW_4"]
& \Lambda(D_{14}\oplus D_{24}\oplus D_{15}\oplus D_{23})\arrow[u,"(V_4)_*"]\\
\Lambda(D_{14}\oplus D_{24})\Lambda(D_{13}\oplus D_{25})\arrow[r, "U_*\otimes U_*"] \arrow[u,"q_{1424;1325}"]\arrow[rd, phantom, "\BW_5"]
& 
\Lambda(D_{14}\oplus D_{24})\Lambda(D_{15}\oplus D_{23})\arrow[u,"q_{1424;1523}"]\\
\Lambda(D_{14})\Lambda(D_{24})\Lambda(D_{13})\Lambda(D_{25})\arrow[r, "{\Id\otimes \bar{p}(R_3,R_5)}"]\arrow[u,"q_{14,24}\otimes q_{13;25}"] & 
\Lambda(D_{14})\Lambda(D_{24})\Lambda(D_{15})\Lambda(D_{23})\arrow[u,"q_{14,24}\otimes q_{15;23}"]
\end{tikzcd}.
\end{equation}

\Step 1. The first square $(\BW_1)$ is the tensor of two diagrams of the form \eqref{ED.7}, for operators $(L_1,L_2; R_3, R_4)$ and $(L_1,L_2; R_4, R_5)$. $(\BW_1)$ is commutative if we correct it by $(-1)^{a_1}$ where
\[
a_1\colonequals r_{12;34}+r_{12;45} \text{ with } r_{ij;kl}\colonequals r(L_i, L_j; R_k, R_l) \text{ defined by }\eqref{ED.9}.  
\]

\Step 2. Similarly, the last square $(\BW_5)$ is the tensor of two diagrams of the form \eqref{ED.7}, for operators $(L_1,L_2; R_4, R_4)$ and $(L_1,L_2; R_3, R_5)$. $(\BW_5)$ is commutative if we correct it by $(-1)^{a_5}$ with
\[
a_5\colonequals r_{12;35}.
\]

\Step 3. In the second square $(\BW_2)$, the bottom horizontal arrow is induced by the diagonal matrix 
\[
V_1=\begin{pmatrix}
U & 0\\
0 & U
\end{pmatrix}. 
\]
The square $(\BW_2)$ is constructed by Lemma \ref{LD.8}, and as such is commutative. 

\Step 4. In the fourth square $(\BW_4)$, the top horizontal arrow is induced by the same matrix 
\[
V_2=V_1=\begin{pmatrix}
U & 0\\
0 & U
\end{pmatrix}. 
\]
Similarly, the square $(\BW_2)$ is commutative by Lemma \ref{LD.8}.

\Step 5. In the third square $(\BW_3)$, the two vertical maps are induced respectively by 
\begin{equation}\label{ED.16}
V_3=\begin{pmatrix}
0 & 0 & -1 & 0\\
0 & 1 & 0 & 0\\
1 & 0 & 0 & 0\\
0 & 0 & 0 & 1
\end{pmatrix},
V_4=\begin{pmatrix}
1 & 0 & 0 & 0\\
0 & 0 & 0 & -1\\
0 & 0 & 1 & 0\\
0 & 1 & 0 & 0
\end{pmatrix}
\end{equation}
The commutativity of  $(\BW_3)$ follows from the next lemma:
\begin{lemma}\label{LB.16} The matrix $V_2$ is homotopic to the composition $V_4^{-1}\circ V_1\circ V_3$ by a path of invertible operators:
	\begin{equation}\label{ED.11}
V_4^{-1}\circ V_1\circ V_3=\begin{pmatrix}
0 & -\phi_R & -\phi_L & 0\\
\phi_R &  0 & 0 & \phi_L\\
\phi_L & 0 & 0 & -\phi_R\\
0 & -\phi_L & \phi_R & 0
\end{pmatrix}
\arraycolsep=1.4pt\def\arraystretch{1.5}
\begin{array}{l}
 :L^2_k(E_{14}\oplus E_{24}\oplus E_{14}\oplus E_{25})\to\\
 \qquad\qquad L^2_k(E_{14}\oplus E_{24}\oplus E_{15}\oplus E_{23}),
\end{array}
\end{equation}
for any $k\in\{0,1\}$. The same statement holds for bundles $F_{ij}$. 
\end{lemma}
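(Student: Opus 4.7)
The proof is primarily a $4 \times 4$ linear-algebra exercise. First, by direct matrix multiplication I would verify the stated formula for $V_4^{-1} \circ V_1 \circ V_3$. Since $V_4$ is a signed permutation with $\det V_4 = +1$ (easily checked by cofactor expansion along the first row), $V_4^{-1}$ equals $V_4^{T}$; computing $V_1 V_3$ column by column using the block form $V_1 = \mathrm{diag}(U,U)$ with $U = \bigl(\begin{smallmatrix} \phi_L & -\phi_R \\ \phi_R & \phi_L \end{smallmatrix}\bigr)$, and then left-multiplying by $V_4^{T}$, rearranges the rows to yield exactly the matrix displayed in \eqref{ED.11}.

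Second, I would construct an explicit homotopy from $V_2$ to $V_4^{-1} V_1 V_3$ through invertible operators. The key observation is that $V_3$ and $V_4$ are constant matrices belonging to the path-connected group $SO(4)$ (both have determinant $+1$, as verified above). Choose smooth one-parameter families $V_3(\tau), V_4(\tau) \in SO(4)$ with $V_3(0) = V_3$, $V_4(0) = V_4$, and $V_3(1) = V_4(1) = I_4$, for example via the exponential of an interpolating skew-symmetric matrix $V_i(\tau) = \exp((1-\tau)\log V_i)$. Define
\[
H(\tau) = V_4(\tau)^{-1} \circ V_1 \circ V_3(\tau), \qquad \tau \in [0,1].
\]
At the endpoints $H(0) = V_4^{-1} V_1 V_3$ and $H(1) = V_1 = V_2$. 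Each $H(\tau)$ is a composition of three bundle automorphisms whose pointwise values lie in $SO(4)$ (since $V_1$ is pointwise orthogonal, $\phi_L^2 + \phi_R^2 = 1$, and $V_3(\tau), V_4(\tau) \in SO(4)$ are constant). Thus each $H(\tau)$ is a bounded invertible operator on $L^2_k$ for $k \in \{0,1\}$ with uniformly bounded inverse, and $\tau \mapsto H(\tau)$ is operator-norm continuous.

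The main obstacle lies in the bookkeeping of bundle identifications: the constant matrices $V_3(\tau)$ and $V_4(\tau)$ must be interpreted carefully as operators between direct sums of $L^2$-spaces of sections over distinct glued $4$-manifolds $X_{ij}$, and these interpretations must be consistent along the entire homotopy. The standard resolution is to view each $V_i(\tau)$ as a bundle map acting via the canonical cylindrical trivializations $\phi_i^E, \phi_j^E$ from property \ref{J2} on the collar region $[-1,1]\times Y$, with off-collar pieces handled by the identifications $E_{ij}|_{X_i\setminus W_i} \cong E_i$. Because the endpoint operators $V_2$ and $V_4^{-1}V_1V_3$ are both pointwise orthogonal matrices whose entries are either constants or smooth functions of the cut-off angle $\theta$, and both are bona fide bounded operators between the fixed source $L^2_k(E_{14}\oplus E_{24}\oplus E_{13}\oplus E_{25})$ and target $L^2_k(E_{14}\oplus E_{24}\oplus E_{15}\oplus E_{23})$, one checks that the intermediate matrices $H(\tau)$ respect the same gluing conventions. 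This verification, together with the pointwise invertibility already noted, completes the proof of the lemma.
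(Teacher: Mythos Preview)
Your approach has a genuine gap at the step you flag as ``bookkeeping.'' The intermediate matrices $V_3(\tau)$ and $V_4(\tau)$ are \emph{not} well-defined operators between the relevant direct sums for $\tau\in(0,1)$. For instance, at $\tau=1$ your $V_3(1)=I_4$ would have to send the first summand $L^2_k(E_{14})$ (sections over $X_1\#X_4$) identically to the first summand $L^2_k(E_{13})$ of the intermediate space (sections over $X_1\#X_3$); multiplication by the constant $1$ does not restrict support, so there is no such map. Consequently your sentence ``each $H(\tau)$ is a composition of three bundle automorphisms whose pointwise values lie in $SO(4)$'' is false as written: the factors are not bundle automorphisms at all, and your mechanism for deducing invertibility of $H(\tau)$ collapses.

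What \emph{can} be salvaged is the composite $H(\tau)=V_4(\tau)^{-1}V_1V_3(\tau)$, computed formally as a $4\times4$ matrix in $\phi_L,\phi_R$ and then reinterpreted directly as an operator between the fixed source and target. For the specific exponential paths you propose (plane rotations, since $V_3,V_4$ are each quarter-turns in a coordinate $2$-plane), one can check entry by entry that every $\phi_L$ appears only where the left pieces $X_i$ match and every $\phi_R$ only where the right pieces $X_j$ match, so $H(\tau)$ is well-defined; pointwise orthogonality on the collar and block-by-block invertibility off the collar then give invertibility. But this verification \emph{is} the content of the lemma, and it is not automatic from path-connectedness of $SO(4)$ --- a generic path would produce ill-defined entries. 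The paper sidesteps this by writing down the homotopy $V_2(\tau)$ explicitly in two stages (rotating the $\phi_L$ entries in columns $1,3$ first, then in columns $2,4$) and verifying invertibility by computing the determinant at each stage.
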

\begin{proof}[Proof of Lemma \ref{LB.16}] We construct the homotopy in 2 steps. If we compare $V_2$ with \eqref{ED.11}, only positions of $\phi_L$ are different. It suffices to move them around by homotopy. 
	
	\Step 1. Take $\tau\in [0,1]$ and define:
	\[
V_2(\tau)=\begin{pmatrix}
\phi_L\cos \frac{\pi\tau}{2}& -\phi_R & -\phi_L\sin \frac{\pi\tau}{2} & 0\\
\phi_R &  \phi_L & 0 & 0\\
\phi_L\sin \frac{\pi\tau}{2}& 0 & \phi_L \cos \frac{\pi\tau}{2}& -\phi_R\\
0 & 0 & \phi_R & \phi_L
\end{pmatrix},\ \det V_2(\tau)=\phi_L^4+\phi_R^4+2\phi_L^2\phi_R^2\cos\frac{\pi\tau}{2}. 
	\]
	
		\Step 2. Take $\tau\in [1,2]$ and define:
	\[
	V_2(\tau)=\begin{pmatrix}
	0& -\phi_R & -\phi_L& 0\\
	\phi_R &  \phi_L\sin\frac{\pi\tau}{2}  & 0 & \phi_L\cos\frac{\pi\tau}{2} \\
	\phi_L& 0 & \phi_L & -\phi_R\\
	0 & -\phi_L\cos\frac{\pi\tau}{2}  & \phi_R & \phi_L\sin\frac{\pi\tau}{2}
	\end{pmatrix},\ \det V_2(\tau)=\phi_L^4+\phi_R^4-2\phi_L^2\phi_R^2\cos\frac{\pi\tau}{2}. 
	\]
	
	Then $V_2(0)=V_0$ and $V_2(2)=V_4^{-1}\circ V_1\circ V_3$. 
\end{proof}

Back to the proof of \ref{C-I}. To figure out the overall sign involved in the diagram \eqref{ED.13}, we have to compute the compositions of all left vertical maps and all right vertical maps using Lemma \ref{LD10} and \ref{LD11}. They are induced by $V_3$ and $V_4$ respectively, so the outcomes are
\begin{align*}
a_l&=\Ind D_{13}\Ind D_{14}+(\Ind D_{13}+\Ind D_{14})\Ind D_{24}+\Ind D_{13},\\
a_r&=\Ind D_{23}\Ind D_{24}+(\Ind D_{23}+\Ind D_{24})\Ind D_{15}+\Ind D_{23}.
\end{align*}

We have to verify that 
\begin{equation}\label{ED.14}
a_1+a_5+a_l+a_r\equiv 0\mod 2,
\end{equation}
which is the sum of 14 terms. In the computation below, we use the excision principle \eqref{ED.6} and set
\[
b=\Ind D_{1j}-\Ind D_{2j},\ 3\leq j\le 5,
\]
so
\begin{align*}
a_1+a_5+a_l+a_r&=2\Ind D_{23}\Ind D_{24}+2\Ind D_{25}+(\Ind D_{13}+\Ind D_{23})\\
&\qquad+(\Ind D_{23}+\Ind D_{24})(\Ind D_{15}+\Ind D_{25})\\
&\qquad+\Ind D_{24}(1+\Ind D_{14})+\Ind D_{13}(\Ind D_{14}+\Ind D_{24})\\
&\equiv b+(\Ind D_{23}+\Ind D_{24})\cdot b\\
&\qquad+\Ind D_{24}(1+\Ind D_{24})+\Ind D_{24}\cdot b+ \Ind D_{13}\cdot b\\
&\equiv b+b^2\equiv 0\mod 2. 
\end{align*}

This completes the proof of \ref{C-I}.
\begin{remark} The computation above is not enlightening at all. However, once we know the sum \eqref{ED.14} admits an expression that involves indices of $D_{ij}$ only, one may refer to the case when $Y=\emptyset$ in Subsection \ref{SubsecB.3}, as the computation does not see the difference. In that case, there is much easier to see why $\{q(R_3,R_4)\}$ are associative.
\end{remark}
\subsection{Verification of Axiom \ref{C-IV}}\label{SubsecB.8} We have formulated the problem in a way that is asymmetric in $\SL$ and $\SR$.  But Axiom \ref{C-IV} is identical to Axiom \ref{C-III} if one  interchanges the roles of $\SL$ and $\SR$. The proof \ref{C-IV} follows the same line of arguments as above. For any operators $L_i\in \SL, 0\leq i\leq 2$, we have to verify that 
\[
\bar{p}(L_0, L_1; R_3, R_4)\otimes \bar{p}(L_1, L_2; R_3,R_4) =\Id\otimes \bar{p}(L_0, L_2; R_3, R_4)
\]
as maps:
\[
\Lambda(D_{03})\Lambda(D_{14})\Lambda(D_{13})\Lambda(D_{24})\to \Lambda(D_{04})\Lambda(D_{13})\Lambda(D_{14})\Lambda(D_{23}),
\]
and the corresponding diagram is:
\begin{equation}\label{ED.15}
\begin{tikzcd}[column sep=3cm,row sep=0.9cm]
\Lambda(D_{03})\Lambda(D_{14})\Lambda(D_{13})\Lambda(D_{24})\arrow[r, "{\bar{p}(L_0, L_1)\otimes \bar{p}(L_1,L_2)}"]\arrow[d,"q_{03,14}\otimes q_{13;24}"] & 
\Lambda(D_{04})\Lambda(D_{13})\Lambda(D_{14})\Lambda(D_{23})\arrow[d,"q_{04,13}\otimes q_{14;23}"]\\
\Lambda(D_{03}\oplus D_{14})\Lambda(D_{13}\oplus D_{24})\arrow[r, "U_*\otimes U_*"] \arrow[d,"q_{0314;1324}"]& 
\Lambda(D_{04}\oplus D_{13})\Lambda(D_{14}\oplus D_{23})\arrow[d,"q_{0413;1423}"]\\
\Lambda(D_{03}\oplus D_{14}\oplus D_{13}\oplus D_{24})\arrow[r, "(V_1)_*"] & \Lambda(D_{04}\oplus D_{13}\oplus D_{14}\oplus D_{23})\\
\Lambda(D_{13}\oplus D_{14}\oplus D_{03}\oplus D_{24})\arrow[r, "(V_2)_*"] \arrow[u,"(V_3)_*"]
& \Lambda(D_{14}\oplus D_{13}\oplus D_{04}\oplus D_{23})\arrow[u,"(V_3)_*"]\\
\Lambda(D_{13}\oplus D_{14})\Lambda(D_{03}\oplus D_{24})\arrow[r, "U_*\otimes U_*"] \arrow[u,"q_{1413;0324}"]
& 
\Lambda(D_{13}\oplus D_{14})\Lambda(D_{04}\oplus D_{23})\arrow[u,"q_{1314;0423}"]\\
\Lambda(D_{14})\Lambda(D_{13})\Lambda(D_{03})\Lambda(D_{24})\arrow[r, "{\Id\otimes \bar{p}(R_3,R_5)}"]\arrow[u,"q_{13,14}\otimes q_{03;24}"] & 
\Lambda(D_{14})\Lambda(D_{13})\Lambda(D_{04})\Lambda(D_{23})\arrow[u,"q_{14,13}\otimes q_{04;23}"]
\end{tikzcd}.
\end{equation}
with $V_3$ defined as in \eqref{ED.16}. Again we have to verify the sum
\[
a_1'+a_5'+a_l'+a_r'\equiv 0\mod 2
\]
where 
\begin{align*}
a_1'&=r_{01;34}+r_{12;34}= r_{11;34}+r_{02;34}=a_5',\\
a_l'&=\Ind D_{13}\Ind D_{14}+(\Ind D_{13}+\Ind D_{14})\Ind D_{03}+\Ind D_{03},  &&\\
a_r'&=\Ind D_{13}\Ind D_{14}+(\Ind D_{13}+\Ind D_{14})\Ind D_{04}+\Ind D_{04}. &&
\end{align*}
If we set $c=\Ind D_{i3}-\Ind D_{i4},\ i\in \{0,1,2\}$, then 
\[
a_1'+a_5'+a_l'+a_r'\equiv c^2+c\equiv 0\mod 2. 
\]

The last step is to show that the matrix $V_2$ is homotopic to 
\[
V_3^{-1}\circ V_1\circ V_3=\begin{pmatrix}
\phi_L & \color{blue}{0} & 0& \color{blue}{-\phi_R}\\
\color{red}{0}& \phi_L & \color{red}{-\phi_R} & 0\\
0 & \color{blue}{\phi_R}& \phi_L & \color{blue}{0} \\
\color{red}{\phi_R}& 0 & \color{red}{0} & \phi_L
\end{pmatrix}
\arraycolsep=1.4pt\def\arraystretch{1.5}
\begin{array}{l}
:L^2_k(E_{13}\oplus E_{14}\oplus E_{03}\oplus E_{24})\to\\
\qquad\qquad L^2_k(E_{14}\oplus E_{13}\oplus E_{04}\oplus E_{23}).
\end{array}
\]

The homotopy is again constructed by ``rotating" the four entries colored red and the other four entries colored blue. The proofs of Proposition \ref{PD.3} and \ref{PD.9} are now completed.

\subsection{Proof of Proposition \ref{PD.5} }\label{SubsecD.9} We claim that the construction in Proposition \ref{PD.9} still works in this general setup, if $\SL$ is an even class of operators in the sense of Definition \ref{DD.5}. If we stick to operators $\hr=(P, R, Q)\in \widehat{\SR}$ with $P=\emptyset$, then the proof of Proposition \ref{PD.9} remains valid, since it does not see the difference. 

In the general case, let 
$
\hr_j=(P_j, R_j,Q_j)\in \widehat{\SR}, j=3,4. 
$
We wish to compare $p(\hr_3,\hr_4)$ with $p(R_3,R_4)$. To illustrate, we focus on the special case when $P_3=Q_3=\emptyset$ and verify the following digram is commutative:
\begin{equation}\label{ED.17}
\begin{tikzcd}[column sep=4cm]
\Lambda(D_{13})\Lambda(P_4)\Lambda(D_{24})\Lambda(Q_4)\arrow[r, "{\Id\otimes \bar{p}(R_3, R_4)}"]\arrow[d, "\Id\otimes q"]& \Lambda(P_4)\Lambda(D_{13})\Lambda(Q_4)\Lambda(D_{23})\arrow[d, "q\otimes\Id"]\\
\Lambda(D_{13})\Lambda(P_3\oplus D_{24}\oplus Q_4)\arrow[r,"{\bar{p}(R_3,\hr_4)}"]\arrow[d,"q_{13;24}"]& \Lambda(P_4\oplus D_{14}\oplus Q_4)\Lambda(D_{23})\arrow[d,"q_{14;23}"]\\
\Lambda(D_{13}\oplus P_4\oplus D_{24}\oplus Q_4)\arrow[r,"(-1)^r U_*"] &\Lambda(P_4\oplus D_{14}\oplus Q_4\oplus D_{23}). 
\end{tikzcd}
\end{equation} 

The second square comes from the digram \eqref{ED.7} with $R_4$ replaced by $\hr_4$; so
\[
U=\begin{pmatrix}
0 & -1 & 0 & 0\\
\color{red}{\phi_L}& 0 & \color{red}{-\phi_R}& 0\\
0 & 0& 0& -1\\
\color{red}{\phi_R}& 0 &\color{red}{\phi_L} & 0 
\end{pmatrix}
\]
and 
\[
r=(1+\Ind D_{23})\Ind (L_2\# \hr_4)=(1+\Ind D_{23})(\Ind P_4+\Ind D_{23}+\Ind Q_3). 
\]

One may verify that the first square of \eqref{ED.17} also is commutative, using diagrams like \eqref{ED.13} and \eqref{ED.15}. The computation boils down to 
\[
\Ind P_4\cdot (\Ind D_{13}+\Ind D_{23})\equiv 0 \mod 2,
\]
so the assumption that $\SL$ is even is crucially here.  In general, one has to verify that a digram like \eqref{ED.17} commutes for arbitrary $\hr_3, \hr_4\in \widehat{\SR}$. This reduces the problem from $\widehat{\SR}$ to the smaller family $\SR$: it suffices to verify axiom \ref{C-I}-\ref{C-IV} for $(\SL, \SR)$, but this is done in Proposition \ref{PD.9}. Details are left for the readers. 

Finally, to verify the additional property \eqref{ED.19}, we set
\[
\hr_3=(\emptyset, R_3, \emptyset),\ \hr_4=(P_3, R_3, Q_3),
\]
in the diagram \eqref{ED.17}. Then we use the fact that the top arrow  $\bar{p}(R_3,R_3)=\Id$ to conclude. 
\subsection{Proof of Theorem \ref{T24.2}}\label{SubsecD.10} Having developed the abstract theory of relative orientations, let us explain its application in gauge theory and prove Theorem \ref{T24.2}. Consider a strict cobordism $X: Y_1\to Y_2$, let 
\[
Y=\partial X=(-Y_1)\cup ([-1,1]\times\Sigma)\cup Y_2.
\]
We regard $Y$ as a compact oriented 3-manifold by smoothing the corners. 

For any relative \spinc cobordism $\bs_X\in \Spincr(X;\bs_1,\bs_2)$, let the operator
\[
L_{\bs_X}
\]
be the restriction of the Fredholm operator $\CQ(\fc_1, \bs_X, \fc_2)$ on $X$ and $R_*$ be the restriction on the complement $\CX\setminus X$, then 
\[
\CQ(\fc_1, \bs_X, \fc_2)=L_{\bs_X}\# R_*.
\]
Let $\SL=\{L_{\bs_X}: \bs_X\in \Spincr(X;\bs_1,\bs_2) \}$ be the space of all such operators. The underlying manifold of $L_{\bs_X}$ is always the compact 4-manifold $X$. As for the other class $\SR$, let $X_3$ be any smooth 4-manifold with boundary $(-Y)$
 such that  $X\#X_3$ is a closed oriented manifold and  $\bs_X|_{\partial X}$ extends to a \spinc structure $\bs_3$ on $X\#X_3$. Define $R_3$ to be the linearized Seiberg-Witten map together with the linearized gauge fixing equation on $X\#X_3$ restricted on $X_3$. As a result 
 \[
 L_{\bs_X}\# R_3
 \]
 is the linearized operator at some configuration for the closed \spinc manifold $(X\#X_3, \bs_X\# \bs_3)$. Set $\SR=\{R_*\}\cup \{\text{all possible }(X_3, R_3)\}$. Our goal is to construct the natural bijection 
\[
e_E: \Lambda( L_{\bs_X}\# R_*)\to \Lambda( L_{\bs_X\otimes E}\# R_*),
\]
for each relative line bundle $[E]\in H^2(X,\partial X; \Z)$. (Here we changed the notation for a line bundle to avoid confusion). Using the set of bijections $\{p(R_3,R_4)\}$ in Proposition \ref{PD.3} or \ref{PD.9}, we can define $e_E$ using any compact piece $(X_3, R_3)$ instead: 
\[
e_E: \Lambda( L_{\bs_X}\# R_3)\to \Lambda( L_{\bs_X\otimes E}\# R_3).
\]
It is constructed as follows. The linearized operator at a reducible configuration on $X_3\# R_3$ is 
\[
(d^*\oplus d^+)\oplus D_A^+
\]
The second operator is complex linear, while the first one is independent of the line bundle $[E]\in H^2(X,\partial X;\Z)$, so $e_E$ is defined by the commutative digram
\[
\begin{tikzcd}
\Lambda(d^*\oplus d^+)\Lambda(D_{A}^+)\arrow[r,"\Id\otimes h"]\arrow[d,"q"]&\Lambda(d^*\oplus d^+)\Lambda(D_{A'}^+)\arrow[d,"q"]\\
\Lambda( L_{\bs_X}\# R_3)\arrow[r,"e_E"]&\Lambda( L_{\bs_X\otimes L}\# R_3),
\end{tikzcd}
\]
where $h:\Lambda(D_{A}^+)\to \Lambda(D_{A'}^+)$ preserves the complex orientations. Notice that $\{e_E\}$ is independent of the compact piece $(X_3, R_3)$ by our construction of $\{p(R_3, R_4)\}$. 

Now the first property \ref{U1} of Theorem \ref{T24.2} follows from Axiom \ref{C-IV}. 

As for \ref{U2}, it suffices to address the special case when either $[E_{12}]=0$ or $[E_{23}]=0$. Technically, we have to work with the operators $\CQ'$ defined in \eqref{E24.7}, which involve manifolds with boundary and spectral projections. We can enlarge the family $\SR$ to incorporate such operators, so it is not a problem.

At this point, we conclude using the additional property \eqref{ED.19} in Proposition \ref{PD.5} by setting either $P_3=\emptyset$ or $Q_3=\emptyset$. The assumption is verified by the next lemma. 
\begin{lemma} The class of operators $\SL\colonequals\{L_{\bs_X}: \bs_X\in \Spincr(X; \bs_1,\bs_2) \}$ is even in the sense of Definition \ref{DD.5}. 
\end{lemma}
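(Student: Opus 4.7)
My plan is to reduce the index computation to the Atiyah--Singer index theorem on a closed 4-manifold and then exploit the characteristic property of $c_1$ of a \spinc structure. Since any two elements of $\SL$ differ by tensoring with a relative line bundle $[E]\in H^2(X,\partial X;\Z)$ (because $\Spincr(X;\bs_1,\bs_2)$ is a torsor over this group), it suffices to compare $\Ind L_{\bs_X}\#R_3$ with $\Ind L_{\bs_X\otimes E}\#R_3$ for a fixed $R_3\in\SR$.

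By the construction of $\SR$, each $R_3$ comes with a manifold $X_3$ chosen so that $X\#X_3$ is a closed oriented 4-manifold and $\bs_X|_{\partial X}$ extends to a \spinc structure $\s_3$ on $X_3$. The glued operator $L_{\bs_X}\#R_3$ is then the full linearized Seiberg--Witten map (together with the linearized gauge-fixing equation) on $(X\#X_3,\bs_X\#\s_3)$ at some configuration, and its real Fredholm index depends only on topology. By Atiyah--Singer,
\[
\Ind L_{\bs_X}\#R_3 \;=\; 2\,\mathrm{ind}_{\C}D_A^+ \;-\; (b^0-b^1+b^+)(X\#X_3),
\]
where $\mathrm{ind}_{\C}D_A^+ = (c_1(\bs_X\#\s_3)^2-\sigma(X\#X_3))/8$.

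Next I would analyze how this changes when $\bs_X$ is replaced by $\bs_X\otimes E$. Since $E$ is trivialized on $\partial X$, the trivial extension of $E$ across $\partial X$ defines an honest line bundle on $X\#X_3$ whose first Chern class $e\in H^2(X\#X_3;\Z)$ is the image of $[E]$ under $H^2(X,\partial X;\Z)\to H^2(X\#X_3;\Z)$. Then $c_1(\bs_X\otimes E\#\s_3)=c_1(\bs_X\#\s_3)+2e$, while $\sigma,b^0,b^1,b^+$ are unchanged, so writing $c_1=c_1(\bs_X\#\s_3)$ the difference in real indices is
\[
\Ind L_{\bs_X\otimes E}\#R_3 \;-\; \Ind L_{\bs_X}\#R_3 \;=\; \frac{(c_1+2e)^2 - c_1^2}{4} \;=\; c_1\cdot e + e^2.
\]

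The final step, which is the heart of the argument, is Wu's formula: since $c_1(\s)\equiv w_2(X\#X_3)\pmod 2$ for any \spinc structure, and since $w_2\cup e \equiv e\cup e\pmod 2$ for every $e\in H^2(X\#X_3;\Z/2)$, we obtain $c_1\cdot e+e^2\equiv 2e^2\equiv 0\pmod 2$. Hence the parity of the index is unchanged, which is precisely the evenness condition \eqref{ED.18}. I do not expect any serious technical obstacle here: the only point requiring some care is checking that the relative line bundle $E$ really does extend across $\partial X$ to give a well-defined integral class $e$ on the closed manifold $X\#X_3$, which is clear from the trivialization of $E$ along $\partial X$, and that the index-theoretic formula above is insensitive to whether we perturb with $\q_i$'s or not, which follows since the perturbation terms are compact.
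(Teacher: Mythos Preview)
Your argument is essentially the same as the paper's, only more explicit: the paper's proof consists entirely of the sentence ``By the excision principle, it suffices to verify the condition \eqref{ED.18} for a special operator $(X_3, R_3)\in\SR$. In particular, we take $(X_3, R_3)$ to be a compact piece,'' leaving the closed-manifold computation (your Atiyah--Singer plus Wu formula step) implicit. So you have filled in exactly the details the paper suppresses.

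There is one small gap in your write-up. You assert that ``each $R_3$ comes with a manifold $X_3$ chosen so that $X\#X_3$ is a closed oriented 4-manifold,'' but this is not true of every element of $\SR$: by construction $\SR=\{R_*\}\cup\{\text{compact pieces}\}$, and $R_*$ lives on the non-compact complement $\CX\setminus X$. The evenness condition in Definition~\ref{DD.5} must hold for \emph{all} $R_3\in\SR$, including $R_*$. The fix is precisely what the paper invokes first: by the excision identity \eqref{ED.3}, the difference $\Ind L_1\#R_3-\Ind L_2\#R_3$ is independent of $R_3$, so verifying it for a single compact $(X_3,R_3)$ suffices. Once you insert that one sentence at the start, your proof is complete and matches the paper's intended argument.
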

\begin{proof}[Proof of Lemma] By the excision principle, it suffices to verify the condition \eqref{ED.18} for a special operator $(X_3, R_3)\in \SR$. In particular, we take $(X_3, R_3)$ to be a compact piece. 
\end{proof}

\section{Some Formulae of Taubes}\label{AppE}

In this appendix, we summarize some formulae of the Seiberg-Witten equations from Taubes' paper \cite[Section 2]{Taubes96}. Although the primary applications of \cite{Taubes96} focus on symplectic 4-manifolds, it is well-known that some of them generalize to any Riemannian 4-manifolds. This observation forms the basis of the finiteness result in Section \ref{Sec25}. For the sake of completeness, we record their statements and prove Lemma \ref{L25.3}. 

\smallskip

Given an oriented Riemannian 4-manifold $X$, consider the Seiberg-Witten equations on $X$ perturbed by a self-dual 2-form $\omega^+\in \Gamma(X, i\Lambda^+ X)$:
\begin{align}
\half \rho_4(F_{A^t}^+)-(\Phi\Phi^*)_0&=\rho_4(\omega^+),\label{EE.1}\\
D_A^+\Phi&=0. \label{EE.2}
\end{align}
The 2-form $\omega^+$ is not assumed to be closed, and $X$ is not necessarily compact. Set 
\[
F\colonequals \half F_{A^t}\in \Omega^2(X, i\R).
\]
Then the curvature tensor $F_{A}|_{S^+}\in \Gamma(X, i\Lambda^2X\otimes\End(S^+))$ can be written as 
\begin{equation}\label{EE.3}
F_A|_{S^+}=F\otimes\Id_{S^+}+\SU,
\end{equation}
where $\SU$ is the traceless part of $F_A|_{S^+}$ and is independent of the \spinc connection $A$. By the Weitzenb\"{o}ck formula \cite[(4.14)]{Bible}, if $(A,\Phi)$ solves \eqref{EE.1}\eqref{EE.2}, then
\begin{equation}\label{EE.4}
\half \Delta_A\Phi+\half |\Phi|^2\Phi=-\rho_4(\omega^+)\Phi-\frac{s}{4}\Phi. 
\end{equation}

Our goal is to find explicit formulae for $
d^*F,\ \Delta |F|^2 \text{ and } \Delta |\nabla_A \Phi|^2. $
\begin{lemma}\label{LE.1} For any solution $(A,\Phi)$ to the perturbed Seiberg-Witten equations \eqref{EE.1}\eqref{EE.2}, we have \[
	d^*F=2d^*F^+=2d^*F^-=i\im \langle \Phi,\nabla_A\Phi\rangle+2d^*\omega^+. 
	\]
\end{lemma}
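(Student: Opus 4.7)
\medskip

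\noindent\textbf{Proof proposal for Lemma \ref{LE.1}.} The plan is to split the statement into two formal parts: the equality of divergences $d^*F=2d^*F^+=2d^*F^-$, and then the explicit expression for $2d^*F^+$ in terms of the spinor. First, because $A$ is a genuine connection on the determinant line bundle, the Bianchi identity gives $dF_{A^t}=0$, hence $dF=0$. Decomposing $F=F^++F^-$ with $*F^\pm=\pm F^\pm$, the relation $d^*\alpha=-*d*\alpha$ on a $4$-manifold yields $d^*F^+=-*dF^+$ and $d^*F^-=*dF^-$, while $dF^+=-dF^-$. Therefore $d^*F^+=d^*F^-$, and so $d^*F=2d^*F^+=2d^*F^-$.

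Next, I would rewrite the curvature equation \eqref{EE.1} as
\[
F^+=\sigma(\Phi)+\omega^+,\qquad \sigma(\Phi)\colonequals \rho_4^{-1}\bigl((\Phi\Phi^*)_0\bigr)\in \Gamma(X,i\Lambda^+ X),
\]
so that taking $d^*$ of both sides gives $d^*F^+=d^*\sigma(\Phi)+d^*\omega^+$. Multiplying by two, the whole lemma is reduced to the pointwise identity
\begin{equation}\label{EE.key}
2\,d^*\sigma(\Phi)=i\im\langle\Phi,\nabla_A\Phi\rangle.
\end{equation}

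The strategy for \eqref{EE.key} is the standard moment-map/Weitzenb\"ock calculation. Fix a point $p\in X$ and a local orthonormal frame $\{e_i\}$ with $\nabla_{e_i}e_j=0$ at $p$. Using the Clifford multiplication identity
\[
\rho_4(e_i\wedge e_j)=\half\bigl(\rho_4(e_i)\rho_4(e_j)-\rho_4(e_j)\rho_4(e_i)\bigr)\quad\text{on }S^+,
\]
one expresses $\sigma(\Phi)$ in the frame as $\sigma(\Phi)_{ij}=c\,\im\langle\Phi,\rho_4(e_i\wedge e_j)\Phi\rangle$ for a universal constant $c$ (determined by a one-time normalization check). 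Differentiating with $\nabla_A$, the derivative $\nabla_{e_k}\sigma(\Phi)_{ij}$ splits into two terms involving $\nabla_{e_k}\Phi$; taking the divergence $\sum_i \nabla_{e_i}\sigma(\Phi)_{ij}$, the symmetric parts combine into $\sum_i \rho_4(e_i)\rho_4(e_i\wedge e_j)\Phi$ acting on $\nabla_A\Phi$. The crucial algebraic fact is the Clifford contraction identity
\[
\sum_i\rho_4(e_i)\rho_4(e_i\wedge e_j)=(n-1)\rho_4(e_j)-\text{lower order},
\]
which, combined with the Dirac equation $D_A^+\Phi=\sum_i\rho_4(e_i)\nabla_{e_i}\Phi=0$, cancels the terms containing $D_A^+\Phi$ and leaves precisely the expected multiple of $i\im\langle\Phi,\nabla_{e_j}\Phi\rangle$. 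Once the normalization constant is fixed against a test case (for instance, the flat model on $\R^4$ with a covariantly constant spinor), the coefficients collapse to those claimed in \eqref{EE.key}.

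The main obstacle is the bookkeeping in Step~3: keeping track of real versus imaginary parts, the factor of two coming from the self-dual projection $(1+*)/2$, and the sign convention in $\rho_4(\omega^+)$. I would organize the computation by first proving the identity in a parallel frame at a point (so all Christoffel symbols drop out), then checking the constant on a single explicit spinor. Once \eqref{EE.key} is established, combining it with the Bianchi reduction of Step~1 finishes the proof.
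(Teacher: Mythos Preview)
Your proposal is correct and follows essentially the same route as the paper: reduce via Bianchi to computing $d^*(F^+-\omega^+)$, express this quadratic spinor term in a normal frame, differentiate, and use $D_A^+\Phi=0$ to kill the Dirac-type cross terms. The paper's version is slightly more direct in that it works with the trace formula $(F^+-\omega^+)(e_i,e_k)=-\tfrac14\tr\bigl((\Phi\Phi^*)_0\,\rho_4(e_i)\rho_4(e_k)\bigr)$ and tracks all constants explicitly from the start, so no separate normalization check is needed; but the underlying computation is the same as yours.
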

\begin{proof}
	Since $F$ is a closed 2-form, $
	dF^-=-dF^+$ and $d^*F^-=d^*F^+. 
	$
	To compute $d^*F^+$, we pick a local orthonormal framing $\{e_i\}_{1\leq i\leq 4}$ such that $\nabla_{e_i}e_k=0$ at $x\in X$. Moreover, we exploit the formula from \cite[Lemma 5.13]{Spin}:
	\[
	d^*F^+=-(\nabla_{e_i} F^+)(e_i,\cdot).
	\]
	By the curvature equation \eqref{EE.1}, we have 
	\begin{align*}
	(F^+-\omega^+)(e_i, e_k)&=-\frac{1}{4}\tr(\rho_4(F^+-\omega^+)\rho_4(e_i)\rho_4(e_k))=-\frac{1}{4}\tr((\Phi\Phi^*)_0\rho_4(e_i)\rho_4(e_k)).
	\end{align*}
	Now we use the Dirac equation \eqref{EE.2} to compute:
	\begin{align*}
	I\colonequals &-\nabla_{e_i}(F^+-\omega^+)(e_i, e_k)\\
	=&\frac{1}{4}\tr([(\nabla_{e_i}^A\Phi)\Phi^*+\Phi(\nabla_{e_i}^A\Phi)^*-\re\langle \Phi, \nabla_{e_i}^A\Phi\rangle \otimes\Id_{S^+}]\rho_4(e_i)\rho_4(e_k)))\\
	=&\frac{1}{4}\tr(\rho_4(e_i)\rho_4(e_k)(\nabla_{e_i}^A\Phi)\Phi^*)+\frac{1}{4}\tr(\Phi\underbrace{(\nabla_{e_i}^A\Phi)^*\rho_4(e_i)}_{=0}\rho_4(e_k))+\half \re\langle \Phi,\nabla_{e_k}^A\Phi\rangle. \\
	\end{align*}
	For the first term, we commute $\rho_4(e_i)$ and $\rho_4(e_k)$ to derive:
	\begin{align*}
	I=&-\frac{1}{4}\tr(\rho_4(e_k)\underbrace{\rho_4(e_k)(\nabla_{e_i}^A\Phi)}_{=0}\Phi^*)+\half\tr(\rho_4(e_k)\rho_4(e_k)(\nabla_{e_i}^A\Phi)\Phi^*)+ \half\re\langle \Phi,\nabla_{e_k}^A\Phi\rangle\\
=&-\half \langle \nabla_{e_k}^A\Phi, \Phi\rangle+\half \re\langle \Phi,\nabla_{e_k}^A\Phi\rangle=\half\cdot i\im \langle \Phi, \nabla_{e_k}^A\Phi\rangle. 
	\end{align*}
	We conclude that $2d^*F^+=2d^*\omega^++i\im \langle \Phi,\nabla_{e_k}^A\Phi\rangle\cdot \omega_k$, where  $\{\omega_i\}_{1\leq i\leq 4}$ are co-vectors dual to $\{e_i\}$. 
\end{proof}

Now we are read to compute the Hodge Laplacian of the curvature 2-form $F$. 
\begin{proposition}
	For any solution $(A,\Phi)$ to the perturbed Seiberg-Witten equations \eqref{EE.1}\eqref{EE.2}, we have
	\[
	(d+d^*)^2 F+|\Phi|^2 F=\langle \nabla_A\Phi\wedge \nabla_A\Phi\rangle+2dd^*\omega^++I(\Phi,\Phi),
	\]
	where $\langle \nabla_A\Phi\wedge \nabla_A\Phi\rangle$ denotes the imaginary valued 2-form 
	\[
	\sum_{i,j} \omega_i\wedge \omega_j\cdot \langle \nabla_{e_i}^A\Phi, \nabla_{e_j}^A\Phi\rangle=2i\sum_{i<j}\omega_i\wedge \omega_j \cdot \im  \langle \nabla_{e_i}^A\Phi, \nabla_{e_j}^A\Phi\rangle,
	\]
	and $I(\Phi,\Phi)=\sum_{i<k}\omega_i\wedge \omega_k\cdot i\im \langle \Phi, \SU(e_i, e_k)\Phi\rangle$ is a symmetric bilinear form.
\end{proposition}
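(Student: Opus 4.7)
The plan is to derive the formula by taking the exterior derivative of the identity in Lemma~\ref{LE.1} and then processing the resulting commutator of covariant derivatives using \eqref{EE.3}. Since $F = \tfrac12 F_{A^t}$ is the curvature of a connection on $\det S^+$, the Bianchi identity gives $dF=0$, so the Hodge Laplacian simplifies to
\[
(d+d^*)^2 F = dd^*F = d\bigl(i\,\mathrm{Im}\langle \Phi,\nabla_A\Phi\rangle\bigr) + 2\,dd^*\omega^+,
\]
using Lemma~\ref{LE.1}. The main computational task is therefore to evaluate $d\alpha$ for $\alpha \colonequals i\,\mathrm{Im}\langle\Phi,\nabla_A\Phi\rangle \in \Omega^1(X,i\mathbb{R})$.

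To do this, I would fix $x\in X$ and work in a local orthonormal frame $\{e_k\}$ with $\nabla e_k=0$ at $x$. Then $d\alpha(e_i,e_j) = e_i(\alpha(e_j)) - e_j(\alpha(e_i))$, and by the compatibility of the $A$-connection with the Hermitian metric we obtain
\[
e_i(\alpha(e_j)) - e_j(\alpha(e_i)) = i\,\mathrm{Im}\bigl[\langle\nabla^A_{e_i}\Phi,\nabla^A_{e_j}\Phi\rangle - \langle\nabla^A_{e_j}\Phi,\nabla^A_{e_i}\Phi\rangle\bigr] + i\,\mathrm{Im}\langle \Phi, F_A(e_i,e_j)\Phi\rangle .
\]
The first bracket is $2i\,\mathrm{Im}\langle\nabla^A_{e_i}\Phi,\nabla^A_{e_j}\Phi\rangle$, which upon antisymmetric summation over $i<j$ assembles into the 2-form $\langle\nabla_A\Phi\wedge\nabla_A\Phi\rangle$ by the very definition in the statement. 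For the second bracket, the curvature decomposition \eqref{EE.3} gives $F_A(e_i,e_j)\Phi = F(e_i,e_j)\Phi + \mathscr{U}(e_i,e_j)\Phi$. The scalar part contributes $i\,\mathrm{Im}\langle\Phi, F(e_i,e_j)\Phi\rangle = -F(e_i,e_j)|\Phi|^2$ (the sign coming from the fact that $F$ takes values in $i\mathbb{R}$ and the Hermitian pairing is conjugate-linear in the appropriate slot), assembling into $-|\Phi|^2 F$. The traceless part assembles into exactly the form $I(\Phi,\Phi)$ defined in the statement. Adding these pieces gives
\[
d\alpha = \langle \nabla_A\Phi\wedge\nabla_A\Phi\rangle - |\Phi|^2 F + I(\Phi,\Phi),
\]
from which the desired formula follows after combining with the $2\,dd^*\omega^+$ term and moving $|\Phi|^2 F$ to the left side.

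The main obstacle is purely bookkeeping: keeping the Hermitian-inner-product conventions consistent (so that the sign in $i\,\mathrm{Im}\langle\Phi, F(e_i,e_j)\Phi\rangle$ comes out correctly and cancels the $|\Phi|^2 F$ added on the left-hand side), and correctly identifying the antisymmetric sums $\sum_{i,j}\omega_i\wedge\omega_j\langle\nabla^A_{e_i}\Phi,\nabla^A_{e_j}\Phi\rangle$ and $\sum_{i<j}\omega_i\wedge\omega_j\,i\,\mathrm{Im}\langle\Phi,\mathscr{U}(e_i,e_j)\Phi\rangle$ with the 2-forms named in the statement. No additional analytic input beyond Lemma~\ref{LE.1} and the Bianchi identity is required; in particular, the Dirac equation \eqref{EE.2} is only needed implicitly through Lemma~\ref{LE.1}, and the curvature equation \eqref{EE.1} is used only through \eqref{EE.3}, which is itself algebraic.
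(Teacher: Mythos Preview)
Your proposal is correct and follows essentially the same route as the paper: reduce $(d+d^*)^2F$ to $dd^*F$ via $dF=0$, invoke Lemma~\ref{LE.1}, and compute $d\bigl(i\,\im\langle\Phi,\nabla_A\Phi\rangle\bigr)$ in a normal frame, splitting the curvature term via \eqref{EE.3}. One small correction to your closing remark: the curvature equation \eqref{EE.1} is not actually used here at all (even implicitly), since \eqref{EE.3} is just the trace/traceless decomposition of $F_A|_{S^+}$ and holds for any $\mathrm{spin}^c$ connection.
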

\begin{proof}Since $dF=0$, it suffices to compute $dd^*F$. We exploit the formula from \cite[Lemma 5.13]{Spin}:
	\[
	d\nu=\omega_i\wedge \nabla_{e_i}\nu 
	\]
	for any $\nu\in \Omega^*(X,i\R)$. 
In particular, set $\nu=i\im\langle \Phi,\nabla_A\Phi\rangle$:
	\begin{align*}
	d\nu&=\omega_i\wedge \nabla_{e_i}(\omega_k\otimes i\im\langle \Phi,\nabla_{e_k}\Phi\rangle)=\omega_i\wedge \omega_k\cdot (i\im\langle \nabla_{e_i}^A\Phi,\nabla_{e_k}^A\Phi\rangle+i\im\langle \Phi,\nabla_{e_i}^A\nabla_{e_k}^A\Phi\rangle)\\
	&=\sum_{i<k}\omega_i\wedge \omega_k\cdot (2i\im\langle \nabla_{e_i}^A\Phi,\nabla_{e_k}^A\Phi\rangle+i\im\langle \Phi, F_A(e_i,e_k)\Phi\rangle)\\
	&=\langle \nabla_A\Phi\wedge \nabla_A\Phi\rangle-|\Phi|^2 F+I(\Phi, \Phi).
	\end{align*}
	At the last step, we used the decomposition 
	$
	F_A|_{S^+}=\half F_{A^t}\otimes \Id_{S^+}+\SU
$ from \eqref{EE.3}.
\end{proof}

Finally, we address the Laplacian of $|\nabla_A\Phi|^2$. Note that 
\[
\half \Delta|\nabla_A\Phi|^2+|\Hess_A\Phi|^2=\re\langle (\nabla_A^*\nabla_A)\nabla_A\Phi,\nabla_A\Phi\rangle. 
\]
We start with an explicit formula for the commutator $\Delta_A\nabla_A-\nabla_A\Delta_A$ where $\Delta_A\colonequals \nabla_A^*\nabla_A$. 
\begin{lemma}\label{LE.3} For any \spinc connection $A$ and any spinor $\Phi$, we have an identity:
	\begin{align*}
	\langle \Delta_A\nabla_A\Phi,\nabla_A\Phi\rangle&=\langle \nabla_A(\Delta_A\Phi),\nabla_A\Phi\rangle-\Ric(e_i, e_j)\re\langle \nabla_{e_i}\Phi,\nabla_{e_j}\Phi\rangle\\
	&\qquad+\re\langle (d^*_AF_A)\Phi, \nabla_A\Phi\rangle-2\re\langle F_A(e_i,e_j)\nabla_{e_i}\Phi,\nabla_{e_j}\Phi\rangle. 
	\end{align*}
\end{lemma}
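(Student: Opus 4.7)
I would prove this as a pointwise identity by working at a point $x_0 \in X$ in a geodesic normal orthonormal frame $\{e_i\}$ for which $\nabla_{e_i}e_k = 0$ at $x_0$ and the brackets $[e_i, e_k]$ vanish at $x_0$. The key idea is a commutation of iterated covariant derivatives that produces two sources of curvature: the Riemann tensor of $X$ acting on the $T^*X$ factor of $\nabla_A\Phi \in \Gamma(T^*X \otimes S^+)$, and the curvature $F_A$ acting on the $S^+$ factor.

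First, viewing $\nabla_A\Phi$ as a section of $T^*X \otimes S^+$ with the combined Levi-Civita and spin connection $\hat\nabla$, the formula $\Delta_A T = -\sum_i \hat\nabla_{e_i}\hat\nabla_{e_i} T$ at $x_0$ gives, after a short expansion that accounts for the second-order defect of the frame,
\[
(\Delta_A \nabla_A \Phi)(e_k) = -\sum_i \nabla^A_{e_i}\nabla^A_{e_i}\nabla^A_{e_k}\Phi \;+\; \sum_i \nabla^A_{\nabla_{e_i}\nabla_{e_i}e_k}\Phi.
\]
The standard identification $\sum_i \nabla_{e_i}\nabla_{e_i}e_k(x_0) = -\Ric(e_k)^\sharp$ (a direct consequence of the Ricci identity in normal coordinates) converts the second sum into precisely the term $-\Ric(e_i, e_j)\re\langle\nabla^A_{e_i}\Phi, \nabla^A_{e_j}\Phi\rangle$ after pairing with $\nabla^A_{e_k}\Phi$ and summing over $k$.

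Next, I would rewrite the triple derivative $\nabla^A_{e_i}\nabla^A_{e_i}\nabla^A_{e_k}\Phi$ by two successive commutations, each introducing a curvature correction via $[\nabla^A_{e_i}, \nabla^A_{e_k}]\Phi = F_A(e_i, e_k)\Phi$:
\[
\nabla^A_{e_i}\nabla^A_{e_i}\nabla^A_{e_k}\Phi = \nabla^A_{e_i}\nabla^A_{e_k}\nabla^A_{e_i}\Phi + \nabla^A_{e_i}\bigl(F_A(e_i, e_k)\Phi\bigr),
\]
and then $\nabla^A_{e_i}\nabla^A_{e_k}\nabla^A_{e_i}\Phi = \nabla^A_{e_k}\nabla^A_{e_i}\nabla^A_{e_i}\Phi + F_A(e_i, e_k)\nabla^A_{e_i}\Phi$. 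Summing over $i$ and using $\Delta_A \Phi = -\sum_i \nabla^A_{e_i}\nabla^A_{e_i}\Phi$ identifies the leading piece with $-\nabla^A_{e_k}(\Delta_A \Phi)$, which supplies the $\langle \nabla_A(\Delta_A\Phi), \nabla_A\Phi\rangle$ contribution.

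The remaining curvature terms are $-\sum_i F_A(e_i, e_k)\nabla^A_{e_i}\Phi$ and $-\sum_i \nabla^A_{e_i}\bigl(F_A(e_i, e_k)\Phi\bigr) = -\sum_i (\nabla_{e_i}F_A)(e_i, e_k)\Phi - \sum_i F_A(e_i, e_k)\nabla^A_{e_i}\Phi$. Recognizing $\sum_i (\nabla_{e_i} F_A)(e_i, e_k) = -(d_A^*F_A)(e_k)$ produces the $\re\langle (d_A^*F_A)\Phi, \nabla_A\Phi\rangle$ term, and the two copies of $F_A(e_i, e_k)\nabla^A_{e_i}\Phi$ combine to yield the coefficient $-2$ in the final identity once we pair with $\nabla^A_{e_k}\Phi$, sum over $k$ and take the real part. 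The main technical obstacle is carefully tracking signs and the factor of two on the $F_A(e_i, e_j)\nabla_{e_i}\Phi$ term, which requires the two independent $F_A$ corrections coming from the double commutation to add rather than cancel; a secondary care-point is to verify that no extraneous Ricci terms leak in during the swap operations, which is ensured by the fact that each commutator produces only $F_A$ plus a Lie-bracket term that vanishes at $x_0$.
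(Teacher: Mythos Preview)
Your strategy is exactly the one used in the paper: work at a point in a normal frame, commute the covariant derivatives twice to produce the two $F_A$--corrections, and extract the Ricci term from the second–order defect of the frame. The $F_A$ bookkeeping and the identification of $d_A^*F_A$ are correct.

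There is, however, a genuine slip in how you separate the Ricci term from the $\nabla_A(\Delta_A\Phi)$ term. Two of your intermediate assertions are individually false and only survive because their errors cancel. First, the ``standard identification'' $\sum_i\nabla_{e_i}\nabla_{e_i}e_k(x_0)=-\Ric(e_k)^\sharp$ is not correct in normal coordinates: a direct computation using $\partial_l\Gamma^m_{ij}(0)=\tfrac{1}{3}(R^m_{ilj}+R^m_{ijl})$ gives $\sum_i\nabla_{e_i}\nabla_{e_i}e_k(x_0)=-\tfrac{1}{3}\Ric(e_k)^\sharp$. Second, you write ``using $\Delta_A\Phi=-\sum_i\nabla^A_{e_i}\nabla^A_{e_i}\Phi$'' to identify $-\sum_i\nabla^A_{e_k}\nabla^A_{e_i}\nabla^A_{e_i}\Phi$ with $\nabla^A_{e_k}(\Delta_A\Phi)$; but that equality for $\Delta_A\Phi$ holds only at $x_0$, so after applying $\nabla^A_{e_k}$ there is an extra frame term $\sum_i\nabla^A_{\nabla_{e_k}\nabla_{e_i}e_i}\Phi$ you have dropped. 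The honest Ricci contribution comes from the \emph{difference}
\[
\sum_i\bigl(\nabla_{e_i}\nabla_{e_i}e_k-\nabla_{e_k}\nabla_{e_i}e_i\bigr)(x_0)=\sum_iR(e_i,e_k)e_i=-\Ric(e_k)^\sharp,
\]
obtained via $\nabla_{e_i}e_k=\nabla_{e_k}e_i$. This is precisely how the paper organizes the computation: it expands $\Delta_A\nabla_A\Phi$ and $\nabla_A\Delta_A\Phi$ separately, keeping the second--order frame terms $-\nabla_{e_j}\nabla_{e_j}\omega_i\otimes\nabla_{e_i}\Phi$ and $-\omega_k\otimes\langle\nabla_{e_k}\nabla_{e_j}\omega_i,\omega_j\rangle\nabla_{e_i}\Phi$ on each side, and only after subtracting and pairing with $\nabla_A\Phi$ does the Ricci curvature appear. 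Fixing these two lines makes your argument coincide with the paper's.
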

\begin{remark}\label{RE.4} The last two terms can be recognized as follows:
	\begin{align*}
		d_A^*F_A&=d^*F\otimes\Id_{S^+}+d^*_{A_0}\SU,\\
	\re\langle F_A(e_i,e_j)\nabla_{e_i}\Phi,\nabla_{e_j}\Phi\rangle&=\re\langle \SU(e_i,e_j)\nabla_{e_i}\Phi,\nabla_{e_j}\Phi\rangle+2\sum_{i<j} F(e_i, e_j)\cdot i\im\langle \nabla_{e_i}\Phi,\nabla_{e_j}\Phi\rangle\\
	&=\re\langle \SU(e_i,e_j)\nabla_{e_i}\Phi,\nabla_{e_j}\Phi\rangle-\langle F, \langle\nabla_A\Phi\wedge\nabla_A\Phi\rangle\rangle.
	\end{align*}
Note that  $d^*_{A_0}\SU$ is independent of the reference \spinc connection $A_0$.
\end{remark}
\begin{proof}[Proof of Lemma \ref{LE.3}] Let $\{e_i=\partial_i\}$ in a normal coordinate at $x\in X$ and $\{\omega_i\}_{1\leq i\leq 4}$ be the co-vectors dual to $\{e_i\}$. Then
	\begin{align*}
	\nabla_A\Phi&=\omega_i\otimes\nabla_{e_i}\Phi,\\
	\Delta_A\nabla_A\Phi&=-\omega_i\otimes \nabla_{e_j}\nabla_{e_j}\nabla_{e_i}\Phi-\nabla_{e_j}\nabla_{e_j}\omega_i\otimes\nabla_{e_i}\Phi\\
	&=-\omega_i\otimes \nabla_{e_i}\nabla_{e_j}\nabla_{e_j}\Phi-2\omega_i\otimes F_A(e_j, e_i)\nabla_{e_j}\Phi\\
	&\qquad+\omega_i\otimes (d^*_AF_A)(e_i)\Phi-\nabla_{e_j}\nabla_{e_j}\omega_i\otimes\nabla_{e_i}\Phi,\\
	\Delta_A\Phi&=-\nabla_{e_j}\nabla_{e_j}\Phi-\langle \nabla_{e_j}\omega_i,\omega_j\rangle \nabla_{e_i}\Phi,\\
	\nabla_A	\Delta_A\Phi&=-\omega_i\otimes \nabla_{e_i}\nabla_{e_j}\nabla_{e_j}\Phi-\omega_k\otimes \langle\nabla_{e_k} \nabla_{e_j}\omega_i,\omega_j\rangle \nabla_{e_i}\Phi,
	\end{align*}
	Now take inner products with $\nabla_A\Phi$. To find the Ricci curvature, use relations:
	\begin{align*}
	\langle \nabla_{e_j}\nabla_{e_j}\omega_i, \omega_k\rangle&=-\langle\nabla_{e_j}\nabla_{e_j} e_k, e_i\rangle,& \langle \nabla_{e_k}\nabla_{e_j}\omega_i, \omega_j\rangle&=-\langle\nabla_{e_k}\nabla_{e_j} e_j, e_i\rangle,
	\end{align*}
	and $\nabla_{e_j}e_k=\nabla_{e_k}e_j$ in a normal neighborhood.
\end{proof}
\begin{proposition}\label{PB.5}	For any solution $(A,\Phi)$ to the perturbed Seiberg-Witten equations, we have
	\begin{align*}
	&\half \Delta |\nabla_A\Phi|^2+|\Hess_A\Phi|^2+\half |\Phi|^2|\nabla_A\Phi|^2+|\langle \nabla_A\Phi, \Phi\rangle|^2+\frac{s}{4}|\nabla_A\Phi|^2\\
	=&2\langle F,\langle\nabla_A\Phi\wedge \nabla_A\Phi\rangle\rangle+J_1+J_2 
	\end{align*}
	where
	\begin{align*}
	J_1&=-2\langle \SU(e_i, e_j)\nabla_{e_i}\Phi,\nabla_{e_j}\Phi\rangle-\Ric(e_i, e_j)\re\langle \nabla_{e_i}\Phi,\nabla_{e_j}\Phi\rangle-\re\langle \rho_4(\omega^+)\nabla_A\Phi,\nabla_A\Phi\rangle,\\
	J_2&=\re\langle (d_{A_0}^*\SU)\Phi,\nabla_A\Phi\rangle+\re\langle 2d^*\omega^+\otimes \Phi, \nabla_A\Phi\rangle\\
	&\hspace{1.527in}-\re\langle \rho_4(\nabla\omega^+)\Phi,\nabla_A\Phi\rangle-\frac{1}{4}\re\langle ds\otimes \Phi, \nabla_A\Phi\rangle. 
	\end{align*}
	In particular, $|J_1|\leq C|\nabla_A\Phi|^2$ and $|J_2|\leq C|\nabla_A\Phi||\Phi|$ for some function $C:X\to \R_{>0}$ depending only on $(g_X,\omega^+)$.
\end{proposition}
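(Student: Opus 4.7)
}

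The starting point is the standard identity
\[
\half \Delta |\nabla_A\Phi|^2 + |\Hess_A\Phi|^2 \;=\; \re\langle \nabla_A^*\nabla_A(\nabla_A\Phi),\,\nabla_A\Phi\rangle,
\]
valid for any connection on any Riemannian 4-manifold. The plan is then to apply Lemma \ref{LE.3} to commute $\nabla_A^*\nabla_A$ past $\nabla_A$, producing (a) a Ricci contribution $-\Ric(e_i,e_j)\re\langle \nabla_{e_i}\Phi,\nabla_{e_j}\Phi\rangle$, (b) a $(d_A^*F_A)\Phi$-term, and (c) the curvature pairing $-2\re\langle F_A(e_i,e_j)\nabla_{e_i}\Phi,\nabla_{e_j}\Phi\rangle$. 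By Remark \ref{RE.4}, (c) splits into the scalar piece $2\langle F,\langle\nabla_A\Phi\wedge\nabla_A\Phi\rangle\rangle$ that sits on the right-hand side of the claim, together with a traceless piece $-2\re\langle\SU(e_i,e_j)\nabla_{e_i}\Phi,\nabla_{e_j}\Phi\rangle$ that, along with (a) and an $\omega^+$-term appearing below, makes up $J_1$.

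Next I would differentiate the Weitzenb\"ock identity \eqref{EE.4} to obtain
\[
\nabla_{e_k}\Delta_A\Phi \;=\; -2\re\langle\nabla_{e_k}\Phi,\Phi\rangle\,\Phi \;-\; |\Phi|^2\nabla_{e_k}\Phi \;-\; 2\rho_4(\nabla_{e_k}\omega^+)\Phi \;-\; 2\rho_4(\omega^+)\nabla_{e_k}\Phi \;-\; \tfrac{e_k s}{2}\,\Phi \;-\; \tfrac{s}{2}\nabla_{e_k}\Phi,
\]
using $d|\Phi|^2 = 2\re\langle\nabla_A\Phi,\Phi\rangle$. Pairing with $\nabla_{e_k}\Phi$, summing in $k$, and taking real parts yields the expected $-|\Phi|^2|\nabla_A\Phi|^2$, $-\tfrac{s}{2}|\nabla_A\Phi|^2$, and $-2\re\langle\rho_4(\omega^+)\nabla_A\Phi,\nabla_A\Phi\rangle$, together with the $\nabla\omega^+$ and $ds$ contributions to $J_2$, plus an extra scalar piece $-2\sum_k(\re\langle\Phi,\nabla_{e_k}\Phi\rangle)^2$ coming from $d|\Phi|^2$.

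The key simplification is in the $\re\langle(d_A^*F_A)\Phi,\nabla_A\Phi\rangle$ term. Using $d_A^*F_A = d^*F\otimes\Id_{S^+} + d^*_{A_0}\SU$ (Remark \ref{RE.4}), the $\SU$-piece provides the first term of $J_2$. For the scalar part, I would substitute Lemma \ref{LE.1}, i.e.\ $d^*F = i\im\langle\Phi,\nabla_A\Phi\rangle + 2d^*\omega^+$; the $d^*\omega^+$-piece yields the second term of $J_2$, while the remaining piece gives, via the identity $\re\langle i\Phi,X\rangle = \im\langle\Phi,X\rangle$, exactly $\sum_k(\im\langle\Phi,\nabla_{e_k}\Phi\rangle)^2$. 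Combined with the $-2\sum_k(\re)^2$ from the previous step and rearranged using
\[
|\langle\nabla_A\Phi,\Phi\rangle|^2 \;=\; \sum_k (\re\langle\Phi,\nabla_{e_k}\Phi\rangle)^2 + \sum_k(\im\langle\Phi,\nabla_{e_k}\Phi\rangle)^2,
\]
these scalar zero-order corrections assemble (after moving them to the left-hand side) into the advertised $+\,|\langle\nabla_A\Phi,\Phi\rangle|^2$ and $+\,\half|\Phi|^2|\nabla_A\Phi|^2$ terms. The bounds $|J_1|\le C|\nabla_A\Phi|^2$ and $|J_2|\le C|\Phi||\nabla_A\Phi|$ are then immediate from Cauchy--Schwarz, since $\SU$, $d^*_{A_0}\SU$, $\Ric$, $\omega^+$, $\nabla\omega^+$, $s$ and $ds$ are fixed smooth data on $X$, independent of $(A,\Phi)$.

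The genuinely delicate point is the bookkeeping of real and imaginary parts of $\langle\Phi,\nabla_{e_k}\Phi\rangle$ in the $d^*F$ simplification: it is there that the two separate contributions from the Weitzenb\"ock identity and from Lemma \ref{LE.1} must combine cleanly into the single scalar $|\langle\nabla_A\Phi,\Phi\rangle|^2$, and a sign mistake in the standard convention $\re\langle i\Phi,X\rangle = \im\langle\Phi,X\rangle$ would destroy the cancellation. Everything else is a mechanical calculation in a normal coordinate frame at a single point.
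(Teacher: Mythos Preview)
Your approach is exactly the one the paper takes: start from $\half\Delta|\nabla_A\Phi|^2+|\Hess_A\Phi|^2=\re\langle\Delta_A\nabla_A\Phi,\nabla_A\Phi\rangle$, apply Lemma~\ref{LE.3} and Remark~\ref{RE.4}, differentiate the Weitzenb\"ock identity \eqref{EE.4}, and feed in Lemma~\ref{LE.1} for $d^*F$. The structure is identical and the proof is essentially complete.

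There is, however, a genuine bookkeeping slip in your final combination. Differentiating \eqref{EE.4} and multiplying out the $\tfrac12$ gives your coefficients ($-|\Phi|^2|\nabla_A\Phi|^2$, $-\tfrac{s}{2}|\nabla_A\Phi|^2$, $-2\sum_k(\re\langle\Phi,\nabla_{e_k}\Phi\rangle)^2$, etc.), but these do \emph{not} assemble into the stated $\tfrac12|\Phi|^2|\nabla_A\Phi|^2$, $\tfrac{s}{4}|\nabla_A\Phi|^2$, and $|\langle\nabla_A\Phi,\Phi\rangle|^2$: with your $+\sum_k(\im)^2$ from the $d^*F$ term and $-2\sum_k(\re)^2$ from Weitzenb\"ock, moving everything left gives $2(\re)^2-(\im)^2$, not $(\re)^2+(\im)^2$. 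The sign convention you quote, $\re\langle i\Phi,X\rangle=\im\langle\Phi,X\rangle$, is the conjugate-linear-in-first-slot convention; the paper uses the opposite one (linear in the first slot), so that $\re\langle(d^*F)\Phi,\nabla_A\Phi\rangle=-|\im\langle\Phi,\nabla_A\Phi\rangle|^2+\cdots$. With that sign \emph{and} the paper's half-sized Weitzenb\"ock coefficients (the paper writes $\nabla_A\Delta_A\Phi+\tfrac12|\Phi|^2\nabla_A\Phi+\re\langle\Phi,\nabla_A\Phi\rangle\Phi=\cdots$, i.e.\ it differentiates \eqref{EE.4} without clearing the $\tfrac12$), one gets $-(\re)^2-(\im)^2$ on the right, which moves left as the clean $|\langle\nabla_A\Phi,\Phi\rangle|^2$. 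So your warning that ``a sign mistake \ldots\ would destroy the cancellation'' is exactly right, and in fact your own sign and your factor of $2$ are inconsistent with each other and with the stated identity; you should redo the numerical bookkeeping carefully against the paper's conventions.
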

\begin{proof} By Lemma \ref{LE.3} and Remark \ref{RE.4}, it suffices to compute
	\[
	\re\langle (d^*F)\Phi,\nabla_A\Phi\rangle \text{ and }\re\langle \nabla_A\Delta_A\Phi, \nabla_A\Phi\rangle. 
	\]
	For the first term, we apply Lemma \ref{LE.1}:
	\begin{align*}
	\re\langle (d^*F)\Phi,\nabla_A\Phi\rangle&=-|\im \langle\Phi,\nabla_A\Phi\rangle|^2+\re\langle 2d^*\omega^+\otimes \Phi, \nabla_A\Phi\rangle. 
	\end{align*}
	
	For the second term, we apply $\Delta_A$ to \eqref{EE.4} to compute
	\begin{align*}
	\nabla_A\Delta_A\Phi+\half |\Phi|^2\nabla_A\Phi+\re\langle\Phi,\nabla_A\Phi\rangle \Phi=\rho_4(\omega^+)\nabla_A\Phi-\rho_4(\nabla\omega^+)\Phi-\frac{s}{4}\nabla_A\Phi-\frac{1}{4}ds\otimes\Phi.
	\end{align*}
To conclude, take the inner product with $\nabla_A\Phi$:
	\begin{align*}
	&\re\langle \nabla_A\Delta_A\Phi, \nabla_A\Phi\rangle+\half |\Phi|^2|\nabla_A\Phi|^2+|\re\langle\Phi,\nabla_A\Phi\rangle|^2\\
	=&-\re\langle \rho_4(\omega^+)\nabla_A\Phi,\nabla_A\Phi\rangle-\re\langle \rho_4(\nabla\omega^+)\Phi,\nabla_A\Phi\rangle-\frac{s}{4}|\nabla_A\Phi|^2-\frac{1}{4}\re\langle ds\otimes \Phi, \nabla_A\Phi\rangle.\qedhere
	\end{align*}
\end{proof}

Finally, let us state the corresponding results for 3-manifolds from which one can easily deduce Lemma \ref{L25.3}.
\begin{proposition}\label{PE.6} Let $(Y,g_Y)$ be any Riemannian 3-manifold, $\Rm_Y$ be the curvature tensor and $\omega\in \Omega^2(Y, i\R)$ be a closed 2-form. For any solution $(B,\Psi)$ to the 3-dimensional Seiberg-Witten equations \eqref{3DSWEQ}, write 
	\[
	F=\half F_{B^t}\in \Omega^2(Y, i\R). 
	\]
	Then we have 
	\[
	d^*F=i\im\langle \Psi,\nabla_B\Psi\rangle+d^*\omega,
	\]
	and 
\begin{align*}
\half \Delta |\nabla_B\Psi|^2+&|\Hess_B\Psi|^2+\half |\Phi|^2|\nabla_B\Psi|^2+|\langle \nabla_B\Psi, \Psi\rangle|^2\\
=&2\langle F,\langle\nabla_B\Psi\wedge \nabla_B\Psi\rangle\rangle+J_1(\nabla_B\Psi,\nabla_B\Psi)+J_2(\Psi,\nabla_B\Psi).
\end{align*}
where $J_1$ and $J_2$ are certain bilinear maps depending only on $R_Y$, $\omega$ and their first derivatives. In particular, if 
\[
\|\Rm_Y\|_{L^\infty_1}, \|\omega\|_{L^\infty_1}<\infty,
\]
then 
 $$|J_1(\nabla_B\Psi,\nabla_B\Psi)|\leq C|\nabla_B\Psi|^2\text{ and }|J_2(\nabla_B\Psi,\Psi)|\leq C|\nabla_B\Psi||\Psi|,$$
 for some constant $C>0$.
\end{proposition}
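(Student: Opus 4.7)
The plan is to mirror the derivations of Lemma \ref{LE.1} and Proposition \ref{PB.5} verbatim, with the only changes being that (i) we work with a local orthonormal frame $\{e_i\}_{1\leq i\leq 3}$ instead of $\{e_i\}_{1\leq i\leq 4}$, (ii) we replace the self-dual 2-form $\omega^+$ by the full 2-form $\omega$, and (iii) we use the 3-dimensional Weitzenb\"ock formula on $\Psi$ in place of its 4-dimensional counterpart \eqref{EE.4}. Alternatively, one can pull back $(B,\Psi)$ to the $\R_t$-translation invariant configuration $(A,\Phi)=(dt+B,\Psi)$ on $\R_t\times Y$, which solves the 4-dimensional Seiberg-Witten equations with self-dual perturbation $\omega^+ = \tfrac{1}{2}(\omega+*_4\omega)$, and then read off the 3-dimensional statement from Proposition \ref{PB.5}. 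I will describe the direct approach below.

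For the first formula, the proof of Lemma \ref{LE.1} used only a pointwise computation in a normal frame at $x\in X$ together with the curvature equation of \eqref{EE.1} and the Dirac equation \eqref{EE.2}. In three dimensions, the curvature equation $\tfrac{1}{2}\rho_3(F_{B^t}) - (\Psi\Psi^*)_0 = \rho_3(\omega)$ combined with $D_B\Psi=0$ gives the analogous identity
\[
(F-\omega)(e_i,e_k) = -\tfrac{1}{4}\tr\bigl((\Psi\Psi^*)_0 \,\rho_3(e_i)\rho_3(e_k)\bigr),
\]
and the same manipulation of $\tr(\rho_3(e_i)\rho_3(e_k)(\nabla_{e_i}^B\Psi)\Psi^*)$, using the Clifford relations and the fact that $\rho_3(e_i)\nabla_{e_i}^B\Psi=0$, yields $d^*F = i\,\mathrm{Im}\langle \Psi,\nabla_B\Psi\rangle + d^*\omega$. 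No self-dual projection is needed since in three dimensions $F$ has no such decomposition.

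For the second formula, the key point is that Lemma \ref{LE.3}, which expresses $\langle \Delta_A\nabla_A\Phi,\nabla_A\Phi\rangle - \langle\nabla_A\Delta_A\Phi,\nabla_A\Phi\rangle$ in terms of the Ricci tensor and $d^*_A F_A$, is a general Weitzenb\"ock-type identity valid on any Riemannian manifold, so it applies verbatim on $Y$. Substituting the 3-dimensional Weitzenb\"ock identity
\[
\Delta_B\Psi + \tfrac{1}{2}|\Psi|^2\Psi = -\rho_3(\omega)\Psi - \tfrac{s}{4}\Psi,
\]
which follows by combining $D_B^2\Psi = \nabla_B^*\nabla_B\Psi + \tfrac{s}{4}\Psi + \tfrac{1}{2}\rho_3(F_{B^t})\Psi$ with the curvature equation, and then applying $\nabla_B$, taking inner product with $\nabla_B\Psi$, and plugging in the first formula $d^*F = i\,\mathrm{Im}\langle\Psi,\nabla_B\Psi\rangle+d^*\omega$, produces the desired identity with $J_1$ and $J_2$ given by the obvious 3-dimensional analogues of the expressions in Proposition \ref{PB.5}. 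The traceless curvature piece $\SU$ is defined exactly as in \eqref{EE.3} and is independent of the reference \spinc connection.

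The main obstacle, such as it is, is purely bookkeeping: carefully commuting two covariant derivatives picks up a curvature of $B$, which one has to split into its scalar part $F\otimes\Id_S$ and its traceless part $\SU$, and then bound the resulting terms. The pointwise bounds $|J_1|\leq C|\nabla_B\Psi|^2$ and $|J_2|\leq C|\nabla_B\Psi||\Psi|$ are then immediate from the explicit formulae, since every coefficient of $J_1$ is a bounded tensor built from $\Rm_Y$, $\omega$, $\rho_3(\omega)$, and $\SU$ (which are controlled by $\|\Rm_Y\|_{L^\infty}$ and $\|\omega\|_{L^\infty}$), while every coefficient of $J_2$ additionally involves one derivative of $\omega$ or of the scalar curvature $s$, controlled by $\|\Rm_Y\|_{L^\infty_1}$ and $\|\omega\|_{L^\infty_1}$.
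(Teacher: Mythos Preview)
Your proposal is correct. The paper's own proof is a single sentence: ``Proposition \ref{PE.6} follows from its 4-dimensional analogue: Lemma \ref{LE.1} and Proposition \ref{PB.5}.'' This is precisely the alternative route you mention (pulling back to $\R_t\times Y$ and invoking the 4-dimensional results), whereas your main write-up carries out the direct 3-dimensional computation. Both approaches are equivalent here; your direct argument actually supplies more detail than the paper does, and your acknowledgment of the pull-back method shows you have identified the paper's intended shortcut as well.
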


Proposition \ref{PE.6} follows from its 4-dimensional analogue: Lemma \ref{LE.1} and Proposition \ref{PB.5}.

\bibliographystyle{alpha}
\bibliography{sample}

\end{document}